\definecolor{darkgreen}{rgb}{0,0.7,0}
\definecolor{darkblue}{rgb}{0,0,0.7}
\def\subsection{\@startsection{subsection}{2}%
  \z@{.9\linespacing\@plus.7\linespacing}
{.9\baselineskip}%
  {\normalfont\centering\itshape}%
}
\numberwithin{equation}{section}
\newcommand{\npboxed}[1]{%
\setlength{\fboxsep}{2pt}\setlength{\fboxrule}{0.4pt}%
\vspace{-2.4pt}\hspace{-2.4pt}\boxed{#1}\vspace{-2.4pt}\hspace{-2.4pt}
}
\def\tableaubox(#1)(#2)(#3:#4:#5){%
    \node[fit=(#1-#3-#4)(#1-#3-\the\numexpr#4+#5-1\relax), inner sep=0pt, draw] (#1-#3#4#5-box) {};
  \node[circle, fill=white, scale=0.75, inner sep=0.5pt, yshift=-3pt] at (#1-#3#4#5-box.north west) {#2};
}
\def\drawtableau(#1){%
\matrix (m) [matrix of nodes, nodes in empty cells, column sep=5pt, row sep=5pt] {#1};
\draw[thick] ([xshift=-5pt, yshift=1pt]m-3-1.north west) -- ([xshift=5pt, yshift=1pt]m-3-8.north east);
}
\newcommand{\N}{\mathbb{N}}
\newcommand{\Z}{\mathbb{Z}}
\newcommand{\R}{\mathbb{R}}
\newcommand{\bS}{\mathbb{S}}
\newcommand{\bP}{\mathbb{P}}
\newcommand{\cT}{\mathcal{T}}
\newcommand{\cI}{\mathcal{I}}
\newcommand{\cP}{\mathcal{P}}
\newcommand{\cW}{\mathcal{W}}
\newcommand{\cM}{\mathcal{M}}
\newcommand{\cH}{\mathcal{H}}
\newcommand{\cE}{\mathcal{E}}
\newcommand{\cG}{\mathcal{G}}
\newcommand{\cN}{\mathcal{N}}
\newcommand{\cS}{\mathcal{S}}
\newcommand{\cU}{\mathcal{U}}
\newcommand{\cD}{\mathcal{D}}
\newcommand{\x}[2]{x_{#1}^{(#2)}}
\newcommand{\fX}{\mathfrak{X}}
\newcommand{\Bl}{\operatorname{Bl}}
\newcommand{\PBl}{\operatorname{\bP Bl}}
\newcommand{\Arr}{\operatorname{Arr}}
\newcommand{\Conf}{\operatorname{Conf}}
\newcommand{\PConf}{\operatorname{\bP Conf}}
\newcommand{\supp}{\operatorname{supp}}
\newcommand{\codim}{\operatorname{codim}}
\newcommand{\Id}{\operatorname{Id}}
\newcommand{\rk}{\operatorname{rank}}
\newcommand{\fac}{\operatorname{fac}}
\newcommand{\join}{\vee}
\newcommand{\meet}{\wedge}
\newcommand{\gr}{\operatorname{gr}}
\newcommand{\upto}[1]{\underline{#1}}
\newcommand{\Rt}{\operatorname{Rt}}
\newcommand{\Ch}{\operatorname{Ch}}
\newcommand{\Ct}{\operatorname{Ct}}
\newcommand{\addEndSymbol}[2]{
\AtBeginEnvironment{#1}{%
  \pushQED{\qed}\renewcommand{\qedsymbol}{#2}%
}
\AtEndEnvironment{#1}{\popQED}
}
\newtheoremstyle{nparstyle}
  {8pt plus 2pt}
  {8pt plus 2pt}
  {}
  {0pt}
  {}
  {. }
  {0pt}
  {\thmnumber{#2}\thmnote{. \textbf{#3}}}
\theoremstyle{nparstyle}
\newtheorem{npar}{}[subsection]\addEndSymbol{npar}{$\triangle$}
\theoremstyle{definition}
\newmdtheoremenv[
  linewidth=1.6pt,
  hidealllines=true,
  leftline=true,
  innerleftmargin=5pt,
  innerrightmargin=0,
  innertopmargin=0pt,
  innerbottommargin=0pt,
  skipabove=10pt plus 4pt minus 0pt,
  skipbelow=10pt plus 4pt minus 0pt,
]{definition}[npar]{Definition}
\newtheorem{remark}[npar]{Remark}\addEndSymbol{remark}{$\triangle$}
\newtheorem{example}[npar]{Example}\addEndSymbol{example}{$\triangle$}
\newtheorem{examples}[npar]{Examples}\addEndSymbol{examples}{$\triangle$}
\theoremstyle{plain}
\newtheorem{lemma}[npar]{Lemma}
\newtheorem{proposition}[npar]{Proposition}
\newtheorem{conjecture}[npar]{Conjecture}
\newtheorem{theorem}[npar]{Theorem}
\newtheorem{corollary}[npar]{Corollary}
\newmdenv[
  linewidth=0pt,
  innerleftmargin=30pt,
  innerrightmargin=30pt,
  innertopmargin=0pt,
  innerbottommargin=0pt,
  skipabove=10pt plus 4pt minus 0pt,
  skipbelow=0pt,
]{indentedbox}
\newenvironment{highlight}[2]{\begin{indentedbox}\textbf{#1} (#2).\itshape}{\end{indentedbox}}
\renewcommand{\tocsection}[3]{%
  \indentlabel{\@ifnotempty{#2}{\bfseries\ignorespaces#1 #2\quad}}\bfseries#3}
\renewcommand{\tocsubsection}[3]{%
  \indentlabel{\@ifnotempty{#2}{\ignorespaces#1 #2\quad}}#3}
\newcommand\@dotsep{4.5}
\def\@tocline#1#2#3#4#5#6#7{\relax
  \ifnum #1>\c@tocdepth 
  \else
    \par \addpenalty\@secpenalty\addvspace{#2}%
    \begingroup \hyphenpenalty\@M
    \@ifempty{#4}{%
      \@tempdima\csname r@tocindent\number#1\endcsname\relax
    }{%
      \@tempdima#4\relax
    }%
    \parindent\z@ \leftskip#3\relax \advance\leftskip\@tempdima\relax
    \rightskip\@pnumwidth plus1em \parfillskip-\@pnumwidth
    #5\leavevmode\hskip-\@tempdima{#6}\nobreak
    \leaders\hbox{$\m@th\mkern \@dotsep mu\hbox{.}\mkern \@dotsep mu$}\hfill
    \nobreak
    \hbox to\@pnumwidth{\@tocpagenum{\ifnum#1=1\bfseries\fi#7}}\par
    \nobreak
    \endgroup
  \fi}
\renewcommand\csname r@tocindent0\endcsname{0pt}
\def\l@subsection{\@tocline{2}{0pt}{2.5pc}{5pc}{}}
\newcommand{\chaption}{section}
\newcommand{\Chaption}{Section}
\newcommand{\subchaption}{subsection}
\newcommand{\Subchaption}{Subsection}
\newcommand{\startSubchaption}[1]{\subsection{#1}}
\begin{document}

\title[Wonderful Blow-Ups of Weighted Building Sets]{Wonderful Blow-Ups of Weighted Building Sets And Configuration Spaces Of Filtered Manifolds}

\author{Aaron Gootjes-Dreesbach}
\address{Department of Mathematics, Utrecht University, Budapestlaan 6, 3584CD Utrecht, NL}
\email{math@awgd.org}

\begin{abstract}
Fulton and MacPherson famously constructed a configuration space that encodes infinitesimal collision data by  blowing up the diagonals~\cite{FM94}.
We observe that when generalizing their approach to configuration spaces of filtered manifolds (e.g. jet spaces or sub-Riemannian manifolds), these blow-ups have to be modified with weights in order for the collisions to be compatible with higher-order data.

In the present article, we provide a general framework for blowing up arrangements of submanifolds that are equipped with a \textit{weighting} in the sense of Loizides and Meinrenken~\cite{LM23}. We prove in particular smoothness of the blow-up under reasonable assumptions, extending a result of Li~\cite{Li09} to the weighted setting. Our discussion covers both spherical and projective blow-ups, as well as the (restricted) functoriality of the construction. 

Alongside a self-contained introduction to weightings, we also give a new characterization thereof in terms of their vanishing ideals and prove that cleanly intersecting weightings locally yield a weighting.

As our main application, we construct configuration spaces of filtered manifolds, including convenient local models. We also discuss a variation of the construction tailored to certain fiber bundles equipped with a filtration. This is necessary for the special case of jet configuration spaces, which we investigate in a future article~\cite{future}.
\end{abstract}

\maketitle

{\hypersetup{linkcolor=black}
\tableofcontents
}
\setlength{\parskip}{2.5mm plus0.4mm minus0.4mm}

\newpage
\section{Introduction}
\begin{wrapfigure}{r}{0.3\textwidth}
    \centering
\begingroup%
  \makeatletter%
  \providecommand\color[2][]{%
    \errmessage{(Inkscape) Color is used for the text in Inkscape, but the package 'color.sty' is not loaded}%
    \renewcommand\color[2][]{}%
  }%
  \providecommand\transparent[1]{%
    \errmessage{(Inkscape) Transparency is used (non-zero) for the text in Inkscape, but the package 'transparent.sty' is not loaded}%
    \renewcommand\transparent[1]{}%
  }%
  \providecommand\rotatebox[2]{#2}%
  \newcommand*\fsize{\dimexpr\f@size pt\relax}%
  \newcommand*\lineheight[1]{\fontsize{\fsize}{#1\fsize}\selectfont}%
  \ifx\svgwidth\undefined%
    \setlength{\unitlength}{124.72440945bp}%
    \ifx\svgscale\undefined%
      \relax%
    \else%
      \setlength{\unitlength}{\unitlength * \real{\svgscale}}%
    \fi%
  \else%
    \setlength{\unitlength}{\svgwidth}%
  \fi%
  \global\let\svgwidth\undefined%
  \global\let\svgscale\undefined%
  \makeatother%
  \begin{picture}(1,1.81818182)%
    \lineheight{1}%
    \setlength\tabcolsep{0pt}%
    \put(0,0){\includegraphics[width=\unitlength,page=1]{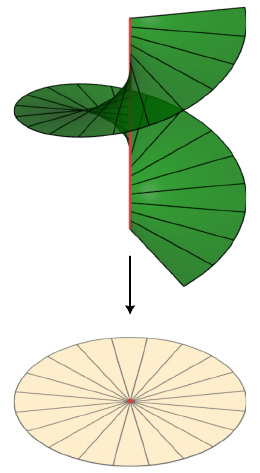}}%
    \put(0.43420223,0.7009502){\color[rgb]{0,0,0}\makebox(0,0)[t]{\lineheight{0.80000001}\smash{\begin{tabular}[t]{c}$b$\end{tabular}}}}%
    \put(0.25172216,1.68556992){\color[rgb]{0,0,0}\makebox(0,0)[t]{\lineheight{0.80000001}\smash{\begin{tabular}[t]{c}$\Bl_N(M)$\end{tabular}}}}%
    \put(0.09112237,0.46308538){\color[rgb]{0,0,0}\makebox(0,0)[t]{\lineheight{0.80000001}\smash{\begin{tabular}[t]{c}$M$\end{tabular}}}}%
  \end{picture}%
\endgroup%

    \caption{Visualization of the blow-up of the origin of a disk. Points in the blow-up that are separated by an integer number of rotations are identified. The central spine thus can be identified with unit length vectors at the origin, and every point away from the origin has a unique preimage under the blow-down map.}
    \label{fig:helix-normal}
    \vspace{-10mm}
\end{wrapfigure}

\textit{Blowing up} a closed submanifold $N$ of an ambient manifold $M$ means to (smoothly) replace $N$ in $M$ with its unit normal bundle $\bS\nu N$. In the resulting space $\Bl_N M$, a sequence of points that approaches $N$ along some unit normal vector converges exactly to that vector. $\Bl_N M$ is a manifold with boundary equipped with a smooth and proper \textit{blow-down map} $b:\Bl_NM\to M$ that forgets the extra information. Compared to this \textit{spherical} blow-up, it is also natural to consider the \textit{projective} blow-up $\bP\Bl_N M$ that only remembers the normal direction in the projective bundle $\bP\nu N$.

Blow-ups can famously be used for \textit{desingularization}, see e.g. Hironaka's celebrated theorem~\cite{Hir64}.
Another classic application that is more relevant to our purposes is the construction of an ordered configuration space of pairs of points on a manifold $M$ that takes into account infinitesimal data of collisions. This is achieved by blowing up the diagonal $\Delta$ in $M^2$: The unit normal vectors in $\nu\Delta\simeq TM$ encode the direction along which two points of a configuration collided. This was generalized to configurations of higher multiplicities by Fulton and MacPherson~\cite{FM94}.

In this article, we introduce the necessary machinery to define a blown-up configuration space of jets in this spirit. The difficulty lies in the fact that jet space comes equipped with additional structure: The \textit{Cartan distribution} and the filtration of the tangent bundle that it induces by taking Lie brackets. This makes jet space an example of a \textit{filtered manifold}. For the bulk of this paper, we will work towards a configuration space for these filtered manifolds in general, leaving the application to jet spaces to a future article currently in preparation~\cite{future}.

\textbf{A motivating example.}
To get a flavour of what we are working towards, let us consider how to construct $J^{[2,2]}(\R^m,\R)$. This is the space of pairs of second-order jets from $\R^m$ to $\R$ that includes infinitesimal information for every collided pair\footnote{This definition serves as a motivation but may be updated in~\cite{future}. In fact, we suspect that a slightly modified blow-up involving higher weights may be better suited.}.

\textit{Adapted coordinates.}
We start innocently enough with the square $(J^2(\R^m,\R))^2$, whose elements we can naturally write as
$$(x_1,y_1,y'_1,y''_1;\;x_2,y_2,y'_2,y''_2)$$
for $x_i\in\R^m, y_i\in\R, y'_i\in\operatorname{Hom}(\R^m,\R)$ and $y''_i\in\operatorname{Sym}(\R^m\otimes\R^m,\R)$. Since we are interested in the behaviour at the diagonal $\Delta\subseteq(J^2(\R^m,\R))^2$, we pass to offset coordinates
$$\left(x_1,y_1,y'_1,y''_1;\;\Delta x,\Delta y, \Delta y', \Delta y''\right)$$
where we define $\Delta x\in\R^m, \Delta y\in\R, \Delta y'\in\operatorname{Hom}(\R^m,\R)$ and $\Delta y''\in\operatorname{Sym}(\R^m\otimes\R^m,\R)$ by the equations
\begin{equation}\label{eq:adapted-coords}
\begin{aligned}
x_2 &:= x_1 + \Delta x,\\
y_2 &:= y_1 + y'_1(\Delta x) + {\tfrac12} y''_1(\Delta x, \Delta x) + \Delta y,\\
y'_2 &:= y'_1 + y''_1(\Delta x,\cdot) + \Delta y',\\
y''_2 &:= y''_1 + \Delta y''.
\end{aligned}
\end{equation}
Not only are these coordinates adapted to the diagonal $\Delta$ in the sense that $\Delta$ is cut out by $\Delta x=\Delta y=\Delta y'=\Delta y''=0$, but the offsets have a natural interpretation as remainder terms of truncated Taylor series. In this way, the infinitesimal data encoded in the jets themselves is taken into account for the collisions that result from sending $\Delta x$ to zero while keeping the remaining coordinates fixed.

\textit{A first attempt at blowing up.}
Performing a classical (spherical) blow-up would now come down to making a substitution
\begin{equation}\label{eq:classic-blow-up}
\left(\Delta x,\Delta y, \Delta y', \Delta y''\right)=\lambda\cdot\left(\delta x,\delta y, \delta y', \delta y''\right),
\end{equation}
where $\lambda\in[0,\infty)$ and $\left(\delta x,\delta y, \delta y', \delta y''\right)$ is a unit vector, such that even when $\lambda=0$ we still remember along which directions the jets collided. The blow-down map to the original space $(J^2(\R^m,\R))^2$ is then given by inserting Equation~\eqref{eq:classic-blow-up} into Eqs.~\eqref{eq:adapted-coords}, resulting in e.g.
$$y_2 = y_1 + \lambda\, y'_1(\delta x) + {\tfrac12}\lambda^2\, y''_1(\delta x, \delta x) + \lambda\,\delta y.$$
This is not very satisfying: We would expect the last error term to be cubical in the parameter $\lambda$ that controls the collision, in line with the interpretation as a Taylor series. Indeed, making the offset term linear defeats the purpose of picking coordinates adapted to higher-order data, as the blow-up ends up being not sensitive to these.

\textit{Adjusting for weights.}
This is why we instead perform a \textit{weighted} blow-up according to the substitution
\begin{equation}\label{eq:weighted-blow-up}
\left(\Delta x,\Delta y, \Delta y', \Delta y''\right)=\left(\lambda\,\delta x,\tfrac13\lambda^3\,\delta y, \tfrac12\lambda^2\,\delta y', \lambda\,\delta y''\right).
\end{equation}
Aside from the formal similarity of the resulting expressions to Tayor expansions, the orders with which $\lambda$ appears can be justified by compatibility with the Lie filtration $H_\bullet\subseteq TJ^2(\R^m,\R)$ generated by the Cartan distribution: 
It turns out that the offset coordinates~\eqref{eq:adapted-coords} are chosen in such a way that they are adapted to the diagonal $\Delta$ as a filtered submanifold with respect to $H^2_\bullet$. The order in $\lambda$ with which we blow-up each of the coordinates normal to $\Delta$ is exactly the weight associated with the adapted coordinates in that direction.

\textit{Excluding collisions along the fibers.}
In order to have a well-defined fiber map to the blow-up $(\R^m)^{[2]}$ of the base, we further want to restrict our construction to those elements where $\delta x\neq 0$, i.e. to collisions that happen along a horizontal direction. This restriction allows us to rescale the new parameters using $\lambda$, yielding
$$\lambda\in[0,\infty),\quad \delta x\in\bS^{n-1},\quad \delta y\in\R,\quad \delta y'\in\operatorname{Hom}(\R^m,\R),\quad \delta y''\in\operatorname{Sym}(\R^m\otimes\R^m,\R)$$
with the normalization only applying to $\delta x$ instead of coupling $(\delta x,\delta y,\delta y',\delta y'')$.
Together with $(x_1,y_1,y_1',y_1'')$, these components assemble into elements of our blown-up 2-fold jet configuration space $J^{[2,2]}(\R^m,\R)$. 
The blow-down map is once again given by combining Eqs.~\eqref{eq:adapted-coords} and~\eqref{eq:weighted-blow-up}, resulting in
\begin{equation}\label{eq:blow-down}
\begin{aligned}
x_2 &:= x_1 + \lambda\,\delta x,\\
y_2 &:= y_1 + \lambda\,y'_1(\delta x) + {\tfrac12}\lambda^2\, y''_1(\delta x, \delta x) + \tfrac13 \lambda^3\,\delta y,\\
y'_2 &:= y'_1 + \lambda\, y''_1(\delta x,\cdot) + \tfrac12 \lambda^2\, \delta y',\\
y''_2 &:= y''_1 + \lambda\,\delta y''.
\end{aligned}
\end{equation}
As we hoped, projecting to $(x_1,\lambda,\delta x)$ gives a canonical projection map $J^{[2,2]}(\R^m,\R)\to (\R^m)^{[2]}$ into the blown-up 2-fold configuration space of the base $\R^m$.

\textit{Holonomic jet configuration space.}
Consider a smooth function $f:\R^m\to\R$ and a sequence of configurations $\{(x_{1,n},x_{2,n})\}_{n\in\N}$ in $\R^m$ with $x_{1,n}\neq x_{2,n}$ that converge to a point $(x_\infty,x_\infty)$ on the diagonal. Not every element of $J^{[2,2]}(\R^m,\R)$ can be reached as a limit of the sequence $(j^2f(x_{1,n}),j^2f(x_{2,n}))$: By comparing Equation~\eqref{eq:blow-down} to the Taylor series, we see that the parameters $\delta y, \delta y'$ and $\delta y''$ must all encode the third derivative of $f$ at the collision point $x_0$, just contracted with different powers of $\delta x$. This motivates the definition of the \textit{holonomic} jet configuration space
\begin{align*}
J^{[2,2]}_h(\R^m,\R):=\{ &(x_1,y_1,y'_1,y''_1;\lambda,\delta x,\delta y, \delta y',\delta y'') \in J^{[2,2]}(\R^m,\R) \;|\;\\
&\qquad\qquad\delta y = \delta y'(\delta x) \text{ and } \delta y'=\delta y''(\delta x,\cdot) \text{ whenever }\lambda=0 \}
\end{align*}
consisting of those configurations that can be reached as such limits. We claim that blown-up jet spaces like this are well-suited for the study of differential relations that involve more than one point, which we will consider in~\cite{future}.

\textbf{Generalizing to arbitrary jet bundles and higher multiplicities.}
The previous construction can be carried out, in equally explicit terms, for two-fold multijet space between Euclidean spaces at arbitrary order. However, we want to give a construction that is coordinate-independent and applies to jets of sections of arbitrary fiber bundles.
To this end, we adopt the recent differential-geometric framework of \textit{weightings} due to Loizides and Meinrenken~\cite{LM23,LM22}. This notion coincides with Melrose's \textit{quasi-homogeneous structures}~\cite{Mel91} that were also studied by Behr~\cite{Beh21}. Weightings provide a coordinate-independent way of assigning weights in normal directions of a submanifold and are the natural setting for the construction of weighted blow-ups. Our reliance on weightings is a reflection of the weighted nature of the error terms when truncating a Taylor series at finite order.

Moreover, a different and significant complication appears when moving from configurations of pairs to higher multiplicities: Instead of a single submanifold of $M^2$ representing collided pairs, in $M^s$ we have multiple diagonals, i.e. submanifolds cut out by requiring some subset of the components to agree.
The union of these diagonals is stratified with each stratum encoding which subset of the configuration is involved in the collision
(see Figure~\ref{fig:diagonals} on page~\pageref{fig:diagonals} for the concrete example of triples in an open interval). The achievement of Fulton and MacPherson was to perform a blow-up of this stratified diagonal in the unweighted case for arbitrary multiplicities\footnote{While they worked with projective blow-ups in an algebro-geometric language, their construction was also translated to spherical blow-ups in differential geometry by Axelrod and Singer~\cite{AS94}.}.
A series of papers further developed their techniques to define smooth blow-ups of sufficiently regular \textit{building sets} of subvarieties~\cite{CP95, MP98, Uly00, Hu03, Li09}. This article systematically extends this line of inquiry to building sets equipped with weightings, allowing us to perform a weighted blow-up of all strata of the diagonal.

\textbf{Blow-ups of building sets.}
A building set is a collection $\cG$ of closed submanifolds of some $M$ that all intersect cleanly (defined with more care for technical assumptions in Definition~\ref{def:building-set}). The archetypal example is the Fulton-MacPherson building set $\cG^{FM}$, which consists of all diagonals in $M^s$. It may be tempting to consider just the union of the submanifolds as the data required for a blow-up, but a building set also represents a choice of submanifolds that get blown-up even when they are strictly contained in some of the others. The \textit{graph-based} approach to defining the blow-up of $\cG$ is to take the closure of the image of the diagonal map
$$M\setminus\cup\cG\;\lhook\joinrel\longrightarrow\; \prod_{G\in\cG}\Bl_G M,$$
thus enforcing that the extra data of the individual blow-ups is consistent with each other. This approach was already introduced in~\cite{FM94} alongside an equivalent iterated blow-up. The insight of~\cite{CP95} generalized in~\cite{Li09} is that a sufficient (but not necessary) condition yielding smoothness of the blow-up is \textit{separation} of the building set: Roughly speaking, wherever some of the submanifolds intersect, the normal bundle of their intersection must be a direct sum of the individual normal bundles\footnote{Strictly speaking, it suffices to ask this for only the smallest of the intersecting submanifolds, see Definition~\ref{def:separated} for a precise statement.}. Since the blow-ups record exactly the normal data, this splitting allows them to coexist peacefully.

Our notion of \textit{weighted} building set (Definition~\ref{def:weighted-building-set}) equips each element of $\cG$ with a weighting and only assumes the following rather minimal consistency condition: Whenever some $G_1,...,G_n$ in $\cG$ intersect in a non-empty element $\cap_{i=1}^n G_i$ that also lies in $\cG$, then the weighting assigned to $\cap_{i=1}^n G_i$ is the clean intersection of the weightings associated to each $G_i$. As in the unweighted case, the blow-up is then defined with the graph-based approach, but now using weighted blow-ups for every element of $\cG$ (Definition~\ref{def:blow-up-weighted-building-set}). It turns out that separation of $\cG$ is no longer sufficient for a smooth structure: We also need some further compatibility of the weightings themselves instead of just the submanifolds along which they are defined. We call this condition \textit{uniform alignment} (Definition~\ref{def:building-properties}): Essentially, all the weightings that interact with each other\footnote{Concretely, this will mean those contained in the same \textit{nest} (Definition~\ref{def:nests}), a notion arising from the algebra of intersections of elements of $\cG$.} need to have local charts simultaneously adapted to all of them, and any non-zero weights assigned to a specific coordinate direction in these charts must match.

\textbf{Summary of Results.}
Our main result is the development of the notion of \textit{weighted building sets}, and the following regularity result about their blow-ups in a fairly broad class of sufficiently well-behaved situations:
\begin{highlight}{Main Theorem}{see~\ref{thm:manifold-structure}}
The (spherical) blow-up $\Bl_\cW M$ of a uniformly aligned weighting $\cW$ along a separated building set can be equipped with the structure of a smooth manifold with corners such that the blow-down map is smooth and proper. 
\end{highlight}
We provide explicit coordinates for this smooth structure (Definition~\ref{def:building-local-charts}), discuss the canonical stratification of the blow-up (Definition~\ref{def:ass-nest-and-strat}) and characterize the regularity of $\Bl_\cW M$ when seen as a subspace of $\prod_{G\in\cG}\Bl_{\cW_G} M$ (Proposition~\ref{prop:weak-submfd-structure}). 
We also provide a notion of morphisms between weighted building sets (Definition~\ref{def:morphism}). While the blow-up is not functorial, we describe how such a morphism at least induces a smooth map between the blow-ups on an open subset (Proposition~\ref{prop:morphism-induced}). Throughout, we provide a wealth of examples and counterexamples. As a side effect, we translate Li's regularity results for the unweighted case into differential-geometric language (Theorem~\ref{thm:li}).

As the main application of this framework, we build a blown-up $s$-fold configuration space $\Conf^{[s]}(M,H_\bullet)$ for every filtered manifold $(M,H_\bullet)$:
\begin{highlight}{Proposition}{see~\ref{def:filtered-weighted-building-set}, \ref{prop:fm-reg-unif-align}, and~\ref{def:filtered-fm}}
A Lie filtration $H_\bullet$ on a manifold $M$ induces a canonical uniformly aligned weighting $\cW^{FM}$ along the separated Fulton-MacPherson building set $\cG^{FM}$.
In particular, $$\Conf^{[s]}(M,H_\bullet):=\Bl_{\cW^{FM}}(M^s)$$ is a smooth manifold with corners.
\end{highlight}
A projective version of this space (Definition~\ref{def:filtered-fm}) trades in existence of the boundary for orbifold singularities. Due to the complexity of the general charts, we provide more convenient local models for both (Propositions~\ref{prop:FM-local-model} and~\ref{prop:FM-local-model-proj}). Having jet spaces in mind, we discuss a further adaptation of the configuration space for a Lie filtration on the total space of a fiber bundle $E\to M$ (\Subchaption~\ref{ssec:filtered-bundle-fulton-macpherson}).

To facilitate all of the above, we also establish some smaller results about weightings in general. Firstly, we give a new invariant characterization of weightings as subsets of the infinite-order tangent bundle $T^{(\infty)}M$ in terms of its vanishing ideal:
\begin{highlight}{Theorem}{see~\ref{thm:new-criterion}}
	A connected and embedded submanifold $\cW\subseteq T^{(\infty)}M$ is a weighting if and only if it is closed and its vanishing ideal $I_\cW$ satisfies
	\begin{equation*}
		I_\cW = \langle f^{(i)}\;|\; f\in C^\infty(M) \text{ with } f^{(j)}|_\cW=0\;\forall j\leq i \rangle.
    \end{equation*}
\end{highlight}
In other words, $I_\cW$ must already be generated by all $i$-th lifts $f^{(i)}$ of those functions $f\in C^\infty(M)$ to $T^{(\infty)}M$ whose lifts $f^{(j)}$ of order $j\leq i$ also vanish on $\cW$. In particular, this will allow us to conclude the following:
\begin{highlight}{Proposition}{see~\ref{prop:clean-locally-weighting}}
Let $\{\cW_\alpha\}_{\alpha\in A}$ be a family of weightings on $M$ that intersect cleanly in $\cW_0=\bigcap_{\alpha=1}^d\cW_\alpha$. Then $\cW_0$ is locally a weighting, i.e. around every point of $M$ there exists a neighbourhood $U$ such that $\cW_0\cap T^{(\infty)}U$ is a weighting.
\end{highlight}
We more broadly investigate several notions of compatibility between weightings, beyond the \textit{multiweightings} of~\cite{LM23}, with many examples. To express some of these results more conveniently, we introduce weightings as subsets of the infinite-order tangent bundle $T^{(\infty)}M$ but remain strictly equivalent to the definition of~\cite{LM23}. We also introduce a dual description of weightings as certain filtrations of higher \textit{co}tangent bundles. Finally, while the smooth structure of the blow-up of single weighted submanifolds is induced from that of the deformation space in~\cite{LM23}, we give a complementary direct description with explicit charts.

\textbf{Structure of this article.} This article is organized as follows: In \Chaption~\ref{sec:weightings}, we give a self-contained introduction to weightings while establishing a number of useful results that we require later on.
\Chaption~\ref{sec:weighted-submanifolds} details the construction of both spherical and projective blow-ups of a weighting along a single submanifold, including discussion of their smooth and orbifold structure.
In \Chaption~\ref{sec:building-sets}, we first introduce building sets and discuss the central notions of factors, flags and nests. We then give a differential-geometric account of Li's blow-up of building sets in preparation for the weighted case.
\Chaption~\ref{sec:weighted-building-sets} introduces weightings along an entire building set as well as their blow-ups. We also give charts that provide a smooth structure assuming sufficient regularity and examine the stratification of the blow-up.
\Chaption~\ref{sec:filtered-manifolds} applies this framework to construct configuration spaces for filtered manifolds. In particular, it establishes that a Lie filtration on a manifold induces a weighting along each diagonal of its $s$-fold power, and that these assemble into a well-behaved weighted building set. We also provide more convenient local models and further adapt the configuration space to a fiber bundle structure.
Appendix~\ref{app:manifold-intersections} briefly recaps notions of intersections of submanifolds that turn up in the discussion of compatibility of weightings. The prolonged proofs of smoothness of transition maps on the blow-ups of both a single weighted submanifold and an entire weighted building set are relegated to Appendices~\ref{app:proof-weighted-submanifolds} and~\ref{app:proof-weighted-building-set}, respectively.

\textbf{Notation \& Conventions.}
All our manifolds are smooth, Hausdorff and second-countable without boundary or corners unless explicitly stated otherwise.
We usually assume that the local charts $\chi:U\to\R^m$ of a given input manifold $M$ are surjective, but the charts that we build for our constructions (e.g. the blow-up $\Bl_\cW M$) are often not surjective. Whenever taking the intersection of a possibly empty set of sets, there is a natural choice of maximal set that we assign as the result.
For example, in the context of building sets, an empty intersection yields the full underlying manifold.
We point out differences to Loizides and Meinrenken in definitions and notation in Remark~\ref{rem:equiv-defs}. Particularly in regard to notions of regularity of submanifold intersection, compare our Definition~\ref{def:transversal}.
It will often be convenient to use for every natural number $s$ the shorthand $\upto{s}:=\{1,...,s\}$.

\textbf{Acknowledgments.} The author is deeply indebted to his advisor Álvaro del Pino Gómez for countless hours of helpful discussions and insightful feedback without which this work would not have been possible.

\textbf{Note.} The contents of this document will form part of the author's doctoral thesis and may be split into two articles in the future.

\newpage
\section{Weightings}\label{sec:weightings}
A \textit{weighting} $\cW$ along a submanifold $N\subseteq M$ is a way of assigning weights to coordinate directions normal to $N$. It provides a notion of functions vanishing on $N$ to some \textit{weighted} order, and it is the natural data to talk about weighted normal bundles and blow-ups.

Weightings were introduced by Loizides and Meinrenken in~\cite{LM23, LM22} in order to construct tangent groupoids in the sense of Van Erp-Yuncken~\cite{EY17} in the more general setting of singular Lie filtrations, building on constructions of Choi-Ponge~\cite{CP19c} and Haj-Higson~\cite{HH18}. 
They coincide with the earlier unpublished notion of \textit{quasi-homogeneous structures} due to Melrose~\cite{Mel91}, which was further developed by Behr~\cite{Beh21}.

We have multiple ambitions for this \chaption: To start, we give a succinct but self-contained introduction to weightings. Our perspective on the topic originates from Loizides and Meinrenken's higher tangent bundle approach, but implements it using a \textit{profinite} formalism provided by the infinite-order tangent bundle $T^{(\infty)}M$. We moreover establish a number of new results about weightings.
With Theorem~\ref{thm:new-criterion} we provide a new invariant characterization of the subsets of $T^{(\infty)}M$ that correspond to weightings in terms of their vanishing ideal. This is particularly convenient in our discussion of compatibility between multiple weightings defined over the same submanifold in \Subchaption~\ref{ssec:compatibility}. Here we introduce and discuss both weaker and stronger conditions than the \textit{multiweightings} from~\cite{LM23} that will become necessary for the weighted building sets of \Chaption~\ref{sec:weighted-building-sets}. Finally, since weightings inherently intertwine both sides of the differential-geometric duality between curves and functions, we outline the dual picture from the viewpoint of higher cotangent bundles.

This \chaption{} is subdivided as follows: In the first \subchaption, we give an introduction to higher tangent bundles, such that we can introduce weightings as subsets thereof in \Subchaption{}~\ref{ssec:weightings}. \Subchaption{}~\ref{ssec:weighted-normal} defines and discusses the weighted normal bundle, which will be essential in constructing blow-ups. \Subchaption{}~\ref{ssec:dual} introduces higher cotangent bundles and the dual portrayal of weightings as filtrations. The corresponding weighted conormal bundle is introduced in \Subchaption~\ref{ssec:weighted-conormal}. We discuss the linear data and compatibility between weightings in \Subchaption{}s~\ref{ssec:linear-data} and~\ref{ssec:compatibility} and conclude with an examination of their intersections in \Subchaption{}~\ref{ssec:intersection}.

\startSubchaption{Higher tangent bundles}

Let us recap the notion of \textit{higher} tangent bundles of a manifold:

\begin{definition}
Given a manifold $M$ and $r\in\N_0\cup\{\infty\}$, we define the \textbf{$r$-th order tangent bundle} $T^{(r)}M$ as the space of equivalence classes $[\gamma]$ of curves $\gamma:\R\to M$ whose derivatives at zero agree in local charts up to order $r$, i.e. $$[\gamma_1]=[\gamma_2]\quad:\iff\quad \gamma_1^{(i)}(0)=\gamma_2^{(i)}(0)\quad\text{for all }0\leq i\leq r.$$
We denote the natural quotient maps for $r_1\geq r_2$ as
$$p^{(r_2,r_1)}:T^{(r_1)}M\to T^{(r_2)}M$$
and write in particular $p^{(r)}:=p^{(0,r)}$ for the projection to $M=T^{(0)}M$.
\end{definition}

It of course holds that $T^{(1)}M=TM$.
For more context, we refer the interested reader to~\cite{Mo70a,Mi97,KMS13,LM23}.
Note that the $r$-th order tangent bundle is different from the \textit{$r$-fold tangent bundle} $$T^rM:= \underbrace{T...T}_{\text{$r$ times}}M,$$ but can be seen as a symmetric subset of it\footnote{For example, $T^{(2)}\R=\{(x,x',x'')\in\R^3\}$ sits in $T^2\R=\{(x,\Delta x, \delta x, \delta\Delta x)\in\R^4\}$ by setting $\Delta x=\delta x = x'$ and $\delta\Delta x=x''$.}.
Moreover, $T^{(r)}M$ is exactly the fiber $J^r_0(\R,M)$ over zero of the jet space of maps $\R\to M$ and thus carries a natural smooth structure for finite $r$.

\begin{npar}[Profinite structure on $T^{(\infty)}M$]\label{npar:profinite} We can also make sense of a smooth structure on $T^{(\infty)}M$ in a \textit{profinite} sense as follows:
\begin{enumerate}
    \item We equip $T^{(\infty)}M$ with the limit topology, i.e. a subset $U\subseteq T^{(\infty)}M$ is \textbf{open} if it is an arbitrary union of preimages $\left(p^{(r,\infty)}\right)^{-1}(U_r)$ for some collection of opens $U_r\subseteq T^{(r)}M$.
    \item A function $f:T^{(\infty)}M\to\R$ is \textbf{smooth} if every $p\in T^{(\infty)}M$ has an open neighbourhood $U$ and an $r\in\N_0$ such that $f$ is a composition of $p^{(r,\infty)}$ and a smooth function on $T^{(r)}M$. We write $C^\infty(T^{(\infty)}M)$ for the algebra of such smooth functions.
	\item We say a subset $Q\subseteq T^{(\infty)}M$ is an \textbf{embedded submanifold of codimension $d$} if there is an $r\in\N_0$ and an embedded submanifold $Q_r\subseteq T^{(r)}M$ of codimension $d$ such that $Q=\left(p^{(r,\infty)}\right)^{-1}(Q_r)$. The \textbf{vanishing ideal} of $Q$ is the ideal $$I_Q=\{f\in C^\infty(T^{(\infty)}M)\;|\; f|_Q=0\}.$$
\end{enumerate}
	We will not require more than the above basic definitions, but more context on profinite structures can be found e.g. in~\cite[][Appendix A]{AC12} or~\cite[][Appendices 2.A and 3.A]{Ac21}.
\end{npar}

\begin{npar}[Properties of $T^{(r)}N$]\ 
\begin{enumerate}
\item The $r$-th order tangent bundle comes with a fiber map
	\begin{align*}
		p^{(0,r)}: T^{(r)}M&\to T^{(0)}M=M\\ [\gamma] &\mapsto \gamma(0)
	\end{align*}
    that makes it into a fiber bundle.
\item Any function from $\R$ to $\R$ that preserves the origin acts on elements of $T^{(r)}M$ by precomposing. As a matter of fact, this action only depends on the $r$-th jet at the origin, so we have an action of the algebra 
\begin{equation}\label{eq:lambda-r}
\Lambda_r:=J^r_{0,0}(\R,\R)=\{j^rf(0)\;|\; f:\R\to\R \text{ smooth with } f(0)=0 \}.
\end{equation}

\item We will mostly be interested in the restriction of this action to $\R=\Lambda_1\subseteq\Lambda_r$. Concretely, $\lambda\in\R$ sends an equivalence class $[\gamma]$ to $$\lambda\cdot[\gamma]:=[\gamma(\lambda\cdot)].$$ For finite $r$, this $\R$-action makes $T^{(r)}M$ into a graded bundle in the sense of Grabowski-Rotkiewicz~\cite{GR12}.
\item This construction is functorial, i.e. any smooth map $\phi:M\to \widetilde M$ induces a $\Lambda_r$-equivariant map
	\begin{align*}
		T^{(r)}\phi:T^{(r)}M&\to T^{(r)}\widetilde M\\ [\gamma] &\mapsto [\phi\circ\gamma].
	\end{align*}
\item For every $f\in C^\infty(M)$ and $0\leq i\leq r$ it is convenient to define the \textbf{$i$-th lift} $f^{(i)}\in C^\infty(T^{(r)}M)$ of $f$ as
$$f^{(i)}([\gamma]) := \frac{1}{i!}\left.\frac{d^i}{dt^i}\right|_{t=0}f(\gamma(t)).$$
		It holds in particular that $f^{(0)}=f\circ p^{(0,r)}$. It is easy to check that $f^{(i)}(\lambda\cdot[\gamma])=\lambda^i f^{(i)}([\gamma])$. For simplicity, we suppress the dependence on $r$ in our notation $f^{(i)}$ for the lifts. This should not lead to ambiguities, as the $i$-th lifts to tangent bundles of different orders $r_1,r_2\geq i$ are exactly related by the quotient map $p^{(r_2,r_1)}$. While smoothness of the lift for finite $r$ is immediate, these relations also yield smoothness for $r=\infty$.
\item The lift of a product of functions $\{f_l\}_{l=1,...,p}$ satisfies a Leibniz rule
	\begin{equation}\label{eq:leibniz}
		\left(\prod\limits_{l=1}^p f_l\right)^{(i)} = \sum\limits_{|\vec{i}|=i} \prod\limits_{l=1}^p f_l^{(i_l)}
	\end{equation}
	where the sum is taken over multi-indices $\vec{i}=(i_1, ..., i_p)$.
\item While we do not have a need for it here, note that there also exists a notion of lift $X^{(-i)}$ for vector fields $X\in\fX(M)$ as well as more generally for tensor fields as observed in~\cite{Mo70b}, see also~\cite{LM23}.
\item Let $\chi=(x_1, ..., x_m):U\to\R^m$ be a coordinate chart on an open $U\subseteq M$ and $r$ be finite. Then the lifts of the components of $\chi$ assemble into coordinates
$$ T^{(r)}\chi=\left(\x{1}{0}, ..., \x{m}{0}, ..., \x{1}{r}, ..., \x{m}{r}\right) : T^{(r)}U\to \R^{m(r+1)} $$
on $T^{(r)}M$, where we make the canonical identification $T^{(r)}\R^m=\R^{m(r+1)}$ implicit. Conversely, any element $q\in T^{(r)}M$ can be regarded as arising from a curve $\gamma:\R\to U$ whose components under $\chi$ satisfy
\begin{equation}\label{eq:curve-TrM}
x_i(\gamma(t)) = \sum\limits_{j=0}^r t^j \x{i}{j}(q).
\end{equation}
In particular, the $\R$-action in local coordinates is given by
    \begin{equation*}
    \lambda\cdot(\x{1}{0}, ..., \x{m}{r}) := (\lambda^{0}\x{1}{0}, ..., \lambda^{r}\x{m}{r}).\qedhere
    \end{equation*}
\end{enumerate}
\end{npar}

For our later proof of Theorem~\ref{thm:new-criterion}, we want to put two small technical lemmas about derivatives of lifts on the record:

\begin{lemma}\label{lem:lift-lin-dependence}
Let $q\in T^{(r)}M$ lie in the zero section, i.e. $0\cdot q=q$, and consider a function $f:U\to\R$ and coordinates $x_a:U\to\R$ defined close to $p=p^{(0,r)}(q)\in M$. Then $$d_qf^{(i)}=\sum_a \left(\frac{\partial f}{\partial x_a}\right)^{(0)}\;d_q x_a^{(i)} \text{ for all $i\leq r$.}$$ In particular, the $i$-th lifts of a collection of functions on $U$ are linearly independent at $q$ if and only if the functions themselves are linearly independent at $p$.
\end{lemma}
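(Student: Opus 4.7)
The plan is to reduce the computation of $d_qf^{(i)}$ to a first-order Taylor expansion of $f$ at $p$, and then show that the higher-order remainder contributes nothing to the differential at $q$ precisely because $q$ lies in the zero section.

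First, I would fix the coordinates $(x_1,\dots,x_m)$ and note that the induced coordinates $(\x{a}{j})_{a\leq m,\,j\leq r}$ on $T^{(r)}M$ satisfy $\x{a}{j}(q)=0$ for $j\geq 1$ and $\x{a}{0}(q)=x_a(p)$, since the assumption $0\cdot q=q$ combined with the weight-$j$ scaling $\lambda\cdot\x{a}{j}=\lambda^j\x{a}{j}$ forces all positive-order components of $q$ to vanish. Write a Taylor decomposition near $p$:
\begin{equation*}
    f \;=\; f(p) + \sum_a \tfrac{\partial f}{\partial x_a}(p)\,(x_a-x_a(p)) + R,
\end{equation*}
where $R$ vanishes to second order at $p$, so can be written locally as $R=\sum_{a,b}(x_a-x_a(p))(x_b-x_b(p))\,h_{ab}$ for smooth functions $h_{ab}$.

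Next, I would apply the $i$-th lift termwise using linearity. Constants have vanishing $i$-th lift for $i\geq 1$, and the shifted coordinate functions satisfy $(x_a-x_a(p))^{(i)}=x_a^{(i)}$ for $i\geq 1$ (and $=x_a^{(0)}-x_a(p)$ for $i=0$), so the linear term contributes exactly $\sum_a\bigl(\tfrac{\partial f}{\partial x_a}\bigr)^{(0)}x_a^{(i)}$. Differentiating this at $q$ yields the desired right-hand side, since $\bigl(\tfrac{\partial f}{\partial x_a}\bigr)^{(0)}(q)=\tfrac{\partial f}{\partial x_a}(p)$ and the derivatives of the coordinate lifts $x_a^{(i)}$ are by definition $d_qx_a^{(i)}$.

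The main obstacle is showing that $d_qR^{(i)}=0$. For this I would apply the Leibniz rule~\eqref{eq:leibniz} to the triple product $(x_a-x_a(p))(x_b-x_b(p))\,h_{ab}$, obtaining
\begin{equation*}
    R^{(i)} = \sum_{a,b}\sum_{j+k+l=i}(x_a-x_a(p))^{(j)}(x_b-x_b(p))^{(k)}h_{ab}^{(l)}.
\end{equation*}
Each summand is a product of three functions on $T^{(r)}M$. By the first step, $(x_a-x_a(p))^{(j)}(q)=0$ and $(x_b-x_b(p))^{(k)}(q)=0$ for every $j,k\geq 0$. Since the product rule for differentials yields a sum of three terms in which at least two of the three factors is evaluated at $q$, every summand vanishes, giving $d_qR^{(i)}=0$.

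Finally, for the ``in particular'' clause, I would observe that the covectors $\{d_qx_a^{(i)}\}_{a=1,\dots,m}$ are part of a coordinate coframe on $T^{(r)}M$ and hence linearly independent at $q$. Thus linear independence of $\{d_qf_k^{(i)}\}_k$ is equivalent to linear independence of the row vectors $\bigl(\tfrac{\partial f_k}{\partial x_a}(p)\bigr)_a$, which is exactly linear independence of $\{d_pf_k\}_k$. For $r=\infty$ the statement follows by choosing any finite $r'\geq i$ and applying the finite-order case via $p^{(r',\infty)}$.
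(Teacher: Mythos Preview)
Your proof is correct and takes a genuinely different route from the paper's. The paper argues by direct computation of the partial derivatives $\partial f^{(i)}/\partial x_a^{(j)}$ at $q$: since $q$ lies in the zero section it is represented by the constant curve at $p$, so perturbing the coordinate $x_a^{(j)}$ by $s$ corresponds to the curve $t\mapsto \chi(p)+st^j e_a$; exchanging the $s$- and $t$-derivatives then immediately gives $\delta_{ij}\,\partial_a f(p)$. Your approach instead Taylor-expands $f$ at $p$, lifts termwise, and kills the quadratic remainder via the Leibniz rule together with the observation that all $(x_a-x_a(p))^{(j)}$ vanish at $q$. The paper's computation is shorter and yields the full matrix $\partial f^{(i)}/\partial x_a^{(j)}$ (not just the $j=i$ row) in one stroke; your argument is more structural, avoids picking a curve representative, and makes the role of the zero-section hypothesis transparent through the vanishing of the shifted coordinate lifts. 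One minor wording issue: when you say the linear part ``contributes exactly $\sum_a(\partial f/\partial x_a)^{(0)}x_a^{(i)}$'' the coefficient is literally the constant $\partial_a f(p)$, not the function $(\partial f/\partial x_a)^{(0)}$; you recover the stated formula only after evaluating at $q$, as you correctly note in the next sentence.
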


\begin{proof}
Since the lifts of coordinates $\chi=(x_1, ..., x_m)$ on $M$ form coordinates on $T^{(r)}M$, we just need to show that
$$\frac{\partial}{\partial x_a^{(j)}} f^{(i)}(q) = \delta_{ij}\; \frac{\partial}{\partial x_a}f(p),$$
where $\delta_{ij}$ is the Kronecker delta.
When writing $f_\chi=f\circ\chi^{-1}$ for the representative of $f$ in the coordinates $\chi$, the left hand side evaluates to
$$\left.\frac{d}{ds}\right|_{s=0} \;\frac{1}{i!}\; \left.\frac{d^i}{dt^i}\right|_{t=0} f_\chi( \chi(p) + s t^j\,e_a),$$
where $e_a$ is the unit vector in the $a$-th direction and we crucially used that $q$ can be represented by the constant curve at $p$. Exchanging and evaluating the $s$-derivative, we obtain
$$\left(\frac{1}{i!}\; \left.\frac{d^i}{dt^i}\right|_{t=0} t^j \right)\;d_{\chi(p)}f_\chi(e_a)=\delta_{ij}\; \frac{\partial}{\partial x_a}f(p)$$
and are done.
\end{proof}

\begin{lemma}\label{lem:action-on-lift-deriv}
Let $q\in T^{(r)}M$, $X\in T_qT^{(r)}M$, $f\in C^\infty(M)$ and $i\in\N$. Then for all $\lambda\in\R$,
$$d_{\lambda\cdot q}f^{(i)}(\lambda\cdot X) = \lambda^i\; d_q f^{(i)}(X)$$
holds where $\lambda\cdot X$ is the action on $TT^{(r)}M$ induced by that on $T^{(r)}M$.
\end{lemma}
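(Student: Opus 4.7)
The plan is to exploit the homogeneity of the lift $f^{(i)}$ under the $\R$-action. By earlier discussion, for every $\lambda \in \R$ and every $q \in T^{(r)}M$ we have
\[
f^{(i)}(\lambda \cdot q) = \lambda^i \, f^{(i)}(q).
\]
In other words, writing $m_\lambda : T^{(r)}M \to T^{(r)}M$ for the smooth map given by the action of $\lambda$, the identity $f^{(i)} \circ m_\lambda = \lambda^i \, f^{(i)}$ holds as an identity of smooth functions (for finite $r$; for $r = \infty$, it holds at every finite truncation, which is enough because $f^{(i)}$ factors through some finite-order tangent bundle).

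The key step is then simply to differentiate this identity at $q$ and apply the chain rule. On the left-hand side, the chain rule gives
\[
d_q\!\left(f^{(i)} \circ m_\lambda\right)(X) = d_{m_\lambda(q)} f^{(i)}\!\left(d_q m_\lambda(X)\right) = d_{\lambda \cdot q} f^{(i)}(\lambda \cdot X),
\]
where in the last equality I use the very definition of the induced action of $\lambda$ on $TT^{(r)}M$: namely, $\lambda \cdot X := d_q m_\lambda (X)$. On the right-hand side, scalar multiplication by $\lambda^i$ commutes with $d_q$, yielding $\lambda^i \, d_q f^{(i)}(X)$. Equating the two gives the claim.

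The only mild subtlety, and hence the main (still minor) obstacle, is making sure that the definition of $\lambda \cdot X$ used in the statement is indeed the one obtained by differentiating $m_\lambda$; once this is unpacked, the identity reduces to an application of the chain rule to a homogeneity relation. No coordinate computation is needed, and the argument works uniformly in $r \in \N_0 \cup \{\infty\}$ since, by the profinite smooth structure, both $f^{(i)}$ and $m_\lambda$ descend from their counterparts on a finite-order bundle.
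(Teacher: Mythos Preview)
Your proof is correct and essentially identical to the paper's: both differentiate the homogeneity relation $f^{(i)}\circ m_\lambda=\lambda^i f^{(i)}$. The paper merely spells out the chain rule by representing $X$ via a one-parameter family of curves $[\gamma_s]$ and computing $\left.\tfrac{d}{ds}\right|_{s=0} f^{(i)}(\lambda\cdot[\gamma_s])$, whereas you invoke the chain rule directly; the content is the same.
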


\begin{proof}
The vector $X$ can be represented as an equivalence class $[\gamma_\bullet]$ of a smooth family of paths $\{\gamma_s\}_{s\in\R}$ such that $q=[\gamma_0]$. It follows that $\lambda\cdot X = [\gamma_\bullet(\lambda \cdot)]$ and
\begin{align*}
d_{\lambda\cdot [\gamma_0]}f^{(i)}(\lambda\cdot [\gamma_\bullet]) &= \left.\frac{d}{ds}\right|_{s=0} \; f^{(i)}(\lambda\cdot[\gamma_s]) \\
&=  \left.\frac{d}{ds}\right|_{s=0} \;\lambda^i\; f^{(i)}([\gamma_s]) \\
&= \lambda^i\; d_{[\gamma_0]} f^{(i)}([\gamma_\bullet]),
\end{align*}
concluding our proof.
\end{proof}

\startSubchaption{Weightings}\label{ssec:weightings}

We are now ready to give a definition of weightings as subbundles of $T^{(\infty)}M$. While being phrased in profinite language, it is equivalent to the definition given in~\cite{LM23}.

\begin{definition}\label{def:weigthing} Let $M$ be a manifold of dimension $m$.
\begin{enumerate}
\item A \textbf{weight sequence $w$ of order $r\geq0$} is a sequence $w_1, ..., w_m$ of  non-negative integers bounded from above by $r$.
\item A \textbf{weighting} on $M$ is a subset $\cW\subseteq T^{(\infty)}M$ such that, around every $p\in M$, there are coordinates $(x_1, ..., x_m):U\to\R^m$ in which either $\cW\cap T^{(\infty)}U$ is empty or $$\cW \cap T^{(\infty)}U = \{q\in T^{(\infty)}M \;|\; \x{i}{j}(q)=0 \text{ for all $i$ and } j<w_i\}$$
	holds for some globally fixed weight sequence. We call such coordinates \textbf{adapted to $\cW$} and the tuple $(M,\cW)$ is a \textbf{weighted manifold}. $(M,\cW)$ is of \textbf{order} $r$ if its weight sequence is of order $r$.
\item A smooth map $\phi:(M,\cW)\to (\widetilde M,\widetilde\cW)$ between weighted manifolds is a \textbf{morphism of weighted manifolds} if $$T^{(\infty)}\phi(\cW)\subseteq \widetilde\cW.$$
\item The \textbf{support} of a weighting $\cW$ is given by $\supp\cW:=p^{(0,\infty)}(\cW)$. We say $\cW$ is a weighting \textbf{along a submanifold} $N\subseteq M$ if $N=\supp\cW.$
\end{enumerate}
\end{definition}

Intuitively, a weighting $\cW$ is a coordinate-invariant way to assign positive integer weights $w_i$ to all directions that are normal to $\supp\cW$. By construction, a weight is zero if and only if it is assigned to a direction tangent to $\supp\cW$. As a set, the weighting contains exactly the classes of curves based at $\supp\cW$ whose Taylor coefficients of order less than $w_i$ vanish along each coordinate direction $i$. The following Lemma follows immediately from the existence of adapted charts:

\begin{lemma}
Let $(M,\cW)$ be a weighted manifold. Then $\cW\subseteq T^{(\infty)}M$ and $\supp\cW\subseteq M$ are topologically closed and embedded submanifolds and the former is invariant under the reparametrization action.
\end{lemma}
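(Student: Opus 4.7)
The plan is to reduce all four claims to straightforward assertions about the explicit local form of $\cW$ afforded by adapted coordinates (Definition~\ref{def:weigthing}). Throughout, one works in an adapted chart $\chi=(x_1,\dots,x_m):U\to\R^m$ in which either $\cW\cap T^{(\infty)}U=\emptyset$ or $\cW\cap T^{(\infty)}U=\{q\mid x_i^{(j)}(q)=0,\ j<w_i\}$ for the globally fixed weight sequence $w$. No serious obstacle arises; essentially everything is a direct unpacking of definitions once the adapted coordinates are in hand.

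For the embedded submanifold structure of $\cW\subseteq T^{(\infty)}M$, I set $r:=\max_i w_i$. The defining conditions only involve coordinates of order at most $r-1$, so $\cW\cap T^{(\infty)}U=(p^{(r,\infty)})^{-1}(L_U)$ where $L_U\subseteq T^{(r)}U$ is the linear subspace cut out by $\{x_i^{(j)}=0\mid j<w_i\}$, an embedded submanifold of codimension $d:=\sum_i w_i$. Since these local descriptions all refer to the same intrinsically given set $\cW\subseteq T^{(\infty)}M$, the pieces $L_U$ glue to a globally defined embedded submanifold $\cW_r\subseteq T^{(r)}M$ of codimension $d$ with $\cW=(p^{(r,\infty)})^{-1}(\cW_r)$, satisfying the profinite submanifold condition from~\ref{npar:profinite}(3). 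Closedness of $\cW$ in $T^{(\infty)}M$ then follows locally from continuity of $p^{(r,\infty)}$ applied to the closed linear subspace $L_U$, combined with the fact that closedness is a local property.

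For $\supp\cW$, I would examine adapted charts directly. In a chart where $\cW\cap T^{(\infty)}U\neq\emptyset$, a point $p\in U$ lies in $\supp\cW$ if and only if $x_i(p)=0$ for every $i$ with $w_i\geq 1$: necessity holds because any $q\in\cW$ above $p$ satisfies $x_i^{(0)}(q)=x_i(p)=0$ whenever $w_i\geq 1$, and sufficiency follows by lifting $p$ to the class of the constant curve at $p$, whose lifts of positive order automatically vanish. This exhibits $\supp\cW\cap U$ as the vanishing locus of a subfamily of the coordinates on $U$, hence a closed embedded submanifold. Charts with empty $\cW\cap T^{(\infty)}U$ contribute nothing, and every point of $M$ lying outside $\supp\cW$ admits a neighbourhood disjoint from $\supp\cW$ (by shrinking, if necessary, away from the vanishing locus above). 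The local submanifold structures therefore glue to a closed embedded submanifold structure on $\supp\cW\subseteq M$.

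Finally, invariance under the $\R$-action is immediate in adapted coordinates: since $x_i^{(j)}(\lambda\cdot q)=\lambda^j x_i^{(j)}(q)$, any $q\in\cW\cap T^{(\infty)}U$ satisfying $x_i^{(j)}(q)=0$ for $j<w_i$ has $\lambda\cdot q$ satisfying the same equations. The only point requiring minor care in the whole argument is ensuring a single $r$ works globally for the profinite submanifold definition, which is automatic from the wording ``globally fixed weight sequence'' in Definition~\ref{def:weigthing}.
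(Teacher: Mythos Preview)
Your approach is precisely what the paper intends: it states that the lemma ``follows immediately from the existence of adapted charts'' and gives no further argument, so your expansion via the explicit local form in adapted coordinates is exactly right.

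One small point: the lemma asserts invariance under the full \emph{reparametrization action} of $\Lambda_\infty$ (precomposition by arbitrary jets $[f]\in J^\infty_{0,0}(\R,\R)$), whereas you only check the restricted $\R$-action $\lambda\cdot[\gamma]=[\gamma(\lambda\,\cdot)]$. The extension is immediate by the same local reasoning---if $x_i^{(j')}([\gamma])=0$ for all $j'<w_i$, then by the chain rule (Fa\`a di Bruno) the $j$-th derivative of $t\mapsto x_i(\gamma(f(t)))$ at $0$ is a polynomial in these vanishing quantities for $j<w_i$, so $x_i^{(j)}([\gamma\circ f])=0$ as well---but you should state it to match the claim.
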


This means that $\cW$ is the pullback of a submanifold at some finite level of $T^{(\infty)}M$. We can see that $(M,\cW)$ is of order $r$ exactly when this is possible with a submanifold of $T^{(r-1)}M$. It is also easy to see that that a weighted morphism maps the support of the domain into that of the codomain.

\begin{examples}\ \label{ex:weightings}
\begin{enumerate}
	\item Given a manifold $M$ we have the \textbf{initial weighting} $\cW^{i}=\emptyset$ with $\supp\cW^{i}=\emptyset$ and the \textbf{final weighting} $\cW^{f}=T^{(\infty)}M$ with $\supp\cW^{f}=M$. The initial weighting can be seen as arising from any weight sequence and is the only weighting with an ambiguous weight sequence. The final weighting arises from a sequence of only zeros. Any smooth map from $(M,\cW^i)$ or into $(M,\cW^f)$ can be upgraded to a morphism of weightings, and every chart is adapted to these weightings.
\item\label{ex:standard-weightings} On $M=\R^m$, for each given weight sequence $w_1, ..., w_m$ we have as local model the \textbf{standard weighting}
$$\cE^{(w_1, ..., w_m)}:=\{q\in T^{(\infty)}M \;|\; \x{i}{j}(q)=0 \text{ for all $i$ with } j<w_i\}$$
along the submanifold
$$\supp\cE^{(w_1, ..., w_m)}=\{(x_1, ..., x_m)\in\R^m \;|\; w_i\neq 0 \implies x_i=0 \}. $$ Concretely, the support of $\cE^{(0,1,2)}$ is the $x_1$-axis, and the elements of the weighting are equivalence classes
		$$[ t\mapsto (f_1(t), f_2(t), f_3(t)) ]$$
		where $f_2(0)=f_3(0)=f'_3(0)=0$.
The identity map $$\Id:(\R^m, \cE^{(w_1, ..., w_m)})\to(\R^m,\cE^{(\tilde w_1, ..., \tilde w_m)})$$ is a morphism of weighted manifolds if and only if $w_i\geq \tilde w_i$ for all $i$.
\item Given a closed submanifold $N$, we can define the \textbf{trivial weighting along $N$} by setting $\cW_N:=\left(p^{(0,\infty)}\right)^{-1}(N)$. Any chart of $M$ adapted to $N$ yields a chart adapted to this weighting for the weight sequence consisting of $\dim N$ zeros and $\codim N$ ones. One can see that the support $\supp{\cW_N}$ is just $N$, and conversely that any weighting of order 1 is a trivial weighting along its support. The identity map on $M$ is always a weighted morphism $(M,\cW)\to(M,\cW_N)$ for any weighting $\cW$ along $N$, while a map $\phi:M\to\widetilde{M}$ is a weighted morphism $(M,\cW_N)\to(\widetilde{M}, \cW_{\tilde N})$ between trivial weightings exactly when $\phi(N)\subseteq \tilde N$.

\item More generally for a closed submanifold $N$ and \textit{any} order $r$, there is a unique \textbf{maximal weighting of order $r$ along $N$} that can be written as the pull-back of the zero section of $T^{(r-1)}M|_N$ along $p^{(r-1)}$.

\item Any filtered submanifold $N$ of a filtered manifold $(M,H_\bullet)$ naturally comes with the structure of a weighting $\cW_{H_\bullet,N}$ with support $N$. It intuitively consists of equivalence classes of curves whose $i$-th derivative lies within the $i$-th degree of the filtration. We discuss this in detail in Section~\ref{ssec:weighting-filtered-submfd} as part of defining a blown-up configuration space for filtered manifolds. A central result of~\cite{LM22} is that this also holds in the wider context of singular Lie filtrations.

\end{enumerate}
Further examples can be constructed by considering products and pullbacks of weightings as discussed in~\cite{LM23}.
\end{examples}

A central object associated with a weighting is its structure filtration:

\begin{definition}\label{def:structure-filtration}
Let $\cW$ be a weighting on a manifold $M$. The \textbf{structure filtration} of $\cW$ is the filtration
$$ C^\infty(M)= \cW_0 \supseteq \cW_1 \supseteq \cW_2 \supseteq ... $$
given by ideals
	$$\cW_i:=\{f\in C^\infty(M)\;|\; f^{(j)}|_\cW=0 \text{ for all }j<i\}.$$
\end{definition}

Elements of degree $i$ in the structure filtration should be viewed as the functions that vanish to $i$-th order when taking the weights into account. 
That these are ideals follows by the Leibniz rule from Equation~\eqref{eq:leibniz}. It also implies that the structure filtration is multiplicative, i.e. $\cW_i\cdot\cW_{i'}\subseteq \cW_{i+i'}$ for all $i,i'\in\N$. The elements of $\cW_i$ are exactly the weighted morphisms from $(M,\cW)$ into the maximal weighting of order $i$ along $\{0\}\subset\R$.

\begin{examples}\ \label{ex:weightings-filtration}
\begin{enumerate}
\item For the initial weighting $\cW^i$, all degrees of the structure filtration are given by $C^\infty(M)$. For the final weighting $\cW^f$, all positive degrees of the structure filtration are given by $\{0\}$.
\item\label{ex:standard-weightings-filtration} For the standard weighting $\cE^w$ of a weight sequence $w=(w_1, ..., w_m)$ on $\R^m$, we prove in the next proposition that $\cE^w_i$ is the ideal in $C^\infty(\R^m)$ generated by
\begin{equation}\label{eq:local-model-filtration}
	x_1^{\alpha_1}...x_m^{\alpha_m} \qquad\text{where}\qquad \alpha_1 w_1 + ... + \alpha_m w_m \geq i.
\end{equation}
It is sufficient to consider only the minimal $(\alpha_1, ..., \alpha_m)$ satisfying the inequality to obtain a finite set of generators. Using the fact that weighted coordinates locally identify a weighting with a standard weighting, this means that the structure filtration of any weighting is \textit{locally} generated by such polynomials in the coordinate functions.

		Concretely for coordinates $(x,y,z)$ of $\R^3$ with weights $w=(0,1,2)$, we obtain the following filtration at low degrees:
		\begin{align*}
			\cE^w_0 &= C^\infty(\R^3), \\
		\cE^w_1 &= \langle y,z \rangle, \\
		\cE^w_2 &= \langle y^2, z \rangle, \\
		\cE^w_3 &= \langle y^3, yz, z^2 \rangle, \\
		\cE^w_4 &= \langle y^4, y^2z, z^2 \rangle.
		\end{align*}
	Note how the weight of a coordinate function is always the largest degree of the filtration in which it is still included.
\item The structure filtration of the trivial weighting $\cW_N$ along a manifold $N$ is given by powers $\cW_i=\cI^i$ of the vanishing ideal $\cI$ of $N$.\qedhere
\end{enumerate}
\end{examples}

\begin{proposition}\label{prop:filtration-standard-weighting}
	Let $w=(w_1, ..., w_m)$ be a weight sequence. Then the structure filtration of the standard weighting $\cE^w$ on $\R^m$ is given by
	$$\cE^w_i=\langle x^\alpha \;|\; \alpha\in\N^m \text{ with } \alpha w\geq i \rangle,$$
	where the right-hand side denotes the generated ideal in $C^\infty(\R^m)$.
\end{proposition}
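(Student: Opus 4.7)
The plan is to prove the two containments $J\subseteq\cE^w_i$ and $\cE^w_i\subseteq J$ separately, where $J:=\langle x^\alpha \;|\; \alpha\in\N^m,\ \alpha\cdot w\geq i\rangle$ denotes the ideal on the right-hand side.

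For $J\subseteq\cE^w_i$, I would apply the Leibniz rule from Equation~\eqref{eq:leibniz} to the monomial $x^\alpha=\prod_l x_l^{\alpha_l}$. Iterating Leibniz, the $j$-th lift $(x^\alpha)^{(j)}$ is a sum of products of lifts $\x{l}{k}$ of the coordinates whose total order equals $j$. On $\cE^w$, every factor $\x{l}{k}$ with $k<w_l$ vanishes, so for a product to survive each $x_l$ must contribute total order at least $\alpha_l w_l$, forcing $j\geq\alpha\cdot w$. Consequently $(x^\alpha)^{(j)}|_{\cE^w}=0$ for $j<\alpha\cdot w$, so $x^\alpha\in\cE^w_{\alpha\cdot w}$, and since $\cE^w_i$ is an ideal this gives $J\subseteq \cE^w_i$.

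For the reverse, I would split coordinates by weight: let $I_0=\{l : w_l=0\}$ and $I_+=\{l : w_l\geq 1\}$, and write $x=(y,z)$ accordingly so that $\supp\cE^w=\{z=0\}$. Given $f\in\cE^w_i$, I would apply Hadamard's iterated lemma to Taylor-expand $f$ in the $z$ variables alone, around $z=0$, up to order $i$:
$$f(y,z)=\sum_{\beta\in\N^{I_+},\,|\beta|<i}c_\beta(y)\,z^\beta+\sum_{|\beta|=i}z^\beta\,h_\beta(y,z),$$
with smooth coefficients $c_\beta(y)=\tfrac{1}{\beta!}\partial_z^\beta f(y,0)$ and smooth $h_\beta$. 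The remainder terms already lie in $J$, since $|\beta|=i$ combined with $w_l\geq 1$ for $l\in I_+$ forces $\beta\cdot w\geq i$. It then suffices to show that each coefficient $c_\beta$ with $\beta\cdot w<i$ vanishes identically, as the corresponding monomials $z^\beta$ are not a priori in $J$. For this I would probe with the family of test curves $\gamma_{y_0,a}(t)=(y_0,(a_l t^{w_l})_{l\in I_+})$ for arbitrary $y_0\in\R^{I_0}$ and $a\in\R^{I_+}$, which lie in $\cE^w$ by direct verification of the defining conditions. Expanding gives
$$f(\gamma_{y_0,a}(t))=\sum_{\beta}c_\beta(y_0)\,a^\beta\,t^{\beta\cdot w},$$
so $f^{(j)}([\gamma_{y_0,a}])=\sum_{\beta\cdot w=j}c_\beta(y_0)\,a^\beta$. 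The hypothesis $f\in\cE^w_i$ forces this to vanish for all $j<i$, and since it is a polynomial in $a$ vanishing identically, each $c_\beta(y_0)$ with $\beta\cdot w<i$ must be zero. As $y_0$ was arbitrary, $c_\beta\equiv 0$ whenever $\beta\cdot w<i$, completing the containment.

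The main obstacle is the handling of zero-weight directions: a naive total-degree Taylor expansion on $\R^m$ does not force high polynomial degree to yield high weighted degree, so the remainder need not land in $J$. The key bookkeeping trick is to expand only in the positive-weight variables $z$, carrying the full smooth dependence on the zero-weight variables $y$ inside the coefficients, and then to kill the low-weighted-degree Taylor coefficients via test curves whose temporal scaling is adapted to the weights.
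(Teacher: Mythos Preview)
Your proof is correct, but takes a genuinely different route from the paper for the hard inclusion $\cE^w_i\subseteq J$. The paper uses a \emph{weighted Hadamard lemma}: for each $x\in\R^m$ it considers the single curve $\gamma_x(t)=t\cdot x$ along the weighted action, notes that $g_x:=f\circ\gamma_x$ has vanishing Taylor polynomial of order $i-1$ at $t=0$, and then evaluates $f(x)=g_x(1)$ via the integral remainder formula. An inductive chain-rule computation shows $g_x^{(i)}(t)=\sum_{\alpha w\geq i}x^\alpha t^{\alpha w-i}g^\alpha(t\cdot x)$ for suitable smooth $g^\alpha$, and substituting into the integral directly exhibits the smooth coefficient functions $f_\alpha$. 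Your approach instead splits off the zero-weight directions, performs a classical Taylor expansion in the positive-weight variables only, and then kills the low-weighted-degree coefficients by probing with a richer family of test curves $\gamma_{y_0,a}$ carrying free parameters $a$ and invoking a polynomial-identity argument. The paper's method is more unified (no case split on weights, explicit integral formula for the $f_\alpha$); yours is arguably more elementary (no integral remainder needed) but hinges on the bookkeeping insight you flag at the end. One small presentational point: your displayed expansion of $f(\gamma_{y_0,a}(t))$ omits the remainder terms, which is harmless since they contribute only at order $\geq i$ in $t$, but is worth stating explicitly.
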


\begin{proof}
It is easy to see that $x_a\in\cE^w_a\setminus\cE^w_{a+1}$ by definition of the standard weighting, and this implies by multiplicativity that any polynomial of the form above lies in the ideal $\cE^w_i$.

Conversely, we need to show that for any $f\in\cE^w_i$ we can produce functions $f_\alpha\in C^\infty(\R^m)$ such that 
	\begin{equation}\label{eq:hadamard-to-show}
		f(x)=\sum_{\alpha w\geq i} x^\alpha f_\alpha(x)
	\end{equation}
	with only finitely many $f_\alpha$ not vanishing everywhere. This is a weighted version of Hadamard's Lemma and can be proven similarly:
For any $x\in\R^m$, the curve
	\begin{align*}
		\gamma_x:\R&\to\R^m\\ t &\mapsto t\cdot x
	\end{align*}
along the weighted action on $\R^m$ is a representative of an element $[\gamma_x]$ in $\cE^w$. Evaluating $f$ along these curves yields functions $$g_x:=f\circ \gamma_x:\R\to\R.$$ By definition of the lifts and $f\in\cE^w_k$, we have $$\frac{g^{(k)}(0)}{k!} = f^{(k)}([\gamma_x]) = 0 \qquad\text{for all $k<i$}.$$
Thus the Taylor expansion of $g_x$ at $t=0$ to order $i-1$ vanishes, and the integral formula for the remainder at $t=1$ yields
	$$f(x)=g_x(1)=\int_0^1\frac{g_x^{(i)}(s)}{(i-1)!}(1-s)^{i-1}ds.$$
By complete induction over $i$, one can show easily that there exist functions $g^\alpha:\R^m\to\R$ for every multi-index $\alpha$ with $\alpha w\geq i$ such that
	$$g_x^{(i)}(t)=\sum_{\alpha w\geq i} x^\alpha t^{\alpha w-i}g^\alpha(t\cdot x),$$
	where only a finite number of $g^\alpha$ is not everywhere vanishing. Inserting this into the integral remainder formula and pulling out the sum and powers of $x$ yields
	the desired Equation~\eqref{eq:hadamard-to-show}.
\end{proof}

We observe that the $\cE^w_i$ with degree $i$ larger than the order $r$ of $w$ contain no more elements than minimally necessary due to multiplicativity, i.e. they are given by sums of products of functions whose degrees are smaller than $r$ but sum to $i$. With a partition of unity argument, we obtain the following corollary for general weightings:

\begin{corollary}\label{cor:mult-after-order}
Let $\cW$ be a weighting of order $r$ over $M$ and $i> r$. Then $\cW_i$ consists of all sums of products $\prod_{j=1}^{n}f_j$ where $f_j\in\cW_{i_j}$, $i_j\leq r$ and $\sum_{j=1}^n i_j=i$.
\end{corollary}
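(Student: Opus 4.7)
The containment $\supseteq$ is immediate from multiplicativity $\cW_{i_1}\cdot\cW_{i_2}\subseteq\cW_{i_1+i_2}$ of the structure filtration, so the work lies in proving the reverse. The plan is to localize using a partition of unity subordinate to a cover by adapted charts, apply Proposition~\ref{prop:filtration-standard-weighting} within each chart to obtain a monomial decomposition, globalize using bump functions, and finally rebalance the declared degrees to sum exactly to $i$.

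Concretely, given $f\in\cW_i$, fix adapted charts $(\chi_\alpha,U_\alpha)$ with a subordinate partition of unity $\{\rho_\alpha\}$. Proposition~\ref{prop:filtration-standard-weighting} writes each restriction $f|_{U_\alpha}=\sum_\beta g_{\alpha,\beta}\,x^{\alpha_\beta}$ as a finite sum with $\alpha_\beta w\geq i$. Choose bump functions $\psi_\alpha$ supported in $U_\alpha$ and equal to $1$ on $\supp\rho_\alpha$, and set $\tilde{x}_{k,\alpha}:=\psi_\alpha x_k$, extended by zero outside $U_\alpha$. A short Leibniz-rule computation using $x_k^{(b)}|_\cW=0$ for all $b<w_k$ shows $\tilde{x}_{k,\alpha}\in\cW_{w_k}$, and since $\psi_\alpha\equiv1$ on $\supp\rho_\alpha$ we obtain the global identity
\begin{equation*}
\rho_\alpha f=\sum_\beta(\rho_\alpha g_{\alpha,\beta})\cdot\prod_k\tilde{x}_{k,\alpha}^{(\alpha_\beta)_k}.
\end{equation*}
Summing over the locally finite $\alpha$'s expresses $f$ as a finite sum of such products, with each factor lying in some $\cW_{w_k}$ (and $w_k\leq r$), and the sum of natural degrees within a monomial equal to $\alpha_\beta w\geq i$.

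The last step rebalances the declared degrees. Since the filtration is decreasing, any factor $\tilde{x}_{k,\alpha}\in\cW_{w_k}$ also lies in $\cW_j$ for every $j\leq w_k$, so its assigned degree may be freely lowered. Distributing the excess $\alpha_\beta w-i$ among the factors, each with absorption capacity $w_k$, is always possible because the total capacity $\alpha_\beta w$ exceeds the reduction required. The main obstacle---though ultimately routine---is precisely this bookkeeping step: one must verify that every monomial with $\alpha_\beta w>i$ can in fact be re-expressed with declared degrees summing to \emph{exactly} $i$ rather than merely $\geq i$, and that each final $i_j$ remains bounded by $r$, which holds because each $i_j\leq w_k\leq r$.
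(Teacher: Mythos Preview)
Your proof is correct and follows exactly the approach the paper indicates: reduce to the local model via Proposition~\ref{prop:filtration-standard-weighting} and globalize with a partition of unity, then rebalance degrees using the decreasing filtration. The paper's own argument is a one-line sketch (``With a partition of unity argument\ldots''), and you have simply filled in the details; the only minor slip is calling the resulting sum over $\alpha$ ``finite'' when on a non-compact $M$ it is merely locally finite, but this does not affect the substance.
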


This shows that the structure filtration is generated as a multiplicative filtration by its degrees up to the order of the weighting and also ensures that our definition coincides with that of \cite{LM23}.

\begin{npar}[Remark]While we mostly take the higher tangent bundle point of view, for completeness we collect some important properties of the structure filtration:
\begin{enumerate}
\item One can recover a weighting from its structure filtration by the following formula:
\begin{equation}\label{eq:weighting-from-filtration}
 \cW = \{ q\in T^{(\infty)}M \;|\; \forall j<i, f\in \cW_i: f^{(j)}(q)=0\}
\end{equation}
\item The support is exactly the vanishing locus of the first degree of the structure filtration:
$$\supp\cW=\{p\in M \;|\; f(p)=0 \text{ for all } f\in\cW_1 \}$$
\item A smooth map $\phi:(M,\cW)\to (\widetilde M,\widetilde\cW)$ is a morphism of weighted manifolds if and only if $\cW_i\supseteq\phi^*\widetilde\cW_i$ holds for all $i\geq0$.
\end{enumerate}
These follow by straightforward computations, involving local coordinates adapted to the weighting where necessary.
\end{npar}

\begin{npar}[Equivalent definitions]\label{rem:equiv-defs} The previous remark suggests that one can equivalently define weightings using their structure filtration as data. In~\cite[][Def.~2.2]{LM23} Loizides and Meinrenken do just that: They introduce weightings as filtrations satisfying a local model similar to Equation~\eqref{eq:local-model-filtration}. They also invariantly characterize which filtrations correspond to weightings in~\cite[][Prop.~2.8]{LM23}. Our description as a subset of $T^{(\infty)}M$ is motivated by their local expression at finite order $r$ from~\cite[][Lem.~7.3]{LM23}. Finally, they also provide an invariant criterion for the higher tangent bundle viewpoint in~\cite[][Th.~8.4]{LM23}. Their $T_rM$, $Q$, $C^\infty_{M,(i)}$ and $N$ become $T^{(r)}M$, $p^{(r,\infty)}(\cW), \cW_i$ and $\supp\cW$ in our notation. Beyond this we differ in not requiring weight sequences and thus components of adapted coordinates to be ordered, which will be convenient to express alignment of multiple weightings.

Melrose's equivalent \textit{quasi-homogeneous structures} are defined by an invariant characterization of the structure filtration in~\cite[][Def.~1.15.11]{Mel91} with a local description similar to our Equation~\eqref{eq:local-model-filtration} following right after.
\end{npar}

Note that in terms of the vanishing ideal $I_\cW$ of a weighting $\cW$, coordinates $\{x_1,...,x_m\}$ are adapted to $\cW$ with weights $\{w_1,...,w_m\}$ if and only if
\begin{equation}\label{eq:adapted-coords-from-ideal}
I_\cW = \langle x_a^{(i)} \;|\; 1\leq a\leq m,\, i<w_a \rangle
\end{equation}
holds locally. This follows immediately from Definition~\ref{def:weigthing} of adapted coordinates. We conclude our discussion of weightings by proving a new characterization in this spirit without resorting to coordinates:

\begin{theorem}\label{thm:new-criterion}
	A connected embedded submanifold $\cW\subseteq T^{(\infty)}M$ is a weighting if and only if it is closed and
	its vanishing ideal satisfies
			\begin{equation}\label{eq:new-criterion}
				I_\cW = \langle f^{(i)} \in C^{\infty}(T^{(\infty)}M) \;|\; f\in C^\infty(M) \text{ with } f^{(j)}|_\cW=0\;\forall j\leq i \rangle.
			\end{equation}
\end{theorem}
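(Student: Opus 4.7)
The inclusion $\supseteq$ in~\eqref{eq:new-criterion} is automatic since each generator $f^{(i)}$ with $f^{(j)}|_\cW=0$ for $j\leq i$ lies in $I_\cW$. For the reverse, assume $\cW$ is a weighting (closedness being immediate from adapted coordinates), and pick adapted coordinates $(x_1,\ldots,x_m)$ with weights $(w_1,\ldots,w_m)$ around $p\in\supp\cW$. Equation~\eqref{eq:adapted-coords-from-ideal} locally expresses $I_\cW$ as $\langle x_a^{(j)}\;|\;j<w_a\rangle$, and each such generator coincides with $f^{(j)}$ for $f=x_a\in\cW_{w_a}$ satisfying the required vanishing. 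The local identity of ideals globalizes via a smooth partition of unity on $T^{(\infty)}M$ subordinate to adapted charts, with the local $x_a$ extended to global functions by cut-offs.

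\textbf{Converse direction.} Fix $p\in N:=\supp\cW$ and pick $r$ large enough that $\cW=(p^{(r,\infty)})^{-1}(Q_r)$ for an embedded $Q_r\subseteq T^{(r)}M$. The ideal hypothesis together with the $\R$-homogeneity $f^{(i)}(\lambda\cdot q)=\lambda^i f^{(i)}(q)$ immediately shows $\cW$ is $\R$-invariant (each generator vanishing at $q$ still vanishes at $\lambda\cdot q$), so the zero-section point $q$ over $p$ lies in $\cW$. Define the descending filtration $V_i:=\{df(p)\;|\;f\in\cW_i\}\subseteq T^*_pM$; by Corollary~\ref{cor:mult-after-order}, elements of $\cW_i$ for $i>r$ are sums of products of at least two base functions vanishing at $p$, so $V_i=0$ for $i>r$. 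Let $n_w:=\dim V_w/V_{w+1}$, so $\sum_w n_w=m$. Inductively choose $x_{a,w}\in\cW_w$ ($a=1,\ldots,n_w$) whose differentials at $p$ project to a basis of $V_w/V_{w+1}$; projecting successively to $V_w/V_{w+1}$ shows the combined differentials are linearly independent, yielding a coordinate chart near $p$ to which we assign weight $w$ to each $x_{a,w}$. To verify adaptedness, let $\widetilde Q\subseteq T^{(r)}U$ denote $\{x_{a,w}^{(j)}=0\;|\;j<w\}$: then $Q_r\subseteq\widetilde Q$ by~\eqref{eq:weighting-from-filtration}, and Lemmas~\ref{lem:action-on-lift-deriv} and~\ref{lem:lift-lin-dependence} show that $\widetilde Q$ is smooth at $q$ of codimension $\sum_w w\cdot n_w$, since the differentials $d_qx_{a,w}^{(j)}$ lie in distinct weight-$j$ eigenspaces of $T^*_qT^{(r)}M$ and are linearly independent within each. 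The ideal hypothesis identifies $N^*_qQ_r$ weight-by-weight with $\bigoplus_i\{d_qf^{(i)}\;|\;f\in\cW_{i+1}\}$, and by Lemma~\ref{lem:lift-lin-dependence} the weight-$i$ summand is canonically isomorphic to $V_{i+1}$ (the map $df(p)\mapsto d_qf^{(i)}$ is both well-defined and injective). This yields $\codim Q_r=\sum_{w\geq 1}\dim V_w=\sum_w w\cdot n_w$, forcing $Q_r=\widetilde Q$ locally at $q$. An $\R$-invariance argument, replacing any $y\in\widetilde Q\cap T^{(r)}U$ with $\lambda\cdot y$ for $\lambda\to 0^+$ to reach the local equality neighborhood of $q$ and then applying $\lambda^{-1}$, upgrades this to equality on all of $T^{(r)}U$ for $U$ a sufficiently small neighborhood of $p$. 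Hence $(x_{a,w})$ are adapted coordinates.

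\textbf{Main obstacle.} The crux is the codimension match $\codim Q_r=\sum_w w\cdot n_w$: the ideal hypothesis~\eqref{eq:new-criterion} is precisely what rules out extra conormal directions of $Q_r$ at $q$ beyond those arising from lifts of base functions at the expected weights. Lemmas~\ref{lem:action-on-lift-deriv} and~\ref{lem:lift-lin-dependence} are tailored to this count, decomposing conormal contributions into $\R$-weight eigenspaces and reducing their linear independence to that of base differentials on $M$, where the filtration dimensions $n_w$ emerge.
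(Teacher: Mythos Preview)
Your approach is correct apart from one circular step: you invoke Corollary~\ref{cor:mult-after-order} to conclude $V_i=0$ for $i>r$, but that corollary is proved only for weightings, which is precisely what you are establishing. The repair is straightforward and uses only your hypotheses. Since $\cW$ is pulled back from $Q_r\subseteq T^{(r)}M$, the conormal space $N^*_{\hat q}\cW$ at the zero-section point lies in the span of $\{d_{\hat q}x_a^{(j)}\mid j\le r\}$; but each $d_{\hat q}f^{(i)}$ for $f\in\cW_{i+1}$ both lies in $N^*_{\hat q}\cW$ and, by Lemma~\ref{lem:lift-lin-dependence}, in the span of $\{d_{\hat q}x_a^{(i)}\}$. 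For $i>r$ these spans meet only in zero, forcing $d_pf=0$ and hence $V_j=0$ for $j\ge r+2$. (Your stated bound $i>r$ is thus off by one, but harmlessly so.)

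The route is genuinely different from the paper's. There, adapted coordinates are built by starting from any minimal set $\{f_a^{(d_a)}\}$ of lifts cutting out $\cW$ near $q_0$ and running a descending induction on $i$ that swaps generators until each base function appearing at some lift degree also appears at all lower lift degrees; every swap uses Lemma~\ref{lem:lift-lin-dependence} to exchange one $(i{-}1)$st lift for another while preserving linear independence. You instead extract the weight multiplicities directly from the filtration $V_i=\{d_pf\mid f\in\cW_i\}$ of $T^*_pM$, choose coordinates adapted to it in one shot, and close with the codimension identity $\codim Q_r=\sum_{i\ge1}\dim V_i$. Your argument is more structural---it recognizes $V_\bullet$ as the conormal filtration of \S\ref{ssec:linear-data} before knowing $\cW$ is a weighting---whereas the paper's is algorithmic. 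Both hinge on Lemmas~\ref{lem:lift-lin-dependence} and~\ref{lem:action-on-lift-deriv} for the weight decomposition of conormal vectors at the zero section; the difference is whether one counts dimensions or manipulates generators.
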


In terms of the structure filtration of a weighting, Equation~\eqref{eq:new-criterion} is equivalent to saying that $I_\cW$ is generated by $\oplus_i \cW_i^{(i-1)}$, i.e. by the $(i-1)$-st lifts of elements of $\cW_i$ for any positive $i\in\N$.

\begin{proof}
It is clear that any weighting satisfies Equation~\eqref{eq:new-criterion} using local adapted coordinates and is closed by definition. Conversely we need to show for any closed $\cW$ satisfying this equation that around every point $p\in M$ we can produce coordinates adapted to $\cW$. Note first that due to $$f^{(i)}(\lambda\cdot q)=\lambda^i\,f^{(i)}(q),$$ such a $\cW$ must be invariant under the $\R$-action on $T^{(r)}M$. In particular, it is a graded subbundle. It follows that the projection $N:=p^{(0,\infty)}(\cW)$ is also a closed embedded submanifold. Thus any coordinates around $p\not\in N$ are adapted to $\cW$ when restricted to a small enough neighbourhood.

Consider now a point $p\in N$ and take a preimage $q_0\in\cW$ under $p^{(0,\infty)}$. By $\R$-invariance, we can pick $q_0$ in the zero section, i.e. $0\cdot q_0 = q_0$. Since $\cW$ is an embedded submanifold\footnote{Note that this is an embedded submanifold in the profinite sense of Paragraph~\ref{npar:profinite}. However, this makes no difference for the purposes of this proof, as any property we need can be proven by passing to a finite level $T^{(r)}M$ where $\cW$ becomes an honest embedded submanifold.}, it is cut out in some neighbourhood $U$ of $q_0$ by a collection of $k:=\codim \cW$ independent functions. Due to Equation~\eqref{eq:new-criterion}, we can further assume that there are functions\footnote{Since we do not yet know that $\cW$ is a weighting, it bears mentioning that Definition~\ref{def:structure-filtration} of $\cW_{d_a+1}$ makes sense for any subset $\cW\subseteq T^{(\infty)}M$.} $f_a\in \cW_{d_a+1}$ with $d_a\in\N_0$ such that $$\cW\cap U = \{q\in U\;|\;  f_a^{(d_a)}(q)=0 \text{ for all }a=1, ..., k\}.$$ Our goal is now to construct a set of functions $\{x_b\}_{b=1,...,\codim N}$ on $U$ (possibly shrinking it) and weights $w_b\in\N_{>0}$ such that $$\cW\cap U = \{q\in U\;|\;  x_b^{(i)}(q)=0 \text{ for all }b=1, ..., \codim N\text{ and }i<w_b\},$$ and subsequently extend these to a full set of adapted coordinates.

We construct the $x_a$ by proving the following statement inductively in $i\in\N_0$, starting at some large $i_{\text{max}}$ and decreasing: \textit{For every integer $i\in \N_0$, close to $q_0\in T^{(\infty)}M$ there is a minimal set $$\cM_i=\left\{m_{i,a}^{(d_{i,a})}\;|\; a=1,...,k\right\}$$ of functions $m_{i,a}\in C^\infty(U)$ lifted to degree $d_{i,a}$ such that: (1) $\cM_i$ cuts out $\cW$, (2) $m_{i,a}\in\cW_{d_{i,a}+1}$ and (3) if $i\leq j\leq d_{i,a}$, then there exists some other $a'\in\{1,...,k\}$ such that $m_{i,a}=m_{i,a'}$ and $d_{i,a'}=j$.}

The last condition says that any function that appears in $\cM_i$ lifted to any degree larger than $i$ also appears lifted to all other degrees above $i$.

For $i=0$, $\cM_0$ consists of the lifts of the $x_b$ we desire: Each function in the set must appear with all lifts up to some maximal weight. Since all $x_b^{(0)}$ lie in $\cM_0$, and clearly the projection $N$ is cut out by these, $b$ must indeed range from 1 to $\codim N$. When choosing $i_{\text{max}}$ large enough to make condition (3) trivial, our starting set $\cM_{i_{\text{max}}}:=\{f_a^{(d_a)}\;|\; a=1,...,k\}$ satisfies the statement.

Assuming now that the statement holds for some fixed $i$, we need to show that it also follows for $(i-1)$:
Take any function $f$ such that $f^{(i)}$ lies in $\cM_i$ but $f^{(i-1)}$ does not. By assumption, $f^{(i-1)}$ vanishes on $\cW$ and therefore its differential $d_{q_0} f^{(i-1)}$ is a linear combination of the derivatives of functions in $\cM_i$. By Lemma~\ref{lem:lift-lin-dependence}, $d_{q_0} f^{(i-1)}$ must be a linear combination of functions that are also lifted to the degree $(i-1)$. Since the coefficients in the Lemma are the same irrespective of the degree of the lift, we can conclude that $d_{p}f$ is a linear combination of differentials of functions whose $(i-1)$st lifts appear in $\cM_i$. By the same reasoning applied to $f^{(i)}\in\cM_i$ and since we know that $\cM_i$ is a minimal generating set, we conclude that $d_{p}f$ is \textit{not} a linear combination of differentials of functions whose $i$th lifts appear in $\cM_i$. Thus there appears a $g$ in the former linear combination such that $g^{(i-1)}\in\cM_i$ but $g^{(i)}\not\in\cM_i$. We can now remove this $g^{(i-1)}$ from $\cM_i$ and add in $f^{(i-1)}$ instead to obtain a new $\cM_i'$. This set clearly still vanishes on $\cW$ since $f\in\cW_{i+1}$. It is also still a minimal set that cuts out $\cW$ since it has the same number of elements as $\cM_i$ that have linearly independent differentials by construction.
After sufficiently many steps of this, we have constructed a new set $\cM_{i-1}$ that satisfies the statement for $(i-1)$.

We thus know that the functions $\{x_b\}_{b=1, ..., \codim N}$ exist. Their lifts cut out $\cW$ close to $q$ on the zero section, but by invariance of $\cW$ under the $\R$-action, they vanish on $\cW$ over the full fiber of a neighbourhood $V$ of $p\in N$. Moreover, the differentials of the lifts are linearly independent on the full fibers due to Lemma~\ref{lem:action-on-lift-deriv}. They thus form a minimal set of functions cutting out $\cW$ not only close to $q$ but on $\left(p^{(0,\infty)}\right)^{-1}(V)$.
They assemble into a map from $V$ to $\R^{\codim N}$ that is a submersion at $p$. By the submersion theorem, we can extend the $x_b$ to a full set $\{x_b\}_{b=1,...,m}$ of coordinates close to $p$. These coordinates are adapted to $\cW$.
\end{proof}

This argument of course also constructs local adapted charts around points $p\in\supp\cW$ without the closed and connectedness assumptions. However, these charts do not need to exist around $p\in\partial\supp\cW\setminus\supp\cW$ and may correspond to weight sequences that don't match (even after reordering) on different components of $\cW$. The latter phenomenon will show itself in particular in Proposition~\ref{prop:clean-locally-weighting}.

\startSubchaption{The weighted normal bundle}\label{ssec:weighted-normal}

To construct weighted blow-ups, one needs a weighted analogon to the normal bundle:

\begin{definition}\label{def:weighted-normal-bundle}
Let $(M,\cW)$ be a weighted manifold of order $r$. As a set, the \textbf{weighted normal bundle} is the quotient $\nu\cW:=\cW/\sim$ under the equivalence relation
$$ q_1\sim q_2 :\iff f^{(j)}(q_1)=f^{(j)}(q_2) \text{ for all } j\in\N_0, f\in\cW_j.$$ It comes with a canonical projection $\nu\cW\to \supp\cW$ induced by the map $T^{(\infty)}M\to M$.
\end{definition}

Classically, the normal bundle of a submanifold $N\subseteq M$ can be defined as the quotient of $TM|_N$ by $TN$. Here $TM|_N$ is generalized to $\cW$, representing all the higher tangent vectors based at $\supp\cW$ that are consistent with the weighting. In particular, we quotient out all the vectors tangent to $\supp\cW$ since they cannot be distinguished by elements of the structure filtration. A visualization of the resulting weighted normal vectors is depicted in Figure~\ref{fig:weighting}.

\begin{figure}
    \centering
    \Large
\begingroup%
  \makeatletter%
  \providecommand\color[2][]{%
    \errmessage{(Inkscape) Color is used for the text in Inkscape, but the package 'color.sty' is not loaded}%
    \renewcommand\color[2][]{}%
  }%
  \providecommand\transparent[1]{%
    \errmessage{(Inkscape) Transparency is used (non-zero) for the text in Inkscape, but the package 'transparent.sty' is not loaded}%
    \renewcommand\transparent[1]{}%
  }%
  \providecommand\rotatebox[2]{#2}%
  \newcommand*\fsize{\dimexpr\f@size pt\relax}%
  \newcommand*\lineheight[1]{\fontsize{\fsize}{#1\fsize}\selectfont}%
  \ifx\svgwidth\undefined%
    \setlength{\unitlength}{283.46456693bp}%
    \ifx\svgscale\undefined%
      \relax%
    \else%
      \setlength{\unitlength}{\unitlength * \real{\svgscale}}%
    \fi%
  \else%
    \setlength{\unitlength}{\svgwidth}%
  \fi%
  \global\let\svgwidth\undefined%
  \global\let\svgscale\undefined%
  \makeatother%
  \begin{picture}(1,0.55)%
    \lineheight{1}%
    \setlength\tabcolsep{0pt}%
    \put(0,0){\includegraphics[width=\unitlength,page=1]{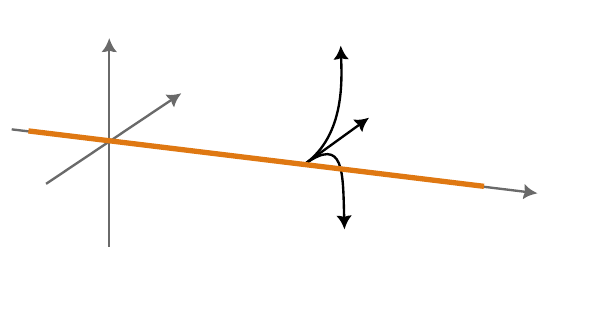}}%
    \put(0.77459999,0.25948706){\color[rgb]{0.8745098,0.47058824,0.07058824}\makebox(0,0)[t]{\lineheight{0.80000001}\smash{\begin{tabular}[t]{c}$N$\end{tabular}}}}%
    \put(0.92942566,0.19959797){\color[rgb]{0,0,0}\makebox(0,0)[t]{\lineheight{0.80000001}\smash{\begin{tabular}[t]{c}0\end{tabular}}}}%
    \put(0.31745495,0.3710075){\color[rgb]{0,0,0}\makebox(0,0)[t]{\lineheight{0.80000001}\smash{\begin{tabular}[t]{c}1\end{tabular}}}}%
    \put(0.15681987,0.44877159){\color[rgb]{0,0,0}\makebox(0,0)[t]{\lineheight{0.80000001}\smash{\begin{tabular}[t]{c}2\end{tabular}}}}%
    \put(0,0){\includegraphics[width=\unitlength,page=2]{weighting.pdf}}%
  \end{picture}%
\endgroup%

    \caption{The elements of the weighted normal bundle can be visualized as arrows based at the support and curved according to the weights, drawn here for the standard weighting $\cE^{(0,1,2)}$ along $N=\supp\cE^{(0,1,2)}$. A weighted vector in $\nu\cE^{(0,1,2)}$ with coordinates $(y_1,y_2,y_3)$ corresponds to the line $\{\lambda\cdot (y_1,y_2,y_3)\;|\;\lambda\in[0,1]\}$ traced out by the action associated with the weights. The coordinates correspond precisely to the end-point of the line, giving an identification $\nu{\cE^{(0,1,2)}}\simeq\R^3$.}
    \label{fig:weighting}
\end{figure}

\begin{npar}[Properties of weighted normal bundles]\label{props-weighted-normal} Let $\cW$ be of order $r$.
\begin{enumerate}
\item If the lifts $f^{(j)}$ for all $f\in\cW_j$ agree on some elements of $\cW$ up to $j=r$, then they must also agree for all $j>r$ by Corollary~\ref{cor:mult-after-order} and the product rule for lifts. We may thus equivalently define the weighted normal bundle as the quotient of $p^{(r,\infty)}(\cW)\subseteq T^{(r)}M$ under the relation
$$ q_1\sim q_2 :\iff f^{(j)}(q_1)=f^{(j)}(q_2) \text{ for all } 0\leq j\leq r, f\in\cW_j.$$
\item In local coordinates of $T^{(r)}M$ induced by a chart $\chi=(x_1, ..., x_m):U\to\R^m$ on $U\subseteq M$ adapted to the weighting $\cW$, it therefore suffices to consider only the local generators of $\cW_j$:
$$ q_1\sim q_2 \iff \x{i}{w_i}(q_1)=\x{i}{w_i}(q_2) \text{ for all } i=1, ..., m.$$
In other words, the weighted normal bundle only remembers along each coordinate direction the first coefficient of a curve that is not forced to vanish by virtue of sitting in $\cW$. 
\item The weighted normal bundle thereby inherits a canonical smooth structure from $T^{(r)}M$. In concrete terms, charts $\chi:(x_1, ..., x_m):U\to\R^m$ on $U\subseteq M$ induce charts
$$\chi^{(\cW)}=\left(x_1^{(w_1)}, ..., x_m^{(w_m)}\right):\nu\cW|_U\to \R^m$$
by lifting each coordinate to the degree of its weight.

\item For a trivial weighting $\cW_N$, this definition reduces to the \textit{normal} normal bundle $\nu N\to N$.

\item The $\R$-action of $T^{(\infty)}M$ descends to $\nu\cW$, making it a graded bundle in the sense of Grabowski-Rotkiewicz as well. In local coordinates, it is given by
    \begin{equation}\label{eq:action-normal}
    \lambda\cdot\left(\x{1}{w_1}, ..., \x{m}{w_m}\right) := \left(\lambda^{w_1}\x{1}{w_1}, ..., \lambda^{w_r}\x{m}{w_r}\right).
    \end{equation}
This allows us to derive the expression
\begin{equation}\label{eq:inverse-coords-normal}
\left(\chi^{(\cW)}\right)^{-1}(y_1, ..., y_m) = \left[t\mapsto \chi^{-1}(t\cdot(y_1, ..., y_m))\right]
\end{equation}
for the inverse of the charts.
\item The projection $\pi:\nu\cW \to \supp\cW$ is given in charts by
$$
\left( \chi\circ\pi\circ\left(\chi^{(\cW)}\right)^{-1} (y_1, ..., y_m) \right)_i = \begin{cases}
y_i &\text{if }w_i=0,\\
0 &\text{else}
\end{cases}
$$
and is in particular a smooth fibration. When identifying the zero section of $T^{(\infty)}M$ with $M$, it is the result of acting on $\nu\cW$ by zero.
\item The weighted normal bundle construction is functorial: Given a weighted morphism $\phi:(M,\cW)\to(\widetilde M,\widetilde \cW)$, we get induced maps
\begin{align*}
\nu_\phi: \nu\cW&\to \nu{\widetilde \cW}\\
[\gamma] &\mapsto [\phi\circ\gamma]
\end{align*}
by composing a representing curve with $\phi$.
If $\phi$ is the identity map on a fixed manifold $M$, we will also denote this by $$\nu_{\widetilde \cW,\cW}: \nu\cW\to \nu{\widetilde \cW}.$$ Any induced map intertwines the actions and thus is a map of graded bundles. If $\chi$ and $\widetilde\chi$ are adapted coordinates on $M$ and $\widetilde M$, then the induced map locally takes the form $$
\left( \widetilde\chi^{(\widetilde\cW)}\circ\nu_\phi\circ
\left(\chi^{(\cW)}\right)^{-1}(y_1, ..., y_m)\right)_i = \frac{1}{\tilde w_i!} \left.\frac{d^{\tilde w_i}}{dt^{\tilde w_i}}\right|_{t=0}\phi_{\widetilde\chi\chi}(t\cdot(y_1, ..., y_m))_i,
$$
where $\phi_{\widetilde\chi\chi}:=\widetilde\chi\circ\phi\circ\chi^{-1}$ is the local representation of $\phi$.\qedhere
\end{enumerate}
\end{npar}

\begin{figure}
    \centering
    \Large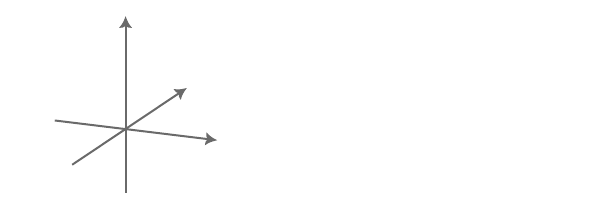
    \caption{Example of the map induced between the weighted normal bundles for weights $(1,1,2)$ and $(0,1,1)$ acting on an arbitrary blue weighted vector. The supports are the origin and the first axis, respectively, and highlighted in orange.}
    \label{fig:induced-example}
\end{figure}

\begin{example}\label{example:induced-map-weighted-normal-bundle}
Let $\cE$ and $\cE'$ be standard weightings on $\R^m$ with weights $w_i$ and $w'_i$ along the coordinates $x_i$ such that the identity $\Id:(\R^m,\cE)\to(\R^m,\cE')$ is a morphism of weightings, i.e. $w_i\geq w'_i$. In this case the induced map $\nu_{\cW',\cW}$ sends a weighted normal vector in $\nu\cW$ with coordinates $x_i^{(w_i)}$ to one in $\nu{\cW'}$ with coordinates $$x_i^{(w'_i)}=\begin{cases}
x_i^{(w_i)} &\text{if }w_i=w'_i,\\
0 &\text{else.}
\end{cases}$$
In other words, in coordinates the map $\nu_{\cW',\cW}$ is the canonical projection onto $$\{(y_1,...,y_m)\in\R^m\;|\; y_i=0 \text{ for all $i$ with }w_i\neq w'_i  \}.$$
An example for this is depicted in Figure~\ref{fig:induced-example}.
\end{example}

\startSubchaption{The dual filtration}\label{ssec:dual}

We now want to turn to the dual viewpoint, and study weightings in terms of higher order cotangent vectors. While this may be conceptually enlightening, our use of this is restricted to a convenient characterization of uniform alignment in Proposition~\ref{prop:char-uniform-align}. To start, we have objects dual to the higher tangent bundles:

\begin{definition}
Let $M$ be a manifold and $r\in\N_0\cup\{\infty\}$. The \textbf{$r$-th order cotangent bundle} is given by the jet space
$$T_{(r)}M:=\{ j^rf(p)\in J^r(M,\R)\;|\; f:M\to\R,\; p\in M\text{ with } f(p)=0\}$$
of functions on $M$ that vanish at the evaluation point.
It thus comes for each $r_1\geq r_2$ with smooth quotient maps
$$p_{(r_2,r_1)}:T_{(r_1)}M\to T_{(r_2)}M$$
and in particular the projection $p_{(r)}:=p_{(0,r)}$ to $T_{(0)}M=M$.
\end{definition}

Note that we can canonically identify $T_{(1)}M=T^*M$.

\begin{npar}[Properties of $T_{(r)}M$]\ 
\begin{enumerate}
\item Once again this carries a smooth structure for finite $r$ and otherwise a profinite structure, both induced by the smooth structure on finite jet space.

\item Unlike $T^{(r)}M$, $T_{(r)}M$ comes with the additional structure of a bundle of algebras with the multiplication determined by multiplying representative functions. Concrete expressions for finite $r$ can be obtained by the Leibniz rule.

\item Consider again the non-commutative algebra
$$\Lambda_r=J^r_{0,0}(\R,\R)=\{j^rf(0)\;|\;f(0)=0\}\simeq\R^r$$
where the multiplication is given by composing representatives. Note that this is different from the commutative multiplication given by multiplying representatives. This algebra acts on $T^{(r)}M$ and $T_{(r)}M$
by pre- and post-composition, respectively.

\item There are natural projections $\Lambda_{r}\to\Lambda_i$ respecting the algebra structure whenever $r\geq i$. Conversely, there is a natural inclusion $\Lambda_i\subseteq\Lambda_r$ respecting the algebra structure only when $i=1$, since otherwise composition creates higher-order terms.

\item Restricting the action to $\R=\Lambda_1\subseteq\Lambda_r$ just gives the scalar multiplication of the algebra structure on $T_{(r)}M$, making it much less interesting as a graded bundle.

\item Analogously to the situation for the cotangent bundle, this construction is not functorial, but does pointwise induce for every smooth map $\phi:M\to \widetilde M$ and $p\in M$ a smooth equivariant map
	\begin{align*}
		T_{(r)}\phi|_p:T_{(r)}\widetilde M|_{\phi(p)}&\to T_{(r)} M|_p\\ j^rf(\phi(p)) &\mapsto j^r(f\circ\phi)(p).
	\end{align*}
    If $\phi$ is a diffeomorphism, we do get a global $T_{(r)}\phi:T_{(r)}\widetilde M\to T_{(r)} M$.
    
\item Given a chart $\chi=(x_1,...,x_m):U\to\R^m$ on $U\subseteq M$, this can once again be used to induce a chart $$T_{(r)}\chi^{-1}:T_{(r)}U\to T_{(r)}\R^m$$ on $T_{(r)}M$ that sends $j^rf(p)$ to the jet of the local representative $f\chi^{-1}$ at $\chi(p)$. Here we use the canonical identification $$T_{(r)}\R^m\simeq\R^{m+{{r+m}\choose m}-1}$$ that arises from $J^r(\R^m,\R)\simeq\R^{m+{{r+m}\choose m}}$ and write
$$T_{(r)}\chi^{-1}=\left( x_{(1)},...,x_{(m)}, \{\partial x_{(\alpha)}\}_{1\leq|\alpha|\leq r} \right)$$
for the components, where the multi-index $\alpha\in\N_0^m$ labels the derivatives as usual. Since $x_{(j)}=x_j\circ p_{(r)}$, we may for convenience sometimes identify the $x_{(i)}$ with the original coordinates $x_i$ on $M$, and for concrete $\alpha$ like $(0,1,2)$ we may write $\partial x_{(\alpha)}=\partial x_1\partial^2 x_3$. Since derivatives are linear, these coordinates are vector bundle coordinates. The rank of the bundle is ${{r+m}\choose m}-1$.
\item An element $\lambda=(\lambda_1, ..., \lambda_r)\in\Lambda_r$ acts on $g\in T_{(r)}M$ as follows in the coordinates corresponding to small multi-indices:
\begin{align*}
\partial x_1(\lambda\cdot g) &=
    \lambda_1\cdot\partial{x_1}(g)\\
\partial x_1 \partial x_2 (\lambda\cdot g) &= 
    \lambda_1\cdot\partial{x_1}\partial x_2(g) \;+\; \lambda_2\cdot\partial{x_1}(g)\cdot \partial{x_2}(g)\\
\partial x_1 \partial x_2 \partial x_3 (\lambda\cdot g) &= 
    \lambda_1\cdot\partial{x_1}\partial x_2\partial x_3(g)\\
    &\quad +\lambda_2\cdot\left( \partial{x_1}\partial x_2(g)\cdot \partial{x_3}(g) +\partial{x_1}\partial x_3(g)\cdot \partial{x_2}(g) + \partial{x_2}\partial x_3(g)\cdot \partial{x_1}(g) \right)\\
    &\quad +\lambda_3\cdot\partial{x_1}(g)\cdot \partial{x_2}(g)\cdot \partial{x_3}(g)
\end{align*}
A general expression results from a multivariate version of Faà di Bruno's formula, see e.g.~\cite[][Cor. 2.10]{CS96}.\qedhere
\end{enumerate}
\end{npar}

The duality between these higher order objects is reflected in the following pairings:

\begin{definition}
Let $M$ be a manifold, $r\in\N_0\cup\{\infty\}$ and $i\leq r$. On the fibered product
$$
T_{(r)}M\times_M T^{(r)}M = \{ (j^rf(p),[\gamma])\in T_{(r)}M\times T^{(r)}M\;|\; p=\gamma(0) \}
$$
we define the maps
\begin{align*}
\langle\cdot,\cdot\rangle_i:\; T_{(r)}M\times_M T^{(r)}M\; &\to\; \Lambda_i\\
(j^rf(p),[\gamma])\; &\mapsto\; j^i(f\circ\gamma)(0),
\end{align*}
\end{definition}

\begin{npar}[Remarks]\ 
\begin{enumerate}
\item Since $T^{(r)}M$ is not a vector bundle for $r>1$, these pairings are of course not generally bilinear. However, they are linear in the first argument and $\langle\cdot,\cdot\rangle_1$ recovers the usual bilinear pairing of tangent and cotangent bundles.

\item These pairings encode the same structure as the lifts of functions, but more symmetrically:
Indeed the $j$-th component of $\langle j^rf(p),[\gamma]\rangle_i$ under the canonical identification $\Lambda_i\simeq \R^{i}$ is exactly the lift $f^{(j)}([\gamma])$.

\item The pairings are equivariant under the natural action of $\Lambda_r^2$ by pre- and post-composition, i.e. $$\langle\alpha\cdot j^rf(p), [\gamma]\cdot\beta\rangle_i=\alpha\cdot\langle j^rf(p), [\gamma]\rangle_i \cdot\beta$$
for $\alpha,\beta\in\Lambda_r.$\qedhere
\end{enumerate}
\end{npar}

The structure filtration represents the weighting in the dual picture. We can make this manifest by expressing it in terms of the infinite order cotangent bundle:

\begin{definition}\label{def:dual-filtration}
To every weighting $\cW\subseteq T^{(\infty)}M$ we assign the \textbf{dual filtration}
$$T_{(\infty)}M|_{\supp\cW}=j^{\infty}\cW_{0}\supseteq j^{\infty}\cW_{1}\supseteq ...$$
by setting for each $i\geq0$ 
$$j^{\infty}\cW_i:=\{j^{\infty}f(p) \;|\;  p\in \supp\cW \text{ and } f\in\cW_i \text{ with }f(p)=0\}.$$
\end{definition}

\begin{npar}[Properties of the dual filtration]\ 
\begin{enumerate}
\item The condition $f(p)=0$ in Definition~\ref{def:dual-filtration} is automatic for $i\geq1$ since $x\in\supp\cW$ guarantees that any $f\in\cW_i\subseteq\cW_1$ vanishes at $x$.
\item When fixing a basepoint $p\in M$, the fibers yield a multiplicative filtration $j^\infty\cW_\bullet|_p$ of ideals. This follows from the corresponding properties of the structure filtration.
\item Passing to $j^\infty\cW_\bullet$ does not lose any information as we can still go back by setting $\supp\cW=p_{(\infty)}(j^\infty\cW_0)$ and using the following formulas:
\begin{align*}
\cW&=\{q\in T^{(\infty)}M|_{\supp\cW}\;|\;  \langle g,q\rangle_i=0\text{ for all }i\in\N_0, \;g\in j^\infty\cW_{i}\text{ with }p_{(\infty)}(g)=p^{(\infty)}(q) \},\\
\cW_i &=\{f\in C^\infty(M)\;|\; j^\infty f(p)\in j^\infty\cW_i \text{ for all } p\in \supp\cW\}.
\end{align*}

\item Given that the pairing encodes lifts of functions, and $\cW_i$ consists of those functions whose lifts of degree below $i$ vanish, we have
\begin{equation}\label{eq:inclusion-equiv}
\begin{aligned}
j^{\infty}\cW_i\;\subseteq\;\{g\in T_{(\infty)}M|_{\supp\cW}\;|\; &\langle g,q\rangle_{i-1}=0\text{ for all }q\in\cW\\
&\text{ with } p_{(\infty)}(g)=p^{(\infty)}(q) \}
\end{aligned}
\end{equation}
for $i\geq1$. It is tempting to say that the inverse inclusion is also true, but for example the jet at the origin of the first coordinate function $x_1$ for the standard weighting $\cE^{(0,1)}$ lies in the right-hand side for $i=1$, but not in $j^\infty\cW_1$. This can be understood as a lack of holonomicity of that jet:
For any element $g$ of $j^\infty\cW_i$, there must exist a representing function $f$ whose close-by jets all pair to zero with elements of $\cW$, while the right-hand side asks this only pointwise.\qedhere
\end{enumerate}
\end{npar}

We have the following dual local model for weightings:

\begin{proposition}\label{prop:dual-weighting-local}
Let $\cW$ be a weighting over $M$ and consider weighted coordinates $\chi=(x_1,...,x_m)$ over $U\subseteq M$ with weight sequence $w\in\N_0^m$. It then holds that 
\begin{align*}
j^\infty\cW_i \cap T_{(\infty)}U &= \langle j^\infty x^\beta (p)\;|\;p\in\supp\cW\cap U,\; \beta\in\N_0^m\setminus\{0\} \text{ with } \beta w\geq i \rangle\\
&= \{ g\in T_{(\infty)}U|_{\supp\cW} \;|\; \partial x_{(\alpha)}(g)=0\text{ for all }\alpha\in\N_0^m\setminus\{0\} \text{ with }\alpha w<i  \}\\
&= \{ g\in T_{(\infty)}U \;|\; \partial x_{(\alpha)}(g)=0\text{ for all }\alpha\in\N_0^m\setminus\{0\} \text{ with }\alpha w<i\\
&\qquad\qquad\qquad\qquad\qquad\qquad\text{ and }x_{(j)}(g)=0\text{ for all $j$ with }w_j>0 \},
\end{align*}
where the angled brackets denote taking the generated ideal fiberwise.
\end{proposition}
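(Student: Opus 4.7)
The plan is to use adapted weighted coordinates $\chi:U\to\R^m$ to reduce everything to the standard weighting $\cE^w$ on an open subset of $\R^m$, where Proposition~\ref{prop:filtration-standard-weighting} gives an explicit description of the structure filtration: locally on $U$, $\cW_i$ is the ideal of $C^\infty(U)$ generated by the monomials $x^\beta$ with $\beta w\geq i$. I will then establish the three equalities in succession.

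For the first equality, the inclusion $\subseteq$ proceeds by taking $g=j^\infty f(p)\in j^\infty\cW_i\cap T_{(\infty)}U$. Since $\infty$-jets depend only on germs at $p$ and $f|_U\in(\cW|_U)_i$, one can write $f|_U=\sum_\beta x^\beta h_\beta$ locally as a finite sum over $\beta$ with $\beta w\geq i$; using that jet evaluation is a ring homomorphism gives $g=\sum_\beta j^\infty x^\beta(p)\cdot j^\infty h_\beta(p)$, which lies in the fiberwise ideal. For $\supseteq$, each generator $j^\infty x^\beta(p)$ arises from $x^\beta\in\cW_i|_U$, and products with arbitrary jets $j^\infty h(p)$ correspond to $j^\infty(h x^\beta)(p)$; bump-extending $h x^\beta$ from a neighborhood of $p$ in $U$ to a global element of $\cW_i$ that agrees with it near $p$ shows the jet lies in $j^\infty\cW_i$.

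For the second equality, the direction $\subseteq$ is a direct Leibniz-rule computation: for $p\in\supp\cW$ one has $x_j(p)=0$ whenever $w_j>0$, so $\partial^\alpha(h\cdot x^\beta)(p)$ decomposes into terms $\partial^\gamma h(p)\cdot\partial^\delta x^\beta(p)$ with $\gamma+\delta=\alpha$, and $\partial^\delta x^\beta(p)$ vanishes unless $\delta_j=\beta_j$ for every $j$ with $w_j>0$; this forces $\alpha w\geq\beta w\geq i$, so $\partial x_{(\alpha)}$ vanishes on every generator of the ideal whenever $\alpha w<i$. For the reverse $\supseteq$, given $g$ with $p_{(\infty)}(g)=p\in\supp\cW$ and $\partial x_{(\alpha)}(g)=0$ for all $\alpha w<i$, I will invoke Borel's theorem: for a finite set of monomials $x^\beta$ generating $\cW_i|_U$, one constructs smooth functions $h_\beta$ on $U$ whose Taylor series at $p$ are chosen so that $f:=\sum_\beta h_\beta x^\beta$ satisfies $j^\infty f(p)=g$; the local monomial description then gives $f\in\cW_i|_U$, and bump-extending to $M$ places $g$ in $j^\infty\cW_i$.

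The third equality is merely a reformulation of the restriction $g\in T_{(\infty)}U|_{\supp\cW}$: in adapted coordinates $\supp\cW$ is cut out by $\{x_j=0 : w_j>0\}$, so this membership is equivalent to the explicit equations $x_{(j)}(g)=0$ for all $j$ with $w_j>0$. The main obstacle in the proof is the reverse direction of the second equality, namely realizing a jet with prescribed vanishing of Taylor coefficients as the jet of a genuine element of $\cW_i$ rather than a mere formal power series; Borel's theorem together with the explicit monomial description of $\cW_i|_U$ is what allows this to be carried out constructively.
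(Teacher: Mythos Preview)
Your proof is correct. The first and third equalities match the paper's treatment. For the second equality, the paper takes a more algebraic route: rather than invoking Borel's theorem for the reverse inclusion, it observes that ``polynomials span all jets'' and reduces the equality to the biconditional
\[
\beta w\geq i \quad\iff\quad \partial x_{(\alpha)}(j^\infty x^\beta(p))=0 \text{ for all } \alpha\neq 0 \text{ with } \alpha w<i,
\]
which it then verifies by a direct computation (the $\Leftarrow$ direction simply takes $\alpha=\beta$). Your use of Borel's theorem to realize a jet with the prescribed vanishing as $j^\infty(\sum_\beta h_\beta x^\beta)(p)$ is a more analytic and explicit alternative: it does the same work as the paper's monomial reduction without having to justify why the monomial check suffices (which, strictly speaking, requires knowing that both sides are monomial ideals in the formal power series ring at $p$). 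Both approaches rest on the local description of $\cW_i$ from Proposition~\ref{prop:filtration-standard-weighting}, and both are complete.
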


\begin{proof}
The first equality holds using the local model for $\cW_i$ from Proposition~\ref{prop:filtration-standard-weighting}. The third equality holds because $\supp\cW$ is cut out in $U$ by $x_j=0$ for all $j$ with $w_j>0$.
We are thus left with the second equality. Using that polynomials span all jets, we consider the equivalent statement that for $p\in\supp\cW$ and $\beta\in\N_0^m\setminus\{0\}$
$$\beta w\geq i\;\iff\; \partial x_{(\alpha)}(j^\infty x^\beta(p))=0\;\text{ for all } \alpha\text{ with }|\alpha|\geq1,\;\alpha w<i.$$

\textit{Regarding $(\Rightarrow)$:} Consider a multi-index $\alpha$ with $|\alpha|\geq1$ and $\alpha w<i$.
If $\alpha\not\leq \beta$, it always holds that $\partial x_{(\alpha)}x^\beta$ vanishes and we are done. If instead $\alpha\leq\beta$, it is proportional to $x^{\beta-\alpha}(p)$. We must have $\beta w\geq i>\alpha w$, so there must be some $\beta_j>\alpha_j$ for $j\in\{1,...,m\}$ with $w_j\neq0$. $x^{\beta-\alpha}(p)$ then contains a positive power of $x_j(p)$, which vanishes by $p\in\supp\cW$.

\textit{Regarding $(\Leftarrow)$:} Assume this did not hold, i.e. $\beta w< i$. Then we can take the statement on the right hand side for $\alpha=\beta$ and obtain the contradiction $\partial x_{(\beta)}x^\beta(p)=0.$
\end{proof}

In particular, $j^\infty\cW_\bullet$ consists of subbundles of infinite rank (unless all weights vanish), but generally also infinite corank\footnote{One situation where the corank is finite is when $\supp\cW$ consists of a single point. Indeed, the holonomicity condition becomes trivial, making Equation~\eqref{eq:inclusion-equiv} into an equality. The fact that the tangential directions in $\supp\cW$ are what generates the infinite corank is also apparent in Example~\ref{ex:dual-filtration}.}.

\begin{example}\label{ex:dual-filtration}
Consider again the standard weighting $\cE:=\cE^{(0,1,2)}$ on $\R^3$ with coordinates $(x,y,z)$ from Example~\ref{ex:weightings}(2). By taking the local model of the structure filtration, we have for $p\in \supp\cE$ that
\begin{align*}
j^\infty\cE_0|_p &= T_{(\infty)}\R^3|_p,\\
j^\infty\cE_1|_p &= \langle\; j^\infty y(p),\; j^\infty z(p)\;\rangle,\\
j^\infty\cE_2|_p &= \langle\; j^\infty y^2(p),\; j^\infty z(p)\;\rangle,\\
j^\infty\cE_3|_p &= \langle\; j^\infty y^3(p),\; j^\infty yz(p),\; j^\infty z^2(p)\;\rangle,\\
j^\infty\cE_4|_p &= \langle\; j^\infty y^4(p),\; j^\infty y^2z(p),\; j^\infty z^2(p)\;\rangle.
\end{align*}
By the previous proposition, this is the same as
\begin{align*}
j^\infty\cE_0 &= \{ g\in T_{(\infty)}\R^3 \;|\; y(g)=z(g)=0 \},\\
j^\infty\cE_1 &= \{ g\in j^\infty\cE_0 \;|\; \partial_x^{n+1}(g)=0 \text{  for all } n\geq 0 \},\\
j^\infty\cE_2 &= \{ g\in j^\infty\cE_1 \;|\; \partial_x^n\partial_y(g)=0 \text{  for all } n\geq 0 \},\\
j^\infty\cE_3 &= \{ g\in j^\infty\cE_2 \;|\; \partial_x^n\partial_z(g)=\partial_x^n\partial^2_y(g)=0 \text{  for all } n\geq 0 \},\\
j^\infty\cE_4 &= \{ g\in j^\infty\cE_3 \;|\; \partial_x^n\partial^3_y(g)=\partial_x^n\partial_y\partial_z(g)=0 \text{  for all } n\geq 0 \}.
\end{align*}
where we use the notation $\left(x,y,z,\{\partial^{\alpha_1}_x\partial^{\alpha_2}_y\partial^{\alpha_3}_z\}_{\alpha\in\N_0^3\setminus\{0\}}\right)$ for the coordinates induced by $(x,y,z)$ on $T_{(\infty)}\R^3$.
\end{example}

\startSubchaption{The weighted conormal bundle}\label{ssec:weighted-conormal}

Dual to the weighted normal bundle, we have the following:

\begin{definition}
Let $\cW$ be a weighting. We define for $i\geq1$ the \textbf{degree $i$ weighted conormal bundle} as the quotient $$\nu^*_{(i)}\cW:=j^\infty\cW_i/\sim,$$
where we identify $j^\infty f_1(p)\sim j^\infty f_2(p)$ if and only if
$$\langle j^\infty f_1(p),[\gamma]\rangle_i\;=\;\langle j^\infty f_2(p),[\gamma]\rangle_i$$
for all $[\gamma]\in\cW$ with $\gamma(0)=p$.
The full \textbf{weighted conormal bundle} is the graded vector bundle 
$$\nu^*\cW:=\bigoplus_{i=1}^\infty \nu^*_{(i)}\cW.$$
\end{definition}

Intuitively, the weighted conormal bundle $\nu^*\cW$ contains the jets $j^\infty\cW_i$ of functions that vanish to a given weighted degree $i$, but only up to what can be distinguished by evaluation on elements of $\cW$. This is exactly dual to our definition of the weighted normal bundle $\nu\cW$ as the elements of $\cW$ up to what can be distinguished by elements of $\cW_i$.

Note that while each $j^\infty\cW_i$ is a bundle of algebra ideals, the multiplicativity of the filtration means that the product of any two weighted covectors vanishes in $\nu^*_{(i)}\cW$. This leaves $\nu^*_{(i)}\cW$ and $\nu^*\cW$ with only an interesting vector bundle structure. The grading on $\nu^*\cW$ refers to the fibers being graded vector spaces, not the weaker graded structure in the sense of Grabowski-Rotkiewicz as it exists on the dual $\nu\cW$.

Before giving more properties of the conormal bundle, we want to point out what its defining equivalence relation comes down to in local coordinates:

\begin{lemma}\label{lem:conormal-coords}
Let $\cW$ be a weighting with weighted coordinates $\chi=(x_1,...,x_m):U\to\R^m$ and fix $i\geq1$. Then elements $g_1$ and $g_2$ of $j^\infty\cW_i$ represent the same conormal in $\nu^*_{(i)}\cW$ if and only if
$$x_{(j)}(g_1)=x_{(j)}(g_2)$$
for all $j=1,...,m$ with $w_j=0$ and
$$\partial x_{(\alpha)}(g_1)= \partial x_{(\alpha)}(g_2)$$
for all multi-indices $\alpha$ with $\alpha w = i$ and such that $\alpha_j=0$ whenever $w_j=0$.
\end{lemma}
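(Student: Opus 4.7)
The plan is to work in the adapted coordinates $(x_1,\ldots,x_m)$ of the hypothesis and split the indices $\upto{m}=A_0\sqcup A_+$ with $A_0:=\{a:w_a=0\}$ and $A_+:=\{a:w_a>0\}$. Since $\nu^*_{(i)}\cW$ only identifies elements over a common basepoint $p\in\supp\cW\cap U$, and $x_{(j)}(g)$ records $x_j$ of that basepoint (automatically zero for $j\in A_+$ as $p\in\supp\cW$), the basepoint-agreement condition is exactly $x_{(j)}(g_1)=x_{(j)}(g_2)$ for $j\in A_0$. Fixing such a common $p$ and setting $g:=g_1-g_2\in j^\infty\cW_i|_p$ (well-defined since the fiber is a linear subspace), I reduce the lemma to showing that $\langle g,[\gamma]\rangle_i=0$ for every $[\gamma]\in\cW$ over $p$ is equivalent to the vanishing of $\partial x_{(\alpha)}(g)$ for each admissible $\alpha$.

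For any representative $f\in\cW_i$ of $g$, Definition~\ref{def:structure-filtration} of the structure filtration forces $f^{(j)}([\gamma])=0$ for $j<i$, so the pairing $\langle g,[\gamma]\rangle_i\in\Lambda_i$ collapses to its top component $f^{(i)}([\gamma])$. To expose the dependence on $[\gamma]$, I would invoke Proposition~\ref{prop:filtration-standard-weighting} to write locally
\begin{equation*}
f=\sum_{\beta\in S}g_\beta\cdot x^\beta,\qquad S:=\{\beta\in\N_0^m:\beta_a=0\text{ for }a\in A_0,\ \beta w\geq i\};
\end{equation*}
this is possible after absorbing any $x_a$-factor with $a\in A_0$ into the smooth prefactor $g_\beta$. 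Along $\gamma(t)$, each factor $\gamma_a(t)^{\beta_a}$ with $a\in A_+$ starts as $c_a^{\beta_a}t^{\beta_a w_a}$ where $c_a:=x_a^{(w_a)}([\gamma])$, and no $a\in A_0$ factor appears since $\beta_a=0$. Power-counting in $t$ then shows that only $\beta\in S$ with $\beta w=i$ contributes to the $t^i$-coefficient, and only via its leading term paired with the constant $g_\beta(p)$, yielding
\begin{equation*}
f^{(i)}([\gamma])=\sum_{\substack{\beta\in S\\ \beta w=i}}g_\beta(p)\prod_{a\in A_+}c_a^{\beta_a}.
\end{equation*}

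The final step is to identify the coefficient $g_\beta(p)$ intrinsically with the chart coordinate $\partial x_{(\beta)}(g)$. Leibniz-expanding $\partial^\beta f$ and evaluating at $p$ collapses to a single surviving term, because $\beta$ is supported on $A_+$ and $x_a(p)=0$ there kills every term in which any $x_a$-factor remains; this yields $\partial^\beta f(p)=\beta!\,g_\beta(p)$, so $g_\beta(p)$ equals $\partial x_{(\beta)}(g)$ up to a nonzero scalar dictated by the chart convention. Finally, each tuple $(c_a)_{a\in A_+}\in\R^{|A_+|}$ is realized by the explicit polynomial curve $\gamma_a(t):=x_a(p)+c_a t^{w_a}$ for $a\in A_+$ (extended constantly on $A_0$), which lies in $\cW$ by direct inspection of its coordinate lifts, and the monomials $\prod_{a\in A_+}c_a^{\beta_a}$ for distinct admissible $\beta$ are linearly independent as polynomials in the $c_a$. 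Together these imply that $f^{(i)}([\gamma])=0$ for all such $[\gamma]$ is equivalent to every $\partial x_{(\beta)}(g)$ vanishing, matching the second family of conditions in the lemma exactly. The main obstacle is the power-counting in the middle step: one must verify carefully that no $\beta$ with $\beta w>i$ contributes to the $t^i$-coefficient and that higher-order Taylor data of $\gamma$ or of $g_\beta$ cannot leak in, so that the coefficient identification is genuinely intrinsic.
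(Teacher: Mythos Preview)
Your proposal is correct and follows essentially the same route as the paper's own proof: reduce by linearity to a single $f\in\cW_i$ versus zero, expand $f$ via Proposition~\ref{prop:filtration-standard-weighting} with coefficients supported on the positively weighted directions, power-count along curves in $\cW$ to isolate the $\beta w=i$ terms, and then use linear independence of the resulting monomials in the $c_a=x_a^{(w_a)}([\gamma])$. Your treatment is in fact slightly more explicit than the paper's in two places: you spell out the identification $\partial^\beta f(p)=\beta!\,g_\beta(p)$ via the Leibniz expansion (the paper merely asserts the equivalence with $f_\alpha(p)=0$), and you exhibit the concrete polynomial curves realizing arbitrary $(c_a)$ rather than appealing to the general parametrization from Equation~\eqref{eq:curve-TrM}.
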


\begin{proof}
The first condition is simply checking that $g_1$ and $g_2$ both lie in the same fiber of $T_{(\infty)}M$, where all $x_{(j)}$ with $w_j\neq0$ must vanish anyway since elements of $j^\infty\cW_i$ must lie in the fibers over $\supp\cW$. Let us assume this is true for $p=p_{(\infty)}(g_1)=p_{(\infty)}(g_2)$ and consider the second condition.

Since both the pairing $\langle\cdot,\cdot,\rangle_i$ in the first entry and derivatives are linear, it is sufficient to consider the case where it holds for some $f\in\cW_i$ that $g_1=j^\infty f(p)$ and $g_2=j^\infty(0)(p)$. Using the definition of $\nu^*_{(i)}\cW$ and $\partial x_{(\alpha)}$, this leaves us with the following claim:
\begin{align*}
&f^{(i)}(q)=0 \text{ for all } q\in \cW|_p
\\
&\quad\iff\quad
\partial x^\alpha f(p) = 0 \text{ for all } \alpha\text{ with }\alpha w=i\text{ and }w_j=0\implies \alpha_j=0.
\end{align*}

Since $f\in\cW_i$, we can use Proposition~\ref{prop:filtration-standard-weighting} to locally write
\begin{equation}\label{eq:local-f-expr}
f(x)=\sum_{\alpha w\geq i} f_\alpha(x) \, x^\alpha
\end{equation}
for some smooth functions $f_\alpha\in C^\infty(U)$ with only finitely many $f_\alpha$ not vanishing. We can furthermore assume that every $f_\alpha$ such that there is a $j$ with $w_j=0$ and $\alpha_j>0$ vanishes, as these can be absorbed into terms with smaller $\alpha$.

Using this substitution, we can compute that the right-hand side of our claim is equivalent to
\begin{equation}\label{eq:lhs-to-show}
f_\alpha(p)\neq 0 \text{ for all } \alpha\text{ with }\alpha w=i\text{ and }w_j=0\implies \alpha_j=0.
\end{equation}

For the left-hand side, we can represent each $q$ by a curve $\gamma$ with $\gamma(0)=p$ and
\begin{equation}\label{eq:local-gamma-expr}
x_k(\gamma(t)) = \sum\limits_{j=w_k}^r t^j \x{k}{j}(q)
\end{equation}
according to Equation~\eqref{eq:curve-TrM} and the fact that $q\in\cW$ implies $\x{i}{j}(q)=0$ when $j<w_i$. This allows us to compute
\begin{align*}
f^{(i)}(q) &= \left.\frac{d^i}{dt^i}\right|_{t=0} f\circ\gamma(t) \\
&= \left.\frac{d^i}{dt^i}\right|_{t=0}  \sum_{\alpha w\geq i} f_\alpha(\gamma(t)) \;\prod_{k=1}^m x_k(\gamma(t))^{\alpha_k}\\
&= \left.\frac{d^i}{dt^i}\right|_{t=0} \sum_{\alpha w\geq i} f_\alpha(\gamma(t)) \;\prod_{k=1}^m \left(\sum_{j=w_k}^r t^j\, x_k^{(j)}(q) \right)^{\alpha_k} \\
&=i!\,\sum_{\alpha w= i} f_\alpha(p) \,  \prod_{k=1}^m \left(\sum_{j=w_k}^r x_k^{(j)}(q) \right)^{\alpha_k}.
\end{align*}
The first equality is just the definition of the $i$-th lift. The second and third equalities follow by substituting our expressions for $f$ and $\gamma$ from Eqs.\eqref{eq:local-f-expr} and~\eqref{eq:local-gamma-expr}. We can then collect the $t^j$ factors and observe that they start at order $\alpha w = i$. The fourth equality now follows by the product rule and all higher-order terms vanishing upon differentiating. We see that the last expression vanishes for \textit{all} $q\in\cW|_p$ if and only if Equation~\eqref{eq:lhs-to-show} holds, and can conclude the proof.
\end{proof}

\begin{npar}[Properties of the weighted conormal bundle]\ 
\begin{enumerate}
\item By Lemma~\ref{lem:conormal-coords}, a finite number of coordinates on $T_{(\infty)}M$ descend to coordinates on each $\nu^*_{(i)}\cW$. In particular, the degree $i$ weighted conormal bundle is a vector bundle of finite rank over $\supp\cW$ spanned by equivalence classes of jets of monomials
$x^\alpha$ where $\alpha w=i$ and $\alpha_j=0$ whenever $w_j=0$.

\item If $\cW_N$ is the trivial weighting over a closed submanifold $N$, then the usual conormal bundle $\nu^*N$ is canonically isomorphic to $\nu^*_{(1)}\cW_N$. In particular, the latter is of rank $\codim N$. We will see in Example~\ref{ex:conormal} that when the weighting is not trivial, the rank of $\nu^*_{(1)}\cW_N$ drops. The covectors of some coordinate directions move to higher degrees of the conormal bundle, making it necessary to consider the full conormal bundle as a graded object instead of just its first degree.

\item The pairing descends to well-defined maps
$$\langle\cdot,\cdot\rangle_i: \nu_{(i)}^*\cW\times_{\supp\cW}\nu\cW\to \Lambda_i.$$
In fact, their image is exactly the kernel $\ker(\Lambda_i\to\Lambda_{i-1})\simeq\R$, i.e. those elements of $\Lambda_i$ such that only the highest order derivative does not vanish. These maps are linear in the first argument and equivariant. They can further be assembled to a pairing
$$\langle\cdot,\cdot\rangle_\infty: \nu^*\cW\times_{\supp\cW}\nu\cW\to \Lambda_\infty$$
between the weighted conormal and normal bundles that is linear in the first argument and equivariant.

\item Indeed the weighted conormal and normal bundles are constructed by the smallest equivalence relations making the pairings 
$$\langle\cdot,\cdot\rangle_i: \nu_{(i)}^*\cW\times_{\supp\cW}\nu\cW\to \Lambda_i$$
non-degenerate in the following sense:
Whenever $\langle a,n \rangle_i = \langle b,n\rangle_i$ for all $n\in\nu\cW|_p$ for some $a,b\in\nu^*_{(i)}\cW|_p$, then already $a=b$. Similarly, whenever $\langle c,a \rangle_i = \langle c,b\rangle_i$ for all $c\in\nu^*_{(i)}\cW|_p$ for some $a,b\in\nu\cW|_p$, then also $a=b$.

\item As a consequence, one can show that
$$\nu^*_{(i)}\cW|_p=\operatorname{Hom}_i(\nu\cW|_p, \R),$$
where the right-hand side denotes the space of $i$-homogeneous maps with respect to the canonical $\R$-action on $\nu\cW$ and the multiplication on $\R$. As such,
$$\nu^*\cW|_p=\operatorname{Hom}(\nu\cW|_p,\Lambda_\infty)$$
where the right-hand side denotes equivariant maps intertwining the action on $\nu\cW$ with the graded action
$$s\cdot(\lambda_1, \lambda_2, \lambda_3, ...) = (s\lambda_1, s^2\lambda_2, s^3\lambda_3, ...)$$
of $s\in\R$ on $\Lambda_\infty$. Dually, we can write
$$\nu\cW|_p=\operatorname{Hom}(\nu^*\cW|_p,\Lambda_\infty),$$
where the right-hand side denotes algebra homomorphisms for the commutative multiplication on $\Lambda_\infty$.

\item As a quotient of a subset of $T_{(\infty)}M$, functoriality of the weighted conormal bundle fails in a similar way. Concretely, for every weighted morphism $\phi:(M,\cW)\to(\widetilde{M},\widetilde{\cW})$ and $p\in{M}$ we only get pointwise maps
\begin{align*}
\nu^*\widetilde{M}|_{\phi(p)} &\to \nu^*\cW|_{p}\\
[j^\infty f(\phi(p))] &\mapsto [j^\infty(f\circ\phi)(p)].
\end{align*}
If $\phi$ is a diffeomorphism, these do assemble into an global induced map $\nu^*\widetilde{M} \to \nu^*\cW$.\qedhere
\end{enumerate}
\end{npar}

\begin{example}\label{ex:conormal}
Consider again the standard weighting $\cE:=\cE^{(0,1,2)}$ on $\R^3$ with coordinates $(x,y,z)$. 
Compared to the dual filtration in Example~\ref{ex:dual-filtration}, taking the quotient has the effect of discarding all monomials with either a power of $x$ or whose weighted degree is strictly larger than required and we get the following basis:
\begin{align*}
\nu^*_{(1)}\cE &= \langle y\rangle_{C^\infty(\supp\cW)},\\
\nu^*_{(2)}\cE &= \langle y^2, z\rangle_{C^\infty(\supp\cW)},\\
\nu^*_{(3)}\cE &= \langle y^3, yz\rangle_{C^\infty(\supp\cW)},\\
\nu^*_{(4)}\cE &= \langle y^4, y^2z, z^2\rangle_{C^\infty(\supp\cW)}.
\end{align*}
Here, the angled brackets $\langle \cdot\rangle_{C^\infty(\supp\cW)}$ just denote the fiberwise linear span.
\end{example}

\begin{remark}
It may be tempting to define the weighted conormal bundle as the graded bundle of the filtration $j^\infty\cW_\bullet$. According to Equation~\eqref{eq:inclusion-equiv}, our definition of $\nu^*_{(i)}\cW$ is a quotient of $j^\infty\cW_i/j^\infty\cW_{i+1}$. This means that the pairing with $\nu\cW$ would still be well-defined, but not non-degenerate. Furthermore, the alternative definition would yield bundles of infinite rank at every degree: In the context of Example~\ref{ex:conormal}, the basis of e.g. $\nu^*_{(2)}\cE$ would additionally need to contain equivalence classes induced by monomials $x^ny^2$ and $x^nz$ for any $n\geq1$.
\end{remark}

\startSubchaption{Linear data of a weighting}\label{ssec:linear-data}

A weighting carries some data that is purely linear, taking the form of filtrations of the normal and conormal bundles of its support:

\begin{definition}
Let $\cW$ be a weighting over a manifold $M$. By applying the differential to the structure filtration, we obtain the \textbf{conormal filtration}
$$
\nu^* \supp\cW = d\cW_1\supseteq d\cW_2\supseteq \ldots \supseteq 0_{\supp\cW}$$
of $\cW$ by setting
$$d\cW_i:=\{d_pf\;|\; f\in\cW_i, p\in\supp\cW\}.$$

Dually, we define the \textbf{normal filtration}
$$ \nu\supp\cW\supseteq \ldots \supseteq \nu_{(-1)}\cW \supseteq \nu_{(0)}\cW = 0_{\supp_\cW} $$
by taking for all $p\in\supp\cW$ the annihilators
$$\nu_{(-i)}\cW|_p:=\{[v]\in\nu(\supp\cW)|_{p}\;|\; \omega(v)=0\text{ for all }\omega\in d\cW_{i+1}|_p \}.$$
\end{definition}

While the conormal filtration is only implicit in \cite{LM23}, we find it enlightening to introduce these on equal footing.

\begin{npar}[Properties of the (co)normal filtration]\label{props-conormal-filtration} Let $\cW$ be of order $r$.\nopagebreak
\begin{enumerate}\nopagebreak
\item In local coordinates $\{x_j\}_{j=1,...,m}$ of $M$ adapted to $\cW$ with weights $w_j$, Proposition~\ref{prop:filtration-standard-weighting} immediately gives 
\begin{align*}
d\cW_i|_p &= \langle d_px_j\;|\; w_j\geq i\rangle,\\
\nu_{(-i)}\cW|_p &= \langle [\partial x_j(p)] \;|\; 0<w_j\leq i\rangle.
\end{align*}
Whenever $i$ is larger than the order of $\cW$, we have that $d\cW_i=0$ and $\nu_{(-i)}\cW=\nu\supp\cW$. We also see that the set of weights and their multiplicity is already encoded in the linear data.
\item It follows that the filtrations $d\cW_i$ and $\nu_{(-i)}\cW$ are by smooth vector subbbundles of finite rank.
\item The space of sections of the conormal filtration is given by
\begin{equation*}
\Gamma(d\cW_i)\simeq \frac{\cW_i}{\cW_1^2\cap\cW_i}.
\qedhere
\end{equation*}
\end{enumerate}
\end{npar}

Note that the linear data is not in general sufficient to recover the weighting:

\begin{example}
Consider the standard weighting $\cE:=\cE^{(1,3)}$ on $\R^2$ with standard coordinates $(x,y)$ and the diffeomorphism $(u,v):\R^2\to\R^2$ defined by
$$u(x,y):= x,\qquad v(x,y):=y+x^2.$$
Let $\cW$ be the pull-back of $\cE$ along $(u,v)$. Thus $(u,v)$ are weighted coordinates for $\cW$, which is the subset of $T^{(\infty)}\R^2$ cut out by $$u^{(0)}=v^{(0)}=v^{(1)}=v^{(2)}=0.$$
A quick calculation shows this is equivalent to
$$x^{(0)}=y^{(0)}=y^{(1)}=y^{(2)}+2\left(x^{(1)}\right)^2=0$$
in the coordinates $(x,y)$, and we see in particular that $\cE\neq\cW$. However, $du=dx$ and $dv=dy$ holds on $\{0\}=\supp\cE=\supp\cW$, such that the conormal filtrations of $\cE$ and $\cW$ must be the same.
\end{example}

It is no coincidence that our counterexample is of order three:

\begin{proposition}
Let $\cW$ and $\widetilde{\cW}$ be weightings of order $r\leq2$ over the same manifold with the same conormal (or equivalently normal) filtration. Then $\cW=\widetilde{\cW}$.
\end{proposition}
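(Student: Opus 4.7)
The plan is to show the structure filtrations agree in all degrees, $\cW_i=\widetilde\cW_i$ for every $i\geq 0$, and then recover the weightings themselves via Equation~\eqref{eq:weighting-from-filtration}. Degree $i=0$ is trivial. For $i=1$, both $\cW_1$ and $\widetilde\cW_1$ equal the vanishing ideal of the common support $N:=\supp\cW=\supp\widetilde\cW$, which I can identify as follows: the conormal filtration lives over $\supp\cW$ as its base, so $d\cW_1=d\widetilde\cW_1$ as bundles forces the two supports to coincide, and then $\cW_1$ is pinned down because the first piece of the structure filtration of any weighting is the vanishing ideal of its support (as noted after Example~\ref{ex:weightings-filtration}).

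The heart of the argument is degree $i=2$. By multiplicativity, $\cW_1\cdot\cW_1\subseteq\cW_2$, so $\cW_1^2\cap\cW_2=\cW_1^2$, and the identification in Paragraph~\ref{props-conormal-filtration}(3) yields the short exact sequence
$$0\longrightarrow\cW_1^2\longrightarrow\cW_2\longrightarrow\Gamma(d\cW_2)\longrightarrow 0,$$
and analogously for $\widetilde\cW_\bullet$. Since $\cW_1=\widetilde\cW_1$ by the previous step we have $\cW_1^2=\widetilde\cW_1^2$, and by hypothesis $d\cW_2=d\widetilde\cW_2$. A direct extension argument now gives $\cW_2=\widetilde\cW_2$: every $f\in\widetilde\cW_2$ differs from an element of $\cW_2$ by something in $\widetilde\cW_1^2=\cW_1^2\subseteq\cW_2$, hence $f\in\cW_2$, and symmetrically.

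For degrees $i\geq 3$, which sit strictly above the order $r\leq 2$, Corollary~\ref{cor:mult-after-order} expresses $\cW_i$ as sums of products of elements of $\cW_{j_1},\dots,\cW_{j_n}$ with $j_k\leq r$ and $\sum j_k=i$. Since $\cW_j=\widetilde\cW_j$ has already been established for $j\leq 2$, these generating sets coincide, giving $\cW_i=\widetilde\cW_i$ for all $i$. Invoking Equation~\eqref{eq:weighting-from-filtration} then yields $\cW=\widetilde\cW$.

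The main obstacle is step $i=2$, and its soundness is exactly what forces the bound $r\leq 2$. The crucial point is that the multiplicative relation $\cW_1\cdot\cW_1\subseteq\cW_2$ is saturated at order two, so the quotient $\cW_2/(\cW_1^2\cap\cW_2)\cong\Gamma(d\cW_2)$ combined with knowledge of $\cW_1^2$ determines $\cW_2$ completely. At order $r=3$ the analogous step fails, because $\cW_3$ is no longer determined by $\cW_1^3$ together with $d\cW_3$: one must also resolve the ambiguity coming from $\cW_1\cdot\cW_2\subseteq\cW_3$, and this is precisely what the counterexample preceding the proposition exhibits by perturbing a weight-3 coordinate by a quadratic term in weight-1 coordinates.
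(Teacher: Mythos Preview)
Your proof is correct and takes a genuinely different route from the paper. The paper argues directly in the higher tangent bundle picture: it chooses adapted coordinates $x_j$ for $\cW$ and $\tilde x_j$ for $\widetilde\cW$ that agree to first order along the common support (possible because the conormal filtrations match), and then observes that for $r\leq 2$ the defining equations $x_j^{(i)}=0$ of a weighting involve only $i\leq 1$, hence are determined by this first-order agreement. Your argument instead works entirely with the structure filtrations, bootstrapping $\cW_1=\widetilde\cW_1$ to $\cW_2=\widetilde\cW_2$ via the short exact sequence from Paragraph~\ref{props-conormal-filtration}(3) and then invoking Corollary~\ref{cor:mult-after-order}. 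The paper's approach is shorter and more self-contained; yours is coordinate-free, makes the role of the bound $r\leq 2$ structurally transparent (the step $\cW_1^2\cap\cW_2=\cW_1^2$ is exactly what fails to determine $\cW_3$ at order three), and ties the argument directly to the counterexample preceding the proposition.
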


\begin{proof}
If $r=0$, then both $\cW$ and $\widetilde{\cW}$ must be the final weighting. If $r=1$, weightings are already determined by their support, so they must match. So we really only need to work for the case $r=2$: By taking linear combinations we can find adapted coordinates $x_j$ of $\cW$ and $\tilde{x}_j$ of $\widetilde{\cW}$ close to the support such that
$$x_j(p)=\tilde{x}_j(p) \quad\text{ and }\quad d_px_j=d_p\tilde{x}_j \quad\text{ for all }p\in\supp\cW$$
and the weight sequence $w$ matches both. Thus for $i\leq1$, any $x_j^{(i)}$ vanishes exactly when $\tilde{x}_j^{(i)}$ does. Since these are exactly the conditions that appear in the local description of $\cW$ and $\widetilde{\cW}$ when $r\leq2$, we must have $\cW=\widetilde{\cW}$.
\end{proof}

Using only the linear data of the weighting, we can define linearized versions of the weighted conormal and normal bundles:

\begin{definition}
Let $\cW$ be a weighting over a manifold $M$. The \textbf{linearized weighted conormal bundle} is defined as the associated graded bundle of the conormal filtration, i.e.
$$\nu^*_\text{lin}\cW:=\gr d\cW_\bullet = \bigoplus_{i=1}^\infty d\cW_i/d\cW_{i+1},$$
and the \textbf{linearized weighted normal bundle} is the associated graded bundle over the normal filtration, i.e.
$$\nu_\text{lin}\cW:=\gr \nu_{(\bullet)}\cW = \bigoplus_{i=1}^\infty \nu_{(-i)}\cW/\nu_{(-i+1)}\cW.$$
\end{definition}

\begin{npar}[Properties of the linearized bundles]\label{props-linearized-bundles}\ 
\begin{enumerate}
\item In local coordinates $\{x_j\}_{j=1,...,m}$ of $M$ adapted to $\cW$ with weights $w_j$ we have for every $p\in\supp\cW$ and $i\geq1$ that
\begin{align*}
(d\cW_i/d\cW_{i+1})|_p &= \langle [d_px_j]\;|\; w_j=i\rangle,\\
(\nu_{(-i)}\cW/\nu_{(-i+1)}\cW)|_p &= \langle [\partial x_j(p)]\;|\; w_j=i\rangle.
\end{align*}
We see in particular that $\nu^*_\text{lin}\cW\simeq\nu^*\supp\cW$ and $\nu_\text{lin}\cW\simeq\nu\supp\cW$, but not necessarily canonically so.
\item For a pair $\cW\subseteq\widetilde{\cW}$ of weightings, one can easily check using $\supp\cW\subseteq\supp\widetilde{\cW}$ and $\cW_i\supseteq\widetilde{\cW}_i$ that there are well-defined linear canonical maps
$$\nu_\text{lin}\cW\to\nu_\text{lin}\widetilde{\cW} \qquad\text{and}\qquad \nu^*_\text{lin}\widetilde{\cW}|_{\supp\cW}\to \nu^*_\text{lin}\cW$$
that are compatible with the grading.
\item Loizides and Meinrenken show in \cite[][Prop. 4.4]{LM23} that $\nu_\text{lin}\cW$ is the linearization of $\nu\cW$ as a graded bundle in the sense of Grabowski-Rotkiewicz. In particular, $\nu\cW\simeq\nu_\text{lin}\cW$ always holds as graded bundles, but not canonically.

\item Similarly, $\nu^*_\text{lin}\cW$ can be seen as the linearization of $\nu^*\cW$ in the sense that canonically $$\nu^*_\text{lin}\cW\simeq\frac{\nu^*\cW}{j^\infty(\cW_1^2)}.$$
For dimensional reasons, neither $\nu^*\cW$ nor $\nu^*_{(1)}\cW$ are in general isomorphic to $\nu^*_\text{lin}\cW$.

\item If the weighting $\cW_N$ is the trivial weighting over $N$, then canonically both $$\nu\cW_N\simeq\nu_\text{lin}\cW_N\simeq\nu N$$ and
\begin{equation*}
\nu^*_{(1)}\cW_N\simeq\nu^*_\text{lin}\cW_N\simeq\nu^* N.\qedhere
\end{equation*}
\end{enumerate}
\end{npar}

While linear data is not sufficient to check whether two weightings are the same in general, it \textit{does} suffice by dimensional reasons if we additionally know that one weighting is contained in the other:

\begin{lemma}\label{lem:linear-check}
Let $\cW\subseteq\widetilde{\cW}$ be a pair of connected weightings and assume that one (equivalently both) of the canonical maps
$$\nu_\text{lin}\cW\to\nu_\text{lin}\widetilde{\cW} \qquad\text{and}\qquad \nu^*_\text{lin}\widetilde{\cW}|_{\supp\cW}\to \nu^*_\text{lin}\cW$$
is an isomorphism. Then it must hold that $\cW=\widetilde{\cW}$.
\end{lemma}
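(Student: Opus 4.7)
The plan is to leverage the local coordinate descriptions of the linearized bundles to force the weight sequences of $\cW$ and $\widetilde\cW$ to coincide, and then use a dimension-count argument together with connectedness to upgrade the inclusion $\cW\subseteq\widetilde\cW$ to an equality.

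First I would exploit the local formula from Paragraph~\ref{props-linearized-bundles}(1): the $i$-th graded piece of the linearized normal bundle has fiber rank equal to $\#\{j : w_j = i\}$. Since the canonical map $\nu_\text{lin}\cW\to\nu_\text{lin}\widetilde\cW$ is a graded isomorphism over $\supp\cW$, the multisets $\{w_j\}$ and $\{\tilde w_j\}$ must coincide. In particular $\codim\supp\cW = \codim\supp\widetilde\cW$, so the two supports have the same dimension at every point of $\supp\cW$. Combined with the inclusion $\supp\cW\subseteq\supp\widetilde\cW$ from $\cW\subseteq\widetilde\cW$, this makes $\supp\cW$ both open (by the dimension count, using that it is a closed embedded submanifold of equal dimension inside $\supp\widetilde\cW$) and closed in $\supp\widetilde\cW$. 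Since $\widetilde\cW$ is connected, so is its image $\supp\widetilde\cW=p^{(0,\infty)}(\widetilde\cW)$, giving $\supp\cW=\supp\widetilde\cW$.

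Next I would pass to a finite order $R$ large enough that both weightings are pulled back from embedded submanifolds $\cW_R\subseteq T^{(R)}M$ and $\widetilde\cW_R\subseteq T^{(R)}M$. These are closed embedded submanifolds with codimensions $\sum_j w_j$ and $\sum_j \tilde w_j$ respectively, which agree because the weight multisets match. Since $\cW\subseteq\widetilde\cW$ implies $\cW_R\subseteq\widetilde\cW_R$, and both are closed embedded submanifolds of $T^{(R)}M$ of the same dimension, $\cW_R$ is an open-and-closed subset of $\widetilde\cW_R$. Connectedness of $\widetilde\cW_R$ (inherited from that of $\widetilde\cW$) then forces $\cW_R=\widetilde\cW_R$, and pulling back to $T^{(\infty)}M$ yields $\cW=\widetilde\cW$.

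The whole argument rests on taking the hypothesis seriously \emph{at the level of graded ranks}: the isomorphism on linearizations is stronger than an equality of total ranks, and it is this graded information that pins down the weight multiset and thus the codimension in $T^{(R)}M$. The step I expect to require the most care is justifying that the iso of linearized normal bundles really forces the graded rank match $\#\{j:w_j=i\}=\#\{j:\tilde w_j=i\}$ (rather than something weaker) and tracking the compatibility of the canonical map with the gradings of Paragraph~\ref{props-linearized-bundles}(2). The rest is essentially a connectedness-plus-dimension argument, with the only subtlety being the bookkeeping that the chosen level $R$ works for both weightings simultaneously.
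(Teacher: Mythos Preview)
Your proposal is correct and follows essentially the same approach as the paper: use compatibility of the canonical map with the grading to conclude that the weight multisets (hence the codimensions in $T^{(\infty)}M$) agree, and then invoke connectedness together with the inclusion $\cW\subseteq\widetilde\cW$ to force equality. The paper's proof is considerably terser---it jumps directly from ``same weight sequence'' to ``same codimension in $T^{(\infty)}M$'' and concludes, without separately arguing that the supports coincide or passing explicitly to a finite level---but your more detailed treatment of these intermediate steps is entirely in line with what the paper has in mind.
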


\begin{proof}
Since the canonical maps are compatible with the gradings, we know that the ranks of each degree of the linearized bundles must match. By \ref{props-linearized-bundles}(1), this means that $\cW$ and $\widetilde{\cW}$ can be equipped with the same weight sequence and in particular have the same codimension in $T^{(\infty)}M$. If they are connected, this and $\cW\subseteq\widetilde{\cW}$ means they must coincide.
\end{proof}

\startSubchaption{Compatibility between weightings}\label{ssec:compatibility}

We ultimately want to consider blow-ups of families of weightings over the same manifold. In order for these to be well-behaved, we will need to assume compatibility conditions.

\begin{definition}\label{def:weighting-compat}
Let $\{\cW_\alpha\}_{\alpha=1,...,d}$ be a family of weightings on $M$. We say these weightings
\begin{enumerate}
    \item \textbf{intersect cleanly} if they intersect cleanly when regarded as submanifolds of $T^{(r)}M$, where $r$ is at least as large as the order of each $\cW_\alpha$,
    \item are \textbf{aligned} if around every point $p\in M$ there exists a chart $\chi:U\to\R^m$ that is simultaneously adapted to every weighting $\cW_\alpha$,
    \item are \textbf{uniformly aligned} if the weights $w_{\alpha,i}$ under these simultaneously aligned charts further satisfy for every $\alpha,\beta\in A$ and $i\in\{1,...,m\}$ that $w_{\alpha,i}=w_{\beta,i}$ whenever both sides are non-zero.
\end{enumerate}
\end{definition}

To avoid ambiguity, clean intersection of submanifolds is defined in Appendix~\ref{app:manifold-intersections}. There, we also define stronger notions (intersection like coordinate subspaces and transversal intersection) and recall a criterion for clean intersections in terms of vanishing ideals, all of which will be relevant in this \subchaption.

Note that uniform alignment corresponds to existence of \textit{uniform} weights $w_i$ such that $w_{\alpha,i}\in\{w_i,0\}$. It is easy to see that uniformly aligned weightings are always aligned, and aligned weightings always intersect cleanly.

The intersection properties of weightings are often related to those of their supports and vice versa:

\begin{lemma}\ \label{lem:intersection-props-supports}
\begin{enumerate}
\item The supports of any aligned family of weightings must intersect like coordinate subspaces.
\item The supports of any cleanly intersecting family of weightings must themselves intersect cleanly.
\item Every family of weightings with transversely intersecting supports is uniformly aligned.
\end{enumerate}
\end{lemma}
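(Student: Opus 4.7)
The plan is to handle each item separately. For (1), given the family is aligned, pick a chart $\chi=(x_1,\ldots,x_m)$ near any $p \in M$ simultaneously adapted to every $\cW_\alpha$ with weights $w_{\alpha,i}$. By the local model of adapted coordinates, $\supp\cW_\alpha$ is locally cut out by $\{x_i = 0 : w_{\alpha,i}>0\}$, a coordinate subspace in $\chi$. Hence the family of supports is simultaneously realized as coordinate subspaces, which is the definition of intersecting like coordinate subspaces.

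For (2), view $M$ as the zero section of $T^{(r)}M$ for $r$ bounding the orders of the $\cW_\alpha$. The central observation is the set-theoretic identity $\supp\cW_\alpha = \cW_\alpha \cap M$. In any chart adapted to $\cW_\alpha$ alone, both $\cW_\alpha$ and $M$ are coordinate subspaces of $T^{(r)}M$, so they intersect cleanly, yielding in particular $T_p\supp\cW_\alpha = T_p\cW_\alpha \cap T_pM$ for every $p \in \supp\cW_\alpha$. Let $Z := \bigcap_\alpha \cW_\alpha$; by hypothesis $Z$ is a smooth submanifold with $T_pZ = \bigcap_\alpha T_p\cW_\alpha$, and $Z \cap M = \bigcap_\alpha \supp\cW_\alpha$. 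To promote this to a clean intersection on $M$, I would use the reparametrization $\R$-action on $T^{(r)}M$: each $\cW_\alpha$ (hence $Z$) is $\R$-invariant, and the action by $0$ is a smooth retraction $Z \to Z \cap M$ onto the fixed set, presenting $Z \cap M$ as a smooth submanifold. For the tangent space equality, given $v \in T_pZ \cap T_pM$ lift to a curve $\gamma$ in $Z$ through $p$ with tangent $v$; the curve $\tilde\gamma := p^{(0,r)} \circ \gamma$ lies in $M$ by construction and in $Z$ by $\R$-invariance, with tangent $(p^{(0,r)})_* v = v$ at $0$ since $v$ is already horizontal. Hence $v \in T_p(Z \cap M)$, so
\[ T_p(Z \cap M) = T_pZ \cap T_pM = \bigcap_\alpha(T_p\cW_\alpha \cap T_pM) = \bigcap_\alpha T_p\supp\cW_\alpha, \]
establishing clean intersection of the supports.

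For (3), transverse intersection of supports at $p$ yields coordinates on $M$ in which each $N_\alpha := \supp\cW_\alpha$ is a coordinate subspace $\{x_i=0 : i \in I_\alpha\}$ with the index sets $I_\alpha$ pairwise disjoint. For each $\alpha$, existence of adapted coordinates for $\cW_\alpha$ supplies functions $\{y^{(\alpha)}_a\}_{a \in I_\alpha}$ vanishing on $N_\alpha$ with positive weights $w_{\alpha,a}$. The crucial observation is that whether a chart is adapted to $\cW_\alpha$ depends only on its positively-weighted coordinate components: the defining equations $(y^{(\alpha)}_a)^{(j)}=0$ for $j < w_{\alpha,a}$ involve only these functions, so the weight-$0$ tangent components of the chart may be chosen freely as long as the whole still forms a valid chart. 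I would therefore assemble a chart $\tilde\chi$ from all $\{y^{(\alpha)}_a\}_{\alpha,a}$ together with coordinates $\{z_j\}$ along $\bigcap_\alpha N_\alpha$. Transversality ensures that the differentials at $p$ form a basis, so $\tilde\chi$ is a chart; by the crucial observation it is adapted to every $\cW_\alpha$ simultaneously. Since the $I_\alpha$ are disjoint, at each coordinate index of $\tilde\chi$ at most one of the $w_{\alpha,\bullet}$ is non-zero, so the uniform alignment condition is vacuously satisfied.

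I expect (2) to be the main obstacle, since cleanness in $T^{(r)}M$ does not pass to $M$ by pure tangent-space arithmetic alone: one needs both the submanifold structure of $\bigcap_\alpha \supp\cW_\alpha$ and curves realizing the intersected tangent spaces. Both come from the $\R$-invariance of weightings --- via the retraction by $0$ and the horizontal projection of curves, respectively --- a structural feature absent from generic clean-intersection lemmas. Parts (1) and (3) are more routine, with (3) relying on the observation that adaptation to a weighting is indifferent to the weight-$0$ components of the chart.
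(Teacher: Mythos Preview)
Your proof is correct and follows essentially the same approach as the paper's. Part (1) matches verbatim; part (3) is the same assembly-of-normal-coordinates argument with your ``crucial observation'' being exactly what the paper uses. For part (2), both arguments exploit the zero section and $\R$-invariance: the paper lifts $v\in\bigcap_\alpha T_p\supp\cW_\alpha$ along the zero section into $\bigcap_\alpha T_{0_p}\cW^{(r)}_\alpha$, invokes cleanness there, and projects back, while you take a curve in $Z$ and project via $p^{(0,r)}$ --- these are dual formulations of the same idea, and your version is arguably a touch more explicit about why the intersection of supports is itself a submanifold.
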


The third statement can be understood as a consequence of weightings with transversely intersecting supports divvying up the normal bundle between themselves.

\begin{proof}\ 

\textit{Regarding (1)}:
This follows immediately from the simultaneous local model of an aligned family of weightings.

\textit{Regarding (2)}:
Let $r$ be the order at which $\cW^{(r)}_\alpha:= p^{(r)}(\cW_\alpha)$ are cleanly intersecting submanifolds. Since the $\cW_\alpha$ are weightings, $\cW^{(r)}_\alpha$ must contain the zero section in $T^{(r)}M$ over $\supp\cW_\alpha$. For any vector $v\in \cap_{\alpha\in A} T_p\supp\cW_\alpha$, we can thus find a preimage $v_0$ in $\cap_{\alpha\in A}T_{0_p}\cW^{(r)}_\alpha$ under $p^{(r)}$ by lifting along the zero section. Due to cleanness of the intersection, this preimage lies in $T_{0_p}\left(\cap_{\alpha\in A}\cW^{(r)}_\alpha\right)$. Its image $v$ therefore lies in $T_{p}\left(\cap_{\alpha\in A}\supp\cW_\alpha\right)$.

\textit{Regarding (3)}:
Let $p$ be a point in the intersection of some of the supports. Consider adapted local coordinates around $p$ for each of the weightings. The coordinate directions with non-vanishing weights must all have linearly independent derivatives at $p$ by assumption, and thus can be extended to a full set of coordinates around $p$. Since each weighting is cut out by lifts of coordinate components with non-vanishing weights, these new coordinates are aligned with each of the weightings. The alignment is uniform, since any component appears in the vanishing ideal of at most one of the supports.
\end{proof}

Let us consider a number of examples and counterexamples:

\begin{examples}\ \label{ex:weighting-compatibility}

\begin{enumerate}
    \item The initial and final weighting are uniformly aligned with any other weighting.
    \item Any family of standard weightings on $\R^m$ is always aligned, and uniformly aligned exactly if there is no direction that gets assigned two different, non-zero weights.

    \item As a consequence of Lemma~\ref{lem:intersection-props-supports}(2), trivial weightings along any collections that do not intersect cleanly give a counterexample to cleanly intersecting weightings.

    \item As a consequence of Lemma~\ref{lem:intersection-props-supports}(1), considering trivial weightings along $n+1$ different lines in $\R^n$ that intersect in a single point (where $n>1$) yields an example of cleanly intersecting weightings that are not aligned.

    \item In Def.~\ref{def:filtered-weighted-building-set}, we will see that the collection of diagonals in the $s$-fold power of a filtered manifold supports a collection of weightings that is not, in general, aligned. Consider as a simple case the configuration space $M^3$ of points on the interval $M=(0,1)$ and observe that the three diagonals $$\Delta_i=\{(x_1,x_2,x_3)\;|\;x_k=x_l \text{ for }k,l\neq i\} \qquad\text{for }i\in\{1,2,3\}$$ intersect cleanly in the \textit{small} diagonal $\{(x,x,x)\;|\; x\in M\}$, but not like coordinate subspaces. The point of the notion of \textit{building set} from \Chaption~\ref{sec:building-sets} will be that blow-ups are still well-behaved when sufficiently many subsets of the collection are aligned, such as any pair of diagonals in the case of $M^3$.

    \item\label{ex:not-aligned} In the previous examples, alignment fails already since the supports do not intersect like coordinate subspaces. But it can also fail due to incompatibility of the higher-order data:
    Consider the weightings $\cW_A,\cW_B$ and $\cW_C$ on $\R^3$ from Figure~\ref{fig:not-aligned}. Observe that each of these has a single coordinate direction with a maximal weight 2. Assume there were adapted coordinates around the point $p$ in the support of all three weightings. For each $I\in\{A,B,C\}$, the maximal weight forces the existence of one coordinate direction whose differential at $p$ is proportional to $d_p f$ for any $f\in\cW_{I,2}$. Since the weightings are standard weightings, we can pick for $f$ in particular the functions $x_3, x_2$ and $x_2+x_3$, respectively. Thus $\cW_A$ forces the existence of a coordinate component with differential proportional to $d_p x_3$, $\cW_B$ forces one with $d_p x_2$ and $\cW_3$ forces $d_p(x_2+x_3)$. None of these covectors are proportional to each other and thus must arise from three different coordinate components. But since they are linearly dependent, this is impossible! This also provides a counterexample to the converse directions of Lemma~\ref{lem:intersection-props-supports}(1) and~(2).
    \item One can easily pick standard weightings such as $\{\cE^{(0,1,1)},\cE^{(1,1,0)}\}$ to obtain examples of uniformly aligned families that do not have transversely intersecting supports. These are counterexamples to the converse direction of Lemma~\ref{lem:intersection-props-supports}(3).\qedhere
\end{enumerate}
\end{examples}

\begin{figure}
    \centering
    \Large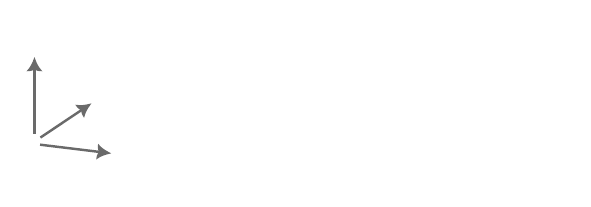
    \caption{We define three weightings supported over the coordinate subspaces $\textcolor{purple}{A}, \textcolor{ForestGreen}{B}$ and $\textcolor{olive}{C=\{0\}}$ as follows: $\textcolor{purple}{\cW_A:=\cE^{(0,0,2)}}$, $\textcolor{ForestGreen}{\cW_B:=\cE^{(0,2,1)}}$ and $\textcolor{olive}{\cW_C:=R\cdot\cE^{(1,1,2)}}$, where $R$ is a rotation of 45 degrees around the $x_1$-axis. These weightings intersect cleanly and have supports that intersect like coordinate subspaces, but are not aligned.}
    \label{fig:not-aligned}
\end{figure}

\begin{remark}
For comparison, the \textbf{multi-weightings} from ~\cite{LM23} are equivalent to a finite family $\{\cW_\alpha\}_{\alpha=1..k}$ of weightings over a manifold $M$ satisfying the following: There is a tableau $\{w_{\alpha j}\}_{\alpha=1..k,j=1..m}$ of non-negative weights such that around every $p\in M$ there is a neighbourhood $U$, an open $V\subseteq\R^m$ and charts $\chi:U\to V$ such that $\chi_*(\cW_\alpha|_U)=\cE^{(w_{\alpha1},...,w_{\alpha m})}|_V$. In other words, the table of weights gives a simultaneous local model.

This is subtly stronger than just alignment of all $\cW_\alpha$: For example, consider the submanifolds
$$A=\{(x,y)\in\R^2\,|\, x=2\}\cup S^1 \quad\text{and}\quad B=\{(x,y)\in\R^2\,|\,y=2\}\cup S^1.$$ The trivial weightings $\cW_A$ and $\cW_B$ are aligned but are not a multi-weighting as the local models near the points $(1,0)$ and $(2,2)$ in $A\cap B$ do not match. In higher dimensions, this phenomenon also appears when assuming all supports are connected.

It should be noted that when submanifolds intersect cleanly in the language of Loizides and Meinrenken, we would instead say that they intersect like coordinate subspaces, compare Definition~\ref{def:transversal}.
\end{remark}

We conclude this \subchaption{} by observing that uniform alignment of nested weightings can be tested using only linear data. 

\begin{lemma}\label{lem:linear-unif-alignment-check}
Let $\cW\subseteq\widetilde{\cW}$ be weightings such that the canonical map $$\nu^*_{\mathrm{lin}}\widetilde{\cW}|_{\supp\cW}\to \nu^*_{\mathrm{lin}}\cW$$ is an injection. Then $\cW$ and $\widetilde{\cW}$ are uniformly aligned. 

Concretely, consider adapted coordinates $\{\tilde x_a\}_{a=1,...,m}$ of $\widetilde{\cW}$ with weights $\tilde w_a$. Close to any point $p\in\supp{\cW}$, the set $\{\tilde x_a\;|\; \tilde w_a\geq 1\}$ can be extended to adapted coordinates of ${\cW}$.
\end{lemma}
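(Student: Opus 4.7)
The plan is to show that the concrete claim implies the abstract one, and then to prove the concrete claim by extending the $\widetilde{\cW}$-adapted chart to a $\cW$-adapted one in which the extensions automatically carry $\widetilde{\cW}$-weight zero. Uniform alignment then follows since every coordinate either has matching nonzero weights for both weightings or carries $\widetilde{\cW}$-weight zero.

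First I would fix $p\in\supp\cW$ and an auxiliary $\cW$-adapted chart $\{y_b\}$ near $p$ with weights $w_b$, writing $n_i$ (resp.\ $\tilde n_i$) for the number of $b$ (resp.\ $a$) with $w_b=i$ (resp.\ $\tilde w_a=i$). The local model (Proposition~\ref{prop:filtration-standard-weighting}) gives $\tilde x_a\in\widetilde{\cW}_{\tilde w_a}\setminus\widetilde{\cW}_{\tilde w_a+1}$, while $\cW\subseteq\widetilde{\cW}$ implies $\widetilde{\cW}_i\subseteq\cW_i$ and hence $\tilde x_a\in\cW_{\tilde w_a}$. At each degree $i\geq 1$ the classes $[d_p\tilde x_a]$ with $\tilde w_a=i$ form a basis of $(d\widetilde{\cW}_i/d\widetilde{\cW}_{i+1})|_p$; by injectivity of the canonical map at degree $i$ they remain linearly independent in $(d\cW_i/d\cW_{i+1})|_p$, so I can pick $n_i-\tilde n_i$ of the $y_b$ with $w_b=i$ --- call them $f_{i,k}$ --- to complete to a basis. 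Combining all these with the $y_b$ of $w_b=0$, any linear dependence among their differentials at $p$, reduced modulo $d\cW_{k+1}|_p$ for the lowest weight $k$ with a nonzero coefficient, contradicts this basis property; the resulting functions thus form a chart $(z_c)_{c=1,\ldots,m}$ on a neighbourhood of $p$.

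I then assign each $z_c$ its natural weight ($\tilde w_a$, $i$, or $0$). Adaptedness to $\cW$ follows from a codimension argument: each lift $z_c^{(j)}$ below the assigned weight vanishes on $\cW$ because $z_c$ lies in the corresponding filtration level; the total number of such generators equals $\sum_b w_b$, matching the codimension of $\cW$; and Lemma~\ref{lem:lift-lin-dependence} shows that their differentials at a point on the zero section over $p$ are linearly independent, reducing degree by degree to linear independence of the $d_p z_c$. Lemma~\ref{lem:action-on-lift-deriv} together with $\R$-invariance of $\cW$ propagates this independence off the zero section, and matching codimensions then force the cut-out locus to agree with $\cW$ locally.

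Finally I would verify that each added $f_{i,k}$ has $\widetilde{\cW}$-weight zero. Otherwise $f_{i,k}\in\widetilde{\cW}_{\tilde s}$ for some $\tilde s\geq 1$: the case $\tilde s>i$ would place $[d_pf_{i,k}]$ in $d\cW_{i+1}|_p$, contradicting the basis extension; the case $\tilde s<i$ combined with $f_{i,k}\in\cW_i\subseteq\cW_{\tilde s+1}$ and injectivity forces $f_{i,k}\in\widetilde{\cW}_{\tilde s+1}$, iterating up to $\tilde s=i$; but then $[d_pf_{i,k}]$ must sit in the image of the injection at degree $i$, namely the span of the $[d_p\tilde x_a]$ with $\tilde w_a=i$, again contradicting the basis choice. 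The $y_b$ with $w_b=0$ satisfy $d_py_b\notin\nu_p^*\supp\cW\supseteq\nu_p^*\supp\widetilde{\cW}$ and so cannot lie in $\widetilde{\cW}_1$ either. Hence every extending coordinate has $\widetilde{\cW}$-weight zero and the chart witnesses uniform alignment. The subtlest step will be precisely this automatic $\widetilde{\cW}$-weight zero claim, where the degree-wise injectivity is pinned down to compare filtration levels; the $\cW$-adaptedness verification is the longest but only invokes standard machinery from \Chaption~\ref{sec:weightings}.
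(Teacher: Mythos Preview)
Your argument is correct and constructs the same chart as the paper—retain all $\tilde x_a$ with $\tilde w_a\geq 1$ and complete with suitable $\cW$-adapted coordinates—but verifies $\cW$-adaptedness by a different mechanism. The paper introduces an auxiliary weighting $\hat\cW$ for which the new chart is adapted by construction, observes $\cW\subseteq\hat\cW$, and then invokes Lemma~\ref{lem:linear-check} (nested weightings with isomorphic linearized conormals coincide) to conclude $\cW=\hat\cW$; this is quick but leans on that earlier lemma. You instead verify adaptedness directly via a codimension count and linear independence of lifted differentials, which is more self-contained but longer. Your explicit degree-by-degree basis extension is also more careful than the paper's terse basis exchange in $\nu^*_{\mathrm{lin}}\cW$, which really does need to respect the grading.

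Two simplifications are available. First, your final step is unnecessary: since the chart retains every $\tilde x_a$ with $\tilde w_a\geq 1$ and these alone cut out $\widetilde\cW$, the chart is automatically $\widetilde\cW$-adapted with all remaining coordinates receiving $\widetilde\cW$-weight zero by definition. (Your iteration there also writes $f_{i,k}\in\widetilde\cW_{\tilde s+1}$ where injectivity only yields $d_pf_{i,k}\in d\widetilde\cW_{\tilde s+1}|_p$; the latter still suffices for the contradiction, but the distinction matters.) Second, in your adaptedness step the appeal to Lemmas~\ref{lem:lift-lin-dependence} and~\ref{lem:action-on-lift-deriv} can be replaced by the observation that the $z_c$ form a chart on $M$, so all their lifts form a chart on $T^{(r)}U'$, whence the relevant differentials are linearly independent everywhere and the cut-out locus is a coordinate subspace of the right codimension containing $\cW$.
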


\begin{proof}
Take adapted coordinates $\{\tilde x_a\}$ of $\widetilde{\cW}$ as in the statement. Without loss of generality, reorder the $\tilde x_a$ such that the components with non-vanishing weights appear first. After reordering it thus holds for $k:=\codim\supp\widetilde{\cW}$ that $\tilde w_a\geq 1$ if and only if $a\leq k$. Now pick any coordinates $\{x_a\}_{a=1,...,m}$ around $p\in\supp{\cW}$ that are adapted to ${\cW}$ with weights ${w}_a$.

We know that the equivalence classes of $\{d_p x_a\}_{a=1,...,m}$ forms a basis of $\nu^*_{\mathrm{lin}}\cW$ and by the injectivity assumption that the equivalence classes of $\{d_p \tilde x_a\}_{a=1,...,k}$ are a linearly independent set. We can thus reorder the $x_a$ such that
$$\hat x_a:=\begin{cases}\tilde x_a & \text{if }a\leq k,\\ x_a & \text{else} \end{cases}$$
for $1\leq a\leq m$ still yields a basis. Moreover, the differentials must also be linearly independent before taking equivalence classes, which means that the $\hat x_a$ form a coordinate chart close to $p$. It is clear that these coordinates are adapted to $\widetilde{\cW}$ since we preserved all components that are normal to $\supp\widetilde{\cW}$. We will show that the coordinates are also adapted to $\cW$.

As an intermediary step, consider the local weighting $\hat\cW$ that is defined by declaring the $\hat x_a$ to be adapted coordinates with weights
$$\hat w_a:=\begin{cases}w_a & \text{if }a\leq k,\\ \tilde w_a & \text{else}. \end{cases}$$
We know that $\cW\subseteq\hat\cW$ since $\hat\cW$ is constructed by some coordinates adapted to $\cW$, and some coordinates adapted to $\widetilde{\cW}\supseteq\cW$. By Lemma~\ref{lem:linear-check}, it suffices to check that the canonical map
$$\phi: \nu^*_\text{lin}\hat\cW|_{\supp\cW}\to\nu^*\cW$$
is an isomorphism\footnote{We can assume connectedness when applying the Lemma since we argue locally.} to conclude the proof by noting $\cW=\hat\cW$. However, we chose the $\hat x_a$ precisely such that the equivalence classes of their differentials form a basis in $\nu^*\cW$, so $\phi$ must indeed be an isomorphism.
\end{proof}

\startSubchaption{Intersection of weightings}\label{ssec:intersection}

It is natural to wonder whether the intersection of weightings can be a weighting itself. Let us first build intuition with examples:

\begin{examples}\ \label{ex:weighting-intersections}

\begin{enumerate}
\item Any intersection of standard weightings on $\R^n$ is easily seen to be the weighting where each coordinate direction gets assigned the maximum of all original weights along this direction.

\item However, in general we must concede that the intersection can entirely fail to be a weighting: Consider the standard weighting $\cE^{(0,0,1)}$ on $\R^3$ as well as $\Phi^*\cE^{(0,0,1)}$ where $\Phi$ is the diffeomorphism $$\Phi(x_1,x_2,x_3):=(x_1,x_2,x_3+x_1 x_2).$$ A brief calculation shows that the projection of their intersection to $\R^3$ gives the subset $\{x_3=x_1x_2=0\}$. This would have to be the support of the intersected weighting but is not even a submanifold.

\item\label{ex:clean-not-weighting} This is even true when the weightings intersect cleanly: Construct two weightings $\cW_C$ and $\cW_D$ supported over two points $p_1, p_2 \in \R^2$ as follows: $\cW_C$ assigns the weight 1 to the horizontal $x_1$-axis at both points, and the weight 2 to the vertical $x_2$-axis. $\cW_D$ makes the same assignments over $p_1$, but flips the weights over $p_2$. Their intersection is clean and given by the union of a weighting for weights $(1,2)$ over $p_1$ and a weighting for $(2,2)$ over $p_2$. Since the weight sequences do not match, this set is not globally a weighting.

\item\label{ex:weighting-not-clean} Conversely, if the intersection is a weighting, it does not need to be clean: Consider on $\R^2$ the weightings $\cE^{(0,2)}$ and $\Psi^*\cE^{(0,1)}$, where $\Psi$ is the diffeomorphism $$\Psi(x_1,x_2):=(x_1,x_2+x_1^2).$$ These are cut out of $T^{(\infty)}\R^2$ by the equations
    $$x_2^{(0)}=x_2^{(1)}=0 \qquad\text{and}\qquad x_2^{(0)}+\left(x_1^{(0)} \right)^2=0,$$ respectively. Intersecting these evidently gives the weighting $\cE^{(1,2)}$ with the vanishing ideal $$I_{\cE^{(1,2)}}=\langle x_2^{(0)}, x_2^{(1)}, x_1^{(0)} \rangle.$$ By Lemma~\ref{lem:clean-vanishing-ideal}, this would need to be included in $$I_{\cE^{(0,2)}}+I_{\Psi^*\cE^{(0,1)}}=\langle x_2^{(0)}, x_2^{(1)}, \left(x_1^{(0)}\right)^2 \rangle$$ to be a clean intersection, but $x_1^{(0)}$ is clearly missing.
    \qedhere
\end{enumerate}
\end{examples}

The problem of mismatching weight sequences from Example~\ref{ex:weighting-intersections}~(\ref{ex:clean-not-weighting}) turns out to be the only pitfall to keep in mind, as a clean intersection of weightings always yields a weighting locally:

\begin{proposition}\label{prop:clean-locally-weighting}
Let $\{\cW_\alpha\}_{\alpha\in A}$ be a family of weightings on $M$ that intersect cleanly in $\cW_0=\bigcap_{\alpha=1}^d\cW_\alpha$. Then $\cW_0$ is locally a weighting, i.e. around every point of $M$ there exists a neighbourhood $U$ such that $\cW_0\cap T^{(\infty)}U$ is a weighting. 

Furthermore, there exist coordinates $\chi=(x_1,...,x_m)$ adapted to $\cW_0$ such that each component $x_i$ is also part of an adapted coordinate system for some $\cW_\alpha$ with the same weight assigned to it. If $\cW_0$ is connected, then it is also globally a weighting.
\end{proposition}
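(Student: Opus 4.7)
The plan is to combine the new invariant characterization of weightings from Theorem~\ref{thm:new-criterion} with the relation between vanishing ideals of cleanly intersecting submanifolds. Throughout, we work locally around some fixed point $p \in M$ and implicitly pass to a finite order $r$ larger than the order of each $\cW_\alpha$, so that the $\cW_\alpha$ become honest embedded submanifolds of $T^{(r)}M$.

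First, I observe that $\cW_0$ is topologically closed as an intersection of closed sets, and that clean intersection gives it the structure of an embedded submanifold of $T^{(\infty)}M$. Shrinking to a small connected open neighbourhood, the hypotheses of Theorem~\ref{thm:new-criterion} apply up to verifying the characterization of $I_{\cW_0}$. By Lemma~\ref{lem:clean-vanishing-ideal}, clean intersection yields $I_{\cW_0} = \sum_{\alpha \in A} I_{\cW_\alpha}$ locally. The inclusion ``$\supseteq$'' in Equation~\eqref{eq:new-criterion} is immediate for any set; for ``$\subseteq$'' it therefore suffices to show that each $I_{\cW_\alpha}$ is contained in the right-hand side. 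This follows because Theorem~\ref{thm:new-criterion} applied to $\cW_\alpha$ expresses $I_{\cW_\alpha}$ as generated by lifts $f^{(i)}$ of functions $f \in C^\infty(M)$ with $f^{(j)}|_{\cW_\alpha} = 0$ for all $j \leq i$, and the inclusion $\cW_0 \subseteq \cW_\alpha$ guarantees that the same $f$ then also satisfies $f^{(j)}|_{\cW_0} = 0$ for $j \leq i$. Thus $\cW_0$ is locally a weighting.

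For the second statement, I would revisit the constructive part of the proof of Theorem~\ref{thm:new-criterion} more carefully. Pick adapted coordinates $\chi^\alpha = (x^\alpha_1, \ldots, x^\alpha_m)$ for each $\cW_\alpha$ near $p$, with weights $w^\alpha_a$. By Equation~\eqref{eq:adapted-coords-from-ideal}, the set $\bigcup_\alpha \{(x^\alpha_a)^{(j)} \mid j < w^\alpha_a\}$ generates $I_{\cW_0}$. Starting the decreasing induction of the proof of Theorem~\ref{thm:new-criterion} from this generating set rather than an arbitrary one, the algorithm at each step only replaces a lift by another lift of a function whose own higher lift already appears in the set; it never introduces new base functions. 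Consequently the final set $\cM_0$, whose members are the coordinates $\{x_b\}$ we need, consists entirely of functions of the form $x^\alpha_a$, and the weight assigned to $x_b = x^\alpha_a$ is exactly its original weight $w^\alpha_a$. Extending these coordinates by the submersion theorem as in the original proof finishes the construction of the desired adapted chart.

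Finally, if $\cW_0$ is connected, the local weight sequence is forced to be the same on every connected piece (after reordering), so the local weightings glue into a single global weighting on $M$. The main obstacle is the second statement: one has to be confident that the algorithm inside the proof of Theorem~\ref{thm:new-criterion} can be run starting from a preferred generating set while preserving the identities of both the base functions and the weights attached to them, rather than producing coordinates that are merely linear combinations of the $x^\alpha_a$.
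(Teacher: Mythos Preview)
Your proposal is correct and follows essentially the same route as the paper: both combine Theorem~\ref{thm:new-criterion} with Lemma~\ref{lem:clean-vanishing-ideal} to verify the ideal criterion for $\cW_0$, and both obtain the ``Furthermore'' statement by feeding the algorithm in the proof of Theorem~\ref{thm:new-criterion} with the lifts of adapted coordinate components of the $\cW_\alpha$. Your explicit observation that the inductive step only ever inserts \emph{lower} lifts of functions already present (hence never introduces new base functions nor raises their maximal lift degree) is exactly the point the paper sweeps under ``follows immediately''.
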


\begin{proof}
By Theorem~\ref{thm:new-criterion}, the vanishing ideals of the $\cW_\alpha$ are given by $$I_{\cW_\alpha}=\langle f^{(i)}\;|\; f\in C^\infty(M) \text{ with } f^{(j)}|_{\cW_\alpha}=0\;\forall j\leq i \rangle.$$ By Lemma~\ref{lem:clean-vanishing-ideal}, we know that
\begin{equation}\label{eq:intersection-ideal}
    I_{\cW_0} = \sum_{\alpha\in A} I_{\cW_\alpha} = \langle f^{(i)}\;|\; f\in C^\infty(M), \alpha\in A \text{ with } f^{(j)}|_{\cW_\alpha}=0\;\forall j\leq i \rangle.
\end{equation}
Again by Theorem~\ref{thm:new-criterion}, to conclude that $\cW_0$ is locally a weighting we only need to check that
$$I_{\cW_0}\subseteq \langle f^{(i)}\;|\; f\in C^\infty(M) \text{ with } f^{(j)}|_{\cW_0}=0\;\forall j\leq i \rangle$$
as the other inclusion is automatic. But this follows immediately from Equation~\eqref{eq:intersection-ideal} because $\cW_0\subseteq\cW_\alpha$.

Note that since the vanishing ideal of each $I_{\cW_\alpha}$ locally is generated by lifts of components of adapted coordinates, Lemma~\ref{lem:clean-vanishing-ideal} implies that $\cW_0$ is locally cut out by a collection of these. The proof of Theorem~\ref{thm:new-criterion} proceeds by picking a subset to build adapted coordinates for $\cW_0$, so the "Furthermore.." part of the proposition follows immediately. Finally, if $\cW_0$ is connected, then Theorem~\ref{thm:new-criterion} gives globally consistent local models, i.e. with the same weight sequences.
\end{proof}

We can check the cleanliness of an intersection that results in a weighting using their structure filtrations:

\begin{lemma}\label{lem:reg-intersection}
Let $\{\cW_\alpha\}_{\alpha=1,...,d}$ be a family of weightings on $M$ that intersect in a weighting $\cW_0=\bigcap_{\alpha=1}^d\cW_\alpha$. Then the intersection is clean if and only if
\begin{equation}\label{eq:reg-int}
\cW_{0,i} = \sum_{|\vec{i}|= i}\prod_{\alpha=1}^d \cW_{\alpha,i_\alpha}
\end{equation}
holds for the corresponding structure filtrations.
\end{lemma}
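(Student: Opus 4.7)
Translate cleanness of the intersection into the vanishing ideal equation $I_{\cW_0}=\sum_\alpha I_{\cW_\alpha}$ via Lemma~\ref{lem:clean-vanishing-ideal}, and pass between the two sides of the target equation through Theorem~\ref{thm:new-criterion}. The inclusion $\cW_{0,i}\supseteq\sum_{|\vec i|=i}\prod_\alpha\cW_{\alpha,i_\alpha}$ in fact holds without any cleanness hypothesis: for any product $\prod g_\alpha$ with $g_\alpha\in\cW_{\alpha,i_\alpha}$ and $\sum i_\alpha=i$, the Leibniz rule \eqref{eq:leibniz} expands $(\prod g_\alpha)^{(j)}$ for $j<i$ into terms each of which has some factor $g_\alpha^{(j_\alpha)}$ with $j_\alpha<i_\alpha$, forcing vanishing on $\cW_\alpha\supseteq\cW_0$. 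So the real content is the reverse inclusion, paired with cleanness.

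\textbf{If direction.} Assume the filtration equation. We aim to show $I_{\cW_0}\subseteq\sum_\alpha I_{\cW_\alpha}$ (the other inclusion is immediate from $\cW_0\subseteq\cW_\alpha$). By Theorem~\ref{thm:new-criterion}, $I_{\cW_0}$ is generated as an ideal by the lifts $f^{(i)}$ for $f\in\cW_{0,i+1}$. Writing any such $f$ as a sum of products $\prod_\alpha g_\alpha$ with $\sum i_\alpha=i+1$ and $g_\alpha\in\cW_{\alpha,i_\alpha}$, Leibniz gives $f^{(i)}=\sum_{|\vec j|=i}\prod_\alpha g_\alpha^{(j_\alpha)}$. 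Since $\sum j_\alpha=i<i+1=\sum i_\alpha$, each summand has some index with $j_\alpha<i_\alpha$, placing that factor in $I_{\cW_\alpha}$ (again by Theorem~\ref{thm:new-criterion}) and thus the full product in $\sum_\alpha I_{\cW_\alpha}$.

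\textbf{Only if direction.} Assume cleanness. We need $\cW_{0,i}\subseteq\sum_{|\vec i|=i}\prod_\alpha\cW_{\alpha,i_\alpha}$. Invoke Proposition~\ref{prop:clean-locally-weighting}: around any point we obtain adapted coordinates $(x_1,\ldots,x_m)$ for $\cW_0$ with weights $w_j$ such that each $x_j$ is also an adapted coordinate for some $\cW_{\alpha(j)}$ carrying the same weight $w_j$. By Proposition~\ref{prop:filtration-standard-weighting}, $\cW_{0,i}$ is locally generated by monomials $x^\beta$ with $\beta\cdot w\geq i$. Grouping the factors by their associated $\cW_\alpha$ yields $x^\beta=\prod_\alpha g_\alpha$ with $g_\alpha:=\prod_{\alpha(j)=\alpha}x_j^{\beta_j}\in\cW_{\alpha,i_\alpha}$ for $i_\alpha:=\sum_{\alpha(j)=\alpha}\beta_j w_j$, by multiplicativity of each structure filtration. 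The total $\sum i_\alpha=\beta\cdot w\geq i$, and since the filtration is decreasing we may reduce some $i_\alpha$ if necessary to obtain $\sum i_\alpha=i$ exactly.

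\textbf{Main obstacle.} The remaining difficulty is globalizing the local monomial decomposition of elements of $\cW_{0,i}$, since the chart and the assignment $\alpha(\cdot)$ depend on the base point. Cover $\supp\cW_0$ by adapted charts and pick a subordinate partition of unity $\{\phi_p\}$ together with bumps $\psi_p$ equal to $1$ on $\operatorname{supp}\phi_p$. Any $f\in\cW_{0,i}$ decomposes as $\sum_p\phi_p f$, and each summand is locally a sum of products $\prod_\alpha g_\alpha^{p,\beta}$ as above; absorbing $\phi_p$ into one factor and $\psi_p$ into the remaining ones produces globally defined functions that, after extension by zero off the chart, still lie in the appropriate $\cW_{\alpha,i_\alpha}$ because $C^\infty(M)=\cW_{\alpha,0}$ and each $\cW_{\alpha,i_\alpha}$ is an ideal. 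Summing over $p$ yields the desired global decomposition of $f$.
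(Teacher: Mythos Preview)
Your proof is correct and follows essentially the same route as the paper's: both directions hinge on Lemma~\ref{lem:clean-vanishing-ideal} and Theorem~\ref{thm:new-criterion}, with the ``only if'' direction using Proposition~\ref{prop:clean-locally-weighting} to obtain adapted coordinates whose components distribute among the $\cW_\alpha$, and Proposition~\ref{prop:filtration-standard-weighting} for the local monomial description. The paper dispatches the globalization with a one-liner (``it suffices to argue locally''), while you spell out the partition-of-unity step explicitly; otherwise the arguments match.
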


\begin{proof}\ \\
\textit{Assuming clean intersection}, we need to show the '$\subseteq$' inclusion of Equation~\eqref{eq:reg-int}, as the other inclusion is automatic by $\cW_0\subseteq\cW_\alpha$, thus $\cW_{\alpha,i}\subseteq\cW_{0,i}$, and multiplicativity of $\cW_{0,i}$. It suffices to argue locally, so we can assume according to Prop.~\ref{prop:filtration-standard-weighting} that $\cW_{0,i}$ is generated by products $x_1^{\beta_1}...x_m^{\beta_m}$ of some local coordinates with $\beta_1 w_1+...+\beta_m w_m\geq i$. Proposition~\ref{prop:clean-locally-weighting} allows us to choose coordinates adapted to $\cW_0$ such that each $x_j$ is part of coordinates adapted to some $\cW_\alpha$ with the same weight. In particular, $x_j^{\beta_j}\in\cW_{\alpha,(\beta_j w_j)}$, and the generators $x_1^{\beta_1}...x_m^{\beta_m}$ are included in the right-hand side of Equation~\eqref{eq:reg-int}. It does not matter if $\beta_1 w_1+...+\beta_m w_m$ is strictly larger than $i$ since $\cW_{\alpha,i_\alpha}\subseteq\cW_{\alpha,i_\alpha-1}$.

\textit{Assuming Equation~\eqref{eq:reg-int}}, due to Lemma~\ref{lem:clean-vanishing-ideal} we only need to show that $I_{\cW_0}\subseteq \sum_\alpha I_{\cW_\alpha}$ since the other inclusion is automatic. By Theorem~\ref{thm:new-criterion}, we only need to show this inclusion for the generators $f^{(i)}\in I_{\cW_0}$ where $f\in\cW_{0,i+1}$. According to Equation~\ref{eq:reg-int}, we can decompose $f$ into $$f=\sum_{|\vec{i}|\geq i+1}\prod_{\alpha=1}^d f_{\alpha,i_\alpha} \qquad\text{with } f_{\alpha,i_\alpha}\in\cW_{\alpha,i_\alpha}.$$
Per the Leibniz rule for lifts, we obtain $$f^{(i)}=\sum_{|\vec{i}|\geq i+1}\sum_{|\vec{j}|= i}\prod_{\alpha=1}^d f_{\alpha,i_\alpha}^{(j_\alpha)}.$$ In each term of this sum, there must be at least one $\alpha$ such that $j_\alpha<i_\alpha$. For this $\alpha,$ it holds that $f_{\alpha,i_\alpha}^{(j_\alpha)}\in I_{\cW_\alpha}$ by Theorem~\ref{thm:new-criterion}, and we are done.
\end{proof}

For an intersection weighting $\cW_0=\bigcap_{\alpha=1}^d\cW_\alpha$ the identity can be seen as a weighted morphism $(M,\cW_0)\to(M,\cW_\alpha)$ for all $\alpha$. If the intersection is clean, we can check whether a weighted normal vector of $\cW_0$ vanishes by looking at the normal vectors it induces along each submanifold:

\begin{lemma}\label{lem:reg-intersection-normals}
Let $\{\cW_\alpha\}_{i=\alpha...d}$ be a collection of weightings on $M$ that intersect cleanly in a weighting $\cW_0$. Then any $[n]\in\nu_{\cW_0}M$ with $\nu_{\cW_\alpha,\cW_0}([n])=0$ for all $\alpha=1...d$ must vanish.
\end{lemma}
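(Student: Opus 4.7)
The plan is to argue locally by exploiting the preferred coordinates provided by Proposition~\ref{prop:clean-locally-weighting}. Since $\{\cW_\alpha\}$ intersects cleanly in the weighting $\cW_0$, around any point $p \in \supp\cW_0$ we may choose coordinates $\chi = (x_1,\ldots,x_m)$ adapted to $\cW_0$ with weights $(w_1,\ldots,w_m)$ such that each component $x_i$ is also part of some adapted chart $\tilde\chi^{(i)} = (\tilde x_1^{(i)},\ldots,\tilde x_m^{(i)})$ for a weighting $\cW_{\alpha(i)}$, say as $\tilde x_{j(i)}^{(i)} = x_i$ with matching weight $\tilde w_{j(i)}^{(i)} = w_i$. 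This is the only substantive input needed from the clean-intersection hypothesis.

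Write $[n] \in \nu\cW_0|_p$ in the induced coordinates $\chi^{(\cW_0)} = (x_1^{(w_1)},\ldots,x_m^{(w_m)})$ as $(y_1,\ldots,y_m)$. The claim that $[n]$ vanishes amounts to showing $y_i = 0$ for every $i$ with $w_i > 0$. Fix such an $i$ and consider the induced map $\nu_{\cW_{\alpha(i)},\cW_0}:\nu\cW_0\to\nu\cW_{\alpha(i)}$, which is well defined since $\cW_0\subseteq\cW_{\alpha(i)}$ makes the identity a weighted morphism. The coordinate formula from~\ref{props-weighted-normal}(7), applied to the identity, gives the $j(i)$-th coordinate of the image in $(\tilde\chi^{(i)})^{(\cW_{\alpha(i)})}$ as
\begin{equation*}
\frac{1}{w_i!}\left.\frac{d^{w_i}}{dt^{w_i}}\right|_{t=0} \tilde x_{j(i)}^{(i)}\circ \chi^{-1}(t^{w_1}y_1,\ldots,t^{w_m}y_m) = \frac{1}{w_i!}\left.\frac{d^{w_i}}{dt^{w_i}}\right|_{t=0} t^{w_i} y_i = y_i,
\end{equation*}
using $\tilde x_{j(i)}^{(i)} = x_i$ and $\tilde w_{j(i)}^{(i)} = w_i > 0$. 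By hypothesis $\nu_{\cW_{\alpha(i)},\cW_0}([n]) = 0$, so in particular its $j(i)$-th coordinate vanishes, whence $y_i=0$. Since $i$ was arbitrary, $[n]$ is indeed the zero-section element.

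The main potential obstacle would be that adapted charts for different $\cW_\alpha$ need not be compatible in any simple way, making it unclear how a single weighted normal direction of $\cW_0$ is recorded by each $\nu\cW_\alpha$. Proposition~\ref{prop:clean-locally-weighting} resolves this difficulty entirely: it produces a \emph{single} chart adapted to $\cW_0$ whose every coordinate direction is simultaneously a coordinate direction of some $\cW_\alpha$ with matching weight. Once this chart is in hand, the explicit formula for $\nu_\phi$ from~\ref{props-weighted-normal}(7) forces the desired coordinate identification almost tautologically.
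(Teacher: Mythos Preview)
Your proof is correct and takes a genuinely different route from the paper's. The paper argues intrinsically via the structure filtration: it invokes Lemma~\ref{lem:reg-intersection} to decompose any $f\in\cW_{0,i}$ as a sum of products $\prod_\alpha f_{\alpha,i_\alpha}$ with $f_{\alpha,i_\alpha}\in\cW_{\alpha,i_\alpha}$, then expands $f^{(i)}(n)$ with the Leibniz rule and observes that each surviving term contains a factor $f_{\alpha,j_\alpha}^{(j_\alpha)}(n)$ which vanishes by the hypothesis $\nu_{\cW_\alpha,\cW_0}([n])=0$. You instead go through coordinates: Proposition~\ref{prop:clean-locally-weighting} hands you a single chart adapted to $\cW_0$ each of whose components is simultaneously an adapted coordinate for some $\cW_{\alpha(i)}$ with matching weight, and then the formula from~\ref{props-weighted-normal}(7) makes the $i$-th coordinate of $[n]$ literally equal to a fiber coordinate of $\nu_{\cW_{\alpha(i)},\cW_0}([n])$. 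Your approach is more direct and avoids the detour through Lemma~\ref{lem:reg-intersection}; the paper's approach has the virtue of being coordinate-free and of showcasing how the structure-filtration identity~\eqref{eq:reg-int} encodes clean intersection.
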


\begin{proof}
Given an $n\in \cW_0$ as in the statement of the Lemma, we want to show that for all $i\in\N_0$ and $f\in\cW_{0,i}$ it follows that $f^{(i)}(n)=0$. By Lemma~\ref{lem:reg-intersection}, such an $f$ can be decomposed as
$$f=\sum\limits_{|\vec{i}|=i}\prod\limits_{\alpha=1}^d f_{\alpha,i_\alpha} \quad\text{where}\quad f_{\alpha,i_\alpha}\in \cW_{\alpha,i_\alpha}.$$

We compute by inserting this decomposition and using the Leibniz rule for lifts:
\begin{align*}
f^{(i)}(n) &= \sum\limits_{|\vec{i}|=i} \left( \prod\limits_{\alpha=1}^d f_{\alpha,i_\alpha} \right)^{(i)}(n) \\
&= \sum\limits_{|\vec{i}|=i} \sum\limits_{|\vec{j}|=i} \prod\limits_{\alpha=1}^d f_{\alpha,i_\alpha}^{(j_\alpha)}(n) \\
&= \sum\limits_{|\vec{j}|=i} \prod\limits_{\alpha=1}^d f_{\alpha,j_\alpha}^{(j_\alpha)}(n) \\
&= 0.
\end{align*}
The third equality follows since any term where $\vec{j}\neq\vec{i}$ must contain a factor of the form $f^{(j_\alpha)}_{\alpha, i_\alpha}(n)$ with $j_\alpha<i_\alpha$, and such factors vanish since $n\in\cW_0\subseteq\cW_\alpha$ and $f_{\alpha,i_\alpha}\in \cW_{\alpha,i_\alpha}$ (compare with Equation~\eqref{eq:weighting-from-filtration}).
The final equality results from the assumption that $\nu_{\cW_\alpha,\cW_0}([n])=0$, i.e. that for all $g\in \cW_{\alpha,j}$ we have $g^{(j)}(n)=0.$ It follows by setting $j=j_\alpha$ and $g=f_{\alpha,j_\alpha}$.
\end{proof}

The weightings $\cE^{(0,2)}$ and $\Psi^*\cE^{(0,1)}$ from Ex.~\ref{ex:weighting-intersections}~(\ref{ex:weighting-not-clean}) that fail to intersect cleanly also yield a counterexample to the conclusion of this Lemma: The non-zero normal vector $[t\mapsto (t,0)]$ in $\nu_{\cE^{(1,2)}}$ induces vanishing vectors along both of the original weightings.

\newpage
\section{Blowing up weighted submanifolds}\label{sec:weighted-submanifolds}
\startSubchaption{Spherical blow-ups}

Blowing up a submanifold means to replace it with an \textit{exceptional divisor} that encodes along which (possibly weighted) normal direction one can move onto it. On the level of sets, we can thus take the following definitions:

\begin{definition}\label{def:blow-up-weighting}
Let $(M,\cW)$ be a weighted manifold. The \textbf{(spherical) blow-up of $\cW$} is given by
$$\Bl_\cW(M):= M\setminus\supp\cW \quad\sqcup\quad \bS\nu\cW,$$
where the \textbf{weighted unit normal bundle} is defined as the quotient $$\bS\nu\cW:=(\nu\cW \setminus \supp\cW)/\R_{>0}.$$ In this context, $\bS\nu\cW$ is also referred to as the \textbf{exceptional divisor} of $\Bl_\cW(M)$. If $\cW$ is the trivial weighting along a closed submanifold $N$, we may denote the blow-up as $\Bl_N(M)$.

A blow-up comes equipped with the \textbf{blow-down map} $$b_\cW:\Bl_\cW(M)\to M$$
that sends any point in $M\setminus\supp\cW$ to itself and unit normal vectors in $\bS\nu\cW$ to the point in $\supp\cW$ at which they are based.
\end{definition}

\noindent\begin{minipage}{\textwidth}
\begin{minipage}{0.65\textwidth}
\begin{npar}[Visualization]
As Figure~\ref{fig:helix-weighted} shows, the weighted blow-up can be visualized analogously to the unweighted blow-up of the origin of the disk from Figure~\ref{fig:helix-normal}: We consider the surface traced out vertically by the weighted radial rays coming out of the origin and identify any two points that are an integer number of rotations apart. The red central spine now stands in one-to-one correspondence with the exceptional divisor $\bS\nu\cW$ since slicing the surface at a fixed height gives a weighted ray. The points away from the spine are just a copy of $M\setminus\supp\cW$, and the blow-down map is the vertical projection. 

This is a good topological picture to understand the behaviour of limits. In contrast to the unweighted case, it does not yield a smooth structure as the depicted surface is not smoothly embedded in $\R^3$ even before taking the quotient: While the tangent spaces along the spine are almost everywhere spanned by the vertical axis and the horizontal axis that has weight one, there are two singular points where the surface 'flips' around the spine. We will assign a smooth structure that makes this blow-up diffeomorphic to an annulus, but this structure cannot be faithfully depicted with an embedded surface such that the blow-down map is the vertical projection.
\end{npar}
\end{minipage}\hfill%
\begin{minipage}{0.4\textwidth}
    \centering
\begingroup%
  \makeatletter%
  \providecommand\color[2][]{%
    \errmessage{(Inkscape) Color is used for the text in Inkscape, but the package 'color.sty' is not loaded}%
    \renewcommand\color[2][]{}%
  }%
  \providecommand\transparent[1]{%
    \errmessage{(Inkscape) Transparency is used (non-zero) for the text in Inkscape, but the package 'transparent.sty' is not loaded}%
    \renewcommand\transparent[1]{}%
  }%
  \providecommand\rotatebox[2]{#2}%
  \newcommand*\fsize{\dimexpr\f@size pt\relax}%
  \newcommand*\lineheight[1]{\fontsize{\fsize}{#1\fsize}\selectfont}%
  \ifx\svgwidth\undefined%
    \setlength{\unitlength}{124.72440945bp}%
    \ifx\svgscale\undefined%
      \relax%
    \else%
      \setlength{\unitlength}{\unitlength * \real{\svgscale}}%
    \fi%
  \else%
    \setlength{\unitlength}{\svgwidth}%
  \fi%
  \global\let\svgwidth\undefined%
  \global\let\svgscale\undefined%
  \makeatother%
  \begin{picture}(1,1.81818182)%
    \lineheight{1}%
    \setlength\tabcolsep{0pt}%
    \put(0,0){\includegraphics[width=\unitlength,page=1]{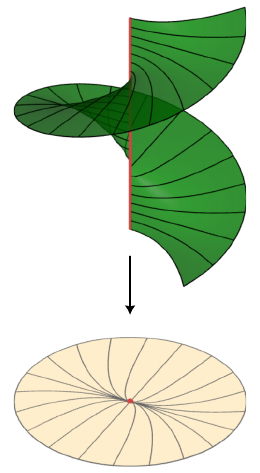}}%
    \put(0.47852607,0.7009502){\color[rgb]{0,0,0}\makebox(0,0)[rt]{\lineheight{0.80000001}\smash{\begin{tabular}[t]{r}$b_\cW$\end{tabular}}}}%
    \put(0.05095605,1.68556992){\color[rgb]{0,0,0}\makebox(0,0)[lt]{\lineheight{0.80000001}\smash{\begin{tabular}[t]{l}$\Bl_\cW(M)$\end{tabular}}}}%
    \put(0.09112237,0.46308538){\color[rgb]{0,0,0}\makebox(0,0)[t]{\lineheight{0.80000001}\smash{\begin{tabular}[t]{c}$M$\end{tabular}}}}%
  \end{picture}%
\endgroup%

    \captionof{figure}{Visualization of the blow-up of the origin of a disk equipped with weights $(1,2)$.}
    \label{fig:helix-weighted}
\end{minipage}
\end{minipage}

The reader in a rush may skip ahead directly to Definition~\ref{def:smooth-blow-up} of the smooth charts on the blow-up. However, since these involve somewhat convoluted formulas, we first sketch a path to motivate our choices:
In Paragraph~\ref{par:smooth-weighted-normal-local}, we describe how we can equip $\bS\nu\cW$ with a smooth structure in the local case, i.e. the case where $M=\R^m$ and $\cW$ is a standard weighting. In Paragraph~\ref{smooth-structure-local-case}, we argue that this smooth structure on the exceptional divisor can be extended to a smooth structure on the blow-up. It in particular provides expressions for coordinates that serve as motivation for our definition of charts for general $M$ and $\cW$.

\begin{npar}[Smooth structure on $\bS\nu\cW$ in the local case]\label{par:smooth-weighted-normal-local}
The local model of a weighted manifold is a standard weighting $\cW=\cE^{(w_1, ..., w_m)}$ on Euclidean space $M=\R^m$. The support of this weighting is $$N=\{(x_1, ..., x_m) \;|\; x_i=0 \text{ for all $i$ with } w_i\neq 0\},$$
and the weighted normal bundle can be viewed as points $n=\left(\x{1}{w_1}, ..., \x{m}{w_m}\right)$ in $\R^m$ interpreted as curved arrows $\{\lambda\cdot n\,|\, \lambda\in[0,1]\}$ traced out by the action from Equation~\eqref{eq:action-normal}. The unit normal bundle is given by quotienting out the action of $\R_{>0}$ on non-zero graded vectors, i.e. $$\bS\nu\cW=(\R^m\setminus N)/\R_{>0}.$$
It can be visualized as a cylinder around $N$. To equip it with charts we can imitate projective coordinates and consider for every sign $s\in\{\pm1\}$ and direction $x_h$ normal to $N$ (or, equivalently, $w_h>0$) the hyperplane $\{x_h=s\}$. On the open subset $U_{hs}\subseteq \bS\nu\cW$ that consists of orbits intersecting this hyperplane, we use the remaining $\{x_i\}_{i\neq h}$ at the intersection point as coordinates:
    \begin{equation}\label{coords-local-unit-normal}
    \R^{m-1} \ni (y_1, ..., y_{h-1}, y_{h+1}, ... y_m) \leftrightarrow
        [y_1, ..., s, ..., y_m] \in \bS\nu\cW
    \end{equation}
Fig.~\ref{fig:coords} depicts this situation for the case $m=3$ with weights $(0,1,2)$ and the hyperplane $x_3=1$.
\end{npar}

\begin{figure}
    \centering
\begingroup%
  \makeatletter%
  \providecommand\color[2][]{%
    \errmessage{(Inkscape) Color is used for the text in Inkscape, but the package 'color.sty' is not loaded}%
    \renewcommand\color[2][]{}%
  }%
  \providecommand\transparent[1]{%
    \errmessage{(Inkscape) Transparency is used (non-zero) for the text in Inkscape, but the package 'transparent.sty' is not loaded}%
    \renewcommand\transparent[1]{}%
  }%
  \providecommand\rotatebox[2]{#2}%
  \newcommand*\fsize{\dimexpr\f@size pt\relax}%
  \newcommand*\lineheight[1]{\fontsize{\fsize}{#1\fsize}\selectfont}%
  \ifx\svgwidth\undefined%
    \setlength{\unitlength}{283.46456693bp}%
    \ifx\svgscale\undefined%
      \relax%
    \else%
      \setlength{\unitlength}{\unitlength * \real{\svgscale}}%
    \fi%
  \else%
    \setlength{\unitlength}{\svgwidth}%
  \fi%
  \global\let\svgwidth\undefined%
  \global\let\svgscale\undefined%
  \makeatother%
  \begin{picture}(1,0.55)%
    \lineheight{1}%
    \setlength\tabcolsep{0pt}%
    \put(0,0){\includegraphics[width=\unitlength,page=1]{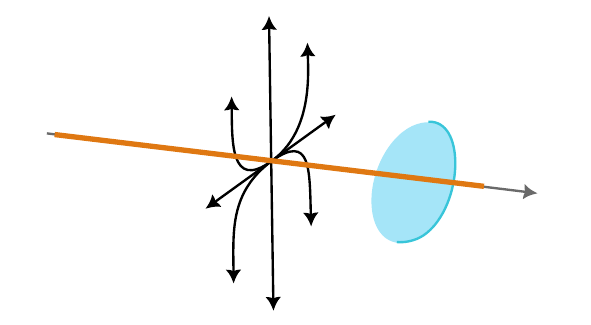}}%
    \put(0.84755829,0.25712742){\color[rgb]{0.8745098,0.47058824,0.07058824}\makebox(0,0)[t]{\lineheight{0.80000001}\smash{\begin{tabular}[t]{c}$N$\end{tabular}}}}%
    \put(0.57436977,0.09626488){\color[rgb]{0.05490196,0.52156863,0.6627451}\makebox(0,0)[lt]{\lineheight{0.80000001}\smash{\begin{tabular}[t]{l}$\bS\nu_\cW M$\end{tabular}}}}%
    \put(0.78291763,0.43220155){\color[rgb]{0.04705882,0.58431373,0.27058824}\makebox(0,0)[t]{\lineheight{0.80000001}\smash{\begin{tabular}[t]{c}$x_3=1$\end{tabular}}}}%
    \put(0.92942566,0.19959797){\color[rgb]{0,0,0}\makebox(0,0)[t]{\lineheight{0.80000001}\smash{\begin{tabular}[t]{c}0\end{tabular}}}}%
    \put(0.15681987,0.44877159){\color[rgb]{0,0,0}\makebox(0,0)[t]{\lineheight{0.80000001}\smash{\begin{tabular}[t]{c}2\end{tabular}}}}%
    \put(0,0){\includegraphics[width=\unitlength,page=2]{coords.pdf}}%
    \put(0.32210243,0.40961946){\color[rgb]{0,0,0}\makebox(0,0)[t]{\lineheight{0.80000001}\smash{\begin{tabular}[t]{c}1\end{tabular}}}}%
    \put(0,0){\includegraphics[width=\unitlength,page=3]{coords.pdf}}%
  \end{picture}%
\endgroup%

    \caption{The weighted unit normal bundle $\bS\nu_\cW M$ for the standard weighting $\cW=\cE^{(0,1,2)}$ on $\R^m$. By identifying the unique intersection points with the weighted rays, we can use the green hyperplane $\{x_3=1\}$ to parametrize all points on the upper half of the cylinder.}
    \label{fig:coords}
\end{figure}

\begin{npar}[Smooth structure on $\Bl_\cW(M)$ in the local case]\label{smooth-structure-local-case}
We want to extend the parametrization in Equation~\eqref{coords-local-unit-normal} from the exceptional divisor to the full blow-up $\Bl_\cW\R^m$. To do this, we add back in a parameter $y_h\in[0,\infty)$ that pushes away from the exceptional divisor according to the action~\eqref{eq:action-normal}  whenever it is non-zero:
    \begin{equation}\label{coords-local}
    \R^m_{h} \ni (y_1, ..., y_m) \leftrightarrow
    \begin{cases}
        y_h\cdot(y_1, ..., s, ..., y_m) &\text{for } y_h>0,\\
        [y_1, ..., s, ..., y_m] &\text{for } y_h=0,
    \end{cases}
    \end{equation}
    where the $s$ is always inserted at the $h$-th position and $\R^m_{h}$ is the half-space $\R^{h-1}\times[0,\infty)\times\R^{m-h}$.
When $y_h=0$, the equivalence class in the bottom expression yields an element of $\bS\nu\cW$, and otherwise $y_h\cdot(y_1, ..., s, ..., y_m)$ gives a point in $\R^m\setminus N$.

Let us invert Equation~\eqref{coords-local} to obtain explicit expressions for the components $y_i$ of the charts. When given an element $(x_1,...,x_m)$ of $\R^m\setminus N$, then by definition of the action we must find $(y_1,...,y_m)$ with $y_h>0$ such that
$$
x_i = y_h^{w_i}\cdot y_i \qquad\text{for $i\neq h$ and}\qquad x_h = y_h^{w_h}\cdot s.
$$
Solving for the $y_i$ we get
\begin{equation}\label{eq:coord-rel1}
    y_i = x_i \cdot (s\,x_h)^{-w_i/w_h} \qquad\text{for $i\neq h$ and}\qquad y_h = (s\, x_h)^{1/w_h}.
\end{equation}
These coordinates only make sense when we assume that $s\,x_h>0$, since we may be taking a root.
When given a weighted unit normal vector $n=[v_1,...,v_n]\in\bS\nu\cW$ instead, we must assume that $s\,v_h>0$ is non-zero in order to represent $n$ as $$n=\left[v_1\cdot (s\,v_h)^{-w_1/w_h}, \;\ldots, \;s, \;\ldots,\; v_m\cdot (s\,v_h)^{-w_m/w_h}\right].$$
Now Equation~\eqref{coords-local} allows us to read off
\begin{equation}\label{eq:coord-rel2}
    y_i = v_i \cdot (s\,v_h)^{-w_i/w_h} \qquad\text{for $i\neq h$ and}\qquad y_h = 0.
\end{equation}
The open conditions $s\,x_h>0$ and $s\,v_h>0$ determine the domain on which we can define our charts. Luckily, any point in $\Bl_\cW\R^m=\R^m\setminus N\cup\bS\nu\cW$ lies in one of the domains for some choice of $h$ and $s$.
\end{npar}

Equations~\eqref{eq:coord-rel1} and~\eqref{eq:coord-rel2} coax us into defining the following coordinates in the general case:

\begin{definition}\label{def:smooth-blow-up}
    Let $\cW$ be a weighting over a manifold $M$ of dimension $m$ and $\chi=(x_1,...,x_m):U\to\R^m$ an adapted coordinate chart on $U\subseteq M$ for the weight sequence $w_1, ..., w_m$ such that $U$ intersects $\supp\cW$. Fix a sign $s\in\{\pm1\}$ and an $h\in\{1, ..., m\}$ such that $w_h> 0$, i.e. such that the $x_h$-axis is normal to $\supp\cW$.
    
    Define a subset $U_{hs}\subseteq \Bl_\cW U$ as $$
    U_{hs}=\{ p\in U\setminus\supp\cW \;|\; s\,x_h(p)> 0 \} \;\cup\; \{ [n]\in\bS\nu\cW|_U \;|\; s\,x_h^{(w_h)}(n)> 0  \} $$
    as well as the half-space model
    $$\R^m_h:=\{(y_1, ..., y_m)\in\R^m \;|\; y_h\geq 0 \}.$$
    We can now define the \textbf{induced chart}
    $$\chi_{hs}=\left(x_{1:hs}, ..., x_{m:hs}\right):U_{hs}\to\R^m_h$$
    on $\Bl_\cW(M)$ by setting 
    $$ x_{i:hs}(p):=
    \begin{cases}
    x_i(p)\cdot(s\,x_h(p))^{-w_i/w_h}&\text{for } p\in U\setminus\supp\cW,\\
    \x{i}{w_i}(n)\cdot\left(s\,\x{h}{w_h}(n)\right)^{-w_i/w_h}&\text{for } p=[n]\in \bS\nu\cW|_U
    \end{cases}
    $$
    for $i\neq h$, as well as
    $$ x_{h:hs}(p):=  
    \begin{cases}
    \left(s\,x_h(p)\right)^{1/w_h}&\text{for } p\in U\setminus\supp\cW,\\
    0 &\text{for } p=[n]\in \bS\nu\cW|_U.
    \end{cases}
    $$
    If the weighting is not clear from context, we may add a superscript $(\cW)$ to our notation and write $\chi^{(\cW)}_{hs}$, $x^{(\cW)}_{i:hs}$ and $U^{(\cW)}_{hs}$.
\end{definition}

The notation for $x_{i:hs}$ is inspired by the fact that it is given by weighted quotients of either $x_i$ and $s\,x_h$ or $\x{i}{w_i}$ and $s\,\x{h}{w_h}$. Note that when $w_i=0,$ we have $x_{i:hs}=x_i\circ b_\cW$. The component $x_{h:hs}$ is special in that it can be thought of as the distance to the exceptional divisor.

\begin{npar}[Blow-down map in coordinates]\label{par:blow-down-coords}
It is often convenient to have the analogous expression to Equation~\eqref{coords-local} for the inverse of the charts $\chi_{hs}$, namely
    \begin{equation}\label{eq:inv-coords-local}
    \left(\chi_{hs}\right)^{-1}(y_1, ..., y_m) =
    \begin{cases}
        \chi^{-1}(y_h\cdot(y_1, ..., s, ..., y_m)) &\text{for } y_h>0,\\
        [\left(\chi^{(\cW)}\right)^{-1}(y_1, ..., s, ..., y_m)] &\text{for } y_h=0.
    \end{cases}
    \end{equation}
For example, it allows us to read off the local expression
\begin{equation}\label{eq:local-blow-down}
\chi\circ b_\cW \circ \chi_{hs}^{-1} (y_1, ..., y_m) = y_h \cdot (y_1, ..., s, ..., y_m)
\end{equation}
for the blow-down map, using that the projection to the base of the weighted normal bundle $\nu\cW$ is given by the action of zero under Equation~\eqref{eq:action-normal}.
\end{npar}

These charts provide a smooth structure:

\begin{theorem}\label{thm:weighted-blow-up-atlas}
For every weighted manifold $(M,\cW)$, the blow-up $\Bl_\cW(M)$ can be equipped with the structure of a smooth manifold with boundary such that the blow-down map $b_\cW:\Bl_\cW(M)\to M$ is smooth and proper and the boundary of $\Bl_\cW(M)$ is the exceptional divisor.

Concretely, we can build a smooth atlas by taking all charts as in Definition~\ref{def:smooth-blow-up} together with the charts of $M$ whose domains do not intersect $\supp\cW$.
\end{theorem}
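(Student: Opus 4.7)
The plan is to verify that the proposed atlas yields a smooth manifold with boundary on $\Bl_\cW(M)$, check the two properties of the blow-down map, and identify the exceptional divisor as the boundary. The genuine obstacle will be smoothness of transitions between induced charts coming from \emph{different} adapted coordinate systems, which I would isolate and defer to an appendix.

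First I would verify coverage: a point of $M\setminus\supp\cW$ either lies in the domain of a classical chart disjoint from $\supp\cW$, or in an adapted chart meeting $\supp\cW$, in which case some $x_h$ with $w_h>0$ must be non-zero (otherwise the point would lie in $\supp\cW$), giving a suitable pair $(h,s)$; and for $[n]\in\bS\nu\cW$, non-vanishing of $n$ as a weighted normal vector forces some $x_h^{(w_h)}(n)$ with $w_h>0$ to be non-zero in any adapted chart around the basepoint. Equation~\eqref{eq:inv-coords-local} exhibits an explicit inverse showing $\chi_{hs}$ is a bijection from $U_{hs}$ to the open subset of $\R^m_h$ cut out by the inequalities determined by the image of $\chi$. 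I would then topologize $\Bl_\cW(M)$ by declaring these bijections and the classical charts on $M\setminus\supp\cW$ to be homeomorphisms, deducing second-countability and Hausdorffness from the corresponding properties of $M$ and of the sphere bundle $\bS\nu\cW$.

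The heart of the proof is smoothness on three kinds of overlap: (i) two induced charts $\chi_{hs},\chi_{h's'}$ from the same adapted chart $\chi$; (ii) an induced chart overlapped with a classical chart on $M\setminus\supp\cW$; and (iii) two induced charts $\chi_{hs}$ and $\tilde\chi_{\tilde h \tilde s}$ from genuinely different adapted charts $\chi,\tilde\chi$. Case (i) is a direct computation from Definition~\ref{def:smooth-blow-up} that yields elementary positive-power and root expressions on a domain where the relevant coordinates are strictly positive. Case (ii) is immediate from Equation~\eqref{eq:local-blow-down}, combined with smoothness of the change of coordinates on $M$, since the overlap sits in the open locus $y_h>0$. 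Case (iii) is the real obstacle: one must insert the change of adapted coordinates $\tilde\chi\circ\chi^{-1}$ into the defining formulas for $\chi_{hs}$ and $\tilde\chi_{\tilde h\tilde s}$ and verify that the apparent fractional-power singularities along $y_h=0$ cancel to produce a smooth map of half-spaces. The key leverage is the structural constraint that any transition between adapted charts must satisfy in its Taylor expansion along $\supp\cW$, which forces exactly the coefficients needed to absorb the problematic factors. This is the content relegated to Appendix~\ref{app:proof-weighted-submanifolds}.

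Smoothness of $b_\cW$ then reads off Equation~\eqref{eq:local-blow-down}: on classical charts it is the identity, and on induced charts it is polynomial. The exceptional divisor is cut out in every induced chart by $y_h=0$, matching $\partial\R^m_h$, while no point of $M\setminus\supp\cW$ satisfies this equation, so $\bS\nu\cW=\partial\Bl_\cW(M)$. For properness, given compact $K\subseteq M$, I would cover $K\cap\supp\cW$ by finitely many adapted-chart domains; over each such domain, the preimage of a compact subset of $M$ under $b_\cW$ lies in the preimage under $\chi_{hs}$ of a compact subset of $\R^m_h$ (using that $\bS\nu\cW$ restricted to a compact base is a compact sphere bundle), while over $M\setminus\supp\cW$ the preimage is just $K\setminus\supp\cW$. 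Taking the union exhibits $b_\cW^{-1}(K)$ as a closed subset of a finite union of compacta, hence compact.
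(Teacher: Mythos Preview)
Your proposal is correct and follows essentially the same approach as the paper: verify coverage and bijectivity via the explicit inverse~\eqref{eq:inv-coords-local}, establish the topological properties, reduce to smoothness of transitions with the hard case (iii) deferred to an appendix, then read off smoothness and properness of the blow-down from the local formula. The paper's appendix makes your ``structural constraint in the Taylor expansion'' precise via a weak form of the Malgrange Preparation Theorem (Theorem~\ref{thm:malgrange} and Lemma~\ref{lem:useful-transition-relation}), which smoothly extracts a factor $t^{\tilde w_i}$ from $(\tilde\chi\circ\chi^{-1})_i(t\cdot q)$ and thereby cancels the fractional powers; and for properness the paper simply notes that $b_\cW$ is a closed map with compact fibers (points or spheres), a slightly slicker variant of your finite-cover argument.
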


Loizides and Meinrenken prove a similar statement by first considering the smooth structure on the \textit{weighted deformation space} and then taking a quotient. For the sake of completeness and in preparation for later constructions, we prove smoothness of transitions functions directly in Appendix~\ref{app:proof-weighted-submanifolds}.

One should think of the blow-down map as an essential part of the data of a blow-up:

\begin{example}\label{ex:multiple}
The weightings $\cE^{(1,1)}$ and $\cE^{(2,2)}$ on $\R^2$ can easily be seen to have diffeomorphic blow-ups, namely the annulus with one open and one closed boundary. However, one cannot find a diffeomorphism that also intertwines the blow-down maps. Indeed the comparison map
$$\Phi:\Bl_{\cE^{(2,2)}}(\R^2)\to\Bl_{\cE^{(1,1)}}(\R^2)$$
that is defined as the unique continuous extension of the identity on $\R^2\setminus\{0\}$ intertwines the blow-down maps, but is just a smooth bijection. See also Example~\ref{example:induced-maps}(2).
\end{example}

\startSubchaption{Induced map between blow-ups}

Note that the blow-up is not functorial, but we can still get an induced map defined on an open subset:

\begin{definition}
Let $\phi:(M,\cW)\to(\widetilde M,\widetilde \cW)$ be a morphism of weighted manifolds and define the open subset $\Bl_\phi (M)\subseteq \Bl_\cW (M)$ as
$$ \Bl_\phi (M) := M \setminus \phi^{-1}(\supp\widetilde \cW) \cup \{[n]\in \bS\nu_\cW M \;|\; \nu_\phi(n)\neq 0 \}. $$

On this subset we define the induced map $b_\phi: \Bl_\phi (M) \to \Bl_{\widetilde \cW} (M)$ by
\begin{align*}
b_\phi(p) &:= \phi(p) & &\text{for } p\in M\setminus \phi^{-1}(\supp\widetilde \cW), \\
b_\phi([n]) &:= [\nu_\phi(n)] & &\text{for } [n]\in \bS\nu_\cW M \cap \Bl_\phi (M).
\end{align*}
If $\phi$ is the identity map on a fixed manifold $M$, then we write $\Bl_{\widetilde \cW,\cW}(M)$ and $b_{\widetilde \cW,\cW}$ instead of $\Bl_\phi(M)$ and $b_\phi$.
\end{definition}

This induced map between weighted blow-ups is already discussed in Remark~6.3(c) of~\cite{LM23}.

\begin{examples}\ \label{example:induced-maps}
\begin{enumerate}
    \item If we consider again the weightings $\cE^{(1,1,2)}\subseteq\cE^{(0,1,1)}$ from Figure~\ref{fig:induced-example} we see that $\Bl_{\cE^{(0,1,1)},\cE^{(1,1,2)}}(\R^3)$ is missing two kinds of points from the ambient $\Bl_{\cE^{(1,1,2)}}(\R^3)$: On one hand, any $p\in\{(\lambda,0,0)\in\R^3\;|\:\lambda\neq0\}$ on the first axis, and on the other hand any weighted normal vector at the origin that is tangent to the third axis. These are exactly the points that lack data to determine a reasonable image in $\Bl_{\cE^{(0,1,1)}}(\R^3)$.
\item The weightings $\cE^{(2,2)}\subset \cE^{(1,1)}$ from Example~\ref{ex:multiple} provide an exceptional case in that the map induced on weighted normal bundles by the inclusion vanishes entirely. Thus $\Bl_{\cE^{(1,1)},\cE^{(2,2)}}(\R^2)$ is the bulk $\R^2\setminus\{0\}$ and the induced map 
$$\Bl_{\cE^{(2,2)}}(\R^2)\supset \R^2\setminus\{0\}\to \Bl_{\cE^{(1,1)}}(\R^2)$$ is the identity. Interestingly, its extension $\Phi$ from Example~\ref{ex:multiple} gives an isomorphism between weighted normal bundles when restricting to the exceptional divisor, which is decidedly \textit{not} the (vanishing) induced map on the weighted normal bundles.
\qedhere
\end{enumerate}
\end{examples}

Openness of $\Bl_\phi M$ and smoothness of $b_\phi$ are established in Corollary~\ref{cor:domain-induced-open}
and Proposition~\ref{prop:induced-map-smooth} of the Appendix. We also provide a local expression of the induced map in Lemma~\ref{lem:local-rep-induced-map}.

The induced map between blow-ups allows us to understand the relationship between the limit points in different blow-ups of the same sequence away from the exceptional divisors:

\begin{lemma}\label{lem:seq-compat-weightings}
Fix an ambient manifold $M$, two weightings $\cW,\widetilde \cW$ and a sequence $p_n\in M\setminus(\supp\cW\cup \supp{\widetilde \cW})$ such that $p$ and $\tilde p$ are the limits of $p_n$ when viewed as a sequence in $\Bl_\cW(M)$ and $\Bl_{\widetilde \cW}(M)$, respectively. Then $b_{\cW}(p)=b_{\widetilde \cW}(\tilde p)$.

If further $\cW\subseteq\widetilde \cW$ and $p\in\Bl_{\widetilde \cW,\cW}(M)$, then $\tilde p=b_{\widetilde \cW,\cW}(p)$.
\end{lemma}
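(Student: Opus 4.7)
The plan is to exploit the continuity of the various blow-down and induced maps together with the fact that all of them extend the identity on the complement of the relevant supports.

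For the first statement, I would observe that since the sequence $(p_n)$ avoids $\supp\cW$, the blow-down map gives $b_\cW(p_n) = p_n$, now viewed as an element of $M$. Continuity of $b_\cW$ (Theorem~\ref{thm:weighted-blow-up-atlas}) then yields
\[
b_\cW(p) \;=\; \lim_{n\to\infty} b_\cW(p_n) \;=\; \lim_{n\to\infty} p_n
\]
in $M$. The same argument applied to $b_{\widetilde\cW}$ gives $b_{\widetilde\cW}(\tilde p) = \lim p_n$, and Hausdorffness of $M$ forces the two limits to agree.

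For the second statement, the idea is to push the convergence $p_n \to p$ in $\Bl_\cW(M)$ forward to $\Bl_{\widetilde\cW}(M)$ through the induced map $b_{\widetilde\cW,\cW}$. Two facts established earlier do the work: openness of the domain $\Bl_{\widetilde\cW,\cW}(M)$ inside $\Bl_\cW(M)$ (Corollary~\ref{cor:domain-induced-open}), and continuity of $b_{\widetilde\cW,\cW}$ on this domain (Proposition~\ref{prop:induced-map-smooth}). Since $p$ lies in the domain, $(p_n)$ eventually does too, and continuity yields $b_{\widetilde\cW,\cW}(p_n) \to b_{\widetilde\cW,\cW}(p)$ in $\Bl_{\widetilde\cW}(M)$. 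But $(p_n)$ avoids $\supp\widetilde\cW$, so the induced map acts as the identity there, whence this sequence equals $(p_n)$ seen in $\Bl_{\widetilde\cW}(M)$ and converges to $\tilde p$ by hypothesis. Since $\Bl_{\widetilde\cW}(M)$ is Hausdorff, I conclude $\tilde p = b_{\widetilde\cW,\cW}(p)$.

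I do not foresee a serious obstacle: the statement is essentially a packaging of the facts that the blow-down and induced maps are continuous extensions of the identity and that the blow-ups are Hausdorff. The one subtlety worth flagging is that the induced-map argument only applies once $(p_n)$ has entered $\Bl_{\widetilde\cW,\cW}(M)$, so openness of this domain inside $\Bl_\cW(M)$ is genuinely used; without it one could not guarantee that $b_{\widetilde\cW,\cW}$ sees the tail of the sequence.
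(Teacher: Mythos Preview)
Your proof is correct and follows the same approach as the paper's, which is a one-liner invoking continuity of $b_\cW$, $b_{\widetilde\cW}$ and $b_{\widetilde\cW,\cW}$ together with uniqueness of limits in a manifold topology. You have simply spelled out the details the paper leaves implicit, including the use of openness of $\Bl_{\widetilde\cW,\cW}(M)$ to ensure the tail of the sequence lies in the domain of the induced map.
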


\begin{proof}
The statements follow by continuity of $b_\cW, b_{\widetilde \cW}$ and $b_{\widetilde \cW,\cW}$ as well as uniqueness of limit points in a manifold topology.
\end{proof}

Note that the limit point $\widetilde p$ does not get determined by $p$ if the limit of the sequence $p_n$ does not lie within $\Bl_{\widetilde \cW,\cW}(M)$.

\startSubchaption{Projective blow-ups}

To conclude our discussion of weighted blow-ups of single submanifolds, we must briefly speak of the \textit{projective} blow-ups that arise from quotienting out what remains of the $\R$-action on the exceptional divisor:

\begin{definition}
Let $(M,\cW)$ be a weighted manifold. The \textbf{projective blow-up of $\cW$} is given by
$$
\PBl_\cW(M):= M\setminus \supp\cW \quad\cup\quad \bP\nu_\cW M,
$$
where the weighted projective normal bundle $\bP\nu_\cW M$ is defined as the quotient $(\nu_\cW\setminus\supp\cW)/\R$ of the full $\R^*$-action. We define the \textbf{blow-down map} $$ pb_\cW:\PBl_\cW(M)\to M $$
and the blow-up $\PBl_N(M)$ of a trivial weighting analogously to Def.~\ref{def:blow-up-weighting}.
\end{definition}

The result of taking this quotient is visualized in Figure~\ref{fig:projective-helix}. This procedure can introduce singularities, but we can still write down orbifold charts analogously to the manifold charts of the spherical blow-up:

\begin{figure}
    \centering
    \Large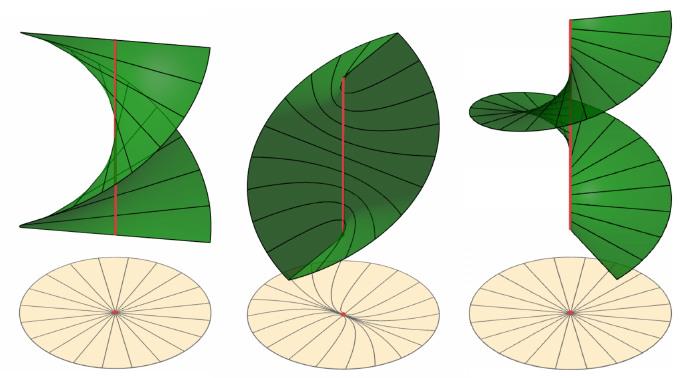
    \caption{Visualization of the projective blow-ups of the origin on the disk when equipped with weights $(1,1), (1,2)$ and $(2,2)$, respectively. For the trivial weights, projectivisation corresponds to gluing the exceptional divisor onto itself, thereby removing one of the boundaries of the annulus and resulting in a Möbius strip. Equivalently, one can immediately consider the surface traced out by the weighted lines through the origin instead of just the one-sided weighted rays. For the weights $(1,2)$, this process topologically results in a disk, with the exceptional divisor homeomorphic to a closed interval. While the edges at the top and bottom of the surface are mostly an artifact of this visualization, there really are orbifold singularities at the endpoints of the exceptional divisor. For the weights $(2,2)$, the fact that all weights are even means that the orbits under the full $\R$-action and the original $\R_{>0}$-action coincide and projectivization has no effect.}
    \label{fig:projective-helix}
\end{figure}

\begin{definition}
    Let $\cW$ be a weighting over a manifold $M$ of dimension $m$ and $\chi=(x_1,...,x_m):U\to\R^m$ an adapted coordinate chart on $U\subseteq M$ for the weight sequence $w_1, ..., w_m$. Fix a sign $s\in\{\pm1\}$ and an $h\in\{1, ..., m\}$ such that $w_h> 0$.
    
    \textit{If $w_h$ is odd,} define the subset $\bP U_{hs}\subseteq \Bl_\cW U$ as $$
    \bP U_{hs}=\{ p\in U\setminus\supp\cW \;|\; s\,x_h(p)\neq 0 \} \;\cup\; \{ [n]\in\bP\nu_\cW U \;|\; s\,x_h^{(w_h)}(n)\neq 0  \}$$
    and let $G_{hs}$ be the trivial group that acts trivially on $\R^m$.
    
    \textit{If $w_h$ is even,} let $$
    \bP U_{hs}=\{ p\in U\setminus\supp\cW \;|\; s\,x_h(p)> 0 \} \;\cup\; \{ [n]\in\bP\nu_\cW U \;|\; s\,x_h^{(w_h)}(n)\neq 0  \}$$
    and define $G_{hs}:=\Z_2$. Let the non-trivial element of $\Z_2$ act on $(y_1, ..., y_m)\in\R^m$ by flipping the sign of $y_h$ and of all $y_i$ with $w_i$ odd.

    We can now define orbifold parametrizations
    $$\phi_{hs}:\R^m\to \bP U_{hs}$$
    by setting$$
        \phi_{hs}(y_1, ..., y_m) := \begin{cases}
        \chi^{-1}(y_h\cdot(y_1, ..., s, ..., y_m)) &\text{for } y_h\neq0,\\
        [\left(\chi^{(\cW)}\right)^{-1}(y_1, ..., s, ..., y_m)] &\text{for } y_h=0.
        \end{cases}
    $$
\end{definition}

It is easy to check that $\phi_{hs}$ maps surjectively into $\bP U_{hs}$. The parity of $w_h$ determines whether $\{y_h\leq0\}\subseteq\R^m$ gets folded onto the image of $\{y_h\geq0\}$ by $\phi_{hs}$, warranting the case distinction in the definition of $\bP U_{hs}$.
Moreover, $\phi_{hs}$ is invariant under the action of $G_{hs}$ on its domain.

The proof that these really are orbifold charts proceeds largely analogously to the smooth structure in the spherical case and is thus omitted here.

\begin{proposition}
Let $(M,\cW)$ be a weighted manifold and consider the projective blow-up $\PBl_\cW(M)$.
\begin{enumerate}
    \item $\PBl_\cW(M)$ has orbifold singularities at exactly those points of the exceptional divisor that are equivalence classes of curves that are contained in a coordinate subspace spanned by directions with even weights. Their isotropy group is $\Z_2$.
    \item If the weights are all odd, then $\PBl_\cW(M)$ is a smooth manifold without boundary.
    \item If the weights are all even, then $\PBl_\cW(M)$ has a canonical structure of a smooth manifold with the exceptional divisor as boundary.
    \item If the weights have mixed parity, then the orbifold singularities of $\PBl_\cW(M)$ cannot be seen as boundary points of a smooth manifold.
\end{enumerate}
\end{proposition}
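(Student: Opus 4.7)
The plan is to work entirely with the orbifold parametrizations $\phi_{hs}:\R^m\to\bP U_{hs}$ introduced just before the statement. One first verifies, in complete analogy to the proof of Theorem~\ref{thm:weighted-blow-up-atlas}, that the $\phi_{hs}$ are $G_{hs}$-invariant, locally universal for $G_{hs}$-invariant smooth maps, and that their transition relations are smooth away from the $G_{hs}$-fixed loci; this reduces the whole proposition to a pointwise analysis of isotropy. The cleanest invariant to track is, given a point $[n]\in\bP\nu\cW$, the set of indices $i$ with $w_i>0$ for which $\x{i}{w_i}(n)\neq 0$, which is a chart-independent datum since $\nu\cW$ is a graded bundle.

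\textbf{Proof of (1).} For $w_h$ odd, $G_{hs}$ is trivial, so no fixed points; for $w_h$ even, the nontrivial element flips $y_h$ and every $y_i$ with $w_i$ odd. Its fixed set in the exceptional divisor $\{y_h=0\}$ is exactly $\{y_i=0\mid w_i\text{ odd}\}$, which corresponds to those $[n]$ whose representing curve lies in the coordinate subspace spanned by the even-weight directions. Conversely, if $[n]$ has some nonzero component in an odd-weight direction $i$, choose $h=i$ (with the appropriate sign $s$); the chart $\phi_{hs}$ then covers $[n]$ with trivial isotropy, so $[n]$ is a smooth point. The $\Z_2$ isotropy at the singular points is immediate.

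\textbf{Proofs of (2) and (3).} If all positive weights are odd, every admissible $\phi_{hs}$ has trivial $G_{hs}$ and maps $\R^m$ (no boundary) bijectively onto $\bP U_{hs}$; these together with unaffected charts on $M\setminus\supp\cW$ form a smooth atlas without boundary. If all positive weights are even, then $(-1)\cdot n=n$ on $\nu\cW\setminus\supp\cW$ by Equation~\eqref{eq:action-normal}, so the $\R^*$- and $\R_{>0}$-orbits coincide and $\PBl_\cW M=\Bl_\cW M$ canonically; the manifold-with-boundary structure is then inherited from Theorem~\ref{thm:weighted-blow-up-atlas}.

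\textbf{Proof of (4).} Assume at least one positive weight is even and at least one positive weight is odd, and let $p$ be a singular point. In any chart $\phi_{hs}$ covering $p$, $w_h$ is even (by (1)) and there exists some $i\neq h$ with $w_i$ odd and positive, so the $\Z_2$-action on $\R^m$ flips at least the two coordinates $y_h$ and $y_i$ and fixes the origin with fixed-point set of codimension $\geq 2$. The ring of $\Z_2$-invariants therefore requires more than $m$ generators (necessarily including $y_h^2$, $y_iy_h$, $y_i^2$, with syzygies), so $\R^m/\Z_2$ is not locally diffeomorphic to either $\R^m$ or the half-space $\R^m_+$ near $0$, ruling out $p$ from being a smooth interior or boundary point. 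The subtle obstacle here is to exclude that some \emph{different} smooth structure on $\PBl_\cW M$ makes $p$ a boundary point. For this, I would argue as follows: by (1), generic exceptional-divisor points near $p$ (those with some nonzero odd-weight component) are covered by charts with $w_h$ odd and hence are smooth interior points of $\PBl_\cW M$ with no boundary. Any hypothetical boundary of $\PBl_\cW M$ must therefore be a closed codimension-$1$ submanifold contained in the singular locus, but the latter has codimension $1+|\{i:w_i\text{ odd}\}|\geq 2$ inside $\PBl_\cW M$, giving the desired contradiction.
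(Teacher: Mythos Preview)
Your argument is correct and, for parts (1), (2), and (4), follows the paper's proof closely. Two remarks are worth making.

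For part (3) you take a different and somewhat cleaner route: rather than working chart-by-chart as the paper does (observing that when all weights are even the $\Z_2$-action flips only $y_h$, so $\R^m/G_{hs}$ is literally the half-space $\R^m_h$), you argue globally that $(-1)\cdot n=n$ on $\nu\cW\setminus\supp\cW$ and hence $\PBl_\cW M=\Bl_\cW M$ canonically. This is more conceptual and immediately invokes Theorem~\ref{thm:weighted-blow-up-atlas}; the paper's chart argument is equivalent but arrives at the same identification only implicitly.

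For part (4) you are more careful than the paper. The paper gives only the codimension argument (fixed locus has codimension $\geq 2$, boundary must have codimension $1$), whereas you supplement this with a ring-of-invariants observation and explicitly justify why any hypothetical boundary must be contained in the singular locus (by first establishing that nearby points with a nonzero odd-weight component are smooth interior points). This extra care is warranted, since the paper's terse argument does not spell out why the boundary would have to coincide with the singular locus.

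One minor inaccuracy: in your setup for (1) you claim that the set of indices $i$ with $w_i>0$ and $x_i^{(w_i)}(n)\neq 0$ is chart-independent. It is not---a change of adapted coordinates can mix directions of equal weight. What \emph{is} chart-independent is whether some odd-weight component is nonzero (equivalently, whether $(-1)\cdot n\neq n$ in $\nu\cW$), and that is all your argument actually uses.
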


\begin{proof}
\textit{Regarding (1):} The singular locus consists of the points whose isotropy group is non-trivial. The isotropy group of a point $p\in\PBl_\cW(M)$ can be calculated as the stabilizer of any point $p'\in\phi_{hs}^{-1}(p)$ under the action of $G_{hs}$ for any choice of orbifold chart. If $w_h$ is odd, $G_{hs}$ and thus the isotropy group are both trivial. If $w_h$ is even, the set of points invariant under $G_{hs}$ is given by
$$\{(y_1, ..., y_m)\in\R^m\;|\; y_h=0 \land w_i\text{ odd} \implies y_i = 0\}.$$
This is the set of points with non-trivial stabilizer $G_{hs}=\Z_2$ and thus the singular locus. The condition $y_h=0$ corresponds to points on the exceptional divisor and the remaining conditions describe equivalence classes of curves as in the statement of the proposition.

\textit{Regarding (2):} Assuming all weights are odd, the singular locus is empty according to part (1). The orbifold structure is thus smooth.

\textit{Regarding (3):} Assuming all weights are even, the action of $G_{hs}$ in any chart flips the single component $y_h$ such that taking the quotient $\R^m/G_{hs}$ is equivalent to restricting the parametrization $\phi_{hs}$ to the subspace
    $$\R^m_h:=\{(y_1, ..., y_m)\in\R^m \;|\; y_h\geq 0 \}.$$
This yields a parametrization of a manifold with boundary (namely the spherical blow-up $\Bl_\cW(M)$).

\textit{Regarding (4):} We once again consider local charts. Since we know that points in charts with $w_h$ odd are smooth, we restrict our attention to charts with $w_h$ even. By reordering the components, we can without loss of generality assume that the action of $G_{hs}$ is given by
$$(y_1,..., y_m)\mapsto (y_1, ..., y_{h-1}, -y_h, ..., -y_m),$$
where $h<m$ by assumption of mixed parity weights. The points invariant under this action take the form $(y_1, ..., y_{h-1}, 0, ..., 0)$ and thus form a submanifold of codimension $m-h+1>1$. This gets homeomorphically mapped to subset of $\bP U_{hs}$ of the same codimension. We cannot equip $\bP U_{hs}$ with the structure of a manifold with this subset as boundary, as it would have to have codimension 1.
\end{proof}

\newpage
\section{Blowing up building sets}\label{sec:building-sets}
There are two approaches to generalize blow-ups to collections of submanifolds: The \textit{graph} blow-up takes the closure of the bulk of the manifold embedded into the product of all individual blow-ups (see our Definitions~\ref{def:blow-up-building-set} and~\ref{def:blow-up-weighted-building-set}). The \textit{iterated} blow-up proceeds instead by blowing up smaller submanifolds first and lifting the remaining submanifolds at each step. Both approaches were already used in Fulton and MacPherson's seminal paper~\cite{FM94} about compactified configuration spaces. They coincide in this context, provided the iterated blow-ups are performed in a sensible order. An equivalence between graph and iterated blow-up for a different specific construction in the non-weighted smooth setting can also be found in~\cite{AMN21}.

While we expect the same to hold true with reasonable assumptions in the weighted setting, we will focus only on the graph blow up. This is for two reasons: First, as we have concrete applications in mind, we want to obtain concrete coordinates instead of an iterative construction. Second, this allows us to avoid having to define weightings and their blow-ups in the category of manifolds with corners, which would necessarily be produced in intermediate steps\footnote{Weighted blow-ups for manifolds with corners are discussed in~\cite{Beh21} in the language of Melrose.}.

One insight of~\cite{FM94} is that it may be necessary to also blow up some, but not necessarily all of the intersections of the submanifolds under consideration in order to obtain a smooth result.
We call such a choice a \textit{building set} $\cG$ in order to differentiate it from the full \textit{arrangement} $\Arr_\cG$ that it generates by taking intersections. This notion first appeared in~\cite{CP95} and was developed to further generality in \cite{MP98} and finally \cite{Li09}.

In this \chaption, we hope to provide a self-contained pedagogic introduction to these ideas. Moreover, the papers above deal only with projective blow-ups in an algebro-geometric setting\footnote{The special case of the Fulton-MacPherson compactification was adapted to the smooth setting in~\cite{AS94}, replacing projective with spherical blow-ups. Similarly, \cite{Ga03} can be seen to provide a translation of~\cite{MP98}.}, while we adapt definitions to discuss both spherical and projective blow-ups in the smooth category. The central tools this \chaption{} provides are a characterization of nests in Proposition~\ref{prop:char-nest} and the construction of adapted coordinates to nests of separated building sets in Proposition~\ref{prop:nest-coords}.
While the former is in the same spirit as similar characterizations in the settings of \cite{CP95,MP98,Li09}, we believe they are new as stated here.

In \Subchaption~\ref{ssec:building-sets}, we start by giving basic definitions and examples of building sets and arrangements. \Subchaption~\ref{ssec:factors} discusses factors, flags and nests as essential algebraic notions needed to discuss the blow-up. We conclude in \Subchaption~\ref{ssec:blow-ups} with a qualitative discussion of non-weighted blow-ups in preparation for the more general weighted constructions of the next \chaption.

\startSubchaption{Building sets and arrangements}\label{ssec:building-sets}

\begin{definition}\label{def:building-set}
    A \textbf{building set}\index{building set} over a manifold $M$ is a collection $\cG$ of non-empty and (topologically) closed submanifolds of positive codimension such that all $\cS\subseteq\cG$ intersect cleanly in a connected submanifold $\cap\cS$.
    We say that $$\Arr_\cG:=\{\cap\cS\;|\; \cS\subseteq\cG \}$$ is the \textbf{arrangement}\index{arrangement} induced by $\cG$, where by convention $\cap\emptyset= M$.
\end{definition}

\begin{remark}\label{remark:building-set-def}
Let us break down the details of this definition:
The elements of $\cG$ are closed subsets in order for blow-ups over each individual element of $\cG$ to be well-defined. Excluding the empty set and codimension zero sets is just a matter of convention as they do not affect the blow-up.
We assume connectedness of $\cap\cS$ for all $\cS\subseteq\cG$ in an effort to reduce the considerable notational burden of later chapters. This is sufficient for our applications, but one may readily generalize our results by first performing blow-ups separately on an open cover of $M$ on which the restricted building sets do satisfy connectedness and then gluing the pieces together (compare~\cite[][appendix 5.4]{Li09} for the non-weighted case).
\end{remark}

\begin{figure}
    \centering
    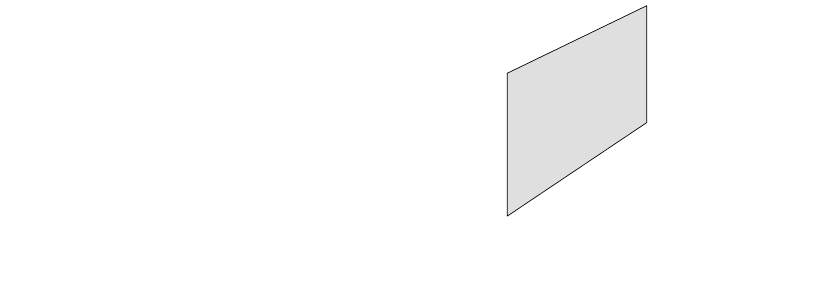
    \caption{We will use different collections of these submanifolds of $M$ as examples of building sets. $G_4$ and $G_5$ are spanned by the $x_2$ and $x_1$ axes, respectively, while $G_6$ is spanned by $x_2$ and $x_3$. $G_{123}$ and $G_{456}$ are the intersections of the other submanifolds.}
    \label{fig:low-dim-examples}
\end{figure}

\begin{examples}\ 
\begin{enumerate}
    \item We will be considering subsets of the collections of submanifolds of $\R^2$ and $\R^3$ depicted in Figure~\ref{fig:low-dim-examples} as examples throughout this section.
    \item Given a manifold $X$ and $s\geq 1$, we view $M:=X^s$ as a space of ordered configurations allowing collisions. For any $I\subseteq\{1, ..., s\}$ containing at least two elements, we define a corresponding \textit{diagonal} $$\Delta_I:=\{(p_1, ..., p_n)\in X^s\;|\; \forall i,l\in I: p_i=p_l\}.$$
The collection of all such diagonals is the \textbf{Fulton-MacPherson building set} $\cG^{FM}$. Note that for $s\geq4$, the induced arrangement is strictly larger than $\cG^{FM}$, also containing \textit{polydiagonals} like $\Delta_{\{1,2\}}\cap\Delta_{\{3,4\}}$. This building set is studied in more detail in \Subchaption~\ref{ssec:fulton-macpherson}.
    \item We call a building set \textbf{closed} if every non-empty intersection of elements in the building set is also contained in it, i.e. $\cG=\Arr_\cG\setminus\{\emptyset, M\}$. We can take the closure of any building set by adding intersections.\qedhere
\end{enumerate}
\end{examples}

Note that nested submanifolds have decreasing dimension, ensuring that a building set is at least locally finite:

\begin{lemma}\label{lem:dim-shrinking}
    For any two elements $A,B$ of a building set $\cG$, $A\subsetneq B$ implies that $\dim A<\dim B$.
\end{lemma}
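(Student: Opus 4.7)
The plan is to exploit the connectedness clause in the definition of building set. The inclusion $A \subseteq B$ of embedded submanifolds of $M$ gives $\dim A \leq \dim B$ automatically (every smooth curve in $A$ is a smooth curve in $B$, so $T_pA \subseteq T_pB$ at each $p\in A$), so the real content is to rule out equality.

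Suppose for contradiction that $\dim A = \dim B$. Since $A$ and $B$ are embedded submanifolds of $M$ with $A \subseteq B$, the inclusion $A \hookrightarrow B$ is an injective immersion of manifolds of equal dimension, hence a local diffeomorphism onto its image. In particular, $A$ is open in $B$.

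At the same time, $A$ is topologically closed in $M$ by the defining property of a building set (Definition~\ref{def:building-set}), and $B \subseteq M$, so $A = A\cap B$ is closed in $B$ as well. Applying the connectedness clause of Definition~\ref{def:building-set} to the singleton $\cS = \{B\} \subseteq \cG$ shows that $B = \cap\cS$ is connected. Since $A$ is non-empty and simultaneously open and closed in the connected space $B$, we conclude $A = B$, contradicting the strict inclusion $A \subsetneq B$. Hence $\dim A < \dim B$, as claimed.

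There is essentially no obstacle here: the lemma is a short consequence of connectedness plus the open-and-closed trick, and the only detail to state cleanly is that an inclusion of equidimensional embedded submanifolds is a local diffeomorphism.
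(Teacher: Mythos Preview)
Your proof is correct and follows essentially the same route as the paper: argue that an equidimensional closed submanifold $A\subseteq B$ is both open and closed in $B$, then invoke connectedness of $B$ (from Definition~\ref{def:building-set}) to force $A=B$. The paper simply compresses the open-and-closed step into the phrase ``$A$ must be a union of connected components of $B$,'' but the logic is identical.
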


\begin{proof}
Assume that the dimensions of $A\subsetneq B$ match. Since $A$ is closed in $B$, it must be a union of connected components of $B$. By the connectedness assumption, this implies the contradiction $A=B$.
\end{proof}

The induced arrangement carries some structure:

\begin{npar}[Algebraic structure on $\Arr_\cG$]
	$\Arr_\cG$ is a bounded lattice under set inclusion, where join and meet are given by
 \begin{align*}
\cap\cS_1\join\cap\cS_2 &:= \cap(\cS_1 \cup \cS_2),\\
\cap\cS_1\meet\cap\cS_2 &:= \cap(\cS_1 \cap \cS_2),
 \end{align*}
respectively. $M$ is the maximal element and $\cap\cG$ is the minimal element which may or may not be empty. We will commonly need for any subset $\cP\subseteq\cG$ and element $S\in\Arr_\cG$ the sets
\begin{align*}
\cP_{\geq S} &:= \{P\in\cP \;|\; S\subseteq P\},\\
\max\cP &:= \{P\in\cP \;|\; \not\exists P'\in\cP: P\subsetneq P' \},\\
\min\cP &:= \{P\in\cP \;|\; \not\exists P'\in\cP: P'\subsetneq P \}.\qedhere
\end{align*}
\end{npar}

\startSubchaption{Factors, flags and nests}\label{ssec:factors}

Consider a submanifold $S\in\Arr_\cG$ from the arrangement produced by $\cG$ and pick a point $p$ on it that is generic, i.e. not contained in any smaller $S'\in\Arr_\cG$. If we want to understand the blow-up of $\cG$ close to $p$, not all elements of $\cG$ are relevant: To start, only the elements of $\cG_{\geq S}$ that contain our generic $p$ have any chance of having an effect. To understand the regularity of a simultaneous blow-up of an arrangement, we need to check how nicely these relevant $\cG_{\geq S}$ intersect.

Assume for now that the $\cG_{\geq S}$ intersected transversely. The latter is equivalent to linear independence of their conormal bundles. As the blow-up construction involves unit normal vectors, this linear independence will provide a splitting of a given normal vector of $S$ into those of each $G\in\cG_{\geq S}$. This \textit{separation} of the data intuitively allows the blow-ups of all the relevant submanifolds in $\cG$ to be performed independently of each other.

But it is not necessary that \textit{all} elements of $\cG_{\geq S}$ intersect transversely. For example, the blow-ups of any two nested submanifolds are automatically compatible despite failing to be transverse. In effect, we will see that only the \textit{minimal} elements in $\cG_{\geq S}$ can cause trouble. Consequently we make the following definition:

\begin{definition}\label{def:factors}\label{def:separated}
Let $\cG$ be a building set.
\begin{enumerate}
	\item For every $S\in\Arr_\cG$ we define the set of \textbf{$\cG$-factors of $S$} as $$\cG_S:=\min\cG_{\geq S}.$$
	\item We say $\cG$ is \textbf{separated} if the collection $\cG_S$ intersects transversely for all $S\in\Arr_\cG$.
\end{enumerate}
\end{definition}

We will indeed see that separation of the building set is a sufficient condition for a manifold structure on its blow-up. Note that $\cG_G=\{G\}$ for $G\in\cG$ and that $S=\cap\cG_S$.

In~\cite{CP95} and~\cite{Li09}, conditions analogous to separation are taken to be part of the definition of building sets. As the blow-up can be very regular without making this assumption (see Example~\ref{ex:blow-ups}(1)), we find it more natural to view it as an independent property.

\begin{examples}\ 
\begin{enumerate}
    \item For the submanifolds in Figure~\ref{fig:low-dim-examples}, separated building sets are given by $\{G_1,G_2\}$ and $\{G_5,G_6\}$. The sets $\{G_1,G_2,G_3\}$ and $\{G_4,G_5\}$ are not separated.
    \item For a closed building set $\cG$, the $\cG$-factors always consist of the element itself, making $\cG$ trivially separated. The closures of the non-separated examples above yield separated building sets $\{G_1,G_2,G_3,G_{123}\}$ and $\{G_4,G_5,G_{456}\}$ this way.
\qedhere
\end{enumerate}
\end{examples}

When blowing up a single submanifold $N$, points on the exceptional divisor encode along which non-vanishing normal vectors we can approach $N$. When blowing up a whole building set $\cG$, a point should similarly determine such normal vectors for each $G\in\cG$, but these vectors are not independent choices. The relations between them will give the exceptional divisor a stratified structure. To understand this structure, we make the following two definitions:

\begin{definition}\label{def:nests}
Let $\cG$ be a building set over $M$.
\begin{enumerate}
	\item A \textbf{$\cG$-flag} of length $l$ is a chain $\emptyset \subsetneq S_1\subsetneq S_2 \subsetneq ... \subsetneq S_l \subsetneq M$ of elements of $\Arr_\cG$. We allow the trivial flag with $l=0$.
	\item A \textbf{$\cG$-nest} is a subset $\cN\subseteq\cG$ such that $\cN=\bigcup_{i=1}^l \cG_{S_i}$ for some $\cG$-flag $S_1\subsetneq ... \subsetneq S_l$. The trivial flag induces the empty nest.
\end{enumerate}
\end{definition}

A $\cG$-flag can be intuitively understood as recording all the possible ways one can approach points in $\cup\cG$ from $M\setminus \cup\cG$. However, not all the elements of $\Arr_\cG$ get blown up, but only those in $\cG$. This means that flags hold redundant data. Passing to the induced $\cG$-nest fixes this, and we will discuss in Remark~\ref{rem:strata} that nests indeed index a stratification of the blow-up.

\begin{examples}\ 
\begin{enumerate}
    \item Consider the building set $\cG=\{G_1,G_2\}$ from Figure~\ref{fig:low-dim-examples}. The five possible non-empty flags are $$G_1,\quad G_2, \quad G_{123},\quad G_{123}\subsetneq G_1,\quad G_{123}\subsetneq G_2.$$ The first two of these represent moving onto $G_i\setminus G_{123}$ from the bulk. The third approaches $G_{123}$ directly along a tangent vector in neither $TG_1$ nor $TG_2$. The last two flags represent moving onto $G_{123}$ while asymptotically approaching $G_1$ or $G_2$. The three possible non-empty nests are $$\{G_1\},\quad \{G_2\},\quad \{G_1,G_2\}$$ as the last three flags induce the same nest.
    \item For any closed building set (such as $\{G_1,G_2,G_{123}\}$), nests and flags are in 1-to-1 correspondence.
    \item The naming of nests is motivated by the Fulton-MacPherson building set, as $\cG^{FM}$-nests stand in 1-to-1 correspondence with nested subsets of $$\cI=\{I\subseteq\{1,...,s\}\;|\; |I|\geq2 \},$$ see Lemma~\ref{fulton-mac-nest}. Examples~\ref{ex:fm-flags-nests} show that one can understand nests as encoding all possible essentially different orders of point collisions.
    \qedhere
\end{enumerate}
\end{examples}

The definition of a $\cG$-nest is intuitively motivated but cumbersome in practice. We will now work towards Proposition~\ref{prop:char-nest}, which gives a much more convenient characterization in the separated case. This ultimately comes down to $\cG$-factors interacting nicely with the lattice structure on $\Arr_\cG$. To start, we have:

\begin{lemma}\label{lem:separated-g-factors}
Let $\cG$ be a separated building set and $G_1\neq G_2$ two $\cG$-factors of some $\emptyset\neq S\in\Arr_\cG$. Then there is no $G_0\in\cG$ containing both $G_1$ and $G_2$.
\end{lemma}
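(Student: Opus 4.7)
The plan is to argue by contradiction: assume some $G_0\in\cG$ contains both $G_1$ and $G_2$, and then use the separation hypothesis to collapse the conormal bundle of $G_0$ to zero, contradicting the positive-codimension requirement in Definition~\ref{def:building-set}.

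First I would rule out $G_0=G_1$ or $G_0=G_2$. If $G_0=G_1$, then $G_2\subseteq G_1$; but $G_1,G_2\in\cG_{\geq S}$ are both minimal, so $G_1\subseteq G_2$ (from minimality of $G_2$ applied to $G_1\in\cG_{\geq S}$) together with $G_2\subseteq G_1$ would force $G_1=G_2$, contradicting $G_1\neq G_2$. Symmetrically $G_0\neq G_2$. So $G_0\supsetneq G_1$ and $G_0\supsetneq G_2$ strictly, and in particular $G_0\in\cG_{\geq S}$.

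Now I pick any point $p$ in the non-empty $S=\cap\cG_S$, which lies in all three of $G_0,G_1,G_2$. From $G_i\subseteq G_0$ we get $T_pG_i\subseteq T_pG_0$, so passing to annihilators in $T^*_pM$,
\[
\nu^*_p G_0 \;\subseteq\; \nu^*_p G_1 \cap \nu^*_p G_2.
\]
Separation of $\cG$ means that $\cG_S$ intersects transversely at $p$, which in particular implies that the conormal spaces $\{\nu^*_p G\}_{G\in\cG_S}$ are linearly independent; taking the pair $G_1,G_2$, this yields $\nu^*_p G_1\cap \nu^*_p G_2=\{0\}$. Hence $\nu^*_p G_0=0$, contradicting $\codim G_0\geq 1$ and completing the proof.

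The only step requiring any care is the minimality bookkeeping that forces strict containments $G_0\supsetneq G_i$; once those are in hand, the conormal computation is immediate from clean intersection plus the transversality built into separation, so I do not anticipate any serious obstacle.
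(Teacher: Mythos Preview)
Your argument is correct and is essentially the paper's own proof, stated in conormal terms: separation forces $\nu^*_pG_1\cap\nu^*_pG_2=0$ (equivalently $T_pG_1+T_pG_2=T_pM$), so any $G_0\in\cG$ containing both would have codimension zero, contradicting Definition~\ref{def:building-set}. The preliminary bookkeeping ruling out $G_0=G_i$ is harmless but in fact unnecessary, since the codimension-zero conclusion already contradicts $G_0\in\cG$ regardless.
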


\begin{proof}
By separation of $\cG$, $G_1$ and $G_2$ are transverse, and their intersection is non-empty since $S\neq\emptyset$. Any $G_0$ containing them must thus have codimension zero and cannot be contained in $\cG$.
\end{proof}

Furthermore, every subset of $\cG$ gets partitioned by the $\cG$-factors of its intersection:

\begin{lemma}\label{lem:g-factors-partition}
Let $\cP$ be a subset of a separated building set $\cG$. Then the sets $$\{\cP_{\geq G}\;|\; G\in\cG_{\cap\cP}\}$$ form a partition of $\cP$. Each of the $\cP_{\geq G}$ is non-empty and satisfies $\cap\cP_{\geq G}=G.$ 
\end{lemma}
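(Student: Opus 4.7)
Set $S:=\cap\cP$, so that $\cG_{\cap\cP}=\cG_S=\min\cG_{\geq S}$; by separation, $\cG_S$ intersects transversely, and by the remark following Definition~\ref{def:factors}, $S=\cap\cG_S$. The plan is to prove the three claims in turn: the collection covers $\cP$, its members are pairwise disjoint, and $\cap\cP_{\geq G}=G$ (which will simultaneously yield non-emptiness).

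For covering, each $P\in\cP$ lies in $\cG_{\geq S}$, and Lemma~\ref{lem:dim-shrinking} bounds chains in $\cG$ by $\dim M$, so descending from $P$ within $\cG_{\geq S}$ must terminate at some minimal element $G\in\cG_S$ with $G\subseteq P$, i.e.\ $P\in\cP_{\geq G}$. For disjointness, if some $P$ lay in both $\cP_{\geq G_1}$ and $\cP_{\geq G_2}$ for distinct $G_1,G_2\in\cG_S$, then $P\in\cG$ would contain both $\cG$-factors, contradicting Lemma~\ref{lem:separated-g-factors}.

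For the identity $\cap\cP_{\geq G_i}=G_i$, set $S_i:=\cap\cP_{\geq G_i}\in\Arr_\cG$ (with the convention $M$ if empty); clearly $G_i\subseteq S_i$. The covering step gives $\cup_i\cP_{\geq G_i}=\cP$ and hence $\cap_i S_i=\cap\cP=S$. Since clean intersection implies the general inequality $\codim(A\cap B)\leq \codim A+\codim B$, and since transversality of $\cG_S$ supplies $\sum_i\codim G_i=\codim S$, I plan to chain
\begin{equation*}
\codim S \;=\; \codim\bigcap_i S_i \;\leq\; \sum_i \codim S_i \;\leq\; \sum_i \codim G_i \;=\; \codim S.
\end{equation*}
Equality forces $\codim S_i=\codim G_i$ for each $i$; combined with connectedness of $S_i$ (guaranteed by $S_i\in\Arr_\cG$) and the inclusion $G_i\subseteq S_i$ of submanifolds of equal dimension, this yields $S_i=G_i$. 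In particular $S_i\neq M$, so $\cP_{\geq G_i}$ cannot be empty.

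The main obstacle I expect lies in the codimension comparison: one must apply the clean-intersection inequality iteratively and extract the exact transversality statement from separation. A secondary concern is the degenerate case $S=\emptyset$, to which Lemma~\ref{lem:separated-g-factors} does not directly apply; this may need either an ambient non-emptiness hypothesis or a separate treatment in which both sides of the partition assertion reduce to a trivial statement.
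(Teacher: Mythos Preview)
Your proof is correct and reaches the same conclusion as the paper, but the key step (showing $\cap\cP_{\geq G}=G$) is organized differently. The paper works with conormal bundles: separation gives the direct sum $\nu^*_{\cap\cP}M=\bigoplus_{G'\in\cG_{\cap\cP}}\nu^*_{G'}M|_{\cap\cP}$, while cleanness gives $\nu^*_{\cap\cP}M=\sum_{P\in\cP}\nu^*_PM|_{\cap\cP}$; splitting the latter sum along the partition and comparing with the direct sum forces $\sum_{P\in\cP_{\geq G}}\nu^*_PM=\nu^*_GM$, hence $\cap\cP_{\geq G}=G$. You instead run a codimension sandwich $\codim S\le\sum_i\codim S_i\le\sum_i\codim G_i=\codim S$ and squeeze. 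These are the same argument in different clothing (conormal rank versus codimension), but your version is a touch more elementary since it avoids setting up the bundle decomposition explicitly. Your covering step is also slightly more careful than the paper's, invoking Lemma~\ref{lem:dim-shrinking} to justify that a minimal element below $P$ actually exists; the paper takes this for granted.

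Your two flagged concerns are not real obstacles. The iterated clean-intersection inequality is immediate once you note that cleanness of $\cP\subseteq\cG$ gives $T_p(\cap_i S_i)=\cap_i T_pS_i$ at every $p\in S$, after which $\codim(\cap_i V_i)\le\sum_i\codim V_i$ is just linear algebra. As for $S=\emptyset$: the paper handles it with the single remark that ``we may assume $\cap\cP\neq\emptyset$ since the statements to prove are otherwise trivial,'' so you are in no worse shape than the original.
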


\begin{proof}
For every $P\in\cP$, it holds that $P\in\cG$ and $P\supseteq\cap\cP$ such that in particular $P\in\cP_{\geq G}$ for some $\cG$-factor $G$ of $\cap\cP$. If this $P$ was also in $\cP_{\geq G'}$ for a different $\cG$-factor $G'$, then Lemma~\ref{lem:separated-g-factors} gives a contradiction, so the $\cP_{\geq G}$ form a partition.

We need to work a bit harder to show that the elements $\cP_{\geq G}$ of the partition intersect in $G$. First note that separation of $\cG$ implies that we have a direct sum
\begin{equation}
\nu^*_{\cap\cP}M=\bigoplus_{G'\in\cG_{\cap\cP}}\nu^*_{G'} M|_{\cap\cP}.
\end{equation}
of the conormal bundles of $\cG$-factors. Here we may assume that $\cap\cP\neq\emptyset$ since the statements to prove are otherwise trivial. Since the conormal bundle of an intersection $\cap\cP$ is always the sum of the original conormal bundles, we get 
\begin{equation}\label{eq:dir-sum-normal}
\sum_{P\in\cP}\nu^*_P M|_{\cap\cP}=\bigoplus_{G\in\cG_{\cap\cP}}\nu^*_{G} M|_{\cap\cP}.
\end{equation}

Fixing some $G\in\cG_{\cap\cP}$, the inclusion $G\subseteq\cap\cP_{\geq G}$ is trivial and implies that we have 
\begin{equation}\label{eq:normal-bundle-p}
\nu^*_G M\supseteq \sum_{P\in\cP_{\geq G}}\nu^*_P M|_{G}.
\end{equation}
When splitting the left-hand side of Equation~\eqref{eq:dir-sum-normal} according to the partition, each group of terms thus includes into a single term of the direct sum on the right, implying that the inclusion is in fact an equality. This means on one hand that $\cap\cP_{\geq G}=G.$ On the other hand, if $\cP_{\geq G}$ was empty, this would mean that the rank of $\nu^*_G M$ is zero. But by definition of building sets, the codimension of $G$ is positive, and we also have a contradiction to $\cP_{\geq G}=\emptyset$.
\end{proof}

The previous Lemma is the key ingredient for the following equivalence:

\begin{proposition}\label{prop:char-nest}
Let $\cG$ be a building set. Consider the following statements for any subset $\cN\subseteq\cG$ with $\cap\cN\neq\emptyset$:
\begin{enumerate}
\item Every subset $\cP\subseteq\cN$ of (pairwise) non-comparable elements satisfies $\cP=\cG_{\cap\cP}$.
\item $\cN$is a $\cG$-nest.
\item Every subset $\cP\subseteq\cN$ of (pairwise) non-comparable elements with $|\cP|\geq2$ satisfies $\cap\cP\not\in\cG$.
\end{enumerate}
It always holds that $(1)\Rightarrow(2)\Rightarrow(3)$. If $\cG$ is separated, it also holds that $(3)\Rightarrow(1)$ and all statements are equivalent.
\end{proposition}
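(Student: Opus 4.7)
The plan is to prove the three implications in the order given: both forward directions are purely combinatorial statements in the lattice $\Arr_\cG$, while $(3)\Rightarrow(1)$ is the only place where separation is genuinely used, and it enters through Lemma~\ref{lem:g-factors-partition}.

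For $(1)\Rightarrow(2)$, I would argue by induction on $|\cN|$. Set $S_1:=\cap\cN$, which is non-empty by hypothesis. Since every element of $\cN$ contains some minimal one, $\cap\min\cN=\cap\cN=S_1$, and $\min\cN$ is an antichain of $\cN$; applying (1) to it gives $\cG_{S_1}=\min\cN$. Let $\cN':=\cN\setminus\min\cN$; every antichain of $\cN'$ is an antichain of $\cN$, so $\cN'$ inherits (1). If $\cN'=\emptyset$ the flag $(S_1)$ already does the job. Otherwise the inductive hypothesis yields $\cN'=\bigcup_{i=2}^{l}\cG_{S_i}$ for a flag $S_2\subsetneq\cdots\subsetneq S_l$ with $S_2=\cap\cN'$. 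The delicate point is verifying the strict inclusion $S_1\subsetneq S_2$: the inclusion $S_1\subseteq S_2$ is automatic, and equality would force $\min\cN'=\cG_{\cap\min\cN'}=\cG_{S_1}=\min\cN$ by applying (1) to the antichain $\min\cN'\subseteq\cN$, contradicting $\cN'\cap\min\cN=\emptyset$. This verification is the main subtlety of the step.

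For $(2)\Rightarrow(3)$, write $\cN=\bigcup_{i=1}^{l}\cG_{S_i}$, let $\cP\subseteq\cN$ be an antichain with $|\cP|\geq 2$, and assume for contradiction that $\cap\cP\in\cG$. For each $P\in\cP$ pick an index $i(P)$ with $P\in\cG_{S_{i(P)}}$, set $i^*:=\min_{P\in\cP}i(P)$, and choose $P^*\in\cP$ attaining it. Since $S_{i^*}\subseteq S_{i(P)}\subseteq P$ for all $P\in\cP$, we get $S_{i^*}\subseteq\cap\cP\subseteq P^*$; minimality of $P^*$ within $\cG_{\geq S_{i^*}}$ combined with $\cap\cP\in\cG_{\geq S_{i^*}}$ then forces $\cap\cP=P^*$. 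But this gives $P^*\subseteq P$ for every $P\in\cP$, contradicting the antichain property once $|\cP|\geq 2$.

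For $(3)\Rightarrow(1)$, assuming $\cG$ is separated, let $\cP\subseteq\cN$ be an antichain. Since $\cap\cP\supseteq\cap\cN\neq\emptyset$, Lemma~\ref{lem:g-factors-partition} supplies a partition $\cP=\bigsqcup_{G\in\cG_{\cap\cP}}\cP_{\geq G}$ into non-empty blocks satisfying $\cap\cP_{\geq G}=G\in\cG$. Each block is itself an antichain, so if any had size $\geq 2$, condition (3) would give $\cap\cP_{\geq G}\notin\cG$, contradicting $G\in\cG$. Hence every block is a singleton $\{G\}$, which simultaneously gives $\cG_{\cap\cP}\subseteq\cP$ (each factor occurs as a block) and $\cP\subseteq\cG_{\cap\cP}$ (the blocks exhaust $\cP$), proving equality. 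Apart from the strict-inclusion check in step one, the main obstacle is that this last step genuinely uses separation: the partition provided by Lemma~\ref{lem:g-factors-partition} is precisely the mechanism that converts the negative criterion (3) into the positive identity (1), and without it one cannot match up $\cP$ with $\cG_{\cap\cP}$ factor by factor.
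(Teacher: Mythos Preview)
Your proof is correct and follows essentially the same approach as the paper's: the same iterative removal of minimal elements for $(1)\Rightarrow(2)$ (phrased as induction rather than an explicit iteration), the same minimum-index argument for $(2)\Rightarrow(3)$, and the same application of Lemma~\ref{lem:g-factors-partition} for $(3)\Rightarrow(1)$. The strict-inclusion check you flag as the subtlety in step one is exactly the point the paper also singles out, with the same justification.
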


\begin{proof}\ 

\textit{Regarding (1)$\Rightarrow$(2):} If $\cN$ is empty, it is the empty nest induced by the trivial flag. Otherwise define $\cN_1:=\cN$ and iteratively set $\cM_i:=\min\cN_i$ and $\cN_i:=\cN_{i-1}\setminus\cM_{i-1}$ until no elements are left. We claim that the $S_i:=\cap\cN_i=\cap\cM_i$ form a flag whose induced nest is $\cN$.

Indeed since $S_i=\cap\cM_i$, and the elements in $\cM_i$ are non-comparable by construction, (1) yields $\cM_i=\cG_{S_i}$. Every element of $\cN$ is minimal at some point during this process, thus $\cN=\bigcup_i \cM_i=\bigcup_i \cG_{S_i}$ follows. It only remains to be seen that the $S_i$ form a flag, as we already saw that they induce $\cN$.

Clearly the $S_i$ are nested in each other as intersections of the shrinking set $\cN_i$. They are strict subsets:
If it was true that $S_i=S_{i+1}$, then $\cM_i=\cG_{S_i}=\cG_{S_{i+1}}=\cM_{i+1}$ would follow, which is patently false. We also have that $S_1=\cap\cN_1=\cap\cN$ is not empty by assumption.

\textit{Regarding (2)$\Rightarrow$(3):} Let  $\cP\subseteq\cN$ consist of (pairwise) non-comparable elements with $|\cP|\geq2$. Assume $\{S_i\}_{i=1...l}$ is a flag inducing $\cN$, i.e. $\cN=\bigcup_{i=1}^l \cG_{S_i}$. Thus for every $P\in\cP$ there must be an $i_P\in\N$ such that $P$ is a $\cG$-factor of $S_{i_P}$. Let $m:=\min_{P\in\cP} i_P$. Since the $S_i$ are nested, $S_m\subseteq\cap\cP$. There must be a $P_0\in\cP$ that is a $\cG$-factor of $S_m$, and as the elements of $\cP$ are incomparable, $\cap\cP\subsetneq P_0$. If $\cap\cP$ was an element of $\cG$, then we have a contradiction to $P_0$ being a $\cG$-factor of $S_m$ since $\cap\cP$ is smaller.

\textit{Regarding (3)$\Rightarrow$(1):} Let $\cP\subseteq\cN$ be a subset of non-comparable elements. By Lemma~\ref{lem:g-factors-partition}, $\cP$ is partitioned into non-empty $\cP_{\geq G}$ for each $G\in\cG_{\cap\cP}$. If $|\cP_{\geq G}|=1$ for all $G$, we would be done as this implies $\cP_{\geq G}=\{G\}$ and $\cP=\cG_{\cap \cP}$. So assume instead that there is a subset with $|\cP_{\geq G}|\geq2$. Applying (3) to it would yield $\cap\cP_{\geq G}=G\not\in\cG$, a contradiction.
\end{proof}

Note that (1)$\Rightarrow$(2) can be modified to produce all flags that induce a given nest by choosing, at every step, different \textit{subsets} of minimal elements to intersect and remove from the nest.

The separation assumption is really necessary for equivalence:

\begin{example}
Consider $\cG=\{G_1,G_2,G_3\}$ for the submanifolds from Figure~\ref{fig:low-dim-examples}. As a counterexample to (2)$\Rightarrow$(1), $\cG$ itself is a nest induced by the flag $S_1=G_{123}$, but $\cP=\{G_1,G_2\}\subseteq\cG$ consists of non-comparable elements and $\cP\neq\cG_{\cap\cP}$. It also does not hold that (3)$\Rightarrow$(2): The intersection of $\cP$ is not in $\cG$ but $\cP$ is not a nest.
\end{example}

The previous proposition gives us some insight into the structure of nests in the separated case:

\begin{corollary}
Let $\cG$ be a separated building set and $\cN\subseteq\cG$ a non-empty nest. Then $\cN$ is a tree in the sense that whenever $G_1\subsetneq G_0$ and $G_2\subsetneq G_0$ for $G_0,G_1,G_2\in\cN$, then either $G_1\subseteq G_2$ or $G_2\subseteq G_1$.
\end{corollary}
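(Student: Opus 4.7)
The plan is to argue by contradiction using Proposition~\ref{prop:char-nest} together with Lemma~\ref{lem:separated-g-factors}. Suppose $G_0, G_1, G_2 \in \cN$ with $G_1 \subsetneq G_0$ and $G_2 \subsetneq G_0$, and assume for contradiction that $G_1$ and $G_2$ are incomparable. Then $\cP := \{G_1, G_2\}$ is a subset of $\cN$ of two pairwise non-comparable elements.

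First I would check that $\cap\cP = G_1 \cap G_2$ is non-empty, so that the characterization from Proposition~\ref{prop:char-nest} applies. Unfolding the definition of nest, $\cN = \bigcup_{i=1}^l \cG_{S_i}$ for some $\cG$-flag $S_1 \subsetneq \ldots \subsetneq S_l$ with $S_1 \neq \emptyset$. Since every $G \in \cN$ is a $\cG$-factor of some $S_i$, and $S_1 \subseteq S_i$, it follows that $S_1 \subseteq G$ for every $G \in \cN$. In particular $G_1 \cap G_2 \supseteq S_1 \neq \emptyset$.

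Since $\cG$ is separated, Proposition~\ref{prop:char-nest} yields the equivalence (1)$\Leftrightarrow$(2), so the nest property applied to $\cP$ gives $\cP = \cG_{G_1 \cap G_2}$. In other words, $G_1$ and $G_2$ are the two (distinct) $\cG$-factors of $S := G_1 \cap G_2$. But $G_0 \in \cG$ contains both $G_1$ and $G_2$, directly contradicting Lemma~\ref{lem:separated-g-factors}, which forbids the existence of any element of $\cG$ containing two distinct $\cG$-factors of a non-empty element of $\Arr_\cG$. This contradiction completes the proof.

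There is no significant obstacle here; the work has already been done in establishing the equivalent characterization of nests and the separation lemma, and this corollary is essentially a specialization of condition (1) to $|\cP| = 2$ combined with the transversality-based Lemma~\ref{lem:separated-g-factors}.
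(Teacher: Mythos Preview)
Your proof is correct and follows essentially the same route as the paper: assume $G_1,G_2$ incomparable, apply characterization (1) of Proposition~\ref{prop:char-nest} to conclude they are the $\cG$-factors of $G_1\cap G_2$, then invoke Lemma~\ref{lem:separated-g-factors} against the existence of $G_0$. Your additional verification that $\cap\cN\neq\emptyset$ (via $S_1\subseteq G$ for all $G\in\cN$) is a worthwhile detail that the paper's proof leaves implicit.
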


\begin{proof}
Assume there existed $G_0,G_1,G_2\in\cN$ as in the statement and assume that $G_1$ and $G_2$ were not comparable. Then by part (1) of Proposition~\ref{prop:char-nest}, $G_1$ and $G_2$ are the $\cG$-factors of $G_1\cap G_2$. By Lemma~\ref{lem:separated-g-factors}, there cannot be a $G_0$ containing both of them, and we have a contradiction.
\end{proof}

Finally, we want to show that for every nest of a separated building set we can find coordinates adapted to all of its elements:

\begin{proposition}\label{prop:nest-coords}
Let $\cG$ be a separated building set and $\cN$ a $\cG$-nest. Then around every point in $\cap\cN$ there are coordinates of $M$ in which each element of $\cN$ is given by a coordinate subspace.
\end{proposition}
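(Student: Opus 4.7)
The plan is to produce the differentials of adapted coordinates at a fixed point $p \in \bigcap \cN$ first, and then integrate them into actual coordinate functions via the inverse function theorem. Since $N \subseteq N'$ in $\cN$ implies $\nu^*_{N'}|_p \subseteq \nu^*_N|_p$ (conormals shrink as submanifolds grow), for each $N \in \cN$ I would pick a linear complement $W_N \subseteq \nu^*_N|_p$ of $\widetilde{W}_N := \sum_{N' \in \cN,\, N' \supsetneq N} \nu^*_{N'}|_p$, so that $\nu^*_N|_p = W_N \oplus \widetilde{W}_N$.

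The heart of the proof is to show that $\bigoplus_{N \in \cN} W_N$ is an internal direct sum inside $T^*_p M$. I would argue by induction on $|\cN|$: let $N_0 \in \cN$ be minimal and suppose $\omega \in W_{N_0} \cap \sum_{N \in \cN \setminus \{N_0\}} W_N$. The sum splits into contributions from elements of $\cN$ strictly above $N_0$ (whose $W$-parts lie in $\widetilde{W}_{N_0} \subseteq \nu^*_{N_0}|_p$) and from the set $\cP$ of elements incomparable with $N_0$; write $\omega = \omega_\text{up} + \omega_\text{inc}$, where both summands lie in $\nu^*_{N_0}|_p$. Passing to the minimal elements $P_1, \ldots, P_r$ of $\cP$ (pairwise non-comparable with each other and with $N_0$), every $\cP$-contribution sits in $\sum_i \nu^*_{P_i}|_p$. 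Now characterization~(1) of Proposition~\ref{prop:char-nest} identifies the non-comparable subset $\{N_0, P_1, \ldots, P_r\} \subseteq \cN$ with the $\cG$-factor set of its intersection, and separation of $\cG$ then forces the transversality $\nu^*_{N_0}|_p \cap \sum_i \nu^*_{P_i}|_p = 0$. Hence $\omega_\text{inc} = 0$ and $\omega = \omega_\text{up} \in W_{N_0} \cap \widetilde{W}_{N_0} = 0$.

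With the direct sum in hand, pick a basis $\omega_1, \ldots, \omega_k$ of $\bigoplus_N W_N$ with each $\omega_j$ lying in exactly one $W_{N_j}$, and extend to a basis of $T^*_p M$ by $\omega_{k+1}, \ldots, \omega_m$. Since $\nu^*_{N_j}|_p$ is precisely the space of differentials at $p$ of smooth functions vanishing on $N_j$, one can choose $f_j$ with $f_j|_{N_j} = 0$ and $df_j(p) = \omega_j$ for $j \leq k$, and add arbitrary $f_{k+1}, \ldots, f_m$ realizing the remaining basis vectors. The inverse function theorem then makes $(f_1, \ldots, f_m)$ a coordinate chart near $p$. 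A downward induction in the tree structure of $\cN$ gives the refined decomposition $\nu^*_N|_p = \bigoplus_{N' \supseteq N,\, N' \in \cN} W_{N'}$, so the index set $J_N := \{j \leq k : N_j \supseteq N\}$ picks out precisely a basis of $\nu^*_N|_p$. Each $f_j$ with $j \in J_N$ vanishes on $N_j \supseteq N$ and hence on $N$, giving the containment $N \subseteq \{f_j = 0 : j \in J_N\}$; a codimension count then upgrades this to equality in a neighborhood of $p$.

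The main obstacle is the directness step in the second paragraph: separation of $\cG$ produces transversality only of families of $\cG$-factors of an intersection, so one crucially needs characterization~(1) of Proposition~\ref{prop:char-nest} to identify any pairwise non-comparable sub-family of a nest as exactly such a factor set. Without either the nest property or separation, the decomposition $\bigoplus_N W_N$ would overlap non-trivially inside $T^*_p M$ and no simultaneously adapted chart could exist.
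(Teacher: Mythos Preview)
Your argument is correct. Both your proof and the paper's rely on the same core mechanism---Proposition~\ref{prop:char-nest}(1) says any pairwise non-comparable subfamily of a nest is a set of $\cG$-factors, and separation then makes their conormal spaces independent---but the organization differs. The paper works iteratively and top-down: it starts from the maximal elements of $\cN$ (which are already a non-comparable set, hence transverse) to obtain a first batch of functions, then at each step picks a maximal remaining element $G_1$, extends the functions already vanishing on its supersets to a set cutting out $G_1$, and checks independence against the rest again via Proposition~\ref{prop:char-nest}(1). You instead front-load the linear algebra by choosing complements $W_N$ of $\widetilde W_N$ in each conormal fiber and proving the global directness of $\bigoplus_N W_N$ in one induction (removing a minimal element and using transversality of the resulting non-comparable family $\{N_0,P_1,\dots,P_r\}$), after which all functions are produced simultaneously and the inverse function theorem finishes the job. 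The paper's version is slightly more hands-on and adapts immediately to the weighted refinement later needed in Proposition~\ref{prop:char-uniform-align} and Lemma~\ref{lem:extending-coordinates}; your version is cleaner as pure linear algebra and makes the role of the decomposition $\nu^*_N|_p=\bigoplus_{N'\supseteq N}W_{N'}$ explicit. One small remark: your induction on $|\cN|$ tacitly uses that $\cN\setminus\{N_0\}$ is again a nest with the same $W_N$'s---this holds because characterization~(1) of Proposition~\ref{prop:char-nest} is inherited by subsets, and removing a minimal $N_0$ leaves every $\widetilde W_N$ unchanged.
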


\begin{proof}
By the first characterization of nests in Proposition~\ref{prop:char-nest} and separation, any subset of incomparable elements in $\cN$ intersects transversely. Applying this to the set $\cN_0:=\max\cN$ of maxima of $\cN$, we can immediately find locally defined functions that are linearly independent close to a given $p\in\cap\cN$ and such that each of the maxima is cut out by a subset. Now pick some maximal element $G_1$ of $\cN\setminus\cN_0$. The functions cutting out the elements of $(\cN_0)_{\geq G_1}$ all vanish on $G_1$ and can be minimally extended to a set that cuts out $G_1$, still linearly independently at $p$. The set $\{G_1\}\cup(\cN_0\setminus(\cN_0)_{\geq G_1})$ consists of incomparable elements, thus these newly added functions are also linearly independent of the rest. We can now define $\cN_1:=\{G_1\}\cup\cN_0$, pick some $G_2\in\cN\setminus\cN_1$, and continue this process until no elements of $\cN$ are left. In a final step, these functions form components of a submersion at $p$ which can be extended to a full set of coordinates by the submersion theorem.
\end{proof}

\startSubchaption{Blow-ups}\label{ssec:blow-ups}

We have now assembled everything we need to define blow-ups of building sets, discuss examples and provide context from the literature. In particular, this \subchaption{} states the central regularity result that the blow-up of a separated building set is smooth. We do not give a proof since the more general results about blow-ups of \textit{weighted} buildings sets of the following \chaption{} specialize to the building sets discussed here.

\begin{definition}\label{def:blow-up-building-set}
	Let $\cG$ be a non-empty building set over $M$. Then the closure of the image of the diagonal inclusion map
	$$ \iota: M\setminus\cup\cG \;\lhook\joinrel\longrightarrow \prod_{G\in\cG}\Bl_G (M)$$
	is called the \textbf{blow-up along $\cG$} and denoted by $\Bl_\cG (M)$. The \textbf{blow-down map} $$b_\cG:\Bl_\cG (M)\to M$$ is the map that sends $\{p_G\}_{G\in\cG}$ to $b_G(p_G)$ for any $G\in\cG$.
We also define the \textbf{projective blow-up} $\PBl_\cG(M)$ analogously, by replacing each $\Bl_G(M)$ with $\PBl_G(M)$.
\end{definition}

Points in the image of the diagonal map are completely determined by the value of the blow-down map. In contrast, for every $G\in\cG$ with $b_\cG(p)\in G$, a point $p$ in the boundary of the image of $\iota$ additionally encodes a normal direction to $G$ at $b_\cG(p)$. As per Lemma~\ref{lem:seq-compat-weightings} there are relations constraining this data whenever a direction normal to some $G_1\in\cG$ projects to a non-zero normal direction along a larger $G_2\in\cG$.

Our guiding question is the following: \textit{When is $\Bl_\cG(M)$ a manifold with corners in some canonical way?} It certainly holds in many situations:

\begin{figure}
    \centering
\begingroup%
  \makeatletter%
  \providecommand\color[2][]{%
    \errmessage{(Inkscape) Color is used for the text in Inkscape, but the package 'color.sty' is not loaded}%
    \renewcommand\color[2][]{}%
  }%
  \providecommand\transparent[1]{%
    \errmessage{(Inkscape) Transparency is used (non-zero) for the text in Inkscape, but the package 'transparent.sty' is not loaded}%
    \renewcommand\transparent[1]{}%
  }%
  \providecommand\rotatebox[2]{#2}%
  \newcommand*\fsize{\dimexpr\f@size pt\relax}%
  \newcommand*\lineheight[1]{\fontsize{\fsize}{#1\fsize}\selectfont}%
  \ifx\svgwidth\undefined%
    \setlength{\unitlength}{396.8503937bp}%
    \ifx\svgscale\undefined%
      \relax%
    \else%
      \setlength{\unitlength}{\unitlength * \real{\svgscale}}%
    \fi%
  \else%
    \setlength{\unitlength}{\svgwidth}%
  \fi%
  \global\let\svgwidth\undefined%
  \global\let\svgscale\undefined%
  \makeatother%
  \begin{picture}(1,0.34285714)%
    \lineheight{1}%
    \setlength\tabcolsep{0pt}%
    \put(0.26635626,0.01421904){\color[rgb]{0,0,0}\makebox(0,0)[t]{\lineheight{0.80000001}\smash{\begin{tabular}[t]{c}(a) $\Bl_{\cG_a}\R^2$\end{tabular}}}}%
    \put(0,0){\includegraphics[width=\unitlength,page=1]{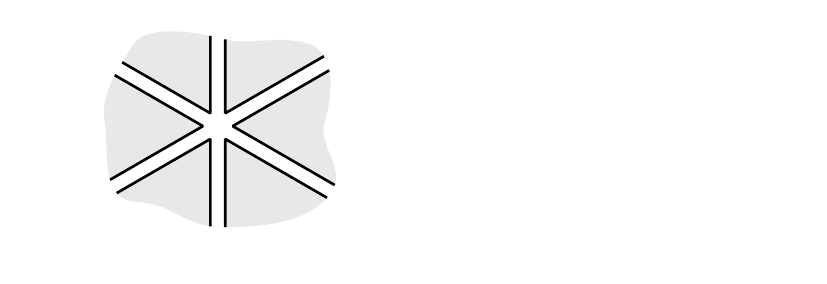}}%
    \put(0.73211166,0.01421904){\color[rgb]{0,0,0}\makebox(0,0)[t]{\lineheight{0.80000001}\smash{\begin{tabular}[t]{c}(b) $\Bl_{\cG_b}\R^2$\end{tabular}}}}%
    \put(0,0){\includegraphics[width=\unitlength,page=2]{plane-blown-up.pdf}}%
  \end{picture}%
\endgroup%

    \caption{Blow-ups of the building sets $\cG_a:=\{G_1,G_2,G_3\}$ and $\cG_b:=\{G_1,G_2,G_3,G_{123}\}$ from Figure~\ref{fig:low-dim-examples}. The blow-down map acts by collapsing all boundary components back into $\cup\cG_a=\cup\cG_b$.}
    \label{fig:plane-blown-up}
\end{figure}

\begin{examples}\label{ex:blow-ups}\ 
\begin{enumerate}
    \item The blow-ups of two different building sets $\cG_a$ and $\cG_b$ in the Euclidean plane are depicted in Figure~\ref{fig:plane-blown-up} and both canonically carry a smooth structure with corners. Note how the inclusion of $G_{123}$ in $\cG_b$ removes the intersection between the boundary components corresponding to the other submanifolds.
    \item The projective blow-ups of $\cG_a$ and $\cG_b$ are canonically diffeomorphic to $\R^2$ and $\Bl_{\{0\}}\R^2$, respectively. In the spherical case, points in the exceptional divisor still carry orientation data. For projective blow-ups, hyperplanes in the building set have no effect when constructing projective blow-ups as this data is quotiented out.
    \item Analogously to (1), the blow-up of $\cG_c=\{G_5,G_6\}$ from Figure~\ref{fig:low-dim-examples} can be visualized as the result of cutting out small open neighbourhoods of both $G_5$ and $G_6$.
\qedhere
\end{enumerate}
\end{examples}

But $\Bl_\cG(M)$ is not always smooth:

\begin{examples}\label{ex:not-mfd}\ 
\begin{enumerate}
\item Similarly to~\ref{ex:blow-ups}(i), one may consider a collection $\cG$ of hypersurfaces in $\R^3$ that all intersect in the origin. If each component of $\R^3\setminus\cup\cG$ touches at most three of the hypersurfaces away from the origin, we obtain a manifold with corners. If, however, the surfaces are arranged in such a way that some component is bounded by at least four of them, we have a singularity modelled by a pyramid at the origin.
\item While the previous example can be equipped with a (non-canonical!) structure of a manifold with corners (or even just a boundary) by smoothing edges, this is not always possible: Consider the building set $\cG:=\{G_4,G_5\}$ with the submanifolds of $\R^3$ from Figure~\ref{fig:low-dim-examples}. Then $\Bl_\cG(\R^3)$ is a subset of $\Pi:=\Bl_{G_4}(\R^3)\times\Bl_{G_5}(\R^3)$. Assume this was a manifold with corners whose underlying topology coincides with the subspace topology. Since the image of the diagonal map is locally homeomorphic to $\R^3$, its boundary as a manifold with corners must be locally homeomorphic to $\R^2$ (though not diffeomorphic, as it may have edges and corners).

$G_4$ and $G_5$ are spanned by the $x_2$ and $x_1$-axis, respectively, so we can consider local coordinates $(a_0,a_2,a_3)$ on $\Bl_{G_4}(\R^3)$ induced by choosing the hyperplane $x_1=1$, as well as coordinates $(b_0,b_1,b_3)$ on $\Bl_{G_5}(\R^3)$ induced by $x_2=1$. In particular, the blow-down maps in these coordinates are given by
$$
\begin{pmatrix} a_0\\a_2\\a_3 \end{pmatrix}\mapsto
\begin{pmatrix} a_0\\a_2\\a_0 a_3 \end{pmatrix}
\qquad\text{and}\qquad
\begin{pmatrix} b_0\\b_1\\b_3 \end{pmatrix}\mapsto
\begin{pmatrix} b_1\\b_0\\b_0 b_3 \end{pmatrix},
$$
i.e., $a_0$ and $b_0$ are the non-negative parameters controlling distance to the divisor. The part of the image of the diagonal map that is covered by these coordinates is thus cut out by the equations
$$b_1=a_0, \qquad a_2=b_0, \qquad a_0a_3=b_0b_3$$
where $a_0, b_0>0.$ We discard the superfluous parameters $b_1$ and $a_2$ to see that over the domain of these charts, taking the closure adds a boundary homeomorphic to
\begin{align*}
B=\{ (a_0,b_0,a_3,b_3)\in [0,\infty)^2\times\R^2 \;&|\; (a_0=0\lor b_0=0)\\
&\land (a_0>0\implies a_3=0)\\
&\land(b_0>0\implies b_3=0) \}.
\end{align*}
This space is not locally homeomorphic to $\R^2$, yielding a contradiction\footnote{
For example, the plane $P=\{(0,0,a_3,b_3)\in B\;|\;a_3^2+b_3^2<\epsilon\}$ is an arbitrarily small non-open subset homeomorphic to $\R^2$. This is not possible for a subset of $\R^2$.}.
\qedhere
\end{enumerate}
\end{examples}

In these counterexamples, the building sets were not separated. Indeed one otherwise finds:

\begin{theorem}[\cite{Li09}]\label{thm:li}
    For any separated building set $\cG$ over $M$, $\Bl_\cG (M)$ can canonically be equipped with the structure of a smooth manifold with corners such that $b_\cG$ is smooth, proper and a diffeomorphism away from the exceptional divisor.
\end{theorem}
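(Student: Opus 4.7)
The plan is to construct explicit local charts on $\Bl_\cG(M)$ indexed by $\cG$-nests, generalizing the single-submanifold parametrizations of Definition~\ref{def:smooth-blow-up}. For $p \in \Bl_\cG(M)$, define the associated nest
\[
\cN(p) := \{ G \in \cG \mid p_G \in \bS\nu G \},
\]
where $p_G$ is the $G$-component of $p$ under the inclusion $\Bl_\cG(M) \hookrightarrow \prod_G \Bl_G(M)$. A preliminary step is to show that $\cN(p)$ is a $\cG$-nest. Picking an approximating sequence $p_n \in M \setminus \cup\cG$ and tracing its simultaneous limits in the various $\Bl_G(M)$ via Lemma~\ref{lem:seq-compat-weightings}, one finds that whenever $G_1, G_2 \in \cN(p)$ satisfy $G_1 \cap G_2 \in \cG$, also $G_1 \cap G_2 \in \cN(p)$, and moreover that for any intersection $\cap\cP$ arising from a subset $\cP \subseteq \cN(p)$ of non-comparable elements, $\cP$ is exactly the set of $\cG$-factors $\cG_{\cap\cP}$. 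The first characterization in Proposition~\ref{prop:char-nest} then identifies $\cN(p)$ as a $\cG$-nest.

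Given such $p$ with nest $\cN$, Proposition~\ref{prop:nest-coords} supplies coordinates $(x_1,\ldots,x_m)$ near $b_\cG(p)$ in which each $G \in \cN$ is cut out by $\{x_i = 0 \mid i \in I(G)\}$ for some index set $I(G) \subseteq \upto{m}$. Using the $\cG$-factor partition of Lemma~\ref{lem:g-factors-partition}, we pick distinct hyperplane indices $h(G) \in I(G)$ and signs $s(G) \in \{\pm 1\}$, $G \in \cN$, so that the single-submanifold chart of Definition~\ref{def:smooth-blow-up} around $G$ contains $p_G$. With $\rho_G \geq 0$ for $G \in \cN$ and $y_i \in \R$ for $i \notin \{h(G) : G \in \cN\}$, the chart is the parametrization
\[
x_i \;=\; \Big(\prod_{G \in \cN,\, i \in I(G)} \rho_G\Big) \cdot \tilde y_i, \qquad \tilde y_i := \begin{cases} s(G) & \text{if } i = h(G), \\ y_i & \text{otherwise,} \end{cases}
\]
extended to the exceptional divisor at $\rho_G = 0$ using the unit normal bundle description of Definition~\ref{def:smooth-blow-up}. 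Charts of $M$ whose domains miss $\cup\cG$ cover the complement. In these coordinates $b_\cG$ is the polynomial above, hence smooth, and properness follows because $\Bl_\cG(M)$ is closed in $\prod_G \Bl_G(M)$ while each $b_G$ is proper; away from the exceptional divisor $b_\cG$ restricts to the identity on $M \setminus \cup\cG$, so is a diffeomorphism there.

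The main obstacle is verifying that the transition map between two charts arising from different nests $\cN, \cN'$ is smooth. One must compare the radial parameters $\rho_G$ of $\cN$ with $\rho'_{G'}$ of $\cN'$, expressing each $\rho_G$ either as a root of $s(G)\,x_{h(G)}$ (on the bulk where no smaller divisor interferes) or as a product of $\rho'_{G'}$ over those $G' \in \cN'$ contained in $G$, reflecting that the rescaling by $\rho_G$ is a composition of rescalings along a finer nest structure. Separation enters decisively through Proposition~\ref{prop:char-nest}(1): the $\cG$-factors of any arrangement element form a transverse family, which is what allows the change-of-variables formulas to remain smooth in the closed half-space where every $\rho$ is non-negative. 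The computation is a stratified iteration of the single-blow-up transition analysis from Appendix~\ref{app:proof-weighted-submanifolds}, and canonicality of the resulting smooth structure follows because any two such atlases admit a common refinement built from the union of their choices.
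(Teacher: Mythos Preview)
Your approach---constructing charts indexed by nests and then checking smooth transitions---mirrors what the paper does in full generality for \emph{weighted} building sets in Section~5 and Appendix~\ref{app:proof-weighted-building-set}. The paper's actual proof of Theorem~\ref{thm:li}, however, is a one-line specialization: by Lemma~\ref{lem:triv-separated-is-unif} the trivial weighting along a separated building set is uniformly aligned, so Theorem~\ref{thm:manifold-structure} applies directly.

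There is a genuine gap in your direct argument. Your definition
\[
\cN(p) = \{G \in \cG \mid p_G \in \bS\nu G\} = \{G \in \cG \mid b_\cG(p) \in G\}
\]
is \emph{not} a nest in general. Take $\cG = \cG^{FM}$ on $M^3$ and any $p$ with $b_\cG(p)$ on the small diagonal $\Delta_{123}$: then $b_\cG(p)$ lies in every element of $\cG^{FM}$, so your $\cN(p)$ is all of $\{\Delta_{12},\Delta_{13},\Delta_{23},\Delta_{123}\}$. But $\cP = \{\Delta_{12},\Delta_{13}\}$ is a pair of non-comparable elements with $\cap\cP = \Delta_{123} \in \cG$, contradicting characterization~(3) of Proposition~\ref{prop:char-nest}; equivalently, $\cG_{\cap\cP} = \{\Delta_{123}\} \neq \cP$, so your claimed verification of characterization~(1) fails. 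The correct object is the paper's \emph{control set} (Definition~\ref{def:ass-nest-and-strat}), which discards any $G$ whose normal direction at $p$ is already determined via the induced map $b_{G,H}$ from some smaller $H \in \cG$. Proving that \emph{this} refined set is a nest (Proposition~\ref{prop:ass-nest-is-nest}) requires the clean-intersection argument of Lemma~\ref{lem:reg-intersection-normals}, not just tracking where $b_\cG(p)$ lands. Once you replace $\cN(p)$ by the control set, the remainder of your sketch lines up with the paper's chart construction (Definition~\ref{def:building-local-charts} specialized to trivial weights) and transition analysis.
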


While Li's theorem was originally phrased in the language of algebraic geometry, the differential-geometric formulation above is a direct corollary of our more general Theorem~\ref{thm:manifold-structure} from the weighted setting combined with Lemma~\ref{lem:triv-separated-is-unif}. Note that the building set $\cG_a$ in Example~\ref{ex:blow-ups}(1) is not separated, so this theorem is not sharp.

\begin{npar}[Applications]
Li points out that a number of constructions in the literature can be viewed as blow-ups of different building sets:
Hu~\cite{Hu03} studies the results of blowing up closed building sets. By adding the polydiagonals to the Fulton-MacPherson building set, we obtain a closed building set whose blow-up was studied by Ulyanov~\cite{Uly00}. Between these extreme cases, for every connected graph with $s$ labeled vertices one can find a building set over the same arrangement that generates Kuperberg-Thurston's compactification~\cite{KT99}.
The moduli space of rational curves with a number of marked points studied in~\cite{Ka92} can also be constructed as a blown-up building set. Finally, we want to point out that~\cite{Ro14} studies \textit{weighted compactifications of configurations spaces} by assigning rational weights to every point in a configuration and constructing a building set adapted to these weights that is contained in the Fulton-MacPherson building set. Note that this is different from assigning weights to the normal directions of collided configurations and performing a weighted blow-up as we will study in the next \chaption{}.
\end{npar}

Let us briefly preview the structure of the exceptional divisor:

\begin{npar}[Stratification of $\Bl_\cG(M)$]\label{rem:strata}
In the more general weighted setting, we will see that there is a canonical stratification of $\Bl_\cG(M)$ (Definition~\ref{def:ass-nest-and-strat}). In general, this is a weak notion of stratification (compare Proposition~\ref{prop:stratification}), but in a regular situation (e.g., in the context of Li's theorem) the stratification is very well-behaved: In local charts it matches the stratification of the standard corner model (Proposition~\ref{prop:strat-local-coords}) and the strata are labeled by $\cG$-nests (Proposition~\ref{prop:ass-nest-is-nest}).
The bulk of the blow-up (i.e., everything away from the exceptional divisor) constitutes the stratum labeled by the empty set. The non-empty nests $\cN$ label strata whose points can be reached as limit points of curves in the bulk that asymptotically approach the exceptional divisor as encoded in $\cN$.
\end{npar}

\begin{example}
Consider again the blow-ups of two different building sets $\cG_a$ and $\cG_b$ in the Euclidean plane from Figure~\ref{fig:plane-blown-up}. First consider $\cG_a$: The edges $b_{\cG_a}^{-1}(G_i\setminus G_{123})$ are exactly the strata labeled by the nests $\{G_i\}$ and the corners $b_{\cG_a}^{-1}(G_{123})$ are correspond to the nest $\{G_1,G_2,G_3\}$. The blow-up of $\cG_b$ has additional edges since $G_{123}$ is included: The edges $b_{\cG_b}^{-1}(G_i\setminus G_{123})$ are still labeled by $\{G_i\}$, and the new edges at the origin correspond to $\{G_{123}\}$. All of these result from moving onto the exceptional divisor along a straight path. The corners on the other hand are now labeled by $\{G_i,G_{123}\}$ and result from moving into the origin asymptotically along one of the $G_i$.
\end{example}

\newpage
\section{Blowing up weighted building sets}\label{sec:weighted-building-sets}
We want to generalize the blow-ups of building sets of the last \chaption{} by equipping each element with a weighting, forming a \textit{weighted} building set.

We define weightings along building sets rigorously in \Subchaption~\ref{ssec:weighted-building-sets} along with appropriate notions of triviality and (uniform) alignment. In \Subchaption~\ref{ssec:weighted-building-blow-ups}, we define their blow-ups and discuss how, close to any given point in the blow-up, only a subset of the building set is relevant (the \textit{control set} of that point). This also naturally yields a stratification of the blow-up.  We furthermore restate our main Theorem in~\ref{thm:manifold-structure}. In \Subchaption~\ref{ssec:perspectives}, we introduce the notion of \textit{good perspective} as the data required to define charts and show that their domains cover the blow-up when the weighting is uniformly aligned and defined over a separated building set. \Subchaption~\ref{ssec:charts} defines the induced charts and gives crucial ingredients for the proof of the main theorem. \Subchaption~\ref{ssec:smooth-structure} discusses the relation of the smooth structure to both the stratification and the embedding into the product of individual blow-ups. Finally, we introduce morphisms of weighted building sets and the maps they induce between blow-ups in \Subchaption~\ref{ssec:morphisms}.

\startSubchaption{Weighted building sets}\label{ssec:weighted-building-sets}

\begin{definition}\label{def:weighted-building-set}
	Let $\cG$ be a building set on a manifold $M$. A \textbf{weighting $\cW$ along $\cG$} consists of weightings $\cW_G\subseteq T^{(\infty)} M$ along each submanifold $G\in\cG$ such that $\cap_{P\in\cP}\cW_P$ is a clean intersection equal to $\cW_{\cap\cP}$ for all $\cP\subseteq\cG$ with $\cap\cP\in\cG$. We also say that $(\cW,\cG)$ is a \textbf{weighted building set}.
\end{definition}

We see that for all $G\subseteq G'$, the identity is a weighted morphism from $(M,\cW_G)$ to $(M,\cW_{G'})$ since considering $\cP=\{G,G'\}$ yields $\cW_G\subseteq\cW_{G'}$. Note that a weighting along $\cG$ is determined by the weightings over the elements of $\cG$ that are \textit{independent} in the sense that they cannot be written as a nontrivial intersection of subsets of $\cG$. However, not all choices of weightings on the independent submanifolds can be extended to a weighting along the full building set since we require compatibility when taking intersections.

\begin{npar}[Notation]
From here on whenever we fix a weighting $\cW$ along a building set $\cG$, we index all objects relating to $\Bl_{\cW_G}$ with only $G$ for brevity. So for any $G,G'\in\cG$ we will write $b_G$ and $\nu_{G',G}$ and $b_{G',G}$, etc, as shorthand for $b_{\cW_G}$ and $\nu_{\cW_{G'},\cW_G}$ and $b_{\cW_{G'},\cW_G}$.
\end{npar}

Our notions of compatibility of weightings let us consider weighted building sets of different regularity:

\begin{definition}\label{def:building-properties}
Let $\cW$ be a weighting over a building set $\cG$. We say it is...
\begin{enumerate}
\item \textbf{trivial} if $\cW_G$ is a trivial weighting for all $G\in\cG$.
\item \textbf{aligned} if for every $\cG$-nest $\cN$, the weightings $\{\cW_G\}_{G\in\cN}$ are aligned.
\item \textbf{uniformly aligned} if for every $\cG$-nest $\cN$, the weightings $\{\cW_G\}_{G\in\cN}$ are uniformly aligned.
\end{enumerate}
\end{definition}

We want to stress that (uniform) alignment of a weighting over a building set is a weaker condition than (uniform) alignment when viewed as a collection of weightings, i.e. in the sense of Definition~\ref{def:weighting-compat}. For example, the trivial weighting along the Fulton-MacPherson building set satisfies only the former. We will argue in the following \subchaption{} that these weaker conditions only on nests are sufficient, luckily.

We first give a convenient mental model for the data of an aligned weighted building set:

\begin{npar}[Tableau of weights]\label{par:tableau}
For an aligned weighting $\cW$ over a building set $\cG$, consider a $\cG$-nest $\cN$ and a point $p\in\cap\cN$. By alignment, there exists a chart $\chi=(x_1,...,x_m):U\to\R^m$ of $M$ around $p$ that is aligned with each $\{\cW_N\}_{N\in\cN}$. This chart assigns to each submanifold $N\in\cN$ and each coordinate direction $x_i$ a weight $w_{N,i}$. If $\cG$ is separated, we can display this data conveniently in a \textbf{weight tableau} as follows: We arrange the non-vanishing weights for each coordinate direction in columns and associate them with the elements of $\cN$ by drawing horizontal boxes. 

For example, a possible weight tableau over a nest $\cN=\{A,B,C,D\}$ on an eight-dimensional manifold could look like this:
\begin{center}
\begin{tikzpicture}
  \matrix (m) [matrix of nodes, nodes in empty cells, column sep=5pt, row sep=5pt] {
    1 & 2 & 3 & 1 &  &  &  &  \\
    1 & 2 & 3 & 1 & 2 & 1 & 2 &  \\
    $x_1$ & $x_2$ & $x_3$ & $x_4$ & $x_5$ & $x_6$ & $x_7$ & $x_8$ \\
  };
  \draw[thick] ([xshift=-5pt, yshift=1pt]m-3-1.north west) -- ([xshift=5pt, yshift=1pt]m-3-8.north east);

  \tableaubox(m)(A)(1:1:2)
  \tableaubox(m)(B)(1:3:2)
  \tableaubox(m)(C)(2:1:5)
  \tableaubox(m)(D)(2:6:2)
\end{tikzpicture}
\end{center}
Each element of $\cN$ is represented by a box of weights covering a subset of the coordinates, where every weight outside of the box is zero by convention. The box as a whole thus represents the normal directions to the submanifold it is associated with. We arrange the coordinate components in such a way that these boxes are connected and vertically stacked without overhang. 
Due to separation, this is always possible: Any two incomparable elements in a nest must be $\cG$-factors by Proposition~\ref{prop:char-nest}, and thereby intersect transversely. This means that any two $N,N'$ with $w_{N,i}\neq0\neq w_{N',i}$ must satisfy either $N\subseteq N'$ or $N\supseteq N'$.

Conveniently, the partial order of elements of $\cN$ can be read off of the tableau: $N\subseteq N'$ holds precisely when the box for $N'$ is stacked somewhere above $N$. Uniform alignment of the weighting is reflected here by all non-zero weights in a given column being the same.
\end{npar}

For the rest of this \subchaption, we will investigate the relationship between triviality, (uniform) alignment and separation of the underlying building set. The main upshot will be that uniform alignment can be checked with a simple criterion in the separated case (Proposition~\ref{prop:char-uniform-align}).

Clearly, any uniformly aligned weighting is also aligned. Moreover, we have:

\begin{lemma}\label{lem:triv-separated-is-unif}
    Any trivial weighting $\cW$ along a separated building set $\cG$ is uniformly aligned.
\end{lemma}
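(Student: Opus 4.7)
The plan is to reduce uniform alignment to a purely geometric statement about simultaneously adapted charts. Since every weight of a trivial weighting is $0$ or $1$, the condition that non-zero weights agree is vacuous, so for each fixed $\cG$-nest $\cN$ it suffices to produce, around every $p\in M$, a chart of $M$ simultaneously adapted to each $\cW_G$ for $G\in\cN$. Proposition~\ref{prop:nest-coords} already does this for $p\in\cap\cN$, so the real task is to handle $p$ outside $\cap\cN$.

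My approach would be to replace $\cN$ by the subcollection $\cN_p := \{G\in\cN \;|\; p\in G\}$, for which $p\in\cap\cN_p$ automatically. First I would verify that $\cN_p$ is itself a $\cG$-nest: when $\cN_p=\emptyset$ this is the nest of the trivial flag, and otherwise any subset of pairwise incomparable elements of $\cN_p$ of cardinality at least two also sits in $\cN$ and therefore has intersection outside $\cG$ by Proposition~\ref{prop:char-nest}(3) applied to $\cN$. Invoking the converse direction $(3)\Rightarrow(2)$ of the same proposition—which is precisely where separation of $\cG$ enters—then gives that $\cN_p$ is a nest. Proposition~\ref{prop:nest-coords} now supplies coordinates $\chi:U\to\R^m$ around $p$ in which every $G\in\cN_p$ is cut out by a subset of the coordinate components.

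To promote this into a chart adapted to the full family $\{\cW_G\}_{G\in\cN}$, I would shrink $U$ so that it misses every $G\in\cN\setminus\cN_p$. Each such $G$ is closed and avoids $p$, and $\cN$ is finite because separation forces every $\cG_{S_i}$ appearing in a flag defining $\cN$ to have cardinality at most $\codim S_i$ via the direct-sum decomposition of conormal bundles. After shrinking, $\cW_G\cap T^{(\infty)}U$ is empty for all $G\in\cN\setminus\cN_p$—which is the second (empty) branch permitted in Definition~\ref{def:weigthing}—while for $G\in\cN_p$ the chart realizes $\cW_G = (p^{(0,\infty)})^{-1}(G)$ as the common zero set of the appropriate $x_a^{(0)}$, matching the standard-weighting local model with a weight sequence of $0$s and $1$s.

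I do not anticipate a serious obstacle. The one delicate point is ensuring that $\cN_p$ inherits the nest property, which is precisely the separated-case equivalence in Proposition~\ref{prop:char-nest}; the rest is bookkeeping together with the observation that all nests are automatically finite in the separated setting.
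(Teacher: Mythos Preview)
Your proof is correct and follows the same approach as the paper: invoke Proposition~\ref{prop:nest-coords} to obtain coordinates in which every element of the nest is a coordinate subspace, observe that these are automatically adapted to the trivial weightings, and note that the uniformity condition is vacuous for weights in $\{0,1\}$. You are in fact more careful than the paper, which only invokes Proposition~\ref{prop:nest-coords} at points of $\cap\cN$ and leaves the reduction to $\cN_p$ (and the shrinking of $U$ away from the remaining $G\in\cN\setminus\cN_p$) implicit; your verification that $\cN_p$ is again a nest via Proposition~\ref{prop:char-nest} is exactly the right way to fill this in.
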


\begin{proof}
Proposition~\ref{prop:nest-coords} provides adapted coordinates to every nest $\cN$ of $\cG$. Since $\cW$ is trivial, any chart adapted to $G\in\cN$ is also adapted to $\cW_G$. Uniform alignment follows immediately since trivial weightings can only have weights zero or one.
\end{proof}

Conversely, uniform alignment implies neither triviality nor separation of the building set, as the pair of uniformly aligned standard weightings $\{\cE^{(0,2,1)},\cE^{(1,2,0)}\}$ shows. Separation alone does not even suffice for alignment:

\begin{example}
Consider again the weightings from Figure~\ref{fig:not-aligned}. Since they intersect cleanly, they can be viewed as a weighting along the building set $\{A,B,C\}$ formed by their supports. This building set is separated since it is closed. Moreover, the flag $C\subseteq B\subseteq A$ induces all of $\{A,B,C\}$ as a nest, but as we argued in Example~\ref{ex:weighting-compatibility}(\ref{ex:not-aligned}), the weightings over this nest are not aligned.
\end{example}

For arbitrary weighted building sets, checking uniform alignment using Definition~\ref{def:building-properties} is hard: It requires either carefully constructing adapted charts or showing non-existence, possibly for a large number of nests. With Lemma~\ref{lem:linear-unif-alignment-check}, we provided a method to check uniform alignment of pairs of weightings based on just linear data, without the need to construct coordinates explicitly. We can use this to modify the proof of Proposition~\ref{prop:nest-coords} to find a condition sufficient for uniform alignment in the separated weighted case. The crucial step is the following Lemma: For any collection of weightings over supports that intersect transversely together with a smaller weighting that satisfies a linear compatibility condition, it constructs adapted charts.

\begin{lemma}\label{lem:extending-coordinates}
Consider a weighting $\{\cW_\alpha\}_{\alpha=1,...,d}$ over a collection of supports $\{N_\alpha\}$ that intersect transversely. Let $\cW_0\subseteq\cap_\alpha\cW_\alpha$ be a weighting with support $N_0\subseteq \cap_\alpha N_\alpha$ such that the map $$\bigoplus_\alpha\nu^*_\mathrm{lin}\cW_\alpha|_{N_0}\to\nu^*_\mathrm{lin}\cW_0$$ is an injection. Then $\cW_0$ and all $\cW_\alpha$ are uniformly aligned.

More concretely, given weighted coordinates $\{x_{\alpha,j}\}_{j=1,...,n_\alpha}$ of each $\cW_\alpha$  with weights $w_{\alpha,j}$ around some fixed $p\in N_0$, we can extend the functions $\{x_{\alpha,j}\;|\; w_{\alpha,j}\geq 1\}$ to a weighted chart of $\cW_0$ around $p$.
\end{lemma}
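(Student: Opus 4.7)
The plan is to reduce to the pairwise setting by introducing the intersection weighting $\cW_\cap := \bigcap_\alpha \cW_\alpha$ and then invoking Lemma~\ref{lem:linear-unif-alignment-check} for the nested pair $\cW_0 \subseteq \cW_\cap$. By Lemma~\ref{lem:intersection-props-supports}(3), the transverse intersection of the supports makes $\{\cW_\alpha\}$ a uniformly aligned family; by Lemma~\ref{lem:intersection-props-supports}(2) they intersect cleanly, and Proposition~\ref{prop:clean-locally-weighting} makes $\cW_\cap$ into a weighting on some open neighborhood $U$ of $p$. Moreover, transversality of the $N_\alpha$ yields linear independence at $p$ of the combined family of functions $\{x_{\alpha,j}\;|\;w_{\alpha,j}\geq 1\}_{\alpha,j}$. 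Completing this family to a full chart $\chi_\cap$ of $M$ near $p$ by appending any functions whose differentials fill out the remaining tangent directions to $\bigcap_\alpha N_\alpha$, Lemma~\ref{lem:reg-intersection} combined with Proposition~\ref{prop:filtration-standard-weighting} shows that $\chi_\cap$ is weighted-adapted to $\cW_\cap$ with the weights inherited unchanged from each $\cW_\alpha$ on the corresponding components (and zero on the appended ones).

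The next step identifies the linearized conormals. By property~\ref{props-linearized-bundles}(1), at each point of $N_0$ the degree $i$ piece of $\nu^*_{\mathrm{lin}}\cW_\cap|_{N_0}$ and of $\bigoplus_\alpha \nu^*_{\mathrm{lin}}\cW_\alpha|_{N_0}$ are both locally spanned by the equivalence classes of $d\,x_{\alpha,j}$ ranging over all pairs with $w_{\alpha,j}=i$; the canonical comparison map is therefore a graded isomorphism (transversality together with the uniform alignment established above prevents duplication across different $\alpha$). The hypothesis of the lemma then factors through this isomorphism and yields an injection $\nu^*_{\mathrm{lin}}\cW_\cap|_{N_0} \hookrightarrow \nu^*_{\mathrm{lin}}\cW_0$.

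The main step is now the application of Lemma~\ref{lem:linear-unif-alignment-check} to $\cW_0 \subseteq \cW_\cap$: it produces a weighted chart $\chi_0$ of $\cW_0$ around $p$ that retains the weighted components $\{x_{\alpha,j}\;|\;w_{\alpha,j}\geq 1\}$ of $\chi_\cap$ with matching weights, which is precisely the ``concrete'' version of the lemma. Uniform alignment of $\cW_0$ with each individual $\cW_\alpha$ follows from the same chart: for fixed $\alpha$, the weighting $\cW_\alpha$ is locally cut out by $\{x_{\alpha,j}^{(i)} = 0 : i<w_{\alpha,j}\}$, so $\chi_0$ is adapted to $\cW_\alpha$ once we assign weight zero to its components not originating from $\{x_{\alpha,j}\;|\;w_{\alpha,j}\geq 1\}$, and the matching of non-zero weights between $\cW_0$ and $\cW_\alpha$ on each coordinate direction is forced by the containment $\cW_0\subseteq\cW_\alpha$ together with the uniform alignment with $\cW_\cap$. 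I expect the main obstacle to be verifying that the chart produced by Lemma~\ref{lem:linear-unif-alignment-check} is simultaneously compatible with \emph{every} $\cW_\alpha$; this should work out cleanly because the components inherited from $\chi_\cap$ already contain a full set of weighted chart components for each $\cW_\alpha$ by construction in the first step.
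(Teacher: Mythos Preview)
Your proof is correct and follows essentially the same strategy as the paper: form the intersection weighting $\widetilde{\cW}=\cap_\alpha\cW_\alpha$, use transversality of the supports (Lemma~\ref{lem:intersection-props-supports}(3)) to obtain simultaneous adapted coordinates and the identification $\nu^*_{\mathrm{lin}}\widetilde{\cW}\simeq \bigoplus_\alpha \nu^*_{\mathrm{lin}}\cW_\alpha$, and then invoke Lemma~\ref{lem:linear-unif-alignment-check} for the nested pair $\cW_0\subseteq\widetilde{\cW}$. One minor correction: your citation of Lemma~\ref{lem:intersection-props-supports}(2) points the wrong way (that part deduces clean intersection of \emph{supports} from clean intersection of weightings); the clean intersection of the $\cW_\alpha$ you need instead follows directly from their alignment, as noted just after Definition~\ref{def:weighting-compat}, or simply from the explicit simultaneous adapted chart you already have.
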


\begin{proof}
Take coordinates $\{x_{\alpha,j}\}_{j=1,...,n_\alpha}$ adapted to the $\cW_\alpha$ as in the statement. We already saw in Lemma~\ref{lem:intersection-props-supports}(3) that we can take a subset of these to obtain coordinates $\{\tilde x_a\}_{a=1,...,n}$ in which $\cW_\alpha$ are all uniformly aligned. It immediately follows in these coordinates that $\widetilde{\cW}:=\cap_\alpha\cW_\alpha$ is a weighting that is uniformly aligned with all $\cW_\alpha$. It is easy to see that $$\nu^*_\text{lin}\widetilde{\cW}\simeq \bigoplus_\alpha \nu^*_\text{lin}\cW_\alpha$$ holds. In particular, 
$$\nu^*_\text{lin}\widetilde{\cW}|_{N_0}\to \nu^*_\mathrm{lin}\cW_0$$ must be an injection, so we find ourselves in the situation of Lemma~\ref{lem:linear-unif-alignment-check}. Using arbitrary coordinates $\{x_a\}_{a=1,...,m}$ adapted to $\cW_0$, we can construct coordinates $\{\hat x_a\}_{a=1,...,m}$ as in the proof of the lemma. All weightings are uniformly aligned in these coordinates.
\end{proof}

We conclude:

\begin{proposition}\label{prop:char-uniform-align}
    Let $\cW$ be a weighting along a separated building set $\cG$. Then $\cW$ is uniformly aligned if and only if for every $G\in\cG$ and $S\in\Arr_\cG$ with $G\subseteq S$ the map
    $$\bigoplus_{H\in\cG_S}\nu^*_\mathrm{lin}\cW_H|_G\to\nu^*_\mathrm{lin}\cW_G$$ is an injection.
\end{proposition}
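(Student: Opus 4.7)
I would fix $G\in\cG$ and $S\in\Arr_\cG$ with $G\subseteq S$ (the case $G=S$ being trivial) and apply uniform alignment to the nest $\cN:=\{G\}\cup\cG_S$ induced by the flag $G\subsetneq S$. This yields simultaneously adapted coordinates $(x_1,\dots,x_m)$ near any $p\in G$ with uniform weights $w_{N,a}$ across $N\in\cN$. Separation of $\cG$ makes the $\cG$-factors $\cG_S$ transverse at $S$, so the index sets $\{a:w_{H,a}>0\}$ are pairwise disjoint across $H\in\cG_S$. Each $\nu^*_\mathrm{lin}\cW_H|_p$ is spanned by $\{[d_p x_a]:w_{H,a}>0\}$, and these classes map to the corresponding nonzero $[d_p x_a]\in\nu^*_\mathrm{lin}\cW_G$ at the matching degree $w_{G,a}=w_{H,a}$. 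Disjointness of the supporting index sets across $H$ then gives injectivity of the direct sum.

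\textbf{Backward direction.} I would show by induction on $|\cN|$ that every $\cG$-nest admits simultaneously adapted coordinates with uniform weights around any $p\in\cap\cN$. For the inductive step, pick a minimal $N_0\in\cN$; the complement $\cN':=\cN\setminus\{N_0\}$ remains a nest by Proposition~\ref{prop:char-nest} and admits such a chart $\{x_a\}$ by induction. Let $\cC$ collect the minimal ancestors of $N_0$ in $\cN'$. Being pairwise non-comparable, Proposition~\ref{prop:char-nest} identifies $\cC=\cG_S$ for $S:=\cap\cC$. The injectivity hypothesis applied to $(N_0,S)$, combined with Lemma~\ref{lem:extending-coordinates}, then produces new coordinates $\{y_b\}$ extending the existing $\cC$-normal directions $\{x_a:w_{H,a}>0\text{ for some }H\in\cC\}$ into a chart simultaneously adapted to $\{\cW_H\}_{H\in\cC}\cup\{\cW_{N_0}\}$ with uniform weights.

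\textbf{The main obstacle} is verifying that the combined chart $\{x_a\}\cup\{y_b\}$ stays adapted (with $w_{N,b}=0$) to every remaining $N\in\cN'\setminus\cC$. For a non-minimal ancestor $N\supsetneq N_0$ in $\cN'$, the tree structure of the nest (Corollary following Proposition~\ref{prop:char-nest}) provides some $N'\in\cC$ with $N'\subseteq N$, so $T_p S\subseteq T_p N'\subseteq T_p N$ and tangency of the $y_b$ to $S$ passes to $N$. For $N\in\cN'$ incomparable with $N_0$, the enlargement $\cC\cup\{N\}$ remains pairwise non-comparable in the nest and hence equals $\cG_{S\cap N}$; separation then forces $\nu^*_p N\cap\nu^*_p N_0=0$, so any $d_p y_b\in\nu^*_p N_0$ evades $\nu^*_p N$ automatically. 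The edge case $\cC=\emptyset$ (when $N_0$ is also maximal, hence isolated in $\cN$) is handled by choosing $\{y_b\}$ directly in $\nu^*_p N_0$ linearly independent from $\{d_p x_a\}$, with transversality of $N_0$ against each $N\in\cN'$ (again via separation and Proposition~\ref{prop:char-nest}) once more forcing $d_p y_b\notin\nu^*_p N$.
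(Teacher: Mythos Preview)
Your forward direction is essentially the paper's argument. For the backward direction you take a different route: the paper follows the top-down scheme of Proposition~\ref{prop:nest-coords}, starting from $\max\cN$ and at each step only \emph{appending} normal coordinate functions for the next-smaller element via Lemma~\ref{lem:extending-coordinates}; since nothing is ever discarded, adaptation to all previously treated weightings is automatic. Your bottom-up induction (remove a minimal $N_0$) is a legitimate dual, but the execution has a gap.

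The phrase ``combined chart $\{x_a\}\cup\{y_b\}$'' is not a chart: it names too many functions and you never specify which ones survive. Lemma~\ref{lem:extending-coordinates} only promises that its output contains the $\cC$-normal $x_a$'s; it says nothing about the remaining tangent-to-$S$ coordinates. For $N\in\cN'$ incomparable to $N_0$, the normal-to-$N$ functions $x_a$ are \emph{not} $\cC$-normal (indeed $\nu^*_pN\cap\nu^*_pS=0$), so the chart produced by Lemma~\ref{lem:extending-coordinates} need not contain them at all, and then it is simply not adapted to $\cW_N$. Your observation that $d_py_b\in\nu^*_pN_0$ avoids $\nu^*_pN$ is necessary but not sufficient: it controls the new directions, not the ones you may have lost.

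The fix is to exploit the freedom in the proof of Lemma~\ref{lem:extending-coordinates} (via Lemma~\ref{lem:linear-unif-alignment-check}): the weight-$0$ part of the resulting $\cW_{N_0}$-chart can be chosen arbitrarily. Since $\nu^*_pN_0\cap\nu^*_pN=0$ for each incomparable $N$ (this is what your transversality argument really buys), the old normal-to-$N$ functions are linearly independent from the new normal-to-$N_0$ block and can be retained as weight-$0$ directions. One then checks that replacing weight-$0$ directions does not disturb adaptation to $\cW_{N_0}$, so the resulting chart is simultaneously adapted to all of $\cN$. This completes your approach; the paper's top-down order sidesteps the bookkeeping because it never has a full chart to protect.
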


\begin{proof}\ 

\textit{Regarding "If":}
We can exactly follow the proof of Proposition~\ref{prop:nest-coords} with one caveat: Since the weights are not trivial, functions that cut out a weighting over a larger support cannot automatically be extended to a collection that cuts out a smaller weighting. However, the condition on weighted conormal bundles guarantees that we can use Lemma~\ref{lem:extending-coordinates} for this step.

\textit{Regarding "Only if":}
The submanifolds $G\subseteq S$ form a flag inducing the nest $\cN:=\{G\}\cup\cG_S$. By uniform alignment we can find coordinates adapted to all involved weightings. In these, we use the uniformity of weights to conclude that each $\nu^*_\text{lin}\cW_H|_G\to\nu^*_\text{lin}\cW_G$ is injective. Separation of $\cG$ gives transversality of $\cG_S$, which ensures that the images of these component maps are linearly independent.
\end{proof}

\startSubchaption{Blow-ups}\label{ssec:weighted-building-blow-ups}

To define the blow-up of a weighted building set, we follow the graph-based strategy:

\begin{definition}\label{def:blow-up-weighted-building-set}\nobreak%
	Let $\cW$ be a weighting along a non-empty\footnotemark{} building set $\cG$ on $M$. Then the closure of the image of the diagonal inclusion map
	$$ \iota: M\setminus\cup\cG \;\lhook\joinrel\longrightarrow \prod_{G\in\cG}\Bl_{\cW_G} (M)$$
	is called the \textbf{(spherical) blow-up along $\cW$} and denoted by $\Bl_\cW (M)$. The \textbf{exceptional divisor} now consists of the limit points not in the image of $\iota.$ The \textbf{blow-down map} $$b_\cW:\Bl_\cW (M)\to M$$ is the map that sends $\{p_G\}_{G\in\cG}$ to $b_G(p_G)$ where $G\in\cG$ is picked arbitrarily.
    
 We also define the \textbf{projective blow-up} $\PBl_\cW(M)\xrightarrow{pb_\cW} M$ by replacing each $\Bl_{\cW_G}(M)$ with $\PBl_{\cW_G}(M)$.
\end{definition}
\addtocounter{footnote}{-1}\footnotetext{We can otherwise set $\Bl_\emptyset(M)=M$ by convention, where $b_\emptyset$ is the identity.}

Note that, equivalently, $\PBl_\cW (M)$ is the image of $\Bl_\cW (M)$ under the map assembled from the individual quotient maps $\Bl_{\cW_G}(M)\to\PBl_{\cW_G}(M)$. One can thus understand its topology as a quotient of the spherical blow-up\footnote{This is indeed how we approach the projective case in our application to configuration spaces of filtered manifolds, compare Propositions~\ref{prop:FM-local-model-proj} and~\ref{prop:FM-local-bundle-model-proj}}.
For this reason, we will focus on spherical blow-ups in the remainder of this \chaption.

Our definition of the blow-up effectively subtracts the supports $\cup\cG$ from $M$ and replaces them with all limit points we can reach in $\prod_{G\in\cG}\Bl_{\cW_G} (M)$ by sequences in the bulk. This forces relations between the components:

\begin{lemma}\label{lem:limit-point-coherence}
	Let $\cW=\{\cW_G\}_{G\in\cG}$ be a weighted building set. Then every $\{p_G\}_{G\in\cG}\in \Bl_\cW (M)$ satisfies the following:
	\begin{enumerate}
		\item There exists a $p\in M$ such that $\forall G\in\cG: b_G(p_G)=p$.
		\item If $G\subset G'$ in $\cG$ and $p_G\in\Bl_{G',G}(M)$, then $p_{G'}=b_{G',G}(p_G)$.
	\end{enumerate}
\end{lemma}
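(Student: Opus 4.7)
The plan is to reduce to the case of limit points of sequences in the bulk and then invoke continuity of the blow-down maps together with Lemma~\ref{lem:seq-compat-weightings}. By definition, any $\{p_G\}_{G\in\cG}\in\Bl_\cW(M)$ is either in the image of $\iota$ or a limit point thereof. If $\{p_G\}_{G\in\cG}=\iota(x)$ for some $x\in M\setminus\cup\cG$, then both conditions are immediate: every $p_G$ is identified with $x\in M\setminus\supp\cW_G$ under the canonical embedding $M\setminus\supp\cW_G\hookrightarrow\Bl_{\cW_G}(M)$, so $b_G(p_G)=x$ for all $G$, and for $G\subset G'$ the induced map $b_{G',G}$ acts as the identity on the bulk of $M$, giving $b_{G',G}(p_G)=x=p_{G'}$.

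For the remaining case, I would pick a sequence $\{x_n\}_{n\in\N}\subseteq M\setminus\cup\cG$ such that $\iota(x_n)\to\{p_G\}_{G\in\cG}$ in the product topology. Convergence in the product means that for every $G\in\cG$, the component sequence $x_n$ (viewed in $\Bl_{\cW_G}(M)$ via the canonical embedding) converges to $p_G$. For part (1), continuity of each blow-down map $b_G$ combined with $b_G(x_n)=x_n$ yields that $x_n\to b_G(p_G)$ in $M$. Since this limit is the same sequence for every choice of $G$, Hausdorffness of $M$ forces $b_G(p_G)$ to agree for all $G\in\cG$, so setting $p:=b_G(p_G)$ for any (equivalently every) $G$ proves the claim.

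For part (2), given $G\subset G'$ in $\cG$ we have $\cW_G\subseteq\cW_{G'}$ as noted in the discussion following Definition~\ref{def:weighted-building-set}, hence the induced map $b_{G',G}$ and its domain $\Bl_{G',G}(M)$ are defined. The sequence $x_n$ lies in $M\setminus\cup\cG\subseteq M\setminus(\supp\cW_G\cup\supp\cW_{G'})$, and by the previous paragraph it converges to $p_G$ in $\Bl_{\cW_G}(M)$ and to $p_{G'}$ in $\Bl_{\cW_{G'}}(M)$. Since $p_G\in\Bl_{G',G}(M)$ by assumption, the second statement of Lemma~\ref{lem:seq-compat-weightings} applies and directly gives $p_{G'}=b_{G',G}(p_G)$.

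There is no real obstacle here: the statement is essentially a continuity/coherence check. The only subtlety is ensuring one is justified in passing from convergence of $\iota(x_n)$ in the product to convergence of the individual components and applying continuity on the closed subspace $\Bl_{\cW_G}(M)\subseteq\prod_G\Bl_{\cW_G}(M)$, which is immediate from the definition of the subspace and product topologies.
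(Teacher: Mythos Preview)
Your proof is correct and follows the same approach as the paper, which simply states that both points follow by applying Lemma~\ref{lem:seq-compat-weightings} pairwise. You have unpacked this reference in detail: for part~(1) you reprove the first statement of that lemma directly via continuity and Hausdorffness rather than citing it, and for part~(2) you invoke it explicitly, but the underlying argument is identical.
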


\begin{proof}
Both points follow by applying Lemma~\ref{lem:seq-compat-weightings} pairwise.
\end{proof}

\begin{examples}
The inverse is not generally true:
\begin{enumerate}
    \item For the trivial weighting above the building set $\{G_4,G_5\}$ of submanifolds from Figure~\ref{fig:low-dim-examples}, the point $([e_3], -[e_3])$ satisfies the relations from Lemma~\ref{lem:limit-point-coherence} trivially but does not lie in $\Bl_\cG(M)$. This weighting is uniformly aligned, but the building set is not separated.

    \item Consider the standard weightings $\cE^{(0,1)},\cE^{(1,0)}$ and $\cE^{(2,2)}$ over the closed building set consisting of the two axes and the origin of $\R^2$. A weighted unit normal vector in the exceptional divisor of $\cE^{(2,2)}$ completely determines the components with regard to the other weightings. This is \textit{not} due to the induced maps between weighted normal bundles: These both vanish by Example~\ref{example:induced-map-weighted-normal-bundle}. Instead, there is a canonical isomorphism $$\nu\cE^{(2,2)}\to \nu\cE^{(0,1)}\oplus\nu\cE^{(1,0)}$$ that arises analogously to the isomorphism in Example~\ref{example:induced-maps}(2). However, the relations from Lemma~\ref{lem:limit-point-coherence} would already be satisfied for any three weighted unit normal vectors over the origin since the condition on induced maps is trivially satisfied.
    This is an aligned weighting over a separated building set, but not uniformly aligned. 
    \qedhere
\end{enumerate}
\end{examples}

Due to these relations between components, given a point $p=\{p_G\}_{G\in\cG}\in \Bl_\cW (M)$ we can find a minimal subset $\cG_p\subseteq\cG$ that suffices to determine the remaining components for any point close to $p$:

\begin{definition}\label{def:ass-nest-and-strat}
Let $\cW=\{\cW_G\}_{G\in\cG}$ be a weighted building set. We assign to every $p=\{p_G\}_{G\in\cG}\in \Bl_\cW (M)$ its \textbf{control set}
$$ \cG_p := \{G\in\cG \;|\; b_\cW(p)\in G \text{ and } \nexists H\in\cG: H\subsetneq G \text{ and } p_H\in \Bl_{G,H}(M) \}. $$
Conversely, we define for any $\cN\subseteq\cG$ $$ \Bl_\cW(M)_\cN := \{p\in \Bl_\cW (M) \;|\; \cG_p = \cN \}. $$
\end{definition}

This can be seen to define a topological stratification of the blow-up in the following sense:

\begin{proposition}\label{prop:stratification}
Let $\cW=\{\cW_G\}_{G\in\cG}$ be a weighted building set. A sequence of points in $\Bl_\cW(M)_\cN$ can converge to a point in $\Bl_\cW(M)_{\cN'}$ only if $\cN\subseteq\cN'$. In other words,
\begin{equation}\label{eq:stratification}
\overline{\Bl_\cW(M)_\cN}\subseteq \bigcup\limits_{\cN'\supseteq \cN} \Bl_\cW(M)_{\cN'}.
\end{equation}
\end{proposition}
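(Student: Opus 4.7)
The plan is to show the contrapositive: if a sequence $(p_n)$ in $\Bl_\cW(M)_\cN$ converges to some $p_\infty \in \Bl_\cW(M)_{\cN'}$, then every $G \in \cN$ must also lie in $\cN' = \cG_{p_\infty}$. Fix such a $G$. There are two conditions in Definition~\ref{def:ass-nest-and-strat} that I need to verify for $G$ with respect to $p_\infty$: (1) $b_\cW(p_\infty) \in G$, and (2) no $H \in \cG$ with $H \subsetneq G$ satisfies $(p_\infty)_H \in \Bl_{G,H}(M)$.

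Condition (1) is immediate: since $b_\cW:\Bl_\cW(M) \to M$ is continuous (being the restriction of the composition of the projection onto the $G$-factor with $b_G$), and $G$ is topologically closed in $M$ by the definition of a building set, $b_\cW(p_n) \in G$ for all $n$ forces $b_\cW(p_\infty) \in G$.

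For condition (2), I argue by contradiction. Suppose there were an $H \in \cG$ with $H \subsetneq G$ and $(p_\infty)_H \in \Bl_{G,H}(M)$. Recall that $\Bl_\cW(M)$ sits as a topological subspace inside $\prod_{G' \in \cG} \Bl_{\cW_{G'}}(M)$, so convergence of $p_n \to p_\infty$ in $\Bl_\cW(M)$ is equivalent to componentwise convergence; in particular $(p_n)_H \to (p_\infty)_H$ in $\Bl_{\cW_H}(M)$. Since $\Bl_{G,H}(M)$ is an open subset of $\Bl_{\cW_H}(M)$ (Corollary~\ref{cor:domain-induced-open}), eventually $(p_n)_H \in \Bl_{G,H}(M)$. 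But this directly contradicts $G \in \cG_{p_n} = \cN$, which by definition forbids the existence of exactly such an $H$.

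There is no real obstacle here; the proposition essentially expresses that the closure operation can only add elements to the control set, which is built into the definition via the openness of the domains $\Bl_{G,H}(M)$ of the induced maps. The only subtlety to flag is the use of the subspace topology from the ambient product of single-weighting blow-ups to justify componentwise convergence, together with the openness statement from Corollary~\ref{cor:domain-induced-open}; both are available by this point in the paper.
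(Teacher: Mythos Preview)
Your proof is correct and follows essentially the same approach as the paper's own proof: both verify the two defining conditions of $G\in\cG_{p_\infty}$ by using closedness of $G$ for the first and openness of $\Bl_{G,H}(M)$ (Corollary~\ref{cor:domain-induced-open}) for the second, arriving at the same contradiction. Your exposition is slightly more explicit about componentwise convergence in the ambient product, but otherwise the arguments coincide.
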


\begin{proof}
    Let $p_n$ be a sequence of points with $\cG_{p_n}=\cN$ that converge to a point $p$ with $\cG_p=\cN'$. Pick an arbitrary $G\in\cN$. It holds for all $n$ that $b(p_n)\in G$ and $\nexists H\in\cG: H\subsetneq G \text{ and } (p_n)_H\in \Bl_{G,H}(M)$. Since the submanifolds in $\cG$ are closed, $b(p)=\lim_n b(p_n)\in G$ as well. If there was an $H$ as above such that $p_H\in \Bl_{G,H}(M)$, then as $\Bl_{G,H}(M)$ is open (Corollary~\ref{cor:domain-induced-open}) this would have to be true for some $(p_n)_H$ already, yielding a contradiction. Together, this implies $G\in\cN'$, i.e. $\cN\subseteq\cN'$. 
\end{proof}

\begin{examples}
We consider again the blow-ups from Figure~\ref{fig:plane-blown-up}:
\begin{enumerate}[(a)]
\item For the building set $\cG_a=\{G_1,G_2,G_3\}$, the corners of the blow-up are exactly the stratum labeled by the set $\{G_1,G_2,G_3\}$. Each edge that gets blown-down to $G_i$ is in the stratum labeled by $\{G_i\}$. The bulk, i.e. all points away from the exceptional divisor, are always labeled by the empty set. The opposite inclusion of Equation~\eqref{eq:stratification} fails here, since the closure of the stratum for $\{G_i\}$ does not include all corners.
\item For the building set $\cG_a=\{G_1,G_2,G_3,G_{123}\}$, the line segments that get blown-down to some $G_i$ are still in the $\{G_i\}$ stratum. We now additionally have circle segments in the boundary, which correspond to the stratum labeled by $\{G_{123}\}$. The $\{G_{123}\}$ and $\{G_i\}$ strata exactly intersect in the $\{G_{123},G_i\}$ stratum in the corners. As opposed to the previous example, Equation~\eqref{eq:stratification} is a true equality now. That this holds generally for uniformly aligned weightings along separated building sets will turn out to be an immediate corollary of Proposition~\ref{prop:strat-local-coords}.
\qedhere
\end{enumerate}
\end{examples}

We do not expect strata labeled by arbitrary subsets $\cN\subseteq\cG$ to be non-empty.
Indeed for weightings over separated building sets, the control sets must all be nests:

\begin{proposition}\ \label{prop:ass-nest-is-nest}
Let $\cW=\{\cW_G\}_{G\in\cG}$ be a weighting over a separated building set. Then for every $p\in \Bl_\cW (M)$, the control set $\cG_p$ is a $\cG$-nest.
\end{proposition}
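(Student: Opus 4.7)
The plan is to verify the characterization from Proposition~\ref{prop:char-nest}(3): namely that any subset $\cP \subseteq \cG_p$ of pairwise non-comparable elements with $|\cP| \geq 2$ must satisfy $\cap \cP \notin \cG$. Since $\cG$ is separated, this will be equivalent to $\cG_p$ being a $\cG$-nest. Before applying the Proposition, I should also note that $\cap\cG_p$ is non-empty, since $b_\cW(p)\in G$ for every $G\in\cG_p$ by definition, so $b_\cW(p)\in\cap\cG_p$.

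So fix $\cP \subseteq \cG_p$ non-comparable with $|\cP|\geq 2$, and suppose for contradiction that $H := \cap\cP$ lies in $\cG$. Non-comparability with $|\cP|\geq 2$ forces $H \subsetneq G$ for every $G\in\cP$. By the defining condition of the control set, for every $G\in\cP$ the point $p_H$ cannot lie in $\Bl_{G,H}(M)$. On the other hand, $b_H(p_H) = b_\cW(p) \in H \subseteq G$, which rules out $p_H \in M\setminus\supp\cW_H$; hence $p_H \in \bS\nu\cW_H$, i.e.\ $p_H$ is represented by a non-zero element $n \in \nu\cW_H$. Unwinding the definition of $\Bl_{G,H}(M)$, the failure $p_H\notin\Bl_{G,H}(M)$ then translates to $\nu_{G,H}(n) = 0$ in $\nu\cW_G$ for every $G\in\cP$.

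Here is the key step: since $\cW$ is a weighting along the building set $\cG$ and $H = \cap\cP \in \cG$, Definition~\ref{def:weighted-building-set} guarantees that $\cW_H = \bigcap_{G\in\cP}\cW_G$ as a \emph{clean} intersection of weightings. We are therefore in the precise situation of Lemma~\ref{lem:reg-intersection-normals}, which forces any $[n]\in\nu\cW_H$ whose images under all the canonical maps $\nu_{G,H}$ vanish to itself be the zero element. This contradicts $n$ being non-zero (as witnessed by $p_H\in\bS\nu\cW_H$), completing the contradiction.

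I do not anticipate substantial obstacles: the argument is essentially a bookkeeping exercise that reduces the nest condition (via Proposition~\ref{prop:char-nest}) to the clean-intersection criterion for weighted normals (Lemma~\ref{lem:reg-intersection-normals}). The only real content is identifying that the pairwise non-comparability conditions in the control set $\cG_p$, combined with the weighted compatibility along a putative intersection $H\in\cG$, force a non-zero weighted normal vector to induce zero normal vectors along all elements of $\cP$ — which cleanness rules out. Separation of $\cG$ itself is used only indirectly, through the equivalence $(3)\Rightarrow(1)$ of Proposition~\ref{prop:char-nest}.
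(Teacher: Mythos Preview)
Your proof is correct and follows essentially the same approach as the paper's own proof: both reduce to characterization (3) of Proposition~\ref{prop:char-nest}, assume a non-comparable $\cP\subseteq\cG_p$ with $H=\cap\cP\in\cG$, use the control-set definition to force $p_H\notin\Bl_{G,H}(M)$ for all $G\in\cP$, and then invoke Lemma~\ref{lem:reg-intersection-normals} on the clean intersection $\cW_H=\bigcap_{G\in\cP}\cW_G$ to contradict $n\neq 0$. Your version is in fact slightly more careful, explicitly verifying the hypothesis $\cap\cG_p\neq\emptyset$ of Proposition~\ref{prop:char-nest} and spelling out why $p_H$ must lie on the exceptional divisor.
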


\begin{proof}
Let $p=\{p_G\}_{G\in\cG}\in \Bl_\cW (M)$. We will use the third characterization of nests from Proposition~\ref{prop:char-nest} to conclude that $\cG_p$ is a $\cG$-nest. We thus assume towards contradiction that there exists a subset $\cP\subseteq\cG_p$ of incomparable elements with $|\cP|\geq2$ such that $\cap\cP\in\cG$. By definition of weighted building sets, $\cW_{\cap\cP}=\cap_{P\in\cP}\cW_P$ must be a clean intersection of weightings. Since $\cap\cP\in\cG$, then by definition of $\cG_p$, $p_{\cap\cP}\not\in\Bl_{P,\cap\cP}(M)$ for all $P\in\cP$. Moreover, we must have $p_{\cap\cP}=[n]$ for some weighted normal vector $n\in\nu\cW_{\cap\cP}$ since we lie on the exceptional divisor. $p_{\cap\cP}\not\in\Bl_{P,\cap\cP}(M)$ then means that $\nu_{P,\cap\cP}(n)=0$ for all $P\in\cP$. We are now exactly in the situation of Lemma~\ref{lem:reg-intersection-normals}, which yields that $n=0$. However, this is impossible for a representative of an element in the blow-up and we have a contradiction.
\end{proof}

We will see the opposite implication that every $\cG$-nest can be realized as the control set of some point in the blow-up at least under the additional assumption of uniform alignment in Corollary~\ref{cor:nests-are-control-sets}.

If $\cG$ is not separated, we can find examples where the control sets fail to be nests:

\begin{example}
Consider three different planes $A,B,C$ in $\R^3$ that intersect in a common line $A\cap B\cap C$ as well as a point $D$ on that line. Equip $A$ and $B$ with the unique trivial weightings $\cW_A$ and $\cW_B$,  and equip $C$ and $D$ with the unique maximal weightings $\cW_C$ and $\cW_D$ of order 2. Together, these form a weighting $\cW$ over the building set $\cG=\{A,B,C,D\}$. Let $p$ be the limit in $\Bl_\cW(M)$ of a sequence of points that converges to $D$ along a line that is not contained in any of $A,B$ or $C$. Then $\cG_p=\{A,B,D\}$, as the induced map from $D$ to $C$ is the only one that does not vanish by Example~\ref{example:induced-map-weighted-normal-bundle}. However, $\cG_p$ is not a nest: $A$ and $B$ can both be in a nest only if $A\cap B$ is part of the flag that induces it. But $C$ is also a $\cG$-factor of $A\cap B$ and would thus need to be included as well.
\end{example}

Due to difficulty in finding a trivially weighted counterexample, we suspect the following:

\begin{conjecture}
Let $\cW=\{\cW_G\}_{G\in\cG}$ be the trivial weighting over any building set. Then for every $p\in \Bl_\cW (M)=\Bl_\cG(M)$, the control set $\cG_p$ is a $\cG$-nest.
\end{conjecture}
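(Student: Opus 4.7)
The plan is to construct a $\cG$-flag $S_1 \subsetneq \ldots \subsetneq S_l$ in $\Arr_\cG$ whose associated nest $\bigcup_{i=1}^l \cG_{S_i}$ equals $\cG_p$. First, I would pick an approximating sequence $q_n \in M \setminus \cup\cG$ with $q_n \to p$, and in local coordinates around $b(p)$, pass to subsequences to extract a weighted asymptotic expansion $q_n - b(p) = \lambda_n v_1 + \lambda_n^2 v_2 + \ldots + \lambda_n^N v_N + o(\lambda_n^N)$ with $\lambda_n \to 0^+$ and $v_i \in T_{b(p)} M$, with $N$ large enough to detect all relevant tangencies. Setting $V_r := \langle v_1, \ldots, v_r \rangle$, I define for each $G \in \cG$ containing $b(p)$ the \emph{departure order} $r(G) := \min\{i : v_i \notin T_{b(p)} G\}$. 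A direct computation exploiting triviality (so that $\nu \cW_H = T_{b(p)}M/T_{b(p)}H$ and $\nu_{G,H}$ is canonical projection) shows that for $H \subsetneq G$ in $\cG$ with $b(p) \in H$, one has $p_H \in \Bl_{G,H}(M)$ iff $v_{r(H)} \notin T_{b(p)} G$; consequently $G \in \cG_p$ is equivalent to $v_{r(H)} \in T_{b(p)} G$ for every such $H$.

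I would then set $\tilde S_r := \bigcap \{G \in \cG : b(p) \in G,\, V_{r-1} \subseteq T_{b(p)} G\} \in \Arr_\cG$, which is nested $\tilde S_r \subseteq \tilde S_{r+1}$ with strict inclusions at the indices $r_i$ where the defining collection of $G$'s genuinely shrinks, yielding the flag $S_i := \tilde S_{r_i}$. The key observation is that $T_{b(p)} \tilde S_r \supseteq V_{r-1}$, since the tangent space of an intersection is the intersection of tangent spaces. The identity $\cG_p = \bigcup_i \cG_{S_i}$ then follows from two dual arguments. For $G \in \cG_{S_i}$ minimal in $\cG_{\geq \tilde S_{r_i}}$, any $H \subsetneq G$ in $\cG$ with $b(p) \in H$ must satisfy $\tilde S_{r_i} \not\subseteq H$ by minimality, hence $V_{r_i-1} \not\subseteq T_{b(p)} H$ (since the defining intersection places such $H$ above $\tilde S_{r_i}$), hence $r(H) < r_i$ and $v_{r(H)} \in V_{r_i - 1} \subseteq T_{b(p)} \tilde S_{r_i} \subseteq T_{b(p)} G$, confirming $G \in \cG_p$. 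Conversely, for $G \in \cG_p$ with $r := r(G)$, $G$ itself lies in the intersection defining $\tilde S_r$, so $\tilde S_r \subseteq G$; any supposed intermediate $H \in \cG$ with $\tilde S_r \subseteq H \subsetneq G$ would force $V_{r-1} \subseteq T_{b(p)}H$ via $T_{b(p)}\tilde S_r \subseteq T_{b(p)}H$, giving $r(H) \geq r$, and the tangency condition $v_{r(H)} \in T_{b(p)} G$ guaranteed by $G \in \cG_p$ then contradicts $v_{r(G)} \notin T_{b(p)}G$ in both sub-cases $r(H) = r$ and $r(H) > r$.

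The main obstacle is the rigorous setup of the asymptotic expansion: for arbitrary $q_n \to p$ the scaling factors $\lambda_n$ may involve non-integer exponents, and a merely $C^\infty$ approximating sequence might in principle be tangent to some $G$ to infinite order, so $r(G)$ is not a priori detected by any truncated polynomial expansion. A cleaner avenue would be to reformulate everything intrinsically in terms of the data $\{p_G\}_{G\in\cG}$ and the compatibility relations of Lemma~\ref{lem:limit-point-coherence}, building the filtration $V_r$ directly from the tangent-cone structure of $p$ itself rather than from any choice of approximating sequence; this should also make manifest why the argument breaks down as soon as the weighting is non-trivial (the identification $\nu \cW_H = T_{b(p)}M/T_{b(p)}H$ fails and the linear characterization of $\nu_{G,H}$ no longer applies), consistent with the known counterexample preceding the conjecture.
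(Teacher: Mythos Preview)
This statement is listed in the paper as a \emph{conjecture}, not a theorem: the paper does not claim a proof. In the LaTeX source there is in fact a draft argument, but it sits inside an \verb|\iffalse ... \fi| block, i.e.\ the author deliberately suppressed it. So there is no ``paper's own proof'' to compare against; what there is, is an abandoned sketch.

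Your proposal has a genuine gap, and you have correctly located it yourself. The asymptotic expansion $q_n - b(p) = \lambda_n v_1 + \lambda_n^2 v_2 + \cdots$ cannot be set up in general: a sequence in the bulk can approach a submanifold $G$ with infinite-order tangency, so no finite truncation detects $r(G)$, and even at finite order there is no reason the relevant rates should be integer powers of a common $\lambda_n$. Everything downstream (the departure orders, the flag $\tilde S_r$) rests on this expansion, so as written the argument does not close. Your claimed characterisation ``$p_H \in \Bl_{G,H}(M)$ iff $v_{r(H)} \notin T_{b(p)}G$'' is correct \emph{once} $r(H)$ exists and $p_H = [v_{r(H)}]$ in $\nu H$, but that identification is exactly what the expansion is supposed to supply.

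It is worth noting that the ``cleaner avenue'' you propose at the end --- working intrinsically with the data $\{p_G\}$ and the compatibility relations, bypassing any approximating sequence --- is essentially the strategy of the author's suppressed draft. That draft builds the flag by iteratively stripping minimal elements $\cM_i := \min \cN_i$ from $\cN_1 := \cG_p$, setting $S_i := \cap \cM_i$, and trying to show $\cM_i = \cG_{S_i}$ via conormal-bundle inclusions of the form $\nu^* G|_{S_i} \subseteq \sum_{P \in \cM_i} \nu^* P$. The draft breaks down at the point where a conormal $c' \in \nu^* P$ must be paired with a normal vector $n \in \nu H$ for some auxiliary $H \subsetneq G$: this pairing only makes sense if $H \subseteq P$, which is not established. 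So the intrinsic route is promising but also incomplete as it stands --- which is presumably why the author left it as a conjecture.
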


The previous Proposition~\ref{prop:ass-nest-is-nest} is the motivation for our definition of (uniform) alignment of weighted building sets: Close to $p$, the data in the blow-ups of the nest $\cG_p$ suffices to determine the rest, so it is natural that we only ever need consistency conditions on weightings that appear in the same nest. With this knowledge, let us turn to investigating the smoothness of the blow-up.

We already saw in \ref{ex:not-mfd} that we cannot in general expect even the spherical blow-up to inherit a manifold structure if $\cG$ is not separated. Alignment and separation are also not sufficient to avoid singularities:

\begin{example}\label{ex:not-unif-sing}
Consider the standard weightings $\cW_A=\cE^{(1,1,0)}$ and $\cW_B=\cE^{(1,2,1)}$ on $\R^3$. Since the weights increase, we have $\cW_A\supseteq\cW_B$. It follows that they form an aligned weighting over the closed building set $\cG=\{A,B\}$ consisting of the third axis $A$ and the origin $B$. By the choice of hyperplanes $x_1=1$ and $x_2=1$, respectively, we obtain local coordinates $(a_0,a_2,a_3)$ on $\Bl_{\cW_A}(\R^3)$ and $(b_0,b_1,b_3)$ on $\Bl_{\cW_B}(\R^3)$ with $a_0,b_0\geq 0$ whose blow-down maps are locally given by
$$
\begin{pmatrix} a_0\\a_2\\a_3 \end{pmatrix}\mapsto
\begin{pmatrix} a_0\\a_0 a_2\\a_3 \end{pmatrix}
\qquad\text{and}\qquad
\begin{pmatrix} b_0\\b_1\\b_3 \end{pmatrix}\mapsto
\begin{pmatrix} b_0 b_1\\b_0^2\\b_0 b_3 \end{pmatrix}.
$$
A quick calculation\footnote{The image of the diagonal map is given by the equations $a_0=b_0b_1, a_0a_2=b_0^2$ and $a_3=b_0b_3$ for $a_0,b_0\geq 0$. One readily sees that combining the first two equations also yields $a_2b_1=b_0$. To restrict to the boundary, we send either $a_0$ or $b_0$ to zero, but it follows in both cases that the other must vanish as well. To avoid confusion, note that there actually is a boundary away from the exceptional divisor of $B$, but this is not covered by the chosen chart domain on $\Bl_{\cW_B}(\R^3)$.} shows that the points in the boundary of the image of the diagonal map are cut out in the coordinates $(a_0,a_2,a_3;b_0,b_1,b_3)$ of $\Bl_{\cW_A}(\R^3)\times\Bl_{\cW_B}(\R^3)$ by the following equations:
$$
a_0=a_3=b_0=0 \text{ and } a_2b_1=0.
$$
For the weighted blow-up to be a manifold with corners even just topologically, this boundary would need to locally be homeomorphic to $\R^2$, which at the origin it clearly is not. The singular point can, for example, be reached as the limit along the curve $\gamma(t):=(t^3,t^4,0)$ in $\R^3$.
\end{example}

Our main theorem asserts that uniform alignment \textit{and} separation of the building set is sufficient for a smooth structure:

\begin{theorem}\label{thm:manifold-structure}
Let $\cW$ be a uniformly aligned weighting along a separated building set $\cG$. 
Then $\Bl_\cW(M)$ can be equipped with the structure of a smooth manifold with corners such that the blow-down map $b_\cW:\Bl_\cW (M)\to M$ is smooth and proper.
\end{theorem}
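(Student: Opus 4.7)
The plan is to build an explicit atlas on $\Bl_\cW(M)$ that extends the single-weighting charts of Definition~\ref{def:smooth-blow-up} to the entire building set, and then to verify the three required properties: the charts cover $\Bl_\cW(M)$, their transitions are smooth, and the blow-down map is smooth and proper. The key input is that by Proposition~\ref{prop:ass-nest-is-nest} the control set $\cG_p$ of every point $p \in \Bl_\cW(M)$ is a $\cG$-nest $\cN$, and by uniform alignment there exist coordinates $(x_1,\ldots,x_m)$ on $M$ around $b_\cW(p)$ that are simultaneously adapted to every $\cW_G$ with $G \in \cN$, with uniform weights organized into the tableau of Paragraph~\ref{par:tableau}.

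For such a nest $\cN$, a chart will be specified by a \emph{good perspective}: a choice, for each $G \in \cN$, of a distinguished coordinate direction $h_G$ with $w_{G,h_G} > 0$ and a sign $s_G \in \{\pm 1\}$, satisfying the tableau-compatibility that whenever $G \subsetneq G'$ in $\cN$ one has $w_{G',h_G} = 0$, i.e., $h_G$ labels a column on which $G$'s box sits but $G'$'s does not. The associated chart on an open subset $U_{\cN,h,s} \subseteq \Bl_\cW(M)$ consists of three kinds of coordinates: pullbacks of $x_i$ from $M$ for every direction $i$ with $w_{G,i}=0$ for all $G \in \cN$; one non-negative distance-to-divisor parameter $t_G \in [0,\infty)$ for each $G \in \cN$, defined on the bulk by an appropriate root $(s_G\, x_{h_G})^{1/w_{G,h_G}}$; and weighted ratios generalizing $x_{i:hs} = x_i \cdot (s\,x_h)^{-w_i/w_h}$ from Definition~\ref{def:smooth-blow-up}, one per remaining direction. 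Inverting such a chart expresses each $x_i$ as an explicit monomial in the $t_G$ times a polynomial in the ratios, so the composition with $b_\cW$ is automatically smooth.

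The first verification is that these chart domains cover $\Bl_\cW(M)$. For $p$ with control set $\cN$, the definition of the control set means that for every $G \in \cN$ the weighted normal vector $p_G \in \nu\cW_G$ is non-zero and does not map to zero under $\nu_{G',G}$ for any $G' \supsetneq G$ in $\cG$; in tableau language this guarantees, for every $G \in \cN$, the existence of a column $h_G$ sitting in $G$'s box but not in any strictly larger nest element's box such that the corresponding coordinate of $p_G$ is non-zero. Choosing $s_G$ to match its sign yields a good perspective with $p \in U_{\cN,h,s}$; Proposition~\ref{prop:char-nest} and Proposition~\ref{prop:nest-coords} ensure that the choices across different elements of $\cN$ are mutually consistent under separation.

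The main obstacle is smoothness of the transition maps between overlapping charts $\chi_{\cN,h,s}$ and $\chi_{\widetilde\cN,\tilde h,\tilde s}$, where the control sets, distinguished directions, and signs may all differ. On an overlap, the change of perspective amounts to rescalings by integer powers and positive roots of non-vanishing coordinates, combined with the same kind of blow-up-ratio substitutions that appeared in the single weighted submanifold case of Theorem~\ref{thm:weighted-blow-up-atlas}. Uniform alignment is what guarantees that competing roots carry compatible exponents across the nest, and separation (via Proposition~\ref{prop:char-uniform-align}) is what prevents ill-defined combinations from arising. We carry out this coordinate computation in full in Appendix~\ref{app:proof-weighted-building-set}. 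Finally, properness of $b_\cW$ is straightforward: for compact $K \subseteq M$, the preimage $b_\cW^{-1}(K)$ sits inside $\prod_{G \in \cG} b_G^{-1}(K)$, which is compact by Tychonoff and properness of each $b_G$ (Theorem~\ref{thm:weighted-blow-up-atlas}), and is closed there by construction of $\Bl_\cW(M)$ as the closure of the diagonal image.
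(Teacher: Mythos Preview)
Your overall architecture matches the paper exactly: charts are indexed by good perspectives on nests, Proposition~\ref{prop:ass-nest-is-nest} reduces covering to a tableau argument, the hard transition computation is deferred to Appendix~\ref{app:proof-weighted-building-set}, and properness is obtained from the product embedding. Two concrete points, however, are stated incorrectly and would make the argument fail as written.

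First, in the covering step you claim that for $G\in\cN=\cG_p$ the normal vector $p_G$ \emph{does not} map to zero under $\nu_{G',G}$ for any $G'\supsetneq G$. The opposite is true: if $G'\in\cN$ and $G\subsetneq G'$, then the control-set condition applied to $G'$ says precisely that $p_G\notin\Bl_{G',G}(M)$, i.e.\ $\nu_{G',G}(p_G)=0$. It is this \emph{vanishing} that forces the surviving non-zero component of $p_G$ to sit in a column not covered by any larger box, which is what supplies the choice of $h_G$ (this is the argument in Lemma~\ref{lem:perspective-covers}). With the inequality reversed as you have it, no such conclusion follows.

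Second, your formula $t_G=(s_G\,x_{h_G})^{1/w_{G,h_G}}$ on the bulk is correct only for the minimal elements of the nest. For $G$ sitting above some $A\in\cN$, the control parameter $t_G$ is a root of the \emph{blow-up} coordinate $x_{h_G:A}$, not of $x_{h_G}$ itself; equivalently, on the bulk one has $x_{h_G}=\prod_{G'\subseteq G}t_{G'}^{\,w_{G',h_G}}$ (Proposition~\ref{prop:comp-coords}), so $t_G$ is a ratio rather than a pure root of a base coordinate. Without this correction the charts do not glue: the roots you take are of quantities that vanish to the wrong order along the nested divisors, and the Malgrange-type cancellation in Appendix~\ref{app:proof-weighted-building-set} breaks down.
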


In the following two \subchaption{}s, we will discuss choices involved in writing down charts as well as the charts themselves. These are prerequisites for the proof of the main theorem in Appendix~\ref{app:proof-weighted-building-set}.

\startSubchaption{Good perspectives}\label{ssec:perspectives}

We will build coordinates close to $p\in\Bl_{\cW}(M)$ by mixing and matching different coordinate components of charts on each factor $\Bl_{\cW_G}(M)$, all of which are induced by the same chart $\chi:U\to\R^m$ on $M$. For this to work, that chart $\chi$ must be aligned with all the weightings that are relevant in that region - with the weightings over $\cG_p$, that is. We then still have to pick coordinate hyperplanes for each element of the nest to determine charts. These choices should be compatible in the following sense:

\begin{definition}\label{def:good-perspective}
Fix a weighting $\cW$ along a building set $\cG$. Consider the tuple $(\chi,\cN,\mathbf{h},\mathbf{s})$ of a $\cG$-nest $\cN$, a coordinate chart $\chi=(x_1, ..., x_m):U\to\R^m$ on $M$ and maps $\mathbf{h}:\cN\to \{1,...,m\}$ and $\mathbf{s}:\cN\to\{\pm 1\}$.
We say these form a \textbf{good perspective} if the following hold:
\begin{enumerate}[(1)]
\item $\chi$ is aligned with all weightings in $\cW$ along $\cN$.
\item For every $N\in\cN$, it holds that
$$N=\max\{N'\in\cN\;|\;  w_{N',\mathbf{h}(N)}\neq 0  \},$$
where existence of the maximum is part of the condition and $w_{N,i}$ is the weight assigned to the $i$-th coordinate of $\chi$ under $\cW_N$.
\end{enumerate}
\end{definition}

Condition (2) implies that the $\mathbf{h}(N)$-th unit vector is perpendicular to $\chi(N\cap U)$ for each $N\in\cN$, but also that we never pick a hyperplane for $N$ that could be used for a larger submanifold $N'$. Note that it follows that $\mathbf{h}$ is injective, and we thus choose different coordinate hyperplanes for each element of the nest.

\begin{example}\label{ex:weighttableau}
Let $(\chi,\cN,\mathbf{h},\mathbf{s})$ be a good perspective on some aligned weighted building set $(\cW,\cG)$ on $M$. If $\cG$ is separated, we can visualize our choice of $\mathbf{h}$ using the weight tableau from Paragraph~\ref{par:tableau} by drawing an additional box around the relevant weight for each element of $\cN$. A possible good perspective for the example from that paragraph would then look as follows:
\begin{center}
\begin{tikzpicture}
  \matrix (m) [matrix of nodes, nodes in empty cells, column sep=5pt, row sep=5pt] {
    \npboxed{1} & 2 & \npboxed{3} & 1 &  &  &  &  \\
    1 & 2 & 3 & 1 & \npboxed{2} & \npboxed{1} & 2 &  \\
    $x_1$ & $x_2$ & $x_3$ & $x_4$ & $x_5$ & $x_6$ & $x_7$ & $x_8$ \\
  };
  \draw[thick] ([xshift=-5pt, yshift=1pt]m-3-1.north west) -- ([xshift=5pt, yshift=1pt]m-3-8.north east);

  \tableaubox(m)(A)(1:1:2)
  \tableaubox(m)(B)(1:3:2)
  \tableaubox(m)(C)(2:1:5)
  \tableaubox(m)(D)(2:6:2)
\end{tikzpicture}
\end{center}
Condition (2) of Definition~\ref{def:good-perspective} means that these boxed weights must always be located at the very top of each column.
\end{example}

One may worry that existence of good perspectives is already ruled out for nests where a box is entirely covered by smaller boxes stacked on top. However, at least for the separated case, such nests do not exist. In fact, when making a choice for $\mathbf{h}$, the following lemma exploits that condition (2) prevents us from `using up' all the normal directions of a larger submanifold for the hyperplanes associated with smaller submanifolds:

\begin{lemma}\label{lem:existence-perspective-for-nest}
Let $\cW$ be an aligned weighting along a separated building set $\cG$. For every $\cG$-nest $\cN$ and point $p\in\cap\cN$, there exists a good perspective $(\chi,\cN, \mathbf{h}, \mathbf{s})$ such that $p$ is covered by $\chi$.
\end{lemma}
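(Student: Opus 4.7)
The plan is to first produce the chart $\chi$ from the alignment hypothesis, and then to build $\mathbf{h}$ (and $\mathbf{s}$) by exploiting the tableau picture of Example~\ref{ex:weighttableau}: in each column only the non-zero weights for comparable elements of $\cN$ can appear, so every column has a well-defined maximum element of $\cN$, giving a partition of the nonzero columns indexed by $\cN$; it suffices to show every cell of this partition is non-empty.

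\textbf{Step 1 (The chart).} Since $\cW$ is aligned along $\cG$ and $\cN$ is a $\cG$-nest, the family $\{\cW_N\}_{N\in\cN}$ is aligned by Definition~\ref{def:building-properties}, so there is a chart $\chi=(x_1,\dots,x_m):U\to\R^m$ with $p\in U$ that is simultaneously adapted to every $\cW_N$ with some weight tableau $\{w_{N,i}\}$. Recall from Paragraph~\ref{par:tableau} that separation of $\cG$ together with the nest characterization (Proposition~\ref{prop:char-nest}(1)) guarantees that for any fixed column $i$, the set $\{N\in\cN\mid w_{N,i}\neq 0\}$ is totally ordered under inclusion, so it has a unique maximum whenever it is non-empty.

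\textbf{Step 2 (Non-emptiness of each cell).} For each $N\in\cN$, set
$$S_N := \{i\in\{1,\dots,m\}\mid w_{N,i}\neq 0\text{ and }w_{N',i}=0\text{ for all }N'\in\cN\text{ with }N\subsetneq N'\}.$$
I claim $S_N\neq\emptyset$. Let $\cP=\{N'\in\cN\mid N'\supsetneq N\}$ and $S_N^+:=\cap\cP$, with the convention $S_N^+=M$ when $\cP=\emptyset$ (in which case $|S_N|=\codim N\geq 1$ and we are done). Otherwise, $\min\cP\subseteq\cN$ is a set of pairwise incomparable elements, so by Proposition~\ref{prop:char-nest}(1) it equals $\cG_{S_N^+}$. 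Since $\chi$ is aligned with each $\cW_{N'}$, the conormal of $N'\in\cP$ at $p$ is spanned by $\{dx_i\mid w_{N',i}\neq 0\}$, and hence the conormal of $S_N^+$ at $p$ is spanned by exactly the $dx_i$ that appear in some column where a strict superset of $N$ in $\cN$ carries a non-zero weight. Thus $|S_N|=\codim N-\codim S_N^+$, and it suffices to check $N\subsetneq S_N^+$. Suppose for contradiction $N=S_N^+$. Then $N\in\cG$ would equal $\cap\min\cP=\cap\cG_{S_N^+}$, forcing $\cG_{S_N^+}=\cG_N=\{N\}$; but every element of $\min\cP$ strictly contains $N$, contradicting $N\in\cG_{S_N^+}$.

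\textbf{Step 3 (Assembling the perspective).} Pick any $\mathbf{h}(N)\in S_N$ for each $N\in\cN$, and choose $\mathbf{s}:\cN\to\{\pm1\}$ arbitrarily. By construction $\mathbf{h}(N)$ realises condition (2) of Definition~\ref{def:good-perspective}, and the sets $S_N$ are pairwise disjoint (a column $i\in S_N\cap S_{N'}$ with $N\neq N'$ would force both $N$ and $N'$ to be the maximum of $\{N''\mid w_{N'',i}\neq 0\}$), so $\mathbf{h}$ is automatically injective. Together with the chart $\chi$ of Step 1, this yields a good perspective $(\chi,\cN,\mathbf{h},\mathbf{s})$ with $p$ in the domain of $\chi$.

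The main obstacle is Step~2: one must rule out the possibility that every direction normal to $N$ is already normal to some strictly larger member of the nest. This is where both separation of $\cG$ (via the identification $\min\cP=\cG_{S_N^+}$) and the precise definition of $\cG$-factors (which prevents $\cG_{S_N^+}$ from trivialising) are essential; in a non-separated setting Example~\ref{ex:weighting-compatibility}(\ref{ex:not-aligned}) already shows that such coordinate choices need not exist.
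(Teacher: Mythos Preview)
Your proof is correct and follows essentially the same approach as the paper's. The paper also reduces to showing that no $N\in\cN$ has its box in the weight tableau completely covered by higher boxes: it assumes such an $N$ existed, notes that then $N=\cap_{i}N_i$ for the covering supersets $N_i$, and invokes characterization~(3) of Proposition~\ref{prop:char-nest} to conclude $\cap_i N_i\notin\cG$, contradicting $N\in\cN\subseteq\cG$. Your Step~2 argues the same obstruction via characterization~(1) and an explicit codimension count $|S_N|=\codim N-\codim S_N^+$, which is a bit more verbose but equally valid; the rest (chart from alignment, arbitrary signs) is identical.
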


\begin{proof}
By alignment, we can pick some chart $\chi$ around a given $p\in\cap\cN$ aligned with all the weightings along $\cN$. Assume towards contradiction that in the weight tableau of $\cN$, there existed an $N\in\cN$ completely covered by higher boxes $\{N_i\}_{i\in I}$. In particular, $N=\cap_{i\in I}N_i$. However, by separation of the building set we can use the third characterization of Proposition~\ref{prop:char-nest} to conclude that $\cap_{i\in I}\not\in\cG$, a contradiction to $N\in\cN\supseteq\cG$. Consequently every box has at least one coordinate direction not covered by a higher box that can be used to construct $\mathbf{h}$. We can make an arbitrary choice for the signs $\mathbf{s}$ since they do not appear in the two conditions for a good perspective.
\end{proof}

We define corresponding chart domains for good perspectives as well as short-hands for the induced component coordinates:

\begin{definition}
Fix a good perspective $(\chi,\cN,\mathbf{h},\mathbf{s})$ on a weighting $\cW$ along $\cG$.
Recall that $U^{(\cW_N)}_{\mathbf{h}(N)\mathbf{s}(N)}\subseteq\Bl_{\cW_N}(M)$ are the domains of the induced charts from Definition~\ref{def:smooth-blow-up}. Consider the canonical projections $$\pi_N:\prod_{G\in\cG}\Bl_{\cW_G}(M)\to\Bl_{\cW_N}(M)$$
for every $N\in\cN$.

We define the open $U^0_{\chi\cN \mathbf{hs}}\subseteq\prod_{G\in\cG}\Bl_{\cW_G}(M)$ as the intersection
$$ U^0_{\chi\cN \mathbf{hs}} := \bigcap\limits_{N\in\cN} \pi^*_N U^{(\cW_N)}_{\mathbf{h}(N)\mathbf{s}(N)} \;\cap\;
\bigcap_{G\in \cG\setminus\cN}\bigcup_{G\supseteq N\in\cN} \pi^*_N\Bl_{\cW_G,\cW_N}(M).$$
On this set, we can further define the pull-backs of the induced coordinates 
$$ x_{i:N}:=x^{(\cW_N)}_{i:\mathbf{h}(N)\mathbf{s}(N)}\circ\pi_N.$$
Finally, the \textbf{chart domain} of the perspective is given by $$U_{\chi\cN \mathbf{hs}} := U^0_{\chi\cN \mathbf{hs}} \cap \Bl_\cW(M),$$
where we can additionally set
$$x_{i:\emptyset} := x_i\circ b_\cW.$$
\end{definition}

By construction $U_{\chi\cN \mathbf{hs}}$ is open. The first term in the definition of $U^0_{\chi\cN \mathbf{hs}}$ guarantees well-definition of the $x_{i:N}$. The second term involving $\Bl_{\cW_G,\cW_N}(M)$ restricts to a set where $p\in U_{\chi\cN \mathbf{hs}}$ is entirely determined by its components over the nest $\cN$ due to Lemma~\ref{lem:limit-point-coherence}. As an immediate consequence, we have:

\begin{lemma}\label{lem:control-set-in-nest}
For any point $p$ in the chart domain $U_{\chi\cN \mathbf{hs}}$ of a good perspective, it holds that $\cG_p\subseteq\cN$.
\end{lemma}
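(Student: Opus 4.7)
The plan is to argue by contrapositive: I will show that if $G \in \cG \setminus \cN$, then $G \notin \cG_p$. Unpacking the definition of $\cG_p$, it suffices to exhibit some $H \in \cG$ with $H \subsetneq G$ and $p_H \in \Bl_{G,H}(M)$, as this is precisely the condition that disqualifies $G$ from membership in $\cG_p$.

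For this, I would feed the hypothesis $G \in \cG \setminus \cN$ directly into the second factor in the definition of $U^0_{\chi\cN\mathbf{hs}}$, namely
\[
\bigcap_{G'\in \cG\setminus\cN}\bigcup_{G'\supseteq N\in\cN} \pi^*_N\Bl_{\cW_{G'},\cW_N}(M).
\]
Since $p \in U^0_{\chi\cN\mathbf{hs}}$, there must exist some $N \in \cN$ with $N \subseteq G$ such that $\pi_N(p) = p_N \in \Bl_{G,N}(M)$. Because $N \in \cN$ while $G \notin \cN$, we cannot have $N = G$, so in fact $N \subsetneq G$. Taking $H := N$ then produces precisely the element of $\cG$ that rules out $G \in \cG_p$, completing the contrapositive.

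No serious obstacle is expected here — this lemma is essentially just a matter of transcribing the set-theoretic content of the chart-domain definition into the language of control sets. The only thing to be careful about is making sure that the strict containment $N \subsetneq G$ really follows from $N \in \cN$ and $G \notin \cN$, which is immediate. Accordingly, I would keep the proof to a single short paragraph.
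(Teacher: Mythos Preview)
Your proof is correct and matches the paper's approach: the paper states the lemma as an immediate consequence of the second factor in the definition of $U^0_{\chi\cN\mathbf{hs}}$, and your contrapositive argument is precisely the explicit unpacking of that claim. The only minor imprecision is the phrase ``this is precisely the condition that disqualifies''---technically $b_\cW(p)\notin G$ would also disqualify $G$---but since you successfully exhibit the witness $H=N$ regardless, this does not affect the argument.
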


\begin{example}\label{ex:coordinatetableau}
Let us again consider the good perspective $(\chi,\cN,\mathbf{h},\mathbf{s})$ from Example~\ref{ex:weighttableau}. We can visualize all the component coordinates at a point $p\in U_{\chi,\cN\mathbf{hs}}$ in a corresponding \textbf{tableau of coordinates}:
\begin{center}
\begin{tikzpicture}
  \matrix (m) [matrix of nodes, nodes in empty cells, column sep=5pt, row sep=5pt] {
    $\npboxed{x_{1:A}}$ & $ x_{2:A}$ & $\npboxed{x_{3:B}}$ & $ x_{4:B}$ &  &  &  &  \\
    $ x_{1:C}$ & $x_{2:C}$ & $ x_{3:C}$ & $x_{4:C}$ & $\npboxed{x_{5:C}}$ & $\npboxed{x_{6:D}}$ & $ x_{7:D}$ &  \\
    $x_{1:\emptyset}$ & $x_{2:\emptyset}$ & $x_{3:\emptyset}$ & $x_{4:\emptyset}$ & $ x_{5:\emptyset}$ & $ x_{6:\emptyset}$ & $x_{7:\emptyset}$ & $ x_{8:\emptyset}$ \\
  };
  \draw[thick] ([xshift=-5pt, yshift=1pt]m-3-1.north west) -- ([xshift=5pt, yshift=1pt]m-3-8.north east);

  \tableaubox(m)(A)(1:1:2)
  \tableaubox(m)(B)(1:3:2)
  \tableaubox(m)(C)(2:1:5)
  \tableaubox(m)(D)(2:6:2)
\end{tikzpicture}
\end{center}
The bottom row corresponds to the coordinates on $M$ after blowing down. Similarly to how we do not draw vanishing weights, all the coordinates $x_{i:N}$ of components in directions with vanishing weight $w_{N,i}$ are just given by $x_{i:\emptyset}$ and can thus be skipped. Note that we expect there to be further relations between these coordinates - we will later construct charts on the blow-up by selecting one coordinate in each column such that this subset determines all others.
\end{example}

Let us investigate whether we can cover the blow-up $\Bl_\cW(M)$ by chart domains of good perspectives. Unwittingly, we have already seen an example where this is not the case:

\begin{example}
Consider again the blow-up of Example~\ref{ex:not-unif-sing}. The charts we constructed on the two components then correspond to choices $\mathbf{h}=(1,2)$ and $\mathbf{s}=(1,1)$ and the following tableau:
\begin{center}
\begin{tikzpicture}
  \matrix (m) [matrix of nodes, nodes in empty cells, column sep=5pt, row sep=5pt] {
    \npboxed{1} & 1 &  \\
    1 & \npboxed{2} & 1 \\
    $x_1$ & $x_2$ & $x_3$ \\
  };
  \draw[thick] ([xshift=-5pt, yshift=1pt]m-3-1.north west) -- ([xshift=5pt, yshift=1pt]m-3-3.north east);

  \tableaubox(m)(A)(1:1:2)
  \tableaubox(m)(B)(2:1:3)
\end{tikzpicture}
\end{center}
\vspace{-5mm}
We see that this is not a good perspective since the boxed 2 does not sit at the top of the column. In these coordinates, we had found a singular point $p=(p_A,p_B)\in\Bl_{\cW_A}(M)\times\Bl_{\cW_B}(M)$ - namely, the pair of weighted unit normal vectors in the exceptional divisors of $\cW_A$ and $\cW_B$ with coordinates $p_A=[1,0,0]$ and $p_B=[0,1,0]$, respectively. It can be reached as the limit of the curve $t\mapsto(t^3,t^4,0)$ in the bulk as $t$ goes to zero. We see that it is impossible to choose $\mathbf{h}$ and $\mathbf{s}$ such that they form a good perspective with a chart domain that covers $p$: Since the coordinates of $p_A$ and $p_B$ vanish in all but one component, requiring $p_I\in U^{(\cW_I)}_{\mathbf{h}(I)\mathbf{s}(I)}$ for $I=A,B$ forces our choice of $\mathbf{h}$ and $\mathbf{s}$.
\end{example}

It turns out that this phenomenon could only appear due to the lack of uniform alignment:

\begin{lemma}\label{lem:perspective-covers}
Assume $\cW$ is a uniformly aligned weighting over a separated building set.
Then every point in the blow-up lies in the chart domain for some good perspective.
\end{lemma}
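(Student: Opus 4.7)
The plan is to take $\cN := \cG_p$ (which is a $\cG$-nest by Proposition~\ref{prop:ass-nest-is-nest}, using separation of $\cG$) and build a good perspective from this choice. Invoking alignment of $\cW$ along $\cN$, I would pick a chart $\chi:U\to\R^m$ around $b_\cW(p)$ simultaneously adapted to every $\cW_N$, $N\in\cN$. Using local finiteness of $\cG$ at $b_\cW(p)$, I would further shrink $U$ to be disjoint from each $G\in\cG$ with $b_\cW(p)\notin G$. It only remains to specify $\mathbf{h}$ and $\mathbf{s}$.

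The crux is the choice of $\mathbf{h}(N)$ for each $N\in\cN$. Since $b_\cW(p)\in N$, the component $p_N$ lies on the exceptional divisor and may be written $p_N=[n_N]$ for some non-zero $n_N\in\nu\cW_N$. For any $N'\in\cN$ with $N'\supsetneq N$, the defining property of $\cG_p$ applied to $N'$ (with $H=N\in\cG$, $N\subsetneq N'$) forces $p_N\notin\Bl_{\cW_{N'},\cW_N}(M)$, i.e.\ $\nu_{N',N}(n_N)=0$. By uniform alignment and the local description of induced maps from Example~\ref{example:induced-map-weighted-normal-bundle}, this is equivalent to the vanishing of $x_i^{(w_{N,i})}(n_N)$ for every coordinate direction $i$ with $w_{N',i}\neq 0$. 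Letting $N'$ range over all elements of $\cN$ strictly larger than $N$, I conclude that $n_N$ is supported entirely in the top-of-box columns of $N$ in the weight tableau of $\cN$ (Paragraph~\ref{par:tableau}). Because $n_N\neq 0$, I can then select $\mathbf{h}(N)$ from among these top-of-box columns so that $x_{\mathbf{h}(N)}^{(w_{N,\mathbf{h}(N)})}(n_N)\neq 0$, and choose $\mathbf{s}(N)\in\{\pm1\}$ to match its sign. Condition (1) of Definition~\ref{def:good-perspective} then holds by choice of $\chi$, while condition (2) holds because $\mathbf{h}(N)$ sits at the top of $N$'s box.

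It remains to verify $p\in U_{\chi\cN\mathbf{hs}}$. The first conjunct of $U^0_{\chi\cN\mathbf{hs}}$ is immediate from $\mathbf{s}(N)\cdot x_{\mathbf{h}(N)}^{(w_{N,\mathbf{h}(N)})}(n_N)>0$. For the second conjunct, I would fix $G\in\cG\setminus\cN$ and split into two cases. If $b_\cW(p)\notin G$, then the shrinking of $U$ ensures $b_\cW(q)\notin G$ for every $q$ satisfying the first conjunct, so the condition on $G$ is automatically satisfied—the inner union is vacuous and I interpret it in line with the empty-intersection convention from the introduction. If instead $b_\cW(p)\in G$, then because $G\notin\cG_p$ there exists $H\in\cG$ with $H\subsetneq G$ and $p_H\in\Bl_{\cW_G,\cW_H}(M)$; if $H\notin\cN$, I would iterate, producing $H'\subsetneq H$ with $p_{H'}\in\Bl_{\cW_H,\cW_{H'}}(M)$ and using the factorization $\nu_{G,H'}=\nu_{G,H}\circ\nu_{H,H'}$ together with Lemma~\ref{lem:limit-point-coherence}(2) to conclude $p_{H'}\in\Bl_{\cW_G,\cW_{H'}}(M)$. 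By Lemma~\ref{lem:dim-shrinking} this descent must terminate at some $N\in\cG_p=\cN$, yielding the required witness $N\subseteq G$ with $p_N\in\Bl_{\cW_G,\cW_N}(M)$.

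The hard part will be cleanly packaging the translation of $\nu_{N',N}(n_N)=0$ for all $N'\supsetneq N$ in $\cN$ into the combinatorial top-of-box condition on admissible $\mathbf{h}(N)$; this is precisely where uniform alignment is indispensable, as Example~\ref{ex:not-unif-sing} demonstrates that without it one can reach limit points on which no good perspective can be found. Separation enters at two places: ensuring $\cG_p$ is a nest in the first place, and guaranteeing (via Lemma~\ref{lem:existence-perspective-for-nest}) that every element of $\cN$ genuinely has a non-empty top-of-box, so that the selection of $\mathbf{h}(N)$ is never obstructed for purely combinatorial reasons.
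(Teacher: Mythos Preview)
Your argument is essentially the paper's: take $\cN=\cG_p$, pick $\chi$ by alignment, and select $\mathbf{h}(N)$ as a surviving fibre coordinate of $n_N$, which uniform alignment together with $\nu_{N',N}(n_N)=0$ for all $N'\supsetneq N$ in $\cN$ pins to the top of $N$'s box. The paper casts this key step as a proof by contradiction (assume no admissible $h$ exists for $N$ and derive $n_N=0$), whereas you select $\mathbf{h}(N)$ directly; the content is identical. Your descent for the second conjunct when $b_\cW(p)\in G$ is a welcome elaboration of what the paper compresses into one sentence.

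One point needs care. For $G\in\cG\setminus\cN$ with $b_\cW(p)\notin G$, the inner set $\bigcup_{G\supseteq N\in\cN}\pi_N^*\Bl_{\cW_G,\cW_N}(M)$ is an empty \emph{union} and hence $\emptyset$; the empty-intersection convention does not apply and cannot rescue you here. Read literally, the definition of $U^0_{\chi\cN\mathbf{hs}}$ would then exclude $p$ whenever such a $G$ exists. The paper's own proof glosses over this case as well (its assertion that ``for every $G\in\cG\setminus\cN$ there exists $N\in\cN$ with $p_N\in\Bl_{\cW_G,\cW_N}(M)$'' fails for precisely these $G$), so the second conjunct is evidently intended to range only over those $G\in\cG\setminus\cN$ containing some $N\in\cN$, equivalently those with $b_\cW(p)\in G$. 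Your shrinking of $U$ is in the right spirit but does not formally resolve this, since the definition still intersects over all $G$; note also that local finiteness of $\cG$ is not among the hypotheses and need not hold.
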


\begin{proof}
Let $p=\{p_G\}_{G\in\cG}\in\Bl_\cW(M)$ be arbitrary. Set $\cN$ to be the control set $\cG_p$. Since this is a nest according to Lemma~\ref{prop:ass-nest-is-nest}, uniform alignment implies that there is a chart $\chi=(x_1, ..., x_m):U\to\R^m$ of $M$ around $b(p)$ that is aligned with all weightings over $\cN$. We now claim that for every $N\in\cN$, there is an $h\in\{1,...,m\}$ such that
\begin{enumerate}[(a)]
\item the weight $w_{N,h}$ doesn't vanish,
\item there is no proper superset $N'\supseteq N$ in $\cN$ whose weight $w_{N',h}$ also doesn't vanish, and
\item $p_N\in U^{(\cW_N)}_{h+}\cup U^{(\cW_N)}_{h-}$ for the chart domains from Definition~\ref{def:smooth-blow-up}.
\end{enumerate}
Assuming, for now, that this is true, we can define the maps $\mathbf{h}:\cN\to\{1,...,m\}$ and $\mathbf{s}:\cN\to\{\pm1\}$ by assembling these individual $h$'s. Condition (c) tells us that we can pick suitable signs $\mathbf{s}(N)$ to guarantee $$ p\in \bigcap\limits_{N\in\cN} \pi^*_N U^{(\cW_N)}_{\mathbf{h}(N)\mathbf{s}(N)}.$$ By condition (a) and (b) we can establish the property (2) of Definition~\ref{def:good-perspective} and deduce that $(\chi,\cN,\mathbf{h},\mathbf{s})$ form a good perspective. Finally, by the definition of the control set, for every $G\in\cG\setminus\cN$ there must exist a subset $N\in\cN$ with $p_N\in\Bl_{\cW_G,\cW_N}(M)$ and thus $p\in U_{\chi\cN\bf hs}.$

\textit{Proof of the claim:} Assume this was false, i.e. there existed an $N\in\cN$ such that for each $h\in\{1,...,m\}$ at least one of the conditions (a), (b) or (c) fails. Note that by construction of $\cN=\cG_p$, $p_N=[n]$ for some non-zero weighted normal vector $n$. A weighted normal vector is zero exactly when the fiber coordinates vanish, i.e. $x_h^{(w_{N,h})}(n)=0$ for all $h$ which do not satisfy (a). But we are going to show that if (b) or (c) fail, it must hold that $x_h^{(w_{N,h})}(n)=0$ and thus find a contradiction to $n$ being non-zero.

Assume (b) fails, so there is a superset $N'\supseteq N$ in $\cN$ with $w_{N',h}\neq 0$. Then we must have $p_N=[n]\not\in\Bl_{\cW_{N'},\cW_N}(M)$ by construction of $\cN$ and thereby $\nu_{{N'},N}(n)=0.$ Since the weight along $x_h$ of neither $N$ nor $N'$ vanish, by uniform alignment they must be equal. But then by Example~\ref{example:induced-map-weighted-normal-bundle}, 
\begin{align*}
x_h^{(w_{N,h})}(n) &= x_h^{(w_{N',h})}\left(\nu_{\cW_{N'},\cW_N}(n)\right)\\
&=x_h^{(w_{N',h})}(0) = 0.
\end{align*}

Assume instead (c) fails. By definition of the chart domains for $p_N=[n]$ in the exceptional divisor, this is equivalent to $x_h^{(w_{N,h})}(n)$ being neither larger nor smaller than zero and we are done.
\end{proof}

\startSubchaption{Charts}\label{ssec:charts}

We are finally ready to define our charts:

\begin{definition}\label{def:building-local-charts}
Fix a weighting $\cW$ along a building set $\cG$ and a good perspective $(\chi,\cN,\mathbf{h},\mathbf{s})$. Define the corner model
$$ \R^m_\mathbf{h} := \{(y_1, ..., y_m)\in\R^m \;|\; \forall N\in\cN: y_{\mathbf{h}(N)}\geq0\},$$
the \textbf{selector map}
\begin{align*}
\mu: \{1, ..., m\} &\to\cN\cup\{\emptyset\}\\
i&\mapsto\max\left(\{N\in\cN\;|\; w_{N,i}\neq 0 \text{ and }\mathbf{h}(N)\neq i\}\cup\{\emptyset\}\right)
\end{align*}
and the \textbf{induced chart}
$$ \chi_{\bf hs}=(x_{1:\bf hs}, ..., x_{m:\bf hs}):U_{\chi\cN\bf hs}\to \R^m_\mathbf{h} $$
by setting
$$ x_{i:\bf hs}:= \begin{cases}
    \left(\mathbf{s}(N)\,x_{i:\mu(i)}\right)^{1/w_{N,i}} & \text{if }\exists N\in\cN:i=\mathbf{h}(N),\\
    x_{i:\mu(i)} &\text{else.}
\end{cases}$$
For each $N\in\cN$, we may also write $t_N := x_{\mathbf{h}(N):\bf hs}$ for the \textbf{control parameter} associated with $N$ to emphasize its significance.
\end{definition}

\begin{examples}\label{ex:building-coords}\ 
\begin{enumerate}
\item Consider again the tableau of coordinates from Example~\ref{ex:coordinatetableau}:
\begin{center}
\begin{tikzpicture}
  \matrix (m) [matrix of nodes, nodes in empty cells, column sep=5pt, row sep=5pt] {
    $\npboxed{x_{1:A}}$ & $\bf x_{2:A}$ & $\npboxed{x_{3:B}}$ & $\bf x_{4:B}$ &  &  &  &  \\
    $\bf x_{1:C}$ & $x_{2:C}$ & $\bf x_{3:C}$ & $x_{4:C}$ & $\npboxed{x_{5:C}}$ & $\npboxed{x_{6:D}}$ & $\bf x_{7:D}$ &  \\
    $x_{1:\emptyset}$ & $x_{2:\emptyset}$ & $x_{3:\emptyset}$ & $x_{4:\emptyset}$ & $\bf x_{5:\emptyset}$ & $\bf x_{6:\emptyset}$ & $x_{7:\emptyset}$ & $\bf x_{8:\emptyset}$ \\
  };
  \draw[thick] ([xshift=-5pt, yshift=1pt]m-3-1.north west) -- ([xshift=5pt, yshift=1pt]m-3-8.north east);

  \tableaubox(m)(A)(1:1:2)
  \tableaubox(m)(B)(1:3:2)
  \tableaubox(m)(C)(2:1:5)
  \tableaubox(m)(D)(2:6:2)
\end{tikzpicture}
\end{center}
Within every column, we have set those entries in bold that are selected by $\mu$ and go into the definition of our new coordinates $x_{i:\mathbf{hs}}$. This is always either the highest non-vanishing entry (which is then exactly equal to $x_{i:\mathbf{hs}}$), or the one right below in case the highest entry is boxed (in which case we take a root with a sign for $x_{i:\mathbf{hs}}$). The central idea of our coordinates is the following: There are enough relations forced between the different components of a point $p=\{p_G\}_{G\in\cG}$ in the blow up $\Bl_\cW(M)$ such that the bold coordinates suffice to determine all the other coordinates in the tableau. Since these determine all the points over the control set, they fix $p$ entirely. Our coordinates are just the bold entries rescaled for convenience.
\item Let us be more concrete for a simpler example of a good perspective $(\chi,\cN,\mathbf{h},\mathbf{s})$ with the following tableaus of weights and coordinates:

\begin{center}
\begin{tikzpicture}
  \matrix (m) [matrix of nodes, nodes in empty cells, column sep=5pt, row sep=5pt] {
    $1$ & $2$ & $\npboxed{1}$ &  &  &  \\
    $1$ & $2$ & $1$ & $2$ & $\npboxed{3}$ & \\
    $x_{1}$ & $x_{2}$ & $x_{3}$ & $x_{4}$ & $ x_{5}$ & $x_{6}$ \\
  };
  \draw[thick] ([xshift=-5pt, yshift=1pt]m-3-1.north west) -- ([xshift=5pt, yshift=1pt]m-3-6.north east);

  \tableaubox(m)(B)(1:1:3)
  \tableaubox(m)(A)(2:1:5)
\end{tikzpicture}\qquad
\begin{tikzpicture}
  \matrix (m) [matrix of nodes, nodes in empty cells, column sep=5pt, row sep=5pt] {
    $\bf x_{1:B}$ & $\bf x_{2:B}$ & $\npboxed{x_{3:B}}$ & &  &  \\
    $x_{1:A}$ & $x_{2:A}$ & $\bf x_{3:A}$ & $\bf x_{4:A}$ & $\npboxed{x_{5:A}}$ & \\
    $x_{1:\emptyset}$ & $x_{2:\emptyset}$ & $x_{3:\emptyset}$ & $x_{4:\emptyset}$ & $\bf x_{5:\emptyset}$ & $\bf x_{6:\emptyset}$ \\
  };
  \draw[thick] ([xshift=-5pt, yshift=1pt]m-3-1.north west) -- ([xshift=5pt, yshift=1pt]m-3-6.north east);

  \tableaubox(m)(B)(1:1:3)
  \tableaubox(m)(A)(2:1:5)
\end{tikzpicture}
\end{center}
Say for simplicity that $\mathbf{s}(A)=\mathbf{s}(B)=1$. In this situation, Definition~\ref{def:building-local-charts} yields the following charts:
\begin{align*}
x_{1:\mathbf{hs}} &:= x_{1:B}\in\R,\\
x_{2:\mathbf{hs}} &:= x_{2:B}\in\R,\\
t_B:= x_{3:\mathbf{hs}} &:= x_{3:A}\in[0,\infty),\\
x_{4:\mathbf{hs}} &:= x_{4:A}\in\R,\\
t_A:=x_{5:\mathbf{hs}} &:= \sqrt[3]{x_{5:\emptyset}}\in[0,\infty),\\
x_{6:\mathbf{hs}} &:= x_{6:\emptyset}\in\R.
\end{align*}
Using the local expressions for the component-wise blow-down maps from Equation~\eqref{eq:local-blow-down}, we see that
$$x_{3:\emptyset}=(x_{3:B})^1\cdot \mathbf{s}(B)=(x_{5:A})^1\cdot x_{3:A} \qquad\text{and}\qquad x_{5:\emptyset}=(x_{5:A})^3\cdot \mathbf{s}(A)$$
must hold in the bulk of the blow-up. Using that the signs in our good perspective are trivial, we can solve for the parameters $x_{5:A}$ and $x_{3:B}$ that control the distance to the exceptional divisors of the individual blow-ups, obtaining
\begin{equation}\label{eq:example-controls-pars}
x_{5:A} = t_A \qquad\text{and}\qquad x_{3:B}=t_A t_B.
\end{equation}
Thus the distance to the exceptional divisor in the $N$-th component of $p=\{p_N\}_{N\in\cN}\in\Bl_{\cW}(M)$ is given by the product of the control parameters of all $N'\subseteq N$.
Using Eqs.~\eqref{eq:example-controls-pars} and~\eqref{eq:local-blow-down} together, we can write the overall blow-down map in terms of the new coordinates $\chi_{\mathbf{hs}}$ as follows:
\begin{equation*}\label{eq:example-blowdown}\begin{aligned}
x_{1:\emptyset} &= (t_At_B)\cdot x_{1:\mathbf{hs}},\\
x_{2:\emptyset} &= (t_At_B)^2\cdot x_{2:\mathbf{hs}},\\
x_{3:\emptyset} &= (t_At_B)\cdot 1,\\
x_{4:\emptyset} &= t_A^2\cdot x_{4:\mathbf{hs}},\\
x_{5:\emptyset} &= t_A^3\cdot 1,\\
x_{6:\emptyset} &= x_{6:\mathbf{hs}}.
\end{aligned}\end{equation*}
Observe how the components normal only to $A$ get acted on by $t_A$, while the components normal to both $A$ and $B$ get acted on by the product $t_At_B$. In either case we act by the uniform weight along each given direction.
\qedhere
\end{enumerate}
\end{examples}

\begin{npar}[Iterated blow-ups]\label{rem:iterative}
While we have defined the blow-up $\Bl_\cW(M)$ with the graph-based approach for the purposes of this article, we expect this to be equivalent to an iterated approach. Concretely, $\Bl_\cW(M)$ could be constructed by blowing up the elements of $\cG$ in any order such that smaller submanifolds appear before larger ones, where at every step we extend all larger submanifolds into the exceptional divisor by taking a closure\footnote{Note that this requires a more general notion of weighting along submanifolds $N$ of a manifold with corners. Compatibility between the weight assignment and the corners must here be guaranteed by a simultaneous local model.}. Indeed, this is already the case for the results in~\cite{FM94} and~\cite{Li09} that we are generalizing.

To motivate our choices in Definition~\ref{def:building-local-charts}, let us demonstrate how to obtain the same charts by an iterated blow-up in the case of Example~\ref{ex:building-coords}(2). Say we first perform the blow-up of $A$ to obtain coordinates $(x_{1:A}, ..., x_{m:A})$. According to the blow-down map from Equation~\eqref{eq:local-blow-down}, $B\setminus A$ is then cut out, at least away from the exceptional divisor, by the equations $$x_{5:A}\cdot x_{1:A} = (x_{5:A})^2\cdot x_{2:A} = x_{5:A}\cdot x_{3:A} = 0.$$
Taking the closure gives the closed submanifold $\overline{B\setminus A}$ cut out by $$ x_{1:A}=x_{2:A}=x_{3:A}=0,$$ including a boundary at $x_{5:A}=0$. We can naturally assign the weights 1, 2, and 1, respectively, to the normal directions. The induced coordinates on the weighted blow-up of $\overline{B\setminus A}$ now match exactly those we constructed before.
\end{npar}

We will spend the rest of this \subchaption{} arguing how the $x_{i:\mathbf{hs}}$ determine all other entries of the tableau of coordinates.

To start, we have relations between the coordinates of a stacked pair of submanifolds $A\subsetneq B$ in the tableau:
\begin{center}
\begin{tikzpicture}
  \matrix (m) [matrix of nodes, nodes in empty cells, column sep=5pt, row sep=5pt] {
    $\vdots$ \\
    $\;$ & $\;$ & $\;$ & $\npboxed{\times}$ & $\dots$\\
    $\star$ & $\star$ & $\star$ & $\;$ & $\;$ & $\;$ & $\npboxed{\phantom{A}}$ & $\dots$ \\
    $\vdots$ \\
  };

  \tableaubox(m)(A)(3:1:7)
  \tableaubox(m)(B)(2:1:4)
\end{tikzpicture}
\end{center}
The following Lemma tells us how the coordinates marked by stars are determined by the control parameter $t_B$ of $B$ and the coordinates directly above, while the coordinate marked by a cross is determined by $t_B$ and the boxed coordinate in the row below. In the former case we can even allow $A=\emptyset$ to get the coordinates after blowing down. In fact, in an iterated approach these conditions can be understood as a \textit{partial} blown-down of one step in the sequence of blow-ups.

\begin{lemma}\label{lem:coord-rels-pair}
Fix a uniformly aligned weighting $\cW$ along a separated building set $\cG$ and a good perspective $(\chi,\cN,\mathbf{h},\mathbf{s})$. 
Consider $A\subsetneq B$ in $\cN\cup\{\emptyset\}$ such that there exists no $C\in\cN$ with $A\subsetneq C\subsetneq B$. Then it holds for $w_{B,i}\neq0$ that
\begin{equation}\label{eq:coords-rels-pair1}
x_{i:A} = t_B^{w_{B,i}}\,\cdot\,\begin{cases}\mathbf{s}(B)&\text{for }i=\mathbf{h}(B),\\x_{i:B}&\text{for }i\neq\mathbf{h}(B),\end{cases}
\end{equation}
and for $A\neq\emptyset$ that
\begin{equation}\label{eq:coords-rels-pair2}
x_{\mathbf{h}(B):B}=t_B\,\cdot\, x_{\mathbf{h}(A):A}.
\end{equation}
\end{lemma}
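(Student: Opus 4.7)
The plan is to first reduce both equations to a computation on the bulk of the chart domain by a continuity argument, and then to carry out this computation using the explicit formulas for $x_{i:\mathbf{h}(N)\mathbf{s}(N)}^{(\cW_N)}$ from Definition~\ref{def:smooth-blow-up}. The critical preliminary step is to identify the selector $\mu(\mathbf{h}(B))$ from Definition~\ref{def:building-local-charts}: I claim that $\mu(\mathbf{h}(B))=A$ when $A\neq\emptyset$ and $\mu(\mathbf{h}(B))=\emptyset$ when $A=\emptyset$.

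To establish this identification, recall that $\mu(\mathbf{h}(B))$ is the $\cN$-maximum of those $N\in\cN$ with $w_{N,\mathbf{h}(B)}\neq 0$ and $\mathbf{h}(N)\neq\mathbf{h}(B)$, with $\emptyset$ as the fallback. Applying condition~(2) of Definition~\ref{def:good-perspective} to $B$, together with injectivity of $\mathbf{h}$, forces any candidate $N$ to satisfy $N\subsetneq B$. When $A=\emptyset$, no such $N$ exists because $B$ is then minimal in $\cN$, and the claim follows. When $A\neq\emptyset$, alignment of $\chi$ yields $\{j:w_{B,j}\neq 0\}\subseteq\{j:w_{A,j}\neq 0\}$ (in adapted coordinates the conormal bundle of the larger submanifold is spanned by a subset of the $d_px_j$), so $w_{A,\mathbf{h}(B)}\neq 0$ and $A$ is itself a candidate. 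Any competing candidate $N$ shares the non-vanishing weight direction $\mathbf{h}(B)$ with $A$, and is therefore comparable with $A$ by the separation-based argument recalled in Paragraph~\ref{par:tableau}; the chain $A\subsetneq N\subsetneq B$ is excluded by the immediate-predecessor hypothesis, leaving $N\subseteq A$. Hence $A$ is the maximum.

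With $\mu(\mathbf{h}(B))$ pinned down, both~\eqref{eq:coords-rels-pair1} and~\eqref{eq:coords-rels-pair2} become algebraic identities at each point $p\in U_{\chi\cN\mathbf{hs}}$ lying over some $q\in M\setminus\cup\cG$. Substituting the bulk formulas $x_{i:N}(p)=x_i(q)\cdot(\mathbf{s}(N)x_{\mathbf{h}(N)}(q))^{-w_{N,i}/w_{N,\mathbf{h}(N)}}$ for $i\neq\mathbf{h}(N)$, and the analogue $x_{\mathbf{h}(N):N}(p)=(\mathbf{s}(N)x_{\mathbf{h}(N)}(q))^{1/w_{N,\mathbf{h}(N)}}$, expands both sides into products of rational powers of $x_{\mathbf{h}(A)}(q)$, $x_{\mathbf{h}(B)}(q)$, $x_i(q)$, and signs. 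The decisive role of uniform alignment enters at this step: the exponents on the two sides match only because $w_{A,j}=w_{B,j}$ whenever both are non-zero, which is exactly the content of uniform alignment applied to the pair $A\subsetneq B$ in $\cN$ (used for $j=i$ in~\eqref{eq:coords-rels-pair1} and for $j=\mathbf{h}(B)$ in both equations). Separation itself is used only in the identification of $\mu(\mathbf{h}(B))$.

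It remains to propagate these identities from the bulk to the full chart domain. Each $x_{i:N}$ and each $t_N$ is continuous on $U_{\chi\cN\mathbf{hs}}$ by construction, and the bulk portion is dense in $U_{\chi\cN\mathbf{hs}}$ because $\Bl_\cW(M)$ is defined as the closure of the diagonal image of $M\setminus\cup\cG$; continuity then extends the equations to all of $U_{\chi\cN\mathbf{hs}}$. I expect the identification $\mu(\mathbf{h}(B))=A$ to be the main conceptual hurdle, as it is the only place where separation, alignment and the immediate-predecessor hypothesis all act together. Once that step is in hand, the remainder is a careful but mechanical manipulation of exponents that is best carried out by treating $i=\mathbf{h}(B)$ and $i\neq\mathbf{h}(B)$ separately and using the sign identities $\mathbf{s}(N)^2=1$ to absorb signs arising from the $1/w_{N,\mathbf{h}(N)}$-th power.
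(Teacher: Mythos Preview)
Your proposal is correct and follows essentially the same approach as the paper: verify the identities on the bulk using the explicit chart formulas from Definition~\ref{def:smooth-blow-up} together with uniform alignment to match exponents, then extend to the full chart domain by continuity and the closure definition of $\Bl_\cW(M)$. The one notable difference is that you make explicit the identification $\mu(\mathbf{h}(B))=A$ (resp.\ $\emptyset$), whereas the paper invokes it tacitly when writing ``by definition of $t_B$, we can substitute $x_{\mathbf{h}(B):A}=\mathbf{s}(B)\,t_B^{w_{B,\mathbf{h}(B)}}$''; your treatment here is a genuine clarification, but the underlying argument is the same.
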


\begin{proof}
We first show Equation~\eqref{eq:coords-rels-pair2} for $A\neq\emptyset$. In the bulk away from the exceptional divisor it must hold by the coordinate expressions for blow-down maps from Equation~\eqref{eq:local-blow-down} that
$$
x_{\mathbf{h}(B):\emptyset} = \mathbf{s}(B)\,\cdot\, x_{\mathbf{h}(B):B}^{w_{B,\mathbf{h}(B)}} = x_{\mathbf{h}(B):A}\,\cdot\, x_{\mathbf{h}(A):A}^{w_{A,\mathbf{h}(B)}}.
$$
By definition of $t_B,$ we can substitute $x_{\mathbf{h}(B):A}=\mathbf{s}(B)\,\cdot\, t_B^{w_{B,\mathbf{h}(B)}}$, take a $w_{B,\mathbf{h}(B)}$-th root and cancel the sign to obtain the desired Equation~\eqref{eq:coords-rels-pair2}. Here we make use of the weights matching by uniform alignment. Since the blow-up is defined as a closure, the result must also hold on the exceptional divisors.

To show Equation~\eqref{eq:coords-rels-pair1} for $i\neq\mathbf{h}(B)$ we proceed analogously, but starting by inspection of the $i$-th coordinate in the bulk: If $A=\emptyset$, this immediately implies
$$
x_{i:A} = x_{\mathbf{h}(B):B}^{w_{B,i}}\,\cdot\, x_{i:B} = t_{B}^{w_{B,i}}\,\cdot\, x_{i:B}.
$$
If on the other hand $A\neq\emptyset,$ we get
$$
x_{i:\emptyset} = x_{\mathbf{h}(B):B}^{w_{B,i}}\,\cdot\, x_{i:B} = x_{\mathbf{h}(A):A}^{w_{A,i}}\,\cdot\, x_{i:A}.
$$
We can then substitute Equation~\eqref{eq:coords-rels-pair2} and cancel factors to obtain the desired formula. The proof for $i=\mathbf{h}(B)$ follows analogously with $\mathbf{s}(B)$ replacing each $x_{i:B}$, and we are done.
\end{proof}

Applying this Lemma repeatedly gives all entries of the tableau in terms of $\chi_{\mathbf{hs}}$:

\begin{proposition}\label{prop:comp-coords}
Fix a uniformly aligned weighting $\cW$ along a separated building set $\cG$ and a good perspective $(\chi,\cN,\mathbf{h},\mathbf{s})$. Then for each $N\in\cN\cup\{\emptyset\}$, the entry $x_{i:N}$ depends on components $x_{i:\mathbf{hs}}$ (including $t_N=x_{\mathbf{h}(N):\mathbf{hs}}$) of the chart $\chi_{\bf hs}$ as follows:

For any $i\neq\mathbf{h}(N)$ with $w_{N,i}\neq0$ or $N=\emptyset$,
\begin{equation}\label{eq:coord-lemma-two}
x_{i:N} = \prod_{N\subsetneq \hat{N}\in\cN} \left(t_{\hat{N}}\right)^{w_{\hat{N},i}} \cdot
\begin{cases}
 \mathbf{s}(N_0) &\text{if }i=\mathbf{h}(N_0) \text{ for } N_0\in\cN\setminus\{N\}, \\
 x_{i:\bf hs}&\text{else.}
\end{cases}
\end{equation}
For all $N\neq\emptyset$,
\begin{equation}\label{eq:coord-lemma-three}
x_{\mathbf{h}(N):N} = \prod_{N\supseteq \check{N}\in\cN} t_{\check{N}}.
\end{equation}
Finally for any $i$ with $w_{N,i}=0$ or $N=\emptyset$,
\begin{equation}\label{eq:coord-lemma-one}
x_{i:N} = x_{i:\emptyset}.
\end{equation}
\end{proposition}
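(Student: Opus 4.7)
The plan is to prove Eqs.~\eqref{eq:coord-lemma-one}, \eqref{eq:coord-lemma-three}, and \eqref{eq:coord-lemma-two} in that order, each building on its predecessor. For Eq.~\eqref{eq:coord-lemma-one}, when $N = \emptyset$ the statement is definitional, while for $N \in \cN$ with $w_{N,i}=0$ we automatically have $i \neq \mathbf{h}(N)$, and inspection of Definition~\ref{def:smooth-blow-up} shows $x^{(\cW_N)}_{i:\mathbf{h}(N)\mathbf{s}(N)}$ reduces to $x_i \circ b_{\cW_N}$; pulling back through $\pi_N$ yields $x_{i:N} = x_i\circ b_\cW = x_{i:\emptyset}$.

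For Eq.~\eqref{eq:coord-lemma-three} I would induct on the size of the descendant set $\cN_{\leq N} := \{\check N \in \cN : \check N \subseteq N\}$, which is totally ordered by the tree property of nests (Corollary after Proposition~\ref{prop:char-nest}). In the base case where $N$ is minimal in $\cN$, the selector satisfies $\mu(\mathbf{h}(N)) = \emptyset$, so unpacking Definition~\ref{def:building-local-charts} identifies $t_N$ with $x_{\mathbf{h}(N):N}$ both on the bulk (via Definition~\ref{def:smooth-blow-up}) and on the exceptional divisor of the $N$-component. For the inductive step, let $N'$ be the immediate predecessor of $N$ in $\cN_{\leq N}$; any $\cN$-element strictly between $N'$ and $N$ would also lie in $\cN_{\leq N}$, so $(N', N)$ is adjacent in $\cN$. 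Lemma~\ref{lem:coord-rels-pair} Eq.~\eqref{eq:coords-rels-pair2} then gives $x_{\mathbf{h}(N):N} = t_N \cdot x_{\mathbf{h}(N'):N'}$, and the inductive hypothesis closes the step.

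For Eq.~\eqref{eq:coord-lemma-two}, first handle $N = \emptyset$. The subset $\cN_i := \{\hat N \in \cN : w_{\hat N, i} \neq 0\}$ is totally ordered by uniform alignment and separation (Paragraph~\ref{par:tableau}); enumerate it as $\hat N_1 \subsetneq \ldots \subsetneq \hat N_q$. Consecutive pairs, together with $(\emptyset, \hat N_1)$, are adjacent in $\cN \cup \{\emptyset\}$: any interloping $C \in \cN$ with $C \subsetneq \hat N_j$ would satisfy $\supp \cW_C \subseteq \supp \cW_{\hat N_j} \subseteq \{x_i = 0\}$, hence $w_{C, i} \neq 0$ and $C \in \cN_i$, contradicting consecutivity. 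Iterating Lemma~\ref{lem:coord-rels-pair} Eq.~\eqref{eq:coords-rels-pair1} along this chain telescopes $x_{i:\emptyset}$ into $\prod_{j=1}^q t_{\hat N_j}^{w_{\hat N_j, i}}$ times a terminal factor. If $i = \mathbf{h}(N_0)$ for some $N_0 \in \cN$, good perspective condition (2) forces $N_0 = \hat N_q$ and the final step of the iteration terminates with the $\mathbf{s}(N_0)$ case of the lemma; otherwise the iteration leaves $x_{i:\hat N_q}$, which equals $x_{i:\mu(i)} = x_{i:\mathbf{hs}}$. Since $t_{\hat N}^{w_{\hat N, i}} = 1$ for $\hat N \notin \cN_i$, the product extends to all of $\cN$.

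For general $N \in \cN$ with $w_{N,i} \neq 0$ (so automatically $i \neq \mathbf{h}(N)$), Definition~\ref{def:smooth-blow-up} yields the bulk blow-down relation $x_{i:\emptyset} = x_{i:N} \cdot (x_{\mathbf{h}(N):N})^{w_{N,i}}$. Using Eq.~\eqref{eq:coord-lemma-three} together with the observation that $w_{N,i}\neq 0$ forces $x_i$ to vanish on $\supp\cW_{\check N}$ for every $\check N \subseteq N$, so that $w_{\check N, i} = w_{N, i}$ by uniform alignment, one rewrites the factor as $\prod_{\check N \subseteq N} t_{\check N}^{w_{\check N, i}}$. Dividing through in the $N = \emptyset$ formula cancels exactly this subproduct, leaving $\prod_{\hat N \in \cN,\,\hat N \not\subseteq N} t_{\hat N}^{w_{\hat N, i}} \cdot (\text{terminal})$; since any $\hat N$ incomparable to $N$ with $w_{\hat N, i} \neq 0$ would place two incomparable elements in $\cN_i$, such $\hat N$ contribute trivially, and what survives is precisely $\prod_{N \subsetneq \hat N} t_{\hat N}^{w_{\hat N, i}}$. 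The identity holds on the dense open locus $\{x_{\mathbf{h}(N):N} > 0\}$ and extends by smoothness to the whole chart domain. The main obstacle throughout is the combinatorial bookkeeping around the chain $\cN_i$ and verifying that adjacency in $\cN$ is preserved whenever we restrict attention to this subchain.
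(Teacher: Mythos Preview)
Your proof is correct and, for Eqs.~\eqref{eq:coord-lemma-one} and \eqref{eq:coord-lemma-three}, tracks the paper's argument exactly: the same reduction via Definition~\ref{def:smooth-blow-up} for the first, and the same induction along the chain $\cN_{\leq N}$ via Eq.~\eqref{eq:coords-rels-pair2} for the second.

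For Eq.~\eqref{eq:coord-lemma-two} there is a mild organizational difference worth recording. The paper inducts directly for each $N$ on the length $\hat k$ of the chain $\{N'\in\cN: N\subsetneq N',\ w_{N',i}\neq 0\}$; the delicate step is $\hat k=1$ with $i=\mathbf{h}(\hat N_1)$ and $N\neq\emptyset$, which is handled by the bulk identity $x_{\mathbf{h}(N):N}^{w_{N,i}}\cdot x_{i:N} = x_{i:\hat N_1}^{w_{\hat N_1,i}}\cdot \mathbf{s}(\hat N_1)$, a cancellation via uniform alignment, and continuity extension. You instead establish the $N=\emptyset$ formula completely by iterating Eq.~\eqref{eq:coords-rels-pair1} up the chain $\cN_i$ (where the $\mathbf{s}$-case can only arise at the very top, so no subcase split is needed), and then derive the general $N$ formula in one stroke by dividing out $(x_{\mathbf{h}(N):N})^{w_{N,i}}=\prod_{\check N\subseteq N}t_{\check N}^{w_{\check N,i}}$ and extending from the dense bulk by continuity. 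The ingredients are identical---Lemma~\ref{lem:coord-rels-pair}, the chain structure of $\cN_i$ from separation, uniform alignment to match exponents, and density of the bulk---but your packaging avoids the paper's two-subcase branching at $\hat k=1$. One small remark: ``extends by smoothness'' should read ``extends by continuity,'' since at this stage the smooth structure on $\Bl_\cW(M)$ is not yet in place; continuity of both sides (as restrictions of coordinate functions on the factors $\Bl_{\cW_N}(M)$ and polynomials therein) suffices.
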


\begin{proof}
We first prove Equation~\eqref{eq:coord-lemma-one}. It is trivial for $N=\emptyset$. When $N$ is non-empty, it is an immediate consequence of $w_{N,i}=0$ by Paragraph~\ref{par:blow-down-coords}, as the right-hand side is also the $i$-th component of the blow-down map in the $N$-th component.

To prove the remaining two equations by induction, note first that since the underlying building set $\cG$ is separated, the set of subsets $\check{N}\in\cN$ of any fixed $N$ is a chain. Similarly, the set of supersets $\hat{N}\in\cN$ of any fixed $N$ whose weight $w_{\hat{N},i}$ does not vanish is also a chain. We can thus write
$$ \check{N}_{0}\subsetneq \check{N}_1\subsetneq ... \subsetneq \check{N}_{\check{k}} = N \subsetneq \hat{N}_{\hat{k}} \subsetneq ... \subsetneq \hat{N}_{1}. $$
These are exactly the factors that contribute to each of the products in the statement of the lemma.

We get Equation~\eqref{eq:coord-lemma-three} for $x_{\mathbf{h}(N):N}$ for the special case $\check{k}=0$ (i.e., $N$ is a minimum of the nest) by noting that then $\mu(\mathbf{h}(N))=\emptyset$ and thus
$$t_N=x_{\mathbf{h}(N):\bf hs} = x_{\mathbf{h}(N):N}$$
by inspecting the definitions. The general formula for $\check{k}>0$ follows by induction using Equation~\eqref{eq:coords-rels-pair2} as the induction step.

Similarly consider the $\hat{k}=0$ case of Equation~\eqref{eq:coord-lemma-two}, i.e. there is no larger $\hat{N}_1$ with $w_{\hat{N}_1,i\neq 0}$. We need to show $x_{i:N}=x_{i:\bf hs}$ for every $i\neq\mathbf{h}(N)$ with $w_{N,i\neq0}$. But in this case $\mu(i)=N$ (even if $N=\emptyset$), and the equation holds by definition of $x_{i:\bf hs}$.

Now consider the $\hat{k}=1$ case of Equation~\eqref{eq:coord-lemma-two}. We can have two situations: Since $w_{\hat{N}_1,i}\neq 0$ we could have either $\mathbf{h}(\hat{N}_1)=i$ or not. If equality does hold, then we need to show
\begin{equation}\label{eq:target}
x_{i:N} =t_{\hat{N}_1}^{w_{\hat{N}_1,i}} \,\cdot\, \mathbf{s}(\hat{N}_1).
\end{equation}
If $N=\emptyset$, this holds by definition of the coordinates, so assume $N\neq\emptyset$.
In this case, by definition the following equations must hold in the bulk away from both exceptional divisors:
$$
x_{i:\emptyset}= x_{\mathbf{h}(N):N}^{w_N,i} \,\cdot\, x_{i:N} = x_{i:\hat{N}_1}^{w_{\hat{N}_1,i}} \,\cdot\, \mathbf{s}(\hat{N}_1).
$$
Using Equation~\eqref{eq:coords-rels-pair2} for $N\subsetneq \hat{N}_1$ with $\mathbf{h}(\hat{N}_1)=i$, this implies
$$
x_{\mathbf{h}(N):N}^{w_N,i} \,\cdot\, x_{i:N} = x_{\mathbf{h}(N):N}^{w_{\hat{N}_1,i}}\,\cdot \, t_{\hat{N}_1}^{w_{\hat{N}_1,i}} \,\cdot\, \mathbf{s}(\hat{N}_1).
$$
By uniform alignment, we can cancel the first factors to get Equation~\eqref{eq:target} in the bulk.  But by continuity and the definition of the blow-up as a closure, this equation must also hold on the exceptional divisors.
If on the other hand $\mathbf{h}(\hat{N}_1)\neq i$, then we can first apply Equation~\eqref{eq:coords-rels-pair1} and then the above $\hat{k}=0$ case to conclude
$$
x_{i:N}=t_{\hat{N}_1}^{w_{\hat{N}_1,i}} \,\cdot\, x_{i:\hat{N}_1} = t_{\hat{N}_1}^{w_{\hat{N}_1,i}} \,\cdot\, x_{i:\bf hs}.
$$
In either case, Equation~\eqref{eq:coord-lemma-two} holds for $\hat{k}=1$. Any larger $\hat{k}$ can now be reached by induction using Equation~\eqref{eq:coords-rels-pair1}, as we can't have $i=\mathbf{h}(\hat{N})$ for any superset that itself has a superset with non-vanishing weight along $i$.
\end{proof}

The previous proposition can be expressed more succinctly:

\begin{corollary}\label{cor:comp-coords}
Fix a uniformly aligned weighting $\cW$ along a separated building set $\cG$ and a good perspective $(\chi,\cN,\mathbf{h},\mathbf{s})$. For $i=1,...,m$ and $N\in\cN$, the coordinate $x_{i:N}$ of a point $p\in\Bl_\cW(M)$ parametrized by $\chi_{\mathbf{hs}}(p)=(y_1, ..., y_m)\in\R^m_{\mathbf{h}}$ is given by
$$
x_{i:N}\circ\chi^{-1}_{\bf hs}(y_1, ..., y_m) = \begin{cases}
\prod\limits_{N\supseteq \check{N}\in\cN} y_{\mathbf{h}(\check{N})} &\text{if } i=\mathbf{h}(N), \\
\prod\limits_{\substack{\hat{N}\in\cN\\ w_{N,i}=0 \lor N\subsetneq \hat{N}}} \left(y_{\mathbf{h}(\hat{N})}\right)^{w_{\hat{N},i}} \mathbf{s}(N_0) &\text{if }i=\mathbf{h}(N_0) \text{ for } N_0\in\cN\setminus\{N\}, \\
\prod\limits_{\substack{\hat{N}\in\cN\\  w_{N,i}=0 \lor N\subsetneq \hat{N}}} \left(y_{\mathbf{h}(\hat{N})}\right)^{w_{\hat{N},i}} y_i &\text{else.}
\end{cases}
$$
\end{corollary}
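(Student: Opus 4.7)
The plan is to derive this corollary as a routine unpacking of Proposition~\ref{prop:comp-coords} combined with the definitions of the induced chart from Definition~\ref{def:building-local-charts}. The essential observation is that, by construction, the coordinate $y_i$ of $\chi_{\mathbf{hs}}(p)$ agrees with $x_{i:\mathbf{hs}}(p)$, and in particular $y_{\mathbf{h}(N)} = t_N(p) = x_{\mathbf{h}(N):\mathbf{hs}}(p)$ for every $N\in\cN$. Once this identification is in hand, the proof reduces to case analysis on the relationship between the index $i$ and the map $\mathbf{h}$, together with a check that the products in the three cases really do match the ones produced by Proposition~\ref{prop:comp-coords}.

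First I would dispose of case 1, where $i=\mathbf{h}(N)$ (so $N\neq\emptyset$). This is immediate from Equation~\eqref{eq:coord-lemma-three}: substituting $t_{\check N}=y_{\mathbf{h}(\check N)}$ into the product $\prod_{N\supseteq\check N\in\cN} t_{\check N}$ gives exactly $\prod_{N\supseteq\check N\in\cN} y_{\mathbf{h}(\check N)}$.

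Next I would handle cases 2 and 3 in a uniform way. Fix $i\neq\mathbf{h}(N)$. If $w_{N,i}\neq 0$ (which forces $N\neq\emptyset$), Equation~\eqref{eq:coord-lemma-two} applies directly; if $w_{N,i}=0$ or $N=\emptyset$, Equation~\eqref{eq:coord-lemma-one} gives $x_{i:N}=x_{i:\emptyset}$, and I would then apply Equation~\eqref{eq:coord-lemma-two} with $N$ replaced by $\emptyset$ (noting that in this case the condition $i\neq\mathbf{h}(\emptyset)$ is vacuous, and every $\hat N\in\cN$ satisfies $\emptyset\subsetneq\hat N$). A short bookkeeping argument then shows that the product $\prod_{N\subsetneq\hat N\in\cN}(\cdots)^{w_{\hat N,i}}$ from~\eqref{eq:coord-lemma-two} matches the product $\prod_{w_{N,i}=0\,\lor\, N\subsetneq\hat N}(\cdots)^{w_{\hat N,i}}$ of the corollary: when $w_{N,i}\neq 0$ the ``$w_{N,i}=0$'' disjunct contributes nothing and both products range over strict supersets of $N$; when $w_{N,i}=0$ or $N=\emptyset$ the disjunct is automatically satisfied, so both products range over \emph{all} of $\cN$ (equivalently, over the strict supersets of $\emptyset$).

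Finally, I would unwind the case distinction inside~\eqref{eq:coord-lemma-two} that distinguishes whether the index $i$ coincides with $\mathbf{h}(N_0)$ for some $N_0\in\cN\setminus\{N\}$. If so, the non-product factor in~\eqref{eq:coord-lemma-two} is $\mathbf{s}(N_0)$, yielding case 2 of the corollary; otherwise the non-product factor is $x_{i:\mathbf{hs}}=y_i$, yielding case 3. Substituting $t_{\hat N}=y_{\mathbf{h}(\hat N)}$ in the remaining factors gives exactly the claimed formulas.

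The proof is essentially a substitution; the only mildly subtle point — and the only place where a reader could be confused — is verifying that the ``$w_{N,i}=0$'' disjunct in the corollary's product is the correct way to encode the jump from Equation~\eqref{eq:coord-lemma-two} at $N$ versus at $\emptyset$, so that the single uniform formula in the corollary covers both branches of Proposition~\ref{prop:comp-coords}. Once this is verified the three cases line up with the Proposition case by case.
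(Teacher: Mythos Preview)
Your proposal is correct and follows essentially the same approach as the paper's proof, which simply cites Equations~\eqref{eq:coord-lemma-two} and~\eqref{eq:coord-lemma-three} and notes that Equation~\eqref{eq:coord-lemma-one} is used as an intermediate step when $w_{N,i}=0$, with the observation that in that case the condition $N\subsetneq\hat N$ becomes $\emptyset\subsetneq\hat N$ and is trivially satisfied. Your write-up is a more detailed unpacking of exactly this argument.
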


\begin{proof}
This follows immediately from Eqns.~\eqref{eq:coord-lemma-two} and~\eqref{eq:coord-lemma-three}, and using Equation~\eqref{eq:coord-lemma-one} as an intermediate step when $w_{N,i}=0$. Note that in this case the condition $N\subsetneq \hat{N}$ is dropped when determining which $\hat{N}$ to range over in the lower two products, as $\emptyset\subsetneq\hat{N}$ is trivially true.
\end{proof}

At this point we have collected all the ingredients to give a proof that the charts yield a smooth structure, i.e. our main Theorem~\ref{thm:manifold-structure}. The interested reader can find this in Appendix~\ref{app:proof-weighted-building-set}, but we do not require the details of the proof in what follows.

\startSubchaption{Smooth structure}\label{ssec:smooth-structure}

Now that we know that the blow-up supports a smooth structure under reasonable assumptions, we will discuss compatibility of the stratification with it as well as how the structure relates to the ambient manifold $\prod_{G\in\cG}\Bl_{\cW_G} (M).$

As for the stratification, it coincides under the charts with the canonical stratification of the corner model $\R_\mathbf{h}^m$:

\begin{proposition}\label{prop:strat-local-coords}
Fix a
uniformly aligned weighting $\cW$ along a separated building set $\cG$ and any good perspective $(\chi,\cN,\mathbf{h},\mathbf{s})$ covering $U_{\chi\cN\mathbf{hs}}$. Then the stratification from Definition~\ref{def:ass-nest-and-strat} gets mapped to the canonical stratification of the corner model $\R^{m}_{\mathbf{h}}$. In particular, any point $p\in U_{\chi\cN\mathbf{hs}}$ satisfies
$$ \cG_p = \{N\in \cN \;|\; t_N(p)=0\}. $$
\end{proposition}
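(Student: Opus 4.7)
The claim reduces to the set identity $\cG_p = \{N\in\cN : t_N(p)=0\}$, from which the stratification statement follows: the canonical stratification of $\R^m_{\mathbf{h}}$ indexes its strata by which of the boundary coordinates $y_{\mathbf{h}(N)}$ vanish, and $\chi_{\mathbf{hs}}$ sends $p$ to a point whose vanishing boundary coordinates are exactly $\{\mathbf{h}(N): t_N(p)=0\}$. Lemma~\ref{lem:control-set-in-nest} already places both sides of the identity inside $\cN$, so the plan is to prove the two inclusions separately, in each case unfolding the definition of $\cG_p$ and reading off the component coordinates via Corollary~\ref{cor:comp-coords}. Two preparatory observations will do most of the work. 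First, for any column $i$ the set $\{\hat N\in\cN : w_{\hat N,i}\neq 0\}$ is totally ordered by inclusion: incomparable elements of a nest over a separated building set would be transverse $\cG$-factors by Proposition~\ref{prop:char-nest}(1), yet a shared non-vanishing weight at $i$ would place $dx_i$ in both conormal bundles, contradicting transversality. Second, the weighted-building-set axioms force $\cW_{\check N}\subseteq\cW_N$ for $\check N\subseteq N$, so $w_{\check N,i}\geq w_{N,i}$ pointwise, and uniform alignment upgrades $w_{N,i}\neq 0$ to equality $w_{\check N,i}=w_{N,i}$.

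For the forward direction, assume $t_N(p)=0$. Equation~\eqref{eq:coord-lemma-three} immediately gives $x_{\mathbf{h}(N):N}(p)=0$, so $b_\cW(p)\in N$. For any $H\in\cG$ with $H\subsetneq N$, the bulk alternative $p_H\in M\setminus N$ in the definition of $\Bl_{N,H}(M)$ is incompatible with $b_\cW(p)\in N$, so the question becomes whether $\nu_{N,H}(p_H)\neq 0$ on the exceptional divisor. If $H\notin\cN$, the chart-domain condition supplies $N'\in\cN$ with $N'\subseteq H$ and $p_H=b_{H,N'}(p_{N'})$, and functoriality rewrites $\nu_{N,H}(p_H)=\nu_{N,N'}(p_{N'})$; if $H\in\cN$, I take $N'=H$. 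The problem reduces to showing $\nu_{N,N'}(p_{N'})=0$ for $N'\in\cN$ with $N'\subsetneq N$, which by the local description in Example~\ref{example:induced-map-weighted-normal-bundle} amounts to checking that every $i$-th component of $p_{N'}$ with $w_{N,i}=w_{N',i}\neq 0$ vanishes. The case $i=\mathbf{h}(N')$ is excluded because condition (2) of a good perspective forbids $w_{N,\mathbf{h}(N')}\neq 0$ when $N\supsetneq N'$; for $i\neq\mathbf{h}(N')$, Corollary~\ref{cor:comp-coords} exhibits the factor $(t_N)^{w_{N,i}}=0$ inside $x_{i:N'}(p)$, closing the argument.

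For the reverse direction, suppose $N\in\cG_p$ with $t_N(p)\neq 0$. From $b_\cW(p)\in N$ and $x_{\mathbf{h}(N):N}=\prod_{\check N\subseteq N,\,\check N\in\cN}t_{\check N}=0$, some $\check N\subsetneq N$ in $\cN$ has $t_{\check N}=0$; by the monotonicity observation this $\check N$ has $w_{\check N,\mathbf{h}(N)}\neq 0$ and so lies in the chain $N=N_0\supsetneq N_1\supsetneq\ldots\supsetneq N_k$ of nest elements with non-zero weight at $\mathbf{h}(N)$, forcing $k\geq 1$. Setting $j^*:=\min\{j\geq 1 : t_{N_j}=0\}$ and $\check N:=N_{j^*}$, Corollary~\ref{cor:comp-coords} gives $x_{\mathbf{h}(N):\check N}(p)=\mathbf{s}(N)\prod_{j'<j^*}(t_{N_{j'}})^{w_{N_{j'},\mathbf{h}(N)}}$, which is non-zero by minimality of $j^*$. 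Hence $\nu_{N,\check N}(p_{\check N})\neq 0$ in the $\mathbf{h}(N)$-direction, so $p_{\check N}\in\Bl_{N,\check N}(M)$, contradicting $N\in\cG_p$. The main technical obstacle is the bookkeeping in the forward direction: elements of $\cG\setminus\cN$ must be pulled back to the nest via the chart-domain condition and functoriality before Corollary~\ref{cor:comp-coords} applies, and the good-perspective maximality condition has to be invoked precisely to discard the coordinate direction $\mathbf{h}(N')$.
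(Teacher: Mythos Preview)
Your proof is correct and follows essentially the same route as the paper's: both reduce the control-set condition to the product formulas of Proposition~\ref{prop:comp-coords}/Corollary~\ref{cor:comp-coords} along the chain of nest elements below $N$, and both invoke the good-perspective maximality condition to rule out the column $\mathbf{h}(N')$. The paper's argument is more economical in two places: it dispatches $H\in\cG\setminus\cN$ in one sentence via the chart-domain condition (you spell out the functoriality step explicitly), and it works exclusively in the single column $i=\mathbf{h}(N)$ rather than checking all $i$ with $w_{N,i}\neq 0$, exploiting that this column already detects membership in $\Bl_{N,H}(M)$ by Equation~\eqref{eq:coord-lemma-two}. Your more explicit treatment, especially of the forward direction, arguably makes the mechanism clearer.
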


Note that due to the regularity of the corner model, all Whitney conditions are also satisfied.

\begin{proof}
By definition of the control set, we need to show for all $N\in\cN$ that
$$ t_N=0 \iff b_\cW(p)\in N \text{ and } \nexists H\in\cG: H\subsetneq N \land p_H\in \Bl_{N,H}(M).$$
We can equivalently write that no such $H$ exists in $\cN$ on the right-hand side, as $p\in U_{\chi\cN\mathbf{hs}}$ implies that every $p_H$ for $H\in\cG\setminus\cN$ is determined by some $p_{H'}$ for $H'\in\cN$.
This allows us to rewrite the right-hand side using local coordinate expressions induced by $\chi$ for the blow-down map and the subset $\Bl_{N,H}(M)$:
$$
x_{\mathbf{h}(N):\emptyset}(p)= 0 \text{ and } \nexists H\in\cG: H\subsetneq N \land x_{\mathbf{h}(N):H}(p)>0.
$$
Given Equation~\eqref{eq:coord-lemma-two}, this is exactly equivalent to the left-hand side $t_N=0$.
\end{proof}

\begin{corollary}\label{cor:nests-are-control-sets}
For a uniformly aligned weighting $\cW$ along a separated building set $\cG$ and an arbitrary nest $\emptyset\neq\cN\subseteq\cG$, the stratum $\Bl_{\cW}(M)_\cN$ is non-empty.
\end{corollary}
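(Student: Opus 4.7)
The plan is to exhibit an explicit point of $\Bl_\cW(M)_\cN$ by running the local charts of Definition~\ref{def:building-local-charts} backwards and appealing to the stratification-matching result of Proposition~\ref{prop:strat-local-coords}.

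First I would check that $\cap\cN$ is non-empty. Since $\cN\neq\emptyset$, by Definition~\ref{def:nests} it is induced by some $\cG$-flag $\emptyset\subsetneq S_1\subsetneq\ldots\subsetneq S_l$. Every $N\in\cN$ lies in $\cG_{S_i}$ for some $i$, hence contains $S_i\supseteq S_1$, so $S_1\subseteq\cap\cN$ and in particular $\cap\cN\neq\emptyset$. Pick any $p_0\in\cap\cN$.

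Next I would apply Lemma~\ref{lem:existence-perspective-for-nest}, which—since $\cW$ is aligned (being uniformly aligned) and $\cG$ separated—produces a good perspective $(\chi,\cN,\mathbf{h},\mathbf{s})$ whose chart $\chi:U\to\R^m$ covers $p_0$. Theorem~\ref{thm:manifold-structure} then ensures that the induced chart
$$\chi_{\mathbf{hs}}:U_{\chi\cN\mathbf{hs}}\longrightarrow \R^m_{\mathbf{h}}$$
is a diffeomorphism onto an open subset $V\subseteq\R^m_{\mathbf{h}}$. Since $\chi(p_0)$ vanishes in every coordinate direction normal to each $N\in\cN$ (alignment!), and since the explicit inverse formulas of Corollary~\ref{cor:comp-coords} express the blown-down coordinates $x_{i:\emptyset}$ as polynomial expressions vanishing whenever all $t_N=0$, the origin (equivalently, any point with $t_N=0$ for every $N\in\cN$ and the remaining $y_i$'s sufficiently small) lies in $V$. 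Concretely, this can be verified by choosing a sequence $q_k\in V$ with all $t_N(q_k)=1/k$ and the other coordinates zero: these correspond via $\chi_{\mathbf{hs}}^{-1}$ to bulk points of $M\setminus\cup\cG$ converging in $\Bl_\cW(M)$ to a limit $p$ whose chart image is $0\in V$.

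Finally, I would invoke Proposition~\ref{prop:strat-local-coords}: for this $p\in U_{\chi\cN\mathbf{hs}}$ we have
$$\cG_p=\{N\in\cN\mid t_N(p)=0\}=\cN,$$
so $p\in\Bl_\cW(M)_\cN$, which is therefore non-empty. The only delicate point is verifying that the origin of $\R^m_{\mathbf{h}}$ lies in the image $V$ of $\chi_{\mathbf{hs}}$, but this is essentially automatic from the openness of $V$ together with the explicit construction: the chart is built around $p_0\in\cap\cN$, and setting all control parameters $t_N$ to zero is the algebraic reflection of lying in the exceptional divisor above $p_0$, which the chart was designed to parametrize.
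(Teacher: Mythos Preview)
Your proposal is correct and follows essentially the same route as the paper: pick $p_0\in\cap\cN$, use Lemma~\ref{lem:existence-perspective-for-nest} to obtain a good perspective $(\chi,\cN,\mathbf{h},\mathbf{s})$, and then read off from Proposition~\ref{prop:strat-local-coords} that the point with all $t_N=0$ has control set exactly $\cN$. The paper's proof is the terse two-line version of what you wrote; your additional justification that $\cap\cN\neq\emptyset$ and your sequence argument for why the origin lies in the chart image are reasonable elaborations of steps the paper simply asserts.
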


\begin{proof}
Take some $p\in\cap\cN$. By Lemma~\ref{lem:existence-perspective-for-nest}, we can find $\mathbf{h}$ and $\mathbf{s}$ to obtain a good perspective $(\chi,\cN,\mathbf{h},\mathbf{s})$. By Proposition~\ref{prop:strat-local-coords}, the origin $q=\chi_{\mathbf{hs}}^{-1}(0)$ in the induced coordinates now satisfies $\cG_q=\cN$.
\end{proof}

To discuss the relation to the ambient manifold structure on $\prod_{G\in\cG}\Bl_{\cW_G} (M)$, we temper our expectations with an example:

\begin{example}
Consider the uniformly aligned weighting $\cW$ along the separated building set $\cG=\{A,B\}$ in $\R^2$ with the following weight tableau in standard coordinates $(x_1,x_2)\in\R^2$:

\begin{center}
\begin{tikzpicture}
  \matrix (m) [matrix of nodes, nodes in empty cells, column sep=5pt, row sep=5pt] {
    \npboxed{2} & 0 \\
    2 & \npboxed{1} \\
    $x_1$ & $x_2$\\
  };
  \draw[thick] ([xshift=-5pt, yshift=1pt]m-3-1.north west) -- ([xshift=5pt, yshift=1pt]m-3-2.north east);

  \tableaubox(m)(A)(2:1:2)
  \tableaubox(m)(B)(1:1:1)
\end{tikzpicture}
\end{center}
\vspace{-5mm}
The boxed weights correspond to a good perspective of $\mathbf{h}(A)=2$, $\mathbf{h}(B)=1$ and, for example, signs $\mathbf{s}(A)=\mathbf{s}(B)=1$. According to Corollary~\ref{cor:comp-coords}, the coordinates on the components $\Bl_A(M)$ and $\Bl_B(M)$ are determined by the coordinates $t_B=x_{1:\mathbf{hs}}$ and $t_A=x_{2:\mathbf{hs}}$ on $\Bl_\cW(M)$ as follows:
\begin{equation}\label{eq:comp-ex}
\begin{pmatrix}x_{1:A}\\x_{2:A}\end{pmatrix} = 
\begin{pmatrix}t_B^2\\t_A\end{pmatrix}, \qquad
\begin{pmatrix}x_{1:B}\\x_{2:B}\end{pmatrix} = 
\begin{pmatrix}t_A t_B\\t_A\end{pmatrix}.
\end{equation}
As a sanity check, here are the coordinates after blowing down expressed in any of the other charts:
$$
\begin{pmatrix}x_{1:\emptyset}\\x_{2:\emptyset}\end{pmatrix} = 
\begin{pmatrix}t_A^2 t_B^2\\t_A\end{pmatrix} = 
\begin{pmatrix}x_{2:A}^2x_{1:A}\\x_{2:A}\end{pmatrix} = 
\begin{pmatrix}x_{1:B}^2\\x_{2:B}\end{pmatrix}.
$$
Equation~\eqref{eq:comp-ex} tells us that in these coordinates, the blow-up sits in $\Bl_A(M)\times\Bl_B(M)$ like the subset $$\{(t_B^2,t_A,t_At_B,t_A)\;|\; t_A,t_B\geq0\}\subset \R\times [0,\infty)^2\times\R.$$ Dropping the superfluous fourth component and reparametrizing in terms of the first two gives the subset $$\{(x,y,\sqrt{x}y)\;|\;x,y\geq0\},$$
which at $x=y=0$ has a smooth singularity like the Whitney umbrella - in particular the limits of the tangent planes along the two boundaries of the corner do not match.
\end{example}

We learn from this example that we cannot expect the blow-up to satisfy any reasonable smooth embedding condition in the ambient manifold with corners. However, the singularity we found is contained in a small part of the boundary, not topological in nature and luckily the worst that can happen. To make this precise, we recall the notion of a \textit{weak submanifold}, which was introduced by Ammann, Mougel and Nistor~\cite{AMN21} for the same purpose in the unweighted setting:

\begin{definition}
    A subset $S$ of a manifold $M$ \textit{without} corners or boundary is a \textbf{weak submanifold of $M$} if around every $p\in S$ there is a smooth chart $\phi: U\to \R^n$ of $M$ and $0\leq l\leq m\leq n$ such that $\phi(S\cap U)=(\R^m_l\times\{0\})\cap \phi(U)$.

    A subset $S$ of a manifold $M$ \textit{with corners} is a \textbf{weak submanifold of $M$} if around every $p\in S$ there is a smooth chart $\phi:U\to\R^n_k=[0,\infty)^k\times\R^{n-k}$ of $M$ such that $\phi(S\cap U)$ is a weak submanifold of $\R^n$ in the sense above.

\end{definition}

This definition boils down to a subset $S$ which in local coordinates itself looks like a manifold with corners. Compared to other common notions of submanifolds of manifolds with corners, this is not requiring any further compatibility between the corners of $S$ and those of the ambient $M$. This notion is exactly what is satisfied by the image of an embedding of manifolds with corners (see \cite[][Proposition 2.13]{AMN21}). Note that it also makes sense to speak of a \textit{topological} weak submanifold by replacing smooth charts with continuous charts.

\begin{proposition}\label{prop:weak-submfd-structure}
Let $\cW$ be a uniformly aligned weighting along a separated building set $\cG$.
\begin{enumerate}
\item Topologically, $\Bl_\cW(M)$ is a weak submanifold of $\prod_{G\in\cG}\Bl_{\cW_G} (M)$.
\item Consider some $p\in U_{\chi\cG_p\mathbf{hs}}$. Then $\Bl_\cW(M)$ is a weak manifold in the smooth category around $p$ if and only if there exists no pair $A\subsetneq B$ in $\cG_p$ with $w_{B,\mathbf{h}(B)}>1$ and $t_A(p)=t_B(p)=0$.
\end{enumerate}
\end{proposition}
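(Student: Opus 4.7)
My approach is to work in the local chart $\chi_{\mathbf{hs}}\colon U_{\chi\cG_p\mathbf{hs}}\to\R^m_\mathbf{h}$ around $p$ provided by Theorem~\ref{thm:manifold-structure} and compare it with the product chart on the ambient $\Pi:=\prod_{G\in\cG}\Bl_{\cW_G}(M)$. In these coordinates the inclusion $\iota\colon\Bl_\cW(M)\hookrightarrow\Pi$ becomes the explicit polynomial map $\Phi$ described by Corollary~\ref{cor:comp-coords}. Both parts reduce to constructing a (continuous, resp.\ smooth) chart of $\Pi$ near $p$ in which $\Phi(\R^m_\mathbf{h})$ corresponds to a corner model. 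By Proposition~\ref{prop:strat-local-coords} one has $t_N(p)=0$ precisely when $N\in\cG_p$, so the bad-pair condition in (2) simplifies to: some non-minimal $B\in\cG_p$ (i.e.\ with some $A\subsetneq B$, $A\in\cG_p$) satisfies $w_{B,\mathbf{h}(B)}>1$.

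For part (1), I would define continuous recovery coordinates $\tilde y_1,\ldots,\tilde y_m$ on $\Pi$ that extract the $y_i$-values on the image. For $i\notin\mathbf{h}(\cG_p)$, the choice $\tilde y_i := x_{i:N_0}$ for $N_0$ the largest element of $\cG_p$ with $w_{N_0,i}\neq 0$ (or $N_0=\emptyset$ if none exists) gives $\tilde y_i=y_i$ on the image by Corollary~\ref{cor:comp-coords}. For $i=\mathbf{h}(N)$ with $N\in\cG_p$, a similar inspection produces an $N'\in\cG_p\cup\{\emptyset\}$ such that $x_{\mathbf{h}(N):N'}=\mathbf{s}(N)\,t_N^{w_{N,\mathbf{h}(N)}}$ on the image (this is immediate for minimal $N$ via the first case of Corollary~\ref{cor:comp-coords}, and for non-minimal $N$ follows by choosing $N'$ to be the largest element of $\cG_p$ strictly below $N$ with $w_{N',\mathbf{h}(N)}\neq 0$, or any element with $w_{N',\mathbf{h}(N)}=0$ otherwise, so that intermediate factors drop out of the product by the good-perspective maximality of $N$ in column $\mathbf{h}(N)$). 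Then $\tilde y_{\mathbf{h}(N)}:=\bigl(\mathbf{s}(N)\,x_{\mathbf{h}(N):N'}\bigr)^{1/w_{N,\mathbf{h}(N)}}$ continuously recovers $t_N$. Transverse coordinates on $\Pi$ are obtained by subtracting from the remaining ambient coordinates the corresponding formulas of Corollary~\ref{cor:comp-coords} evaluated at $\tilde y$; components $\Bl_{\cW_G}(M)$ for $G\in\cG\setminus\cG_p$ are handled analogously using the induced blow-down maps.

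For part (2), the same construction yields a smooth chart at $p$ iff each $\tilde y_{\mathbf{h}(N)}$ is smooth there. For minimal $N\in\cG_p$, one can alternatively take $\tilde y_{\mathbf{h}(N)}:=x_{\mathbf{h}(N):N}$, which equals $t_N$ directly (no root) and is smooth. For non-minimal $N$, the $w_{N,\mathbf{h}(N)}$-th root appearing in $\tilde y_{\mathbf{h}(N)}$ is smooth at $p$ iff $w_{N,\mathbf{h}(N)}=1$, since its argument $t_N^{w_{N,\mathbf{h}(N)}}$ vanishes there. This establishes the ``if'' direction: absence of a bad pair guarantees $w_{N,\mathbf{h}(N)}=1$ for every non-minimal $N\in\cG_p$, so every $\tilde y_{\mathbf{h}(N)}$ is smooth.

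The main obstacle is the ``only if'' direction, where one must rule out any alternative ambient chart that could smoothly flatten $\Bl_\cW(M)$. I intend to handle this by restricting $\Phi$ to the two-dimensional slice parametrized by $(t_A,t_B)$ (with all other $y_i$ fixed at their values at $p$) and projecting $\Pi$ onto the three-dimensional subspace with coordinates $(x_{\mathbf{h}(A):A},\,x_{\mathbf{h}(B):A},\,x_{\mathbf{h}(B):B})$. The image is the Whitney-umbrella-type surface $\{(t_A,\,\mathbf{s}(B)\,t_B^{w_{B,\mathbf{h}(B)}},\,t_A t_B):t_A,t_B\geq 0\}$, whose tangent planes along its smooth locus have distinct limits at $p$ depending on the direction of approach (e.g.\ approaching along $\{t_B=0\}$ yields one coordinate plane and along a curve $t_A\sim t_B^{w_{B,\mathbf{h}(B)}}$ yields a different one). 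Since such tangent-plane limits are preserved by smooth ambient diffeomorphisms, this surface (and \emph{a fortiori} $\Bl_\cW(M)$) cannot appear as a coordinate subspace in any smooth chart of $\Pi$ near $p$, contradicting the smooth weak-submanifold property.
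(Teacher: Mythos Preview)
Your treatment of part (1) and the ``if'' direction of (2) is essentially the same as the paper's: both identify, for each coordinate $y_i$ on $\Bl_\cW(M)$, some component coordinate $x_{i:N}$ on the ambient product that recovers $y_i$ (up to a continuous, resp.\ smooth, bijection of $\R$), and then complete these to a chart on $\Pi$. The paper does this by picking $N=\mu(i)$; your choice of $N'$ for the control parameters is equivalent. (Minor quibble: the clause ``or any element with $w_{N',\mathbf{h}(N)}=0$ otherwise'' in your recovery of $t_N$ is incorrect---in that case Corollary~\ref{cor:comp-coords} gives a product over \emph{all} $\hat N\subseteq N$, not just $t_N$---but the first option always applies since every $A\subsetneq N$ in $\cG_p$ has $w_{A,\mathbf{h}(N)}\neq 0$.)

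The genuine gap is in the ``only if'' direction. Your argument projects $\Pi$ to three coordinates and observes a Whitney-umbrella singularity there, concluding ``a fortiori'' that $\Bl_\cW(M)$ cannot be a smooth weak submanifold of $\Pi$. This inference is not valid: a smooth projection can \emph{create} singularities. If some \emph{other} ambient coordinate $x_{j:N}$ happened to depend linearly on $t_B$ near $p$, the full inclusion $\Bl_\cW(M)\hookrightarrow\Pi$ would be an immersion even though your three-coordinate shadow is not. To rule this out you must examine every component coordinate, not just the three you chose. (There is also a secondary issue: your formulas $x_{\mathbf{h}(A):A}=t_A$ and $x_{\mathbf{h}(B):B}=t_At_B$ require $A$ to be minimal in $\cG_p$ and the immediate predecessor of $B$; otherwise these products contain further factors $t_{\check N}$ that vanish at $p$, and your slice collapses.)

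The paper's argument for ``only if'' is exactly this missing global check. It shows that $\partial/\partial t_B$ lies in the kernel of $d\iota_p$ by inspecting \emph{all} formulas in Corollary~\ref{cor:comp-coords}: in every $x_{j:N}$ the variable $t_B$ appears either raised to the power $w_{B,j}\geq w_{B,\mathbf{h}(B)}>1$, or (in the first case $j=\mathbf{h}(N)$ with $N\supseteq B$) multiplied by the factor $t_A$ which vanishes at $p$. Hence the inclusion fails to be an immersion at $p$, so $\Bl_\cW(M)$ is not a smooth weak submanifold there. This is the step your projection-based argument is missing.
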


\begin{proof}
Note that around any point $p\in\Bl_\cW(M)$ it suffices to check whether the projection to $\prod_{G\in\cG_p}\Bl_{\cW_G} (M)$ is embedded since all other components in $\prod_{G\in\cG}\Bl_{\cW_G} (M)$ are smoothly determined by these. Consider a point $p\in U_{\chi\cG_p\mathbf{hs}}$.

\textit{Regarding (1)}: To show that the projection to $\prod_{G\in\cG_p}\Bl_{\cW_G} (M)$ is topologically an embedded weak submanifold, it is sufficient to show that for each coordinate component $x_{j:\mathbf{hs}}$ there is some $N\in\cG_p$ such that $x_{j:N}$ coincides with $x_{j:\mathbf{hs}}$ up to a homeomorphism of $\R$. We claim that this is true for any $N$ if $\mu(j)=\emptyset$, and otherwise true for $N=\mu(j)$.

\textit{If $\mu(j)=\emptyset$,} then we know that either (i) $j=\mathbf{h}(N_0)$ for some minimal element $N_0$ of $\cG_p$ or (ii) for all $N'\in\cG_p$ that $j\neq\mathbf{h}(N')$ and that the weight $w_{N',j}$ vanishes. In either of these cases, Corollary~\ref{cor:comp-coords} tells us that $x_{j:N}=x_{j:\mathbf{hs}}$, and we are done.

\textit{If $\mu(j)\neq\emptyset$} we set $N=\mu(j)$. We now have either (i) for all larger $\hat N\supsetneq N$ the weight $w_{\hat N,j}$ vanishes or (ii) there exists some $N_0$ of $\cG_p$ with $j=\mathbf{h}(N_0)$. Corollary~\ref{cor:comp-coords} yields $x_{j:N} = x_{j:\mathbf{hs}}$ in case (i) and $x_{j:N}= x_{j:\mathbf{hs}}^{w_{N,j}}\cdot\mathbf{s}(N_0)$ in case (ii). Since $w_{N,j}\geq 1$, we still have equality up to a homomorphism.

\textit{Regarding (2)}:
We first argue that we obtain a smooth weak submanifold whenever no $A\subsetneq B$ as in (2) exist. We can follow the same strategy as for (1), except that we now need to ensure that the derivative of $x_{j:N}$ by $x_{j:\mathbf{hs}}$ does not vanish close to $p$. The only case where this is not automatic is case (ii) when $\mu(j)\neq\emptyset$ and $t_{N_0}=x_{j:\mathbf{hs}}$ vanishes at $p$, since the weight in $x_{j:N}= x_{j:\mathbf{hs}}^{w_{N,j}}\cdot\mathbf{s}(N_0)$ may be larger than one. In this case (i.e. assuming $t_{N_0}$ vanishes at $p$ and $w_{N,j}=w_{N_0,j}>1$), by applying the condition in (2) to any pair $A\subsetneq N_0$ we get that all $t_{A}$ cannot vanish at $p$. 
While the derivative of the component $x_{j:N}$ by $x_{j:\mathbf{hs}}=t_{N_0}$ vanishes, we claim that it now does not for the component $x_{j:N_0}$ (and this is sufficient to still have an immersion and thus locally an embedding). Calculating it using our favorite Corollary~\ref{cor:comp-coords} gives $x_{j:N_0}=\prod_{A\subseteq N_0} t_{A}$. Since none of the factors vanish, the derivative by $t_{N_0}$ does not either.

We are left to show that we have a singularity at $p$ whenever there exist $A\subsetneq B$ with $w_{B,\mathbf{h}(B)}>1$ and $t_A(p)=t_B(p)=0$. To see this, it is sufficient that $t_B=x_{\mathbf{h}(B):\mathbf{hs}}$ does not immerse into any of the component coordinates $x_{j:N}$ at $p$. Inspecting all the possible expressions for $x_{j:N}$ in Corollary~\ref{cor:comp-coords}, $t_B$ almost always appears raised to the power $w_{B,\mathbf{h}(B)}>1$, so cannot immerse when $t_B(p)=0$. It only appears linearly when $j=\mathbf{h}(N)$ for some $N\supseteq B$. All these $x_{j:N}$ are given by a product that also involves $t_A$. Since this vanishes at $p$, $t_B$ cannot immerse into these $x_{j:N}$ either and we are done.
\end{proof}

\startSubchaption{Morphisms of weighted building sets}\label{ssec:morphisms}

While one may consider different notions of morphism between weighted building sets, we give a fairly broad definition sufficient for our applications:

\begin{definition}\label{def:morphism}
    Let $(M,\cG,\cW)$ and $(\widetilde{M},\widetilde{\cG},\widetilde{\cW})$ be weighted building sets. A \textbf{morphism of weighted building sets} $$(\Phi,\phi):(M,\cG,\cW)\to(\widetilde{M},\widetilde{\cG},\widetilde{\cW})$$ is a smooth map $\Phi:M\to\widetilde{M}$ and a map $\phi:\widetilde{\cG}\to\cG$ such that for all $\widetilde{G}\in\widetilde{\cG}$, $\Phi$ is a weighted morphism between $(M,\cW_{\phi(\widetilde{G})})$ and $(\widetilde{M},\widetilde{\cW}_{\widetilde{G}})$.
\end{definition}

Given some $\Phi$, a natural candidate for $\phi$ is given by $\phi(\widetilde{G}):=\Phi^{-1}(\widetilde{G})$ if the latter lies in $\cG$. Yet in \Subchaption~\ref{ssec:filtered-bundle-fulton-macpherson} we will see in our application to configuration spaces of fiber bundles a case where a different $\phi$ appears naturally.

For this reason, we do not make a fixed choice of $\phi$. Instead, our definition captures what data we need to make the construction of an induced map between the blow-ups possible. The concrete role of $\phi$ is to pick out which component of the domain has to determine a given component of the codomain:

\begin{definition}
Let $(\Phi,\phi):(M,\cG,\cW)\to(\widetilde{M},\widetilde{\cG},\widetilde{\cW})$ be a morphism of weighted building sets and consider the canonical projections $$\pi_H:\prod_{G\in{\cG}}\Bl_{{\cW}_G}(M)\to\Bl_{{\cW}_H}(M)$$
for every $H\in{\cG}$. Define the open subset
$$\Bl_{(\Phi,\phi)}(M):= \bigcap_{\widetilde{G}\in\widetilde{\cG}} \pi_{\phi(\widetilde{G})}^*\Bl_{\Phi^{\widetilde{G}}}(M)\;\;\subseteq\;\; \Bl_{\cW}(M)$$
where $\Phi^{\widetilde{G}}$ denotes $\Phi$ when viewed as a weighted morphism between $(M,\cW_{\phi(\widetilde{G})})$ and $(\widetilde{M},\widetilde{\cW}_{\widetilde{G}})$, as well as
the \textbf{induced map}
\begin{align*}
    b_{(\Phi,\phi)}:\Bl_{(\Phi,\phi)}(M) &\to \Bl_{\widetilde{\cW}}(\widetilde{M})\\
    \{p_G\}_{G\in\cG} &\mapsto \{b_{\Phi^{\widetilde{G}}}(p_{\phi(\widetilde{G})})\}_{\widetilde{G}\in\widetilde{\cG}}.
\end{align*}
\end{definition}

This map is continuous by construction. When our main theorem yields a smooth structure on domain and codomain, we also have smoothness:

\begin{proposition}\label{prop:morphism-induced}
Let $(\Phi,\phi):(M,\cG,\cW)\to(\widetilde{M},\widetilde{\cG},\widetilde{\cW})$ be a morphism of weighted building sets. Assume that $\cG$ and $\widetilde{\cG}$ are separated and that $\cW$ and $\widetilde{\cW}$ are uniformly aligned. Then $b_{(\Phi,\phi)}$ is smooth with respect to the canonical structure of manifolds with corners on $\Bl_{(\Phi,\phi)}(M)\subseteq\Bl_{\cW}(M)$ and $\Bl_{\widetilde{\cW}}(\widetilde{\cW})$.
\end{proposition}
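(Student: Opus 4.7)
The statement is local on $\Bl_{(\Phi,\phi)}(M)$, so I fix $p \in \Bl_{(\Phi,\phi)}(M)$ and its image $\tilde p := b_{(\Phi,\phi)}(p) \in \Bl_{\widetilde{\cW}}(\widetilde M)$, and aim to verify smoothness of $b_{(\Phi,\phi)}$ on a neighbourhood of $p$. The overall strategy is to choose compatible good perspectives on both sides and then express the map in the resulting charts. On the codomain, Lemma~\ref{lem:perspective-covers} gives a good perspective $(\tilde\chi, \widetilde\cN, \widetilde{\mathbf h}, \widetilde{\mathbf s})$ covering $\tilde p$, which I may assume satisfies $\widetilde\cN = \widetilde\cG_{\tilde p}$. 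On the domain, I pick a good perspective $(\chi, \cN, \mathbf h, \mathbf s)$ covering $p$ with $\cN = \cG_p$, and observe that by construction $\phi(\widetilde\cN) \subseteq \cG_p \cup \{G \in \cG \mid b_\cW(p) \in G\}$, so that for each $\widetilde N \in \widetilde\cN$ there is a canonical domain element $\phi(\widetilde N)$ that controls the $\widetilde N$-component of the image.

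Since the blow-up charts $\tilde\chi_{\widetilde{\mathbf{hs}}}$ on the codomain are diffeomorphisms onto open subsets of $\R^m_{\widetilde{\mathbf h}}$, smoothness of $b_{(\Phi,\phi)}$ reduces to showing that each coordinate function $\tilde x_{i:\widetilde{\mathbf{hs}}} \circ b_{(\Phi,\phi)}$ is smooth. By Corollary~\ref{cor:comp-coords} applied on the codomain, each such coordinate is, up to a sign, either $\tilde x_{i:\widetilde N}$ for the unique $\widetilde N = \widetilde\mu(i) \in \widetilde\cN \cup \{\emptyset\}$, or a $\tilde w_{\widetilde N_0, i}$-th root of such a function, where $\widetilde N_0$ is the element of $\widetilde\cN$ with $i = \widetilde{\mathbf h}(\widetilde N_0)$. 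For coordinates of the first kind, smoothness is immediate: $\tilde x_{i:\widetilde N} \circ b_{(\Phi,\phi)} = \tilde x_{i:\widetilde N} \circ b_{\Phi^{\widetilde N}} \circ \pi_{\phi(\widetilde N)}$, and the individual induced map $b_{\Phi^{\widetilde N}}$ is smooth by Proposition~\ref{prop:induced-map-smooth}, while the projection $\pi_{\phi(\widetilde N)}$ is smooth by construction of the ambient product structure.

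The main obstacle, and the crux of the argument, lies in handling the coordinates of the second kind, namely the control parameters $\tilde t_{\widetilde N_0} = (\widetilde{\mathbf s}(\widetilde N_0) \tilde x_{\widetilde{\mathbf h}(\widetilde N_0):\widetilde\mu(\widetilde{\mathbf h}(\widetilde N_0))})^{1/\tilde w_{\widetilde N_0, \widetilde{\mathbf h}(\widetilde N_0)}}$ pulled back along $b_{(\Phi,\phi)}$. The naive composition introduces a fractional power, so I must show that the function under the root factors with sufficient multiplicity as a product of domain control parameters times a smooth nowhere-zero factor. To this end, I invoke the definition of a weighted morphism: $\Phi$ being a weighted morphism from $(M, \cW_{\phi(\widetilde N_0)})$ to $(\widetilde M, \widetilde\cW_{\widetilde N_0})$ means that every $f \in \widetilde\cW_{\widetilde N_0, j}$ satisfies $f \circ \Phi \in \cW_{\phi(\widetilde N_0), j}$. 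Applied to $f = \tilde x_{\widetilde{\mathbf h}(\widetilde N_0)}$, whose weight in $\widetilde\cW_{\widetilde N_0}$ is exactly $\tilde w_{\widetilde N_0, \widetilde{\mathbf h}(\widetilde N_0)}$, Proposition~\ref{prop:filtration-standard-weighting} together with the local model for $\cW_{\phi(\widetilde N_0)}$ in the chart $\chi$ gives the required factorization. Applying Corollary~\ref{cor:comp-coords} then converts this into an expression in the domain coordinates $\chi_{\mathbf{hs}}$ in which the domain control parameters $t_N$ for $N \in \cN$ with $\phi(\widetilde N_0) \subseteq N$ appear with exactly the total weight $\tilde w_{\widetilde N_0, \widetilde{\mathbf h}(\widetilde N_0)}$, making extraction of the root yield a smooth function.

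Finally, collecting these pieces shows that every coordinate of $\tilde\chi_{\widetilde{\mathbf{hs}}} \circ b_{(\Phi,\phi)} \circ \chi_{\mathbf{hs}}^{-1}$ is smooth on the relevant open subset of $\R^m_{\mathbf h}$, and the codomain values lie in $\R^{\widetilde m}_{\widetilde{\mathbf h}}$ because $b_{(\Phi,\phi)}$ is continuous and preserves the exceptional divisors in the correct sense. As $p$ was arbitrary, this establishes smoothness of $b_{(\Phi,\phi)}$ on all of $\Bl_{(\Phi,\phi)}(M)$.
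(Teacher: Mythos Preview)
Your setup mirrors the paper's, but both halves of your case split have gaps, and the second one is substantial.

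For the non-control coordinates you assert that $\pi_{\phi(\widetilde N)}$ is smooth ``by construction of the ambient product structure.'' This is not automatic: $\phi(\widetilde N)$ need not lie in the domain nest $\cN=\cG_p$, so Corollary~\ref{cor:comp-coords} does not describe that projection in the chart $\chi_{\mathbf{hs}}$, and the inclusion of $\Bl_\cW(M)$ into the ambient product is not a smooth embedding in general (Proposition~\ref{prop:weak-submfd-structure}). The paper's key move is exactly to repair this: using that $\cG_p$ is the control set, it produces $N\in\cG_p$ with $N\subseteq\phi(\widetilde N)$ and $p_N\in\Bl_{\phi(\widetilde N),N}(M)$, then rewrites the composite as $b_{(\Phi^{\widetilde N}\circ\Id)}\circ\pi_N$, where $\Id$ is viewed as a weighted morphism $(M,\cW_N)\to(M,\cW_{\phi(\widetilde N)})$. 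Now $\pi_N\circ\chi_{\mathbf{hs}}^{-1}$ is smooth by Corollary~\ref{cor:comp-coords} since $N\in\cN$, and the remaining factor is a single-submanifold induced map, smooth by Proposition~\ref{prop:induced-map-smooth}.

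The control-parameter step has a more serious problem. You propose to apply the weighted-morphism property to $f=\tilde x_{\mathbf{\tilde h}(\widetilde N_0)}$ and invoke Proposition~\ref{prop:filtration-standard-weighting} in the chart $\chi$. But the quantity under the root is $\tilde x_{i:\widetilde\mu(i)}$, a coordinate on the blow-up $\Bl_{\widetilde\cW_{\widetilde\mu(i)}}(\widetilde M)$, not the base function $\tilde x_i$ on $\widetilde M$; these agree only when $\widetilde\mu(i)=\emptyset$, i.e.\ when $\widetilde N_0$ is minimal in $\widetilde\cN$. Even in that case, $\chi$ is adapted to the weightings over $\cN$, not to $\cW_{\phi(\widetilde N_0)}$, so the local model of Proposition~\ref{prop:filtration-standard-weighting} is not available in these coordinates; and you give no argument why the cofactor after extracting the $t_N$ is nonvanishing, without which the root cannot be smooth. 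The paper avoids this entirely by not splitting into cases: it treats all $\tilde x_{i:\widetilde N}$ uniformly via the same factoring through $N\in\cG_p$, so that the root defining the single-blow-up control coordinate is absorbed into the smoothness of the induced map $b_{(\Phi^{\widetilde N}\circ\Id)}$, already established via Malgrange preparation in Lemma~\ref{lem:useful-transition-relation-phi} and Proposition~\ref{prop:induced-map-smooth}.
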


\begin{proof}
Take some point $p\in\Bl_{(\Phi,\phi)}(M)$ and its image $\tilde{p}:=b_{(\Phi,\phi)}(p)$ and find good perspectives such that $p\in U_{\chi\cG_p\mathbf{hs}}$ and $\tilde{p}\in U_{\tilde{\chi}\widetilde{\cG}_{\tilde{p}}\mathbf{\tilde{h}\tilde{s}}}$ for charts $\chi=(x_1,...,x_m)$ and $\tilde{\chi}=(\tilde{x}_1, ..., \tilde{x}_m)$. We have to show for all $\tilde{N}\in\widetilde{\cG}_{\tilde{p}}$ and $i\in\{1,...,m\}$ that
$$\tilde{y}_i:=\tilde{x}_{i:\tilde{N}}\circ b_{(\Phi,\phi)}\circ \chi^{-1}_{\mathbf{hs}}(y_1,...,y_m)\qquad\text{for }i=1,...,m$$ depends smoothly on $(y_1,...,y_m)$ close to $\chi_{\mathbf{hs}}(p)$. We know that there is an $N\in\cG_p$ such that $N\subseteq\phi(\tilde{N})$ and $p_N\in\Bl_{\phi(\tilde{N}),N}(M)$ since $\cG_p$ is a control set. Let $\tilde{s}=\mathbf{\tilde{s}}(\tilde{N})$ and $\tilde{h}=\mathbf{\tilde{h}}(\tilde{N})$ and write
\begin{align*}
\tilde{y}_i
&= \tilde{x}^{(\widetilde{\cW}_{\tilde{N}})}_{i:\tilde{h}\tilde{s}}\circ \pi_{\tilde N}\circ b_{(\Phi,\phi)} \circ \chi^{-1}_{\mathbf{hs}}(y_1,...,y_m)\\
&= \tilde{x}^{(\widetilde{\cW}_{\tilde{N}})}_{i:\tilde{h}\tilde{s}}\circ b_{\Phi^{\tilde{N}}}\circ \pi_{\phi(\tilde{N})} \circ \chi^{-1}_{\mathbf{hs}}(y_1,...,y_m)\\
&= \tilde{x}^{(\widetilde{\cW}_{\tilde{N}})}_{i:\tilde{h}\tilde{s}}\circ b_{\Phi^{\tilde{N}}}\circ b_{\phi(\tilde{N}),N}\circ \pi_{N} \circ \chi^{-1}_{\mathbf{hs}}(y_1,...,y_m)\\
&= \tilde{x}^{(\widetilde{\cW}_{\tilde{N}})}_{i:\tilde{h}\tilde{s}}\circ b_{\left(\Phi^{\tilde{N}}\circ\Id\right)}\circ \pi_{N} \circ \chi^{-1}_{\mathbf{hs}}(y_1,...,y_m)
\end{align*}
where the identity is understood as a weighted morphism $\Id:(M,\cW_N)\to(M,\cW_{\phi(\tilde{N})})$.

Finally we define the shorthand
$$z_l:=x_{l:N}\circ\chi^{-1}_{\mathbf{hs}}(y_1,...,y_m) \qquad\text{for }l=1,...,m.$$
We have now arranged it so that the dependence of each $z_l$ on $(y_1,...,y_m)$ is smooth by Corollary~\ref{cor:comp-coords} and the dependence of each $\tilde{y}_i$ on $(z_1,...,z_m)$ is smooth by Proposition~\ref{prop:induced-map-smooth} as the local representation of an induced map.
\end{proof}

\newpage
\section{Configuration spaces for filtered manifolds}\label{sec:filtered-manifolds}
As an application of blow-ups along weighted building sets we present a blown-up configuration space analogous to that of Fulton and MacPherson, but adapted to a filtered structure on a manifold.

In \Subchaption{}s~\ref{ssec:filtered-submanifolds} and~\ref{ssec:weighting-filtered-submfd} we recall the definition of filtered manifolds and discuss how submanifolds adapted to this filtration induce weightings. In \Subchaption~\ref{ssec:fulton-macpherson} we introduce the Fulton-MacPherson building set on $M^s$ in detail, and in \Subchaption{}~\ref{ssec:fulton-macpherson-weights} we equip this building set with weightings induced by the filtered structure on $M$. In \Subchaption{}~\ref{ssec:filtered-fulton-macpherson} we discuss the resulting weighted blow-up. Finally, we conclude in \Subchaption~\ref{ssec:filtered-bundle-fulton-macpherson} with a discussion on how this construction should be adapted when the original space is the total space of a fiber bundle to prepare for a future application to jet bundles.

\startSubchaption{Filtered manifolds}\label{ssec:filtered-submanifolds}

\begin{definition}[\cite{Mo93}]\label{def:filtered-manifold}
    A \textbf{filtered manifold} of \textbf{depth $r\in\N$} is a manifold $M$ equipped with a \textbf{Lie filtration}\index{Lie filtration}, i.e. a filtration of $TM$ by subbundles
    $$ TM= H_{-r} \supseteq ... \supseteq H_{-1}\supseteq H_0 = 0_M $$
    such that the Lie bracket of two sections of $H_{-i}$ and $H_{-j}$, respectively, lands in $H_{-i-j}$.
\end{definition}

\begin{examples}\ 
\begin{enumerate}
\item The trivial filtration on $M$ is given by $TM=H_{-1}\supseteq H_0=0_M$.

\item A distribution $\cD$ is \textit{equiregular} if taking Lie brackets generates a non-decreasing sequence of subbundles of $TM$, and \textit{bracket-generating} if they eventually span all of $TM$. An equiregular bracket-generating distribution thus turns $M$ into a filtered manifold with $H_{-1}=\cD$. Contact, even-contact and Engel manifolds are particular examples. Even if $\cD$ is not bracket-generating, we by convention define a filtered structure on $M$ by adding $TM$ as the highest degree of the filtration after $\cD$ stabilizes.
\item We want to point out a concrete example that may be of particular interest: A \textit{constant-rank sub-Riemannian structure} on a manifold $M$ is a distribution $\cD\subseteq TM$ equipped with a smooth family of scalar products (see e.g.~\cite{ABB19}). If it is equiregular, we again obtain a filtered structure. 
\item For manifolds $M$ and $N$, the \textit{jet bundle} $J^k(M,N)$ of order $k\in\N_0$ canonically carries a regular bracket-generating distribution, the \textit{Cartan distribution}. It is spanned by the tangent spaces to all holonomic sections. Defining a blown-up configuration space of jets is our motivation for this theory and will be discussed in a future article~\cite{future}.
\item Filtered manifolds come up in the study of operators that are elliptic in a weighted sense.
A central role is played by the tangent groupoid of a filtered manifold, which was constructed in generality by van Erp and Yuncken~\cite{EY17}. It can be seen as the deformation to the normal cone analog to our configuration space in the special case of pairs ($s=2$).\qedhere
\end{enumerate}
\end{examples}

The following definition will be useful for both Lie and more general vector bundle filtrations:

\begin{definition}
    For any filtration $F_{-r}\supsetneq ... \supsetneq F_0 = 0_M$ of vector bundles over a manifold $M$, we define its \textbf{associated graded bundle} as the vector bundle
    $$\gr(F_\bullet) := F_{-1} \oplus  F_{-2}/F_{-1}\oplus ... \oplus F_{-r}/F_{-r+1}.$$
    This is trivially a graded vector bundle in the sense of Grabowski-Rotkiewicz by letting scalars $\lambda\in\R$ act on each degree $\gr(F_\bullet)_{-i}:= F_{-i}/F_{-i+1}$ by multiplying with $\lambda^{i}$. We say the \textbf{weight sequence} of $F_\bullet$ is the sequence
    $$ (w_1, ..., w_{\rk F_{-r}}) = (1, ..., 1, 2, ..., 2, ..., r, ..., r)  $$
    in which each natural number $n$ is repeated $\dim\gr(F_\bullet)_{-n}$ times.
\end{definition}

For a Lie filtration, the Lie bracket equips the fibers of the associated graded bundle with the structure of a Lie algebra.

\startSubchaption{Weighting induced by a filtered submanifold}\label{ssec:weighting-filtered-submfd}

In a sense, a filtration provides weights to \textit{all} directions, \textit{everywhere} in the manifold. For submanifolds $N$ adapted to the filtration, we recall in this subsection that this data naturally restricts to a weighting along $N$ by considering only the weights normal to $N$. This was established by Loizides and Meinrenken for the more general \textit{singular} Lie filtrations in their follow-up paper~\cite{LM22}.
While our construction of a blown-up configuration space should generalize to their setting, we restrict our attention here to the more concrete filtered manifolds.

We first recall what it means for a submanifold to be adapted to a Lie filtration:

\begin{definition}\label{def:filtered-submanifold}
    A \textbf{filtered submanifold} $N$ of a filtered manifold $(M,H_\bullet)$ is an embedded submanifold such that the quotients $$F_{-i}:=\left(H_{-i}|_N+TN\right)/TN$$ have constant rank and thus form filtration of the normal bundle $\nu N$ by subbundles. The \textbf{weight sequence} of $N$ is defined to be the weight sequence of $F_\bullet$.
\end{definition}

Note that our definition of a filtered submanifold is equivalent to the more common condition that the intersections $H_i\cap TN$ have constant rank, which is used e.g. in~\cite{HH18} and under the name \textit{equiregular submanifold} in~\cite{Gr96}. By construction, we have the relation
\begin{equation}\label{eq:filtration-quotient}
\gr(F_\bullet) = \gr(H_\bullet)|_N \,/\, TN,
\end{equation}
where the quotient on the right is performed at every degree.

A Lie filtration gives a natural notion of functions that vanish to a given order on a filtered submanifold:

\begin{definition}\label{def:filtered-submfd-weighting}
Given a filtered submanifold $N$ of $(M,H_\bullet)$, we define a filtration $\cW_{H_\bullet, N, \bullet}$ of $C^\infty(M)$ by setting
\begin{align*}
\cW_{H_\bullet, N, k}=\{f\in C^\infty(M) \;|\;  &X_1...X_l f|_N=0 \text{ for all } l\in\N, \alpha\in\N^l \\
&X_i\in\Gamma(H_{-\alpha_i}) \text{ with } |\alpha|< k \}
\end{align*}
and define the corresponding set $\cW_{H_\bullet,N}\subseteq T^{(\infty)}M$ as in Equation~\eqref{eq:weighting-from-filtration}.
\end{definition}

This is indeed a weighting:

\begin{theorem}[see Theorem 4.1 of~\cite{LM22}]\label{thm:filtered-is-weighting}
Let $(M,H_\bullet)$ be a filtered manifold of depth $r$ and $N$ a filtered submanifold. Then the set $\cW_{H_\bullet,N}\subseteq T^{(\infty)}M$ is a weighting of order $r$ for a weight sequence $(w_1, ..., w_m)$ consisting of $\dim N$ zeros followed by the weight sequence of $N$.

For any adapted coordinates $(z_1, ..., z_m)$ for these weights, it holds that
    \begin{equation}\label{eq:hh-coords}
H_{-i}|_n+T_nN = \{v\in T_n M \;|\; dz_j(v) = 0 \text{ for all $j$ with } w_j>i\}.     
    \end{equation}
\end{theorem}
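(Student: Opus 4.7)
The plan is to construct explicit adapted coordinates around each point $p \in N$ using a frame that respects both the submanifold and the Lie filtration, and then verify compatibility with the structure filtration $\cW_{H_\bullet, N, \bullet}$ using the local model from Proposition~\ref{prop:filtration-standard-weighting}. Since the claim is local, I would work in a neighbourhood of an arbitrary $p \in N$ (outside $N$ the set $\cW_{H_\bullet,N}$ restricts to $T^{(\infty)}U$ away from $\supp\cW$, where any chart is trivially adapted).

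First, I would choose a local frame $X_1, \ldots, X_m$ of $TM$ as follows. Let $d := \dim N$; pick $X_1, \ldots, X_d$ spanning $TN$ (all with intended weight zero). For $j > d$, select $X_j \in \Gamma(H_{-w_j})$ such that the classes $[X_{d+1}], \ldots, [X_m] \in \nu N$ form a graded basis of $F_\bullet$: those with $w_j \leq i$ span $F_{-i} = (H_{-i}|_N + TN)/TN$. Such a frame exists by the filtered submanifold hypothesis together with a choice of complements degree by degree in $\gr(H_\bullet)|_N / TN = \gr(F_\bullet)$. I would then define coordinates $(z_1, \ldots, z_m)$ around $p$ by combining a submanifold chart $\sigma$ for $N$ with an iterated exponential:
$$\varphi(z_1, \ldots, z_m) := \exp(z_m X_m) \circ \cdots \circ \exp(z_{d+1} X_{d+1})\bigl(\sigma(z_1, \ldots, z_d)\bigr).$$
By construction $N$ is cut out by $z_{d+1} = \cdots = z_m = 0$, and on $N$ the coordinate vector field $\partial_{z_j}$ agrees with $X_j$, so $\partial_{z_j}(n) \bmod T_nN \in F_{-w_j}(n)$ for $n\in N$.

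The main step is to verify that these coordinates are adapted for the weight sequence $(0, \ldots, 0, w_{d+1}, \ldots, w_m)$. By Proposition~\ref{prop:filtration-standard-weighting} (and the equivalence between a weighting and its structure filtration), this reduces to showing
$$\cW_{H_\bullet, N, k} = \langle z^\alpha \mid \alpha \cdot w \geq k \rangle$$
locally. The inclusion $\supseteq$ follows from a direct computation: iterated derivations $X_{i_1} \cdots X_{i_l} z^\alpha$ with $\sum_s w_{i_s} < k \leq \alpha \cdot w$ always leave a positive power of some $z_j$ with $j > d$ after all differentiations (once one uses that in coordinates $X_{i_s} = \partial_{z_{i_s}} + \sum_{w_k > w_{i_s}} c_{i_s k}(z) \partial_{z_k}$ with coefficients $c_{i_s k}$ themselves vanishing on $N$, a consequence of $X_{i_s} \in \Gamma(H_{-w_{i_s}})$ together with the coordinate description of $H_{-w_{i_s}}|_N$ induced by our frame). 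The reverse inclusion $\subseteq$ is the hard part and is the main obstacle: given $f$ in the left-hand side, expand it into a weighted Taylor series $f = \sum_\alpha z^\alpha f_\alpha(z_1, \ldots, z_d) + \text{remainder}$ along $N$ and show by induction on $k$ that every $f_\alpha$ with $\alpha \cdot w < k$ must vanish. The key input is that suitable iterated differentiations $X_{i_1}\cdots X_{i_l}$ with $\sum w_{i_s} = \alpha \cdot w$ extract $\alpha! \, f_\alpha|_N$ modulo contributions that are lower-order by induction; here the Lie filtration property $[\Gamma(H_{-a}), \Gamma(H_{-b})] \subseteq \Gamma(H_{-a-b})$ ensures that all commutator corrections have compatible weighted order and do not spoil the argument.

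Finally, for Equation~\eqref{eq:hh-coords}, the inclusion $\subseteq$ is immediate from our coordinate construction: if $v \in H_{-i}|_n + T_nN$, then $v$ is a combination of $T_nN$ and basis vectors $\partial_{z_j}(n)$ with $w_j \leq i$, so $dz_j(v) = 0$ whenever $w_j > i$. The reverse inclusion follows by a dimension count, since $\dim(H_{-i}|_n + T_nN) = d + \#\{j > d : w_j \leq i\}$ by the definition of the weight sequence of $N$, which matches the dimension of the right-hand side. Note that while the first part of the theorem was proven using a specific choice of adapted coordinates, the formula~\eqref{eq:hh-coords} must then hold for \emph{any} adapted coordinates: the flag $\{\partial_{z_j}(n) : w_j \leq i\} \bmod T_nN$ of subspaces of $\nu_nN$ is intrinsic to the weighting by the local model, and thus coincides with $F_\bullet$ regardless of the choice.
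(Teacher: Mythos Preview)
The paper does not give its own proof of this theorem; it is stated with attribution to \cite{LM22}, and the subsequent Remark only adds that in the regular case the result ``essentially already follows from a normal form theorem due to Haj--Higson \cite[Lemma~7.5]{HH18}''. Your iterated-exponential construction is exactly that normal-form approach, so you are fleshing out the route the paper points to rather than diverging from it.

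That said, the step you correctly flag as the main obstacle is not resolved by your sketch as written. The triangular form $X_{i_s} = \partial_{z_{i_s}} + \sum_{w_k > w_{i_s}} c_{i_s k}\,\partial_{z_k}$ with $c_{i_s k}|_N = 0$ is not a consequence of $X_{i_s} \in \Gamma(H_{-w_{i_s}})$ together with $\partial_{z_j}|_N = X_j|_N$: those facts only give that $X_{i_s} - \partial_{z_{i_s}}$ vanishes along $N$, with no control on \emph{which} coordinate directions appear in the discrepancy away from $N$. What one actually needs is that each $\partial_{z_j}$ has weighted order at most $w_j$ as a derivation on the ideal filtration, and establishing this requires tracking the BCH-type corrections through the nested exponentials using the bracket compatibility $[\Gamma(H_{-a}),\Gamma(H_{-b})]\subseteq\Gamma(H_{-a-b})$. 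You invoke that condition at the end, so the right ingredients are all present; but the inductive bookkeeping over the iterated flows is the actual substance of the Haj--Higson/Choi--Ponge argument and cannot be absorbed into a parenthetical.
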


\begin{remark}
To connect to the language of~\cite{LM22}, recall that $H_\bullet$ induces a singular Lie filtration $$\fX(M)=\cH_{-r}\supseteq...\supseteq \cH_{-1}\supseteq\cH_0=0$$
of the sheaf $\fX(M)$ of vector fields on $M$ by setting $\cH_{-i}:=\{X\in\fX(M)\;|\; X(M)\subseteq H_{-i}\}$. Their notion of $N$ being \textit{$\cH$-clean} generalizes the filtered submanifold condition.

In the case of regular Lie filtrations, the statement essentially already follows from a normal form theorem due to Haj-Higson~\cite[][Lemma 7.5]{HH18}, based on earlier work by Choi-Ponge~\cite{CP19}. This establishes that for every frame adapted to the filtered submanifold, we can find a coordinate system that is adapted to the submanifold and the normal directions of the frame, with a sensible behavior at higher orders.
\end{remark}

The linearized weighted normal bundle of this weighting can be characterized conveniently:

\begin{lemma}\label{lem:weighted-normal-is-graded}
For a closed filtered submanifold $N$ of $(M,H_\bullet)$ let $F_\bullet$ be as in Definition~\ref{def:filtered-submanifold}. It holds canonically that
$$ \nu_\mathrm{lin}{\cW_{H_\bullet,N}} \simeq \gr(F_\bullet) $$
as graded vector bundles over $N=\supp\cW_{H_\bullet, N}$.
\end{lemma}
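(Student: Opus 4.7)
The plan is to establish the stronger pointwise equality $\nu_{(-i)}\cW_{H_\bullet,N} = F_{-i}$ as subbundles of $\nu N$ for every $i\geq 0$, and then pass to the associated graded bundle. Since both sides are defined invariantly, this will automatically produce a canonical isomorphism of graded bundles.

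First, I will identify the conormal filtration of $\cW_{H_\bullet,N}$ with the annihilator of $F_\bullet$. Concretely, I claim
$$d\cW_{H_\bullet,N,i+1}|_N \;=\; F_{-i}^\circ \;\subseteq\; \nu^*N.$$
The inclusion $\subseteq$ is elementary: any $f\in\cW_{H_\bullet,N,i+1}$ vanishes on $N$ (apply the defining condition with $l=0$), so $df|_N$ kills $TN$ and lies in $\nu^*N$. Moreover, applying the defining condition with $l=1$ and $\alpha=(i)$ (note $|\alpha|=i<i+1$) gives $Xf|_N=0$ for every $X\in\Gamma(H_{-i})$. Hence $df|_N$ kills $H_{-i}|_N$ as well, so it annihilates $H_{-i}|_N+TN$ and therefore annihilates $F_{-i}$ after passing to the quotient. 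For the reverse inclusion $\supseteq$, I use the adapted coordinates $(z_1,\dots,z_m)$ furnished by Theorem \ref{thm:filtered-is-weighting}, whose weight sequence $(w_1,\dots,w_m)$ consists of $\dim N$ zeros followed by the weight sequence of $N$. On one hand, Paragraph \ref{props-conormal-filtration}(1) gives $d\cW_{H_\bullet,N,i+1}|_n = \langle d_n z_j \mid w_j\geq i+1\rangle$. On the other hand, Equation \eqref{eq:hh-coords} expresses $H_{-i}|_n+T_nN$ as the common kernel of $\{d_n z_j\mid w_j>i\}$, and a dimension count shows that these covectors in fact span $F_{-i}^\circ$ inside $\nu^*N|_n$. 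The two spanning sets coincide, so the two subspaces are equal.

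Next, I take annihilators in $\nu N$. The normal filtration is by definition $\nu_{(-i)}\cW_{H_\bullet,N}|_n = \{[v]\in\nu N|_n\mid \omega(v)=0\ \forall\omega\in d\cW_{H_\bullet,N,i+1}|_n\}$, and the first step identifies this with $(F_{-i}^\circ)^\circ = F_{-i}$ by the usual double-annihilator in finite-dimensional duality between $\nu N|_n$ and $\nu^*N|_n$. Consequently, passing to the associated graded bundle,
$$\nu_\text{lin}\cW_{H_\bullet,N} \;=\; \gr(\nu_{(-\bullet)}\cW_{H_\bullet,N}) \;=\; \gr(F_\bullet),$$
with matching gradings because the two filtrations are indexed in the same way. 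Since every step in this chain is invariantly defined, the resulting isomorphism is canonical.

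The main obstacle is the reverse inclusion in the identification $d\cW_{H_\bullet,N,i+1}|_N=F_{-i}^\circ$: the forward inclusion is forced by the very definition of the structure filtration, but equality genuinely uses the coordinate description of the weighting supplied by Theorem \ref{thm:filtered-is-weighting} (which in the regular case ultimately rests on the Haj-Higson normal form). Once this equality is in hand, the remainder of the argument is purely formal duality.
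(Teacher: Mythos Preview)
Your proof is correct and takes essentially the same approach as the paper: both arguments identify the isomorphism as the identity on representatives and verify it in adapted coordinates by matching the local description of the (co)normal filtration from Paragraph~\ref{props-conormal-filtration}(1) against Equation~\eqref{eq:hh-coords} from Theorem~\ref{thm:filtered-is-weighting}. The only cosmetic difference is that you route the comparison through the conormal side (showing $d\cW_{H_\bullet,N,i+1}=F_{-i}^\circ$ and then taking annihilators), whereas the paper compares the graded pieces directly via Paragraph~\ref{props-linearized-bundles}(1); these are dual descriptions of the same equality $\nu_{(-i)}\cW_{H_\bullet,N}=F_{-i}$.
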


\begin{proof}
We claim that the desired isomorphism is induced by the identity on representatives, i.e. by sending any given $[v]\in \nu_\mathrm{lin}{\cW_{H_\bullet,N}}$ for $v\in TM|_{N}$ to $[v]\in \gr(F_\bullet)$. Well-definition and bijectivity can be checked locally for any choice $\{x_1,...,x_m\}$ of local coordinates on $M$ that are adapted to $N$ as a filtered submanifold (and thus also adapted to $\cW_{H_\bullet,N}$) by comparing the expressions in Equation~\eqref{eq:hh-coords} and in Paragraph~\ref{props-linearized-bundles}(1).
\end{proof}

By Paragraph~\ref{props-linearized-bundles}(3), this also means that 
\begin{equation}\label{eq:noncanonical-weighted-normal}
\nu{\cW_{H_\bullet,N}} \simeq \gr(F_\bullet)
\end{equation}
holds, but this may depend on a choice of coordinates.

One consequence of this is a simplified check for adaptation of coordinates:

\begin{proposition}\label{prop:adapted-to-filtered-sub}
Let $N$ be a filtered submanifold of $(M,H_\bullet)$ with the weight sequence $(w_1, ..., w_{\codim N})$. Consider coordinates $$\chi=(y_1,...,y_{\dim N}, z_1, ..., z_{\codim N}):U\to \R^m$$ of $M$ such that the following holds:
\begin{enumerate}
    \item For all $j\in\{1, ..., m\}$, we have $z_j\in\cW_{H_\bullet, N, w_j}.$
    \item For all $i\in\{0, ..., r\}$ and $n\in N\cap U$ we have
    \begin{equation*}
H_{-i}|_n+T_nN = \{v\in T_n M \;|\; dz_j(v) = 0 \text{ for all $j$ with } w_j>i\}.
    \end{equation*}
\end{enumerate}
Then these coordinates are adapted to $\cW_{H_\bullet,N}$ with weights zero and $w_j$ for each $y_j$ and $z_j$, respectively.
\end{proposition}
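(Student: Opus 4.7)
The strategy is to introduce the auxiliary weighting $\widetilde{\cW}$ on $U$ defined so that $\chi$ is adapted to it with the claimed weights, i.e., the pullback of the standard weighting $\cE^{(0,\ldots,0,w_1,\ldots,w_{\codim N})}$ along $\chi$. The proposition then amounts to showing $\cW_{H_\bullet,N}\cap T^{(\infty)}U=\widetilde{\cW}$. Since adaptedness is a local condition on $M$, we may freely shrink $U$ around a chosen point of $N\cap U$ and assume $N\cap U$ (and hence both weightings, as fiber bundles over it) to be connected.

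The containment $\cW_{H_\bullet,N}\cap T^{(\infty)}U\subseteq\widetilde{\cW}$ follows directly from condition~(1) together with Definition~\ref{def:structure-filtration}: since $z_j\in\cW_{H_\bullet,N,w_j}$, the lift $z_j^{(i)}$ vanishes on $\cW_{H_\bullet,N}$ for every $i<w_j$, which is exactly the cut-out condition defining $\widetilde{\cW}$.

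To upgrade this inclusion to equality, I would invoke Lemma~\ref{lem:linear-check}, which reduces the task to verifying that the canonical map $\nu_{\text{lin}}\cW_{H_\bullet,N}|_U\to\nu_{\text{lin}}\widetilde{\cW}$ is an isomorphism. This is where condition~(2) enters. Combining Paragraph~\ref{props-conormal-filtration}(1) with Equation~\eqref{eq:hh-coords} applied in any chart already known to be adapted to $\cW_{H_\bullet,N}$ (whose existence is guaranteed by Theorem~\ref{thm:filtered-is-weighting}), one identifies the normal filtration $\nu_{(-i)}\cW_{H_\bullet,N}$ intrinsically with the subbundle $F_{-i}=(H_{-i}|_N+TN)/TN$ of $\nu N$. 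Condition~(2) then rewrites $F_{-i}|_n$ as $\langle[\partial_{z_j}]\mid w_j\leq i\rangle$ in the chart $\chi$ provided by the proposition, while Paragraph~\ref{props-conormal-filtration}(1) gives the same local description for $\nu_{(-i)}\widetilde{\cW}|_n$ by construction of $\widetilde{\cW}$. Hence the two normal filtrations coincide on $U$, their linearizations are equal, and the canonical map is the identity, allowing Lemma~\ref{lem:linear-check} to conclude the proof.

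The main technical subtlety is the intrinsic identification $\nu_{(-i)}\cW_{H_\bullet,N}=F_{-i}$ of subbundles of $\nu N$, rather than merely the isomorphism of their associated graded bundles asserted in Lemma~\ref{lem:weighted-normal-is-graded}. This equality is implicit in the proof of that lemma but must be extracted by a brief local computation in some chart already adapted to $\cW_{H_\bullet,N}$; once it is in place, condition~(2) of the proposition reexpresses the same bundle in the given chart $\chi$ without presupposing that $\chi$ is itself adapted, closing the circle.
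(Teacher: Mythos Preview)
Your proposal is correct and follows essentially the same route as the paper: define the auxiliary weighting $\widetilde{\cW}$ from the given chart, use condition~(1) to get the inclusion $\cW_{H_\bullet,N}\subseteq\widetilde{\cW}$, and then invoke Lemma~\ref{lem:linear-check} together with the identification of the normal filtration with $F_\bullet$ (via Lemma~\ref{lem:weighted-normal-is-graded} and Paragraph~\ref{props-conormal-filtration}(1)) to conclude equality from condition~(2). Your write-up is a bit more explicit than the paper's about the subtlety that one needs the actual equality $\nu_{(-i)}\cW_{H_\bullet,N}=F_{-i}$ of subbundles of $\nu N$, not just the graded isomorphism stated in Lemma~\ref{lem:weighted-normal-is-graded}; the paper absorbs this into the phrase ``comes down exactly to condition~(2)''.
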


\begin{proof}
Let $\cW$ be the weighting induced by equipping $\chi$ with the given weights. Our goal is to show that, locally, $\cW=\cW_{H_\bullet,N}$. By condition (1) and Definition~\ref{def:filtered-submfd-weighting} we see straight away that $\cW_k\subseteq\cW_{H_\bullet,N,k}$ or equivalently $\cW\supseteq\cW_{H_\bullet,N}$. We can thus check equality of the weightings using just the linear data by Lemma~\ref{lem:linear-check}. But due to Lemma~\ref{lem:weighted-normal-is-graded} and the coordinate expression for the normal filtration from Paragraph~\ref{props-conormal-filtration}(1), this comes down exactly to condition (2).
\end{proof}

\startSubchaption{The Fulton-MacPherson building set}\label{ssec:fulton-macpherson}

We interpret $M^s$ as an ordered configuration space of $s$ points in $M$ that allows collisions.
The diagonals of $M^s$ form the primordial example $\cG^{FM}$ of a building set, which we will recall and discuss in this \subchaption. While this already underlies the construction of Fulton and MacPherson in~\cite{FM94}, the conception as a building set originates in~\cite{CP95,Li09}. We introduce the convenient notion of \textit{factorization} $\operatorname{fac}\cP$ and provide examples and self-contained proofs of the central properties of $\cG^{FM}$. Purely for convenience, we assume here that $M$ is connected (compare~\ref{remark:building-set-def}).

\begin{definition}\label{def:filtered-building-set}
For any natural number $s\geq1$ define the indexing set $$\cI:=\{I\subseteq\upto{s} \;|\; |I|\geq2\}$$
and for any $I\in\cI$ the \textbf{diagonal} $$\Delta_I=\{(p_1,...,p_s)\in M^s \;|\; p_i=p_j\text{ for all }i,j\in I\}.$$
Together, they form the \textbf{Fulton-MacPherson building set} $$\cG^{FM}:= \{\Delta_I\;|\; I\in\cI\}.$$
\end{definition}

As a reminder, $\upto{s}:=\{1,...,s\}$. We want to point out that when $|M|>1$,
\begin{equation}\label{eq:nesting-indices}
    I_1\subsetneq I_2 \iff \Delta_{I_1}\supsetneq\Delta_{I_2}
\end{equation}
holds. We will need some more notation and language:

\begin{definition} Let $\cP$ be a subset of $\cI$.
\begin{enumerate}
\item We define the \textbf{polydiagonal} $\Delta_\cP:=\cap_{I\in\cP}\Delta_I.$ By convention, $\Delta_\emptyset=M^s.$
\item We say $\cP$ is \textbf{connected} if any two numbers in $\cup\cP$ can be connected by a sequence $a_1, ..., a_k$ in $\{1, ..., s\}$ satisfying the following: For any two successive elements $a_i,a_{i+1}$ there is an element of $\cP$ containing them.
\item We define the \textbf{factorization} $\fac\cP\subseteq\cI$ of $\cP$ to consist of the unions of the largest connected subsets of $\cP$.
\end{enumerate}
\end{definition}

Clearly all elements of $\Arr_{\cG^{FM}}$ can be written as $\Delta_\cP$, but not necessarily uniquely. We can view $\fac\cP$ as a canonical choice that results by merging all elements of $\cP$ that have any overlap. Before showing that $\cG^{FM}$ is indeed a building set as discussed in Section~\ref{sec:building-sets}, we offer some examples:

\begin{figure}
    \centering
    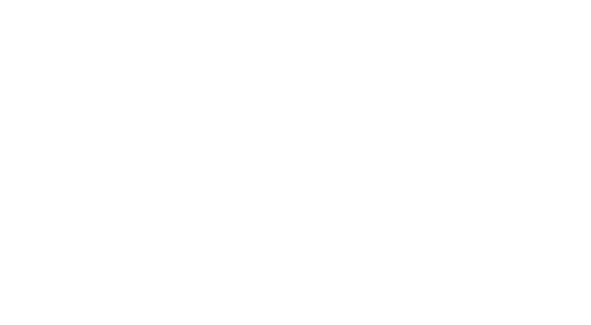
    \caption{The elements of $\cG^{FM}=\{ \textcolor{olive}{\Delta_{23}}, \textcolor{ForestGreen}{\Delta_{12}}, \textcolor{purple}{\Delta_{13}}, \Delta_{123} \}$ depicted for $s=3$ and $M=(0,1)$.}
    \label{fig:diagonals}
\end{figure}

\begin{examples}\label{ex:fm-building-set}\ 
\begin{enumerate}
\item For $s=1$, $\cG^{FM}=\emptyset$ is just the trivial building set.
\item For $s=2$, $\cG^{FM}=\{\Delta_{\{1,2\}}\}$ contains one unique diagonal.
\item For $s=3$, the arrangement $\Arr_{\cG^{FM}}$ induced by $\cG^{FM}$ is the following lattice:
\begin{equation*}
\begin{tikzpicture}
\node[shape=rectangle, draw=black] (123) at (0,0) {123};
\node[shape=rectangle, draw=black] (12) at (-2,1.3) {12} edge (123);
\node[shape=rectangle, draw=black] (23) at (0,1.3) {23} edge (123);
\node[shape=rectangle, draw=black] (13) at (2,1.3) {13} edge (123);
\node (root) at (0,2.6) {$M^3$} edge (12) edge (23) edge (13);
\end{tikzpicture}
\end{equation*}
For readability, here and in any concrete examples that follow we label diagonals $\Delta_I$ with the string of digits in $I$ so that e.g. $12=\Delta_{\{1,2\}}$. We will always omit any edges that are implied by transitivity and put elements of the building set in boxes (as opposed to elements of the full arrangement).

When taking $M$ to be the open interval $(0,1)$, the diagonals are depicted in Figure~\ref{fig:diagonals}. The intersection $$12\cap23\cap13=123$$ of three surfaces in a line is not like an intersection of coordinate subspaces and certainly not transversal. However, this only means that $\cG':=\{\Delta_{12},\Delta_{23},\Delta_{13}\}$ is not a separated building set: Since $\{\Delta_{123}\}$ is also included in $\cG^{FM}$, it is its only $\cG^{FM}$-factor and thereby trivially a transverse intersection.
\item At $s=4$, for the first time $\cG^{FM}$ is not a closed building set, i.e. the induced arrangement $\Arr_{\cG^{FM}}$ is strictly larger than $\cG^{FM}\cup\{M\}$. For example, the polydiagonal $$12\cap34=\Delta_{\{\{1,2\}, \{3,4\}\}}$$ is not a diagonal. The lattice $\Arr_{\cG^{FM}}$ quickly grows in complexity:
\begin{equation*}
\begin{tikzpicture}
\node[shape=rectangle, draw=black] (1234) at   (0,  0) {1234} edge (1234);
\node[shape=rectangle, draw=black] (234) at  (1.6,1.5) {234} edge (1234);
\node[shape=rectangle, draw=black] (134) at  (4.8,1.5) {134} edge (1234);
\node[shape=rectangle, draw=black] (124) at (-1.6,1.5) {124} edge (1234);
\node[shape=rectangle, draw=black] (123) at (-4.8,1.5) {123} edge (1234);
\node (12x34) at (-3.2,1.5) {$12\cap34$} edge (1234);
\node (13x24) at (0,1.5) {$13\cap24$} edge (1234);
\node (14x23) at (3.2,1.5) {$14\cap23$} edge (1234);
\node[shape=rectangle, draw=black] (12) at (-5,3.5) {12} edge (123) edge (124) edge (12x34);
\node[shape=rectangle, draw=black] (23) at (-3,3.5) {23} edge (123) edge (234) edge (14x23);
\node[shape=rectangle, draw=black] (24) at (-1,3.5) {24} edge (124) edge (234) edge (13x24);
\node[shape=rectangle, draw=black] (13) at  (1,3.5) {13} edge (123) edge (134) edge (13x24);
\node[shape=rectangle, draw=black] (34) at  (3,3.5) {34} edge (134) edge (234) edge (12x34);
\node[shape=rectangle, draw=black] (14) at  (5,3.5) {14} edge (124) edge (134) edge (14x23);
\node (root) at (0, 5) {$M^4$} edge (12) edge (23) edge (24) edge (13) edge (34) edge (14);
\end{tikzpicture}
\end{equation*}
Note that this graph always incorporates the lattices for lower $s$ as subgraphs.\qedhere
\end{enumerate}
\end{examples}

\begin{proposition}\label{lem:fulton-mac-nice-building}
Let $M$ be connected. Then $\cG^{FM}$ is a separated building set over $M^s$. Furthermore, the set of $\cG^{FM}$-factors of any $\Delta_\cP\in\Arr_{\cG^{FM}}$ is given by
$$\{\Delta_I\in\cG^{FM}\;|\; I\in\fac\cP\}.$$
\end{proposition}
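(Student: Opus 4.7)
My plan is to reduce every claim to a simple combinatorial fact about the equivalence relation on $\upto{s}$ generated by $\cP$. Given $\cS \subseteq \cG^{FM}$, write $\cS = \{\Delta_I : I \in \cP\}$ for some $\cP \subseteq \cI$ and let $\sim_\cP$ be the equivalence relation on $\upto{s}$ generated by $i \sim_\cP j$ whenever $\{i,j\} \subseteq I$ for some $I \in \cP$. The key observation (which I would verify by unwinding the definition of "connected subset") is that the $\sim_\cP$-classes of size $\geq 2$ are precisely the elements of $\fac\cP$, while all remaining indices form singleton classes. In particular, the elements of $\fac\cP$ are pairwise disjoint subsets of $\upto{s}$.

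First I would check that $\cG^{FM}$ is a building set. Each $\Delta_I$ is a closed embedded submanifold of $M^s$ diffeomorphic to $M^{s-|I|+1}$, hence of positive codimension. For the intersection, I claim $\cap \cS = \Delta_\cP$ equals the subset of configurations $(p_1, \ldots, p_s)$ with $p_i = p_j$ whenever $i \sim_\cP j$, which is diffeomorphic to $M^{|\upto{s}/\sim_\cP|}$ and therefore connected (since $M$ is). For cleanness, I would work locally: around a point of $\Delta_\cP$, take product coordinates on $M^s$ induced by a chart on $M$; each $\Delta_I$ becomes a coordinate subspace cut out by equating the coordinate blocks indexed by elements of $I$. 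Since any finite collection of coordinate subspaces intersects cleanly (indeed, like coordinate subspaces in the sense of Appendix~\ref{app:manifold-intersections}), this gives the required clean intersection.

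Second, I would identify the $\cG^{FM}$-factors of $\Delta_\cP$. By Equation~\eqref{eq:nesting-indices}, $\Delta_J \supseteq \Delta_\cP$ is equivalent to $J$ being contained in a single $\sim_\cP$-class. The minimal diagonals satisfying this are those where $J$ equals a maximal $\sim_\cP$-class of size $\geq 2$, and by the combinatorial observation above, these are exactly the $\Delta_I$ with $I \in \fac\cP$. Hence $\cG^{FM}_{\Delta_\cP} = \{\Delta_I : I \in \fac\cP\}$.

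Third, for separation I need to show these factors intersect transversely. In the product coordinates introduced above, $\Delta_I$ (for $I \in \fac\cP$) is cut out by equations involving only the coordinate blocks indexed by elements of $I$, so its conormal bundle is spanned by differentials supported in the coordinate directions corresponding to $I$. Since the sets $I \in \fac\cP$ are pairwise disjoint, the conormal bundles of the different $\Delta_I$ at any point of $\Delta_\cP$ are supported in disjoint sets of coordinate directions. Their sum is therefore a direct sum, which is exactly transversality of the intersection. The one thing to watch is the edge case $\cP = \emptyset$ (giving $S = M^s$), where both sides of the factor description are empty and the claim holds vacuously. The only mildly subtle step is the combinatorial identification of $\sim_\cP$-classes with $\fac\cP$, but this is immediate once one matches the notion of "largest connected subsets of $\cP$" with the connected components of the hypergraph whose hyperedges are $\cP$.
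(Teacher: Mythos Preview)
Your proof is correct and follows essentially the same approach as the paper's. Both arguments rest on the same two combinatorial facts---that the elements of $\fac\cP$ are pairwise disjoint (giving transversality and hence separation) and that $\Delta_J \supseteq \Delta_\cP$ forces $J$ to lie in a single block of the partition induced by $\cP$ (giving the factor description)---with your version making the equivalence relation $\sim_\cP$ explicit where the paper speaks directly of connected subsets of $\cP$, and your treatment of cleanness and transversality via local product coordinates spelling out what the paper dismisses as ``evidently clean'' and ``immediately yields.''
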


\begin{proof}
The diagonals are clearly non-empty and of positive codimension. The intersection $$\Delta_\cP=\{(p_1,...,p_s)\in M^s\;|\; p_i=p_j \text{ for all }I\in\cP, \text{ and }i,j\in I\}$$ is evidently clean and furthermore connected since $M$ is connected, so $\cG^{FM}$ is a building set.

For every $I\in\fac\cP$, we have $\Delta_\cP\subseteq\Delta_I$. To show that $\Delta_I$ is a $\cG^{FM}$-factor, assume there was a $I'\in\cI$ such that $\Delta_\cP\subseteq\Delta_{I'}\subsetneq\Delta_I$. In order for the first inclusion to hold, $I'$ must be the union of some connected subset of $\cP$. By definition of the factorization as containing the largest such subsets and Equation~\eqref{eq:nesting-indices}, this is a contradiction to the second (strict) inclusion.

To see that $\cG^{FM}$ is separated, note that by construction the elements of a factorization $\fac\cP$ are pairwise disjoint. This immediately yields that the intersection of the corresponding diagonals is transverse.
\end{proof}

Let us characterize $\cG^{FM}$-nests for this concrete building set.

\begin{definition}
We call a subset $\cN\subseteq\cI$ a \textbf{nest} if for all $A,B\in\cN$ with $A\cap B\neq\emptyset$ we have either $A\subseteq B$ or $A\supseteq B$.
\end{definition}

\begin{lemma}\label{fulton-mac-nest}
The map that assigns to every $\cP\subseteq\cI$ the subset $$\widehat{\cP}:=\{\Delta_I\;|\; I\in\cP \}\subseteq\cG^{FM}$$ is a one-to-one correspondence of subsets, and $\cN\subseteq\cI$ is a nest if and only if $\widehat{\cN}$ is a $\cG^{FM}$-nest.
\end{lemma}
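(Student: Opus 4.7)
The plan is in two steps, with a preliminary observation about the map $\cP \mapsto \widehat{\cP}$. For bijectivity, I will invoke Equation~\eqref{eq:nesting-indices} (valid since $|M|>1$), which shows that $I \mapsto \Delta_I$ is an order-reversing bijection from $\cI$ onto $\cG^{FM}$; this induces a bijection of power sets, giving the first claim.

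For the nest equivalence, my main tool will be characterization (3) of Proposition~\ref{prop:char-nest}, which is available because Proposition~\ref{lem:fulton-mac-nice-building} tells us $\cG^{FM}$ is separated. That characterization says $\widehat{\cN}$ is a $\cG^{FM}$-nest iff every pairwise non-comparable $\widehat{\cP} \subseteq \widehat{\cN}$ with $|\widehat{\cP}|\geq 2$ satisfies $\bigcap \widehat{\cP} \notin \cG^{FM}$. Transporting through the bijection (and using Equation~\eqref{eq:nesting-indices} so that non-comparability of diagonals corresponds to non-comparability of indices), this becomes: every pairwise non-comparable $\cP \subseteq \cN$ with $|\cP|\geq 2$ satisfies $\Delta_\cP \notin \cG^{FM}$.

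Next I will translate the right-hand condition using the factor description from Proposition~\ref{lem:fulton-mac-nice-building}: the $\cG^{FM}$-factors of $\Delta_\cP$ are $\{\Delta_I \mid I \in \fac\cP\}$, and $\Delta_\cP$ itself lies in $\cG^{FM}$ precisely when $|\fac\cP| = 1$, i.e. when $\cP$ is connected in the combinatorial sense defined just before the lemma. So the criterion reduces to: every pairwise non-comparable $\cP \subseteq \cN$ with $|\cP|\geq 2$ is disconnected.

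Finally I will match this against the combinatorial nest condition. For the forward direction, if $\cN$ is a nest, then any two non-comparable $A,B \in \cN$ are disjoint; hence any non-comparable $\cP \subseteq \cN$ consists of pairwise disjoint hyperedges, which for $|\cP|\geq 2$ is disconnected. For the reverse, if $\cN$ is not a nest, there exist $A,B \in \cN$ non-comparable with $A\cap B\neq \emptyset$; the pair $\{A,B\}$ is then non-comparable of size two and connected, violating the criterion. I do not expect any real obstacle here; the proof is essentially a bookkeeping argument once the chain Proposition~\ref{prop:char-nest}~(3) $\Rightarrow$ Proposition~\ref{lem:fulton-mac-nice-building} $\Rightarrow$ combinatorics is set up, and the only subtle point is keeping the order-reversal under $I\mapsto \Delta_I$ straight when translating non-comparability between the two sides.
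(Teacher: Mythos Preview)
Your proof is correct and follows essentially the same route as the paper: both use Equation~\eqref{eq:nesting-indices} for the bijection, and both establish the nest equivalence via characterization~(3) of Proposition~\ref{prop:char-nest} (available by separatedness from Proposition~\ref{lem:fulton-mac-nice-building}) together with the factor description $\cG^{FM}_{\Delta_\cP}=\widehat{\fac\cP}$. The only cosmetic difference is that you first translate the criterion into the combinatorial statement ``every non-comparable $\cP\subseteq\cN$ with $|\cP|\ge 2$ is disconnected'' and then verify both directions, whereas the paper treats $(\Rightarrow)$ and $(\Leftarrow)$ separately without isolating that intermediate reformulation.
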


\begin{proof}
The map is a one-to-one correspondence of subsets of due to Equation~\eqref{eq:nesting-indices}. We prove the two directions of the equivalence separately.

\textit{Regarding $(\Rightarrow)$:} Let $\cN$ be a nest and $\cP\subseteq\cN$ a non-empty subset of non-comparable elements corresponding to a non-empty subset $\widehat{\cP}\subseteq\widehat{\cN}$ of non-comparable elements. In this case, $\cP=\fac\cP.$ Since $\widehat{\fac\cP}=\cG^{FM}_{\widehat{\cP}}$ by Lemma~\ref{lem:fulton-mac-nice-building}, we are done by the third characterization of nests in Proposition~\ref{prop:char-nest}.

\textit{Regarding $(\Leftarrow)$:} Let $\widehat{N}$ be a $\cG^{FM}$-nest and take two elements $A,B\in\cN$ with $A\cap B\neq\emptyset$. Assuming towards contradiction that $A$ and $B$ are non-comparable means that $\widehat{P}:=\{\Delta_A,\Delta_B\}$ is a non-empty subset of $\widehat{N}$ of non-comparable elements. Its intersection $\cap\widehat{P}=\Delta_{\{A,B\}}$ is just $\Delta_{A\cup B}$ since $A$ and $B$ overlap. This is a contradiction to the second characterization of nests in Proposition~\ref{prop:char-nest}.
\end{proof}

\begin{npar}[Nests as trees]\label{rem:nests-as-trees}
As Fulton and MacPherson already point out, nests also stand in one-to-one correspondence with (not necessarily connected) trees whose leaves are labeled by elements of $\upto{s}$. For example, the nest $$\cN=\{\{1,2,3\},\;\{5,6\},\;\{7,8,9\},\;\{5,6,7,8,9\}\}$$
for $s=9$ corresponds to the following tree:
\begin{equation*}
\begin{tikzpicture}
\node (1) at (0.9*1, 0) {1};
\node (2) at (0.9*2, 0) {2};
\node (3) at (0.9*3, 0) {3};
\node (4)[circle, fill=black, inner sep=0.5pt] at (0.9*4, 1) {};
\node (4l) at (0.9*4, 0.7) {4};
\node (5) at (0.9*5, 0) {5};
\node (6) at (0.9*6, 0) {6};
\node (7) at (0.9*7, 0) {7};
\node (8) at (0.9*8, 0) {8};
\node (9) at (0.9*9, 0) {9};

\node (123) at (0.9*2, 1) {};
\node (56) at (0.9*5.5, 0.7) {};
\node (789) at (0.9*8, 0.7) {};
\node (56789) at (0.9*7, 1) {};

\draw (1) -- (123.center);
\draw (2) -- (123.center);
\draw (3) -- (123.center);
\draw (5) -- (56.center);
\draw (6) -- (56.center);
\draw (56.center) -- (56789.center);
\draw (789.center) -- (56789.center);
\draw (7) -- (789.center);
\draw (8) -- (789.center);
\draw (9) -- (789.center);
\end{tikzpicture}
\end{equation*}
Assigning to every non-leaf node of this tree all labels that lie under it at any level recovers all elements of the nest $\cN$.
\end{npar}

Note that the minimal element $\Delta_{\{1, ..., s\}}$ of $\cG^{FM}$ is commonly called the \textit{small diagonal} in contrast to the \textit{large diagonal} $\cup\cG^{FM}$. The latter contains all collided configurations, including the partial collisions. The following examples show that flags and nests can be understood to encode all essentially different orders of such collisions, motivating the stratification of the exceptional divisor in Proposition~\ref{prop:stratification}:

\begin{examples}\label{ex:fm-flags-nests}\ 
\begin{enumerate}
\item For $s=3$, we have the following possible $\cG^{FM}$-flags up to permutation of the labels:
\begin{align*}
F_1: &\qquad \emptyset \\
F_2: &\qquad \emptyset\subseteq12 \\
F_3: &\qquad \emptyset\subseteq123 \\
F_4: &\qquad \emptyset\subseteq123\subseteq12
\end{align*}
The first non-zero step of each flag specifies which points are collided, while the full flag determines an order of collision. Concretely, $F_3$ could be interpreted as having all points collide \textit{at the same time}. The flag $F_4$ on the other hand corresponds to points one and two colliding first, and point three joining them after. Viewing these collisions more concretely as paths that move from the bulk of $M^3$ onto the small diagonal $\Delta_{123}$, $F_4$ would correspond to a path that asymptotically approaches $\Delta_{12}$.

Since $\cG^{FM}$ is closed for $s=3$, nests stand in one-to-one correspondence with these flags and encode the same data.

\item For $s=4$, we have the following possible $\cG^{FM}$-flags up to permutation of the labels:
\begin{align*}
F_1: &\qquad \emptyset & F_7: & \qquad \emptyset\subseteq1234\subseteq123 \\
F_2: &\qquad \emptyset\subseteq12 & F_8: & \qquad \emptyset\subseteq1234\subseteq123\subseteq12  \\
F_3: &\qquad \emptyset\subseteq123 & F_9: & \qquad \emptyset\subseteq12\cap34  \\
F_4: &\qquad \emptyset\subseteq1234 & F_{10}: & \qquad \emptyset\subseteq12\cap34\subseteq12  \\
F_5: &\qquad \emptyset\subseteq123\subseteq12 & F_{11}: & \qquad \emptyset\subseteq1234\subseteq12\cap34 \\
F_6: &\qquad \emptyset\subseteq1234\subseteq12 & F_{12}: & \qquad \emptyset\subseteq1234\subseteq12\cap34\subseteq12
\end{align*}
Flags $F_1$ to $F_8$ are analogous to the $s=3$ case. The remaining flags are interesting as they now contain the polydiagonal $12\cap34$. $F_{11}$ can, for example, be understood as a simultaneous collision of 12 and 34 followed by a collision of all of them. $F_{12}$ on the other hand has the collision of 34 happen after that of 12. For our purposes, the order of these initial collisions does not matter. Passing to nests forgets this superfluous information: By replacing true polydiagonals with their $\cG^{FM}$-factors, both $F_{11}$ and $F_{12}$ yield the same nest $\{1234,12,34\}.$ Except for a similar phenomenon for $F_{9}$ and $F_{12}$, all other flags produce different nests.
Note that nests of the closure of the Fulton-MacPherson building set (i.e. the one that also contains polydiagonals) would correspond exactly to flags, thus remembering full orders of collisions.\qedhere
\end{enumerate}
\end{examples}

\startSubchaption{The weighted Fulton-MacPherson building set}\label{ssec:fulton-macpherson-weights}

From here on out we fix a connected filtered manifold $(M,H_\bullet)$ of degree $r$ and $s\geq1$ an integer to define the \textit{weighted} Fulton-MacPherson building set induced by the filtration on $M$. We first obtain a filtration $H^s_\bullet$ on $M^s$ by setting $$H^s_i:=(H_i)^{\oplus s}.$$
We show that the diagonals are filtered submanifolds in order to consider the induced weightings along $\cG^{FM}$:

\begin{lemma}\label{lem:diags-are-filtered}
The diagonals $\Delta_I\in\cG^{FM}$ are filtered submanifolds of $(M^s, H^s_\bullet)$.
\end{lemma}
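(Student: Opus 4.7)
The statement is essentially a linear-algebra check at each point, made uniform over $\Delta_I$ by translation invariance. Fix $I\in\cI$ with $|I|\geq 2$, pick any $i_0\in I$, and take a point $p=(p_1,\ldots,p_s)\in\Delta_I$, so $p_i=p_{i_0}$ for all $i\in I$. Under the canonical identification $T_pM^s=\bigoplus_{i=1}^s T_{p_i}M$, the tangent space $T_p\Delta_I$ consists of tuples $(v_1,\ldots,v_s)$ with $v_i=v_j$ for all $i,j\in I$, while $H^s_{-k}|_p=\bigoplus_i H_{-k}|_{p_i}$. The plan is to compute the quotient $F_{-k}|_p=(H^s_{-k}|_p+T_p\Delta_I)/T_p\Delta_I$ blockwise with respect to the splitting by index set $I$ versus its complement.

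First I would argue that the indices $i\notin I$ contribute nothing to the quotient: for such $i$, the corresponding block of $T_p\Delta_I$ is all of $T_{p_i}M$, and it already absorbs the block $H_{-k}|_{p_i}$. All the interesting content therefore lives in the $I$-block. Writing $V:=T_{p_{i_0}}M$ and $H:=H_{-k}|_{p_{i_0}}$, the $I$-block of the numerator is $H^{\oplus I}+\operatorname{diag}(V)$ and the $I$-block of the denominator is $\operatorname{diag}(V)$, so one obtains a canonical isomorphism
\[
F_{-k}|_p \;\cong\; \frac{H^{\oplus I}+\operatorname{diag}(V)}{\operatorname{diag}(V)} \;\cong\; \frac{H^{\oplus I}}{H^{\oplus I}\cap\operatorname{diag}(V)} \;\cong\; \frac{H^{\oplus I}}{\operatorname{diag}(H)} \;\cong\; H^{|I|-1},
\]
for instance by sending $(v_i)_{i\in I}$ to the differences $(v_i-v_{i_0})_{i\in I\setminus\{i_0\}}$. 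Consequently $\rk F_{-k}|_p=(|I|-1)\rk H_{-k}$, which is independent of $p$, so $F_{-k}$ is a subbundle of $\nu\Delta_I$ of constant rank and $\Delta_I$ is a filtered submanifold in the sense of Definition~\ref{def:filtered-submanifold}.

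As a byproduct of this computation I would record, for later use, that passing to graded pieces gives $\gr(F_\bullet)\cong \gr(H_\bullet)^{|I|-1}|_{\Delta_I}$ in the obvious canonical way, so that the weight sequence of $\Delta_I$ is the weight sequence of $H_\bullet$ repeated $|I|-1$ times. Since the argument is pointwise and purely linear-algebraic, I do not anticipate any real obstacle; the only care needed is to keep the splitting by index set straight and to verify that the map $(v_i)_{i\in I}\mapsto(v_i-v_{i_0})_{i\in I\setminus\{i_0\}}$ actually lands in $H^{|I|-1}$, which is immediate once one notes that $H^{\oplus I}+\operatorname{diag}(V)$ intersects $\operatorname{diag}(V)$ precisely in $\operatorname{diag}(H)$.
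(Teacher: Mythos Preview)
Your proof is correct and follows essentially the same approach as the paper: a pointwise linear-algebra computation exploiting the product structure $H^s_{-k}=\bigoplus_i H_{-k}$. The only cosmetic difference is that the paper checks constancy of $\dim(H^s_{-k}|_p\cap T_p\Delta_I)=(1+s-|I|)\,\rk H_{-k}$ directly (using the equivalent intersection criterion noted after Definition~\ref{def:filtered-submanifold}), whereas you compute the quotient $F_{-k}|_p\cong H_{-k}^{|I|-1}$; these are of course the same calculation read from opposite ends. One small wording slip: in your last sentence you want $H^{\oplus I}\cap\operatorname{diag}(V)=\operatorname{diag}(H)$, not $(H^{\oplus I}+\operatorname{diag}(V))\cap\operatorname{diag}(V)$, but your displayed chain of isomorphisms already uses the correct statement.
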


\begin{proof}
    This is a direct consequence of the product structure of $H^s$.
    We need to show that for every fixed $i\in\{1, .., r\}$, the dimension of the vector space $$H^s_{-i}|_p\cap T_p\Delta_I$$ is constant in $p\in\Delta_I$. By definition of the diagonals, this intersection is equal to $$ \left\{ (X_1, ..., X_s)\in \left(H_{-i}|_p\right)^s \;|\; X_i=X_j\text{ for all }i,j\in I \right\}. $$ We can count the dimension $(1+s-|I|)\cdot\rk H_{-i}$ independently of $p$.
\end{proof}

\begin{definition}\label{def:filtered-weighted-building-set}
We define the \textbf{weighted Fulton-MacPherson building set $\cW^{FM}$} on $M^s$ by setting $$\cW^{FM}_{\Delta_I} := \cW_{H^s_\bullet,\Delta_I}\qquad\text{ for each }\Delta_I\in\cG^{FM}.$$
\end{definition}

It is not automatic that this is a weighted building set: To start, we need to show that for every collection of diagonals $\cP\subseteq\cI$ that intersect in a single diagonal $\Delta_{\cup\cP}$, the corresponding intersection of $\{\cW^{FM}_{\Delta_I}\}_{I\in\cP}$ equals $\cW^{FM}_{\Delta_{\cup\cP}}$. This by itself is clear by Definition~\ref{def:filtered-submfd-weighting}. It is not immediately clear that this intersection is clean, which we will see as a side-effect of constructing convenient coordinates (Proposition~\ref{prop:fm-reg-unif-align}).

As we have seen in Example~\ref{ex:fm-building-set}(3), the elements of $\cG^{FM}$ do not necessarily intersect like coordinate subspaces. As a collection of weightings, $\cW^{FM}$ thus has no chance of being aligned. However, we will show that the nests \textit{are} uniformly aligned, allowing us to conclude that $\cW^{FM}$ is uniformly aligned as a weighted building set (Lemma~\ref{lem:fm-nests-aligned}). This is the main task ahead of us for the rest of this subsection.

Due to Proposition~\ref{prop:char-uniform-align}, uniform alignment comes down to compatibility of the linear data that we already extracted in Lemma~\ref{lem:weighted-normal-is-graded}. While this is also at the core of what follows (see Lemma~\ref{lem:subtree-adapted}), we want to provide explicit adapted coordinates on $M^s$ that make use of the product structure.

Let us start by defining coordinates adapted to the weighting along the diagonal of $M^2$ as a stepping stone towards the general case:

\begin{definition}\label{def:offset-coords}
Let $(M,H_\bullet)$ be a filtered manifold and $p\in \Delta:=\Delta_{\{1,2\}}\subsetneq M\times M$. We say that coordinates $$(x_1, ..., x_m, \Delta x_1, ..., \Delta x_m): U\to \R^{2m}$$
are \textbf{offset coordinates} if
\begin{enumerate}
    \item they are adapted to $\cW_{H^2_\bullet, \Delta}$ such that $(x_1,...,x_m)$ and $(\Delta x_1, ..., \Delta x_m)$ respectively get assigned vanishing weights and the weight sequence of $H_\bullet$, and
    \item $(x_1, ..., x_m)$ depend only on the first factor of $M\times M$.
\end{enumerate}
\end{definition}

Such coordinates always exist:

\begin{lemma}\label{lem:prod-coords}
For every filtered manifold $(M,H_\bullet)$ and point $p\in \Delta_{\{1,2\}}\subsetneq M\times M$, there exist offset coordinates of $M\times M$ close to $p$.
\end{lemma}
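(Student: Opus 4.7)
The plan is to construct offset coordinates by first producing arbitrary coordinates adapted to the weighting $\cW_{H_\bullet^2,\Delta}$ and then modifying the weight-zero (tangential) components so that they depend only on the first factor. The guiding observation is that the local model for an adapted chart only imposes conditions on the lifts of the positive-weight coordinates, so the weight-zero ones can be changed freely as long as the chart property is preserved.

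Concretely, by Lemma~\ref{lem:diags-are-filtered} the diagonal $\Delta := \Delta_{\{1,2\}}$ is a filtered submanifold of $(M^2,H_\bullet^2)$, so Theorem~\ref{thm:filtered-is-weighting} yields coordinates $(y_1,\ldots,y_m,z_1,\ldots,z_m)\colon U\to\R^{2m}$ adapted to $\cW_{H_\bullet^2,\Delta}$ near $p$, with weight sequence $(0,\ldots,0,w_1,\ldots,w_m)$. A direct inspection of $F_{-i}=(H_{-i}^2|_\Delta+T\Delta)/T\Delta$ shows that under the identification $(V_1,V_2)\mapsto V_2-V_1$ of $\nu\Delta$ with $TM$ (via $\pi_1$), the filtration $F_\bullet$ agrees with $H_\bullet$, so $(w_1,\ldots,w_m)$ coincides with the weight sequence of $H_\bullet$.

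Now define $\tilde y_i(q_1,q_2):=y_i(q_1,q_1)$; these functions depend only on $q_1$ and agree with $y_i$ on $\Delta$. The claim is that $(\tilde y_1,\ldots,\tilde y_m,z_1,\ldots,z_m)$ remain adapted coordinates to $\cW_{H_\bullet^2,\Delta}$. Adaptedness in the sense of Definition~\ref{def:weigthing} is controlled purely by the lifts $z_i^{(j)}=0$ for $j<w_i$, and the $z_i$ are unchanged, so this local model describes the same subset of $T^{(\infty)}U$; by the adaptedness of the original system, this subset is exactly $\cW_{H_\bullet^2,\Delta}\cap T^{(\infty)}U$. It then remains to verify that $(\tilde y,z)$ is still a valid chart on some neighborhood of $p$, and this is the main obstacle.

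The chart property reduces to a linear-algebra check at $p$. The covectors $\{d\tilde y_i|_p\}$ annihilate the vertical subspace $\ker d\pi_1|_p=\{(0,V)\}$ and pull back a basis of $T^*_{\pi_1(p)}M$ along $d\pi_1|_p$, so they span the $m$-dimensional annihilator $(\ker d\pi_1|_p)^{\circ}\subseteq T^*_pM^2$. The covectors $\{dz_j|_p\}$ span the $m$-dimensional conormal $\nu^*\Delta|_p$. Any $\omega$ lying in both must annihilate $\{(0,V)\}$ and $T_p\Delta=\{(V,V)\}$ simultaneously, hence vanishes. Thus these two $m$-dimensional subspaces are complementary and span $T^*_pM^2$, yielding linear independence of the full set of differentials and, by the inverse function theorem, a chart near $p$. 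The pair $(\tilde y,z)$ is then an offset chart as required.
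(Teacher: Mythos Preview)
Your proof is correct and follows essentially the same approach as the paper: start from adapted coordinates $(y,z)$ for $\cW_{H^2_\bullet,\Delta}$, replace the tangential $y_i$ by $y_i\circ\iota\circ\pi_1$ (your $\tilde y_i$), observe that adaptedness only involves the positive-weight components $z_j$ which are unchanged, and verify the chart property by a linear-algebra argument at $p$. Your complementarity argument between $(\ker d\pi_1|_p)^\circ$ and $\nu^*\Delta|_p$ is a slightly more explicit version of the paper's observation that the $dy_l$ are injective on $T\Delta$ and $d(\iota\circ\pi_1)$ is an isomorphism there; both amount to the same linear independence check.
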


\begin{proof}
Take some adapted coordinates $$\chi=(y_1,...,y_m, z_1, ..., z_m):U\to \R^{2m}$$
of $\cW_{H^2_\bullet, \Delta}$ around $p$ that must exist by Theorem~\ref{thm:filtered-is-weighting}. By reordering we can assume without loss of generality that $(y_1,...,y_m)$ are the directions with vanishing weights. Due to the product structure of $H^2_\bullet$, the remaining directions must be assigned weights from the weight sequence of $H_\bullet$.
 
Let $\iota:M\hookrightarrow M\times M$ be the diagonal inclusion and $\pi_1:M\times M\to M$ be the projection to the first factor. We propose
$$ x_l:= y_l\circ\iota\circ\pi_1 \qquad\text{and}\qquad \Delta x_l :=z_l $$
as candidates for offset coordinates that obviously satisfy condition (2). These really are coordinates in a small enough neighbourhood: Since all $dz_l$ vanish on $T\Delta,$ by dimensional reasons $(dy_1,...,dy_m)$ must be injective on $T\Delta$ for $\chi$ to be coordinates. But on $T\Delta$, $d(\iota\circ\pi_1)$ is an isomorphism, so $(dx_1,...,dx_m)$ is also injective on it.

Furthermore, the modified coordinates are still adapted to $\cW_{H^2_\bullet, \Delta}$: By definition, this only depends on the coordinate directions normal to $N$, which remain untouched.
\end{proof}

We want to combine such offset coordinates to construct convenient charts close to any given diagonal of $M^s$ for general $s$. We require some more combinatorial machinery for this:

\begin{definition}
A \textbf{forest} $\prec$ is a structure of a partial order on $\upto{s}$ such that for every $k\in\upto{s}$ there is at most one $k'\in\upto{s}$ with $k\prec k'$ and no $l\in\upto{s}$ with $k\prec l\prec k'$. If it exists, this element $k'$ is called the \textbf{parent} of $k$ and will be denoted $\operatorname{par}^\prec(k)$.
The sets of \textbf{children} and \textbf{roots}, i.e. elements with and without parents, are denoted by $\Ch^\prec$ and $\Rt^\prec\subseteq\upto{s}.$
A subset $T\subseteq\upto{s}$ is a \textbf{$\prec$-subtree} if for all $k_1, k_2\in T$ the following holds:
\begin{enumerate}
\item For all $k_0\in\upto{s}$ with $k_1\prec k_0\prec k_2$ also $k_0\in T$, and
\item there exists a $p\in T$ such that $k_1\prec p$ and $k_2\prec p$.
\end{enumerate}
In particular, for every $k_0\in\upto{s}$ we have the \textbf{descendant subtree}
$$T_{\preceq k_0}=\{k\in\upto{s}\;|\; k\preceq k_0\}.$$
\end{definition}

The name is, of course, chosen since the Hasse diagram of $(\upto{s},\prec)$ consists of a number of trees under each root. A $\prec$-subtree is a connected subgraph of the Hasse diagram. Given such a forest, our idea is to build coordinates on $M^s$ such that each child component is specified using an offset relative to their parent:

\begin{definition}\label{def:forest-offset-coords}
Fix $s\geq2$. Given a point $p\in M^s$, let $\prec$ be a forest such that $p_k=p_l$ if and only if $k$ and $l$ are in a common subtree. Consider the projections
\begin{align*}
\pi_k: M^s&\to M, &   \rho_l: M^s&\to M^2,   \\
(q_1, ..., q_s) &\mapsto q_k,
&
(q_1, ..., q_s) &\mapsto (q_{\operatorname{par}^\prec(l)}, q_l).
\end{align*}

For all roots $k\in\Rt^\prec$ pick arbitrary coordinates
$$(\hat x_{k1}, ..., \hat x_{km}): U_k\to \R^{m}$$
on open $U_k\subseteq M$ containing $p_k$. For all children $l\in\Ch^\prec$, pick offset coordinates
$$(\hat x_{l1}, ..., \hat x_{lm}, \Delta \hat x_{l1}, ..., \Delta \hat x_{lm}): U_l\to \R^{2m}$$
on open $U_l\subseteq M$ containing $\pi_l(p)$.

Define $U\subseteq M^s$ as the open set
$$\bigcap_{k\in\Rt^\prec} \pi_k^{-1}(U_k) \;\cap\; \bigcap_{l\in\Ch^\prec} \pi_l^{-1}(U_l) $$
and shrink it further around $p$ until it contains no diagonal $\Delta_I$ with $p\not\in\Delta_I$.
We now say that the collection
$$\left(\{x_{k1}, ..., x_{km}\}_{k\in\Rt^\prec}, \{\Delta x_{l1}, ..., \Delta x_{lm}\}_{l\in\Ch^\prec}\right): U\to \R^{sm}$$
defined by the pullbacks $x_{kj}:=\hat x_{kj}\circ \pi_k$ and $\Delta x_{lj}:=\Delta \hat x_{lj} \circ \rho_l$  form \textbf{$\prec$-offset coordinates}.
\end{definition}

It is clear that these maps form coordinates: For any point $q\in U$, the roots $q_k$ can be recovered smoothly from $(x_{k1}, ..., x_{km})(q)$ by construction, and every child component $q_l$ can be recovered smoothly from $(\Delta x_{l1}, ..., \Delta x_{lm})(q)$ once $q_{\operatorname{par}^\prec(l)}$ is determined.

Moreover, these coordinates are adapted to the filtration and all diagonals formed by subtrees of the forest:

\begin{lemma}\label{lem:subtree-adapted}
Fix a forest $\prec$ with $\prec$-offset coordinates
$$\left(\{x_{k1}, ..., x_{km}\}_{k\in\Rt^\prec}, \{\Delta x_{l1}, ..., \Delta x_{lm}\}_{l\in\Ch^\prec}\right): U\to \R^{sm}$$
on $U\subseteq M^s$. Then these coordinates are adapted to $\cW_{\Delta_I}^{FM}$ for any $\prec$-subtree $I$.

In particular, if $i$ is the maximal element of $I$ we have for $q\in U$ that
\begin{equation}\label{eq:cut-out-diag}
q\in\Delta_I \qquad\iff\qquad (\Delta x_{l1}, ..., \Delta x_{lm})(q)=0 \text{ for all } l\in I\setminus\{i\}.
\end{equation}
Each $\Delta x_{lj}$ for $l\in I\setminus\{i\}$ gets assigned the $j$-th weight of the weight sequence of $H_\bullet$, while all other coordinates (i.e. $x_{kj}$ for $k\in\Rt^\prec$ and $\Delta x_{lj}$ for $l\in\Ch^\prec\setminus (I\setminus \{i\})$) are tangential to $\Delta_I$ and thus have vanishing weight.
\end{lemma}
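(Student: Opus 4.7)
The plan is to prove Equation~\eqref{eq:cut-out-diag} first and then to verify the two conditions of Proposition~\ref{prop:adapted-to-filtered-sub} for the claimed weight assignment. The key structural observation underlying every step is that the projection $\rho_l: M^s \to M^2$ is a morphism of filtered manifolds (immediate from the product structure of $H^s_\bullet$ and $H^2_\bullet$), and that $\rho_l$ maps $\Delta_I$ into $\Delta$ whenever $l$ and $\operatorname{par}^\prec(l)$ both lie in $I$.

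For Equation~\eqref{eq:cut-out-diag}, the forward direction follows because for any $l \in I \setminus \{i\}$, condition~(1) in the subtree definition forces $\operatorname{par}^\prec(l) \in I$, hence $\rho_l(\Delta_I) \subseteq \Delta$ and each $\Delta \hat x_{lj}$ vanishes on $\Delta_I$ after composition with $\rho_l$. Conversely, if the offsets vanish at some $q \in U$, then $q_l = q_{\operatorname{par}^\prec(l)}$ for each such $l$; iterating along the chain $l, \operatorname{par}^\prec(l), \operatorname{par}^\prec(\operatorname{par}^\prec(l)), \ldots, i$ (which stays in $I$ by the subtree property) yields $q_k = q_i$ for every $k \in I$. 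The shrinking of $U$ in Definition~\ref{def:forest-offset-coords} ensures that offset coordinates really do cut out $\Delta$ locally in $M^2$ and that no other diagonals interfere.

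Condition~(1) of Proposition~\ref{prop:adapted-to-filtered-sub} asks that $\Delta x_{lj} \in \cW^{FM}_{\Delta_I, w_j}$ for each $l \in I \setminus \{i\}$. To show this, take a local frame $\{Y_a\}$ of $H_\bullet$ near $q_l$ and build a local frame $\{Y_a^{(k)}\}$ of $H^s_{-\bullet}$ by placing $Y_a$ in the $k$-th factor of $M^s$. Any section $X \in \Gamma(H^s_{-\alpha})$ expands as a $C^\infty$-combination of these, and a Leibniz expansion reduces $X_1 \cdots X_n(\Delta x_{lj})$ to a sum of smooth coefficients multiplying iterated derivatives of $\Delta x_{lj}$ along sub-sequences of the $Y_a^{(k)}$. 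Since $\Delta x_{lj}$ depends only on the $l$-th and $\operatorname{par}^\prec(l)$-th factors, such a derivative vanishes unless every index $k$ in the sub-sequence lies in $\{l, \operatorname{par}^\prec(l)\}$; when this holds, each $Y_a^{(\operatorname{par}^\prec(l))} + Y_b^{(l)}$ is $\rho_l$-related to a vector field on $M^2$ in $H^2_{-\bullet}$ of the same filtration degree, so the iterated derivative pulls back along $\rho_l$ from an iterated derivative of $\Delta \hat x_{lj}$, which by hypothesis vanishes on $\Delta$ whenever the total filtration degree is $< w_j$.

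For condition~(2), decompose $T_q M^s = \bigoplus_k T_{q_k} M$ and use the product structure to see that $H^s_{-i'}|_q + T_q \Delta_I$ consists of those $v$ with $v^{(k)} - v^{(i)} \in H_{-i'}|_{q_i}$ for every $k \in I$. Telescoping along the parent chains in $I$ equivalently requires $v^{(l)} - v^{(\operatorname{par}^\prec(l))} \in H_{-i'}|_{q_l}$ for every $l \in I \setminus \{i\}$, which by Equation~\eqref{eq:hh-coords} applied to the offset coordinates on $M^2$ is precisely $d\Delta \hat x_{lj}((v^{(\operatorname{par}^\prec(l))}, v^{(l)})) = 0$ for all $j$ with $w_j > i'$, equivalently $d\Delta x_{lj}(v) = 0$. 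The codimension of $\Delta_I$ equals $(|I|-1)m$, matching the number of normal coordinates, so the remaining components are tangent to $\Delta_I$ and take weight zero trivially in $\cW^{FM}_{\Delta_I, 0} = C^\infty(U)$. The main obstacle is the Leibniz bookkeeping in condition~(1): iterated derivatives do not in general factor through a submersion, so the frame expansion must be arranged so that every surviving sub-sequence is $\rho_l$-related to vector fields on $M^2$ of admissible filtration degree.
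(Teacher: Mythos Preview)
Your argument is correct, but it takes a different route from the paper. Both approaches share the key observation that $\Delta x_{lj}=\rho_l^*\Delta\hat x_{lj}$ and that $\rho_l$ is a filtered morphism sending $\Delta_I$ into $\Delta$, but they diverge in how they leverage it.

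The paper does not verify Proposition~\ref{prop:adapted-to-filtered-sub} directly. Instead it writes $\Delta_I$ as the transverse intersection of the pair diagonals $\Delta_l:=\Delta_{\{l,\operatorname{par}^\prec(l)\}}$ for $l\in I\setminus\{i\}$, observes that the $\prec$-offset coordinates are adapted to each $\cW^{FM}_{\Delta_l}$ (since this weighting is literally the pullback of $\cW_{H^2_\bullet,\Delta}$ along $\rho_l$), and then invokes Lemma~\ref{lem:extending-coordinates}. The only substantive check left is the injectivity of
\[
\bigoplus_{l}\nu^*_{\mathrm{lin}}\cW^{FM}_{\Delta_l}\big|_{\Delta_I}\longrightarrow\nu^*_{\mathrm{lin}}\cW^{FM}_{\Delta_I},
\]
which, via Lemma~\ref{lem:weighted-normal-is-graded} and Equation~\eqref{eq:filtration-quotient}, reduces to transversality of the $\Delta_l$. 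This is shorter and makes explicit that the lemma is an instance of the general uniform-alignment machinery developed in Section~\ref{sec:weighted-building-sets}.

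Your approach is more self-contained: it does not need Lemma~\ref{lem:extending-coordinates} at all and instead carries out the verification of conditions~(1) and~(2) of Proposition~\ref{prop:adapted-to-filtered-sub} by hand. The cost is the Leibniz bookkeeping in condition~(1). Two small remarks on that step: first, it is cleaner to say that each individual $Y_a^{(k)}$ with $k\in\{l,\operatorname{par}^\prec(l)\}$ is $\rho_l$-related to a section of $H^2_{-\beta_a}$ (rather than the sum $Y_a^{(\operatorname{par}^\prec(l))}+Y_b^{(l)}$), and those with $k\notin\{l,\operatorname{par}^\prec(l)\}$ are $\rho_l$-related to zero; second, the worry you flag at the end is unfounded precisely because the frame vectors \emph{are} $\rho_l$-related, which is exactly what makes the iterated derivative factor through the submersion. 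An even quicker packaging of your condition~(1) is to note that pullback along the filtered morphism $\rho_l$ sends $\cW_{H^2_\bullet,\Delta,w_j}$ into $\cW_{H^s_\bullet,\Delta_l,w_j}$, and then use the trivial monotonicity $\cW_{H^s_\bullet,\Delta_l,w_j}\subseteq\cW_{H^s_\bullet,\Delta_I,w_j}$ coming from $\Delta_I\subseteq\Delta_l$.
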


\begin{proof}
Let $I$ be a $\prec$-subtree with maximal element $i$. $\Delta_I$ can be written as the intersection of all $$\Delta_l:=\Delta_{\{l,\operatorname{par}^\prec(l)\}}\qquad \text{ for }l\in L:=\Ch^\prec\;\cap\; (I\setminus\{i\}).$$ This intersection is transverse because $\prec$ is a forest. Moreover, the $\prec$-offset coordinates are adapted to each $\cW^{FM}_{\Delta_l}$: To this end, note that $\cW^{FM}_{\Delta_l}$ is the pull-back of the weighting $\cW_{H^2_\bullet,\Delta}$ along the map $\rho_l:M^s\to M^2$ from Definition~\ref{def:forest-offset-coords}. The coordinates $(\Delta x_{l1}, ..., \Delta x_{lm})$ normal to $\Delta_l$ are exactly constructed by this pull-back, and there is no additional condition on the remaining coordinates tangential to $\Delta_l$.

It now follows by Lemma~\ref{lem:extending-coordinates} that these coordinates are also adapted to the intersection $\cW^{FM}_{\Delta_I}=\cap_{l\in L}\cW^{FM}_{\Delta_l}$ if we can show that the map $$\bigoplus_{l\in L}\nu^*_\mathrm{lin}\cW^{FM}_{\Delta_l}|_{\Delta_I}\to\nu^*_\mathrm{lin}\cW^{FM}_{\Delta_I}$$ is an injection. By Equation~\eqref{eq:filtration-quotient} and Lemma~\ref{lem:weighted-normal-is-graded}, this is just the map $$\bigoplus_{l\in L}\left(\gr(H_\bullet)|_{\Delta_I}/{T\Delta_l}\right)\to\gr(H_\bullet)|_{\Delta_I}/{T\Delta_I}.$$ Its injectivity at each degree of the associated graded bundle is a direct consequence of transversality of the intersection $\Delta_I=\cap_{l\in L}\Delta_l$.

The weight assignments follow immediately from the weights in Lemma~\ref{lem:extending-coordinates}.
\end{proof}

We can use this to build weighted coordinates aligned with two different diagonals:

\begin{lemma}\label{lem:fm-pairs-aligned}
Let $I,J\in\cI$. Then the weightings $\cW^{FM}_{\Delta_I}$ and $\cW^{FM}_{\Delta_J}$ are uniformly aligned. If further $I\cap J\neq\emptyset$ holds, then they intersect cleanly in $\cW^{FM}_{\Delta_{I\cup J}}$.
\end{lemma}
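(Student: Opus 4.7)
The plan is to apply Lemma~\ref{lem:subtree-adapted} to a forest $\prec$ on $\upto{s}$ chosen so that both $I$ and $J$ (and, when they overlap, also $I\cup J$) are $\prec$-subtrees. The resulting $\prec$-offset coordinates will then be simultaneously adapted to both weightings with matching weights, from which uniform alignment and the cleanness statement drop out.

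First I construct the forest, working locally around a point $p\in \Delta_I\cap\Delta_J$ (outside this intersection at least one support is avoided, so alignment with any chart is automatic there). If $I\cap J=\emptyset$, I pick arbitrary $i\in I$, $j\in J$, take $i$ and $j$ as roots, declare every element of $I\setminus\{i\}$ a direct child of $i$ and every element of $J\setminus\{j\}$ a direct child of $j$, and let each element of $\upto{s}\setminus(I\cup J)$ be its own root. If $I\cap J\neq\emptyset$, I instead fix some $k\in I\cap J$ as a single root with every other element of $I\cup J$ a direct child of $k$, and elements outside $I\cup J$ again isolated roots. Checking the subtree axioms in each case is purely bookkeeping: all elements of $I$ (resp.\ $J$, resp.\ $I\cup J$) sit in a depth-one configuration beneath a common ancestor that already lies in the set, and the depth-one structure precludes intermediate elements that could violate the convexity condition.

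Applying Definition~\ref{def:forest-offset-coords} yields $\prec$-offset coordinates on a neighbourhood of $p$, and Lemma~\ref{lem:subtree-adapted} then simultaneously certifies them as adapted to both $\cW^{FM}_{\Delta_I}$ and $\cW^{FM}_{\Delta_J}$. Crucially, the weight assigned to a coordinate $\Delta x_{lj}$ depends only on $j$ through the weight sequence of $H_\bullet$, and on whether $l$ lies in the non-maximal part of the respective subtree; consequently, whenever a coordinate direction carries non-zero weights under both weightings those weights agree, which is exactly uniform alignment.

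When $I\cap J\neq\emptyset$, the same forest also has $I\cup J$ as a $\prec$-subtree, so by the same lemma the same chart is adapted to $\cW^{FM}_{\Delta_{I\cup J}}$. All three weightings thus appear as standard weightings in a common chart, and inspecting the cut-out description~\eqref{eq:cut-out-diag} shows that $\cW^{FM}_{\Delta_I}\cap\cW^{FM}_{\Delta_J}$ is the standard weighting whose weight in each direction is the maximum of the two individual weights --- which matches $\cW^{FM}_{\Delta_{I\cup J}}$ on the nose. Cleanness then follows either from Example~\ref{ex:weighting-intersections}(1) together with Lemma~\ref{lem:reg-intersection} applied to the explicit structure filtrations, or directly by noting that standard weightings with matching uniform weights intersect cleanly as coordinate subspaces. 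The main (and essentially only) delicate point is the combinatorial construction of a single forest adapted to both $I$ and $J$; once this is in place, Lemma~\ref{lem:subtree-adapted} carries all the analytic weight of the argument.
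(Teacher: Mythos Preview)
Your proof is correct and follows essentially the same approach as the paper. The overlapping case is identical: both you and the paper choose a root $k\in I\cap J$, make all of $(I\cup J)\setminus\{k\}$ direct children, and invoke Lemma~\ref{lem:subtree-adapted} for the three subtrees $I$, $J$, and $I\cup J$. The only difference is in the disjoint case, where the paper shortcuts via transversality of the supports and Lemma~\ref{lem:intersection-props-supports}(3), while you build a two-component forest and again apply Lemma~\ref{lem:subtree-adapted}; your route is more uniform, the paper's slightly quicker.
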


\begin{proof}
If $I$ and $J$ are disjoint, then $\Delta_I$ and $\Delta_J$ intersect transversely and the weightings over them must be uniformly aligned by Lemma~\ref{lem:intersection-props-supports}(3).

If $I$ and $J$ do overlap, let $p=(p_1, ..., p_s)\in \Delta_I\cap \Delta_J=\Delta_{I\cup J}$ be arbitrary and pick $i\in I\cap J$. Define a forest by letting each element of $(I\cup J)\setminus\{i\}$ be a child of $i$ and making all other elements of $\upto{s}$ roots. Then both $I$ and $J$ are $\prec$-subtrees and we obtain their uniform alignment in $\prec$-offset coordinates by Lemma~\ref{lem:subtree-adapted}. Furthermore, $I\cup J$ is also a $\prec$-subtree, making these coordinates adapted to $\cW^{FM}_{\Delta_{I\cup J}}$ as well. Since the weights match in all directions normal to $\Delta_{I\cup J}$, we can conclude that $\cW^{FM}_{\Delta_I}\cup\cW^{FM}_{\Delta_J}=\cW^{FM}_{\Delta_{I\cup J}}$ holds and is a clean intersection.
\end{proof}

To generalize this strategy to nests, we need forests compatible with them.

\begin{definition}
We say a nest $\cN\subseteq\cI$ is \textbf{covered} by a forest $\prec$ if
\begin{enumerate}
\item every element of the nest is a subtree of the forest and conversely
\item for every $k\in\upto{s}$ the descendant subtree $T_{\preceq k}\in\cN$ is in the nest if and only if $k$ has at least one child.
\end{enumerate}
In this case, we define for every $N\in\cN$ the set of \textbf{$N$-controls} as
$$\Ct^\prec_N:=\{n\in N\;|\; \text{$N$ is the smallest element of $\cN$ containing $n$ as a non-root}\}$$
\end{definition}

Clearly, $\{\Rt^\prec\}\cup\{\Ct^\prec_N\}_{N\in\cN}$ forms a partition of $\upto{s}$.

\begin{example}\label{ex:fm-nest-forest}
Consider again for $s=9$ the nest $$\cN=\{\{1,2,3\},\;\{5,6\},\;\{7,8,9\},\;\{5,6,7,8,9\}\}$$ from Paragraph~\ref{rem:nests-as-trees}. As previously discussed, a nest encodes \textit{asymptotic orders} of collisions - in this case, $\{5,6\}$ and $\{7,8,9\}$ first collide within themselves, and subsequently the two resulting points collide.
The following is a forest that covers this nest, with the roots circled:
\vspace{5mm}
\begin{equation*}
\begin{tikzpicture}
\node (1) at (0.9*1, -0.9) {1};
\node[circle, draw, minimum size=5mm, inner sep=0mm] (2) at (0.9*2, 0) {2};
\node (3) at (0.9*3, -0.9) {3};
\node[circle, draw, minimum size=5mm, inner sep=0mm] (4) at (0.9*4, 0) {4};
\node (5) at (0.9*4, -1.8) {5};
\node (6) at (0.9*5, -0.9) {6};
\node[circle, draw, minimum size=5mm, inner sep=0mm] (7) at (0.9*6, 0) {7};
\node (8) at (0.9*7, -0.9) {8};
\node (9) at (0.9*8, -0.9) {9};

\draw (2) -- (1);
\draw (2) -- (3);
\draw (6) -- (5);
\draw (7) -- (6);
\draw (7) -- (8);
\draw (7) -- (9);
\end{tikzpicture}
\end{equation*}

This would correspond to offset coordinates
$$(\Delta x_1, x_2, \Delta x_3, x_4, \Delta x_5, \Delta x_6, x_7, \Delta x_8, \Delta x_9 )$$
where each $\Delta x_k$ is relative to the component labeled by the parent of $k$. These coordinates allow us to describe collisions in the order the nest encodes: We first send $\Delta x_5$ to zero to collide $\{5,6\}$, then send $\Delta x_8$ and $\Delta x_9$ to zero to collide $\{7,8,9\}$ and finally send $\Delta x_6$ to zero to collide the two groups. That this is possible for every covering forest depends crucially on their defining condition (1). Note that the sets of controls
$$
\Ct^\prec_{\{1,2,3\}} = \{1,3\}, \qquad
\Ct^\prec_{\{5,6\}} = \{5\}, \qquad
\Ct^\prec_{\{7,8,9\}} = \{8,9\}, \qquad
\Ct^\prec_{\{5,6,7,8,9\}} = \{6\}
$$
label exactly the parameters that need to vanish at every step of this process, which is why they will eventually appear in our local model for the blow-up.

Note that the tree corresponding to $\cN$ from Paragraph~\ref{rem:nests-as-trees} is related, but distinct from the forest above. Indeed the same nest may be covered by many different forests and vice versa. Given the condition on descendant subtrees\footnote{In the definition of a covering forest, it would have sufficed to only ask that every element of the nest is a subtree in order to produce weighted coordinates. We add the extra condition on descendant subtrees purely for convenience such that we can encode the data in a single picture, as the forests we produce in Lemma~\ref{lem:covering-forest-exists} satisfy it anyway.}, one may try to recover the nest from the forest as the set $$\{T_{\preceq k_0}\;|\; k_0 \text{ has at least one child} \}\subseteq\cI.$$ However, for the above forest this would fail to produce the element $\{7,8,9\}\in\cN$. In order to simultaneously encode forest and nest in a single picture, we can insert extra intermediary nodes with the same label as their parents:
\vspace{5mm}
\begin{equation*}
\begin{tikzpicture}
\node (1) at (0.9*1, -0.9) {1};
\node[circle, draw, minimum size=5mm, inner sep=0mm] (2) at (0.9*2, 0) {2};
\node (3) at (0.9*3, -0.9) {3};
\node[circle, draw, minimum size=5mm, inner sep=0mm] (4) at (0.9*4, 0) {4};
\node (5) at (0.9*4, -1.8) {5};
\node (6) at (0.9*5, -0.9) {6};
\node[circle, draw, minimum size=5mm, inner sep=0mm] (7) at (0.9*6, 0) {7};
\node (77) at (0.9*7, -0.9) {7};
\node (8) at (0.9*6, -1.8) {8};
\node (9) at (0.9*8, -1.8) {9};

\draw (2) -- (1);
\draw (2) -- (3);
\draw (6) -- (5);
\draw (7) -- (6);
\draw[dashed] (7) -- (77);
\draw (77) -- (8);
\draw (77) -- (9);
\end{tikzpicture}
\end{equation*}

We can recover the forest by collapsing the dashed lines again, but now we can also recover the nest as exactly the set of all labels of descendant subtrees rooted at any node with children. We can also read off the controls of a given $N\in\cN$ in this picture: They are exactly the first-degree children of the root of the corresponding tree that are not connected by a dashed line. 
\end{example}

It is always possible to find a covering forest:

\begin{lemma}\label{lem:covering-forest-exists}
Every nest $\cN\subseteq\cI$ is covered by some forest $\prec$.
\end{lemma}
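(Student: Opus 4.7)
The plan is to proceed by induction on $|\cN|$. The base case $|\cN|=0$ is trivial: the discrete partial order on $\upto{s}$ (with no relations) covers the empty nest. For the inductive step, I would pick a maximal element $M\in\cN$ and apply the hypothesis to $\cN':=\cN\setminus\{M\}$ to obtain a covering forest $\prec'$, then modify $\prec'$ by adding the minimum number of parent relations needed to make $M$ a descendant subtree while preserving the subtree structure that $\prec'$ already achieves for $\cN'$.

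Concretely, let $N_1,\ldots,N_k$ denote the maximal elements of $\cN'$ contained in $M$; the nest property together with maximality of $M$ in $\cN$ forces these to be maximal in $\cN'$ globally, so by the inductive hypothesis each $N_i$ equals $T_{\preceq r_i}^{\prec'}$ for some root $r_i$ of $\prec'$. Write $F:=M\setminus\bigcup_i N_i$ and split into two cases. If $F\neq\emptyset$, I would pick any $r\in F$ (which the nest property forces to be an isolated root of $\prec'$) and declare $r$ the parent of each $r_i$ and of each element of $F\setminus\{r\}$. If $F=\emptyset$ (which necessarily forces $k\geq 2$), I would instead declare $r_1$ the parent of $r_2,\ldots,r_k$. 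In either case $\prec$ remains a forest, and $T_{\preceq r}^{\prec}$ (respectively $T_{\preceq r_1}^{\prec}$) becomes exactly $M$.

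Verifying that $\prec$ covers $\cN$ then reduces to checking the two defining conditions. For condition (1), every $N\in\cN'$ must remain a subtree of $\prec$; since the only new parent relations are at previously-isolated roots or between the $r_i$'s, any would-be new interval inside such an $N$ would have to connect elements from two distinct $N_j$'s, which the nest property rules out. For condition (2), only one element has its descendant set changed, and its new value is exactly $M\in\cN$; all other descendant subtrees agree with those of $\prec'$ and lie in $\cN'\subseteq\cN$, so the iff is inherited from the inductive hypothesis.

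The main obstacle will be the case $F=\emptyset$, where we are forced to reuse $r_1$ as the root of $M$'s subtree. This enlarges $T_{\preceq r_1}^{\prec}$ from $N_1$ to $M$, so $N_1$ is no longer a descendant subtree, only a subtree. Verifying subtree-ness of $N_1$ in this situation requires tracing intervals carefully: any new element that could appear on an interval $[a,b]\subseteq N_1$ would have to be an ancestor of $a\in N_1\setminus\{r_1\}$ strictly below $r_1$, but the ancestor chain of $a$ is unchanged by the modification and remains inside $N_1$.
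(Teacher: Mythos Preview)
Your proof is correct and takes essentially the same approach as the paper, just in the opposite direction: the paper orders the nest from small to large and, starting from the discrete order, iteratively attaches the remaining roots within each $N_i$ under a chosen one, whereas your top-down recursion peels off a maximal element first. Both amount to repeatedly joining roots under a designated root.

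Your verification is in fact more careful than the paper's. The $F=\emptyset$ case, in which $N_1$ is demoted from a descendant subtree to merely a subtree after $r_1$ acquires new children, is a genuine phenomenon (it occurs already in the paper's running example, where $\{7,8,9\}$ is only a subtree of the covering forest), and you trace the interval argument explicitly. The paper's one-line justification ``$N_i=\{k\in\upto{s}\mid k\preceq o_i\}$ by construction'' glosses over exactly this point, since the same $o_i$ can be reused at a later step.
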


\begin{proof}
We can construct a forest covering a given nest $\cN$ as follows: Let $N_1, ..., N_{|\cN|}$ be an ordering of all elements of the nest such that for all $i<j$ if holds that either $N_i\subsetneq N_j$ or $N_i$ and $N_j$ are incomparable. We start with a forest $\prec_0$ where all elements are incomparable roots. Given a forest $\prec_i$ we iteratively construct a new forest $\prec_{i+1}$ as follows: We pick an element $o_i\in\Rt^{\prec_i}\cap N_i$ and define $\prec_{i+1}$ as the smallest partial order stronger than $\prec_{i}$ such that additionally $o\prec_{i+1}o_i$ for all $o\in(\Rt^{\prec_i}\cap N_i)\setminus\{o_i\}$. In other words, we attach all the trees growing out of the other elements of $\Rt^{\prec_i}\cap N_i$ as subtrees under $o_i$. For the final forest $\prec:=\prec_{|\cN|}$, we have $N_i=\{k\in\upto{s}\;|\; k\preceq o_i\}$ by construction. Every other subtree of descendants is rooted at a leaf node and thus not an element of $\cI$.
\end{proof}

Consequently, we can build aligned charts over nests:

\begin{lemma}\label{lem:fm-nests-aligned}
Let $\cN\subseteq\cI$ be a nest. Then the weightings $\{\cW^{FM}_{\Delta_I}\;|\; I\in\cN\}$ are uniformly aligned.
\end{lemma}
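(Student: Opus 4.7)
The plan is to reduce the statement to a single application of Lemma~\ref{lem:subtree-adapted} at a common covering forest, exploiting that the weight sequence appearing normal to each diagonal only depends on $H_\bullet$ itself.

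First, I would pick an arbitrary point $p\in\cap_{N\in\cN}\Delta_N=\Delta_{\cup\cN}$, since it suffices to exhibit simultaneously adapted coordinates in a neighbourhood of every such $p$ (points outside this intersection lie in the support of at most some of the weightings, and we can restrict to small neighbourhoods disjoint from the other supports to reduce to lower nests). By Lemma~\ref{lem:covering-forest-exists}, there is a forest $\prec$ on $\upto{s}$ that covers $\cN$; in particular, every $N\in\cN$ is a $\prec$-subtree. Using Lemma~\ref{lem:prod-coords} to build offset charts around each pair $(p_{\operatorname{par}^\prec(l)},p_l)$, together with arbitrary coordinates around the roots, Definition~\ref{def:forest-offset-coords} produces $\prec$-offset coordinates
\[
\left(\{x_{k1},\ldots,x_{km}\}_{k\in\Rt^\prec},\;\{\Delta x_{l1},\ldots,\Delta x_{lm}\}_{l\in\Ch^\prec}\right)
\]
on a neighbourhood of $p$ in $M^s$.

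Next, I apply Lemma~\ref{lem:subtree-adapted} to each $N\in\cN$ separately: because $N$ is a $\prec$-subtree, the same coordinate system is adapted to $\cW^{FM}_{\Delta_N}$. This already establishes alignment of the family $\{\cW^{FM}_{\Delta_N}\}_{N\in\cN}$. To upgrade this to uniform alignment, I look at the weight assignment from the same Lemma: letting $i_N$ denote the maximal element of $N$, the coordinate $\Delta x_{lj}$ is assigned under $\cW^{FM}_{\Delta_N}$ the $j$-th entry of the weight sequence of $H_\bullet$ if $l\in N\setminus\{i_N\}$, and weight zero otherwise; the $x_{kj}$ always receive weight zero. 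Thus for fixed $(l,j)$ the weight assigned to $\Delta x_{lj}$ by $\cW^{FM}_{\Delta_N}$ is either zero or the $j$-th weight of $H_\bullet$, independent of the particular $N$ that uses it. Whenever two weightings in the nest assign a non-zero weight to the same coordinate direction, those weights therefore agree, which is exactly the condition of uniform alignment in Definition~\ref{def:weighting-compat}.

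I do not expect any real obstacles here: all the heavy lifting has been done in Lemmas~\ref{lem:prod-coords}, \ref{lem:subtree-adapted}, and \ref{lem:covering-forest-exists}. The only mildly delicate point is the bookkeeping that ensures the covering forest condition (each $N\in\cN$ is a $\prec$-subtree) is strong enough to feed into Lemma~\ref{lem:subtree-adapted} simultaneously for all $N\in\cN$, so that the \emph{same} chart serves every weighting in the nest; this is precisely what a covering forest is designed to do.
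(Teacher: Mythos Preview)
Your plan is essentially the paper's own proof: choose a covering forest via Lemma~\ref{lem:covering-forest-exists}, build $\prec$-offset coordinates near the point, apply Lemma~\ref{lem:subtree-adapted} to every $N\in\cN$, and observe that the weight assigned to $\Delta x_{lj}$ is always either zero or the $j$-th entry of the weight sequence of $H_\bullet$, which yields uniform alignment. One small slip to fix: the equality $\cap_{N\in\cN}\Delta_N=\Delta_{\cup\cN}$ is false when $\cN$ contains disjoint elements (the left side is the polydiagonal $\Delta_\cN$ in the paper's notation), but your argument only uses $p\in\cap_{N\in\cN}\Delta_N$, so this does not affect correctness.
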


\begin{proof}
It suffices to find weighted coordinates uniformly aligned with all $\cN$ close to any point $p=(p_1,...,p_s)\in\Delta_\cN$, as for any other point $p'\in M^s$ there is some maximal subnest $\cN'\subseteq\cN$ with $p'\in\Delta_{\cN'}$. The previous Lemma allows us to pick a forest $\prec$ that covers $\cN$. By Lemma~\ref{lem:subtree-adapted}, any $\prec$-offset coordinates are aligned with all $\{\cW^{FM}_{\Delta_I}\}_{I\in\cN}$. The weight of any direction $\Delta x_{lk}$ under $\cW^{FM}_{\Delta_I}$ is either zero or $w_k$ depending on whether $l$ lies in a given $I\in\cN$, thus the alignment is uniform.
\end{proof}

We can now conclude that the Fulton-MacPherson set is quite well-behaved.

\begin{proposition}\label{prop:fm-reg-unif-align}
$\cW^{FM}$ is a uniformly aligned weighted building set.
\end{proposition}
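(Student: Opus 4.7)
The plan is to verify separately the two clauses of Definition~\ref{def:weighted-building-set} (weighted building set) and of Definition~\ref{def:building-properties}(3) (uniform alignment), reducing each to the work already done in Lemmas~\ref{lem:fm-pairs-aligned} and~\ref{lem:fm-nests-aligned}.

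Uniform alignment comes essentially for free: it asks precisely that $\{\cW^{FM}_{\Delta_I}\}_{I\in\cN}$ be uniformly aligned for every $\cG^{FM}$-nest $\cN$, which after translating between index-nests and diagonal-nests via Lemma~\ref{fulton-mac-nest} is exactly the content of Lemma~\ref{lem:fm-nests-aligned}.

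For the weighted building set condition I would fix $\cP=\{\Delta_{I_1},\ldots,\Delta_{I_n}\}\subseteq\cG^{FM}$ with $\cap\cP\in\cG^{FM}$ and show $\bigcap_{P\in\cP}\cW^{FM}_P = \cW^{FM}_{\cap\cP}$ as a clean intersection. Since $\cap\cP$ is a single diagonal, Proposition~\ref{lem:fulton-mac-nice-building} forces the factorization $\fac\{I_1,\ldots,I_n\}$ to consist of a single piece; equivalently, $\{I_1,\ldots,I_n\}$ is a connected family of index sets (so the intersection graph whose vertices are the $I_j$ and whose edges connect pairs with non-empty overlap is connected), and $\cap\cP=\Delta_J$ with $J=\bigcup_j I_j$. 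Picking any spanning tree of this intersection graph and traversing it from an arbitrary root, I can relabel the $I_j$ as $I_{j_1},\ldots,I_{j_n}$ such that $I_{j_k}\cap\bigl(\bigcup_{l<k}I_{j_l}\bigr)\neq\emptyset$ for every $k\geq 2$.

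I would then induct on $k$. The base case $k=1$ is trivial and $k=2$ is Lemma~\ref{lem:fm-pairs-aligned} itself. For the inductive step, setting $J_{k-1}:=\bigcup_{l<k}I_{j_l}$ and assuming $\bigcap_{l<k}\cW^{FM}_{\Delta_{I_{j_l}}}=\cW^{FM}_{\Delta_{J_{k-1}}}$ cleanly, I apply Lemma~\ref{lem:fm-pairs-aligned} to the pair $(\Delta_{J_{k-1}},\Delta_{I_{j_k}})$---whose indices overlap by construction---to obtain that $\cW^{FM}_{\Delta_{J_{k-1}}}\cap \cW^{FM}_{\Delta_{I_{j_k}}}=\cW^{FM}_{\Delta_{J_k}}$ cleanly. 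The main technical hurdle, and the only genuinely non-routine part, is to conclude that the \emph{full} family $\{\cW^{FM}_{\Delta_{I_{j_l}}}\}_{l\leq k}$ still intersects cleanly in $\cW^{FM}_{\Delta_{J_k}}$; for this I would invoke Lemma~\ref{lem:clean-vanishing-ideal}, which recasts cleanness as additivity of vanishing ideals. The two cleanness assertions from the induction hypothesis and the merging step then telescope:
\[
I_{\cW^{FM}_{\Delta_{J_k}}} \;=\; I_{\cW^{FM}_{\Delta_{J_{k-1}}}}+I_{\cW^{FM}_{\Delta_{I_{j_k}}}} \;=\; \sum_{l<k}I_{\cW^{FM}_{\Delta_{I_{j_l}}}}+I_{\cW^{FM}_{\Delta_{I_{j_k}}}} \;=\; \sum_{l\leq k}I_{\cW^{FM}_{\Delta_{I_{j_l}}}},
\]
so Lemma~\ref{lem:clean-vanishing-ideal} promotes the pairwise conclusion to the full family. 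At $k=n$ this yields the desired identity $\bigcap_{P\in\cP}\cW^{FM}_P = \cW^{FM}_{\Delta_J}=\cW^{FM}_{\cap\cP}$ as a clean intersection, completing the verification.
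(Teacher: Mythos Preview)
Your proposal is correct and follows essentially the same approach as the paper: reduce uniform alignment to Lemma~\ref{lem:fm-nests-aligned}, and verify the weighted building set condition by ordering $\cP$ so that each partial intersection is again a diagonal, then applying Lemma~\ref{lem:fm-pairs-aligned} inductively. The paper simply asserts that ``cleanness of the intersections at every step guarantees cleanness of the intersection as a whole,'' whereas you make this telescoping explicit via the vanishing-ideal characterization of Lemma~\ref{lem:clean-vanishing-ideal}; this is a welcome bit of extra care but not a different idea.
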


\begin{proof}
    For $\cW^{FM}$ to be a weighting along $\cG^{FM}$ at all, we need for any $\cP\subseteq\cI$ with $\Delta_\cP=\Delta_I$ for some $I\in\cI$ that
    \begin{equation}\label{eq:int-to-show}
        \bigcap_{P\in\cP} \cW^{FM}_{\Delta_P} = \cW^{FM}_{\Delta_I},
    \end{equation}
    and that this intersection is clean. To this end pick a sequence of elements $P_1, ..., P_k\in\cP$ and $I_1, ..., I_k$ such that $$\bigcap_{i=1}^l \Delta_{P_i} = \Delta_{I_l} \qquad\text{and}\qquad I_k=I,$$
    i.e. write $\Delta_I$ as a sequence of intersections such that at every stage we produce a diagonal. In particular, we now have $\Delta_{I_{l+1}}=\Delta_{I_l}\cap\Delta_{P_{l+1}}$ and the pair of weightings over these diagonals intersect cleanly in the weighting over $\Delta_{I_{l+1}}$ by Lemma~\ref{lem:fm-pairs-aligned}. By induction, we can conclude that Equation~\eqref{eq:int-to-show} holds. Cleanness of the intersections at every step guarantees cleanness of the intersection as a whole.
    
    Uniform alignment is exactly the content of Lemma~\ref{lem:fm-nests-aligned}.
\end{proof}

\startSubchaption{Blowing up configuration space}\label{ssec:filtered-fulton-macpherson}

\begin{definition}\label{def:filtered-fm}
Let $(M,H_\bullet)$ be a filtered manifold. We define the \textbf{$s$-fold Fulton-MacPherson configuration space} as $$ \Conf^{[s]}(M,H_\bullet):=\Bl_{\cW^{FM}}(M^s)\xrightarrow{b}M^s.$$

We analogously define the \textbf{$s$-fold projective Fulton-MacPherson configuration space} $ \PConf^{[s]}(M,H_\bullet)$ with the projective blow-up. If the filtration is trivial, we write $\Conf^{[s]}(M)$ and $\PConf^{[s]}(M)$, respectively.
\end{definition}

\begin{npar}[Notation]
For every point $p\in \Conf^{[s]}(M,H_\bullet)$ we let $\cN_p\subseteq\cI$ be the nest that corresponds to $\cG^{FM}_p$ under the correspondence of Lemma~\ref{fulton-mac-nest}.
\end{npar}

By Lemma~\ref{lem:fulton-mac-nice-building}, Proposition~\ref{prop:fm-reg-unif-align} and Theorem~\ref{thm:manifold-structure}, we know that $ \Conf^{[s]}(M,H_\bullet)$ is a smooth manifold with corners while its quotient $\PConf^{[s]}(M,H_\bullet)$ may have some singularities. The nests associated with each point give a stratification.

\begin{npar}[Comparison to the classic Fulton-MacPherson configuration space]
If the filtration $H_\bullet$ is trivial, then all weightings in $\cW^{FM}$ are also trivial, such that $\Conf^{[s]}(M,H_\bullet)$ and $\PConf^{[s]}(M,H_\bullet)$ coincide with the classic Fulton-MacPherson configuration spaces $\Conf^{[s]}(M)$ and $\PConf^{[s]}(M)$, respectively. If $H_\bullet$ is not trivial, the homotopy type of the spherical $\Conf^{[s]}(M,H_\bullet)$ still agrees with that of $\Conf^{[s]}(M)$: Since they are manifolds with corners that agree away from their boundary, one can always consider a small homotopy that retracts into the bulk. Even though there is no canonical comparison map that is a diffeomorphism (compare Example~\ref{ex:multiple}), we still expect them to be diffeomorphic since the local models agree. For the projective $\PConf^{[s]}(M,H_\bullet)$, it is not hard to see that not even the homotopy type generally matches that of $\PConf^{[s]}(M)$ (compare Figure~\ref{fig:projective-helix}).
\end{npar}

As we have seen in the previous section, we must make a choice of a forest covering the nest $\cN_p$ to construct adapted coordinates covering some point $p\in\Conf^{[s]}(M,H_\bullet)$. We could then go ahead and build local coordinates on the blow-up according to Def.~\ref{def:building-local-charts}. But these will have considerable notational complexity! We can make our life slightly easier by instead giving local diffeomorphisms to a model that is more complicated than the corner model:

\begin{proposition}\label{prop:FM-local-model}
Let $p=(p_1, ..., p_s)\in M^s$ and take a nest $\cN\subseteq\cI$ with $p\in\Delta_\cN$, a forest $\prec$ covering the nest and $\prec$-offset coordinates 
$$\left(\{x_{k1}, ..., x_{km}\}_{k\in\Rt^\prec}, \{\Delta x_{l1}, ..., \Delta x_{lm}\}_{l\in\Ch^\prec}\right): U_0\to \R^{sm}$$
around $p$. Write $(w_1, ..., w_m)$ for the weight sequence of $H_\bullet$.
Then there is a map
$$  \Phi:U\to \R^{|\Rt^\prec|\cdot m} \times \prod_{N\in\cN}\left( \bS^{|\Ct^\prec_N|\cdot m-1} \times [0,\infty) \right) $$
defined on an open subset $U\subseteq\Conf^{[s]}(M,H_\bullet)$ and consisting of components
\begin{itemize}
    \item $(\hat x_{k1}, ..., \hat x_{km}):U\to \R^m$ for each $k\in\Rt^\prec$
    \item $\{\Delta \hat x_{l1}, ..., \Delta \hat x_{lm}\}_{l\in\Ct^\prec_N}:U\to \bS^{|\Ct^\prec_N|\cdot m-1}$ for each $N\in\cN$
    \item $t_N: U\to [0,\infty)$ for each $N\in\cN$
\end{itemize}
that satisfy the following:
\begin{enumerate}
    \item $U$ is the intersection of $b^{-1}(U_0)$ and $$\Conf^{[s]}_{\subseteq\cN}(M,H_\bullet):=\{p\in \Conf^{[s]}(M,H_\bullet)\;|\; \cN_p\subseteq\cN\}.$$
    \item $\Phi$ is a diffeomorphism of manifolds with corners onto its image\footnote{It is not generally surjective - indeed (1) specifically excludes points with any collisions beyond $\cN$, which can naturally occur for sequences contained in the boundary of the image.}.
    \item The blow-down map can be written in $\prec$-offset coordinates by the components of $\Phi$ as
        \begin{align*}
        x_{ki} &= \hat x_{ki},\\
        \Delta x_{li} &= \Delta \hat x_{li} \cdot\prod_{\{l,\operatorname{par}^\prec(l)\}\subseteq N\in\cN} t_N^{w_i},
        \end{align*}
        for $k\in\Rt^\prec$, $l\in\Ch^\prec$ and $i\in\upto{m}$.
    \item For $\hat p\in U,$ $$\cN_{\hat p} = \{N\in\cN\;|\; t_N(\hat p) = 0\}.$$
\end{enumerate}
\end{proposition}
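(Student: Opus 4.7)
The plan is to assemble $\Phi$ from the explicit charts of Definition~\ref{def:building-local-charts}, one for each good perspective dictated by the covering forest, and glue them using the standard description of a sphere by its coordinate hyperplanes. By Lemma~\ref{lem:subtree-adapted}, the $\prec$-offset coordinates are simultaneously adapted to the weightings $\{\cW^{FM}_{\Delta_N}\}_{N\in\cN}$: the direction $\Delta x_{li}$ has weight $w_i$ under $\cW^{FM}_{\Delta_N}$ when $\{l,\operatorname{par}^{\prec}(l)\}\subseteq N$ and weight zero otherwise. Hence $\Delta x_{li}$ is normal to $\Delta_N$ yet tangential to every strictly larger element of $\cN$ precisely when $l\in\Ct^\prec_N$. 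The covering condition moreover forces $\bigcup_{N\in\cN}\Ct^\prec_N = \Ch^\prec$, because for any child $l$ with parent $p$ the descendant subtree $T_{\preceq p}$ lies in $\cN$ (since $p$ has a child), so $l$ is a control of the minimal such nest element. This makes the dimensions of the proposed local model add up to $sm$.

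For each choice of $(L, I, \mathbf{s})$ with $L(N)\in\Ct^\prec_N$, $I(N)\in\upto{m}$ satisfying $w_{I(N)}\neq 0$, and $\mathbf{s}(N)\in\{\pm1\}$, I build a good perspective $(\chi_0,\cN,\mathbf{h},\mathbf{s})$ by setting $\mathbf{h}(N)$ to be the index of $\Delta x_{L(N)\,I(N)}$ inside the $\prec$-offset coordinate tuple. Condition~(2) of Definition~\ref{def:good-perspective} is exactly the characterization of $\Ct^\prec_N$ noted above. By Lemma~\ref{lem:perspective-covers}, the chart domains of these good perspectives, ranging over all such $(L,I,\mathbf{s})$, cover
\[
U := \Conf^{[s]}_{\subseteq\cN}(M,H_\bullet)\cap b^{-1}(U_0).
\]

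I then define $\Phi$ by setting $\hat x_{ki}:=x_{ki}\circ b$ for $k\in\Rt^\prec$; $t_N$ as the control parameter of Definition~\ref{def:building-local-charts} in any local chart; and $\Delta\hat x_{li}$ by imposing property~(3), i.e.
\[
\Delta\hat x_{li} := \Delta x_{li}\Big/\prod_{\{l,\operatorname{par}^{\prec}(l)\}\subseteq N'\in\cN} t_{N'}^{w_i}.
\]
Corollary~\ref{cor:comp-coords} shows that in the chart corresponding to $(L,I,\mathbf{s})$ one has $\Delta\hat x_{L(N)\,I(N)} = \mathbf{s}(N)$ while the remaining $\Delta\hat x_{li}$ with $l\in\Ct^\prec_N$ are the induced chart coordinates $x_{\cdot:\mathbf{hs}}$; this simultaneously shows that $t_N$ is independent of the choice of perspective and that the tuple $\{\Delta\hat x_{li}\}_{l\in\Ct^\prec_N,\,i\in\upto{m}}$ has at least one component of modulus $1$ everywhere on $U$, so its $\ell^2$-norm is smoothly bounded away from zero and we may rescale it to land on $\bS^{|\Ct^\prec_N|\cdot m-1}$. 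Properties~(1) and~(4) follow by construction and Proposition~\ref{prop:strat-local-coords}; property~(3) is the defining equation; and for property~(2), $\Phi$ is, within each good perspective chart, the composition of the coordinates of Definition~\ref{def:building-local-charts} with a smooth diffeomorphism between the corner model and the product of a half-sphere with $[0,\infty)$, making it a local diffeomorphism. Injectivity on $U$ holds because a point in the blow-up is determined by $b$ together with its nest-indexed normal data.

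The main obstacle is the bookkeeping needed to verify that the explicit rescaling defining $\Delta\hat x_{li}$ genuinely reproduces the smooth structure of the charts from Definition~\ref{def:building-local-charts} after sphere re-parametrization, particularly when the weights $w_i$ along different coordinate directions disagree. Once the formulas of Corollary~\ref{cor:comp-coords} are expanded and reorganized by nest element, this reduces to the standard fact that the weighted half-space model for $\Bl_{\cE^w}(\R^n)$ is diffeomorphic to $\bS^{n-1}_+\times [0,\infty)$ via its weighted radial decomposition, applied in each spherical factor indexed by $N$.
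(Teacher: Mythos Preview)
Your approach is the same as the paper's: assemble $\Phi$ from the good-perspective charts of Definition~\ref{def:building-local-charts} and reparametrise each $\Ct^\prec_N$-block as a sphere times $[0,\infty)$. However, two related steps are not correct as written.

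First, the control parameter $t_N$ of Definition~\ref{def:building-local-charts} is \emph{not} independent of the perspective. Already for $\cN=\{N\}$ with $|\Ct^\prec_N|\cdot m=2$ and weights $(1,2)$, the two admissible perspectives give $t_N=\mathbf{s}(N)\,\Delta x_1$ and $t_N=(\mathbf{s}(N)\,\Delta x_2)^{1/2}$ respectively. Corollary~\ref{cor:comp-coords} only tells you that in \emph{each} chart one component of your $\Delta\hat x$ equals $\pm1$; it does not compare the tuples across charts, so the asserted perspective-independence does not follow.

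Second, rescaling by the $\ell^2$-norm destroys property~(3) whenever the $w_i$ are not all equal: dividing every $\Delta\hat x_{li}$ by a scalar $\rho$ independent of $i$ cannot be absorbed into a redefinition of $t_N$ in the relation $\Delta x_{li}=\Delta\hat x_{li}\cdot\prod_{N'} t_{N'}^{w_i}$, since one would need $\rho=\lambda^{w_i}$ simultaneously for all $i$.

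Both issues are resolved by what you gesture at in your final paragraph and what the paper actually does: for each $N$, form the vector $v\in\R^{|\Ct^\prec_N|\cdot m}$ by letting the chart's control parameter act (according to the weights) on the remaining chart coordinates with $\mathbf{s}(N)$ inserted in the gap, and then take its unique \emph{weighted} polar decomposition $v=t_N\cdot\sigma_N$ with $\sigma_N\in\bS$. It is this weighted-radius $t_N$, not the chart control parameter, that is independent of the perspective and makes~(3) hold. The paper phrases this as ``passing from coordinates on a sphere constructed by intersecting rays with a hyperplane back to the original sphere''; the rays in question are weighted orbits, not Euclidean ones.
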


As a sanity check, the fact that $\{\Rt^\prec\}\cup\{\Ct^\prec_N\}_{N\in\cN}$ forms a partition of $\upto{s}$ implies that the codomain of $\Phi$ is an $sm$-dimensional manifold with corners. Before we detail the construction of this model, let us build some intuition first:

\begin{npar}[Visualization]\label{par:visualization}
Consider a collided configuration $p\in \Conf^{[s]}(M, H_\bullet)$ and its control set $\cN\subseteq\cI$. To be concrete, say the nest is given as in Example~\ref{ex:fm-nest-forest} by $$\cN=\{\{1,2,3\},\quad\{5,6\},\quad\{7,8,9\},\quad\{5,6,7,8,9\}\},$$ and the configuration $p$ is such that the first three points are collided at $A\in M$, the fourth point is located at $B\in M$ and the remaining points are collided at $C\in M$. Take the forest from Example~\ref{ex:fm-nest-forest} together with coordinates adapted to the filtration around $A, B$ and $C$. Our local model then yields parameters
$$(\Delta \hat x_1, \hat x_2, \Delta \hat x_3, \hat x_4, \Delta \hat x_5, \Delta \hat x_6, \hat x_7, \Delta \hat x_8, \Delta \hat x_9, t_{123}, t_{56}, t_{789}, t_{56789})$$
where $\hat x_2, \hat x_4, \hat x_7\in \R^m$ specify $A,B$ and $C$ in the chosen coordinates, and $t_{123}, t_{56}, t_{789}$ and $ t_{56789}\in[0,\infty)$ control the collisions of the infinitesimal configurations described by 
$$(\Delta \hat x_1, \Delta \hat x_3)\in\bS^{2m-1},\qquad
\Delta \hat x_5\in \bS^{m-1}, \qquad
\Delta \hat x_6\in \bS^{m-1}, \qquad
(\Delta \hat x_8, \Delta \hat x_9)\in\bS^{2m-1}.$$
Since the control set of the point $p$ is all of $\cN$, we know that
$$t_{123}= t_{56}= t_{789}= t_{56789}=0$$
must hold at $p$. The remaining parameters can be visualized as follows:
\begin{center}
\begingroup%
  \makeatletter%
  \providecommand\color[2][]{%
    \errmessage{(Inkscape) Color is used for the text in Inkscape, but the package 'color.sty' is not loaded}%
    \renewcommand\color[2][]{}%
  }%
  \providecommand\transparent[1]{%
    \errmessage{(Inkscape) Transparency is used (non-zero) for the text in Inkscape, but the package 'transparent.sty' is not loaded}%
    \renewcommand\transparent[1]{}%
  }%
  \providecommand\rotatebox[2]{#2}%
  \newcommand*\fsize{\dimexpr\f@size pt\relax}%
  \newcommand*\lineheight[1]{\fontsize{\fsize}{#1\fsize}\selectfont}%
  \ifx\svgwidth\undefined%
    \setlength{\unitlength}{150.23622047bp}%
    \ifx\svgscale\undefined%
      \relax%
    \else%
      \setlength{\unitlength}{\unitlength * \real{\svgscale}}%
    \fi%
  \else%
    \setlength{\unitlength}{\svgwidth}%
  \fi%
  \global\let\svgwidth\undefined%
  \global\let\svgscale\undefined%
  \makeatother%
  \begin{picture}(1,0.69811321)%
    \lineheight{1}%
    \setlength\tabcolsep{0pt}%
    \put(0.44217246,0.58977298){\color[rgb]{0,0,0}\makebox(0,0)[lt]{\lineheight{0.80000001}\smash{\begin{tabular}[t]{l}1\end{tabular}}}}%
    \put(0.43767929,0.48356036){\color[rgb]{0,0,0}\makebox(0,0)[lt]{\lineheight{0.80000001}\smash{\begin{tabular}[t]{l}3\end{tabular}}}}%
    \put(0.16149228,0.3228135){\color[rgb]{0,0,0}\makebox(0,0)[lt]{\lineheight{0.80000001}\smash{\begin{tabular}[t]{l}5\end{tabular}}}}%
    \put(0.14784088,0.22674039){\color[rgb]{0,0,0}\makebox(0,0)[lt]{\lineheight{0.80000001}\smash{\begin{tabular}[t]{l}6\end{tabular}}}}%
    \put(0.22771694,0.61917208){\color[rgb]{0,0,0}\makebox(0,0)[rt]{\lineheight{0.80000001}\smash{\begin{tabular}[t]{r}$\huge M$\end{tabular}}}}%
    \put(0.51568003,0.16053794){\color[rgb]{0,0,0}\makebox(0,0)[t]{\lineheight{0.80000001}\smash{\begin{tabular}[t]{c}7\end{tabular}}}}%
    \put(0.71187556,0.24754513){\color[rgb]{0,0,0}\makebox(0,0)[t]{\lineheight{0.80000001}\smash{\begin{tabular}[t]{c}8\end{tabular}}}}%
    \put(0.71286018,0.17964626){\color[rgb]{0,0,0}\makebox(0,0)[t]{\lineheight{0.80000001}\smash{\begin{tabular}[t]{c}9\end{tabular}}}}%
    \put(0,0){\includegraphics[width=\unitlength,page=1]{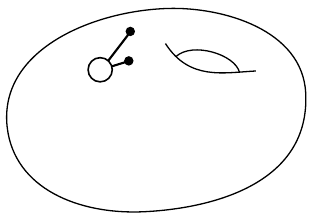}}%
    \put(0.30084011,0.45396378){\color[rgb]{0,0,0}\makebox(0,0)[lt]{\lineheight{0.80000001}\smash{\begin{tabular}[t]{l}2\end{tabular}}}}%
    \put(0,0){\includegraphics[width=\unitlength,page=2]{collided-forest.pdf}}%
    \put(0.75562697,0.3313202){\color[rgb]{0,0,0}\makebox(0,0)[lt]{\lineheight{0.80000001}\smash{\begin{tabular}[t]{l}4\end{tabular}}}}%
    \put(0.40393657,0.23097546){\color[rgb]{0,0,0}\makebox(0,0)[t]{\lineheight{0.80000001}\smash{\begin{tabular}[t]{c}7\end{tabular}}}}%
  \end{picture}%
\endgroup%

\end{center}
As a graph, this is exactly the description of the forest in Example~\ref{ex:fm-nest-forest}. But now for every subtree corresponding to some nest $N\in\cN$, the (normalized) offset of the child nodes in $\Ct^\prec_N$ (i.e., those connected to the root of the subtree by a solid line) corresponds to an element in $\bS^{|\Ct^\prec_N|\cdot m-1}$. For example, the two solid lines starting at 7 and pointing at 8 and 9 are exactly $(\Delta \hat x_8, \Delta \hat x_9)$ when interpreted as coordinates of a tangent vector at $C$. The dotted lines are drawn arbitrarily and do not reflect any parameters.

Changing the position of any root node or any endpoint of a solid line without creating further collisions corresponds to moving within the stratum labeled by $\cN$. Given that the coordinates specifying offsets are normalized, this provides $m|\Rt^\prec|+\sum_{N\in\cN}(m|\Ct^\prec_N|-1)$ degrees of freedom. The remaining $|\cN|$ degrees of freedom are given by the control parameters $t_N$.
Increasing $t_N$ to a non-zero value has the effect of undoing one level of the collision.
This moves the configuration into the larger stratum labeled by $\cN\setminus\{N\}$. For example, increasing the parameter $t_{56}$ associated with $N=\{5,6\}$ would modify the tree at $C$ as follows, with the distance between the fifth and six point controlled by $t_{56}$:
\begin{center}
\begingroup%
  \makeatletter%
  \providecommand\color[2][]{%
    \errmessage{(Inkscape) Color is used for the text in Inkscape, but the package 'color.sty' is not loaded}%
    \renewcommand\color[2][]{}%
  }%
  \providecommand\transparent[1]{%
    \errmessage{(Inkscape) Transparency is used (non-zero) for the text in Inkscape, but the package 'transparent.sty' is not loaded}%
    \renewcommand\transparent[1]{}%
  }%
  \providecommand\rotatebox[2]{#2}%
  \newcommand*\fsize{\dimexpr\f@size pt\relax}%
  \newcommand*\lineheight[1]{\fontsize{\fsize}{#1\fsize}\selectfont}%
  \ifx\svgwidth\undefined%
    \setlength{\unitlength}{87.87401575bp}%
    \ifx\svgscale\undefined%
      \relax%
    \else%
      \setlength{\unitlength}{\unitlength * \real{\svgscale}}%
    \fi%
  \else%
    \setlength{\unitlength}{\svgwidth}%
  \fi%
  \global\let\svgwidth\undefined%
  \global\let\svgscale\undefined%
  \makeatother%
  \begin{picture}(1,0.32258065)%
    \lineheight{1}%
    \setlength\tabcolsep{0pt}%
    \put(0.02807616,0.23648154){\color[rgb]{0,0,0}\makebox(0,0)[lt]{\lineheight{0.80000001}\smash{\begin{tabular}[t]{l}5\end{tabular}}}}%
    \put(0.02249741,0.13058415){\color[rgb]{0,0,0}\makebox(0,0)[lt]{\lineheight{0.80000001}\smash{\begin{tabular}[t]{l}6\end{tabular}}}}%
    \put(0.62601114,0.03008599){\color[rgb]{0,0,0}\makebox(0,0)[t]{\lineheight{0.80000001}\smash{\begin{tabular}[t]{c}7\end{tabular}}}}%
    \put(0.96144221,0.17883982){\color[rgb]{0,0,0}\makebox(0,0)[t]{\lineheight{0.80000001}\smash{\begin{tabular}[t]{c}8\end{tabular}}}}%
    \put(0.9631256,0.06275497){\color[rgb]{0,0,0}\makebox(0,0)[t]{\lineheight{0.80000001}\smash{\begin{tabular}[t]{c}9\end{tabular}}}}%
    \put(0,0){\includegraphics[width=\unitlength,page=1]{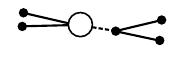}}%
    \put(0.43496599,0.15051102){\color[rgb]{0,0,0}\makebox(0,0)[t]{\lineheight{0.80000001}\smash{\begin{tabular}[t]{c}7\end{tabular}}}}%
  \end{picture}%
\endgroup%

\end{center}
This behavior is also reflected when we write out the blow-down map for this example:
\begin{align*}
x_1 &= \hat{x}_2 + t_{123}\cdot \Delta\hat{x}_1,\\
x_2 &= \hat{x}_2,\\
x_3 &= \hat{x}_2 + t_{123}\cdot \Delta\hat{x}_3,\\
x_4 &= \hat{x}_4,\\
x_5 &= \hat{x}_7 + t_{56789}\cdot \Delta\hat{x}_6 + (t_{56789}t_{56})\cdot \Delta\hat{x}_5,\\
x_6 &= \hat{x}_7 + t_{56789}\cdot \Delta\hat{x}_6,\\
x_7 &= \hat{x}_7,\\
x_8 &= \hat{x}_7 + (t_{56789}t_{789})\cdot \Delta\hat{x}_8,\\
x_9 &= \hat{x}_7 + (t_{56789}t_{789})\cdot \Delta\hat{x}_9,
\end{align*}
where all control parameters act according to the weights for the chosen coordinates.
\end{npar}

\begin{proof}[Proof of Proposition~\ref{prop:FM-local-model}]
For every $k\in\Rt^\prec$, we set
$$(\hat x_{k1}, ..., \hat x_{km}):= (x_{k1}, ..., x_{km})\circ b,$$
which is clearly well-defined on $b^{-1}(U_0)$.
Define $U$ as in condition (1). Every point $p\in U$ must lie in some chart domain $U_{\chi\cN\mathbf{hs}}$ where $\chi$ is given by the $\prec$-offset coordinates. We can assume that for every $N\in\cN$, $\mathbf{h}(N)$ is an index corresponding to an offset coordinate $\Delta x_{l_0i}$ with $l_0\in\Ct^\prec_N$:
After all, whether $p$ lies on the exceptional divisor of $N$ is exactly controlled by the $N$-controls in $\Ct^\prec_N.$
Letting this induced blow-up coordinate corresponding to $\Delta x_{l_0i}$ act on the rest according to the weights, where the gap it leaves is filled by $\mathbf{s}(N)$, yields an element $v\in\R^{|\Ct^\prec_N|\cdot m}$ depending on the point $p$. We can now define the remaining components of the map $\Phi$ as the unique $$\{\Delta \hat x_{l1}, ..., \Delta \hat x_{lm}\}_{l\in\Ct^\prec_N}\in\bS^{|\Ct^\prec_N|\cdot m-1} \qquad\text{and}\qquad t_N \in [0,\infty)$$ such that letting $t_N$ act on this element of the sphere according to the weights produces the vector $v\in\R^{|\Ct^\prec_N|\cdot m}$. This definition amounts to passing from coordinates on a sphere constructed by intersecting rays with a hyperplane back to the original sphere. It is straightforward but notationally extravagant to check that this yields a diffeomorphism $\Phi$ onto its image by explicitly writing out its local representations for every good perspective arising from the fixed offset coordinates and nest. The third property is a direct consequence of Eqs.~\eqref{eq:coord-lemma-three} and~\eqref{eq:local-blow-down} in this situation.
The fourth property follows by Proposition~\ref{prop:strat-local-coords} since our new parameters vanish precisely when the induced coordinate corresponding to $\Delta x_{l_0i}$ does.
\end{proof}

To discuss the relation to Fulton and MacPherson's \textit{screen} terminology, we want to investigate for some collided configuration $p\in\Delta_I$ what the fiber over $p$ in the exceptional divisor of $\Bl_{\cW^{FM}_{\Delta_I}}(M)$ looks like:

\begin{lemma}
For $p=(p_1, ..., p_s)\in\Delta_I\in\cG^{FM}$ and $i\in I$ it holds that
$$
\bS\nu\cW^{FM}_{\Delta_I} |_p \simeq \left(\gr\left(H^{s-|I|}_\bullet|_{p_i}\right) / \sim\right)\setminus\{[0_{p_i}]\}.
$$
The equivalence relation on the right hand side identifies $(s-|I|)$-fold configurations of graded tangent vectors at $p_i$ if they can be made to match by dilating with the $\R_{>0}$-action and shifting by a common graded vector. The excluded point $[0_{p_i}]$ is the class of the completely collided configuration. The isomorphism may depend on a choice of local coordinates and is thus not in general canonical.
\end{lemma}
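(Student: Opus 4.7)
The plan is to build the identification explicitly using forest offset coordinates and then interpret it intrinsically. First, fix $i\in I$ and choose a forest $\prec$ on $\upto{s}$ in which each $l\in I\setminus\{i\}$ is a child of $i$ while all other labels are roots, so that $I$ is a $\prec$-subtree. Take any $\prec$-offset coordinates around $p$. By Lemma~\ref{lem:subtree-adapted} they are adapted to $\cW^{FM}_{\Delta_I}$, with the normal coordinates $\{\Delta x_{l,j}\}_{l\in I\setminus\{i\},\,j\in\upto{m}}$ carrying the weights $w_j$ from the weight sequence of $H_\bullet$. Paragraph~\ref{props-weighted-normal}(2)--(3) then realizes $\nu\cW^{FM}_{\Delta_I}|_p$ as $\R^{m(|I|-1)}$ via the lifts $\Delta x_{l,j}^{(w_j)}$, together with the graded $\R$-action of Equation~\eqref{eq:action-normal}.

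Next, I would invoke Lemma~\ref{lem:weighted-normal-is-graded} together with Equation~\eqref{eq:noncanonical-weighted-normal} and Paragraph~\ref{props-linearized-bundles}(3) to obtain a (generally non-canonical) isomorphism of graded bundles
\[
\nu\cW^{FM}_{\Delta_I}|_p \;\simeq\; \gr(F_\bullet)|_p,
\]
where $F_\bullet$ is the quotient filtration on $\nu\Delta_I$ from Definition~\ref{def:filtered-submanifold}. Since $H^s_\bullet=(H_\bullet)^{\oplus s}$ and $\Delta_I$ is cut out by the equations $p_l=p_i$ for $l\in I\setminus\{i\}$, formula~\eqref{eq:filtration-quotient} combined with the product structure identifies
\[
\gr(F_\bullet)|_p \;\simeq\; \bigoplus_{l\in I\setminus\{i\}} \gr(H_\bullet|_{p_i}),
\]
with each summand contributed by one offset direction $\Delta x_l$.

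Finally, I would recognize the right-hand side as configurations modulo common translation. Regard $\gr(H_\bullet|_{p_i})^{|I|}$ as graded tangent configurations indexed by the collided labels; the diagonal (graded) translation $(v_l)_{l\in I}\mapsto(v_l+u)_{l\in I}$ is the intrinsic "shift by a common graded vector", and its quotient is canonically isomorphic to $\bigoplus_{l\in I\setminus\{i\}}\gr(H_\bullet|_{p_i})$ via $(v_l)_{l\in I}\mapsto(v_l-v_i)_{l\in I\setminus\{i\}}$, sending the totally collided class $[v_l\equiv v_i]$ to $0$. Passing to $\R_{>0}$-quotients on both sides then matches the defining scaling of $\bS\nu\cW^{FM}_{\Delta_I}|_p$ with the graded dilation on configurations, since in the explicit coordinates above both act as $\lambda\cdot v_{l,k}=\lambda^k v_{l,k}$ on each weight-$k$ component; the zero orbit on the configuration side is exactly the excluded class $[0_{p_i}]$.

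The main obstacle is verifying that the isomorphism $\nu\cW^{FM}_{\Delta_I}|_p\simeq\gr(F_\bullet)|_p$ intertwines the two $\R$-actions correctly, since Paragraph~\ref{props-linearized-bundles}(3) warns that it is coordinate-dependent; this is also the source of the non-canonicity acknowledged in the statement. Pinning down exactly which data is invariant — essentially, only the choice of reference label $i$ and of a chart on $M$ near $p_i$, but not the particular adapted lift — reduces to a direct comparison using the explicit coordinate formulas for the action in Paragraph~\ref{props-weighted-normal}.
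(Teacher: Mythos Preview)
Your approach is correct and essentially matches the paper's: both combine Equation~\eqref{eq:noncanonical-weighted-normal} with Equation~\eqref{eq:filtration-quotient} to identify $\nu\cW^{FM}_{\Delta_I}|_p$ with $\gr(H^s_\bullet)|_p/T_p\Delta_I$, recognize this quotient as configurations of graded tangent vectors modulo common shift, and then pass to the sphere bundle by removing the origin and dividing by $\R_{>0}$. The paper skips your explicit $\prec$-offset coordinate construction and your reference-label description $(v_l)\mapsto(v_l-v_i)$ of the shift quotient, working directly with the abstract quotient instead, but the argument is otherwise the same.
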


\begin{proof}
By combining Equation~\eqref{eq:noncanonical-weighted-normal} (which may depend on a choice of coordinates) with Equation~\eqref{eq:filtration-quotient}, we obtain
$$ \nu\cW^{FM}_{\Delta_I} \simeq \gr(H^s_\bullet|_{\Delta_I})\,/\,T\Delta_I$$
By definition of the diagonal $\Delta_I$, the right-hand side is just
$$ \left.\gr\left(H^{s-|I|}_\bullet\right)\right|_{\Delta_I}\,/\,\sim', $$
where the equivalence relation $\sim'$ identifies configurations of graded tangent vectors that are shifted by a common graded vector. Restricting to the fiber over $p$ yields
$$
\nu\cW^{FM}_{\Delta_I} |_p \simeq \gr\left(H^{s-|I|}_\bullet|_{p_i}\right) / \sim'.
$$
The equation in the statement of the Lemma then follows immediately by taking the sphere bundle over this, i.e. removing the origin and taking the quotient by the $\R_{>0}$-action.
\end{proof}

\begin{npar}[Visualization by screens]\label{par:screens}
Consider a blown-up collided configuration given by $p\in \Conf^{[s]}(M, H_\bullet)$ and its control set $\cN\subseteq\cI$. The configuration $p$ is completely determined by the underlying $b_{\cW^{FM}}(p)$ together with the components in the exceptional divisor of $\Bl_{\cW^{FM}_{\Delta_N}}(M)$ for each $\Delta_N\in\cG^{FM}$ with $b_{\cW^{FM}}(p)\in\Delta_N$. In the language of Fulton and MacPherson, such elements of $\bS\nu\cW^{FM}_{\Delta_N}$ are called a \textbf{screen}, and the screens corresponding to components of $p$ over the nest $\cN$ are the \textbf{essential screens} of $p$. In the light of the previous Lemma, a screen for some $N\in\cN$ is exactly an equivalence class of a configuration of $|N|$ points in $\gr(H_\bullet)|_A$ where $A\in M$ is the location of the collision.

To be concrete, consider the configuration $p$ and nest $\cN$ as in Paragraph~\ref{par:visualization}, with the first three points collided at $A\in M$, the fourth point located at $B\in M$ and the remaining points collided at $C\in M$. We may visualize this $p$ by drawing the blown-down configuration and a screen for every element of the nest:
\begin{center}
\begingroup%
  \makeatletter%
  \providecommand\color[2][]{%
    \errmessage{(Inkscape) Color is used for the text in Inkscape, but the package 'color.sty' is not loaded}%
    \renewcommand\color[2][]{}%
  }%
  \providecommand\transparent[1]{%
    \errmessage{(Inkscape) Transparency is used (non-zero) for the text in Inkscape, but the package 'transparent.sty' is not loaded}%
    \renewcommand\transparent[1]{}%
  }%
  \providecommand\rotatebox[2]{#2}%
  \newcommand*\fsize{\dimexpr\f@size pt\relax}%
  \newcommand*\lineheight[1]{\fontsize{\fsize}{#1\fsize}\selectfont}%
  \ifx\svgwidth\undefined%
    \setlength{\unitlength}{396.8503937bp}%
    \ifx\svgscale\undefined%
      \relax%
    \else%
      \setlength{\unitlength}{\unitlength * \real{\svgscale}}%
    \fi%
  \else%
    \setlength{\unitlength}{\svgwidth}%
  \fi%
  \global\let\svgwidth\undefined%
  \global\let\svgscale\undefined%
  \makeatother%
  \begin{picture}(1,0.35714286)%
    \lineheight{1}%
    \setlength\tabcolsep{0pt}%
    \put(0.25990268,0.10795217){\color[rgb]{0,0,0}\makebox(0,0)[lt]{\lineheight{0.80000001}\smash{\begin{tabular}[t]{l}$B$\end{tabular}}}}%
    \put(0.16257988,0.08066251){\color[rgb]{0,0,0}\makebox(0,0)[lt]{\lineheight{0.80000001}\smash{\begin{tabular}[t]{l}$C$\end{tabular}}}}%
    \put(0.15074938,0.15077744){\color[rgb]{0,0,0}\makebox(0,0)[lt]{\lineheight{0.80000001}\smash{\begin{tabular}[t]{l}$A$\end{tabular}}}}%
    \put(0.11810778,0.21190962){\color[rgb]{0,0,0}\makebox(0,0)[lt]{\lineheight{0.80000001}\smash{\begin{tabular}[t]{l}\Large $M$\end{tabular}}}}%
    \put(0.51909524,0.33127815){\color[rgb]{0,0,0}\makebox(0,0)[rt]{\lineheight{0.80000001}\smash{\begin{tabular}[t]{r}$\gr(H_\bullet)|_A$\end{tabular}}}}%
    \put(0.57858734,0.15787246){\color[rgb]{0,0,0}\makebox(0,0)[rt]{\lineheight{0.80000001}\smash{\begin{tabular}[t]{r}$\gr(H_\bullet)|_C$\end{tabular}}}}%
    \put(0.86701649,0.26993896){\color[rgb]{0,0,0}\makebox(0,0)[rt]{\lineheight{0.80000001}\smash{\begin{tabular}[t]{r}$\gr(H_\bullet)|_C$\end{tabular}}}}%
    \put(0.86664499,0.01031829){\color[rgb]{0,0,0}\makebox(0,0)[rt]{\lineheight{0.80000001}\smash{\begin{tabular}[t]{r}$\gr(H_\bullet)|_C$\end{tabular}}}}%
    \put(0,0){\includegraphics[width=\unitlength,page=1]{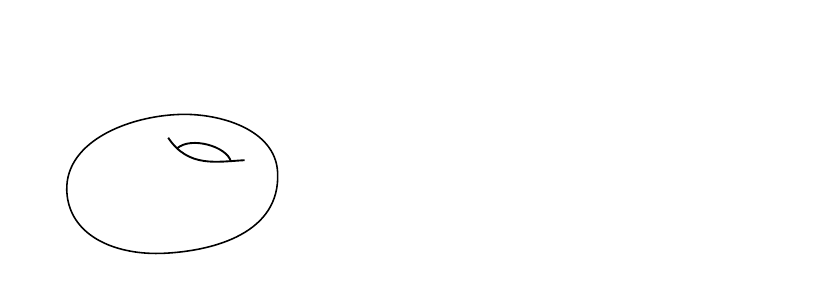}}%
    \put(0.48407184,0.29395471){\color[rgb]{0,0,0}\makebox(0,0)[lt]{\lineheight{0.80000001}\smash{\begin{tabular}[t]{l}1\end{tabular}}}}%
    \put(0.40711803,0.23198378){\color[rgb]{0,0,0}\makebox(0,0)[lt]{\lineheight{0.80000001}\smash{\begin{tabular}[t]{l}2\end{tabular}}}}%
    \put(0.47847672,0.10182443){\color[rgb]{0,0,0}\makebox(0,0)[t]{\lineheight{0.80000001}\smash{\begin{tabular}[t]{c}56\end{tabular}}}}%
    \put(0.54386428,0.10400284){\color[rgb]{0,0,0}\makebox(0,0)[t]{\lineheight{0.80000001}\smash{\begin{tabular}[t]{c}789\end{tabular}}}}%
    \put(0.48187757,0.24215954){\color[rgb]{0,0,0}\makebox(0,0)[lt]{\lineheight{0.80000001}\smash{\begin{tabular}[t]{l}3\end{tabular}}}}%
    \put(0,0){\includegraphics[width=\unitlength,page=2]{collided-config.pdf}}%
    \put(0.76828156,0.06087464){\color[rgb]{0,0,0}\makebox(0,0)[t]{\lineheight{0.80000001}\smash{\begin{tabular}[t]{c}7\end{tabular}}}}%
    \put(0.84521849,0.09051947){\color[rgb]{0,0,0}\makebox(0,0)[t]{\lineheight{0.80000001}\smash{\begin{tabular}[t]{c}8\end{tabular}}}}%
    \put(0.84468907,0.06548579){\color[rgb]{0,0,0}\makebox(0,0)[t]{\lineheight{0.80000001}\smash{\begin{tabular}[t]{c}9\end{tabular}}}}%
    \put(0,0){\includegraphics[width=\unitlength,page=3]{collided-config.pdf}}%
    \put(0.78487745,0.22398039){\color[rgb]{0,0,0}\makebox(0,0)[lt]{\lineheight{0.80000001}\smash{\begin{tabular}[t]{l}5\end{tabular}}}}%
    \put(0.78087774,0.17844846){\color[rgb]{0,0,0}\makebox(0,0)[lt]{\lineheight{0.80000001}\smash{\begin{tabular}[t]{l}6\end{tabular}}}}%
    \put(0,0){\includegraphics[width=\unitlength,page=4]{collided-config.pdf}}%
  \end{picture}%
\endgroup%

\end{center}

This coincides with the pictures from~\cite{FM94} since we do not depict the filtered structure here. Indeed for the trivial filtered structure, $\gr(H_\bullet)$ is just $TM$.
Alternatively, one may think of all screens encoded in trees as follows:

\begin{center}
\begingroup%
  \makeatletter%
  \providecommand\color[2][]{%
    \errmessage{(Inkscape) Color is used for the text in Inkscape, but the package 'color.sty' is not loaded}%
    \renewcommand\color[2][]{}%
  }%
  \providecommand\transparent[1]{%
    \errmessage{(Inkscape) Transparency is used (non-zero) for the text in Inkscape, but the package 'transparent.sty' is not loaded}%
    \renewcommand\transparent[1]{}%
  }%
  \providecommand\rotatebox[2]{#2}%
  \newcommand*\fsize{\dimexpr\f@size pt\relax}%
  \newcommand*\lineheight[1]{\fontsize{\fsize}{#1\fsize}\selectfont}%
  \ifx\svgwidth\undefined%
    \setlength{\unitlength}{150.23622047bp}%
    \ifx\svgscale\undefined%
      \relax%
    \else%
      \setlength{\unitlength}{\unitlength * \real{\svgscale}}%
    \fi%
  \else%
    \setlength{\unitlength}{\svgwidth}%
  \fi%
  \global\let\svgwidth\undefined%
  \global\let\svgscale\undefined%
  \makeatother%
  \begin{picture}(1,0.69811321)%
    \lineheight{1}%
    \setlength\tabcolsep{0pt}%
    \put(0.45339178,0.57610072){\color[rgb]{0,0,0}\makebox(0,0)[lt]{\lineheight{0.80000001}\smash{\begin{tabular}[t]{l}1\end{tabular}}}}%
    \put(0.3328829,0.49113004){\color[rgb]{0,0,0}\makebox(0,0)[rt]{\lineheight{0.80000001}\smash{\begin{tabular}[t]{r}$A$\end{tabular}}}}%
    \put(0.3986246,0.29827978){\color[rgb]{0,0,0}\makebox(0,0)[t]{\lineheight{0.80000001}\smash{\begin{tabular}[t]{c}$C$\end{tabular}}}}%
    \put(0.68359807,0.33239969){\color[rgb]{0,0,0}\makebox(0,0)[rt]{\lineheight{0.80000001}\smash{\begin{tabular}[t]{r}$B$\end{tabular}}}}%
    \put(0.2146268,0.39236591){\color[rgb]{0,0,0}\makebox(0,0)[lt]{\lineheight{0.80000001}\smash{\begin{tabular}[t]{l}2\end{tabular}}}}%
    \put(0.44425428,0.41273369){\color[rgb]{0,0,0}\makebox(0,0)[lt]{\lineheight{0.80000001}\smash{\begin{tabular}[t]{l}3\end{tabular}}}}%
    \put(0.1541343,0.29863674){\color[rgb]{0,0,0}\makebox(0,0)[lt]{\lineheight{0.80000001}\smash{\begin{tabular}[t]{l}5\end{tabular}}}}%
    \put(0.13591236,0.15639082){\color[rgb]{0,0,0}\makebox(0,0)[lt]{\lineheight{0.80000001}\smash{\begin{tabular}[t]{l}6\end{tabular}}}}%
    \put(0.74862563,0.30206051){\color[rgb]{0,0,0}\makebox(0,0)[lt]{\lineheight{0.80000001}\smash{\begin{tabular}[t]{l}4\end{tabular}}}}%
    \put(0.22771694,0.61917208){\color[rgb]{0,0,0}\makebox(0,0)[rt]{\lineheight{0.80000001}\smash{\begin{tabular}[t]{r}$\huge M$\end{tabular}}}}%
    \put(0.39477158,0.08183044){\color[rgb]{0,0,0}\makebox(0,0)[t]{\lineheight{0.80000001}\smash{\begin{tabular}[t]{c}7\end{tabular}}}}%
    \put(0.66463701,0.18646784){\color[rgb]{0,0,0}\makebox(0,0)[t]{\lineheight{0.80000001}\smash{\begin{tabular}[t]{c}8\end{tabular}}}}%
    \put(0.66562163,0.11856896){\color[rgb]{0,0,0}\makebox(0,0)[t]{\lineheight{0.80000001}\smash{\begin{tabular}[t]{c}9\end{tabular}}}}%
    \put(0,0){\includegraphics[width=\unitlength,page=1]{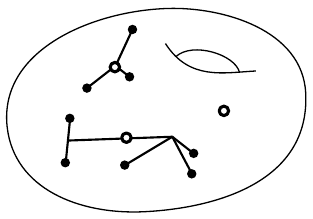}}%
  \end{picture}%
\endgroup%

\end{center}

Here, the circled roots of each tree mark the locations $A,B$ and $C$ of the collisions. The non-leaf nodes may be labeled by all the elements of the nest. The positions of child nodes relative to their parent encode a screen, but their common scaling and offset is arbitrary. Indeed we chose to add an offset to the vectors of $\{7,8,9\}\in\cN$ for legibility.

This visualization by trees is of course closely related to the one we have given before in Paragraph~\ref{par:visualization}. The main difference is that the latter takes into account a forest as data required to build our local model, while this visualization only depends on choices of coordinates around $A, B$ and $C$. As before, changing the location of the nodes without further collisions moves within the stratum, while increasing a control parameter modifies the shape of the tree by removing an intermediary point.
\end{npar}

We also want to provide a local model for the projective case analogous to Proposition~\ref{prop:FM-local-model}. The presence of weights when taking the quotient means that this involves a \textit{weighted} version of the tautological bundle of projective space:

\begin{definition}\label{def:tautological-bundle}
For a weight sequence $(w_1, ..., w_n)$, we define the \textbf{weighted projective tautological bundle} as
$$\cT^{n}:= (\bS^{n-1}\times\R)/\sim,$$
where we quotient out the natural weighted action of $(-1)$ on $\bS^{n-1}\times\R\subset \R^n\times \R$, thus identifying any point $(s_{1}, ..., s_{n},t)$ with $((-1)^{w_1}s_{1}, ..., (-1)^{w_n}s_{n},-t).$
\end{definition}

This is in general an orbifold, but recovers the usual tautological bundle of $\R\bP^{n-1}$ for trivial weights $w_1=...=w_n=1$. With it, we obtain the following local model of the projective configuration space:

\begin{proposition}\label{prop:FM-local-model-proj}
Let $p=(p_1, ..., p_s)\in M^s$ and take a nest $\cN\subseteq\cI$ with $p\in\Delta_\cN$, a forest $\prec$ covering the nest and $\prec$-offset coordinates 
$$\left(\{x_{k1}, ..., x_{km}\}_{k\in\Rt^\prec}, \{\Delta x_{l1}, ..., \Delta x_{lm}\}_{l\in\Ch^\prec}\right): U_0\to \R^{sm}$$
around $p$. Write $(w_1, ..., w_m)$ for the weight sequence of $H_\bullet$ and consider weighted tautological bundles $\cT^{k\cdot m}$ that arise from repeating this sequence $k$ times.
Then there is a map
$$  \Phi:U\to \R^{|\Rt^\prec|\cdot m} \times \prod_{N\in\cN} \cT^{|\Ct^\prec_N|\cdot m} $$
defined on an open subset $U\subseteq\PConf^{[s]}(M,H_\bullet)$ and represented on all sufficiently small $U'\subsetneq U$ by components
\begin{itemize}
    \item $(\hat x_{k1}, ..., \hat x_{km}):U\to \R^m$ for each $k\in\Rt^\prec$
    \item $\{\Delta \hat x_{l1}, ..., \Delta \hat x_{lm}\}_{l\in\Ct^\prec_N}:U'\to \bS^{|\Ct^\prec_N|\cdot m-1}$ for each $N\in\cN$
    \item $t_N: U'\to \R$ for each $N\in\cN$
\end{itemize}
that satisfy the following:
\begin{enumerate}
    \item $U$ is the intersection of $b^{-1}(U_0)$ and $$\PConf^{[s]}_{\subseteq\cN}(M,H_\bullet):=\{p\in \PConf^{[s]}(M,H_\bullet)\;|\; \cN_p\subseteq\cN\}.$$
    \item $\Phi$ is an homeomorphism onto its image.
    \item The blow-down map can be written in $\prec$-offset coordinates by the components of $\Phi$ as
        \begin{align*}
        x_{ki} &= \hat x_{ki},\\
        \Delta x_{li} &= \Delta \hat x_{li} \cdot\prod_{\{l,\operatorname{par}^\prec(l)\}\subseteq N\in\cN} t_N^{w_i},
        \end{align*}
        for $k\in\Rt^\prec$, $l\in\Ch^\prec$ and $i\in\upto{m}$.
    \item For $\hat p\in U,$ $$\cN_{\hat p} = \{N\in\cN\;|\; t_N(\hat p) = 0\}.$$
\end{enumerate}
\end{proposition}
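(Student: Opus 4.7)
The plan is to deduce the projective local model from the spherical local model of Proposition~\ref{prop:FM-local-model} by pushing forward a natural $(\Z_2)^{|\cN|}$-action along both sides. Recall that $\PConf^{[s]}(M,H_\bullet)$ is, by Definition~\ref{def:filtered-fm}, the image of $\Conf^{[s]}(M,H_\bullet)$ in $\prod_{G\in\cG^{FM}}\PBl_{\cW^{FM}_G}(M)$; componentwise, $\PBl_{\cW^{FM}_G}$ differs from $\Bl_{\cW^{FM}_G}$ only by quotienting each exceptional divisor by the residual $\Z_2$ that extends the $\R_{>0}$-action to the full $\R^*$-action. The first step is to observe that, on the spherical local model, this quotient acts independently on each nest factor $\bS^{|\Ct^\prec_N|\cdot m-1}\times[0,\infty)$ in the codomain of $\Phi_{\mathrm{sph}}$: for each $N\in\cN$ the sign $\mathbf{s}(N)$ in the good perspective underlying Proposition~\ref{prop:FM-local-model} only enters the component coordinates associated to $\Bl_{\cW^{FM}_{\Delta_N}}(M)$, so flipping $\mathbf{s}(N)$ realizes the $\Z_2$-action on just that factor.

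Second, I will extend each $t_N$ to range over $\R$ by gluing $\bS^{|\Ct^\prec_N|\cdot m-1}\times[0,\infty)$ to a second copy via $(s_N,0)\sim((-1)^{\mathbf{w}}\cdot s_N,0)$, where $\mathbf{w}$ is the weight sequence of $H_\bullet$ repeated $|\Ct^\prec_N|$ times, and then identify $(s_N,t_N)\sim((-1)^{\mathbf{w}}\cdot s_N,-t_N)$ throughout. This is exactly Definition~\ref{def:tautological-bundle} and yields $\cT^{|\Ct^\prec_N|\cdot m}$. The key compatibility check is that the spherical blow-down formula from Proposition~\ref{prop:FM-local-model}(3) is invariant under this identification: replacing $(s_N,t_N)$ by $((-1)^{\mathbf{w}}\cdot s_N,-t_N)$ changes each offset $\Delta\hat{x}_{li}$ by a factor $(-1)^{w_i}$ and each factor $t_N$ by a sign $-1$, so the product $\Delta\hat{x}_{li}\cdot\prod_{\{l,\operatorname{par}^\prec(l)\}\subseteq N\in\cN}t_N^{w_i}$ is unchanged, matching the proposed formula in (3).

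Third, I will define $\Phi$ by assembling the components of $\Phi_{\mathrm{sph}}$ into elements of the tautological bundles after passing to the quotient; the $\R^{|\Rt^\prec|\cdot m}$ factor requires no modification. That the components $\Delta\hat{x}_{li}$ and $t_N$ are only defined on a sufficiently small $U'\subsetneq U$ is exactly the statement that a local section of the quotient $\bS^{|\Ct^\prec_N|\cdot m-1}\times\R\to\cT^{|\Ct^\prec_N|\cdot m}$ exists away from the orbifold locus. The domain $U$ and property (1) follow from precisely the same argument as in the spherical case; property (4) is immediate because $t_N=0$ is a fixed point of the $\Z_2$-action and persists as a well-defined condition on $\cT^{|\Ct^\prec_N|\cdot m}$; property (3) is the invariance computation above read on representatives.

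The main obstacle is property (2), showing that $\Phi$ is a homeomorphism onto its image. The plan is to identify both source and target as quotients of the spherical objects by the same $(\Z_2)^{|\cN|}$-action: on the source, this is the restriction to $U$ of the projectivization quotient $\Conf^{[s]}\to\PConf^{[s]}$ combined with the independence observation from the first step; on the target, it is the quotient definition of the $\cT^{|\Ct^\prec_N|\cdot m}$ factors. Since $\Phi_{\mathrm{sph}}$ is an equivariant diffeomorphism onto its image (Proposition~\ref{prop:FM-local-model}(2)), the induced map $\Phi$ between the quotients is automatically a homeomorphism. The delicate point will be to verify equivariance carefully, i.e. that flipping $\mathbf{s}(N)$ in the spherical perspective induces precisely the weighted $(-1)$-action on the $N$-th target factor and the identity on all others; this is essentially a restatement of how the coordinates $x_{i:N}$ from Corollary~\ref{cor:comp-coords} depend on the sign $\mathbf{s}(N)$.
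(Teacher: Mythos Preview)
Your proposal follows essentially the same route as the paper: both pass from the spherical local model of Proposition~\ref{prop:FM-local-model} to the projective one by observing that each factor $\bS^{|\Ct^\prec_N|\cdot m-1}\times[0,\infty)$ becomes $\cT^{|\Ct^\prec_N|\cdot m}$ after quotienting by the weighted antipodal identification at $t_N=0$, and that this is precisely what the projectivization $\Conf^{[s]}\to\PConf^{[s]}$ does on the spherical domain. The paper's proof is extremely terse---it essentially just records the rewriting $\cT^{dm}=(\bS^{dm-1}\times[0,\infty))/\sim$ and says the rest ``largely follows''---so your more explicit unpacking of the mechanism is in the same spirit.

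One point of caution: your justification in the first step via ``flipping $\mathbf{s}(N)$'' conflates two different things. The sign $\mathbf{s}(N)$ in a good perspective is a \emph{chart choice} on the fixed space $\Conf^{[s]}$; changing it gives a different coordinate patch, not a different point. The $\Z_2$-identification you actually need comes from the quotient map $\Bl_{\cW^{FM}_{\Delta_I}}\to\PBl_{\cW^{FM}_{\Delta_I}}$ on each factor, which identifies \emph{distinct points} on the exceptional divisor. These are related (the two charts $\chi_{h,+}$ and $\chi_{h,-}$ become the same chart after projectivizing), but your phrasing suggests the sign flip \emph{is} the action, which is not quite right. The paper sidesteps this entirely by working directly with the boundary identification $(s_N,0)\sim((-1)^{\mathbf{w}}s_N,0)$ without mentioning chart signs at all. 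Your invariance check for the blow-down formula in the second step and the quotient argument for property~(2) are otherwise along the right lines and simply make explicit what the paper leaves implicit.
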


\begin{proof}
The canonical quotient $\Conf^{[s]}(M,H_\bullet)\to\PConf^{[s]}(M,H_\bullet)$ identifies points in the domain $U$ of the local model of the spherical configuration space from Proposition~\ref{prop:FM-local-model} only with other points in $U$, so we can argue within that local model. The proposition then largely follows by noting that the weighted tautological bundle can be written 
$$\cT^{dm}=(\bS^{dm-1}\times [0,\infty))/\sim$$
where the quotient identifies any point $(s_{11}, ..., s_{dm},0)$ with
\begin{equation*}((-1)^{w_1}s_{11}, ..., (-1)^{w_m}s_{dm},0).\qedhere\end{equation*}
\end{proof}

As a corollary to this proof we have:

\begin{corollary}
    $\PConf^{[s]}(M,H_\bullet)$ carries a canonical orbifold structure such that the map $\Phi$ from Proposition~\ref{prop:FM-local-model-proj} is an isomorphism onto its image.
\end{corollary}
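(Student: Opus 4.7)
The plan is to pull back the natural orbifold structure on the product target of $\Phi$ via the local models of Proposition~\ref{prop:FM-local-model-proj}, and then verify that different choices of (nest, covering forest, $\prec$-offset coordinates) yield compatible orbifold charts. The target $\R^{|\Rt^\prec|\cdot m}\times\prod_{N\in\cN}\cT^{|\Ct^\prec_N|\cdot m}$ is canonically an orbifold: each weighted tautological bundle $\cT^{dm}$ is the quotient of the smooth manifold with boundary $\bS^{dm-1}\times[0,\infty)$ by the finite group generated by the weighted antipodal action on the sphere component at height zero, and a product of orbifolds is an orbifold. Pulling back through the local homeomorphism $\Phi$ provides around every point of $\PConf^{[s]}(M,H_\bullet)$ an orbifold chart whose uniformizing cover is locally modeled on an open subset of $\R^{|\Rt^\prec|\cdot m}\times\prod_{N\in\cN}\bS^{|\Ct^\prec_N|\cdot m-1}\times[0,\infty)$ with the product of weighted $\Z_2$-actions at the boundary.

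Next, I would show that these charts cover the whole space and form an orbifold atlas. Coverage follows from Lemma~\ref{lem:perspective-covers} together with the fact that the quotient $\Conf^{[s]}(M,H_\bullet)\to\PConf^{[s]}(M,H_\bullet)$ sends chart domains of good perspectives surjectively onto chart domains of the projective local model: every $\cG^{FM}$-nest can be covered by a forest (Lemma~\ref{lem:covering-forest-exists}), and each control set is realized as $\cN_p$ for some $p$ by Corollary~\ref{cor:nests-are-control-sets} translated through Lemma~\ref{fulton-mac-nest}. For compatibility of overlapping charts, I would lift the transition map to the sphere-times-interval level. On the spherical blow-up $\Conf^{[s]}(M,H_\bullet)$ this lift is given by precisely the transition map between two local models $\Phi^{\mathrm{sph}}_1,\Phi^{\mathrm{sph}}_2$ of Proposition~\ref{prop:FM-local-model}, which is a diffeomorphism of manifolds with corners by Theorem~\ref{thm:manifold-structure} combined with Proposition~\ref{prop:fm-reg-unif-align}. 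Hence the transition is smooth and, being equivariant for the product of weighted antipodal actions by construction, descends to an orbifold isomorphism.

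The assertion that $\Phi$ itself is an orbifold isomorphism onto its image is then tautological from the pull-back construction, once the atlas has been shown to be well-defined. Canonicity amounts to noting that any two atlases constructed in this way admit a common refinement: given $p\in\PConf^{[s]}(M,H_\bullet)$ and two choices of $(\cN,\prec,\chi)$ covering $p$, one may pass to a single good perspective on the spherical side simultaneously dominating both (shrinking the chart domain around $p$ as needed), and the resulting orbifold chart refines each individually by the transition argument above.

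The main obstacle is keeping track of the equivariance of the transition maps with respect to the weighted $\Z_2$-actions on each $\bS^{|\Ct^\prec_N|\cdot m-1}$ factor: when two good perspectives differ in the sign function $\mathbf{s}$, the induced lifts of a given point to $\bS\times\R$ differ precisely by the weighted antipode in some factors, and one must check that this is exactly the symmetry quotiented out in $\cT^{dm}$. Once this bookkeeping is in place, the smoothness and equivariance of the lift follow from the established properties of the spherical blow-up, and the orbifold structure is canonical.
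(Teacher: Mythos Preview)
Your approach is essentially the same as the paper's, just considerably more explicit. The paper treats the corollary as immediate from the proof of Proposition~\ref{prop:FM-local-model-proj}: there the projective local model is obtained by observing that the canonical quotient $\Conf^{[s]}(M,H_\bullet)\to\PConf^{[s]}(M,H_\bullet)$ restricts to each spherical local model $U$ and identifies points only within $U$, and that $\cT^{dm}=(\bS^{dm-1}\times[0,\infty))/\!\sim$ with the weighted boundary identification. Since the spherical side already carries a smooth structure with corners (Theorem~\ref{thm:manifold-structure}), the quotient inherits an orbifold structure and compatibility between different local models is automatic from the smooth transition maps upstairs---exactly what you spell out.

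One small point of phrasing: describing $\cT^{dm}$ as the quotient of $\bS^{dm-1}\times[0,\infty)$ by a group acting ``at height zero'' is not literally a group action on the total space. The cleaner orbifold uniformizer is $\bS^{dm-1}\times\R$ with the honest $\Z_2$-action $(s,t)\mapsto((-1)^{w}\cdot s,-t)$ from Definition~\ref{def:tautological-bundle}; the half-line picture is the resulting fundamental domain with boundary gluing. This does not affect your argument, and the paper itself uses the half-line reformulation in the same way.
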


Note that the singular points of the orbifold $\cT^{dm}$ are points $(s_{11}, ..., s_{dm},0)$ with $(-1)^{w_j}s_{ij}=s_ij$ for all $i\in\upto{d}, j\in\upto{m}$, allowing us to read off the singular points of $\PConf^{[s]}(M,H_\bullet)$. The discussion of Paragraph~\ref{par:visualization} also largely applies to the projective configuration space, just that moving off $t_N=0$ into the negative direction produces a configuration that is mirrored in a weighted sense.

\startSubchaption{Adaptations for fiber bundles with horizontally trivial filtration}\label{ssec:filtered-bundle-fulton-macpherson}

From here on out, fix $s\geq 1$ and a fiber bundle $E\xrightarrow{\pi}M$ whose total space is equipped with a Lie filtration $H_\bullet$. Write $(\cG^{FM}_E,\cW^{FM}_E)$ for the weighted Fulton-MacPherson building set for $(E^s,H^s_\bullet)$ and $(\cG^{FM},\cW^{FM})$ for the trivial weighting over the Fulton-MacPherson building set $\cG^{FM}$ of $M^s$. For every $I\in\cI$, we use the notations $\Delta_I\in\cG^{FM}$ and $\underline{\Delta}_I\in\cG_E^{FM}$ to distinguish the diagonals.

Our goal is to construct a blown-up $s$-fold configuration space for $E$ that comes with a canonical projection map to the Fulton-MacPherson configuration space $\Conf^{[s]}(M)=\Bl_{\cG^{FM}}(M^s)$. The crucial observation is the following:

\begin{lemma}
Consider the $s$-fold power $\pi^s:E^s\to M^s$ of the bundle projection and the map $\phi:\cG^{FM}\to\cG^{FM}_E$ that sends each ${\Delta}_I$ to $\underline{\Delta}_I$. Then $(\pi^s,\phi):(E^s,\cW^{FM}_E)\to(M^s,\cW^{FM})$ is a morphism of weighted building sets.
\end{lemma}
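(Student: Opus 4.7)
The plan is as follows. First observe that $\pi^s: E^s \to M^s$ is smooth because it is the $s$-fold product of the smooth bundle projection $\pi$, so the only content of Definition~\ref{def:morphism} to verify is that for every $\Delta_I \in \cG^{FM}$, the map
$$\pi^s: (E^s, \cW^{FM}_{E,\underline{\Delta}_I}) \longrightarrow (M^s, \cW^{FM}_{\Delta_I})$$
is a morphism of weighted manifolds in the sense of Definition~\ref{def:weigthing}(3), i.e.\ that
$$T^{(\infty)}\pi^s\bigl(\cW^{FM}_{E,\underline{\Delta}_I}\bigr) \subseteq \cW^{FM}_{\Delta_I}.$$

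The key simplification is that while $\cW^{FM}_{E,\underline{\Delta}_I}$ is the (generally nontrivial) weighting $\cW_{H^s_\bullet,\underline{\Delta}_I}$ induced by the Lie filtration on $E^s$, the target weighting $\cW^{FM}_{\Delta_I}$ is by assumption the \emph{trivial} weighting along $\Delta_I$. Using Example~\ref{ex:weightings}(3), this means
$$\cW^{FM}_{\Delta_I} = \bigl(p^{(0,\infty)}\bigr)^{-1}(\Delta_I) \subseteq T^{(\infty)}M^s,$$
consisting of all infinitesimal curve classes whose basepoint lies in $\Delta_I$. The claim then reduces to purely a statement about basepoints, since by functoriality $T^{(\infty)}\pi^s$ sends $[\gamma]$ to $[\pi^s\circ\gamma]$, which has basepoint $\pi^s(\gamma(0))$.

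Thus the whole morphism condition collapses to the set-theoretic inclusion $\pi^s(\underline{\Delta}_I) \subseteq \Delta_I$, which is immediate from the definitions: if $(e_1,\dots,e_s)\in\underline{\Delta}_I$ then $e_i = e_j$ for all $i,j\in I$, and so $\pi(e_i) = \pi(e_j)$, i.e.\ $\pi^s(e_1,\dots,e_s)\in\Delta_I$. Consequently, any $[\gamma]\in\cW^{FM}_{E,\underline{\Delta}_I}$ (whose basepoint lies in $\supp\cW^{FM}_{E,\underline{\Delta}_I} = \underline{\Delta}_I$) satisfies $\pi^s(\gamma(0))\in\Delta_I$, giving $[\pi^s\circ\gamma]\in\cW^{FM}_{\Delta_I}$. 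Equivalently on the level of structure filtrations, $(\cW^{FM}_{\Delta_I})_k$ equals the $k$-th power of the vanishing ideal of $\Delta_I$, whose pullback under $\pi^s$ lies in the $k$-th power of the vanishing ideal of $\underline{\Delta}_I \subseteq (\pi^s)^{-1}(\Delta_I)$, and hence in $(\cW^{FM}_{E,\underline{\Delta}_I})_k$ by multiplicativity.

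There is essentially no hard step; the argument is completely formal once one unpacks that the codomain weighting is trivial and therefore probes only the support, and that the support condition holds by the obvious compatibility $\pi^s(\underline{\Delta}_I)\subseteq\Delta_I$ between diagonals in $E^s$ and $M^s$. This is why this Lemma can afford to use the slightly unusual morphism structure in which $\phi$ is not the obvious assignment $\Delta_I\mapsto \pi^{-s}(\Delta_I)$ (which need not even lie in $\cG^{FM}_E$), but the diagonal $\underline{\Delta}_I$ itself.
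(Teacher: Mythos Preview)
Your proof is correct and follows essentially the same approach as the paper: both observe that $\pi^s$ is smooth, that the target weighting $\cW^{FM}_{\Delta_I}$ is trivial, and hence that the weighted morphism condition reduces to the support inclusion $\pi^s(\underline{\Delta}_I)\subseteq\Delta_I$, which is immediate. Your write-up is more detailed (including the alternative structure-filtration argument and the closing remark on the choice of $\phi$), but the logical content is the same.
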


\begin{proof}
The map $\pi^s$ is clearly smooth. For every $I\in\cI$ the diagonal $\underline{\Delta}_I$ in $E^s$ pushes forward along $\pi^s$ to the diagonal $\Delta_I$ in $M^s$. In other words, $\pi^s$ maps the support of $(\cW^{FM}_E)_{\phi(\Delta_I)}$ exactly onto that of $\cW^{FM}_{\Delta_I}$. Since the latter weighting is trivial, this suffices to conclude that $\pi^s$ is a weighted morphism from $(\cW^{FM}_E)_{\phi(\Delta_I)}$ to $\cW^{FM}_{\Delta_I}$. Since $I$ was arbitrary, $(\pi^s,\phi)$ is a morphism of weighted building sets.
\end{proof}

This allows us to make the following definition:

\begin{definition}\label{def:blow-up-bundle}
We define the \textbf{$s$-fold blown-up configuration space of $(E\xrightarrow{\pi}M,H_\bullet)$} as
$$ \Conf^{[s]}(E\xrightarrow{\pi}M,H_\bullet):= \Bl_\pi (E^s) \xrightarrow{\hat\pi} \Conf^{[s]}(M), $$
where we write $\hat\pi$ for the induced map $b_{(\pi^s,\phi)}$. Similarly, define
$$ \PConf^{[s]}(E\xrightarrow{\pi}M,H_\bullet):= \PBl_\pi (E^s) \xrightarrow{p\hat\pi} \PConf^{[s]}(M) $$
for the projective case.
\end{definition}

Note that pushing a nest of $\cG^{FM}_E$ forward along $\pi^s$ preserves the identification of nests with nested subsets of $\cI$, such that the stratifications of the total space and the base space are compatible.

Our motivation is to give a configuration space for the $r$-th order jet bundle $J^r(M,N)\to M$ between two manifolds $M$ and $N$, which carries a filtration induced by the Cartan distribution. This has the special property that its projection to the base is surjective, i.e. it gives a trivial filtered structure on the base. More generally, we can consider the following:

\begin{definition}\label{def:horiz-triv}
We say a smooth fiber bundle $E\xrightarrow{\pi}M$ together with a Lie filtration $H_\bullet$ on the total space $E$ is a \textbf{fiber bundle with horizontally trivial filtration} if it holds that $$d\pi(H_{-1})=TM.$$
\end{definition}

As a consequence, the weight sequence of $H_\bullet$ must start with at least $\dim M$ ones.

\begin{remark}\label{rem:points-removed}
For a bundle with a horizontally trivial filtration, the restriction to the domain $\Conf^{[s]}(M,H_\bullet)$ of the induced map removes two kinds of points: On one hand, every element $\{p_G\}_{G\in\cG^{FM}_E}$ with a component $p_G$ such that $b_G(p_G)\not\in G$, i.e. not on the exceptional divisor, such that $\pi(b_G(p_G))$ does lie in $\pi(G)$. In other words, they encode elements whose configuration of base points in $M$ is \textit{more} collided than the configuration in $E$ itself.
On the other hand, we remove $\{p_G\}_{G\in\cG^{FM}_E}$ with some component $p_G$ encoding a weighted normal vector along $G$ that is vertical with respect to the projection $\pi$, i.e. that encodes a collision along the fibers of $E\to M$. If we did not assume that $d\pi(H_{-1})=TM$, then we would have to remove additional points that do not come from a vertical collision, but have no component along $H_{-1}$. 

When $d>0$, the restriction to $\Conf^{[s]}(M,H_\bullet)$ creates an additional open boundary. Thus while the classic Fulton-MacPherson configuration space may be called compactification at least in the sense of not creating an additional open boundary as one moves to the diagonals, this construction certainly should not.
\end{remark}

For the case with a horizontally trivial filtration, we want to give a local model that preserves the fiber structure. Thus consider offset coordinates on $E^s$ compatible with the bundle structure:

\begin{definition}\label{def:coords-horiz-triv} Let $(E\xrightarrow[]{\pi}M,H_\bullet)$ be a fiber bundle with horizontally trivial filtration with fiber dimension $d$.
We say offset coordinates 
$$(x_1, ..., x_m, y_1, ..., y_d, \Delta x_1, ..., \Delta x_m, \Delta y_1, ..., \Delta y_d):U\to\R^{2m+2d}$$
for $U\subseteq E^2$ around a point on the diagonal of $(E^2,H_\bullet^2)$ are \textbf{adapted to $\pi$} if they additionally satisfy that
$$(x_1, ..., x_m, \Delta x_1, ..., \Delta x_m):U\to\R^{2m}$$
can be written as the composition of $\pi^2$ and offset coordinates on $M^2$ for the trivial filtration. Given a forest $\prec$ on $\upto{s}$, we call $\prec$-offset coordinates \textbf{adapted to $\pi$} if they were constructed using only adapted offset coordinates on $E^2$.
\end{definition}

\begin{example}
Consider the jet bundle $J^2(\R^m,\R)\xrightarrow{\pi}\R^m$ together with the Cartan distribution $$\xi = \left\langle \left.\partial x_a+y'_a \,\partial y + \sum_{b=1}^my''_{ab}\,\partial y'_b\;\right|\; 1\leq a\leq m \right\rangle + \left\langle \partial y''_{ab}\;|\; 1\leq a,b \leq m\right\rangle.$$
By taking Lie derivatives, this generates the depth-3 Lie filtration
$$
H_{-1} = \xi, \qquad H_{-2} = \xi+\langle \partial y'_a\;|\; 1\leq a \leq m\rangle, \qquad H_{-3} = TJ^2(\R^m,\R).
$$
Note that $J^2(\R^m,\R)\to\R^m$ is indeed a fiber bundle with horizontally trivial filtration, since every horizontal vector in $T\R^m$ can be lifted to $\xi$.
In the introduction of this article, we defined the two-fold second-order jet configuration space $J^{2,2}(\R^m,\R):= (J^{2,2}(\R^m,\R))^2\to \R^{2m}$ with coordinates
$$(x_1,y_1,y'_1,y''_1;\;x_2,y_2,y'_2,y''_2).$$
One may check that replacing the second half of these with the offsets
$(\Delta x, \Delta y, \Delta y', \Delta y'')$
defined in Equation~\eqref{eq:adapted-coords} yields offset coordinates adapted to the fiber bundle projection $\pi$.
\end{example}

We can always find such adapted offset coordinates:

\begin{lemma}
Let $(E\xrightarrow[]{\pi}M,H_\bullet)$ be a fiber bundle with horizontally trivial filtration.
Then there exist offset coordinates adapted to $\pi$ close to every point $p$ of the diagonal of $E^2$.
\end{lemma}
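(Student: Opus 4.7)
The plan is to build offset coordinates on $E^2$ by prescribing the horizontal components as pullbacks from $M^2$ and then completing them to a full adapted system using the machinery from the previous subsection.

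First I would fix coordinates $\bar x_1, \ldots, \bar x_m$ on $M$ near $\pi(p_0)$, obtain offset coordinates $(\bar x_i, \Delta\bar x_i)$ on $M^2$ via Lemma~\ref{lem:prod-coords} (with the trivial filtration on $M$), and pull these back to $E^2$ as $X_i := \bar x_i \circ \pi \circ \pi_1$ and $\Delta X_i := \bar x_i \circ \pi \circ \pi_2 - X_i$. By construction the pair $(X_i, \Delta X_i) = (\bar x_i, \Delta \bar x_i) \circ \pi^2$, so the bundle compatibility required by Definition~\ref{def:coords-horiz-triv} holds for the horizontal half. Moreover $\Delta X_i$ vanishes on $\Delta$, so it lies in $\cW_{H^2_\bullet, \Delta, 1}$, and under the canonical identification $\nu\Delta \cong TE|_{p_0}$ its differential at $\Delta$ is the covector $(d\bar x_i) \circ d\pi$. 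The assumption $d\pi(H_{-1}) = TM$ ensures that these $m$ covectors are linearly independent on $H_{-1}$, i.e.\ they span an $m$-dimensional subspace of $F_{-1}^*$ where $F_\bullet \cong H_\bullet|_{p_0}$ is the induced filtration on $\nu\Delta$.

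Next I would extend this partial set to a full adapted system. Starting from adapted coordinates on $(E^2, \cW_{H^2_\bullet, \Delta})$ provided by Theorem~\ref{thm:filtered-is-weighting} (using Lemma~\ref{lem:diags-are-filtered}), I apply the Lemma~\ref{lem:prod-coords} trick to replace the tangential components by first-factor pullbacks without breaking adaptation, and choose fiber coordinates $y_j$ completing $(\bar x_i \circ \pi)$ to a chart of $E$ so that the tangential coordinates take the form $(X_i, Y_j)$ with $Y_j = y_j \circ \pi_1$. For the normal half, I perform a weight-by-weight substitution within each fixed weight level. In weight $1$, the conormal classes of $d\Delta X_i$ span an $m$-dimensional subspace of $H_{-1}^*$ transversal to any complementary vertical subspace, so by a linear change of basis I can swap $m$ of the original weight-1 offset functions for $\Delta X_1, \ldots, \Delta X_m$ and keep a complementary set $\Delta Y_j$ (of original weight $1$) that together with the $\Delta X_i$ give a basis of $H_{-1}^*$. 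The weights $\geq 2$ remain untouched. By Proposition~\ref{prop:adapted-to-filtered-sub} the resulting chart $(X_i, Y_j, \Delta X_i, \Delta Y_j)$ is adapted: the membership conditions in $\cW_{H^2_\bullet, \Delta, w'_j}$ follow by construction, and the cut-out condition on $H^2_{-i}|_n + T_n\Delta$ is preserved because each weight level of $\nu^*\Delta$ is spanned by the same conormal subspace as before the substitution.

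The main obstacle is the weight-$1$ substitution: I must ensure that mixing the prescribed horizontal covectors $(d\bar x_i)\circ d\pi$ with a suitably chosen complement among the original vertical weight-$1$ differentials still produces a basis of $H_{-1}^*$, and that the corresponding functions $\Delta Y_j$ genuinely lie in $\cW_{H^2_\bullet,\Delta,1}$ (rather than some higher degree that would destroy the weight assignment). The first point reduces to a transversality check between the horizontal subspace $\mathrm{im}((d\pi)^*: T^*M \to H_{-1}^*)$ and the annihilator of the vertical part $H_{-1} \cap \ker d\pi$ in $H_{-1}^*$, which is automatic since these are complementary. The second point can be handled either by starting from an adapted system whose weight-$1$ components have been appropriately aligned beforehand, or (equivalently) by invoking the Haj–Higson normal form referenced after Theorem~\ref{thm:filtered-is-weighting} to source the $\Delta Y_j$'s directly from vertical frame data.
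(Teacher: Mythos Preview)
Your approach is essentially the same as the paper's: pull back offset coordinates from $M^2$ via $\pi^2$ for the horizontal part, use the horizontally trivial assumption $d\pi(H_{-1})=TM$ to see these live in weight $1$, complete with vertical components taken from an existing adapted system on $E^2$, and verify adaptation via Proposition~\ref{prop:adapted-to-filtered-sub}. The paper carries this out by directly reordering the normal components of a given offset chart $\tilde\chi$ so that the ones being replaced by the $\Delta x_j$ are exactly those with weight $1$ (which is where the horizontally trivial hypothesis enters), whereas you phrase the same step as a linear substitution inside the weight-$1$ level of $\nu^*\Delta$; these are equivalent. One small remark: your worry at the end that the retained $\Delta Y_j$ might ``lie in some higher degree'' is slightly misdirected---membership in $\cW_2\subseteq\cW_1$ is harmless for condition~(1) of Proposition~\ref{prop:adapted-to-filtered-sub}; the genuine constraint is condition~(2), namely that the differentials of $\{\Delta X_i,\Delta Y_j\}$ together cut out $H^2_{-1}+T\Delta$ correctly, which is exactly the transversality check you already identified.
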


\begin{proof}
Take offset coordinates 
$$\tilde{\chi}=(\tilde{x}_1, ..., \tilde{x}_{m+d}, \Delta \tilde{x}_1, ..., \Delta \tilde{x}_{m+d}):\tilde{U}\to\R^{2m+2d}$$
of the diagonal $\underline{\Delta}\subseteq(E^2,H^2_\bullet)$ around $p$ as well as offset coordinates
$$\chi'=(x'_1, ..., x'_{m}, \Delta x'_1, ..., \Delta x'_{m}):U'\to\R^{2m}$$
of the diagonal $\Delta\subseteq M^2$ around $\pi^2(p)$ for the trivial filtration.
Define on a small enough open $U$ around $p$ the functions
$$x_j:=x'_j\circ\pi^2\qquad\text{and}\qquad \Delta x_j:=\Delta x'_j\circ\pi^2 \qquad\text{for }j=1,...,m $$
and reorder the indices of the $\tilde{x}_j$ and $\Delta\tilde{x}_j$ such that extending the above with
$$y_j:=\tilde{x}_{m+j}\qquad\text{and}\qquad \Delta y_j:=\tilde{x}_{m_j}\qquad\text{for }j=1,...,d$$
yields a system of coordinates. These are clearly adapted to $\pi$, but we must show that they are still offset coordinates. $(x_1,...,x_m,y_1,...,y_d)$ only depend on the first factor of $E^2$ since composing $\pi^2$ with the projection onto that first factor is the same as composing the projection onto the first factor of $M^2$ with $\pi$. So we only need to show the three conditions that our coordinates are adapted to the filtered submanifold $\underline{\Delta}$.

\textit{Regarding (1):} Assume $q\in U\cap\underline{\Delta}$. It then holds that $\Delta\tilde{x}_j(q)=0$ for all $j=1,...,m+d$. Since $\pi^2(q)\in\Delta$ we further have $\Delta x_j(q)=0$ for all $j=1,...,m$. Together, we can say for $q\in U$ that $$q\in \underline{\Delta}\qquad\implies\qquad
(\Delta x_1,...,\Delta x_m;\Delta y_1, ..., \Delta y_d)(q)=0.$$
Since $\underline{\Delta}$ is a submanifold of codimension $m+d$ and the functions are part of a coordinate system, we know that the reverse implication is also true.

\textit{Regarding (2):} Let $\{w_j\}_{j=1,...,m+d}$ be the weight sequence of the filtered submanifold $\underline{\Delta}$. We first argue that the components $\Delta \tilde{x}_j$ that we replace with the $\Delta x_j$ when constructing our coordinates all carry a weight $w_j=1$. Assume otherwise, i.e. that some $d_q\Delta\tilde{x}_{j_0}$ at $q\in\underline{\Delta}$ with positive weight is a linear combination of the $d_q\Delta x_j$. By condition (2) for $\tilde{\chi}$ there is some $v\in T_q E^2$ such that $v\not\in H^2_{-1}$ and $d_q\Delta\tilde{x}_{j_0}(v)\neq 0$. This means there must also be a $j_1$ such that $d_q\Delta x_{j_1}(v)\neq0$. However, by the assumption that $d\pi H_{-1}=TM$, any $v\not\in H^2_{-1}$ must be mapped by $d\pi^2$ to zero, which is a contradiction to the previous inequality by $\Delta x_{j_1}=\Delta x'_{j_1}\circ\pi^2$

To now show (2), we have to establish for any $i\in\{0, ..., r\}$ and $q\in \underline{\Delta}\cap U$ that
\begin{align*}
H^2_{-i}|_q+T_q\underline{\Delta} = \{v\in T_q E^2 \;|\; &d\Delta x_j(v) = 0 \text{ for all $j\in\{1,...,m\}$ with } w_j>i \text{ and }\\
&d\Delta y_j(v)=0\text{ for all $j\in\{1,...,d\}$ with } w_{m+j}>i\}.     
\end{align*}
Given the weight assignment above, for $i=0$ this follows immediately from (1), while for $i>0$ is a consequence of condition (2) for $\tilde{\chi}$ since only its components appear on the right-hand side.

\textit{Regarding (3):} For each $\Delta y_j$, this follows immediately from the corresponding condition for $\tilde\chi$. As argued above, the weights assigned to each $\Delta x_j$ must be one. Therefore the condition for the remaining $\Delta x_j$ is just that they vanish on $\underline{\Delta}$, which we have already seen in (1).
\end{proof}

We are now ready to give the modified local model. It largely coincides with that of Proposition~\ref{prop:FM-local-model}, except that we normalize only in the horizontal offset coordinates:

\begin{proposition}\label{prop:FM-local-bundle-model}
The blown-up configuration space
$$\Conf^{[s]}(E\xrightarrow{\pi}M,H_\bullet)\to \Conf^{[s]}(M) $$
is a smooth fiber bundle. It can locally be described as follows:

Let $p=(p_1, ..., p_s)\in M^s$ and take a nest $\cN\subseteq\cI$ with $p\in\Delta_\cN\subseteq E^s$, a forest $\prec$ covering the nest and $\prec$-offset coordinates 
$$\left(\{x_{k1}, ..., x_{km}, y_{k1}, ..., y_{kd}\}_{k\in\Rt^\prec}, \{\Delta x_{l1}, ..., \Delta x_{lm}, \Delta y_{l1}, ..., \Delta y_{ld} \}_{l\in\Ch^\prec}\right): U_0\to \R^{s(m+d)}$$
around $p$ that are adapted to $\pi$. Write $(1, ..., 1, w_1, ..., w_d)$ for the weight sequence of $H_\bullet$.
Then there is a map
$$  \Phi:U\to \R^{|\Rt^\prec|\cdot (m+d)}  \times \prod_{N\in\cN}\left( \bS^{|\Ct^\prec_N|\cdot m-1}\times [0,\infty) \times \R^{|\Ct^\prec_N|\cdot d} \right) $$
defined on an open subset $U\subseteq\Conf^{[s]}(E\xrightarrow{\pi}M,H_\bullet)$ and consisting of components
\begin{itemize}
    \item $(\hat x_{k1}, ..., \hat x_{km},\hat y_{k1}, ..., \hat y_{kd}):U\to \R^{m+d}$ for each $k\in\Rt^\prec$
    \item $\{\Delta \hat x_{l1}, ..., \Delta \hat x_{lm}\}_{l\in\Ct^\prec_N}:U\to \bS^{|\Ct^\prec_N|\cdot m-1}$ for each $N\in\cN$
    \item $t_N: U\to [0,\infty)$ for each $N\in\cN$
    \item $\{\Delta \hat y_{l1}, ..., \Delta \hat y_{ld}\}_{l\in\Ct^\prec_N}:U\to \R^{|\Ct^\prec_N|\cdot d}$ for each $N\in\cN$
\end{itemize}
that satisfy the following:
\begin{enumerate}
    \item $U$ is the intersection of $b^{-1}(U_0)$ and $$\Conf^{[s]}_{\subseteq\cN}(E\xrightarrow{\pi}M,H_\bullet):=\{p\in \Conf^{[s]}(E\xrightarrow{\pi}M,H_\bullet)\;|\; \cN_p\subseteq\cN\}.$$
    \item The $x_{ki}$, $\Delta x_{li}$ and $t_N$ components of $\Phi$ are the composition of $b_\pi$ with a local model of $\Conf^{[s]}(M)$ in the sense of Proposition~\ref{prop:FM-local-model}. 
    \item $\Phi$ is a diffeomorphism of manifolds with corners onto its image.
    \item The blow-down map can be written in $\prec$-offset coordinates by the components of $\Phi$ as
        \begin{align*}
        x_{ki} &= \hat x_{ki},\\
        y_{kj} &= \hat y_{kj},\\
        \Delta x_{li} &= \Delta \hat x_{li} \cdot\prod_{\{l,\operatorname{par}^\prec(l)\}\subseteq N\in\cN} t_N,\\
        \Delta y_{lj} &= \Delta \hat y_{lj} \cdot \left(\prod_{\{l,\operatorname{par}^\prec(l)\}\subseteq N\in\cN} t_N\right)^{w_j},
        \end{align*}
        for $k\in\Rt^\prec$, $l\in\Ch^\prec$, $i\in\upto{m}$ and $j\in\upto{d}$.
    \item For $\hat p\in U,$ $$\cN_{\hat p} = \{N\in\cN\;|\; t_N(\hat p) = 0\}.$$
\end{enumerate}
\end{proposition}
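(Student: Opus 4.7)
The strategy is to combine the local model of Proposition~\ref{prop:FM-local-model} applied to $(E^s,H^s_\bullet)$ with an explicit reparametrization of the sphere coordinates that decouples horizontal from vertical data. The key input is that, in $\prec$-offset coordinates adapted to $\pi$, the weight assignment of Lemma~\ref{lem:subtree-adapted} combined with the horizontally trivial assumption (Definition~\ref{def:horiz-triv}) splits the offsets $\Delta x_{li}$ (weight $1$) from $\Delta y_{lj}$ (weight $w_j \geq 1$).

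First, I apply Proposition~\ref{prop:FM-local-model} to $(E^s,H^s_\bullet)$ to obtain an intermediate diffeomorphism $\tilde\Phi$ whose sphere components $(\Delta \tilde x_{li},\Delta \tilde y_{lj})\in\bS^{|\Ct^\prec_N|(m+d)-1}$ carry a control parameter $\tilde t_N\in[0,\infty)$ acting with weight $1$ horizontally and weight $w_j$ vertically. To pass from $\Conf^{[s]}(E^s,H^s_\bullet)$ to the fiber bundle $\Conf^{[s]}(E\xrightarrow{\pi}M,H_\bullet)=\Bl_{(\pi^s,\phi)}(E^s)$ I must restrict to the appropriate open subset: in adapted coordinates the induced map on each weighted normal bundle $\nu(\cW^{FM}_E)_{\underline\Delta_I}\to\nu\cW^{FM}_{\Delta_I}$ is, by Example~\ref{example:induced-map-weighted-normal-bundle} with matching horizontal weights $1=1$, the projection killing the vertical coordinates. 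Hence $U$ corresponds to the open subset of $\tilde U$ where $\rho_N:=\bigl(\sum_{l,i}(\Delta \tilde x_{li})^2\bigr)^{1/2}$ is strictly positive for all $N\in\cN$.

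On this open subset I define the change of variables $t_N := \tilde t_N\cdot \rho_N$, $\Delta \hat x_{li} := \Delta \tilde x_{li}/\rho_N$, and $\Delta \hat y_{lj} := \Delta \tilde y_{lj}/\rho_N^{w_j}$, leaving the root coordinates $(\hat x_{ki},\hat y_{kj})=(\tilde x_{ki},\tilde y_{kj})$ unchanged. This is smooth since $\rho_N > 0$, and a smooth inverse exists: given target data, each $\rho_N=1/s$ is recovered as the unique positive solution of the strictly monotone equation $s^2+\sum_{l,j} s^{2w_j}(\Delta \hat y_{lj})^2=1$ (smooth in the parameters by the implicit function theorem, the $s$-derivative being strictly positive as all $w_j\geq 1$), whence $\tilde t_N = t_N/s$, $\Delta \tilde x_{li}=s\cdot \Delta \hat x_{li}$, $\Delta \tilde y_{lj}=s^{w_j}\cdot \Delta \hat y_{lj}$. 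Substitution into the blow-down expression of Proposition~\ref{prop:FM-local-model} yields property~(4), while property~(5) and the domain description~(1) transfer from the intermediate model using $t_N=0\iff \tilde t_N=0$. Assembling these gives a diffeomorphism of manifolds with corners onto the stated target.

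For the fiber bundle claim, by construction the components $(\hat x_{ki},\Delta \hat x_{li},t_N)$ of $\Phi$ are those of a local model (with trivial weights on the base) of $\Conf^{[s]}(M)$ in the sense of Proposition~\ref{prop:FM-local-model}, and they coincide with the composition of $\hat\pi$ with that local model, proving property~(2). The remaining components $(\hat y_{kj},\Delta \hat y_{lj})$ parametrize independent Euclidean factors of total dimension $sd$, so $\Phi$ displays $\hat\pi$ locally as a projection onto the base with fiber $\R^{sd}$; combining these product charts with a local trivialization of $E\to M$ produces the desired bundle atlas. The principal obstacle is the identification of $U$ in the second step, which requires carefully computing the induced map on weighted normal bundles in adapted coordinates; the subsequent reparametrization is then a straightforward, if notation-heavy, rescaling.
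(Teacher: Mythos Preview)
Your approach is essentially the paper's: apply Proposition~\ref{prop:FM-local-model} to $(E^s,H^s_\bullet)$, restrict to the open set where the horizontal sphere components do not all vanish, and rescale so that only those are normalized. The paper does not write out the rescaling explicitly; you do, and this is where a genuine error creeps in.

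Your formula $t_N=\tilde t_N\cdot\rho_N$ is only correct when the nest $\cN$ has no properly nested elements. Take $\cN=\{N_1\subsetneq N_2\}$ and $l\in\Ct^\prec_{N_1}$: property~(4) demands $\Delta x_{li}=\Delta\hat x_{li}\,t_{N_1}t_{N_2}$, but substituting your change of variables gives
\[
\Delta\hat x_{li}\,t_{N_1}t_{N_2}
=\frac{\Delta\tilde x_{li}}{\rho_{N_1}}\,(\tilde t_{N_1}\rho_{N_1})(\tilde t_{N_2}\rho_{N_2})
=\Delta\tilde x_{li}\,\tilde t_{N_1}\tilde t_{N_2}\cdot\rho_{N_2},
\]
which is $\Delta x_{li}\cdot\rho_{N_2}$, not $\Delta x_{li}$. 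The same spurious factor breaks property~(2), since your $t_{N_1}$ then disagrees with the base-model control parameter. The fix is to set $t_N=\tilde t_N\cdot\rho_N/\rho_{N^+}$, where $N^+$ is the smallest element of $\cN$ properly containing $N$ (with $\rho_{N^+}:=1$ when $N$ is maximal in $\cN$): the product $\prod_{N'\supseteq N_l}t_{N'}$ then telescopes to $\rho_{N_l}\prod_{N'\supseteq N_l}\tilde t_{N'}$, and properties~(2) and~(4) follow. Your smoothness argument for the inverse survives this correction, since each $\rho_N$ is still determined by the $\Delta\hat y$-data at level $N$ alone via the same implicit-function step.

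Two minor points: the line ``$\rho_N=1/s$'' should read $\rho_N=s$ (consistent with your own inverse formulas $\tilde t_N=t_N/s$ and $\Delta\tilde x_{li}=s\,\Delta\hat x_{li}$); and Example~\ref{example:induced-map-weighted-normal-bundle} concerns the identity between standard weightings rather than a bundle projection---the relevant computation for $\nu_{\pi^s}$ is the general formula in Paragraph~\ref{props-weighted-normal}(7), though your conclusion (projection onto the horizontal block) is correct.
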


\begin{proof}
We first build a local model with Proposition~\ref{prop:FM-local-model} and the adapted offset coordinates. Since we restricted the configuration space for bundles with a horizontally trivial filtration to hold only collisions with a horizontal component as discussed in Remark~\ref{rem:points-removed}, we know that
$$\{\Delta \hat x_{l1}, ..., \Delta \hat x_{lm}\}_{l\in\Ct^\prec_N}$$
do not all vanish and can thus rescale all offset coordinates such that only these horizontal components are normalized. It only remains to argue for (2), but this follows since the horizontal components of the rescaled local model now coincide with the local model of $\Conf^{[s]}(M)$ constructed with Proposition~\ref{prop:FM-local-model} and only the horizontal components of the adapted offset coordinates.
\end{proof}

Once again, we can also formulate a projective version of this:

\begin{proposition}\label{prop:FM-local-bundle-model-proj}
The blown-up projective configuration space
$$\PConf^{[s]}(E\xrightarrow{\pi}M,H_\bullet)\to \PConf^{[s]}(M) $$
is a smooth fiber bundle with a local model as follows:

Let $p=(p_1, ..., p_s)\in M^s$ and take a nest $\cN\subseteq\cI$ with $p\in\Delta_\cN\subseteq E^s$, a forest $\prec$ covering the nest and $\prec$-offset coordinates 
$$\left(\{x_{k1}, ..., x_{km}, y_{k1}, ..., y_{kd}\}_{k\in\Rt^\prec}, \{\Delta x_{l1}, ..., \Delta x_{lm}, \Delta y_{l1}, ..., \Delta y_{ld} \}_{l\in\Ch^\prec}\right): U_0\to \R^{s(m+d)}$$
around $p$ that are adapted to $\pi$.
Write $(1, ..., 1, w_1, ..., w_d)$ for the weight sequence of $H_\bullet$ and consider the projective tautological bundle $\cT^{\bullet m}$ from Definition~\ref{def:tautological-bundle} for trivial weights.
Then there is a map
$$  \Phi:U\to \R^{|\Rt^\prec|\cdot (m+d)} \times \prod_{N\in\cN}\left( \cT^{|\Ct^\prec_N|\cdot m}  \times \R^{|\Ct^\prec_N|\cdot d}\right) $$
defined on an open subset $U\subseteq\PConf^{[s]}(E\xrightarrow{\pi}M,H_\bullet)$ and represented on all sufficiently small $U'\subsetneq U$ by components
\begin{itemize}
    \item $(\hat x_{k1}, ..., \hat x_{km},\hat y_{k1}, ..., \hat y_{kd}):U\to \R^{m+d}$ for each $k\in\Rt^\prec$
    \item $\{\Delta \hat x_{l1}, ..., \Delta \hat x_{lm}\}_{l\in\Ct^\prec_N}:U\to \bS^{|\Ct^\prec_N|\cdot m-1}$ for each $N\in\cN$
    \item $t_N: U\to \R$ for each $N\in\cN$
    \item $\{\Delta \hat y_{l1}, ..., \Delta \hat y_{ld}\}_{l\in\Ct^\prec_N}:U\to \R^{|\Ct^\prec_N|\cdot d}$ for each $N\in\cN$
\end{itemize}
that satisfy the following:
\begin{enumerate}
    \item $U$ is the intersection of $b^{-1}(U_0)$ and $$\PConf^{[s]}_{\subseteq\cN}(E\xrightarrow{\pi}M,H_\bullet):=\{p\in \PConf^{[s]}(E\xrightarrow{\pi}M,H_\bullet)\;|\; \cN_p\subseteq\cN\}.$$
    \item The $x_{ki}$, $\Delta x_{li}$ and $t_N$ components of $\Phi$ are the composition of $b_\pi$ with a local model of $\PConf^{[s]}(M)$ in the sense of Proposition~\ref{prop:FM-local-model}. 
    \item $\Phi$ is a diffeomorphism of manifolds with corners onto its image.
    \item The blow-down map can be written in $\prec$-offset coordinates by the components of $\Phi$ as
        \begin{align*}
        x_{ki} &= \hat x_{ki},\\
        y_{kj} &= \hat y_{kj},\\
        \Delta x_{li} &= \Delta \hat x_{li} \cdot\prod_{\{l,\operatorname{par}^\prec(l)\}\subseteq N\in\cN} t_N,\\
        \Delta y_{lj} &= \Delta \hat y_{lj} \cdot \left(\prod_{\{l,\operatorname{par}^\prec(l)\}\subseteq N\in\cN} t_N\right)^{w_j},
        \end{align*}
        for $k\in\Rt^\prec$, $l\in\Ch^\prec$, $i\in\upto{m}$ and $j\in\upto{d}$.
    \item For $\hat p\in U,$ $$\cN_{\hat p} = \{N\in\cN\;|\; t_N(\hat p) = 0\}.$$
\end{enumerate}
\end{proposition}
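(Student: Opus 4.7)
The plan is to mirror the argument of Proposition~\ref{prop:FM-local-bundle-model} but starting from the projective local model of Proposition~\ref{prop:FM-local-model-proj}. First I would apply Proposition~\ref{prop:FM-local-model-proj} to the total space $(E^s, H^s_\bullet)$ using $\prec$-offset coordinates adapted to $\pi$ in the sense of Definition~\ref{def:coords-horiz-triv}. This produces, on a neighbourhood of the preimage of $p$ in $\PConf^{[s]}(E\xrightarrow{\pi}M,H_\bullet)$, a local model valued in
$$\R^{|\Rt^\prec|\cdot(m+d)}\times \prod_{N\in\cN}\cT^{|\Ct^\prec_N|\cdot(m+d)}$$
where each weighted tautological bundle is built from the weight sequence $(1,\dots,1,w_1,\dots,w_d)$ repeated $|\Ct^\prec_N|$ times (the leading $1$'s being the horizontal weights).

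Next, I would invoke the discussion of Remark~\ref{rem:points-removed}: because $d\pi(H_{-1})=TM$ and the domain of the induced map excludes vertical collisions, for any point in $U\subseteq \PConf^{[s]}(E\xrightarrow{\pi}M, H_\bullet)$ the horizontal offset coordinates $\{\Delta\hat x_{li}\}_{l\in\Ct^\prec_N,\,i\in\upto{m}}$ do not all vanish on the exceptional divisor of any $N\in\cN$. This permits the same rescaling trick as in the proof of Proposition~\ref{prop:FM-local-bundle-model}: on a sufficiently small $U'\subsetneq U$, pick a representative where the horizontal part of each tautological bundle fibre has unit length, absorbing the scaling factor into a single control parameter $t_N\in\R$ while leaving the vertical components $\{\Delta\hat y_{lj}\}$ free in $\R^{|\Ct^\prec_N|\cdot d}$. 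Because the horizontal weights are all $1$, the weighted $\bZ_2$-action defining $\cT^{|\Ct^\prec_N|\cdot(m+d)}$ restricts to the standard (unweighted) antipodal action on the horizontal sphere factor and to $(-1)^{w_j}$ on each vertical direction, so after rescaling we obtain exactly the factor $\cT^{|\Ct^\prec_N|\cdot m}\times\R^{|\Ct^\prec_N|\cdot d}$ with the $\bZ_2$-action identifying $(-1)^{w_j}$-related vertical offsets via sign-flip of $t_N$, matching the statement.

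The blow-down formula in (4) then falls out of the substitution: the $x_{ki}$ and $y_{kj}$ pieces are unchanged (they record roots), while each $\Delta x_{li}$ and $\Delta y_{lj}$ picks up the product of control parameters along the chain of nest elements containing the edge $\{l,\operatorname{par}^\prec(l)\}$, with horizontal factors raised to weight $1$ and vertical factors raised to weight $w_j$ dictated by the $\R$-action on $\cT^\bullet$. Property (2) is automatic by construction: forgetting the $y$ and $\Delta y$ components and projecting each $\cT^{|\Ct^\prec_N|\cdot m}$ to itself yields precisely the local model of $\PConf^{[s]}(M)$ given by Proposition~\ref{prop:FM-local-model-proj} for the trivial filtration, so $\Phi$ intertwines with $p\hat\pi$. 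Property (5) on the stratification carries over directly, as the vanishing locus of $t_N$ is preserved by the rescaling. Finally, the induced fiber bundle structure over $\PConf^{[s]}(M)$ follows from these local models assembling into local trivializations, since over a fixed point of $\PConf^{[s]}(M)$ the fiber is parametrized smoothly by the $\hat y$ and $\Delta\hat y$ components together with an orbifold datum coming from the weighted $\bZ_2$-action.

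The main obstacle I expect is the careful bookkeeping of the orbifold identifications: the rescaling that normalizes the horizontal sphere must be consistent with the weighted $\bZ_2$-action on the full weighted tautological bundle, and verifying that the resulting quotient is exactly $\cT^{|\Ct^\prec_N|\cdot m}\times\R^{|\Ct^\prec_N|\cdot d}$ as an orbifold (not merely as a topological space) requires checking the local representations on the overlap of the charts given by different choices of distinguished coordinate in the horizontal sphere. The rest of the argument is essentially a projective rerun of Proposition~\ref{prop:FM-local-bundle-model}.
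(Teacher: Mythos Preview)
Your proposal is correct and follows essentially the same route as the paper: combine the projective quotient argument of Proposition~\ref{prop:FM-local-model-proj} with the bundle-adapted rescaling of Proposition~\ref{prop:FM-local-bundle-model}. The paper's proof is even terser than yours, simply pointing to those two propositions, and its one extra remark is exactly the resolution of the ``obstacle'' you flag at the end: since the horizontal weights are all equal to $1$, the tautological factor $\cT^{|\Ct^\prec_N|\cdot m}$ is the \emph{unweighted} tautological bundle, which is a genuine smooth manifold with no orbifold singularities---so no delicate orbifold bookkeeping is actually needed.
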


The blown-up configuration space $\PConf^{[s]}(M,H_\bullet)$ of a filtered manifold was not necessarily smooth when the filtration was non-trivial, i.e. has weights larger than one. For bundles with a horizontally trivial filtration, this proposition establishes that $\PConf^{[s]}(E\xrightarrow{\pi}M,H_\bullet)$ is always smooth.

\begin{proof}
The proof proceeds entirely analogously to that of Proposition~\ref{prop:FM-local-model-proj} when taking into account the modifications of Proposition~\ref{prop:FM-local-bundle-model}. Note that the assumption on the weights along horizontal directions being trivial also means that we can use the unweighted tautological bundle, which is smooth without orbifold singularities.
\end{proof}

\appendix
\newpage
\section{Intersections of submanifolds}\label{app:manifold-intersections}
We require various notions of regularity of submanifold intersections:

\begin{definition}\label{rem:transversal}\label{def:transversal}
Consider a collection $\cS$ of submanifolds of $M$ and its intersection $R=\bigcap\cS$. We say $\cS$...
\begin{enumerate}
    \item \textbf{intersects cleanly} if $R$ is a submanifold with $T_pR=\bigcap\limits_{S\in\cS}T_pS$ at every $p\in R$,
    \item \textbf{intersects like coordinate subspaces} if around each $p\in R$ there exist charts of $M$ under which the elements of $\cS$ are coordinate subspaces,
    \item \textbf{intersects transversely} if it intersects cleanly and further $$\operatorname{codim} R = \sum\limits_{S\in\cS} \operatorname{codim} S.$$
\end{enumerate}
\end{definition}

Transverse intersection means that each of the submanifolds represent an independent constraint, or equivalently that their conormal bundles are linearly independent.
It implies intersection like coordinate subspaces, which in turn implies clean intersection. Perhaps deceptively, any collection consisting of a single submanifold trivially intersects transversely. If the number of proper submanifolds intersecting in a point is higher than the dimension of the ambient space, they cannot intersect transversely.

Also recall that we can check whether an intersection is clean using the vanishing ideals of a submanifold:

\begin{lemma}\label{lem:clean-vanishing-ideal}
A collection $\cS$ of submanifolds of $M$ intersects cleanly if and only if they intersect in a submanifold and $$I_{\cap\cS}=\sum_{S\in\cS}I_S$$ holds\footnote{Note that on the right hand side, we use the convention that the sum of ideals $\sum_{S\in\cS}I_S$ is defined as the ideal generated by sums of elements in each $I_S$.} for their vanishing ideals.
\end{lemma}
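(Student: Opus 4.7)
The inclusion $\sum_{S\in\cS} I_S \subseteq I_R$ is immediate: any $f\in I_S$ vanishes on $S\supseteq R$, hence on $R$, and the right-hand side is an ideal. So both directions reduce to showing equivalence of the reverse inclusion $I_R\subseteq \sum_S I_S$ with the tangential condition $T_pR=\bigcap_S T_pS$ at each $p\in R$ (the inclusion $T_pR\subseteq \bigcap_S T_pS$ being automatic from $R\subseteq S$).

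For the \emph{if} direction, assume $R$ is a submanifold and $I_R=\sum_S I_S$. Given $v\in\bigcap_S T_pS$, I show $v\in T_pR$ by checking $df|_p(v)=0$ for every $f\in I_R$. Write $f=\sum_S f_S$ with $f_S\in I_S$; since $p\in R\subseteq S$ we have $f_S(p)=0$, and $f_S|_S=0$ together with $v\in T_pS$ forces $df_S|_p(v)=0$. Summing gives $df|_p(v)=0$, so $v\in T_pR$.

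For the \emph{only if} direction, assume clean intersection and work locally first. Clean intersection is equivalent to $\nu^*_pR=\sum_S \nu^*_p S$ at each $p\in R$. For each $S$, pick germs at $p$ of functions in $I_S$ whose differentials span $\nu^*_pS$; their union then spans $\nu^*_pR$, and I extract a subset $g_1,\dots,g_c$ (with $c=\codim R$) whose differentials form a basis of $\nu^*_pR$, each $g_i$ lying in some $I_{S_{k(i)}}$ locally. By the submersion theorem (and connectedness/cleanness giving the correct codimension), $g_1,\dots,g_c$ cut out $R$ on a neighbourhood $V_p$ of $p$, and Hadamard's lemma applied to this local defining system gives, for any $f\in I_R$, a decomposition $f|_{V_p}=\sum_i h_i g_i$ with $h_i\in C^\infty(V_p)$. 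Each summand $h_i g_i$ lies in $I_{S_{k(i)}}|_{V_p}$, so $I_R\subseteq\sum_S I_S$ locally on $V_p$.

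To globalize, cover $R$ by such neighbourhoods $\{V_n\}$ and extend to a locally finite open cover of $M$ (adjoining opens disjoint from $R$, where the decomposition is trivial). Choose a subordinate partition of unity $\{\phi_n\}$ together with bump functions $\chi_n$ supported in $V_n$ and equal to $1$ on $\operatorname{supp}\phi_n$. Given $f\in I_R$, decompose $\phi_n f=\sum_S g_{n,S}$ on $V_n$ via the local argument, and observe that each $\chi_n g_{n,S}$ extends by zero to a global function in $I_S$ (it vanishes on $S$ inside $V_n$ because $g_{n,S}$ does, and outside $V_n$ because $\chi_n$ does). Since $\chi_n\phi_n=\phi_n$, summing yields $f=\sum_S \bigl(\sum_n \chi_n g_{n,S}\bigr)$, where the inner sum is a locally finite sum of elements of $I_S$ and hence lies in $I_S$. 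The main obstacle is this local step: extracting, from clean intersection alone (which is weaker than intersection like coordinate subspaces), a system of local defining functions of $R$ each belonging to the vanishing ideal of some single $S\in\cS$; once this is in hand, Hadamard and the partition-of-unity gluing are routine.
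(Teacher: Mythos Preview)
Your proof is correct and follows essentially the same approach as the paper: for the ``if'' direction, both you and the paper test $v\in\bigcap_S T_pS$ against an arbitrary $f\in I_R$ by decomposing $f=\sum f_S$ and using $df_S|_p(v)=0$; for the ``only if'' direction, both arguments extract from the defining functions of the various $S$ a subset of size $\codim R$ with independent differentials that cuts out $R$ locally, then invoke Hadamard's lemma. The only difference is that you spell out the partition-of-unity globalization explicitly, whereas the paper leaves it implicit after establishing the local statement.
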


\begin{proof}\ \\
\textit{Regarding $(\Rightarrow)$:} Let $\cS$ be a collection of manifolds that cleanly intersect in $\cap\cS$. We only need to show $$I_{\cap\cS}\subseteq\sum_{S\in\cS}I_S,$$ as the other inclusion is automatic. Close to some point $p\in\cap\cS$, each $S\in\cS$ is cut out by some functions $\{f_{S,i}\}_{i=1, ..., \codim S}$. Due to clean intersection, we have $$\bigcap_{S,i}\ker d_p f_{S,i} = \bigcap_{S} T_p S = T_p (\cap \cS).$$
As the right-hand side has the codimension of $\cap\cS$, there must be a subset of that many $f_{S,i}$ that are linearly independent at $p$. As $\cap\cS$ is a submanifold, it must also be cut out by that subset, and any function that vanishes on $\cap\cS$ is locally a $C^\infty(M)$-linear combination of these functions by a version of Hadamard's Lemma for submanifolds.

\textit{Regarding $(\Leftarrow)$:} Assume that $\cap\cS$ is a submanifold, its vanishing ideal is generated by the sum $\sum_{S\in\cS}I_S$ and $p\in\cap\cS$. We only need to show that $$\bigcap_{S\in\cS}T_pS \subseteq  T_p(\cap\cS),$$
as the other inclusion is automatic. Thus it suffices to show for any vector $v$ on the left-hand side and function $f\in I_{\cap\cS}$ that $d_pf(v)=0$. By assumption, there must be a local decomposition $f=\sum_{S,j}\alpha_{S,j}f_{S,j}$ for $f_{S,j}$ as above. Then $$d_pf(v)=\sum_{S,j} \alpha_{S,j} d_p f_{S,j}(v) = 0,$$ where we used $f_{S,j}\in I_{\cap\cS}$ and $v\in T_pS$.
\end{proof}

\newpage
\section{Smooth structure on the blow-up of a weighted submanifold}\label{app:proof-weighted-submanifolds}
We tackle the main goal of this appendix in the first \subchaption. This is to prove Theorem~\ref{thm:weighted-blow-up-atlas}, i.e. that the charts from Definition~\ref{def:smooth-blow-up} yield a nice manifold structure on the blow-up of a weighted submanifold. In \Subchaption~\ref{ssec:induced-smooth}, we prove that the induced maps between these blow-ups are defined on an open domain (Corollary~\ref{cor:domain-induced-open}) and are smooth (Proposition~\ref{prop:induced-map-smooth}).

\startSubchaption{Smoothness of transition functions}

Let us consider two charts $\chi_{hs}$ and $\tilde\chi_{\tilde h\tilde s}$ from Def.~\ref{def:smooth-blow-up} and assume that their domains overlap. From here on out we will write $w_i$ and $\tilde w_i$ for the weights assigned to the $i$-th coordinate direction under $\chi$ and $\tilde\chi$, respectively. We want to consider the transition map
\begin{equation}\label{eq:transition-function}
(y_1, ..., y_m)\mapsto(\tilde y_1, ..., \tilde y_m) := \tilde\chi_{\tilde h\tilde s}\circ\chi^{-1}_{hs}(y_1, ..., y_m)
\end{equation}
that sends the coordinates $(y_1, ..., y_m)$ under $\chi_{hs}$ of a point in $\Bl_\cW(M)$ to its coordinates $(\tilde y_1, ..., \tilde y_m)$ under $\tilde\chi_{\tilde h\tilde s}$.

Recall from Equation~\eqref{eq:local-blow-down} that the blow down map locally takes the form
$$\chi\circ b_\cW \circ \chi_{hs}^{-1} (y_1, ..., y_m) = y_h \cdot q,$$
where we use the shorthand $$q:=(y_1, ..., s, ..., y_m)$$ here and in the rest of this \subchaption. Note that $q$ can be seen as the coordinates of a weighted normal vector $\left(\chi^{(\cW)}\right)^{-1}(q)$ in $\nu\cW$ (compare Paragraphs~\ref{par:smooth-weighted-normal-local} and~\ref{smooth-structure-local-case}).

To start, let us compute an explicit expression for the transition map:

\begin{lemma}\label{lem:local-trans-blowdown}
The transition map from Equation~\eqref{eq:transition-function} is given for $i\neq \tilde h$ by
$$
\tilde y_i = \begin{cases}
\frac{
(\tilde \chi\circ\chi^{-1})_i(y_h\cdot q)
}{
\left( \tilde s\;\,(\tilde \chi\circ\chi^{-1})_{\tilde h}(y_h\cdot q) \right)^{\tilde w_i/\tilde w_{\tilde h}}
}
&\text{if }y_h>0,\\
\frac{
\left(\tilde \chi^{(\cW)}\circ\left(\chi^{(\cW)}\right)^{-1}\right)_i(q)
}{
\left( \tilde s\,\left(\tilde \chi^{(\cW)}\circ\left(\chi^{(\cW)}\right)^{-1}\right)_{\tilde h}(q) \right)^{\tilde w_i/\tilde w_{\tilde h}}
}
&\text{if }y_h=0
\end{cases}
$$
as well as
$$
\tilde y_{\tilde h} = \begin{cases}
 \left(\tilde s\, (\tilde\chi\circ\chi^{-1})_{\tilde h}(y_h\cdot q)  \right)^{1/\tilde w_{\tilde h}}
&\text{if }y_h>0, \\
0 &\text{if }y_h=0.
\end{cases}
$$
\end{lemma}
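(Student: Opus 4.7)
The plan is to prove this lemma by directly unpacking the definitions of $\chi_{hs}^{-1}$ and $\tilde\chi_{\tilde h\tilde s}$, handling the bulk and exceptional divisor cases separately according to whether $y_h > 0$ or $y_h = 0$.

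First I would treat the bulk case $y_h > 0$. Here Equation~\eqref{eq:inv-coords-local} gives $\chi_{hs}^{-1}(y_1, \ldots, y_m) = \chi^{-1}(y_h \cdot q)$, which lies in $U \setminus \supp\cW$. Applying the top branch of the definition of $\tilde\chi_{\tilde h\tilde s}$ at a point $p \in U \setminus \supp\cW$ reads
\[
\tilde x_{i:\tilde h\tilde s}(p) = \tilde x_i(p) \cdot \bigl(\tilde s\,\tilde x_{\tilde h}(p)\bigr)^{-\tilde w_i/\tilde w_{\tilde h}} \qquad (i \neq \tilde h),
\]
and the analogous formula for $i = \tilde h$. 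Substituting $p = \chi^{-1}(y_h \cdot q)$ and noting that $\tilde x_i(p) = (\tilde\chi \circ \chi^{-1})_i(y_h \cdot q)$ immediately yields the stated expressions.

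The exceptional divisor case $y_h = 0$ is the more delicate part. By Equation~\eqref{eq:inv-coords-local} we have $\chi_{hs}^{-1}(y_1, \ldots, y_m) = [n]$ where $n := (\chi^{(\cW)})^{-1}(q) \in \nu\cW$. The bottom branch of $\tilde\chi_{\tilde h\tilde s}$ then gives
\[
\tilde x_{i:\tilde h\tilde s}([n]) = \tilde x_i^{(\tilde w_i)}(n) \cdot \bigl(\tilde s\,\tilde x_{\tilde h}^{(\tilde w_{\tilde h})}(n)\bigr)^{-\tilde w_i/\tilde w_{\tilde h}}
\]
for $i \neq \tilde h$, and $\tilde x_{\tilde h:\tilde h\tilde s}([n]) = 0$. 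The only thing to verify is that $\tilde x_i^{(\tilde w_i)}(n) = \bigl(\tilde\chi^{(\cW)} \circ (\chi^{(\cW)})^{-1}\bigr)_i(q)$, but this is precisely the definition of the chart $\tilde\chi^{(\cW)}$ on $\nu\cW$ (Paragraph~\ref{props-weighted-normal}(3)) applied to the point $n = (\chi^{(\cW)})^{-1}(q)$.

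No real obstacle is expected here; the lemma amounts to chasing through definitions carefully. The one subtle point is recognizing that the formulas in the two cases have the same structural form, with $y_h \cdot q$ playing the role of a parameter in the bulk case and the weighted normal bundle chart transition playing the analogous role on the divisor. This parallelism is what subsequent proofs in the appendix will exploit to deduce smoothness of transition functions, so it is worth presenting the two cases in parallel notation.
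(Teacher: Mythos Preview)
Your proposal is correct and follows essentially the same approach as the paper: split into the cases $y_h>0$ and $y_h=0$ using Equation~\eqref{eq:inv-coords-local}, then unfold the definition of $\tilde\chi_{\tilde h\tilde s}$. Your treatment of the $y_h=0$ case is in fact slightly more direct than the paper's, which instead matches representatives via a scaling parameter $\lambda$ before solving, but the content is the same.
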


\begin{proof}
We use the characterization of the charts from Equation~\eqref{eq:inv-coords-local} to relate the $y_i$ and $\tilde y_i$ and make a case distinction based on whether $y_{h}$ vanishes:

\textit{Case 1:} If $y_{h}>0$, then we can solve the $\tilde h$-th component of the resulting equations for $\tilde y_{\tilde h}$ to get
$$
\tilde y_{\tilde h}=\left( \tilde s\;\,(\tilde \chi\circ\chi^{-1})_{\tilde h}(y_h\cdot q) \right)^{1/\tilde w_{\tilde h}},
$$
where for convenience we used $q :=(y_1, ..., s, ..., y_m)$.
Note that the sign $\tilde s$ ensures that we are taking a root of something positive, given that we are considering points in $\tilde U_{\tilde h\tilde s}$. We can insert this into the remaining components to obtain
\begin{equation}\label{eq:yi-in-bulk}
\tilde y_i = \frac{
(\tilde \chi\circ\chi^{-1})_i(y_h\cdot q)
}{
\left( \tilde s\;\,(\tilde \chi\circ\chi^{-1})_{\tilde h}(y_h\cdot q) \right)^{\tilde w_i/\tilde w_{\tilde h}}
}\qquad\text{for $i\neq \tilde h$.}
\end{equation}

\textit{Case 2:} If $y_h=0$, then Equation~\eqref{eq:inv-coords-local} tells us that there exists a $\lambda\in\R_{>0}$ such that $$\left( \tilde \chi^{(\cW)} \right)^{-1}(\tilde q) = \left( \chi^{(\cW)} \right)^{-1}(\lambda\cdot q)$$
with $\tilde q=(\tilde y_1, ..., \tilde s, ..., \tilde y_m)$. Solving the $\tilde h$-th component of $\tilde q$ for $\lambda$ gives
$$
\lambda=\left( \tilde s\,\left(\tilde \chi^{(\cW)}\circ\left(\chi^{(\cW)}\right)^{-1}\right)_{\tilde h}(q) \right)^{-1/\tilde w_{\tilde h}}.
$$
Inserted into the remaining components, we get
\begin{equation}\label{eq:yi-in-divisor}
\tilde y_i = \frac{
\left(\tilde \chi^{(\cW)}\circ\left(\chi^{(\cW)}\right)^{-1}\right)_i(q)
}{
\left( \tilde s\,\left(\tilde \chi^{(\cW)}\circ\left(\chi^{(\cW)}\right)^{-1}\right)_{\tilde h}(q) \right)^{\tilde w_i/\tilde w_{\tilde h}}
}
\end{equation}
for $i\neq \tilde h$. Since $y_{h}=0$ corresponds to points on the exceptional divisor, we must also have $\tilde y_{\tilde h}=0$.
\end{proof}

We recall a weak version of the Malgrange Preparation Theorem:

\begin{theorem}\label{thm:malgrange}
Let $f$ be a smooth real-valued function defined in a neighbourhood of the origin in $\R\times\R^m$. Assume that $\partial^i f/\partial t^i(t,x)$ vanishes at $t=0$ for all $x$ and $0\leq i<w$.

Then there exists a smooth and real-valued function $g$ close to the origin of $\R\times\R^m$ such that $$f(t,x)=t^w\cdot g(t,x).$$ It follows immediately that $$g(0,x)=\frac{1}{w!}\frac{\partial^w f}{\partial t^w}(0,x).$$
In particular, $g$ is non-zero at the origin exactly when the $w$-th derivative of $f$ does not vanish.
\end{theorem}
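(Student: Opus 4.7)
The plan is to prove this by Taylor expansion in the variable $t$ with integral remainder, treating $x$ as a smooth parameter; despite the label \emph{Malgrange Preparation Theorem}, the vanishing hypothesis makes this a strengthened form of a parametric Hadamard lemma, so the full strength of Malgrange is not actually needed.

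Concretely, for any fixed $(t,x)$ in a small enough neighbourhood of the origin, the integral form of Taylor's theorem in the first variable gives
\begin{equation*}
f(t,x) \;=\; \sum_{i=0}^{w-1} \frac{1}{i!}\frac{\partial^i f}{\partial t^i}(0,x)\, t^i \;+\; \frac{t^w}{(w-1)!}\int_0^1 (1-s)^{w-1}\, \frac{\partial^w f}{\partial t^w}(st, x)\, ds.
\end{equation*}
The assumption that $\partial^i f/\partial t^i(0,x)=0$ for all $0\leq i<w$ kills every term of the polynomial sum, leaving only the remainder term, which already carries the desired factor of $t^w$.

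This motivates defining
\begin{equation*}
g(t,x) \;:=\; \frac{1}{(w-1)!}\int_0^1 (1-s)^{w-1}\, \frac{\partial^w f}{\partial t^w}(st, x)\, ds,
\end{equation*}
so that $f(t,x) = t^w g(t,x)$ holds by construction. I would then check that $g$ is smooth on a neighbourhood of the origin by a standard differentiation-under-the-integral-sign argument: on any compact neighbourhood, $\partial^w f/\partial t^w$ and all of its partial derivatives are continuous and bounded, so one may interchange derivatives in $(t,x)$ with the integral over $s\in[0,1]$.

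Evaluating at $t=0$ and using $\int_0^1(1-s)^{w-1}\,ds = 1/w$ gives
\begin{equation*}
g(0,x) \;=\; \frac{1}{(w-1)!}\cdot\frac{1}{w}\cdot\frac{\partial^w f}{\partial t^w}(0,x) \;=\; \frac{1}{w!}\frac{\partial^w f}{\partial t^w}(0,x),
\end{equation*}
which establishes the explicit formula and immediately implies the final non-vanishing characterization. There is no genuine obstacle here; the only point requiring minor care is the justification of smoothness of $g$ via differentiation under the integral sign, which is routine.
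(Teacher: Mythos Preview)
Your proof is correct and complete. The integral-remainder form of Taylor's theorem does exactly what is needed, and your observation that the full Malgrange preparation theorem is unnecessary is on point.

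The paper takes a different route: it does not give a self-contained proof but simply invokes the Malgrange preparation theorem as stated in \cite[Theorem 2.1, Note (1)]{GG73}, and then remarks that the stronger hypothesis here (vanishing of the $t$-derivatives at $t=0$ for \emph{all} $x$, not just at the origin) forces the lower-order coefficient functions in the Malgrange decomposition to vanish. Your approach is more elementary and self-contained, avoiding the nontrivial machinery of Malgrange entirely; in fact, the paper itself uses precisely your integral-remainder technique elsewhere (in the proof of Proposition~\ref{prop:filtration-standard-weighting}) to establish a weighted Hadamard lemma. What citing Malgrange buys is brevity and a pointer to a more general statement; what your argument buys is that nothing beyond first-year analysis is required.
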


Compared to other formulations like~\cite[][Theorem 2.1, Note (1)]{GG73}, all terms with powers of $t$ at a lower order than $w$ vanish since our assumption on the differentials hold for all $x$ instead of just the origin.

The transition functions repeatedly include the expression $\tilde{s}\,(\tilde\chi\circ\chi^{-1})_i(t\cdot q)$ for $t\neq0$. The following Lemma uses Malgrange Preparation to determine the order of vanishing of this expression at $t=0$ and shows that the extension to zero is smooth: 

\begin{lemma}\label{lem:useful-transition-relation}
    For each sign $\tilde{s}$ and index $i\in\{1, ..., m\}$ there exists a smooth function $g(t,q)$, defined for all those $t\in[0,\infty)$ and $q\in\R^m$ with $\chi^{-1}(t\cdot q)$ well-defined and contained in the domain of $\tilde\chi$, such that
$$
\tilde{s}\,(\tilde\chi\circ\chi^{-1})_i(t\cdot q) = t^{\tilde w_i} \cdot g(t,q).
$$
Here $t\cdot q$ is the action associated with $\cW$ and we have
$$
g(0,q) = \tilde{s}\,\left( \tilde\chi^{(\cW)}\circ\left(\chi^{(\cW)}\right)^{-1} \right)_i(q).
$$
In particular, $g(0,q)>0$ if $\left(\chi^{(\cW)}\right)^{-1}(q)\in \tilde U_{i\tilde{s}}$.
\end{lemma}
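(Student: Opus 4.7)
The plan is to recognize the family of curves $\gamma_q: t \mapsto \chi^{-1}(t \cdot q)$ as representatives of elements of $\cW$, so that vanishing of derivatives comes for free from $\tilde{x}_i$ lying in the right degree of the structure filtration. Concretely, writing out $\chi \circ \gamma_q(t) = (t^{w_1}q_1, \dots, t^{w_m}q_m)$ shows that when $t=0$ the image sits in $\supp \cW$ (since all coordinates with non-zero weight vanish) and that the local description of $\cW$ in adapted coordinates is exactly satisfied by $[\gamma_q]$, so $[\gamma_q] \in \cW$ whenever it is defined.

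Next, since $\tilde\chi$ is also adapted to $\cW$, the coordinate $\tilde{x}_i$ carries weight $\tilde{w}_i$, and by Examples~\ref{ex:weightings-filtration}(\ref{ex:standard-weightings-filtration}) (or directly from Proposition~\ref{prop:filtration-standard-weighting} applied to $\tilde\chi$) it lies in $\cW_{\tilde{w}_i}$. The definition of the structure filtration then gives $(\tilde{x}_i)^{(j)}|_\cW = 0$ for all $j < \tilde{w}_i$. Setting $f(t,q) := \tilde{s}\,(\tilde\chi \circ \chi^{-1})_i(t \cdot q) = \tilde{s}\,\tilde{x}_i(\gamma_q(t))$, the translation between $j$-th lifts and $t$-derivatives yields
\[
\frac{1}{j!}\,\frac{\partial^j f}{\partial t^j}(0,q) = \tilde{s}\,(\tilde{x}_i)^{(j)}([\gamma_q]) = 0 \quad \text{for all } j < \tilde{w}_i,
\]
uniformly in $q$. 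This is precisely the hypothesis of the Malgrange Preparation Theorem~\ref{thm:malgrange}, which produces a smooth $g(t,q)$ with $f(t,q) = t^{\tilde{w}_i}\,g(t,q)$ on the stated domain.

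The value at $t=0$ follows from the formula in Theorem~\ref{thm:malgrange}:
\[
g(0,q) = \frac{1}{\tilde{w}_i!}\,\frac{\partial^{\tilde{w}_i} f}{\partial t^{\tilde{w}_i}}(0,q) = \tilde{s}\,(\tilde{x}_i)^{(\tilde{w}_i)}([\gamma_q]).
\]
To identify this with the claimed expression involving the weighted normal bundle charts, I will use the local formula~\eqref{eq:inverse-coords-normal} which identifies $\bigl(\chi^{(\cW)}\bigr)^{-1}(q)$ with the equivalence class $[\gamma_q] \in \nu\cW$, together with the coordinate description from \ref{props-weighted-normal} which evaluates $\bigl(\tilde\chi^{(\cW)}\bigr)_i$ on such a class as $(\tilde{x}_i)^{(\tilde{w}_i)}([\gamma_q])$. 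This gives exactly $g(0,q) = \tilde{s}\,\bigl(\tilde\chi^{(\cW)} \circ (\chi^{(\cW)})^{-1}\bigr)_i(q)$.

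The final positivity claim is then immediate from unpacking Definition~\ref{def:smooth-blow-up}: the condition $(\chi^{(\cW)})^{-1}(q) \in \tilde{U}_{i\tilde{s}}$ is by definition $\tilde{s}\,\tilde{x}_i^{(\tilde{w}_i)}([\gamma_q]) > 0$, which is the same as $g(0,q) > 0$. The main conceptual point — and the only real obstacle — is recognizing that the weighted scaling $t \cdot q$ turns the blow-down map into a family of curves that manifestly sit inside $\cW$; once that observation is in place, Malgrange and the definitions of the structure filtration, weighted normal bundle, and chart domains slot together directly.
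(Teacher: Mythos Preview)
Your proof is correct and follows essentially the same approach as the paper: both apply Malgrange Preparation (Theorem~\ref{thm:malgrange}) to $f(t,q)=\tilde{s}\,(\tilde\chi\circ\chi^{-1})_i(t\cdot q)$, verify the vanishing hypothesis via the fact that $[\gamma_q]\in\cW$ forces the lower-order lifts $\tilde{x}_i^{(j)}([\gamma_q])$ to vanish, and identify $g(0,q)$ through the normal bundle chart formula~\eqref{eq:inverse-coords-normal}. Your phrasing in terms of $\tilde{x}_i\in\cW_{\tilde w_i}$ is a slightly more explicit route to the same vanishing, but the argument is the same.
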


This lemma can be understood to mean the following: Applying the weighted action of $\cW$ in the argument of $\tilde\chi\circ\chi^{-1}$ is the same as applying the weighted action of $\widetilde{\cW}$ in the target, at least up to leading order.

\begin{proof}
We apply Theorem~\ref{thm:malgrange} to the map $$f(t,q):=s\,(\tilde\chi\circ\chi^{-1})_i(t\cdot q).$$ We therefore have to verify for all $q$ that 
\begin{equation}\label{eq:deriv}
\frac{1}{\tilde w_i!}\left.\frac{d^{\tilde w_i}}{dt^{\tilde w_i}}\right|_{t=0}\; s\,(\tilde\chi\circ\chi^{-1})_i(t\cdot q)
=
s\,\left( \tilde\chi^{(\cW)}\circ\left(\chi^{(\cW)}\right)^{-1} \right)_i(q)
\end{equation}
holds, and that all lower-order derivatives vanish. By Equation~\eqref{eq:inverse-coords-normal},
$$
\left(\chi^{(\cW)}\right)^{-1} (q) = [t\mapsto \chi^{-1}(t\cdot q)]
$$
and thus for all $i=1, ..., m$ we have
\begin{align*}
\left(\tilde \chi^{(\cW)}\circ\left(\chi^{(\cW)}\right)^{-1}\right)_i(q)
&= \tilde x_{i}^{(\tilde w_i)}\left( \left(\chi^{(\cW)}\right)^{-1} (q) \right) \\
 &= \frac{1}{\tilde w_i!} \left.\frac{d^{\tilde w_i}}{dt^{\tilde w_i}}\right|_{t=0} \left(\tilde \chi\circ\chi^{-1}\right)_i(t\cdot q)
\end{align*}
All derivatives of lower order $j<\tilde w_i$ are proportional to $\tilde x_i^{(j)}\left( \left(\chi^{(\cW)}\right)^{-1} (q) \right)$ instead, which must vanish for all elements in the weighting $\cW$ by the definition of weightings.

The statement about the sign of $g(0,q)$ is immediate from Equation~\eqref{eq:deriv} and the definition of the chart domain $\tilde{U}_{i\tilde s}.$
\end{proof}

We are now ready to prove that the charts from Def.~\ref{def:smooth-blow-up} induce a manifold structure on the blow-up of a weighted submanifold:

\begin{proof}[Proof of Thm.~\ref{thm:weighted-blow-up-atlas}]
We need our candidate for coordinate charts to be bijective, induce smooth transition maps and a second countable Hausdorff topology. We have already given the inverse of the charts in Equation~\eqref{eq:inv-coords-local}, so bijectivity is clear.
Any two points $p_1, p_2$ in the blow-up either lie in some common chart domain or can be separated by chart domains\footnote{If $b_\cW(p_1)\neq b_\cW(p_2)$, one may simply choose disjoint charts on a small enough domain. If $b_\cW(p_1)= b_\cW(p_2)\not\in\supp\cW$, then $p_1=p_2$ can be covered by a single chart. If $b_\cW(p_1)= b_\cW(p_2)\in\supp\cW$ and the points correspond to antipodal weighted normal vectors, then one may choose a chart $\chi$ in which they are coordinate vectors such that the induced charts with opposite signs separate them. If they are not antipodal vectors, then any chart of $M$ with a coordinate direction along which both $p_1$ and $p_2$ have a positive component will induce a chart that covers both.}. As the atlas clearly has a countable subatlas, this already suffices to conclude the topological properties (compare e.g. Proposition~1.32 of~\cite{Lee}).
So we are left to prove only smoothness of the transition maps.

Assuming that the domains of two charts $\chi_{hs}$ and $\tilde\chi_{\tilde h \tilde s}$ overlap, we get transition functions as in Lemma~\ref{lem:local-trans-blowdown}. Smoothness is clear away from $y_h=0.$

Consider the component $\tilde{y}_{\tilde{h}}$ for $y_h\neq0$: According to Lemma~\ref{lem:useful-transition-relation}, we can write
$$\tilde{y}_{\tilde{h}}=\left(  y_h^{\tilde{w}_{\tilde{h}}}\cdot g(y_1,...,y_m)  \right)^{1/\tilde{w}_{\tilde{h}}} = y_h\cdot g(y_1,...,y_m)^{1/\tilde{w}_{\tilde{h}}}$$ for some function $g$ with $g(y_1,...,y_m)>0$ whenever $y_h=0$ due to $\left(\chi^{(\cW)}\right)^{-1}(y_1,...,s,...,y_m)\in \tilde U_{\tilde{h}\tilde{s}}$. In particular, the component $\tilde{y}_{\tilde{h}}$ smoothly extends to $y_h=0$ by setting it to 0, as required.

We can similarly apply Lemma~\ref{lem:useful-transition-relation} for the other components $\tilde{y}_i$ with $i\neq\tilde{h}$ at $y_h\neq0$ to obtain
$$\tilde{y}_i = \frac{y_h^{\tilde{w}_i}\cdot k_1(y_1,...,y_m)}{
\left( y_h^{\tilde{w}_{\tilde{h}}}\cdot k_2(y_1, ..., y_m) \right)^{\tilde{w}_i/\tilde{w}_{\tilde{h}}}}
=
\frac{k_1(y_1,...,y_m)}{
k_2(y_1, ..., y_m)^{\tilde{w}_i/\tilde{w}_{\tilde{h}}}}
$$
for smooth maps $k_1$ and $k_2$, where $k_2>0$ for $y_h=0$ due to $\chi^{-1}_{hs}(y_1,...,y_m)\in \tilde U_{\tilde{h}\tilde{s}}$. This expression is clearly smooth also when $y_h=0$. To conclude smoothness of the transition functions, we can easily check that evaluating the $k_i$ at $y_h=0$ according to Lemma~\ref{lem:useful-transition-relation} matches our expression for the transition maps from Lemma~\ref{lem:local-trans-blowdown}. We have thus shown that the charts define a manifold structure on the blow-up.

Finally, note that the local expression in Equation~\eqref{eq:local-blow-down} for the blow-down map $b_\cW$ is clearly smooth and a closed map. As the fibers of the blow-down map are compact (being either points or spheres), $b_\cW$ is also proper.
\end{proof}

\startSubchaption{Smoothness of induced maps}\label{ssec:induced-smooth}

We want to conclude the discussion of smooth structures by proving smoothness of induced maps $b_\phi$ and openness of their domain. Thus for the rest of this \subchaption, let $\phi:(M,\cW)\to(\widetilde M, \widetilde\cW)$ be a morphism of weighted manifolds and consider weighted coordinates $(\chi,U)$ and $(\tilde\chi,\tilde U)$ on the domain and codomain. Let $w_i$ and $\tilde w_i$ be the respective weights of the $i$-th coordinate direction.

We first need a straightforward generalization of Lemma~\ref{lem:useful-transition-relation}:

\begin{lemma}\label{lem:useful-transition-relation-phi}
    For each sign $\tilde{s}$ and index $i\in\{1, ..., m\}$, there exists a smooth function $g(t,q)$ defined for all those $t\in[0,\infty)$ and $q\in\R^m$ with $\phi\circ\chi^{-1}(t\cdot q)$ well-defined and contained in the domain of $\tilde\chi$ such that we can write 
$$
\tilde{s}\,(\tilde\chi\circ\phi\circ\chi^{-1})_i(t\cdot q) = t^{\tilde w_i} \cdot g(t,q).
$$
Here $t\cdot q$ is the action associated with $\cW$, and we have
$$
g(0,q) = \tilde{s}\,\left( \tilde\chi^{(\widetilde{\cW})}\circ\nu_\phi\circ\left(\chi^{(\cW)}\right)^{-1} \right)_i(q).
$$
In particular, $g(0,q)>0$ if $\nu_\phi\circ\left(\chi^{(\cW)}\right)^{-1}(q)\in \tilde U_{i\tilde{s}}$.
\end{lemma}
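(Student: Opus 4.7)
The plan is to imitate the proof of Lemma~\ref{lem:useful-transition-relation} essentially verbatim, applying the Malgrange Preparation Theorem (Theorem~\ref{thm:malgrange}) to the smooth function
\[
    f(t,q) := \tilde{s}\,(\tilde\chi\circ\phi\circ\chi^{-1})_i(t\cdot q),
\]
and then identifying the resulting leading coefficient with the claimed expression. The only thing to verify is that, for every admissible $q$, the $t$-derivatives $\partial^j_t f(0,q)$ vanish for $0\leq j<\tilde w_i$ and that the derivative of order $\tilde w_i$ produces the stated formula. Once this is in place, writing $f(t,q)=t^{\tilde w_i}\,g(t,q)$ gives the desired factorization with $g$ smooth.

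To check the vanishing of derivatives, I would use that $\phi$ is a weighted morphism and that $\tilde\chi$ is adapted to $\widetilde\cW$. By the coordinate description of adapted charts (Equation~\eqref{eq:adapted-coords-from-ideal}), $\tilde x_i\in\widetilde\cW_{\tilde w_i}$, and hence $\tilde x_i\circ\phi\in\cW_{\tilde w_i}$ by the characterization of weighted morphisms via structure filtrations. Equation~\eqref{eq:inverse-coords-normal} identifies the curve $t\mapsto\chi^{-1}(t\cdot q)$ as a representative of the element $\left(\chi^{(\cW)}\right)^{-1}(q)\in\cW$, so for every $j<\tilde w_i$
\[
    \tfrac{1}{j!}\,\partial^j_t\big|_{t=0} (\tilde x_i\circ\phi\circ\chi^{-1})(t\cdot q) \;=\; (\tilde x_i\circ\phi)^{(j)}\!\left(\left(\chi^{(\cW)}\right)^{-1}(q)\right) \;=\; 0
\]
by the definition of the structure filtration. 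This is exactly the hypothesis required by Theorem~\ref{thm:malgrange}.

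For the value at $t=0$, the same computation at $j=\tilde w_i$ together with the defining formula for $\nu_\phi$ (see Paragraph~\ref{props-weighted-normal}(7)) and the coordinate description of $\tilde\chi^{(\widetilde\cW)}$ from Paragraph~\ref{props-weighted-normal}(3) yields
\[
    g(0,q)\;=\;\tfrac{\tilde s}{\tilde w_i!}\,\partial^{\tilde w_i}_t\big|_{t=0}\bigl(\tilde x_i\circ\phi\circ\chi^{-1}\bigr)(t\cdot q)
    \;=\;\tilde s\,\left(\tilde\chi^{(\widetilde\cW)}\circ\nu_\phi\circ\left(\chi^{(\cW)}\right)^{-1}\right)_i(q),
\]
which is the announced formula. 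The positivity statement is then immediate from the definition of $\tilde U_{i\tilde s}$.

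The only real subtlety, and the step I would pay most attention to, is the compatibility between the action in the source and the induced map on normal bundles: one must be careful that applying the weighted $\R$-action along $\cW$ in the argument of $\phi\circ\chi^{-1}$ really produces, to leading order $\tilde w_i$ in $t$, the $\widetilde{\cW}$-weighted action of the image weighted normal vector $\nu_\phi\left(\left(\chi^{(\cW)}\right)^{-1}(q)\right)$. This is exactly what the two uses of Paragraph~\ref{props-weighted-normal} above encode, and it is the only place where the weighted morphism hypothesis (as opposed to mere smoothness) is used.
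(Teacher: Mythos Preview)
Your proposal is correct and follows essentially the same approach as the paper, which simply instructs the reader to ``proceed exactly as in the proof of Lemma~\ref{lem:useful-transition-relation}, just inserting $\phi$ and $\nu_\phi$ in the appropriate spots.'' If anything, your writeup is more explicit than the paper's: you spell out that the vanishing of the low-order $t$-derivatives comes from $\tilde x_i\circ\phi\in\cW_{\tilde w_i}$ via the weighted morphism hypothesis, which is exactly the point where the $\phi$-version differs from the original lemma.
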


\begin{proof}
Proceed exactly as in the proof of Lemma~\ref{lem:useful-transition-relation}, just inserting $\phi$ and $\nu_\phi$ in the appropriate spots.
\end{proof}

We now turn towards proving openness of $\Bl_\phi$. To that end, we consider for a concrete choice of adapted charts $(\chi,U)$ and $(\tilde\chi,\tilde U)$ the helper map
$$f:\chi_{hs}\left(U_{hs}\cap\Bl_\cW(\phi^{-1}(\tilde U))\right) \to \R^{I}$$
for $I:=\{i\in\{1,...,\dim \widetilde M\}\;|\; \tilde w_i>0 \}$
defined by 
    $$
f(y_1, ..., y_m)_i:=\begin{cases}
y_h^{-\tilde w_{i}} \left(\widetilde \chi\circ\phi\circ\chi^{-1}\right)_{i}(y_h\cdot q) 
 &\text{for }y_h>0, \\
\left(\widetilde\chi^{(\widetilde\cW)}\circ\nu_\phi\circ(\chi^{(\cW)})^{-1}\right)_{i}(q),
 &\text{for }y_h=0,
\end{cases}
    $$
    where we again use the shorthand $q=(y_1, ..., s, ..., y_m)$.

This map cuts out the part of the subset $\Bl_\phi$ covered by the chosen charts:

\begin{lemma}\label{lem:helper-cuts}
The map $f$ satisfies
    $$
(y_1, ..., y_m)\in \chi_{hs}\left(\Bl_\phi(M)\cap U_{hs}\cap\Bl_\cW(\phi^{-1}(\tilde U)) \right) \quad\iff\quad (y_1, ..., y_m)\in f^{-1}(\R^{I}\setminus\{0\}).
    $$
\end{lemma}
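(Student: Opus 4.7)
The plan is to split into the two cases distinguished by the definition of $f$ and, in each, translate the defining condition of $\Bl_\phi(M)$ into a statement about non-vanishing of $\tilde x_i$- or $\tilde x_i^{(\tilde w_i)}$-components for $i\in I$.

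First I would handle the bulk case $y_h>0$. Here $\chi_{hs}^{-1}(y_1,\ldots,y_m)=\chi^{-1}(y_h\cdot q)\in M\setminus\supp\cW$ by Equation~\eqref{eq:inv-coords-local}, so by definition this lies in $\Bl_\phi(M)$ iff its image under $\phi$ is not in $\supp\widetilde\cW$. Since $\tilde\chi$ is a chart adapted to $\widetilde\cW$, $\supp\widetilde\cW$ is locally cut out by the vanishing of all components $\tilde x_i$ with $\tilde w_i>0$ (see the standard-weighting local model, Examples~\ref{ex:weightings}(\ref{ex:standard-weightings})). Thus $\phi(\chi^{-1}(y_h\cdot q))\notin\supp\widetilde\cW$ iff $(\tilde\chi\circ\phi\circ\chi^{-1})_i(y_h\cdot q)\neq 0$ for some $i\in I$, which (since $y_h>0$) is exactly $f(y_1,\ldots,y_m)\neq 0$ in $\R^I$.

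Next I would treat the exceptional case $y_h=0$. Here the corresponding point is $[n]\in\bS\nu\cW$ with $n:=(\chi^{(\cW)})^{-1}(q)\in\nu\cW$, and by definition this lies in $\Bl_\phi(M)$ iff $\nu_\phi(n)\neq 0$, meaning $\nu_\phi(n)$ is not in the zero section of $\nu\widetilde\cW$. In the vector-bundle chart $\tilde\chi^{(\widetilde\cW)}=(\tilde x_1^{(\tilde w_1)},\ldots,\tilde x_m^{(\tilde w_m)})$ the zero section is cut out by the vanishing of the components with $\tilde w_i>0$ (by the $\R$-action formula~\eqref{eq:action-normal} evaluated at $\lambda=0$; compare Paragraph~\ref{props-weighted-normal}(5)--(6)). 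Hence $\nu_\phi(n)\neq 0$ iff $(\tilde\chi^{(\widetilde\cW)}\circ\nu_\phi\circ(\chi^{(\cW)})^{-1})_i(q)\neq 0$ for some $i\in I$, which is precisely $f(y_1,\ldots,y_m)\neq 0$.

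Combining the two cases gives the claimed equivalence. The argument is essentially bookkeeping: the only subtlety is to check that the $\supp\widetilde\cW$-condition in the bulk case and the zero-section condition on the exceptional divisor both reduce to non-vanishing of exactly the same index set $I$, which follows since in adapted coordinates both the support and the zero section are cut out by the coordinates/lifts of directions with non-zero weight. I do not expect a substantial obstacle here; the real use of $f$ comes later (in proving openness of $\Bl_\phi(M)$ via Corollary~\ref{cor:domain-induced-open} and smoothness of $b_\phi$ via Proposition~\ref{prop:induced-map-smooth}), for which Lemma~\ref{lem:useful-transition-relation-phi} supplies the smooth extension of $f$ to $y_h=0$.
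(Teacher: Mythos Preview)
Your proposal is correct and follows essentially the same approach as the paper: both split into the cases $y_h>0$ and $y_h=0$, translate membership in $\Bl_\phi(M)$ into the respective conditions $\phi(\chi^{-1}(y_h\cdot q))\notin\supp\widetilde\cW$ and $\nu_\phi(n)\neq 0$, and read these off as non-vanishing of the $i\in I$ components in adapted coordinates. Your write-up is slightly more explicit in justifying why exactly the index set $I$ cuts out the support and the zero section, but the argument is the same.
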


\begin{proof}
Let us characterize the coordinates $(y_1, ..., y_m)$ which correspond to points in $\Bl_\phi(M)$.
If $y_h>0$, they are exactly those coordinates which do not get mapped into $\supp\widetilde\cW$ by $\phi$, i.e. those that satisfy
$$ \left(\widetilde \chi\circ\phi\circ\chi^{-1}\right)_{i}(y_h\cdot q)\neq0$$ for some $i\in I$, or equivalently $f(y_1,...,y_m)\neq 0$.
If on the other hand $y_h=0$, the coordinates describe a point $[n]$ on the exceptional divisor of $\Bl_\cW(M)$ that lies in $\Bl_\phi(M)$ if and only if $\nu_\phi(n)\neq 0$, or equivalently $$\left(\widetilde\chi^{(\widetilde\cW)}\circ\nu_\phi\circ(\chi^{(\cW)})^{-1}\right)_{i}(q)\neq0$$ for some $i\in I.$
Thus in both cases $\Bl_\phi(M)$ is the subset where $f>0$.
\end{proof}

\begin{lemma}\label{lem:helper-smooth}
The map $f$ is smooth.
\end{lemma}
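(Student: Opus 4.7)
The approach is to reduce directly to Lemma~\ref{lem:useful-transition-relation-phi}, whose statement is essentially tailor-made for this situation. For each index $i\in I$ (so $\tilde w_i>0$), I would apply that lemma with sign $\tilde s=+1$ to produce a smooth function $g_i(t,q)$, defined on all $(t,q)\in[0,\infty)\times\R^m$ with $\phi\circ\chi^{-1}(t\cdot q)$ contained in the domain of $\tilde\chi$, such that
\[
(\tilde\chi\circ\phi\circ\chi^{-1})_i(t\cdot q)=t^{\tilde w_i}\cdot g_i(t,q),
\qquad
g_i(0,q)=\bigl(\tilde\chi^{(\widetilde\cW)}\circ\nu_\phi\circ(\chi^{(\cW)})^{-1}\bigr)_i(q).
\]

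Substituting this factorization into the piecewise definition of $f$ then collapses both cases into a single smooth expression: for $y_h>0$ the prefactor $y_h^{-\tilde w_i}$ exactly cancels $y_h^{\tilde w_i}$, leaving $f(y_1,\ldots,y_m)_i=g_i(y_h,q)$, while for $y_h=0$ the boundary value of $g_i$ agrees by construction with the term prescribed on the exceptional divisor. Hence $f$ is the composition of the evidently smooth map $(y_1,\ldots,y_m)\mapsto(y_h,(y_1,\ldots,s,\ldots,y_m))$ with the smooth map $(t,q)\mapsto(g_i(t,q))_{i\in I}$, and therefore smooth.

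There is no real obstacle: the formula defining $f$ has been engineered precisely to absorb the vanishing of order $t^{\tilde w_i}$ that Lemma~\ref{lem:useful-transition-relation-phi} (via Malgrange preparation) extracts. The only point to verify carefully is that the common domain of the $g_i$ supplied by the lemma contains the coordinate domain on which $f$ is defined, but this is immediate since both are characterized by the same condition that $\phi\circ\chi^{-1}(y_h\cdot q)$ lie in the domain of $\tilde\chi$.
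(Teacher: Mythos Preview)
Your proposal is correct and follows exactly the paper's approach: the paper's proof is the single sentence ``This is an immediate consequence of smoothness of $g$ in Lemma~\ref{lem:useful-transition-relation-phi},'' and your write-up simply spells out that consequence in detail.
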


\begin{proof}
This is an immediate consequence of smoothness of $g$ in Lemma~\ref{lem:useful-transition-relation-phi}.
\end{proof}

This already allows us to conclude openness of the domain:

\begin{corollary}\label{cor:domain-induced-open}
The subset $\Bl_\phi$ of $\Bl_\cW(M)$ is open.
\end{corollary}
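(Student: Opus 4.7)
The plan is to exploit the two lemmas just proved, since openness is a local property and together they exhibit $\Bl_\phi(M)$ locally as the preimage of an open set under a smooth (hence continuous) map. Concretely, I would fix an arbitrary point $p \in \Bl_\phi(M)$ and produce an open neighborhood in $\Bl_\cW(M)$ that is entirely contained in $\Bl_\phi(M)$.

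First I would choose suitable charts: pick weighted coordinates $(\widetilde{\chi}, \widetilde{U})$ on $\widetilde M$ around $\phi(b_\cW(p))$, and weighted coordinates $(\chi, U)$ on $M$ around $b_\cW(p)$ together with a pair $(h,s)$ such that $p \in U_{hs}$ in the sense of Definition~\ref{def:smooth-blow-up}. Existence of such $(\chi, U, h, s)$ is automatic: if $p$ lies off the exceptional divisor this is just a chart on $M$, and if $p \in \bS\nu\cW$ then any coordinate direction along which the representing weighted normal vector has a nonzero component provides a valid choice of $(h,s)$.

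Next I would consider the open set
\[
V := U_{hs} \cap \Bl_\cW\bigl(\phi^{-1}(\widetilde U)\bigr) \subseteq \Bl_\cW(M),
\]
which is open as an intersection of the chart domain with the preimage of an open set under the continuous composition $\phi \circ b_\cW$. By construction $p \in V$. Under the chart $\chi_{hs}$, the helper map $f$ from above is defined on $\chi_{hs}(V)$, and Lemma~\ref{lem:helper-cuts} identifies $\chi_{hs}(\Bl_\phi(M) \cap V)$ with $f^{-1}(\R^I \setminus \{0\})$. Since $\R^I \setminus \{0\}$ is open and $f$ is smooth (Lemma~\ref{lem:helper-smooth}), and hence continuous, this preimage is open in $\chi_{hs}(V)$. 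Pulling back through the homeomorphism $\chi_{hs}$ yields an open neighborhood of $p$ inside $\Bl_\phi(M) \cap V \subseteq \Bl_\phi(M)$, completing the argument.

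There is essentially no obstacle — all the real work was done in setting up $f$ and verifying Lemmas~\ref{lem:helper-cuts} and~\ref{lem:helper-smooth}. The only thing to double-check is that a good perspective $(h,s)$ always exists at $p$, which is immediate from the definition of the chart domains in Definition~\ref{def:smooth-blow-up} together with the fact that $p \in \Bl_\cW(M)$.
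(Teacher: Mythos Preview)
Your proof is correct and follows essentially the same approach as the paper's: both localize the question using the induced charts $\chi_{hs}$ and then invoke Lemmas~\ref{lem:helper-cuts} and~\ref{lem:helper-smooth} to realize $\Bl_\phi(M)$ as the preimage of the open set $\R^I\setminus\{0\}$ under the continuous map $f$. The paper separates off the trivial case of points away from the exceptional divisor (where the blow-down is a diffeomorphism and openness is immediate), whereas you fold this into the chart-based argument; either way the substance is identical.
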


\begin{proof}
Away from the exceptional divisor of $(M,\cW)$, openness of $\Bl_\phi$ is automatic since the blow-down map is a diffeomorphism. Close to the exceptional divisor, openness can be checked in $U_{hs}\cap\Bl_\cW(\phi^{-1}(\tilde U))$ for appropriately chosen charts. But there it is by Lemmas~\ref{lem:helper-cuts} and~\ref{lem:helper-smooth} exactly the continuous preimage of the open set $\R^I\setminus\{0\}$.
\end{proof}

To prove smoothness of the induced map $b_\phi$, we first consider its local representation in induced charts:

\begin{lemma}\label{lem:local-rep-induced-map}
The local representation
$$
    (y_1, ..., y_m)\mapsto(\tilde y_1, ..., \tilde y_m) := \tilde\chi_{\tilde h\tilde s}\circ b_\phi\circ\chi^{-1}_{hs}(y_1, ..., y_m)$$
of $b_\phi$ is for $i\neq\tilde h$ given by
$$
\tilde y_i = \begin{cases}
\frac{
(\tilde \chi\circ\phi\circ\chi^{-1})_i(y_h\cdot q)
}{
\left( \tilde s\;\,(\tilde \chi\circ\phi\circ\chi^{-1})_{\tilde h}(y_h\cdot q) \right)^{\tilde w_i/\tilde w_{\tilde h}}
}
&\text{if }y_h>0,\\
\frac{
\left(\tilde\chi^{(\widetilde\cW)}\circ\nu_\phi\circ(\chi^{(\cW)})^{-1}\right)_{i}(q)

}{
\left(\tilde s\,
\left(\tilde\chi^{(\widetilde\cW)}\circ\nu_\phi\circ(\chi^{(\cW)})^{-1}\right)_{\tilde h}(q)
\right)^{\tilde w_{i}/\tilde w_{\tilde h}}

}
&\text{if }y_h=0,
\end{cases}
$$
as well as
$$
\tilde y_{\tilde h} = \begin{cases}
\left(\tilde s\,
\left(\tilde\chi\circ\phi\circ\chi^{-1}\right)_{\tilde h}(y_h\cdot q)
\right)^{1/\tilde w_{\tilde h}}

&\text{if }y_h>0, \\
0 &\text{if }y_h=0.
\end{cases}
$$
\end{lemma}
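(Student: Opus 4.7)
The plan is to prove the formulas by a direct unfolding of the definitions in two cases according to whether $y_h>0$ or $y_h=0$. This is essentially the same computation that produced Lemma~\ref{lem:local-trans-blowdown}, except that the identity-on-$M$ in the middle is replaced by $\phi$ in the bulk and by $\nu_\phi$ on the exceptional divisor. In particular, no analytic input is needed at all; the result is a bookkeeping exercise.

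First I would use Equation~\eqref{eq:inv-coords-local} to evaluate $\chi^{-1}_{hs}(y_1,\ldots,y_m)$. If $y_h>0$, this yields the bulk point $\chi^{-1}(y_h\cdot q)$, and applying $b_\phi$ just gives $\phi(\chi^{-1}(y_h\cdot q))\in \widetilde M\setminus\supp\widetilde\cW$ (the last containment being precisely the assumption that we are in $\Bl_\phi(M)$). If instead $y_h=0$, Equation~\eqref{eq:inv-coords-local} gives the point $[(\chi^{(\cW)})^{-1}(q)]$ on the exceptional divisor, and by definition $b_\phi$ sends it to $[\nu_\phi((\chi^{(\cW)})^{-1}(q))]$, which lies on the exceptional divisor of $\widetilde\cW$ and is non-zero by definition of $\Bl_\phi(M)$.

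Next I would apply Definition~\ref{def:smooth-blow-up} for $\widetilde\chi_{\tilde h\tilde s}$ in each case separately. In the case $y_h>0$, the formulas for $\tilde y_i$ with $i\neq\tilde h$ and for $\tilde y_{\tilde h}$ are, by definition of the induced chart on the bulk,
\[
\tilde y_i=\tilde x_i(\phi(\chi^{-1}(y_h\cdot q)))\cdot\bigl(\tilde s\,\tilde x_{\tilde h}(\phi(\chi^{-1}(y_h\cdot q)))\bigr)^{-\tilde w_i/\tilde w_{\tilde h}}
\]
and
\[
\tilde y_{\tilde h}=\bigl(\tilde s\,\tilde x_{\tilde h}(\phi(\chi^{-1}(y_h\cdot q)))\bigr)^{1/\tilde w_{\tilde h}},
\]
which upon rewriting $\tilde x_i\circ\phi\circ\chi^{-1}$ as a component of $\tilde\chi\circ\phi\circ\chi^{-1}$ match the claimed expressions. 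In the case $y_h=0$, the formulas on the exceptional divisor in Definition~\ref{def:smooth-blow-up} give
\[
\tilde y_i=\tilde x_i^{(\tilde w_i)}\bigl(\nu_\phi((\chi^{(\cW)})^{-1}(q))\bigr)\cdot\bigl(\tilde s\,\tilde x_{\tilde h}^{(\tilde w_{\tilde h})}\bigl(\nu_\phi((\chi^{(\cW)})^{-1}(q))\bigr)\bigr)^{-\tilde w_i/\tilde w_{\tilde h}}
\]
and $\tilde y_{\tilde h}=0$, and recognising $\tilde x_i^{(\tilde w_i)}=\bigl(\tilde\chi^{(\widetilde\cW)}\bigr)_i$ from Paragraph~\ref{props-weighted-normal}(3) identifies these with the claimed formulas.

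Since the whole argument is a direct application of the definitions in two cases, there is no real obstacle; the only subtlety is ensuring that the point at which we evaluate $\widetilde\chi_{\tilde h\tilde s}$ is in the appropriate part of its chart domain (bulk or divisor), but this is immediate from the case distinction on $y_h$ and the hypothesis that we remain inside both chart domains.
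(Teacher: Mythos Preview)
Your proposal is correct and follows essentially the same approach as the paper: unwind $\chi_{hs}^{-1}$ via Equation~\eqref{eq:inv-coords-local}, apply $b_\phi$ (as $\phi$ in the bulk and $\nu_\phi$ on the divisor), and then read off the components of $\widetilde\chi_{\tilde h\tilde s}$ from Definition~\ref{def:smooth-blow-up}. The paper's version is marginally less direct in the $y_h=0$ case---it first writes the image as the curve class $[t\mapsto\phi\circ\chi^{-1}(t\cdot q)]$, evaluates the chart via derivatives, and only then identifies these with $\tilde\chi^{(\widetilde\cW)}\circ\nu_\phi\circ(\chi^{(\cW)})^{-1}$---whereas you invoke the normal-bundle chart and $\nu_\phi$ immediately; but this is the same computation packaged differently.
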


\begin{proof}
Using Eq~\eqref{eq:inv-coords-local}, the definition of $b_\phi$ and the expression for the inverse of the coordinates from Equation~\eqref{eq:inverse-coords-normal}, we obtain
$$
(\tilde y_1, ..., \tilde y_m)
= \widetilde\chi_{\tilde h\tilde s}
\begin{cases}
\phi\circ\chi^{-1}(y_h\cdot q) &\text{for }y_h>0,\\
\left[ t\mapsto \phi\circ\chi^{-1}(t\cdot q) \right] &\text{for }y_h=0,
\end{cases}
$$
where still $q=(y_1, ..., s, ..., y_m)$. Using the explicit expression of $\widetilde\chi_{\tilde h\tilde s}$ from Definition~\ref{def:smooth-blow-up}, it follows for $i\neq \tilde h$ that
$$
\tilde y_i = 
\begin{cases}
\frac{

(\tilde\chi\circ\phi\circ\chi^{-1})_i(y_h\cdot q)

}{

\left(
\tilde s\, (\tilde\chi\circ\phi\circ\chi^{-1})_{\tilde h}(y_h\cdot q)
\right)^{\tilde w_i/\tilde w_{\tilde h}}

} &\text{for } y_h>0,\\
\frac{

\frac{1}{\tilde w_i!} \left.\frac{d^{\tilde w_i}}{dt^{\tilde w_i}}\right|_{t=0}
(\tilde\chi\circ\phi\circ\chi^{-1})_i(y_h\cdot q)

}{
\left(

\frac{1}{\tilde w_{\tilde h}!} \left.\frac{d^{\tilde w_{\tilde h}}}{dt^{\tilde w_{\tilde h}}}\right|_{t=0}
(\tilde\chi\circ\phi\circ\chi^{-1})_{\tilde h}(t \cdot q)

\right)^{\tilde w_i/\tilde w_{\tilde h}}
} &\text{for } y_h=0,
\end{cases}
$$
as well as
$$
\tilde y_{\tilde h}=
\begin{cases}
\left(
\tilde s\, (\tilde\chi\circ\phi\circ\chi^{-1})_{\tilde h}(y_h\cdot q)
\right)^{1/\tilde w_{\tilde h}}
&\text{for } y_h>0,\\
0
&\text{for } y_h=0.
\end{cases}
$$
The derivatives in the expression for $\tilde{y}_i$ when $y_h=0$ can be rewritten in terms of weighted normal vectors exactly as in the proof of Lemma~\ref{lem:useful-transition-relation}.
\end{proof}

\begin{proposition}\label{prop:induced-map-smooth}
The induced map $b_\phi$ is smooth.
\end{proposition}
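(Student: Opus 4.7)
The plan is to reduce the problem to verifying smoothness of the local representation given in Lemma~\ref{lem:local-rep-induced-map}, and to handle the only non-trivial case, namely extension across the exceptional divisor $\{y_h = 0\}$, using the Malgrange-style factorization from Lemma~\ref{lem:useful-transition-relation-phi}. Since $b_\phi$ agrees with $\phi$ under the blow-down diffeomorphisms away from the exceptional divisors, smoothness on $\{y_h > 0\}$ is immediate. So the task reduces to showing that each component $\tilde{y}_i$ from Lemma~\ref{lem:local-rep-induced-map} extends smoothly to $\{y_h = 0\}$, with the correct boundary values.

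First I would apply Lemma~\ref{lem:useful-transition-relation-phi} twice: once with sign $+1$ and index $i$ to obtain a smooth $g_i(t,q)$ with $(\tilde\chi\circ\phi\circ\chi^{-1})_i(t\cdot q) = t^{\tilde w_i}\, g_i(t,q)$ and $g_i(0,q) = (\tilde\chi^{(\widetilde\cW)}\circ \nu_\phi\circ (\chi^{(\cW)})^{-1})_i(q)$, and once with sign $\tilde s$ and index $\tilde h$ to obtain a smooth $g_{\tilde h}(t,q)$ with $\tilde s\,(\tilde\chi\circ\phi\circ\chi^{-1})_{\tilde h}(t\cdot q) = t^{\tilde w_{\tilde h}}\, g_{\tilde h}(t,q)$ and, crucially, $g_{\tilde h}(0,q) > 0$ on the relevant chart domain by the sign assertion of the Lemma (since $\nu_\phi\circ(\chi^{(\cW)})^{-1}(q) \in \tilde U_{\tilde h \tilde s}$ there). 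Substituting both factorizations into the formula of Lemma~\ref{lem:local-rep-induced-map} and cancelling the common factor of $y_h^{\tilde w_i}$ in the $i$-th component yields, for $i \neq \tilde h$,
\[
\tilde{y}_i \;=\; \frac{g_i(y_h,q)}{g_{\tilde h}(y_h,q)^{\tilde w_i/\tilde w_{\tilde h}}},
\]
which is manifestly smooth because $g_{\tilde h}$ is strictly positive. Similarly, for the control component
\[
\tilde{y}_{\tilde h} \;=\; \bigl(y_h^{\tilde w_{\tilde h}}\, g_{\tilde h}(y_h,q)\bigr)^{1/\tilde w_{\tilde h}} \;=\; y_h\cdot g_{\tilde h}(y_h,q)^{1/\tilde w_{\tilde h}},
\]
which is smooth on the half-space $y_h \geq 0$.

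Finally, I would verify that these smooth extensions match the values prescribed by Lemma~\ref{lem:local-rep-induced-map} at $y_h = 0$. For $\tilde{y}_{\tilde h}$ this is immediate (both vanish), and for $\tilde{y}_i$ it follows by substituting $g_i(0,q)$ and $g_{\tilde h}(0,q)$, which by Lemma~\ref{lem:useful-transition-relation-phi} are precisely the $i$-th and $\tilde h$-th coordinates of $\tilde\chi^{(\widetilde\cW)}\circ\nu_\phi\circ(\chi^{(\cW)})^{-1}$ applied to $q$. Since every point of $\Bl_\phi(M)$ lies in the domain of some such pair of induced charts, this yields smoothness of $b_\phi$ globally.

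The main obstacle is the apparent singularity of the formula at $y_h = 0$ caused by the fractional power in the denominator; the key insight (already packaged in Lemma~\ref{lem:useful-transition-relation-phi}) is that the morphism condition $T^{(\infty)}\phi(\cW) \subseteq \widetilde\cW$ forces vanishing of all lifts of $(\tilde\chi\circ\phi\circ\chi^{-1})_i$ below order $\tilde w_i$, so a factor $y_h^{\tilde w_i}$ can always be extracted and cancelled against the denominator. Without this weighted divisibility, the fractional power would produce a genuine non-smooth singularity.
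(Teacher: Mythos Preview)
Your proof is correct and follows essentially the same approach as the paper: apply Lemma~\ref{lem:useful-transition-relation-phi} to both numerator and denominator of the local expression from Lemma~\ref{lem:local-rep-induced-map}, cancel the common power of $y_h$, and use positivity of the denominator; away from the exceptional divisor smoothness is immediate since $b_\phi$ coincides with $\phi$. Your version is simply more detailed, in particular in explicitly checking that the smooth extension agrees with the prescribed boundary values.
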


\begin{proof}
Smoothness of the local representatives of $b_\phi$ near the exceptional divisor is clear by applying Lemma~\ref{lem:useful-transition-relation-phi} to the expression for $\tilde{y}_{\tilde{h}}$ as well as to both the numerator and denominator of $\tilde{y}_i$, canceling powers of $y_h$ and noting that the denominator is then non-zero. Smoothness of $b_\phi$ away from the exceptional divisors follows immediately since it coincides with $\phi$ and the blow-down map is a diffeomorphism there.
\end{proof}

\newpage
\section{Smooth structure on the blow-up of a weighted building set}\label{app:proof-weighted-building-set}
In this appendix, we will prove our main Theorem~\ref{thm:manifold-structure}, i.e. that the charts from Definition~\ref{def:building-local-charts} yield the structure of a smooth manifold with corners on the blow up $\Bl_{\cW}(M)$ of a uniformly aligned weighting $\cW$ along a separated building set $\cG$, and furthermore that the blow-down map $b_\cW:\Bl_\cW(M)\to M$ is smooth and proper. We recommend the reader to first familiarize themselves with the simpler proof for a single weighted submanifold in Appendix~\ref{app:proof-weighted-submanifolds}. In particular, we will start by giving adapted versions of the preparatory Lemmas before tackling smoothness of the transition functions.

\startSubchaption{Preparatory lemmas}

We first establish a technical lemma that allows us to commute induced maps on normal bundles with charts as well as commute the weighted action of some $t\in\R$ with application of $\tilde\chi^{(\widetilde\cW)}\circ\nu_{\widetilde\cW,\cW}\circ \left( \chi^{(\cW)} \right)^{-1}$, i.e. the representative of the induced map between different weighted normal bundles in local charts.
As such, this will serve the same function as Lemma~\ref{lem:useful-transition-relation}, but now in the presence of further relevant weightings.

\begin{lemma}\label{lem:projection-in-charts}
Let $\cW\subseteq\widetilde\cW\subseteq\cU$ be weightings on a manifold $M$ of dimension $m$. Assume $\cU$ and $\widetilde\cW$ are aligned with a chart $\tilde\chi$ and that $\cU$ and $\cW$ are aligned with a chart $\chi$. Let $i\in\{1, ..., m\}$ be the index of a coordinate direction under $\tilde\chi$ such that the weights of $\cU$ and $\widetilde\cW$ match. Then for any $q\in\R^m$ in the image of $\chi^{(\cW)}$ it holds that
$$
\left( \tilde\chi^{(\widetilde\cW)}\circ\nu_{\widetilde\cW,\cW}\circ \left( \chi^{(\cW)} \right)^{-1} \right)_i (q)
=\left(  \tilde\chi^{(\cU)}\circ\left(\chi^{(\cU)}\right)^{-1} \circ\nu_{\chi_*\cU,\chi_*\cW} \right)_i(q).
$$

Furthermore, if $\cW'$ is another weighting such that
\begin{enumerate}
\item $\cW\subseteq\cW'\subseteq \cU$,
\item $\cW'\subseteq \cU$ and $\cW$ are uniformly aligned under the chart $\chi$, and
\item there is some $t\in\R, q_0\in\R^m$ such that $q=t\cdot q_0$ according the action of $\chi_*\cW'$,
\end{enumerate}
then the expressions above are equal to
$$
\left(t\cdot \tilde\chi^{(\cU)}\circ\left(\chi^{(\cU)}\right)^{-1} \circ\nu_{\chi_*\cU,\chi_*\cW} \right)_i (q_0),
$$
where $t$ acts according to $\tilde\chi_*\widetilde\cW$.
\end{lemma}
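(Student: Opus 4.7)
The plan for the first equality is to recognize both sides as the single function $\tilde\chi_i^{(\tilde c_i)}$ on $T^{(\infty)}M$ evaluated on classes that agree in $\nu\cU$. Writing $a_j,b_j$ for the weights of $\cW,\cU$ under $\chi$ and $\tilde b_j,\tilde c_j$ for the weights of $\cU,\widetilde\cW$ under $\tilde\chi$, the inclusion $\widetilde\cW\subseteq\cU$ gives $\cU_j\subseteq\widetilde\cW_j$, so the adapted-coordinate condition $\tilde\chi_i\in\cU_{\tilde b_i}$ together with $\tilde b_i=\tilde c_i$ implies that the $i$-th component of both $\tilde\chi^{(\widetilde\cW)}$ and $\tilde\chi^{(\cU)}$ is realized by the same lift $\tilde\chi_i^{(\tilde c_i)}$, which descends consistently to $\nu\widetilde\cW$ and $\nu\cU$. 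Thus the claim reduces to checking that the two chains of maps produce, at the final stage, the same class in $\nu\cU$.

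For the LHS, I would unfold $(\chi^{(\cW)})^{-1}(q)$ via Equation~\eqref{eq:inverse-coords-normal} as the class of the curve $\gamma(t)=\chi^{-1}(t\cdot_\cW q)$, and note that $\nu_{\widetilde\cW,\cW}$ (being induced by the identity) preserves the underlying curve. For the RHS, Example~\ref{example:induced-map-weighted-normal-bundle} makes the local projection explicit: $\bar q:=\nu_{\chi_*\cU,\chi_*\cW}(q)$ has $\bar q_j=q_j$ if $a_j=b_j$ and zero otherwise. A short inspection using Proposition~\ref{prop:filtration-standard-weighting} (or equivalently a direct comparison of $b_j$-th Taylor coefficients of the $\chi$-components of the two curves, using $a_j\geq b_j$) then shows that $t\mapsto\chi^{-1}(t\cdot_\cU\bar q)$ represents the same element of $\nu\cU$ as $\gamma$, which yields the first equality.

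For the second equality, the plan is to reduce the claim to equivariance of all the maps under the natural $\R$-actions. A case analysis for each coordinate direction $j$, using uniform alignment of $\cW$, $\cW'$, $\cU$ under $\chi$ together with $\cW\subseteq\cW'\subseteq\cU$, leaves only three possibilities for $(a_j,c'_j,b_j)$ where $c'_j$ is the weight of $\cW'$: all three vanish, all three coincide and are positive, or $a_j>0$ with $c'_j=b_j=0$. Writing $\bar q_0:=\nu_{\chi_*\cU,\chi_*\cW}(q_0)$, a direct check in each case shows $\bar q=t\cdot_\cU\bar q_0$; the degenerate third case is killed by the projection, while the middle case follows from $q_j=t^{c'_j}(q_0)_j=t^{b_j}(q_0)_j$. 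Equivariance of $(\chi^{(\cU)})^{-1}$ then gives $(\chi^{(\cU)})^{-1}(\bar q)=t\cdot (\chi^{(\cU)})^{-1}(\bar q_0)$ in $\nu\cU$, and equivariance of $\tilde\chi^{(\cU)}$ extracts precisely the factor $t^{\tilde b_i}=t^{\tilde c_i}$, which is the $\tilde\chi_*\widetilde\cW$-action on the $i$-th coordinate. The main obstacle will be keeping track of five weights living in three weightings and two charts with only pairwise alignment hypothesized; once the case split is in place the rest of the argument is essentially formal.
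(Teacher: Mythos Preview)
Your approach is correct and reaches the same conclusion as the paper, but via a more hands-on route. The paper organizes the first equality as a commutative-diagram chase: since $\cW\subseteq\widetilde\cW\subseteq\cU$ and the charts are adapted, one has a square of weighted morphisms, and functoriality of $\nu$ yields a commuting square of normal bundles whose top row is the LHS; the key step is that the left vertical arrow $\nu_{\tilde\chi_*\cU,\tilde\chi_*\widetilde\cW}$ fixes the $i$-th coordinate because the weights match there, so following the opposite path gives the RHS. Your argument unpacks the same content at the level of elements: both sides compute the single lift $\tilde\chi_i^{(\tilde c_i)}$ on curve classes that represent the same point of $\nu\cU$. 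The diagram chase is cleaner and makes the role of functoriality explicit; your version has the virtue of being completely self-contained once the local description of $\nu_{\chi_*\cU,\chi_*\cW}$ from Example~\ref{example:induced-map-weighted-normal-bundle} is in hand.

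For the second part both arguments reduce to the same observation---that on the image of the projection the $\cW'$-action agrees with the $\cU$-action---but your case analysis is slightly incomplete. Uniform alignment together with $a_j\geq c'_j\geq b_j$ allows a fourth pattern: $a_j=c'_j>0$ with $b_j=0$. This case is harmless (it too is killed by the projection, since $a_j\neq b_j$ forces $\bar q_j=(\bar q_0)_j=0$), so the argument still goes through, but the enumeration as stated should be amended.
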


\begin{proof}
Since the weightings are nested and aligned with the charts, the following diagram is a diagram of weighted morphisms:
$$
\begin{tikzcd}
(\R^m,\tilde\chi_*\widetilde\cW)\arrow[d,"\operatorname{Id}"] & (M,\widetilde\cW)\arrow[l,"\tilde\chi"']\arrow[d,"\operatorname{Id}"] & (M,\cW)\arrow[r,"\chi"]\arrow[l,"\operatorname{Id}"']\arrow[d,"\operatorname{Id}"] & (\R^m,\chi_*\cW)\arrow[d,"\operatorname{Id}"] \\
(\R^m,\tilde\chi_*\cU) & (M,\cU)\arrow[l,"\tilde\chi"'] & (M,\cU)\arrow[r,"\chi"]\arrow[l,"\operatorname{Id}"'] & (\R^m,\chi_*\cU)
\end{tikzcd}
$$

By functoriality of the weighted normal bundle construction we get the following commuting diagram:
$$
\begin{tikzcd}
\nu_{\tilde\chi_*\widetilde\cW} \R^m \arrow[d,"\nu_{\tilde\chi_*\cU,\tilde\chi_*\widetilde\cW}"'] & \nu_{\widetilde\cW}M\arrow[d]\arrow[l,"\tilde\chi^{(\widetilde\cW)}"'] & \nu_\cW M\arrow[d]\arrow[r,"\chi^{(\cW)}"]\arrow[l,"\nu_{\widetilde\cW,\cW}"'] & \nu_{\chi_*\cW}\R^m\arrow[d,"\nu_{\chi_*\cU,\chi_*\cW}"] \\
\nu_{\tilde\chi_*\cU} \R^m & \nu_\cU M\arrow[l,"\tilde\chi^{(\cU)}"'] & \nu_\cU M\arrow[r,"\chi^{(\cU)}"]\arrow[l,"\operatorname{Id}"'] & \nu_{\chi_*\cU}\R^m
\end{tikzcd}
$$
The left hand side of the equation we want to show is the $i$-th component of the upper row applied to $q$. By the description of the induced map in aligned coordinates from Example~\ref{example:induced-map-weighted-normal-bundle} and the condition on the $i$-th weights, the arrow $\nu_{\tilde\chi_*\cU,\tilde\chi_*\widetilde\cW}$ is a projection that leaves this component unaffected. Thus following the opposite path in this diagram yields the right-hand side of the equation and we are done with the first part of the lemma.

The second part of the Lemma follows by noting that the description from Example~\ref{example:induced-map-weighted-normal-bundle} also applies to the map $\nu_{\chi_*\cU,\chi_*\cW}$. In particular, the action of $t$ according to $\chi_*\cW'$ is indistinguishable from the action according to $\chi_*\cW$ in the subset that we are projecting to. We can then commute the action to the front of the expression as the induced maps intertwine it.
\end{proof}

Furthermore, we will need an iterated version of Malgrange Preparation:

\begin{lemma}\label{lem:iterated-malgrange}
Let $f$ be a smooth real-valued function defined in a neighbourhood of the origin in $\R^k\times\R^m$ and $w\in\N_0^k$. Assume that $\partial^{|\alpha|} f/\partial t^\alpha(t,x)$ vanishes when the following holds: $\alpha$ is a multi-index with $\alpha_j\leq w_j$, $\sum_j\alpha_j<\sum_j w_j$ and $t_j=0$ for all $j$ with $\alpha_j>0$. Then there exists a smooth and real-valued function $g$ close to the origin of $\R^k\times\R^m$ such that
$$f(t,x)=\prod_{j=1}^k t_j^{w_j}\;\cdot g(t,x).$$
It follows immediately that
$$g(0,x)=\prod_{j=1}^k\left(\frac{1}{w_j!}\left.\frac{d^{w_j}}{dt_j^{w_j}}\right|_{t_j=0}\right)\; f(t,x).$$
\end{lemma}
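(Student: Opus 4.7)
The plan is to prove this by induction on $k$, iteratively applying the single-variable Malgrange Preparation Theorem (Theorem~\ref{thm:malgrange}) to factor out one $t_j^{w_j}$ at a time. The base case $k=1$ is exactly the single-variable theorem. For the inductive step I would argue as follows.

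First, apply Malgrange to $f$ viewed as a function of $t_1$ with $(t_2,\ldots,t_k,x)$ as parameters. The hypothesis applied to the single-index multi-indices $\alpha = (i,0,\ldots,0)$ for $i < w_1$ yields $\partial^i_{t_1} f(0,t_2,\ldots,t_k,x) = 0$, so Malgrange gives a smooth $g_1$ with $f(t,x) = t_1^{w_1} g_1(t,x)$. Now iterate: suppose after $j-1$ steps we have $f(t,x) = t_1^{w_1}\cdots t_{j-1}^{w_{j-1}} g_{j-1}(t,x)$ for some smooth $g_{j-1}$. To apply Malgrange in $t_j$, I must verify that $\partial^i_{t_j} g_{j-1}\bigl|_{t_j = 0} = 0$ for all $i < w_j$.

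The verification is the main technical step. Differentiating the factorization $i$ times in $t_j$ and evaluating at $t_j = 0$ gives
$$
t_1^{w_1}\cdots t_{j-1}^{w_{j-1}} \cdot \partial^i_{t_j} g_{j-1}(t_1,\ldots,t_{j-1},0,t_{j+1},\ldots,t_k,x) = \partial^i_{t_j} f(t_1,\ldots,t_{j-1},0,t_{j+1},\ldots,t_k,x),
$$
and the right-hand side vanishes by the given hypothesis applied to $\alpha = i \cdot e_j$ (which satisfies $\alpha_j = i \leq w_j$, $|\alpha| = i < w_j \leq |w|$, and the only required vanishing coordinate is $t_j$). On the open set where $t_1,\ldots,t_{j-1}$ are all nonzero, we divide to obtain $\partial^i_{t_j} g_{j-1} = 0$. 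Since $g_{j-1}$ is smooth, $\partial^i_{t_j} g_{j-1}$ is continuous, and this open set is dense in $\{t_j = 0\}$, so the vanishing extends to all of $\{t_j=0\}$. Then Malgrange applies and yields $g_{j-1}(t,x) = t_j^{w_j} g_j(t,x)$.

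After $k$ iterations we obtain $f(t,x) = \prod_{j=1}^k t_j^{w_j}\cdot g(t,x)$ with $g := g_k$ smooth. The closing formula for $g(0,x)$ then follows by differentiating the identity $\prod_j t_j^{w_j} \cdot g(t,x) = f(t,x)$ using the generalized Leibniz rule: when one applies $\partial^{w_j}_{t_j}$ in sequence (in any order) and then sets $t = 0$, only the term where all derivatives hit the monomial factors survives, producing $\prod_j w_j! \cdot g(0,x)$. The only subtle step in the whole argument is the continuity/density argument used to extend the vanishing to all of $\{t_j = 0\}$; everything else is a clean iteration.
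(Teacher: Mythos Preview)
Your proposal is correct and follows the same approach as the paper, which simply says ``Repeatedly applying Theorem~\ref{thm:malgrange} and taking $t$ to be $t_j$ at each step from $j=1$ to $k$ yields the statement.'' You have filled in the details the paper omits, and your continuity/density argument to propagate the vanishing of $\partial^i_{t_j} g_{j-1}$ across the locus $\{t_1\cdots t_{j-1}=0\}$ is a clean way to handle the verification at each step; incidentally, it shows that only the single-variable hypotheses $\alpha = i\,e_j$ are actually needed, so the lemma holds under slightly weaker assumptions than stated.
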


\begin{proof}
Repeatedly applying Theorem~\ref{thm:malgrange} and taking $t$ to be $t_j$ at each step from $j=1$ to $k$ yields the statement.
\end{proof}

\startSubchaption{Proof of the main theorem}

To prove Theorem~\ref{thm:manifold-structure}, we need our coordinate charts to be bijective, induce smooth transition maps and a second countable Hausdorff topology. Bijectivity is clear from Corollary~\ref{cor:comp-coords} as it provides the inverse to our charts. Once again the atlas clearly has a countable subatlas and every pair of points either lie in a common chart domain or can be separated by chart domains, so we are left with smoothness of transition maps wherever the chart domains of two good perspectives overlap.

So let $(\chi,\cN,\mathbf{h},\mathbf{s})$ and $(\tilde\chi,\tilde\cN,\mathbf{\tilde h},\mathbf{\tilde s})$ be two overlapping good perspectives and consider the transition map
\begin{equation}\label{eq:transition-map-building}
(y_1, ..., y_m)\mapsto (\tilde y_1, ..., \tilde y_m) := \left(\tilde\chi_{\bf\tilde h \tilde s}\circ \chi^{-1}_{\bf hs}\right)(y_1, ..., y_m).
\end{equation}

Our goal is to show for an arbitrary point $p'=\chi^{-1}_{\bf hs}(y')$ in the overlap of our chart domains that the transition map is smooth close to $y'\in\R^m$. We thus fix such a point from here on out. For all components that are not a control parameter, this is straightforward:

\begin{lemma}
For all $i=1,...,m$ such that there are no $\tilde N\in\tilde\cN$ with $i=\mathbf{\tilde h}(\tilde N)$, the component $\tilde y_i$ is a smooth function of $(y_1,...,y_m)$.
\end{lemma}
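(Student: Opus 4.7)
The plan is to unwrap $\tilde y_i$ using Definition~\ref{def:building-local-charts} and then factor its dependence on $y=(y_1,\ldots,y_m)$ through maps whose smoothness we have already established. Since no $\tilde N\in\tilde\cN$ satisfies $\mathbf{\tilde h}(\tilde N)=i$, Definition~\ref{def:building-local-charts} specializes to $\tilde y_i=\tilde x_{i:\tilde N_0}$, where $\tilde N_0:=\tilde\mu(i)\in\tilde\cN\cup\{\emptyset\}$. The case $\tilde N_0=\emptyset$ is immediate: $\tilde y_i=\tilde x_i\circ b_\cW$, Corollary~\ref{cor:comp-coords} expresses $x_j\circ b_\cW\circ\chi_{\mathbf{hs}}^{-1}$ as a polynomial in $y$, and $\tilde x_i\circ\chi^{-1}$ is a smooth coordinate change on $M$, so the composition is smooth.

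For the main case $\tilde N_0\neq\emptyset$, I will construct an intermediary $M_0\in\cN\cup\{\tilde N_0\}$ with $M_0\subseteq\tilde N_0$ satisfying $p'_{\tilde N_0}=b_{\widetilde\cW_{\tilde N_0},\cW_{M_0}}(p'_{M_0})$. If $\tilde N_0\in\cN$, then $M_0=\tilde N_0$ works (the induced map is the identity), and the only work is comparing two adapted charts on the same weighted blow-up. If instead $\tilde N_0\in\cG\setminus\cN$, the existence of a suitable $M_0\in\cN$ is forced by the second term in the definition of $U^0_{\chi\cN\mathbf{hs}}$ combined with Lemma~\ref{lem:limit-point-coherence}(2). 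With such an $M_0$ in hand, I will factor $\tilde y_i$ as the composition
$$y\;\longmapsto\;\bigl(x_{j:M_0}\circ\chi_{\mathbf{hs}}^{-1}(y)\bigr)_{j}\;\longmapsto\;p'_{M_0}\;\longmapsto\;p'_{\tilde N_0}\;\longmapsto\;\tilde y_i,$$
whose four arrows are respectively the polynomial formulas of Corollary~\ref{cor:comp-coords}, the inverse of the $\chi$-induced chart on $\Bl_{\cW_{M_0}}(M)$ from Theorem~\ref{thm:weighted-blow-up-atlas}, the induced map $b_{\widetilde\cW_{\tilde N_0},\cW_{M_0}}$ (smooth on its open domain by Corollary~\ref{cor:domain-induced-open} and Proposition~\ref{prop:induced-map-smooth}), and the $\tilde\chi$-induced chart on $\Bl_{\widetilde\cW_{\tilde N_0}}(M)$ from Theorem~\ref{thm:weighted-blow-up-atlas}. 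That $p'_{M_0}$ lies in the domain of the induced map, and that its image lies in the domain of the $\tilde\chi$-induced chart, follow from $p'$ belonging to both chart domains.

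The potential difficulty -- bridging the two nest structures $\cN$ and $\tilde\cN$ by a smooth comparison -- turns out to be essentially forced by the chart-domain conditions, so the argument reduces to assembling results already in place. This is in sharp contrast to the remaining components (those corresponding to the control parameters $t_{\tilde N}=\tilde y_{\mathbf{\tilde h}(\tilde N)}$), whose smoothness will genuinely require an iterated Malgrange-type analysis as in Lemma~\ref{lem:iterated-malgrange} to absorb the products of control parameters appearing under fractional roots.
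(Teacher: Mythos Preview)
Your proof is correct and follows essentially the same approach as the paper: factor $\tilde y_i=\tilde x_{i:\tilde\mu(i)}\circ\pi_{\tilde\mu(i)}\circ\chi_{\mathbf{hs}}^{-1}$ through the $\tilde\mu(i)$-th component of the ambient product, and argue that both pieces are smooth. The paper's version is terser, simply citing Corollary~\ref{cor:comp-coords} for smoothness of $\pi_{\tilde\mu(i)}\circ\chi_{\mathbf{hs}}^{-1}$; your treatment is more explicit in that Corollary~\ref{cor:comp-coords} literally only gives the coordinates $x_{j:N}$ for $N\in\cN$, so when $\tilde\mu(i)\notin\cN$ you correctly route through an intermediary $M_0\in\cN$ via the chart-domain condition and the smooth induced map $b_{\tilde\mu(i),M_0}$. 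This extra step is exactly the reasoning implicit in the paper's citation, so there is no genuine difference in strategy---your version just spells out the bridge between the two nest structures.
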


\begin{proof}
These components are given by the composition $\tilde x_{i:\tilde\mu(i)}\circ \chi^{-1}_{\bf hs}$ for the appropriate selector map $\tilde\mu$ and thus factor through the $\tilde\mu(i)$-th component of the ambient product $\prod_{G\in\cG}\Bl_{\cW_G}(M)$. Restricted to this component, the map $\chi^{-1}_{\bf hs}$ on the right-hand side is smooth by the coordinate description in Corollary~\ref{cor:comp-coords}. The map on the left-hand side is by definition just a component of a chart on the $\tilde\mu(i)$-th component, making the composition smooth as well.
\end{proof}

So from now on we fix some $\tilde N\in\tilde\cN$ and consider the corresponding control parameter $\tilde y_i$ for $i:=\mathbf{\tilde h}(\tilde N)$. By definition it takes the form
$$
\tilde y_i(y_1, ..., y_m) :=
\left(\mathbf{\tilde s}(\tilde N)\,\tilde x_{i:\tilde\mu(i)}\circ\chi^{-1}_{\bf hs}(y_1, ..., y_m) \right)^{1/\tilde w_{\tilde N,i}}.
$$
The expression
\begin{equation}\label{eq:f}
f(y_1, ..., y_m):= \mathbf{\tilde s}(\tilde N)\,\tilde x_{i:\tilde\mu(i)}\circ\chi^{-1}_{\bf hs}(y_1, ..., y_m)
\end{equation}
in the parentheses is certainly smooth according to the explicit formulas in Corollary~\ref{cor:comp-coords}. However, due to the presence of the root, $\tilde y_i (y_1, ..., y_m) = (f(y_1, ..., y_m))^{1/\tilde w_{\tilde N,i}}$ may fail to be smooth at $y'$ if $f(y')$ vanishes. Our strategy from here on out is to write $f$ as the product of a $\tilde w_{\tilde N,i}$-th power of a smooth function and a smooth function that does not vanish at $y'$. The following lemmas first establish smoothness under additional simplifying assumptions:

\begin{lemma}
If $\tilde\mu(i)=\emptyset$, then $\tilde y_i(y_1,...,y_m)$ is smooth.
\end{lemma}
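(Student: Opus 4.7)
The plan is to reduce smoothness of $\tilde y_i$ to the already-established single-submanifold case, Theorem~\ref{thm:weighted-blow-up-atlas}. Set $\tilde N := \mathbf{\tilde h}^{-1}(i)$, so that $\tilde w_{\tilde N, i}>0$. Since $\tilde\mu(i)=\emptyset$, the definition of $\tilde y_i$ combined with $\tilde x_{i:\emptyset}=\tilde x_i\circ b_\cW$ gives
$$\tilde y_i \;=\; \left(\mathbf{\tilde s}(\tilde N)\,\tilde x_i\circ b_\cW\circ\chi^{-1}_{\bf hs}\right)^{1/\tilde w_{\tilde N, i}}.$$
A short case analysis on bulk and exceptional-divisor points (using Definition~\ref{def:smooth-blow-up}) identifies this with $\tilde x^{(\cW_{\tilde N})}_{i:i,\mathbf{\tilde s}(\tilde N)}\circ\pi_{\tilde N}\circ\chi^{-1}_{\bf hs}$, where $\pi_{\tilde N}:\prod_G \Bl_{\cW_G}(M)\to\Bl_{\cW_{\tilde N}}(M)$ is the canonical projection and the first factor is a smooth chart coordinate on the single-submanifold blow-up. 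It therefore suffices to prove that $\pi_{\tilde N}\circ\chi^{-1}_{\bf hs}$ is smooth into $\Bl_{\cW_{\tilde N}}(M)$ near $y'$.

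If $\tilde N\in\cN$, the source chart $\chi$ is aligned with $\cW_{\tilde N}$ and induces a chart on $\Bl_{\cW_{\tilde N}}(M)$ for direction $\mathbf{h}(\tilde N)$ and sign $\mathbf{s}(\tilde N)$. The good-perspective hypothesis guarantees that $\pi_{\tilde N}(\chi^{-1}_{\bf hs}(y))$ lies in this chart's domain, and its coordinates there are precisely $x_{j:\tilde N}\circ\chi^{-1}_{\bf hs}$ for $j=1,\ldots,m$, smooth in $y$ by the explicit formulas of Corollary~\ref{cor:comp-coords}. If instead $\tilde N\notin\cN$, the definition of $U_{\chi\cN\bf hs}$ still furnishes some $N\in\cN$ with $N\subseteq\tilde N$ and $\pi_N(\chi^{-1}_{\bf hs}(y'))\in\Bl_{\tilde N,N}(M)$, so that $\pi_{\tilde N}=b_{\tilde N,N}\circ\pi_N$ on a neighborhood of $y'$. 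Smoothness then follows by combining the previous argument (applied to $N$ in place of $\tilde N$) with smoothness of the induced map $b_{\tilde N,N}$ in Proposition~\ref{prop:induced-map-smooth}.

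The main obstacle in the full proof of Theorem~\ref{thm:manifold-structure} will not be the present lemma, but the complementary case $\tilde\mu(i)\neq\emptyset$, in which $\tilde y_i$ no longer factors through a single weighted blow-up. There one must combine an iterated Malgrange argument (Lemma~\ref{lem:iterated-malgrange}) with Lemma~\ref{lem:projection-in-charts} to control the simultaneous vanishing order of numerator and denominator in the source control parameters. The present case is clean precisely because $\tilde y_i$ depends only on the data of the single weighting $\cW_{\tilde N}$.
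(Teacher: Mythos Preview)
Your proof is correct and follows essentially the same route as the paper. Both argue that $\tilde y_i$ equals the control-parameter coordinate $\tilde x_{i:\tilde N}$ on $\Bl_{\cW_{\tilde N}}(M)$ (you via the explicit formula from Definition~\ref{def:smooth-blow-up}, the paper via Equation~\eqref{eq:coord-lemma-three} applied to the tilde perspective, using that $\tilde\mu(i)=\emptyset$ forces $\tilde N$ to be minimal in $\tilde\cN$), and then reduce to smoothness of $\pi_{\tilde N}\circ\chi^{-1}_{\bf hs}$.

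One point worth noting: your case distinction $\tilde N\in\cN$ versus $\tilde N\notin\cN$ makes explicit a step that the paper compresses into the phrase ``by inspecting Corollary~\ref{cor:comp-coords}''. Strictly speaking, that corollary only gives smooth formulas for $x_{j:N}$ when $N\in\cN$, so the case $\tilde N\notin\cN$ really does require the extra step through $b_{\tilde N,N}$ and Proposition~\ref{prop:induced-map-smooth} that you supply. Your version is thus somewhat more complete on this point, though the paper's argument is easily filled in the same way.
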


\begin{proof}
By inspecting Corollary~\ref{cor:comp-coords}, we know that the $\tilde{N}$-th component $\pi_{\tilde N}(\chi^{-1}_{\bf hs}(y_1, ..., y_m))\in\Bl_{\cW_{\tilde N}}(M)$ depends smoothly on $(y_1,...,y_m).$ Thus the same must hold when using different smooth coordinates $\tilde x_{i:\tilde{N}}\circ\chi^{-1}_{\bf hs}(y_1, ..., y_m)$ on that component. But when evaluating this expression using Equation~\eqref{eq:coord-lemma-three}, we see that it is exactly equal to $\tilde y_i(y_1,...,y_m)$.
\end{proof}

\begin{lemma}
If the point $p'=\chi^{-1}_{\bf hs}(y')$ does not lie on the exceptional divisor of the blow-up of $\tilde\mu(i)$, i.e. $b_\cW(p)\not\in\tilde\mu(i)$, then $\tilde y_i(y_1,...,y_m)$ is smooth at $y'$.
\end{lemma}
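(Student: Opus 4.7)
Set $k := \tilde w_{\tilde N, i}$ and $f(y) := \mathbf{\tilde s}(\tilde N)\,\tilde x_{i:\tilde\mu(i)}(\chi^{-1}_{\mathbf{hs}}(y))$, so $\tilde y_i(y) = f(y)^{1/k}$ by definition of the chart. The good-perspective and selector conditions force $\tilde\mu(i) \subsetneq \tilde N$ in $\tilde\cN$, and uniform alignment gives $\tilde w_{\tilde\mu(i), i} = k$. The hypothesis $b_\cW(p') \notin \tilde\mu(i)$ is an open condition, so the bulk formula of Definition~\ref{def:smooth-blow-up} applies to $\tilde x_{i:\tilde\mu(i)}$ throughout a neighborhood of $y'$, yielding
\[
f(y) \;=\; \mathbf{\tilde s}(\tilde N)\,\tilde x_i(Q(y)) \cdot \Psi(y),
\]
where $Q(y) := b_\cW(\chi^{-1}_{\mathbf{hs}}(y))$ is smooth by Corollary~\ref{cor:comp-coords} and $\Psi$ is smooth and strictly positive by the chart-domain condition on $\pi_{\tilde\mu(i)}(p')$. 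The plan is to exhibit a factorization $f(y) = y_{\mathbf{h}(\tilde N)}^k \cdot H(y)$ with $H$ smooth and $H(y') > 0$, which immediately gives smoothness of $\tilde y_i = y_{\mathbf{h}(\tilde N)} \cdot H^{1/k}$.

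If $\tilde x_i(Q(y')) \neq 0$, then $f(y') > 0$ (the sign is forced by the chart-domain condition on $p'_{\tilde\mu(i)}$), so $f > 0$ on a neighborhood and $f^{1/k}$ is smooth. The substantive case is $f(y') = 0$: here $t_{\tilde N}(p') = 0$, and the coordinate characterization of Proposition~\ref{prop:strat-local-coords} (whose proof depends only on Corollary~\ref{cor:comp-coords} and is hence available at this stage) forces $\tilde N \in \cG_{p'}$. Combined with $\cG_{p'} \subseteq \cN$ from Lemma~\ref{lem:control-set-in-nest}, this yields $\tilde N \in \cN$. In particular $\chi$ is then aligned with $\cW_{\tilde N}$, and by uniform alignment the $\chi$-weight $w_{\tilde N, j}$ of $\cW_{\tilde N}$ agrees with $\tilde w_{\tilde N, j}$ wherever both are nonzero.

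The $k$-th-power factorization now proceeds as follows. Since $\tilde x_i \in (\cW_{\tilde N})_k$, the local model of Proposition~\ref{prop:filtration-standard-weighting} in $\chi$-coordinates expresses $\tilde x_i \circ \chi^{-1} = \sum_\alpha c_\alpha(x)\, x^\alpha$ with $\sum_j \alpha_j w_{\tilde N, j} \geq k$ in every summand. Substituting Corollary~\ref{cor:comp-coords}, each factor $x_j \circ Q$ carries $y_{\mathbf{h}(\tilde N)}^{w_{\tilde N, j}}$, so each monomial contributes at least $y_{\mathbf{h}(\tilde N)}^k$; collecting yields $f(y) = y_{\mathbf{h}(\tilde N)}^k\, H(y)$ with $H$ smooth and nonnegative (because $f \geq 0$ and $y_{\mathbf{h}(\tilde N)} \geq 0$ on the chart domain). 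A short inspection of leading terms identifies $H(y')$, up to the positive factor $\Psi(y')$, with $\mathbf{\tilde s}(\tilde N)\,\tilde x_i^{(k)}(n)$ evaluated at the weighted normal vector $n$ representing $p'_{\tilde N}$; this is strictly positive by the chart-domain condition $p'_{\tilde N} \in U^{(\cW_{\tilde N})}_{i\,\mathbf{\tilde s}(\tilde N)}$. The main obstacle is precisely this substantive case: it requires forcing $\tilde N \in \cN$ to bring $\cW_{\tilde N}$ under the scope of the $\chi$-local model, after which the $k$-th-power factorization becomes visible.
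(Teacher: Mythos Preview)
Your approach is correct in outline but takes a substantially longer route than the paper, and your ``short inspection of leading terms'' hides real work.

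The paper avoids entirely the detour through showing $\tilde N\in\cN$. Instead it writes the numerator $\tilde x_i(b_\cW(p))$ directly in terms of the $\tilde N$-th component of $p$: since $b_\cW(p)=b_{\cW_{\tilde N}}(p_{\tilde N})$, the single-weighting blow-down formula gives $\tilde x_i(b_\cW(p))=\tilde u_i^{\,k}\cdot\mathbf{\tilde s}(\tilde N)$, where $\tilde u_i:=\tilde x_{\mathbf{\tilde h}(\tilde N):\tilde N}(p)\ge 0$ is the control parameter of $p_{\tilde N}$ in the $\tilde\chi_{\mathbf{\tilde h}(\tilde N)\mathbf{\tilde s}(\tilde N)}$ chart. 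Then $f=\tilde u_i^{\,k}/(\text{positive})$, so $\tilde y_i=\tilde u_i/(\text{positive})^{1/k}$, and $\tilde u_i$ is smooth in $y$ simply because each component $p_G$ of $\chi_{\mathbf{hs}}^{-1}$ is smooth. No case distinction on $f(y')$, no appeal to the stratification, no need for $\chi$ to be aligned with $\cW_{\tilde N}$.

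Your route works, but the computation you label a ``short inspection'' actually shows $H(y')=\Psi(y')\cdot c^{k}\cdot\mathbf{\tilde s}(\tilde N)\,\tilde x_i^{(k)}(n)$ with $c=\prod_{\hat N\in\cN,\ \hat N\subsetneq\tilde N} y'_{\mathbf{h}(\hat N)}$, not the claimed identification without $c$. To conclude $H(y')>0$ you must argue $c>0$, i.e.\ that no $\hat N\in\cN$ with $\hat N\subsetneq\tilde N$ has $y'_{\mathbf{h}(\hat N)}=0$. This is true but not immediate: such an $\hat N$ would lie in $\cG_{p'}\subseteq\tilde\cN$, hence be comparable in $\tilde\cN$ to $\tilde\mu(i)$ (both sit strictly below $\tilde N$), and then either $\hat N\subseteq\tilde\mu(i)$ contradicts the hypothesis $b_\cW(p')\notin\tilde\mu(i)$, or $\tilde\mu(i)\subsetneq\hat N\subsetneq\tilde N$ contradicts the maximality in the definition of $\tilde\mu(i)$. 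This is precisely where the lemma's hypothesis is consumed in your approach, and it deserves to be spelled out.
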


\begin{proof}
By definition of the coordinates on the blow-up of $\tilde\mu(i)$, we can write
$$\tilde x_{i:\tilde\mu(i)}(p)=\frac{\tilde x_i(p_{\tilde\mu(i)})}{
\left( \mathbf{\tilde s}(\tilde\mu(i))\,\tilde x_{\mathbf{\tilde h}(\tilde\mu(i))}(p_{\tilde\mu(i)})
\right)^{\tilde w_{\tilde\mu(i),i}/\tilde w_{\tilde\mu(i),\mathbf{\tilde h}(\tilde\mu(i))}}
}$$
for any $p=\chi_{\bf hs}^{-1}(y_1,...,y_m)$ close to $p'$,
where the denominator does not vanish precisely because we are not on the exceptional divisor. Furthermore we must have $p_{\tilde\mu(i)}=b_{\cW_{\tilde N}}(p_{\tilde N})$. Let
$$\tilde u_l = \tilde x_{l:\tilde N}(p)=\tilde x_{l:\mathbf{\tilde h}(\tilde N)\mathbf{\tilde s}(\tilde N)}(p_{\tilde N})$$ be $p$ under the coordinates associated to $\tilde N$, which depend smoothly on $(y_1, ..., y_m)$ since we know the components of $\chi_{\bf hs}^{-1}$ to be smooth. Using the local coordinate expression for the blow-down map from Equation~\eqref{eq:local-blow-down}, we get
$$
\tilde x_i(p_{\tilde\mu(i)}) = \tilde x_i\left(b_\cW\left(\tilde\chi^{-1}_{\mathbf{\tilde h}(\tilde N)\mathbf{\tilde s}(\tilde N)}(\tilde y_1, ..., \tilde y_m)\right)\right) = \tilde u_i^{\tilde w_{\tilde N,i}}\cdot \mathbf{\tilde s}(\tilde N).
$$
where we used $i=\mathbf{\tilde h}(\tilde N)$ on the right-hand side. We can conclude that $f$ of Equation~\eqref{eq:f} can indeed be written as the correct power of a smooth expression times something non-vanishing close to $y'$ and are thus done.
\end{proof}

Thus from now on we may further \textbf{assume without loss of generality} that $b(p')\in\tilde\mu(i)\neq\emptyset$, and as a consequence $b(p')\in\tilde N$ also holds. We so far have established how $p'$ relates to the relevant elements $\tilde N$ and $\tilde\mu(i)$ of the nest $\tilde\cN$ in the target, but these elements are not necessarily contained in the nest $\cN$ in the domain. To write down an expression in terms of the coordinates $(y_1,...,y_m)$ in the domain, we will make use of the fact that both these elements are determined by some element of $\cN$:

\begin{lemma}
There is a $N\in\cN\cap\tilde\cN$ such that $N\subseteq \tilde N$ and $p'_N\in\Bl_{\tilde N, N}(M)$. Similarly, there is a $\mu_0\in\cN\cap\tilde\cN$ such that $\mu_0\subseteq \tilde\mu(i)$ and $p'_{\mu_0}\in\Bl_{\tilde \mu(i), \mu_0}(M)$.
\end{lemma}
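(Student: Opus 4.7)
The plan is to exploit that any point in the chart domain has its control set contained in the corresponding nest. Specifically, by Lemma~\ref{lem:control-set-in-nest} we have $\cG_{p'}\subseteq\cN$ and $\cG_{p'}\subseteq\tilde\cN$, so any element of $\cG_{p'}$ is automatically in $\cN\cap\tilde\cN$. It thus suffices to produce, for the first claim, an element $N\in\cG_{p'}$ with $N\subseteq\tilde N$ and $p'_N\in\Bl_{\tilde N,N}(M)$; the second claim is symmetric with $\tilde\mu(i)$ in place of $\tilde N$.

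For the first claim, I would consider the set
\[
S:=\{H\in\cG \;|\; H\subseteq\tilde N \text{ and } p'_H\in\Bl_{\tilde N,H}(M)\}.
\]
This is nonempty because $\tilde N\in S$ trivially ($\Bl_{\tilde N,\tilde N}(M)=\Bl_{\tilde N}(M)$). Since the elements of $\cG$ below $\tilde N$ form a locally finite collection with strictly decreasing dimension (Lemma~\ref{lem:dim-shrinking}), I can pick an element $N\in S$ that is minimal with respect to inclusion. I then claim $N\in\cG_{p'}$, which would finish the first part.

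To verify $N\in\cG_{p'}$, I need to check the two defining conditions. First, $b_\cW(p')\in N$: by our standing assumption $b_\cW(p')\in\tilde N$, so $p'_{\tilde N}$ lies on the exceptional divisor; if $b_\cW(p')$ were not in $N$, then $p'_N$ would lie in the bulk $M\setminus N$ and the induced map $b_{\tilde N,N}$ would send it to itself, which cannot lie in the exceptional divisor of $\Bl_{\tilde N}(M)$, contradicting $p'_N\in\Bl_{\tilde N,N}(M)$. Second, no $H\in\cG$ with $H\subsetneq N$ satisfies $p'_H\in\Bl_{N,H}(M)$: if such an $H$ existed, then by Lemma~\ref{lem:seq-compat-weightings} (or directly by continuity of induced maps, using that the compositions agree on the bulk and blow-ups are closures thereof) we would have $b_{\tilde N,N}\circ b_{N,H}=b_{\tilde N,H}$ on $\Bl_{N,H}(M)\cap\Bl_{\tilde N,N}(M)$, so $p'_H$ would lie in $\Bl_{\tilde N,H}(M)$, placing $H$ in $S$ and contradicting the minimality of $N$. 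Thus $N\in\cG_{p'}\subseteq\cN\cap\tilde\cN$.

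The main obstacle will be the compositionality step: verifying that induced maps between blow-ups of different weightings compose as expected, so that membership in the domains is preserved when passing through an intermediate blow-up. This is plausible since each induced map is defined as the continuous extension of the identity on the bulk, but it needs to be stated carefully. The second claim then follows by the identical argument with $\tilde\mu(i)$ replacing $\tilde N$ throughout, using the assumption $b_\cW(p')\in\tilde\mu(i)$ to obtain $b_\cW(p')\in\mu_0$.
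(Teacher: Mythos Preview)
Your proposal is correct and follows essentially the same route as the paper: both use $\cG_{p'}\subseteq\cN\cap\tilde\cN$ (Lemma~\ref{lem:control-set-in-nest}) and then take a minimal element among the $H\subseteq\tilde N$ with $p'_H\in\Bl_{\tilde N,H}(M)$, arguing that minimality forces membership in $\cG_{p'}$. The paper separates out the trivial case $\tilde N\in\cN$ (taking $N=\tilde N$) whereas you handle it uniformly by allowing $\tilde N\in S$; and the paper simply asserts ``a minimal such $N$ must itself lie in $\cG_{p'}$'' while you spell out the verification. Your compositionality concern is resolved by functoriality of the weighted normal bundle (Paragraph~\ref{props-weighted-normal}(7)): $\nu_{\tilde N,H}=\nu_{\tilde N,N}\circ\nu_{N,H}$, so $p'_H=[n]\in\Bl_{N,H}(M)$ together with $p'_N=[\nu_{N,H}(n)]\in\Bl_{\tilde N,N}(M)$ immediately gives $\nu_{\tilde N,H}(n)\neq 0$. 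One small imprecision: in your check that $b_\cW(p')\in N$, the contradiction is more directly that a bulk point $p'_N\in\tilde N\setminus N$ fails to lie in the \emph{domain} $\Bl_{\tilde N,N}(M)=M\setminus\tilde N\cup\{\ldots\}$, rather than that its image lands in the wrong place.
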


\begin{proof}
If $\tilde N\in\cN,$ this is trivially satisfied for $N=\tilde N$. If on the other hand $\tilde N\not\in\cN$, note that $\cG_{p'}\subseteq \cN\cap\tilde\cN$ by Lemma~\ref{lem:control-set-in-nest} and the assumption that our good perspectives cover $p'$, so $\tilde N\not\in\cG_{p'}$. By definition of the control set and since $b(p')\in\tilde N$, there is at least one $N\in\cG$ with $N\subsetneq \tilde N$ and $p'_N\in\Bl_{\tilde N, N}(M)$. A minimal such $N$ must itself lie in $\cG_{p'}\subseteq\cN\cap\tilde\cN$ and thus satisfies our claim.

Since $b(p')\in\tilde\mu(i)$, the exact same argument also yields existence of $\mu_0$.
\end{proof}

If $N$ was strictly smaller than $\tilde N$, we would be done:

\begin{lemma}
If the $N\subseteq\tilde N$ from the previous lemma is strictly smaller than $\tilde N$, then $\tilde y_i(y_1,...,y_m)$ is smooth at $y'$.
\end{lemma}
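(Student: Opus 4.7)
The plan is to exploit the fact that the strict inclusion $N\subsetneq\tilde N$ forces the control parameter $t_{\tilde N}$ to be strictly positive at $p'$, which defuses the potential non-smoothness caused by the $\tilde w_{\tilde N,i}$-th root appearing in the definition of $\tilde y_i$.

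First I would observe that the element $N\in\cG$ with $N\subsetneq\tilde N$ and $p'_N\in\Bl_{\tilde N,N}(M)$ directly witnesses the failure of the second defining condition of the control set at $\tilde N$, so $\tilde N\notin\cG_{p'}$. Since the good perspective $(\tilde\chi,\tilde\cN,\mathbf{\tilde h},\mathbf{\tilde s})$ covers $p'$, and we have $\tilde N\in\tilde\cN$, Proposition~\ref{prop:strat-local-coords} identifies $\cG_{p'}=\{\tilde N'\in\tilde\cN\mid t_{\tilde N'}(p')=0\}$, and I conclude $t_{\tilde N}(p')>0$.

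By definition, $\tilde y_i=\left(\mathbf{\tilde s}(\tilde N)\,\tilde x_{i:\tilde\mu(i)}\circ\chi^{-1}_{\mathbf{hs}}\right)^{1/\tilde w_{\tilde N,i}}$, and the quantity under the root evaluates at $y'$ to $t_{\tilde N}(p')^{\tilde w_{\tilde N,i}}>0$. By continuity it remains positive on a neighbourhood of $y'$, and on that neighbourhood the $\tilde w_{\tilde N,i}$-th root is a smooth operation. It therefore suffices to show that $f:=\mathbf{\tilde s}(\tilde N)\,\tilde x_{i:\tilde\mu(i)}\circ\chi^{-1}_{\mathbf{hs}}$ itself is smooth near $y'$. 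This is the same type of argument used in the first lemma of this \subchaption: $\tilde x_{i:\tilde\mu(i)}$ factors as $\tilde x^{(\widetilde\cW_{\tilde\mu(i)})}_{i:\mathbf{\tilde h}(\tilde\mu(i))\mathbf{\tilde s}(\tilde\mu(i))}\circ\pi_{\tilde\mu(i)}$, and $\pi_{\tilde\mu(i)}\circ\chi^{-1}_{\mathbf{hs}}$ is smooth either directly via Corollary~\ref{cor:comp-coords} when $\tilde\mu(i)\in\cN$, or by rewriting it as $b_{\tilde\mu(i),\mu_0}\circ\pi_{\mu_0}\circ\chi^{-1}_{\mathbf{hs}}$ (using the $\mu_0\in\cN$ provided by the previous lemma) and combining Proposition~\ref{prop:induced-map-smooth} with Corollary~\ref{cor:comp-coords}.

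The main obstacle is purely conceptual: noticing that strict inclusion $N\subsetneq\tilde N$ produces, via Proposition~\ref{prop:strat-local-coords}, the strict positivity of the control parameter at $y'$. Once this is in hand, the root no longer obstructs smoothness, and the remaining verification is routine factorisation through the induced map on component blow-ups.
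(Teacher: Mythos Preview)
Your argument is correct and reaches the same conclusion as the paper---namely that $f(y')>0$, so the root is harmless---but by a different route. The paper does a three-case analysis on the relative position of $N$ and $\tilde\mu(i)$: the incomparable case is ruled out by separation, the case $N\supsetneq\tilde\mu(i)$ contradicts the definition of $\tilde\mu(i)$, and in the remaining case $N\subseteq\tilde\mu(i)$ one checks directly in component coordinates that $f(y')=0$ would force the $i$-th component of $p'_{\tilde N}$ to vanish, contradicting $p'\in U_{\tilde\chi\tilde\cN\mathbf{\tilde h\tilde s}}$.

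You short-circuit this: the hypothesis $N\subsetneq\tilde N$ with $p'_N\in\Bl_{\tilde N,N}(M)$ is precisely a witness that $\tilde N\notin\cG_{p'}$, and Proposition~\ref{prop:strat-local-coords} applied to the $(\tilde\chi,\tilde\cN,\mathbf{\tilde h},\mathbf{\tilde s})$-perspective then gives $t_{\tilde N}(p')>0$ directly. Since $f=t_{\tilde N}^{\tilde w_{\tilde N,i}}$ by definition, you are done. This is cleaner, and logically sound: Proposition~\ref{prop:strat-local-coords} relies only on Proposition~\ref{prop:comp-coords}, not on Theorem~\ref{thm:manifold-structure}, so there is no circularity. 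Your explicit justification of the smoothness of $f$ (factoring through $\mu_0\in\cN$ when $\tilde\mu(i)\notin\cN$) is also more careful than the paper's one-line appeal to Corollary~\ref{cor:comp-coords}. The trade-off is that the paper's case analysis is self-contained within the appendix and does not reach back to Section~5.5.
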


\begin{proof}
We consider three cases based on whether $N$ is smaller, larger or incomparable to $\tilde\mu(i)$.

\textit{Case 1, $N$ and $\tilde\mu(i)$ incomparable:} Since $\cN$ is a nest, by Proposition~\ref{prop:char-nest}(1) the set $\cP=\{N,\tilde\mu(i)\}$ is a set of $\cG$-factors. These must intersect transversely since the building set is separated. As $\tilde N$ has positive codimension and contains both elements of $\cP$, this is impossible.

\textit{Case 2, $N\subseteq\tilde\mu(i)$:} As $p'_N\in\Bl_{\tilde N, N}(M)$, this implies $p'_{\tilde\mu(i)}\in\Bl_{\tilde N, \tilde\mu(i)}(M)$. Consider again the $f$ from Equation~\eqref{eq:f}. Smoothness can only fail if $f(y')=0$. Assuming this was true and thus $\tilde x_{i:\tilde\mu(i)}(p')=0$, the induced map in the coordinates $\tilde\chi$ would give that $\left(\tilde\chi^{(\cW_{\tilde N})}_{i\mathbf{\tilde s}(\tilde N)}\right)_i(n)=0$ for $p'_{\tilde N}=[n]$, in contradiction to $p'$ being in the domain of these charts.

\textit{Case 3, $N\supsetneq\tilde\mu(i)$:} Due to $N\subsetneq\tilde N$, this yields a contradiction to the definition of $\tilde\mu(i)$ for $i=\mathbf{\tilde h}(\tilde N)$.
\end{proof}

To recap, \textbf{we can now assume} that $N=\tilde N$  (in particular, $\tilde N$ lies in $\cN\cap\tilde\cN$) and that there is a $\mu_0\in\cN\cap\tilde\cN$ with $\mu_0\subseteq\tilde\mu(i)$ and $p'_{\mu_0}\in\Bl_{\tilde \mu(i), \mu_0}(M)$. When drawing the coordinate tableau of the nest $\tilde\cN$ under $\tilde\chi$ we are in the following situation:

\begin{center}
\begin{tikzpicture}
  \matrix (m) [matrix of nodes, nodes in empty cells, column sep=5pt, row sep=5pt] {
    $\vdots$ \\
    $\;$ & $\;$ & $\;$ & $\npboxed{\phantom{A}}$ & $\dots$\\
    $\;$ & $\;$ & $\;$ & $\;$ & $\;$ & $\;$ & $\npboxed{\phantom{A}}$ & $\dots$ \\
    $\vdots$ &&&&&& $\vdots$\\
    $\;$ & $\;$ & $\;$ & $\;$ & $\;$ & $\;$ & $\;$ & $\;$ & $\;$ & $\npboxed{\phantom{A}}$ & $\dots$ \\
    $\vdots$ & & & $\uparrow$ \\
  };

  \tableaubox(m)($\tilde{N}$)(2:1:4)
  \tableaubox(m)($\tilde{\mu}(i)$)(3:1:7)
  \tableaubox(m)($\mu_0$)(5:1:10)
\end{tikzpicture}
\end{center}

The $i=\mathbf{\tilde h}(\tilde N)$-th column is marked with an arrow and $\mu_0$ may equal $\tilde{\mu}(i)$ - namely, when $\tilde\mu(i)$ is also an element of $\cN$.

Our goal remains to show smoothness of $\tilde y_i$ by showing that whenever $f(y')=0$, $f$ vanishes at the correct order. By treating a number of simpler cases in the previous lemmas, we were able to make additional assumptions. But now we are finally ready to give an explicit formula for $f$ using Lemma~\ref{lem:local-rep-induced-map} and Corollary~\ref{cor:comp-coords}. To be able to do this, we first need to slightly rewrite $f$:
\begin{align*}
f(y_1, ..., y_m) &= \mathbf{\tilde s}(\tilde N)\,\left(\tilde\chi_{\mathbf{\tilde h}(\tilde\mu(i))\mathbf{\tilde s}(\tilde\mu(i))}\circ \pi_{\tilde\mu(i)}\circ\chi^{-1}_{\bf hs}\right)_i(y_1, ..., y_m) \\
&= \mathbf{\tilde s}(\tilde N)\,\left(
\tilde\chi_{\mathbf{\tilde h}(\tilde\mu(i))\mathbf{\tilde s}(\tilde\mu(i))}
\circ b_{\tilde\mu(i),\mu_0}
\circ \pi_{\mu_0}
\circ\chi^{-1}_{\bf hs}
\right)_i(y_1, ..., y_m) \\
&= \mathbf{\tilde s}(\tilde N)\,\left(
\tilde\chi_{\mathbf{\tilde h}(\tilde\mu(i))\mathbf{\tilde s}(\tilde\mu(i))}
\circ b_{\tilde\mu(i),\mu_0}
\circ \chi^{-1}_{\mathbf{h}(\mu_0)\mathbf{s}(\mu_0)}
\right)_i(z_1, ..., z_m).
\end{align*}
For the first equality, we simply rewrote the definition of $f$ in terms of the $i$-component of the chart. The second equality follows by $p_{\tilde\mu(i)}=b_{\tilde\mu(i),\mu_0}(p_{\mu_0})$ for $p:=\chi_{\bf hs}^{-1}(y_1,...,y_m)$ close to $p'$. Finally, we sneak in the identity $ \chi^{-1}_{\mathbf{h}(\mu_0)\mathbf{s}(\mu_0)}\circ \chi_{\mathbf{h}(\mu_0)\mathbf{s}(\mu_0)} $ after the projection and define the short hand
$$ z_l := x_{l:\mu_0}(p) = 
\left(
\chi_{\mathbf{h}(\mu_0)\mathbf{s}(\mu_0)}
\circ \pi_{\mu_0}
\circ\chi^{-1}_{\bf hs}
\right)_l (y_1, ..., y_m)
\quad \text{for }l\in 1,...,m.
$$
What we have achieved now is that the dependence of $f$ on $(z_1,...,z_m)$ is given by the local expression of the induced chart from Lemma~\ref{lem:local-rep-induced-map}, while the dependence of $z_l$ on $(y_1, ..., y_m)$ is the parametrization given in Corollary~\ref{cor:comp-coords}. Putting these together, we find for $z_{\mathbf{h}(\mu_0)}\in\R$ and $q,q_0\in\R^m$ defined in terms of $(y_1,...,y_m)$ by
\begin{align*}
z_{\mathbf{h}(\mu_0)} &= \prod_{\mu_0\supseteq N'\in\cN} y_{\mathbf{h}(N')},\\
q_l &= \prod_{N'\in\cN} (y_{\mathbf{h}(N')})^{w_{N',l}}\cdot
\begin{cases}
\mathbf{s}(N'') &\text{if } l=\mathbf{h}(N'') \text{ for some }N''\in\cN,\\
y_l &\text{else,}
\end{cases}\\
(q_0)_l &= \prod_{\mu_0\not\supseteq N'\in\cN} (y_{\mathbf{h}(N')})^{w_{N',l}}\cdot
\begin{cases}
\mathbf{s}(N'') &\text{if } l=\mathbf{h}(N'') \text{ for some }N''\in\cN,\\
y_l &\text{else,}
\end{cases}
\end{align*}
that
\begin{equation}\label{eq:oversized-f}\resizebox{.88\hsize}{!}{$
f(y_1, ..., y_m) = \begin{cases}

\frac{
\mathbf{\tilde s}(\tilde N)\, \left(\tilde\chi\circ\chi^{-1}\right)_i\left( q\right)
}{
\left( \mathbf{\tilde s}(\tilde\mu(i))\,\left(\tilde\chi\circ\chi^{-1}\right)_{\mathbf{\tilde h}(\tilde\mu(i))}( q)  \right)^{\tilde w_{\tilde\mu(i),i}/\tilde w_{\tilde\mu(i),\mathbf{\tilde h}(\tilde\mu(i))}}
}

&\text{if }z_{\mathbf{h}(\mu_0)}>0\\

\frac{
\mathbf{\tilde s}(\tilde N)\, \left(\tilde\chi^{(\cW_{\tilde\mu(i)})}\circ\nu_{\tilde\mu(i),\mu_0}\circ\left(\chi^{(\cW_{\mu_0})}\right)^{-1}\right)_i\left(q_0\right)
}{
\left( \mathbf{\tilde s}(\tilde\mu(i))\,\left(\tilde\chi^{(\cW_{\tilde\mu(i)})}\circ\nu_{\tilde\mu(i),\mu_0}\circ\left(\chi^{(\cW_{\mu_0})}\right)^{-1}\right)_{\mathbf{\tilde h}(\tilde\mu(i))}(q_0)  \right)^{\tilde w_{\tilde\mu(i),i}/\tilde w_{\tilde\mu(i),\mathbf{\tilde h}(\tilde\mu(i))}}
}

&\text{if }z_{\mathbf{h}(\mu_0)}=0
\end{cases}$}
\end{equation}
holds.

To make sense of these expressions, note that by Corollary~\ref{cor:comp-coords} applied to $N=\emptyset$, we have that $b_\cW(p)=\chi^{-1}(q)$, i.e. $q$ are the coordinates of the blown-down point. The point $b_\cW(p)$ is an element of $\mu_0$ (and, equivalently, $\tilde\mu(i)$) if and only if $z_{\mathbf{h}(\mu_0)}>0$. According to the above, this happens when any of the control parameters $y_{\mathbf{h}(N')}$ associated with $N'\subseteq\mu_0$ in $\cN$ vanishes. Thus it makes sense that in this case, our expression for $f$ depends on $q_0$, which is the same as $q$ without including these vanishing parameters in the sense that
\begin{equation}\label{eq:q0-to-q}
q_l = \prod_{\mu_0\supseteq N'\in\cN} (y_{\mathbf{h}(N')})^{w_{N',l}}\cdot (q_0)_l.\end{equation}
Note that we could have concluded that the expression for $z_{\mathbf{h}(\mu_0)}>0$ holds in the bulk without spending the amount of effort we have, but that would not have allowed us to understand for which $y$ the denominator vanishes and what $f$ looks like at these points:

\begin{lemma}
The function $f$ vanishes at a point $(y_1,...,y_m)$ if and only if there is an $N'\in\cN$ such that $\mu_0\subsetneq N'\subseteq \tilde N$ and $y_{\mathbf{h}(N')}=0$.
\end{lemma}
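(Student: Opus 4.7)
The plan is to establish an explicit factorization
$$
f(y_1,\ldots,y_m) \;=\; h(y_1,\ldots,y_m)\cdot \prod_{\substack{N'\in\cN\\ \mu_0\subsetneq N'\subseteq\tilde N}} y_{\mathbf{h}(N')}^{e_{N'}}
$$
for positive integer exponents $e_{N'}$ and a smooth $h$ with $h(y')\neq 0$ on a neighbourhood of $y'$; the lemma follows directly from this. First observe that the denominator of Equation~\eqref{eq:oversized-f} is strictly positive at $y'$: in both cases it equals a positive power of $\mathbf{\tilde s}(\tilde\mu(i))\tilde x_{\mathbf{\tilde h}(\tilde\mu(i))}(b_\cW(p))$, or its weighted-normal-bundle analogue along $p_{\tilde\mu(i)}$, which is positive because $p'_{\tilde\mu(i)}$ lies in the induced chart domain $\tilde U^{(\cW_{\tilde\mu(i)})}_{\mathbf{\tilde h}(\tilde\mu(i))\mathbf{\tilde s}(\tilde\mu(i))}$. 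The denominator therefore contributes to a smooth, nowhere-vanishing factor of $h$, and only the numerator remains to be factored.

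For the numerator, substitute the expressions for $q$ and $q_0$ from Corollary~\ref{cor:comp-coords}, which display them as products of control parameters acting on a remainder. The function $\tilde x_i$ belongs to the $\tilde w_{N',i}$-th degree of the structure filtration of $\cW_{N'}$ for every $N'\in\cN\cap\tilde\cN$ with $N'\subseteq\tilde N$, since $\tilde\chi$ is aligned with $\cW_{N'}$ and, by uniform alignment, $\tilde w_{N',i}$ coincides with $\tilde w_{\tilde N,i}$. Iterated Malgrange preparation (Lemma~\ref{lem:iterated-malgrange}) then factors out appropriate powers of $y_{\mathbf{h}(N')}$ from the numerator for each such $N'$. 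The contributions indexed by $N'\subseteq\mu_0$ cancel against analogous factors arising from a Malgrange factorization of the denominator, so the remaining explicit product ranges precisely over $N'\in\cN\cap\tilde\cN$ with $\mu_0\subsetneq N'\subseteq\tilde N$. Factors $y_{\mathbf{h}(N')}$ for $N'\in\cN\setminus\tilde\cN$ cannot be extracted this way, but they satisfy $y'_{\mathbf{h}(N')}>0$ since $\cG_{p'}\subseteq\cN\cap\tilde\cN$ by Lemma~\ref{lem:control-set-in-nest} together with Proposition~\ref{prop:strat-local-coords}; they are therefore absorbed into $h$, and since the claim of the lemma is phrased only in terms of $N'\in\cN$, no further bookkeeping is needed.

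The main obstacle will be verifying $h(y')\neq 0$. After the extractions, $h(y')$ identifies with a positive multiple of a value of a component of the transition map between charts of the single-submanifold blow-up $\Bl_{\cW_{\tilde N}}(M)$, or equivalently of a component of the induced map on $\nu\cW_{\tilde N}$. Its non-vanishing will follow from the positivity clause of Lemma~\ref{lem:useful-transition-relation} (equivalently Lemma~\ref{lem:useful-transition-relation-phi}), combined with the fact that $p'_{\tilde N}=b_{\tilde N,\tilde\mu(i)}(p'_{\tilde\mu(i)})$ lies in the target chart domain $\tilde U^{(\cW_{\tilde N})}_{\mathbf{\tilde h}(\tilde N)\mathbf{\tilde s}(\tilde N)}$, which is guaranteed by $p'\in U_{\tilde\chi\tilde\cN\mathbf{\tilde h\tilde s}}$. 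This parallels the concluding non-vanishing argument in the proof of Theorem~\ref{thm:weighted-blow-up-atlas}.
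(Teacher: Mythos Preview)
Your approach is genuinely different from the paper's, and in fact you are trying to prove \emph{more} than the lemma: you aim directly for the factorization $f=h\cdot\prod y_{\mathbf h(N')}^{e_{N'}}$ (which in the paper is derived \emph{after} the lemma, using the lemma as input to the Malgrange step), and then read the lemma off as a byproduct. The paper's proof is far more elementary: it simply observes that the denominator in \eqref{eq:oversized-f} is positive on the chart domain, so $f$ vanishes iff the numerator does; and then, since $i=\mathbf{\tilde h}(\tilde N)$, vanishing of the numerator is exactly the condition $\chi^{-1}(q)\in\tilde N$ (respectively the weighted-normal analogue for $q_0$), which reduces in the $\chi$-coordinates to the product $q_{\mathbf h(\tilde N)}=\prod_{N'}y_{\mathbf h(N')}^{w_{N',\mathbf h(\tilde N)}}$ being zero. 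The claimed range $\mu_0\subsetneq N'\subseteq\tilde N$ then drops out algebraically from the tableau structure (only $N'\subseteq\tilde N$ contribute a nonzero exponent, and $N'\subseteq\mu_0$ are ruled out by the case hypothesis on $z_{\mathbf h(\mu_0)}$). No Malgrange preparation, no non-vanishing argument for a remainder $h$, is needed.

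Your route has two real gaps. First, a factorization with $h(y')\neq 0$ only gives the equivalence on a \emph{neighbourhood} of $y'$, whereas the lemma is stated and proved for every point of the overlap. Second, your cancellation bookkeeping is off: the denominator admits Malgrange factors for every $N'\in\cN$ with $N'\subseteq\tilde\mu(i)$ (not just $N'\subseteq\mu_0$), so after cancellation the residual product ranges over $N'\subseteq\tilde N$ with $N'\not\subseteq\tilde\mu(i)$, which need not coincide with $\mu_0\subsetneq N'\subseteq\tilde N$ in general. (The two sets \emph{do} agree on $\cG_{p'}$, essentially because $p'_{\mu_0}\in\Bl_{\tilde\mu(i),\mu_0}(M)$ forces $\nu_{N',\mu_0}(n)\neq0$ for any intermediate $N'$, but you do not supply this argument.) Incidentally, your restriction to $N'\in\cN\cap\tilde\cN$ is unnecessary: the inclusion $\tilde x_i\in\cW_{N',\tilde w_{\tilde N,i}}$ already follows from $\cW_{\tilde N,\bullet}\subseteq\cW_{N',\bullet}$ for any $N'\subseteq\tilde N$, without requiring $\tilde\chi$ to be aligned with $\cW_{N'}$.
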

\begin{proof}
In the case that $z_{\mathbf{h}(\mu_0)}>0$, we can see this along the following chain of equivalences:
\begin{align*}
\left(\tilde\chi\circ\chi^{-1}\right)_i\left( q\right)=0 &\iff 
\chi^{-1}(q)\in\tilde N \\
&\iff q_{\mathbf{h}(\tilde N)} = \mathbf{s}(\tilde N) 
\cdot\prod_{N'\in\cN} (y_{\mathbf{h}(N')})^{w_{N',\mathbf{h}(\tilde N)}}  = 0 \\
&\iff \exists N'\in\cN:\, \mu_0\subsetneq N'\subseteq \tilde N \text{ and } y_{\mathbf{h}(N')}=0.
\end{align*}
For the first equivalence we use that $i=\mathbf{\tilde h}(\tilde N)$, thus the left-hand side measures exactly the parameter controlling collapse onto $\tilde N$ in the coordinates $\tilde\chi_{\mathbf{\tilde h}(\tilde N)\mathbf{\tilde s}(\tilde N)}$. The same argument for the coordinates $\chi_{\mathbf{h}(\tilde N)\mathbf{s}(\tilde N)}$ yields the following equivalence, where we have also spelled out the definition of $q_{\mathbf{h}(\tilde N)}$. For the last equivalence, we use two facts: Firstly, for control parameters associated with an $N'$ such that either $N'\not\subseteq\tilde N$ or such that $N'$ is incomparable to $\mu_0$, the associated weight is zero anyway and thus they cannot contribute to $q_{\mathbf{h}(\tilde N)}$ vanishing.
Secondly, that the control parameters for $N'\subseteq\mu_0$ are non-zero by the assumption $z_{\mathbf{h}(\mu_0)}>0$.

For $z_{\mathbf{h}(\mu_0)}=0$ we can make an analogous chain of equivalences, but now involving the induced charts of the weighted normal bundle and $q_0$:
\begin{align*}
& \left(\tilde\chi^{(\cW_{\tilde\mu(i)})}\circ\nu_{\tilde\mu(i),\mu_0}\circ\left(\chi^{(\cW_{\mu_0})}\right)^{-1}\right)_i\left(q_0\right) =0 \\
\iff \; & \left(\tilde\chi^{(\cW_{\tilde N})}\circ\left(\chi^{(\cW_{\tilde N})}\right)^{-1}\right)_i\left(\nu_{\chi_*\cW_{\tilde N}, \chi_*\cW_{\mu_0}} (q_0)\right) =0\\
\iff \; &  \left(\nu_{\chi_*\cW_{\tilde N}, \chi_*\cW_{\mu_0}} (q_0)\right)_{\mathbf{h}(\tilde N)} =0\\
\iff\; & (q_0)_{\mathbf{h}(\tilde N)} = \mathbf{s}(\tilde N) 
\cdot\prod_{\mu_0\not\supseteq N'\in\cN} (y_{\mathbf{h}(N')})^{w_{N',\mathbf{h}(\tilde N)}}  = 0 \\
\iff\;& \exists N'\in\cN:\, \mu_0\subsetneq N'\subseteq \tilde N \text{ and } y_{\mathbf{h}(N')}=0.
\end{align*}

The first equivalence follows by application of Lemma~\ref{lem:projection-in-charts} for $\cU=\cW_{\tilde N}$, $\widetilde{\cW}=\cW_{\tilde\mu(i)}$ and $\cW=\cW_{\mu_0}$. For the second equivalence, we just check whether the control parameter vanishes in the original charts $\chi^{(\cW_{\tilde N})}$ instead of in $\tilde\chi^{(\cW_{\tilde N})}$. This means we must look at the $\mathbf{h}(\tilde N)$-th component instead of the $i=\mathbf{\tilde h}(\tilde N)$-th component. For the third equivalence, note that the induced map of the identity in local coordinates has no effect since we are considering a component in which the weights of $\chi_*\cW_{\tilde N}$ and $\chi_*\cW_{\mu_0}$ match (compare Example~\ref{example:induced-map-weighted-normal-bundle}). We write out the definition of $(q_0)_{\mathbf{h}(\tilde N)}$ for convenience. Finally, note that due to separation of the building set, $w_{N',\mathbf{h}(\tilde N)}$ vanishes whenever $N'$ and $\mu_0$ are incomparable, and also when $N'$ is not a subset of $\tilde N$, thus the last equivalence follows.
\end{proof}

Let $M_1\supsetneq ...\supsetneq M_k$ be the elements of $\cN$ with $\tilde N\supseteq M_j\supsetneq\mu_0$ and $y'_{\mathbf{h}(M_j)}=0$. Here we used that $\cN$ is a nest and thus subsets of $\tilde N$ are comparable. We claim that close to $y'$ we can write$$
f(y_1,...,y_m) = g(y_1,...,y_m) \cdot \prod\limits_{j=1}^k y_{\mathbf{h}(M_j)}^{\tilde w_{\tilde N,i}}
$$
where $g$ is a smooth, non-vanishing function, thus proving smoothness of the remaining components of the transition functions. To prove this, we luckily only need to consider the numerators in our oversized Equation~\eqref{eq:oversized-f} for $f$, as the denominators do not vanish. For this, we use the iterated version of Malgrange Preparation, Lemma~\ref{lem:iterated-malgrange}. We know $f$ vanishes exactly if any $y_{\mathbf{h}(M_j)}=0$, so to apply Malgrange it suffices to check that the derivatives
\begin{equation}\label{eq:remainder}\resizebox{.88\hsize}{!}{$
0\neq  \prod\limits_{j=1}^k\left( \frac{1}{\tilde w_{\tilde N,i}!}
\left.\frac{d^{\tilde w_{\tilde N,i}}}{dt_j^{\tilde w_{\tilde N,i}}}\right|_{t_j=0}\right) 
\begin{cases}
\left(\tilde\chi\circ\chi^{-1}\right)_i\left( \mathbf{t}\cdot u\right)
&\text{if }z_{\mathbf{h}(\mu_0)}>0,\\
\left(\tilde\chi^{(\cW_{\tilde\mu(i)})}\circ\nu_{\tilde\mu(i),\mu_0}\circ\left(\chi^{(\cW_{\mu_0})}\right)^{-1}\right)_i\left(\mathbf{t}\cdot u_0\right)
&\text{if }z_{\mathbf{h}(\mu_0)}=0,
\end{cases}$}
\end{equation}
of the numerators (up to signs and decorative combinatorial factors) don't vanish when $y=y'$, and that all lower order derivatives do vanish. This will constitute the rest of our proof, so we can assume $y=y'$ from now. Here the notation $\mathbf{t}\cdot u$ is shorthand for letting each $t_j$ act on the vector $u$ according to the action associated with $\cW_{M_j}$ under $\chi$, and the vectors $u$ and $u_0$ are the result of extracting the action of the $y_{\mathbf{h}(M_j)}$ from $q$ and $q_0$, i.e. 
$$
u_l= \prod_{N'\in\cN\setminus\{M_1, ..., M_k\}} (y_{\mathbf{h}(N')})^{w_{N',l}}\cdot
\begin{cases}
\mathbf{s}(N'') &\text{if } l=\mathbf{h}(N'') \text{ for some }N''\in\cN,\\
y_l &\text{else,}
\end{cases}
$$
and
$$
(u_0)_l= \prod_{\substack{\mu_0\not\supseteq N'\in\cN \\ N'\not\in\{M_1, ..., M_k\}}} (y_{\mathbf{h}(N')})^{w_{N',l}}\cdot
\begin{cases}
\mathbf{s}(N'') &\text{if } l=\mathbf{h}(N'') \text{ for some }N''\in\cN,\\
y_l &\text{else}
\end{cases}
$$
such that $$(y_{\mathbf{h}(M_1)}, ..., y_{\mathbf{h}(M_k)})\cdot u = q \qquad\text{and}\qquad (y_{\mathbf{h}(M_1)}, ..., y_{\mathbf{h}(M_k)})\cdot u_0 = q_0.$$
This is entirely analogous to how $q_0$ arises by extracting the action of the $N'\subseteq\mu_0$ from the expression for $q$ in Equation~\ref{eq:q0-to-q}.

To prepare for proving the inequality in both cases, note that since we are on the exceptional divisor of $\tilde N$ there is a representative $n$ of $p'_{\tilde N}$ such that for $l\neq \mathbf{h}(\tilde N)$
$$
\left( \chi^{(\cW_{\tilde N})}(n) \right)_l = \prod_{\substack{ N'\in\cN \\ w_{\tilde N,l=0}\lor \tilde N\subsetneq N' }} (y_{\mathbf{h}(N')})^{w_{N',l}}\cdot
\begin{cases}
\mathbf{s}(N'') &\text{if } l=\mathbf{h}(N'') \text{ for some }N''\in\cN,\\
y_l &\text{else}
\end{cases}
$$
as well as $\left( \chi^{(\cW_{\tilde N})}(n) \right)_{\mathbf{h}(\tilde N)}=\mathbf{s}(\tilde N)$. We can see this by inspecting the expression from Corollary~\ref{cor:comp-coords} and using the definition of the blow-up coordinates in terms of the coordinates on the exceptional divisor.

\begin{lemma}
The Inequality~\eqref{eq:remainder} holds and any lower order derivatives vanish.
\end{lemma}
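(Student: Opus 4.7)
The plan is to reduce both cases of Equation~\eqref{eq:remainder} to a repeated application of Lemma~\ref{lem:useful-transition-relation} (for Case 1) and Lemma~\ref{lem:projection-in-charts} combined with Lemma~\ref{lem:useful-transition-relation} (for Case 2), leveraging uniform alignment to ensure that the weighted actions of the $t_j$ commute and that their weights on the $i$-th coordinate of $\tilde\chi$ all agree with $\tilde w_{\tilde N, i}$. The key observation is that each $M_j$ satisfies $\mu_0 \subsetneq M_j \subseteq \tilde N$, so $\cW_{M_j}$ sits between $\cW_{\mu_0}$ and $\cW_{\tilde N}$ and is uniformly aligned with both under $\chi$; and by uniform alignment of $\tilde\cN$, the weight of the $i = \mathbf{\tilde h}(\tilde N)$ direction under each of $\cW_{\tilde N}$, $\cW_{\tilde\mu(i)}$ agrees.

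First I will establish the vanishing of the lower-order derivatives. For any multi-index $\alpha$ with $\alpha_j\leq \tilde w_{\tilde N,i}$ and $\sum_j\alpha_j<k\tilde w_{\tilde N,i}$, there must be some index $j_0$ with $\alpha_{j_0}<\tilde w_{\tilde N,i}$. Treating the expression as a smooth function of $t_{j_0}$ alone with the other $t_j$ as parameters, Lemma~\ref{lem:useful-transition-relation} (in Case 1) or Lemma~\ref{lem:projection-in-charts} combined with Lemma~\ref{lem:useful-transition-relation} (in Case 2) factors out $t_{j_0}^{\tilde w_{\tilde N,i}}$. Consequently, evaluating $\partial^{\alpha_{j_0}}_{t_{j_0}}$ at $t_{j_0}=0$ already yields zero, and this conclusion survives further differentiation with respect to the remaining $t_j$ and evaluation at $t_j=0$.

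For the non-vanishing, the vanishing conditions just established are exactly those required to apply the iterated Malgrange Preparation Theorem~\ref{lem:iterated-malgrange} with all $w_j$ set to $\tilde w_{\tilde N,i}$. This yields a smooth factorization $f(\mathbf{t},u) = \prod_j t_j^{\tilde w_{\tilde N,i}} \cdot g(\mathbf{t},u)$, with $g(0,u)$ equal (up to the combinatorial prefactors and sign $\mathbf{\tilde s}(\tilde N)$ in Equation~\eqref{eq:remainder}) to the derivative expression we wish to bound away from zero. Iterating Lemma~\ref{lem:useful-transition-relation} once for each $j$ (reducing by Lemma~\ref{lem:projection-in-charts} in Case 2 to an expression of the same shape as Case 1 but with $u_0$ in place of $u$ and $\cW_{\tilde N}$ in place of $\cW_{\tilde\mu(i)}$) identifies $g(0,u)$ with $\mathbf{\tilde s}(\tilde N) \cdot \bigl(\tilde\chi^{(\cW_{\tilde N})}\circ\nu_{\cW_{\tilde N},\cW_{M_k}}\circ(\chi^{(\cW_{M_k})})^{-1}\bigr)_i$ evaluated on the point obtained by acting on $u$ (respectively $u_0$) by the zero of each $\cW_{M_j}$.

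Finally, that this value is non-zero follows by comparing with the distinguished representative $n$ of $p'_{\tilde N}$ exhibited just before the lemma. By construction, $\chi^{(\cW_{\tilde N})}(n)$ has $\mathbf{h}(\tilde N)$-th component equal to $\mathbf{s}(\tilde N)\neq0$, and the remaining components agree with those of the zero-action point just described. Since $p'_{\tilde N}$ lies in the chart domain $\tilde U^{(\cW_{\tilde N})}_{\mathbf{\tilde h}(\tilde N)\mathbf{\tilde s}(\tilde N)}$, the $i$-th component of $\tilde\chi^{(\cW_{\tilde N})}$ evaluated on any representative of $p'_{\tilde N}$ is strictly $\mathbf{\tilde s}(\tilde N)$-signed, so $g(0,u)>0$ as required. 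The main obstacle is the notational management of the iterated reductions in Case 2, where Lemma~\ref{lem:projection-in-charts} must be invoked $k$ times in order to move each weighted action past the induced normal-bundle map; but this is precisely where uniform alignment along the nest $\cG_{p'}\subseteq\cN\cap\tilde\cN$ makes the argument go through cleanly.
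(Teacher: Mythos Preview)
Your overall strategy is correct and matches the paper's: pull each $t_j$-action out as a factor $t_j^{\tilde w_{\tilde N,i}}$, then identify the leading coefficient with $(\tilde\chi^{(\cW_{\tilde N})}(n))_i$, which is nonzero since $i=\mathbf{\tilde h}(\tilde N)$ and $p'_{\tilde N}$ lies in the chart domain $\tilde U^{(\cW_{\tilde N})}_{\mathbf{\tilde h}(\tilde N)\mathbf{\tilde s}(\tilde N)}$.

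There is, however, a misattribution that hides a small gap in your iterative structure. Lemma~\ref{lem:useful-transition-relation} assumes that \emph{both} charts $\chi$ and $\tilde\chi$ are adapted to the \emph{same} weighting. In your Case~1 the action $t_j\cdot$ is taken with respect to $\chi_*\cW_{M_j}$; while $\chi$ is adapted to $\cW_{M_j}$ (since $M_j\in\cN$), the chart $\tilde\chi$ is only known to be adapted to elements of $\tilde\cN$, and $M_j$ need not lie there. What does apply is the local form of the induced map from Paragraph~\ref{props-weighted-normal}(7), equivalently Lemma~\ref{lem:useful-transition-relation-phi} with $\phi=\Id:(M,\cW_{M_k})\to(M,\cW_{\tilde N})$, which only requires $\tilde\chi$ adapted to $\cW_{\tilde N}$ and $\chi$ adapted to $\cW_{M_k}$. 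This is how the paper handles the first $t_k$-derivative in the case $z_{\mathbf{h}(\mu_0)}>0$.

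Moreover, you cannot literally ``iterate Lemma~\ref{lem:useful-transition-relation} once for each $j$'': after one application you are no longer looking at $(\tilde\chi\circ\chi^{-1})_i(\mathbf{t}\cdot u)$ but at a map between weighted normal bundles, namely $(\tilde\chi^{(\cW_{\tilde N})}\circ\nu_{\cW_{\tilde N},\cW_{M_k}}\circ(\chi^{(\cW_{M_k})})^{-1})_i(\mathbf{t'}\cdot u)$. From this point on (and from the outset in your Case~2), the remaining $t_j$-actions are pulled to the front using the ``Furthermore'' clause of Lemma~\ref{lem:projection-in-charts}, not by further applications of Lemma~\ref{lem:useful-transition-relation}. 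The paper organizes the two cases this way: for $z_{\mathbf{h}(\mu_0)}=0$ it applies Lemma~\ref{lem:projection-in-charts} directly (with $\cW_{\mu_0}\subseteq\cW_{\tilde\mu(i)}\subseteq\cW_{\tilde N}$); for $z_{\mathbf{h}(\mu_0)}>0$ it first converts the $t_k$-derivative into the normal-bundle expression via Paragraph~\ref{props-weighted-normal} and then applies Lemma~\ref{lem:projection-in-charts} with $\cW_{M_k}\subseteq\cW_{\tilde N}\subseteq\cW_{\tilde N}$. With these corrections your plan goes through and matches the paper's proof.
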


\begin{proof}[Proof for $z_{\mathbf{h}(\mu_0)}=0$]
We can use Lemma~\ref{lem:projection-in-charts} with $\cW_{\mu_0}\subseteq\cW_{\tilde\mu(i)}\subseteq\cW_{\tilde N}$ for $\cW\subseteq\widetilde\cW\subseteq\cU$ and repeatedly apply the second part of the Lemma to pull the action of $\mathbf{t}$ to the front. Evaluating the derivatives then yields the equivalent inequality
$$
0\neq  
\left(\tilde\chi^{(\cW_{\tilde N})}\circ\left(\chi^{(\cW_{\tilde N})}\right)^{-1}\right)_i\left(\nu_{\chi_*\cW_{\tilde N},\chi_*\cW_{\mu_0}}u_0\right).
$$
Note that if we had computed any $t_j$-derivative at a lower order than $\tilde w_{\tilde N, i}$, the evaluation at $t_j=0$ would yield zero in line with what we set out to prove.

We claim that this inequality is in turn equivalent to
$$
0\neq  
\left(\tilde\chi^{(\cW_{\tilde N})}\circ\left(\chi^{(\cW_{\tilde N})}\right)^{-1}\right)_i\left( 
\chi^{(\cW_{\tilde N})}(n)  \right)=
\left( \tilde\chi^{(\cW_{\tilde N})}(n)\right)_{i}.
$$
In this case the inequality must be satisfied since $i=\mathbf{\tilde h}(\tilde N)$. To see the equivalence, note first that $u_0$ differs from $\chi^{(\cW_{\tilde N})}(n)$ solely by the actions of $y_{\mathbf{h}(N')}$ for all $\mu_0\subsetneq N'\subseteq \tilde N$ such that $y_{\mathbf{h}(N')}\neq 0$. These actions can similarly be pulled to the front, where they must act by multiplying with a non-zero number on the $i$-th component, which does not affect whether the inequality holds. Furthermore, applying $\nu_{\chi_*\cW_{\tilde N},\chi_*\cW_{\mu_0}}$ has no effect on $\chi^{(\cW_{\tilde N})}(n)$: The components that get projected to zero must vanish anyway because $n$ represents a weighted normal vector that is based at a point in $\mu_0.$
\end{proof}

\begin{proof}[Proof for $z_{\mathbf{h}(\mu_0)}>0$]
We can rewrite the differentiation by $t_k$ by using the local form of the map induced for $\phi=\operatorname{Id}$ from~\ref{props-weighted-normal}(5) to obtain
$$
0\neq  \prod\limits_{j=1}^{k-1}\left( \frac{1}{\tilde w_{\tilde N,i}!}
\left.\frac{d^{\tilde w_{\tilde N,i}}}{dt_j^{\tilde w_{\tilde N,i}}}\right|_{t_j=0}\right) 
\left(\tilde\chi^{(\cW_{\tilde N})}\circ
\nu_{\cW_{\tilde N},\cW_{M_k}}\circ
\left(\chi^{(\cW_{M_k})}\right)^{-1}\right)_i\left( \mathbf{t'}\cdot u\right)
$$
where $\mathbf{t'}\cdot u$ is the result of letting $t_1,...,t_{k-1}$ act on $u$ according to the actions of the $\chi_*M_j$. We can argue analogously to the case $z_{\mathbf{h}(\mu_0)}=0$ by applying Lemma~\ref{lem:projection-in-charts} where we set $\cW\subseteq\widetilde\cW\subseteq\cU$ to $\cW_{M_k}\subseteq\cW_{\tilde N}\subseteq\cW_{\tilde N}$. This allows us to pull out all remaining $t_j$ and compute the derivatives to get
$$
0\neq  
\left(\tilde\chi^{(\cW_{\tilde N})}\circ\left(\chi^{(\cW_{\tilde N})}\right)^{-1}\right)_i\left(\nu_{\chi_*\cW_{\tilde N},\chi_*\cW_{M_k}}u_0\right).
$$
By an analogous argument to the previous case, this is equivalent to the true statement
\begin{equation*}
0\neq  
\left( \tilde\chi^{(\cW_{\tilde N})}(n)\right)_{i}.
\qedhere
\end{equation*}
\end{proof}

This finally concludes our proof of smoothness of the transition maps.

Lest we forget, our main Theorem makes the additional claim that the blow-down map $$b_\cW:\Bl_\cW(M)\to M$$
is smooth and proper. We can make short work of this: Smoothness follows by Corollary~\ref{cor:comp-coords}, which gives the local coordinate expression for the blow-down map when choosing $N=\emptyset$. For properness, consider a compact $K\subseteq M$. By properness of the component-wise blow-down maps $$b_G:\Bl_{\cW_G}(M)\to M,$$ we know that $b_G^{-1}(K)$ is compact. By construction, $\Bl_\cW(M)$ is a closed subset of $\prod_{G\in\cG}\Bl_{\cW_G}(M)$. We now see that $b_\cW^{-1}(K)$ is a closed subset of the compact $\prod_{G\in\cG} b_G^{-1}(K)$, and thus itself compact.

\newpage
\printbibliography

\end{document}